\documentclass[reqno, 12pt]{amsart}
\pdfoutput=1
\makeatletter
\let\origsection=\section \def\section{\@ifstar{\origsection*}{\mysection}} 
\def\mysection{\@startsection{section}{1}\z@{.7\linespacing\@plus\linespacing}{.5\linespacing}{\normalfont\scshape\centering\S}}
\makeatother        

\usepackage{amsmath,amssymb,amsthm}
\usepackage{mathrsfs}
\usepackage{mathabx}\changenotsign
\usepackage{wasysym}
\usepackage{dsfont}
 
\usepackage{xcolor}
\usepackage[backref]{hyperref}
\hypersetup{
    colorlinks,
    linkcolor={red!60!black},
    citecolor={green!60!black},
    urlcolor={blue!60!black}
}

\usepackage[open,openlevel=2,atend]{bookmark}

\usepackage[abbrev,msc-links,backrefs]{amsrefs} 
\usepackage{doi}

\renewcommand{\PrintDOI}[1]{\doi{#1}}

\usepackage[T1]{fontenc}
\usepackage{lmodern}
\usepackage[babel]{microtype}
\usepackage[english]{babel}

\linespread{1.25}
\usepackage{geometry}
\geometry{left=27.5mm,right=27.5mm, top=25mm, bottom=25mm}
\numberwithin{equation}{section}
\numberwithin{figure}{section}

\usepackage{scalerel}

\makeatletter
\newcommand{\overrighharpoonup}[1]{\ThisStyle{ \vbox {\m@th\ialign{##\crcr
 \rightharpoonupfill \crcr
 \noalign{\kern-\p@\nointerlineskip}
 $\hfil\SavedStyle#1\hfil$\crcr}}}}

\def\rightharpoonupfill{$\SavedStyle\m@th\mkern+0.8mu\cleaders\hbox{$\shortbar\mkern-4mu$}\hfill\rightharpoonuptip\mkern+0.8mu$}

\def\rightharpoonuptip{ \raisebox{\z@}[2pt][1pt]{\scalebox{0.55}{$\SavedStyle\rightharpoonup$}}}

\def\shortbar{ \smash{\scalebox{0.55}{$\SavedStyle\relbar$}}}
\makeatother

\let\seq=\overrighharpoonup

\DeclareRobustCommand{\harpp}[1]{\seq{#1}}

\usepackage{accents}

\usepackage{enumitem}
\def\rmlabel{\upshape({\itshape \roman*\,})}

\def\alabel{\upshape({\itshape \alph*\,})}

\def\nlabel{\upshape({\itshape \arabic*\,})}

\makeatletter
\def\greek#1{\expandafter\@greek\csname c@#1\endcsname}
\def\Greek#1{\expandafter\@Greek\csname c@#1\endcsname}
\def\@greek#1{\ifcase#1
	\or $\alpha$\or $\beta$\or $\gamma$\or $\delta$\or $\epsilon$\or $\zeta$\or $\eta$\or 
	$\theta$\or $\iota$\or $\kappa$\or $\lambda$	\or $\mu$\or $\nu$\or $\xi$\or $o$\or 
	$\pi$\or $\rho$\or $\sigma$\or $\tau$\or $\upsilon$\or $\phi$\or $\chi$\or $\psi$\or 
	$\omega$\fi}
\def\@Greek#1{\ifcase#1
	\or $\mathrm{A}$\or $\mathrm{B}$\or $\Gamma$\or $\Delta$\or $\mathrm{E}$\or $\mathrm{Z}$\or 
	$\mathrm{H}$\or $\Theta$\or $\mathrm{I}$\or $\mathrm{K}$\or $\Lambda$\or $\mathrm{M}$\or 
	$\mathrm{N}$\or $\Xi$\or $\mathrm{O}$\or $\Pi$\or $\mathrm{P}$\or $\Sigma$\or 
	$\mathrm{T}$\or $\mathrm{Y}$\or $\Phi$\or $\mathrm{X}$\or $\Psi$\or $\Omega$\fi}

\AddEnumerateCounter{\greek}{\@greek}{24}
\AddEnumerateCounter{\Greek}{\@Greek}{12}

\makeatother

\def\glabel{\upshape({\itshape \greek*})}

\let\polishlcross=\l
\def\l{\ifmmode\ell\else\polishlcross\fi}

\let\sm=\setminus

\makeatletter
\def\moverlay{\mathpalette\mov@rlay}
\def\mov@rlay#1#2{\leavevmode\vtop{   \baselineskip\z@skip \lineskiplimit-\maxdimen
   \ialign{\hfil$\m@th#1##$\hfil\cr#2\crcr}}}
\newcommand{\charfusion}[3][\mathord]{
    #1{\ifx#1\mathop\vphantom{#2}\fi
        \mathpalette\mov@rlay{#2\cr#3}
      }
    \ifx#1\mathop\expandafter\displaylimits\fi}
\makeatother

\newcommand{\dcup}{\charfusion[\mathbin]{\cup}{\cdot}}
\newcommand{\bigdcup}{\charfusion[\mathop]{\bigcup}{\cdot}}

\DeclareFontFamily{U}  {MnSymbolC}{}
\DeclareSymbolFont{MnSyC}         {U}  {MnSymbolC}{m}{n}
\DeclareFontShape{U}{MnSymbolC}{m}{n}{
    <-6>  MnSymbolC5
   <6-7>  MnSymbolC6
   <7-8>  MnSymbolC7
   <8-9>  MnSymbolC8
   <9-10> MnSymbolC9
  <10-12> MnSymbolC10
  <12->   MnSymbolC12}{}
\DeclareMathSymbol{\powerset}{\mathord}{MnSyC}{180}
\DeclareMathSymbol{\leftY}{\mathord}{MnSyC}{42}

\DeclareSymbolFont{symbolsC}{U}{txsyc}{m}{n}
\SetSymbolFont{symbolsC}{bold}{U}{txsyc}{bx}{n}
\DeclareFontSubstitution{U}{txsyc}{m}{n}
\DeclareMathSymbol{\strictif}{\mathrel}{symbolsC}{74}

\let\epsilon=\varepsilon

\let\rho=\varrho
\let\wh=\widehat
\let\wt=\widetilde

\def\NN{{\mathds N}}
\def\ZZ{{\mathds Z}}

\newcommand{\ccA}{\mathscr{A}}
\newcommand{\ccB}{\mathscr{B}}
\newcommand{\ccC}{\mathscr{C}}
\newcommand{\ccD}{\mathscr{D}}
\newcommand{\ccE}{\mathscr{E}}
\newcommand{\ccF}{\mathscr{F}}
\newcommand{\ccG}{\mathscr{G}}
\newcommand{\ccH}{\mathscr{H}}
\newcommand{\ccM}{\mathscr{M}}
\newcommand{\ccN}{\mathscr{N}}
\newcommand{\ccP}{\mathscr{P}}
\newcommand{\ccQ}{\mathscr{Q}}
\newcommand{\ccS}{\mathscr{S}}
\newcommand{\ccW}{\mathscr{W}}
\newcommand{\ccX}{\mathscr{X}}
\newcommand{\ccY}{\mathscr{Y}}
\newcommand{\ccZ}{\mathscr{Z}}

\newcommand{\gM}{\mathfrak{M}}

\theoremstyle{plain}
\newtheorem{thm}{Theorem}[section]
\newtheorem{lemma}[thm]{Lemma}
\newtheorem{prop}[thm]{Proposition}
\newtheorem{prob}[thm]{Problem}

\newtheorem{cor}[thm]{Corollary}
\newtheorem{fact}[thm]{Fact}
\newtheorem{clm}[thm]{Claim}
\newtheorem{summary}[thm]{Summary}

\theoremstyle{definition}
\newtheorem{remark}[thm]{Remark}
\newtheorem{dfn}[thm]{Definition}
\newtheorem{example}[thm]{Example}

\def\gth{\mathrm{girth}}
\def\ggth{\mathfrak{girth}}
\def\Gth{\mathrm{Girth}}
\def\GTH{\mathfrak{Girth}}

\def\HJ{\mathrm{HJ}}
\def\CPL{\mathrm{CPL}}
\def\PC{\mathrm{PC}}
\def\Rms{\mathrm{Rms}}
\def\Ext{\mathrm{Ext}}
\def\nni{\mathrm{n.n.i.}}
\def\pt{\mathrm{pt}}
\def\ff{\mathrm{fpt}}
\def\len#1{\vert #1\vert}
\def\ord#1{\mathrm{ord}(#1)}

\let\ups=\varUpsilon
\let\phi=\varphi

\newcommand{\str}{\scalebox{1.3}[1.3]{{$\blacktriangleleft$}}}
\newcommand{\Str}{\hskip.4em \str \hskip.4em}

\newcommand{\karo}{\scalebox{1.2}[1.2]{{$\diamondsuit$}}}

\let\theta=\vartheta
\let\lra=\longrightarrow
\let\vn=\varnothing

\usepackage{yfonts}

\usepackage{tikz}

\usetikzlibrary{calc,decorations.pathreplacing,decorations.pathmorphing, calligraphy}
\usetikzlibrary{arrows,decorations.pathreplacing}
\usetikzlibrary {arrows.meta} 
 
\pgfdeclarelayer{background}
\pgfdeclarelayer{foreground}
\pgfdeclarelayer{front}
\pgfsetlayers{background,main,foreground,front}

\usepackage{multicol}
\usepackage{subcaption}
\captionsetup[subfigure]{labelfont=rm}

\newcommand{\conc}{\charfusion[\mathbin]{+}{\times}}

\makeindex

\newcommand{\redge}[8]{
	
	\ifx\relax#5\relax
	\def\qoffs{0pt}
	\else
	\def\qoffs{#5}
	\fi
	
	\def\rhedge{
		($#1+#4!\qoffs!-90:#2-#4$) -- 
		($#2+#1!\qoffs!-90:#3-#1$) -- 
		($#3+#2!\qoffs!-90:#4-#2$) -- 
		($#4+#3!\qoffs!-90:#1-#3$) -- cycle}

	\coordinate (12) at ($#1!\qoffs!90:#2$);
	\coordinate (14) at ($#1!\qoffs!-90:#4$);
	\coordinate (23) at ($#2!\qoffs!90:#3$);
	\coordinate (21) at ($#2!\qoffs!-90:#1$);
	\coordinate (34) at ($#3!\qoffs!90:#4$);
	\coordinate (32) at ($#3!\qoffs!-90:#2$);
	\coordinate (41) at ($#4!\qoffs!90:#1$);
	\coordinate (43) at ($#4!\qoffs!-90:#3$);
	
	\def\nrhedge{
		(14) let \p1=($(14)-#1$), \p2=($(12)-#1$) in 
		arc[start angle={atan2(\y1,\x1)}, delta angle={atan2(\y2,\x2)-atan2(\y1,\x1)-360*(atan2(\y2,\x2)-atan2(\y1,\x1)>0)}, x radius=\qoffs, y radius=\qoffs] --
		(21) let \p1=($(21)-#2$), \p2=($(23)-#2$) in 
		arc[start angle={atan2(\y1,\x1)}, delta angle={atan2(\y2,\x2)-atan2(\y1,\x1)-360*(atan2(\y2,\x2)-atan2(\y1,\x1)>0)}, x radius=\qoffs, y radius=\qoffs] --
		(32) let \p1=($(32)-#3$), \p2=($(34)-#3$) in 
		arc[start angle={atan2(\y1,\x1)}, delta angle={atan2(\y2,\x2)-atan2(\y1,\x1)-360*(atan2(\y2,\x2)-atan2(\y1,\x1)>0)}, x radius=\qoffs, y radius=\qoffs] --
		(43) let \p1=($(43)-#4$), \p2=($(41)-#4$) in 
		arc[start angle={atan2(\y1,\x1)}, delta angle={atan2(\y2,\x2)-atan2(\y1,\x1)-360*(atan2(\y2,\x2)-atan2(\y1,\x1)>0)}, x radius=\qoffs, y radius=\qoffs] --
		cycle}
	
	\ifx\relax#6\relax
	\def\rlwidth{1pt}
	\else
	\def\rlwidth{#6}
	\fi
	
	\ifx\relax#8\relax
	\fill \nrhedge;
	\else
	\fill[#8]\nrhedge;
	\fi
	
	\ifx\relax#7\relax
	\draw[line width=\rlwidth,rounded corners=\qoffs]\nrhedge;
	\else
	\draw[line width=\rlwidth,#7]\nrhedge;
	\fi
}

\begin{document}
\title[The Girth Ramsey Theorem]{The Girth Ramsey Theorem}

\author[Christian~Reiher]{Christian Reiher}
\address{Fachbereich Mathematik, Universit\"at Hamburg, Hamburg, Germany}
\email{Christian.Reiher@uni-hamburg.de}

\author[Vojt\v{e}ch~R\"{o}dl]{Vojt\v{e}ch R\"{o}dl}
\address{Department of Mathematics and Computer Science, Emory University, Atlanta, USA}
\email{rodl@mathcs.emory.edu}
\thanks{The second author is supported by NSF grant DMS 2300347.}

\subjclass[2010]{05D10, 05C65}
\keywords{Structural Ramsey theory, girth, partite construction method}

\begin{abstract}
Given a hypergraph $F$ and a number of colours $r$, there exists 
a hypergraph~$H$ of the same girth satisfying $H\lra (F)_r$.  
Moreover, for every linear hypergraph $F$ there exists a Ramsey 
hypergraph $H$ that locally looks like a forest of copies of $F$.  
\end{abstract}

\maketitle

\section{Introduction}
\label{sec:intro}
\subsection{Colouring vertices} 
\label{subsec:colver}

We commence with the well known fact, due to Erd\H{o}s
and Hajnal~\cite{EH66}, that for every $k\ge 2$ there exist $k$-uniform hypergraphs 
whose girth and chromatic number are simultaneously 
arbitrarily large (see also~\cites{AKRWZ16, Lov68, Ecken, Erd59}).

Recall that the {\it chromatic number} of a hypergraph $H$ is the least natural number 
$\chi(H)$ such that there exists a colouring of the vertices of $H$ using $\chi(H)$ 
colours with the property that no edge of $H$ is monochromatic. This is a Ramsey 
theoretic invariant of $H$, for a lower bound estimate of the form $\chi(H) > r$
can equivalently be expressed by the partition relation 
\begin{equation} \label{eq:11a}
	H \lra (e)^v_r\,,
\end{equation}
where the superscripted $v$ on the right side means that the objects receiving 
colours are {\it vertices} and the letter $e$ enclosed in parentheses indicates that 
the object we want to find monochromatically is an {\it edge}. 

The absence of cycles can equivalently be described in terms of forests.
Let us call a set~$N$ of edges a {\it forest} if there exists an 
enumeration $N=\{e_1, \ldots, e_{|N|}\}$ such that for 
every $j\in [2, |N|]$ the set $\bigl(\bigcup_{i<j}e_i\bigr)\cap e_j$
is either empty or it consists of a single vertex.
Now the aforementioned result on hypergraphs with high chromatic number 
and large girth reformulates as follows. 
 
\begin{thm}\label{thm:7338}
	For every $k\ge 2$ and all $r, n\in \NN$ there exists a $k$-uniform hypergraph $H$ 
	with $H \lra (e)^v_r$ having the property that any set consisting of at most $n$ edges 
	of $H$ forms a forest. \qed 
\end{thm}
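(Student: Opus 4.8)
The plan is to prove this by the probabilistic method with deletion, in the spirit of Erd\H{o}s and Hajnal. We may assume $n\ge 2$, since for $n\le 1$ every single edge already forms a forest and $k$-uniform hypergraphs of arbitrarily large chromatic number exist trivially (for instance complete ones, whose chromatic number grows with the number of vertices). Fix $k\ge 2$ and $r\in\NN$, let $N$ be large, and let $H$ be the random $k$-uniform hypergraph on $[N]$ in which each of the $\binom Nk$ possible edges is present independently with probability $p=CN^{-(k-1)}\log N$, where $C=C(k,r)$ is a large constant chosen below. I will show that with probability bounded away from zero $H$ simultaneously has small independence number and only few ``short dense'' subhypergraphs; removing one vertex from each of the latter will then produce the desired hypergraph $H'$.

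First I would estimate the independence number. A fixed set of $m$ vertices is independent with probability $(1-p)^{\binom mk}\le e^{-p\binom mk}$, and a union bound over the at most $N^m$ such sets gives $\Pr[\alpha(H)\ge m]\le\exp\!\bigl(m\log N-p\binom mk\bigr)$. Taking $m=\lceil N/(2r)\rceil$ and using $\binom mk\ge(m/k)^k$, the quantity $p\binom mk$ is of order $C(2rk)^{-k}N\log N$, so choosing $C$ large enough in terms of $k$ and $r$ guarantees $p\binom mk\ge 2m\log N$, whence $\Pr[\alpha(H)\ge N/(2r)]\le\exp(-\Omega(N\log N))\to0$.

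Next I would handle the forest condition. If a set $T$ of at most $n$ edges fails to be a forest, pick a subset $T'\subseteq T$ minimal with this property; then $T'$ is connected --- otherwise some connected component of $T'$ would already be a smaller non-forest, since a disjoint union of forests is a forest --- and a connected set of $j$ edges that is not a forest spans at most $j(k-1)$ vertices. (Indeed, ordering the $j$ edges so that each meets the union of its predecessors, their union has at most $k+(j-1)(k-1)=j(k-1)+1$ vertices, and attaining that many forces every new edge to meet the previous union in exactly one vertex, which inductively makes $T'$ a tree.) Hence it suffices to destroy, for every $2\le j\le n$, all connected non-forests on $j$ edges spanning at most $j(k-1)$ vertices. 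The expected number of these is at most $\sum_{j=2}^n N^{j(k-1)}a_{j,k}p^{j}$, where $a_{j,k}$ --- the number of ways to choose $j$ edges inside a fixed $j(k-1)$-element vertex set --- is a constant; substituting $p$ this equals $\sum_{j=2}^n a_{j,k}(C\log N)^{j}=O\bigl((\log N)^{n}\bigr)=o(N)$, so by Markov's inequality $H$ contains fewer than $N/4$ of these configurations with probability $1-o(1)$.

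For $N$ large there is therefore a concrete choice of $H$ with $\alpha(H)<N/(2r)$ and with fewer than $N/4$ of the short dense configurations above. Deleting one vertex from each of them yields an induced subhypergraph $H'$ on more than $3N/4$ vertices in which every set of at most $n$ edges is a forest, while $\alpha(H')\le\alpha(H)<N/(2r)<|V(H')|/r$; consequently in any colouring of $V(H')$ with $r$ colours some colour class is larger than $\alpha(H')$ and hence contains an edge, that is, $H'\lra(e)^v_r$. The step I expect to be most delicate is the calibration of $p$: it must be small enough that the expected number of short dense configurations is $o(N)$ --- so that deletion removes only a negligible fraction of the vertices --- yet large enough to keep $\alpha(H)$ below $N/(2r)$, and the logarithmic factor in $p$ is precisely what lets both conditions hold at once.
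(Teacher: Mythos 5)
Your proof is correct, and it reconstructs the classical probabilistic deletion argument that the paper deliberately does not repeat: Theorem~\ref{thm:7338} is stated with an immediate \qed{} as a known result of Erd\H{o}s and Hajnal (the citations \cite{EH66} and \cite{Ecken} point to the original and to a short proof), so there is no in-paper argument to compare against. Your route -- take $p=CN^{-(k-1)}\log N$, bound the independence number by a union bound over vertex sets, observe that a minimal non-forest on $j$ edges is connected and hence spans at most $j(k-1)$ vertices, bound the expected number of such dense short configurations by $O((\log N)^n)=o(N)$, and delete one vertex per configuration -- is exactly the Erd\H{o}s-style deletion method, adapted directly to the paper's ``forest'' phrasing rather than passing through the (equivalent) statement about $n$-cycles and $\gth$. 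Working with the forest formulation directly is a nice touch: it sidesteps the translation between ``$\gth(H)>n$'' and ``every $\le n$ edges form a forest'', which the paper leaves implicit. All the estimates check out; the only points worth polishing if this were to be written up are (a) the sum for the expected number of bad configurations should range over all span sizes $v\le j(k-1)$, not only $v=j(k-1)$, though the latter dominates, and (b) one should say explicitly, as you implicitly do, that after deletion an alleged non-forest on $\le n$ edges of $H'$ would contain a minimal connected non-forest that is entirely inside $H'$, hence avoids the deleted vertices, contradicting the deletion step. Neither is a gap.
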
 

This result is optimal in the sense that for every forest $W$ there is some number
of colours $r$ such that every hypergraph $H$ satisfying $H \lra (e)^v_r$ contains
a copy of $W$. 
From a Ramsey theoretic perspective, it is equally natural to investigate 
the problem that instead of a monochromatic edge one wishes to enforce a monochromatic
copy of a given hypergraph $F$. For any two hypergraphs $F$ and $H$ we write
$\binom{H}{F}$ for the set of all {\it induced} subhypergraphs of $H$ isomorphic 
to $F$.
Given $\ccH\subseteq \binom{H}{F}$ and $r\in \NN$ the partition relation 
\begin{equation} \label{eq:11b}
	\ccH\lra (F)^v_r
\end{equation}
is defined to hold if for every colouring of the vertices of $H$ with $r$ colours there
exists a monochromatic copy $F_\star\in \ccH$. The existence of a system $\ccH$
having this property for given~$F$ and~$r$ is easily established by starting 
with a linear $v(F)$-uniform hypergraph whose chromatic number exceeds $r$,
and inserting copies of $F$ into its edges. Pursuing this argument further one 
arrives at the following result (see~\cite{NeRo76}). 

\begin{thm}\label{thm:1535}
	For every $k$-uniform hypergraph $F$ and all $r, n\in \NN$ there exists 
	a hypergraph~$H$ together with a system of copies $\ccH\subseteq \binom{H}{F}$
	such that 
	\begin{enumerate}[label=\rmlabel]
		\item $\ccH\lra (F)^v_r$ and    
  		\item for every $\ccN\subseteq \ccH$ with $|\ccN|\le n$ there exists an 
			enumeration $\ccN=\{F_1, \ldots, F_{|\ccN|}\}$ with the property that for 
			every $j\in [2, |\ccN|]$ the sets $\bigcup_{i<j}V(F_i)$ and $V(F_j)$
			have at most one vertex in common. \qed
	\end{enumerate}
\end{thm}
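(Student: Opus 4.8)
The plan is to obtain this as a quick consequence of Theorem~\ref{thm:7338}. Put $d=v(F)$ and fix an enumeration $V(F)=\{1,\dots,d\}$; we may assume $d\ge 2$, since for $d=1$ the hypergraph $F$ carries no $k$-edge (here $k\ge 2$) and the statement is vacuous. Apply Theorem~\ref{thm:7338} with uniformity $d$, the given number of colours $r$, and $\max(n,2)$ in the role of~$n$; this yields a $d$-uniform hypergraph $G$ with $G\lra(e)^v_r$ such that any set of at most $\max(n,2)$ edges of $G$ forms a forest, so in particular $G$ is \emph{linear}: any two distinct edges of $G$ share at most one vertex. Now blow up each edge of $G$ into a labelled copy of $F$: for every $e\in E(G)$ choose a bijection $\phi_e\colon V(F)\to e$, let $F_e$ be the image of $F$ under $\phi_e$, and let $H$ be the hypergraph on vertex set $V(G)$ whose edge set is $\bigl\{\phi_e(f): e\in E(G),\ f\in E(F)\bigr\}$. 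Set $\ccH=\{F_e: e\in E(G)\}$.

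I would then check the three points. First, $\ccH\subseteq\binom{H}{F}$: each $F_e$ has vertex set $e$, and $\phi_e$ sends $E(F)$ onto the collection of sets $\phi_e(f)$; the only thing to verify is that $H$ has \emph{no other} edge inside $e$. Any edge of $H$ contained in $e$ has the form $\phi_{e'}(f')$ with $f'\in E(F)$, hence lies in $e\cap e'$; since $\phi_{e'}(f')\subseteq e\cap e'$ would force $k=|f'|\le|e\cap e'|\le 1$ for $e'\ne e$, contradicting $k\ge 2$, we must have $e'=e$. Thus the subhypergraph induced on $e$ is exactly $F_e\cong F$. Property~(i) is now immediate: an $r$-colouring of $V(H)=V(G)$ has, by $G\lra(e)^v_r$, a monochromatic edge $e\in E(G)$, and $F_e\in\ccH$ is a monochromatic copy of $F$. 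For property~(ii), a subfamily $\ccN\subseteq\ccH$ with $|\ccN|\le n$ corresponds bijectively (via $F_e\leftrightarrow e$) to a set $N\subseteq E(G)$ with $|N|\le n$, and $N$ is a forest; an enumeration $N=\{e_1,\dots,e_m\}$ witnessing this has $\bigl(\bigcup_{i<j}e_i\bigr)\cap e_j$ empty or a singleton for every $j\in[2,m]$, and because $V(F_{e_i})=e_i$ the corresponding enumeration of $\ccN$ is exactly the one required.

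I do not anticipate a genuine obstacle; the text itself flags this as ``easily established''. The single delicate point is the induced-ness of the copies $F_e$, which is precisely where linearity of $G$ is used, and this is why Theorem~\ref{thm:7338} should be invoked with $\max(n,2)$ in place of $n$, so that linearity is guaranteed even when $n\le 1$. Everything else is bookkeeping. Conceptually, it is worth keeping in mind that this ``edge blow-up'' is the base case of the iterated, partite-amalgamation arguments needed for the sharper, girth-preserving theorems announced in the abstract.
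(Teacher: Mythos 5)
Your proposal is correct and matches the construction the paper itself sketches in the paragraph preceding the theorem (take a linear $v(F)$-uniform hypergraph of large chromatic number and girth, insert a copy of $F$ into each edge, then read the forest property of $\ccH$ off the edge-forest property of the underlying hypergraph). The only detail worth keeping in mind is the one you already flagged: replacing $n$ by $\max(n,2)$ so that linearity is available for the inducedness check even when $n\le 1$.
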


Again this result is optimal in the sense that it describes all configurations of 
copies of~$F$ that need to be present in systems $\ccH$ satisfying $\ccH\lra (F)^v_r$
for large $r$. For a precise statement along these lines, we refer to the recent 
work of Daskin, Hoshen, Krivelevich, and Zhukovskii~\cite{DHKZ}.

\subsection{Colouring edges}
\label{subsec:coledges}

An entirely new level of difficulty emerges when edges rather than vertices
are the entities subject to colouration. Erd\H{o}s asked more than forty years
ago whether for every hypergraph $F$ and every number of colours $r$ there exists 
a hypergraph~$H$ with 
\[
	H\lra (F)_r\,,
\]
where now we are colouring edges\footnote[1]{Henceforth all colourings are colourings 
of edges and attempting to keep the notation simple we refrain from adding a 
superscripted ``$e$'' on the right side of our partition relations.} 
and the desired monochromatic occurrence of $F$ is still supposed to be induced.  
This problem was first solved in the $2$-uniform 
case~\cites{Deuber75, EHP75, Rodl73, Rodl76} and later in full 
generality~\cites{AH78, NeRo1}. In \S\ref{sssec:irt} we describe a simple proof of 
this {\it induced Ramsey theorem} from~\cite{NeRo5}. As a matter of fact, the 
articles~\cites{AH78, NeRo1} show much stronger results allowing subhypergraphs 
and not only edges to be coloured. Moreover,~\cite{NeRo1} proves 
that one can demand $H$ to have certain 
additional properties, provided that $F$ has these properties as well. 
For instance, let~$G$ be a hypergraph any pair of whose vertices 
is covered by an edge. Now if~$F$ has no subhypergraph isomorphic to $G$, 
then we can insist that the Ramsey hypergraph $H$ likewise does not have such a 
subhypergraph.
This result allows to preserve the clique number of~$F$, but not the girth. 
Let us recall that girth is defined as follows.     

\begin{dfn} \label{dfn:girth}
	Given a hypergraph $H=(V, E)$ and an integer $n\ge 2$ we say that a cyclic sequence
		\begin{equation}\label{eq:1c}
		e_1v_1\ldots e_nv_n
	\end{equation}
		is an {\it $n$-cycle} in $H$ provided 
		\index{$n$-cycle}
	\begin{enumerate}[label=\upshape{($C\arabic*$)}]
		\item\label{it:C2} the edges $e_1, \ldots, e_n\in E$ are distinct;
		\item\label{it:C1} the vertices $v_1, \ldots, v_n\in V$ are distinct;
		\item\label{it:C3} and $v_i \in e_i\cap e_{i+1}$ for each $i\in\ZZ/n\ZZ$.
	\end{enumerate}	 
	Moreover, for an integer $g\ge 2$ we write $\gth(H)>g$ if for no~$n\in [2, g]$
	there is an $n$-cycle contained in $H$.
\end{dfn} 

In particular, $\gth(H)>2$ means that $H$ is {\it linear} in the sense that 
no two of its edges intersect in more than one vertex. We can now state the 
simplest form of the girth Ramsey theorem, which is among the main results 
of this article. 

\begin{thm} \label{thm:grth1} 
	Given an integer $g\ge 2$, a hypergraph $F$ with $\gth(F)>g$, and a natural 
	number $r$, there exists a hypergraph $H$ with $\gth(H) > g$ and 
 	 	\begin{equation} \label{eq:1}
 		H\longrightarrow (F)_r\,.
 	\end{equation}
\end{thm}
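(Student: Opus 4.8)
The plan is to run a \emph{partite construction} in the spirit of Ne\v{s}et\v{r}il and R\"odl, carrying a girth bookkeeping along every amalgamation step; essentially all of the difficulty lies in making that bookkeeping go through once $g\ge 3$. First, reduce to a partite statement. Call a hypergraph \emph{partite} if its vertex set carries a partition into classes so that every edge meets each class at most once, and a subhypergraph a \emph{partite copy} if it respects the classes; write $\lra^{*}$ for the partition relation demanding a monochromatic partite copy. It suffices to prove: for every partite hypergraph $P$ with $\gth(P)>g$ and every $r\in\NN$ there is a partite hypergraph $Q$ with $\gth(Q)>g$ and $Q\lra^{*}(P)_r$. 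Indeed, view the given $F$ as a partite hypergraph with singleton classes, apply this, and forget the partition of the resulting $Q$: the underlying hypergraph $H$ has exactly the cycles of $Q$, so $\gth(H)>g$, while a monochromatic partite copy of $F$ in $Q$ is a monochromatic copy of $F$ in $H$. Running everything with \emph{induced} partite copies, as in the proof of the induced Ramsey theorem from~\cite{NeRo5}, keeps this copy induced.

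The basic tool is a \textbf{Partite Lemma} that adds one class at a time: for a partite $P$ whose class set is split into ``old'' classes and one ``new'' class, there is a partite $B$ which over the old classes is a disjoint union of copies of the corresponding part of $P$ and over the new class a suitable power, with $B\lra^{*}(P)_r$; the monochromatic copy is produced by the Hales--Jewett theorem applied coordinate by coordinate. Because the new class sits over \emph{disjoint} copies of a part of $P$, every cycle of $B$ projects in each coordinate to a closed walk of the same length in $P$, whose non-degeneracy forces an $n$-cycle in $P$ with $n\le g$; hence $\gth(B)>g$ once $\gth(P)>g$.

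With this engine one builds $Q$ by an iterated amalgamation over high-girth skeletons. Writing $E(P)=\{E_1,\dots,E_m\}$, one produces partite hypergraphs $Q^{0},\dots,Q^{m}=Q$, all of girth $>g$, where $Q^{0}$ is a disjoint union of copies of $P$ and $Q^{t}$ is assembled from $Q^{t-1}$ and a partite skeleton $R_t$ with $\gth(R_t)>g$ — obtained, via the Partite Lemma and an induction on the number of classes and then on $|E(P)|$, so as to be partite-Ramsey for the local structure of $P$ around $E_t$ — by replacing each edge $e$ of $R_t$ with a fresh copy of $Q^{t-1}$, glued to $R_t$ not merely along $e$ but along the \emph{closed} partite subhypergraph spanned by $V(e)$, a vertex set being \emph{closed} when it already contains every vertex that could complete a short edge, or a short cycle, through it. Thus distinct copies meet only in prescribed, small, cycle-free pieces, and a routine ``push the colouring down through $R_1,\dots,R_m$'' argument gives $Q\lra^{*}(P)_r$.

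The step to worry about — and the technical heart of the paper — is that $\gth(Q^{t})>g$ survives each amalgamation. An $n$-cycle with $n\le g$ in $Q^{t}$ either lies inside a single copy of $Q^{t-1}$, excluded by induction; or inside $R_t$, excluded by $\gth(R_t)>g$; or it genuinely threads through $R_t$ and several copies. In that last case one wants to project the cycle onto $R_t$ and contradict $\gth(R_t)>g$, but the projection can degenerate, and one must then argue that the cycle was after all confined to a single copy. Making this dichotomy airtight is exactly what forces the two requirements — amalgamating along closed pieces and taking $R_t$ of girth $>g$ — to be imposed together, and, since the crude form of the argument already breaks down for $g\ge 3$, it appears to need a genuinely new ingredient: an induction on $g$ itself (or on a finer measure of the cyclic complexity of $P$) combined with interleaved steps that first destroy the short cycles created so far. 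Pushing this cycle analysis through is where the real work lies.
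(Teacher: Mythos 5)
Your proposal correctly names the overall shell — reduce to a partite version, drive the construction with Hales--Jewett, amalgamate along closed pieces, and guard the girth by a cycle analysis — and you honestly flag that the cycle analysis is where you get stuck. That flagged step is not a gap in exposition: it is the actual theorem. Two of the load-bearing claims you do make are wrong, and the ``genuinely new ingredient'' you gesture at is far more than a tweak.

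The most serious error is in the Partite Lemma. You assert that any cycle of $B$ projects in each coordinate to a closed walk of the same length in $P$, ``whose non-degeneracy forces an $n$-cycle in $P$''; hence $\gth(B)>g$. This inference fails for every $g\ge 3$. A closed walk in $P$ of length $n$ obtained by coordinatewise projection of an $n$-cycle in a product can be degenerate in \emph{every} coordinate simultaneously: the $n$ edges of the cycle can be arranged so that, for each pair $i\ne j$, some coordinate $\nu(i,j)$ separates $e_i$ from $e_j$, while no single coordinate separates all $n$ of them. This is precisely why Hales--Jewett-type partite lemmata only certify linearity (Lemma~\ref{lem:1819}) and nothing beyond: the paper states explicitly in \S\ref{sssec:vindya} that even $\CPL^{(3)}=\PC(\CPL^{(2)},\CPL^{(2)})$ applied to $C_6$ yields graphs containing a $C_4$. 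Thus the engine you are planning to iterate does not, as stated, preserve $\gth>g$. The partite lemma that actually does the job is built by the extension process $\Ext(\cdot,\cdot)$, which requires the Ramsey theorem for \emph{smaller} girth as an ingredient; it is not a one-step Hales--Jewett argument at all.

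The second gap is in your treatment of amalgamation: glueing along ``closed'' vertex sets does not by itself control the cycles that thread through several copies. The paper's analysis is organised around an entirely different invariant: cycles of \emph{copies} (Definition~\ref{dfn:coc}) with connectors that may be vertices or edges, the distinction between tidy and untidy such cycles, master copies, and the Girth $\Gth$ of a system of copies (Definition~\ref{dfn:Girth}). The key new idea is that the difficulty of excluding a cycle of copies is governed not only by its length but by the number of alternations between vertex and edge connectors — the order $\ord{\ccC}$. One then replaces ``induction on $g$'' by a more refined transfinite induction on nondecreasing sequences in $\gM_\le^\times$, carried out through the $\karo_{\seq g}$ principles for trains (Definition~\ref{dfn:0121}), and, as an intermediate scaffolding, through pretrains, the German $\GTH$ invariant, the extension lemma (Lemma~\ref{lem:1955}), $\GTH$ preservation under amalgamation (Proposition~\ref{prop:2235}), trains, revisability, and a diagonal partite construction that uses a \emph{different} partite lemma at each stage (Lemma~\ref{lem:Xig}). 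None of this is a cosmetic ``interleaved cleaning''; it is the content of Sections~\ref{subsec:AG}--\ref{subsec:0136}. Your outline stops exactly where the proof begins.
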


Roughly speaking this result gives us quite a lot of local control over the 
Ramsey hypergraph $H$ of $F$. Ultimately, one would like to analyse
the local structure of Ramsey hypergraphs in the same way as Theorem~\ref{thm:7338} 
describes the local structure of hypergraphs with large chromatic number. 
Here we solve this problem for linear hypergraphs $F$ (and therefore, in particular, 
for all graphs).

\begin{dfn}\label{dfn:1606}
	Let $F$ be a linear hypergraph. We call a set $\ccN$ of hypergraphs isomorphic 
	to $F$ a {\it forest of copies of $F$} \index{forest of copies} if there exists 
	an enumeration 
	$\ccN=\bigl\{F_1, \ldots, F_{|\ccN|}\bigr\}$
	such that for every $j\in [2, |\ccN|]$ the 
	set $z_j=\bigl(\bigcup_{i<j}V(F_i)\bigr)\cap V(F_j)$ satisfies  
		\begin{enumerate}[label=\rmlabel]
		\item\label{it:1606i} either $|z_j|\le 1$
		\item\label{it:1606ii} or $z_j\in \bigl(\bigcup_{i<j}E(F_i)\bigr)\cap E(F_j)$.
	\end{enumerate}
	We denote the union of a forest of copies $\ccN$ 
	by $\bigcup\ccN$; explicitly, this is the hypergraph
	with vertex set $\bigcup_{F_\star\in \ccN}V(F_\star)$ and edge 
	set $\bigcup_{F_\star\in \ccN}E(F_\star)$. 
	A hypergraph $G$ is said to be a {\it partial $F$-forest} \index{partial 
	$F$-forest} if it is an induced subhypergraph of $\bigcup\ccN$ for some 
	forest $\ccN$ of copies of~$F$.
\end{dfn}

It is not difficult to see that every such forest of copies of $F$ needs to be 
contained in every Ramsey hypergraph of $F$ with sufficiently many colours. 
Another main result of this work states that, conversely, we can build
Ramsey hypergraphs that locally look like forests of copies of $F$. 

\begin{thm}\label{cor:19}
	For every linear hypergraph $F$ and all $r, n\in \NN$ there exists a linear
	hypergraph~$H$ with $H\lra (F)_r$ such that every set $X\subseteq V(H)$
	whose size it at most~$n$ induces a partial $F$-forest in $H$. 
\end{thm}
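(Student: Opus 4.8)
The plan is to build $H$ by a \emph{partite construction} in the style of Ne\v{s}et\v{r}il--R\"{o}dl, arranged carefully enough that the copies of $F$ occurring in the final object overlap only in a single vertex or in a whole common edge of $F$ --- which is precisely what the partial $F$-forest conclusion demands. All objects in the construction are \emph{partite hypergraphs}: the vertex set is split into finitely many parts, and every edge meets each part in at most one vertex. Each partite hypergraph comes equipped with a distinguished system $\ccP$ of \emph{pictures} of $F$ (partition-respecting copies), and the property kept as an invariant throughout is the conjunction of: (a) the hypergraph is linear; and (b) every set of at most $n$ of its vertices induces a partial $F$-forest, witnessed by a sub-forest of $\ccP$. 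The hypergraph $H$ will be the underlying hypergraph of the final object.

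First I would establish a \emph{Partite Lemma}: given a partite hypergraph $P$ with system $\ccP$ and a distinguished picture $\pi\in\ccP$, one constructs a partite hypergraph $Q$ on the same parts, with a system $\ccQ$, such that every $r$-colouring of the pictures of $F$ in $Q$ that occupy the position of $\pi$ admits an isomorphic copy of the whole pattern $(P,\ccP)$ inside $Q$ on which all the $\pi$-pictures receive the same colour; moreover $Q$ is assembled from copies of $P$ that meet pairwise only in pictures of $F$ or in single vertices, so that (a) and (b) descend from $P$ to $Q$. This is the point where Ramsey's theorem --- here the Hales--Jewett theorem --- is invoked: one takes many disjoint copies of $P$ indexed by a Hales--Jewett cube over one auxiliary part (a part disjoint from all copies of $F$ in $\ccP$), glues them along the other parts, and extracts the monochromatic pattern from a monochromatic combinatorial line. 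Since the identifications are confined to vertices outside the copies of $F$, the invariant transfers cleanly in this step.

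With the Partite Lemma available, I would run the usual iteration: from a small root object --- carrying essentially a single picture of $F$, so that (a) and (b) hold there trivially, and seeded if necessary with an ordinary (non-induced) Ramsey hypergraph --- one builds a chain $P_0\subseteq P_1\subseteq\cdots\subseteq P_N$, where at each step one applies the Partite Lemma to the current pattern and \emph{amalgamates}: over every picture of $F$ present so far one places a fresh copy of the object just produced, glued to the rest along that very picture. Here $N$ depends only on $F$, $r$, and $n$. The classical counting argument --- peeling the layers back one at a time and using at each layer the Ramsey property supplied by the Partite Lemma --- then shows that every $r$-colouring of $E(P_N)$ is monochromatic on some picture of $F$, whence $H\lra(F)_r$.

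The crux --- and the only real departure from the classical argument --- is verifying that each amalgamation step preserves (a) and (b). Two amalgamated copies $Q_\pi,Q_{\pi'}$ can meet only inside the previous object, along $\pi\cap\pi'$, which by (b) is a single vertex or a common edge of $F$; and each $Q_\pi$ inherits the invariant from the Partite Lemma. The genuine threat is a short cycle, or a forbidden overlap of copies of $F$, \emph{spread across several} of the glued copies. Ruling this out forces the Partite-Lemma output to be built so that the distinguished picture $\pi$ sits in $Q$ \emph{sparsely and rigidly}: every cycle of $Q$ must either lie inside one copy of $F$ or be forced to run through $\pi$, and every vertex set of size at most $n$ not contained in a single new copy of $F$ must already meet $\pi$ in a forest-like fashion. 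Establishing this no-new-cycles property for the Partite Lemma, and then checking that it survives the iteration, is the technical heart of the proof; it properly strengthens the classical partite construction, and Theorem~\ref{thm:grth1} drops out by tracking only cycles of length at most $g$. Finally, since $F$ need not be uniform, one either runs the construction simultaneously over all of its edge sizes or reduces to the uniform case one edge size at a time, which is routine once the partite machinery is in place.
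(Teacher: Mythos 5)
Your overall instinct---run a partite construction and carry some linearity/forest invariant through it---is the right high-level strategy, but what you have written is not a proof and, more importantly, it glosses over precisely the steps that the paper spends Sections~3--12 developing. Two concrete issues. First, the description of the Partite Lemma step is confused: in the Hales--Jewett construction of the paper one takes $V_i(H)=V_i(F)^n$ and a canonical bijection $\lambda\colon E(F)^n\to E(H)$, so the copies of $F$ (given by combinatorial lines) are intertwined through \emph{all} vertex classes; there is no ``auxiliary part disjoint from all copies of $F$'' along which the identifications happen. In particular the claim that ``the invariant transfers cleanly'' because identifications are confined outside the $F$-copies is simply not how the HJ step works. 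Even establishing that the HJ output is linear and that its copies are strongly induced with clean intersections already costs Lemmas~\ref{lem:hj-str}, \ref{lem:1819}, \ref{lem:cleancap}; and clean intersections are nowhere near enough for the partial-$F$-forest conclusion, since three or more copies with pairwise clean intersections can form a short ``cycle of copies'' that is not a forest. Second, and more fundamentally, you acknowledge that ruling out such spread-out cycles is ``the technical heart of the proof'' and that ``Theorem~\ref{thm:grth1} drops out'', but no mechanism is offered. The paper needs, for this step, the notion of $\Gth$ of a system of copies, the passage through pretrains/trains, the German $\GTH$, the extension process, and an intricate nested induction (the $\karo$ principles in \S\ref{sssec:Karo}). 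The amalgamation you propose (gluing fresh copies along individual pictures of $F$, rather than along constituents over edges of a vertical Ramsey hypergraph) is not the construction the paper uses, and it is not clear that the forest invariant transfers across it without essentially the same machinery.

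For comparison, the paper does \emph{not} attack Theorem~\ref{cor:19} directly. It first proves the system version, Theorem~\ref{thm:1522} (in the sharpened form of Theorem~\ref{thm:6643}), where the object returned is a pair $(H,\ccH)$ such that any $\ccN\subseteq\ccH^+$ with $|\ccN|\in[2,n']$ extends by at most $(|\ccN|-2)/(g-1)$ further copies to a forest of copies; that theorem is what carries the heavy lifting, via $\Omega^{(g)}$ and Lemma~\ref{lem:6822} (which converts $\Gth(H,\ccN^+)>|\ccN|$ into ``$\ccN$ is a forest of copies''). Theorem~\ref{cor:19} is then a short corollary: given $X\subseteq V(H)$ with $|X|\le n$, one applies Theorem~\ref{thm:1522} with $n'=\binom{n}{k}$, picks for every edge $e\subseteq X$ some $F_e\in\ccH$ containing it, sets $\ccN^-=\{F_e\colon e\subseteq X\}$ (so $|\ccN^-|\le n'$), and extends to a forest of copies $\ccN=\ccN^-\cup\ccX$; the induced subhypergraph $H[X]$ is then a partial $F$-forest. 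Your proposal misses this reduction and hence has to re-fight the entire war in a single Partite Lemma, which is exactly what makes the gap unbridgeable as stated.
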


An oversimplified way of looking at the construction of $H$ is the following:
We start with an extremely large set $\ccH$ of mutually disjoint copies of $F$. Then 
we perform many steps of glueing some copies together along edges, with the aim of
obtaining the desired hypergraph $H$. Now, on the one hand, we need to glue quite a 
lot in order to ensure the Ramsey property. On the other hand, locally we are not 
allowed to glue too much because we want to exclude short cycles of copies of $F$. 
In any case, $H$ is constructed together with a system of copies $\ccH\subseteq\binom HF$ such that $\ccH\lra (F)_r$ and we may wonder whether we can insist that all 
small subsets of $\ccH$ should be forests of copies of $F$. 

Before answering this question we need to draw attention to a somewhat bizarre 
difference between the notion of forests of copies of $F$ and the standard forests 
of edges considered in~\S\ref{subsec:colver}. 
It is well known that if a set of edges forms a forest, then so does 
each of its subsets. But, as the following counterexample demonstrates, the analogous 
statement for forests of copies fails (see Figure~\ref{fig:11}). 

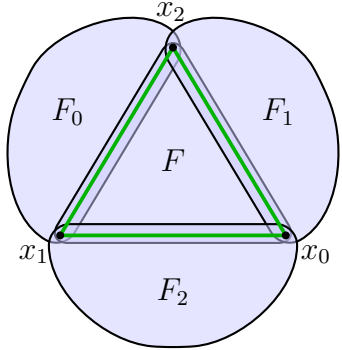
\begin{figure}[ht]
	\centering

	\begin{tikzpicture}[scale=.5]
	
	\coordinate (x1) at (-3, 0);
	\coordinate (x0) at (3,0);
	\coordinate (x2) at (0,5);

\fill [blue!20!white, rounded corners = 8pt, opacity=.5] (.4,5.1) to[out=140,in=60] (-4,4) to[out=-110, in=150] (-2.9,-.4) --cycle;

\draw [thick, rounded corners = 8pt] (.4,5.1) to[out=140,in=60] (-4,4) to[out=-110, in=150] (-2.9,-.4) --cycle;

\fill [blue!20!white, rounded corners = 8pt, opacity=.5] (0,5.4) -- (-3.36,-.2) -- (3.36,-.2)--cycle;

\draw [thick, rounded corners = 8pt] (0,5.4) -- (-3.36,-.2) -- (3.36,-.2)--cycle;

\fill [blue!20!white, rounded corners = 8pt, opacity=.5] (-.4,5.1) to[out=40,in=120] (4,4) to[out=-70, in=30] (2.9,-.4) --cycle;

\draw [thick, rounded corners = 8pt] (-.4,5.1) to[out=40,in=120] (4,4) to[out=-70, in=30] (2.9,-.4) --cycle;

\fill [blue!20!white, rounded corners = 8pt, opacity=.5] (-3.3,.3) to[out=-90,in=175] (0,-3) to[out=5, in=-90] (3.3,.3) --cycle;

\draw [thick, rounded corners = 8pt] (-3.3,.3) to[out=-90,in=175] (0,-3) to[out=5, in=-90] (3.3,.3) --cycle;

		\draw [green!70!black, line width=.5mm] (x0) -- (x1) -- (x2) -- (x0);
	
	\foreach \i in {0,1,2}{
		\fill (x\i) circle (3pt);}
	
	\node at (0,2) {$F$};
	\node at (-2.8,3.3) {$F_0$};
	\node at (2.8,3.3) {$F_1$};
	\node at (0,-1.5) {$F_2$};
	
	\node at (-.05,5.95) {$x_2$};
	\node at (-3.7,-.5) {$x_1$};
	\node at (3.8,-.5) {$x_0$};
	
	\end{tikzpicture}

	\caption{A subforest $\{F_0, F_1, F_2\}$ that fails to be a forest.}
	\label{fig:11} 
\end{figure} 
Let $F$ be a graph containing a triangle~$x_0x_1x_2$. 
For every index $i\in\ZZ/3\ZZ$ let $F_i$ be a graph isomorphic 
to $F$ having the edge~$x_{i+1}x_{i+2}$ but nothing else in common with $F$. 
Suppose that except for these intersections the copies in $\ccN=\{F, F_0, F_1, F_2\}$ 
are mutually disjoint. This enumeration exemplifies that~$\ccN$ is 
a forest of copies. However its subset $\ccN^-=\ccN\sm\{F\}$ fails 
to be such a forest. For instance, for the enumeration $\ccN^-=\{F_0, F_1, F_2\}$ the 
set $z_2=\bigl(V(F_0)\cup V(F_1)\bigr)\cap V(F_2)=\{x_0, x_1\}$ is certainly 
not in case~\ref{it:1606i} and, as it 
fails to be an edge of $F_0$ or $F_1$, it does not satisfy~\ref{it:1606ii} either.
By symmetry a similar problem arises when one enumerates~$\ccN^-$ in any other 
way. 

Summarising this discussion, we have seen that being a forest of copies is 
not preserved under taking subsets. This phenomenon explains the r\^{o}le of $\ccX$
in the most general version of the girth Ramsey theorem that follows.   

\begin{thm} \label{thm:1522}
	Given a linear hypergraph $F$ and $r, n\in \NN$ there exists a linear hypergraph~$H$
	together with a system of copies $\ccH\subseteq\binom{H}{F}$ 
	satisfying not only $\ccH\lra (F)_r$ but also the following statement:
	For every $\ccN\subseteq \ccH$ with $|\ccN|\in [2, n]$ there exists 
	a set $\ccX\subseteq \ccH$ such that $|\ccX|\le |\ccN|-2$ and $\ccN\cup\ccX$
	is a forest of copies.
\end{thm}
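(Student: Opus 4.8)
\emph{Proof plan.}\ The plan is to prove Theorem~\ref{thm:1522} by the partite construction method of Ne\v{s}et\v{r}il and R\"odl, refined so that the amalgamation steps keep the intersection structure of the system of copies tree-like. First I would pass to an \emph{$F$-partite} framework: a hypergraph $P$ whose vertices are split into parts indexed by $V(F)$, together with a distinguished family $\ccP\subseteq\binom PF$ of \emph{transversal} copies of $F$, each meeting every part in exactly one vertex, in the pattern of $F$. It is convenient to work only with systems that are \emph{clean}, meaning that any two copies in $\ccP$ meet either in at most one vertex or in exactly a common edge of $F$; this is essentially forced on us, since the case $|\ccN|=2$ of the theorem, where $|\ccX|\le 0$, demands that any two copies of $\ccH$ already form a forest of copies. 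Two ingredients then have to be set up by the standard amalgamation over a large picture: a partite Ramsey lemma asserting that for every clean $F$-partite $A$ there is a clean $F$-partite $B$ with $B\to(A)_r$ for transversal copies, and a device turning a clean $F$-partite pair $(H,\ccH)$ with $\ccH\to(F)_r$ into an honest instance of Theorem~\ref{thm:1522}. Verifying that these partite operations preserve cleanliness, linearity, and the corrected forest-of-copies property, while tracking the correction sets $\ccX$ through the vertex identifications involved, is the first layer of work.

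The heart of the argument is to build the target clean $F$-partite pair $(H,\ccH)=(P^t,\ccP^t)$ as the last term of a sequence $P^0,P^1,\dots,P^t$. One begins with $P^0$, a disjoint union of clean copies of $F$ placed on the edges of a high-girth auxiliary hypergraph; enumerates all $r$-colourings $\kappa_1,\dots,\kappa_t$ of the copies lying in one fixed sub-configuration; and at step $i$ forms $P^i$ from many disjoint copies of $P^{i-1}$ amalgamated along such a sub-configuration, the amalgamation being indexed by the copies of a suitable picture inside a large high-girth partite-Ramsey hypergraph, so that after step $i$ every $r$-colouring of $E(P^i)$ that induces $\kappa_i$ somewhere already contains a monochromatic copy from $\ccP^i$. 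Having processed all $t$ colourings one obtains $\ccP^t\to(F)_r$; this Ramsey bookkeeping is a routine instance of the partite construction.

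The genuine difficulty, which I expect to be by far the hardest part, is to show that none of the $t$ amalgamation steps spoils the local structure. Gluing copies of $P^{i-1}$ together can a priori create short cycles threading several glued copies, as well as cyclic patterns of edge-sharing among copies of $F$; to forbid these one takes all pictures and all partite-Ramsey hypergraphs occurring in the construction to have girth much larger than $n$. The invariant to propagate along $P^0,\dots,P^t$ is: $P^i$ is linear, and every $\ccN\subseteq\ccP^i$ with $|\ccN|\le n$ admits $\ccX\subseteq\ccP^i$ with $|\ccX|\le|\ccN|-2$ for which $\ccN\cup\ccX$ is a forest of copies. Carrying this across one amalgamation step is the crux: given such an $\ccN$ in $P^i$ one analyses how its members distribute among the glued copies of $P^{i-1}$, applies the inductive hypothesis inside each of them, and then stitches the resulting partial forests together along the gluing configuration, allocating the at most $|\ccN|-2$ permitted extra copies across the resulting tree of pieces exactly in the spirit of the triangle configuration in Figure~\ref{fig:11} --- and it is the large girth of the gluing skeleton that makes this tree genuinely a tree. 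Pinning down the right class of clean partite systems closed under the needed amalgamations, together with this invariant-preservation lemma, is where the real content of the girth Ramsey theorem lies.
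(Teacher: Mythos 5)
Your high-level plan --- run a partite construction over a high-girth base system, use a cleanliness-preserving partite lemma, and carry the invariant ``every $\ccN$ of size at most $n$ extends by at most $|\ccN|-2$ further copies to a forest'' through each amalgamation --- is indeed the route the paper takes in its final section, and the reduction to an $F$-partite framework with clean intersections matches the paper's setup. You also correctly flag the case $|\ccN|=2$ as forcing cleanliness.

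The genuine gap is in what ``girth much larger than $n$'' has to mean for the partite lemma. You distinguish ``short cycles threading several glued copies'' from ``cyclic patterns of edge-sharing among copies of $F$,'' but your proposed fix --- taking the pictures and the partite-Ramsey hypergraphs to have ordinary girth exceeding $n$ --- only excludes the former. The latter are \emph{cycles of copies}: a cyclic arrangement $F_1q_1F_2q_2\ldots F_\ell q_\ell$ with $\ell\le n$ of copies in the partite lemma's system $\ccH$, consecutive ones meeting in a vertex or edge. Such a configuration can be present even when the underlying hypergraph $H$ has girth far exceeding $n$, because the copies themselves are large. And it is fatal to the stitching step: if the five (say) constituents glued at stage $i$ are arranged in such a cycle, the set $\ccN$ obtained by picking one copy of $F$ from each would need strictly more than $|\ccN|-2$ corrections. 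What the amalgamation step actually needs is that the system $(H,\ccH)$ of copies in the partite lemma satisfies a Girth condition \emph{on the system of copies} --- the paper writes $\Gth(H,\ccH^+)>n$ --- which asserts that every ``tidy'' cycle of copies of order at most $n$ has a master copy and is therefore degenerate. Producing a partite lemma with $\Gth(H,\ccH^+)>n$ (and not merely $\gth(H)>n$) is precisely what the bulk of the paper accomplishes; so the gap is not just ``the real content lies elsewhere'' but that the statement of the needed ingredient in your plan is wrong.

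A second, smaller gap: even granting the right partite lemma, your stitching step is not a routine analogue of the triangle example. One has to know that the family of constituent copies $\phi[\ccN]\subseteq\ccH^+$ that the members of $\ccN$ project to is \emph{itself} a forest of copies --- and this is a nontrivial lemma (the paper proves: $\Gth(H,\ccN^+)>|\ccN|$ forces $\ccN$ to be a forest of copies, via an argument about special cycles and useful vertices and edges). Only then can one pick a terminal copy of $\phi[\ccN]$, split $\ccN$ into the parts over that terminal copy and over the rest, recurse, and re-glue the partial forests via forest-gluing lemmata. The budget $|\ccX|\le|\ccN|-2$ does close under this recursion, but the bookkeeping with ``good pairs'' $(\ccN,\phi)$ and the minimality of $|\phi[\ccN]|$ has to be set up with some care; it is not immediate from the single-triangle picture.
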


Let us emphasise again that allowing such a set $\ccX$ is necessary. For instance, 
if $F$ is a triangle and $r$ is large, then $\ccH$ needs to have a subset $\ccN$ 
consisting of five triangles arranged ``cyclically'' (see Figure~\ref{fig:12a}). 
Now $\ccN$ itself fails to be a forest of triangles, but by triangulating the pentagon 
one can create a set $\ccX$ of three further triangles such that $\ccN\cup\ccX$ is a 
forest of triangles and, hence, unavoidable in $\ccH$ (see Figure~\ref{fig:12b}). 

\begin{figure}[ht]
	\centering

		\begin{subfigure}[b]{0.4\textwidth}
			\centering
	\begin{tikzpicture}[scale=.8]
	
	\def\r{1.8cm};
	\coordinate (x0) at (142:\r);
	\coordinate (x1) at (214: \r);
	\coordinate (x2) at (286: \r);
	\coordinate (x3) at (358: \r);
	\coordinate (x4) at (70: \r);
	\coordinate (x5) at (106:3.5cm);
	\coordinate (x6) at (178:3.5cm);
	\coordinate (x7) at (250:3.5cm);
	\coordinate (x8) at (322:3.5cm);
	\coordinate (x9) at (34:3.5cm);

	\draw [thick] (x0)--(x1) --(x2) --(x3)--(x4) -- cycle;
	\draw [thick] (x4) -- (x5) -- (x0) -- (x6) -- (x1) -- (x7) -- (x2) -- (x8) -- (x3) -- (x9) -- cycle;

	\end{tikzpicture}

	\caption{A cycle of triangles\\ \phantom{(b) Adding further triangles creates a }}
	\label{fig:12a} 

	\end{subfigure}
	\hfill    
	\begin{subfigure}[b]{0.4\textwidth}
		\centering
		
			\begin{tikzpicture}[scale=.8]
			
			\def\r{1.8cm};
			\coordinate (x0) at (142:\r);
			\coordinate (x1) at (214:\r);
			\coordinate (x2) at (286: \r);
			\coordinate (x3) at (358: \r);
			\coordinate (x4) at (70: \r);
			\coordinate (x5) at (106:3.5cm);
			\coordinate (x6) at (178:3.5cm);
			\coordinate (x7) at (250:3.5cm);
			\coordinate (x8) at (322:3.5cm);
			\coordinate (x9) at (34:3.5cm);

			\draw [thick] (x0)--(x1) --(x2) --(x3)--(x4) -- cycle;
			\draw [thick] (x4) -- (x5) -- (x0) -- (x6) -- (x1) -- (x7) -- (x2) -- (x8) -- (x3) -- (x9) -- cycle;
			\draw [thick] (x3) -- (x0) -- (x2);

			\end{tikzpicture}
				\caption{Adding further triangles creates a forest}
				\label{fig:12b} 
				\end{subfigure}    
		\caption{The necessity of $\ccX$ in Theorem~\ref{thm:1522}}	\label{fig:12}
		\vspace{-1em}
	\end{figure} 
In general one needs $|\ccN|-2$ triangles 
for triangulating an $|\ccN|$-gon and, hence, the bound $|\ccX|\le |\ccN|-2$ is optimal
whenever $F$ contains a triangle. If $\gth(F)>g\ge 2$, then the upper bound on $|\ccX|$ can be 
improved to $|\ccX|\le (|\ccN|-2)/(g-1)$ (see Theorem~\ref{thm:6643} below).

We conclude this introduction by discussing some partial results towards the girth 
Ramsey theorem that have been 
obtained over the years. First, for general $k$-uniform hypergraphs 
Theorem~\ref{thm:1522} has been 
proved by Ne{\v{s}}et{\v{r}}il and R\"odl~\cite{NeRo4} for $n=2$ and their 
approach yields Theorem~\ref{thm:grth1} for $g=3$ as well. 
\index{Ne\v{s}et\v{r}il}

Most other partial results deal with the case $k=2$, i.e., with graphs. The main result 
of~\cite{NeRo4} asserts that Theorem~\ref{thm:grth1} holds for $k=2$ and $g=4$ and, 
as Ne{\v{s}}et{\v{r}}il and R\"odl point out, by means of a more elaborate version of 
their argument one can treat every $g\le 7$. However, the new difficulties arising for $g=8$
are fairly overwhelming. In general, it seems that even cycles cause more trouble 
than odd cycles and, in fact, an {\it odd-girth} version of Theorem~\ref{thm:grth1} was 
obtained in~\cite{NR79a}.

For $k=2$ and arbitrary girth R\"odl and Ruci\'nski~\cite{JAMS} proved 
probabilistically that Theorem~\ref{thm:grth1} holds for $F=C_{g+1}$, 
thus answering a question of Erd\H{o}s~\cite{Erd75}. 
Hypergraph extensions of this result follow from the work of Friedgut, 
R\"odl, and Schacht~\cite{FRS10}, and of Conlon and Gowers~\cite{CG16}.
The random graph approach was also used by Haxell, Kohayakawa, and 
\L uczak~\cite{HKL} in order to determine the smallest number of edges 
that a Ramsey graph for~$C_{g+1}$ can have.  

It appears, however, that the usual probabilistic model $G(n, p)$ is not 
suitable for proving any version of the girth Ramsey theorem for arbitrary 
graphs or hypergraphs $F$. We shall thus return to the explicit constructions 
that were developed in the early days of this area. 

\subsection*{Organisation} The next section offers an informal discussion 
of some aspects of the proof of the girth Ramsey theorem. It ends with an 
annotated table of contents, that we hope to be helpful for navigating 
through this article. 
From a logical point of view, this section can be skipped entirely. 
The remaining eleven sections, on the other hand, are, with the exception of a
small number of subsections, necessary for our argument and the discussion in 
Section~\ref{sec:overview} is intended to shed some light on the r\^{o}le they 
will play in due course. 
These exceptional omittable subsections are typically added at the end of 
some sections and have intentionally the same title ``orientation''.  
They deal with summaries of where we 
currently are, where we want to go, and how we plan to arrive there.   \section{Overview}
\label{sec:overview}

It is quite hard to summarise in a few pages how the girth Ramsey theorem is proved, but 
we would like to use this section for pointing to some problems one naturally encounters 
when thinking about it, and to some ideas we use
for solving them. Throughout this informal discussion, we assume some familiarity with 
the partite construction method invented long ago by Ne\v{s}et\v{r}il and the second 
author (see Section~\ref{subsec:PC} for a thorough introduction to this topic).
Among all the partial results we mentioned in Section~\ref{sec:intro} the perhaps deepest one
concerns graphs without four-cycles.

\begin{thm}[R\"{o}dl and Ne\v{s}et\v{r}il]\label{thm:g=4}
	For every graph $F$ with $\gth(F)>4$ and every integer $r\geq 2$ there exists a 
	graph $H$ with $\gth(H)>4$ and $H\lra(F)_r$.
\end{thm}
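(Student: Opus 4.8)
\emph{Proof proposal.} This is a special case of Theorem~\ref{thm:grth1}, but on its own it is proved by the Ne\v{s}et\v{r}il--R\"odl \emph{partite construction}, and since that method underlies everything that follows it is worth sketching how it applies here and where the real difficulty lies.

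First I would pass to a partite reformulation. Put $a = v(F)$ and fix an enumeration $V(F) = \{x_1, \dots, x_a\}$. Call a graph $G$ with a vertex partition $V(G) = V_1 \dcup \dots \dcup V_a$ and no edges inside a class an \emph{$a$-partite graph}, and call an induced embedding $\phi\colon V(F)\hookrightarrow V(G)$ with $\phi(x_i)\in V_i$ for every $i$ a \emph{picture} of $F$ in $G$. It is then enough to produce an $a$-partite graph $H$ with $\gth(H)>4$ such that every $r$-colouring of $E(H)$ contains a monochromatic picture of $F$, because forgetting the partition turns such a picture into a monochromatic induced copy of $F$ and leaves the girth unchanged.

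For the construction I would first invoke Theorem~\ref{thm:7338} to fix an $a$-uniform hypergraph $\mathcal A$ — the \emph{skeleton} — with $\gth(\mathcal A)$ larger than any prescribed bound and $\chi(\mathcal A) > r^{e(F)}$; the point of the large girth is that any two copies of $F$ placed on the edges of $\mathcal A$ then meet in at most one vertex, so their overlaps create no short cycle. Next I would build $a$-partite graphs $P^0, P^1, \dots, P^N$, all of girth $>4$, where $P^0$ is the blow-up of $\mathcal A$ carrying one picture of $F$ per edge, and each $P^i$ is obtained from $P^{i-1}$ by \emph{amalgamation}: one takes many copies of $P^{i-1}$, glues them along a common subpicture in a prescribed pattern, and applies the \emph{partite lemma}, the standard engine of the method, in the form that produces from an $a$-partite graph $B$ of girth $>4$ an $a$-partite graph $B'$ of girth $>4$ in which every $r$-colouring of the edges admits a monotone copy of $B$ with a prescribed block of its edges monochromatic. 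Climbing the tower stabilises the colours of the pictures one edge-type at a time, so that at the top every picture over an edge of $\mathcal A$ has a well-defined colour; since $\chi(\mathcal A) > r^{e(F)}$ some edge of $\mathcal A$ is monochromatic in the induced vertex-colouring, and the picture above it is a monochromatic copy of $F$. Thus $H := P^N$ works. The length $N$ of the tower and the exact subpicture amalgamated over at each step are bookkeeping I would not spell out in a sketch.

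The part that is genuinely hard — and the reason this strategy becomes ``fairly overwhelming'' at $g=8$ — is keeping $\gth(P^i)>4$ through every amalgamation. A cycle of length at most $4$ newly appearing in $P^i$ distributes its edges among several of the glued copies of $P^{i-1}$; one wants to show that either it already lies inside one such copy, which is impossible by induction, or it projects down to a cycle of length $\le 4$ in $P^{i-1}$ or in the skeleton $\mathcal A$, both of which have been arranged to have large girth, or else it is produced inside the partite-lemma step, which one suppresses by placing the new vertex classes generically. For $g\le 4$ there are only a few combinatorial types of such a short cycle, and each can be ruled out in this way; a considerably more elaborate version of the same case analysis reaches $g\le 7$. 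For larger $g$, however, a short cycle can wind through arbitrarily many copies while projecting to something that is \emph{not} short, so the crude projection invariant collapses, and controlling the girth forces the much finer structural analysis carried out in the remainder of this article.
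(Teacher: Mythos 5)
The overall shape of your sketch --- a partite construction tower with a girth-preserving partite lemma --- is the right one, and matches what the paper says about the proof from~\cite{NeRo4}. But there are three concrete gaps, and the second and third are exactly the places the paper flags as the real content of this theorem.

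First, the extraction step does not work as stated. The partite construction (\S\ref{sssec:irt}) needs a vertical system $(G,\ccG)$ with $\ccG\lra(F)_r$ for \emph{not-necessarily-induced} copies under \emph{edge} colourings, normally a large clique via Ramsey's theorem; after the tower one finds a copy of picture zero with all constituents monochromatic, which induces an $r$-colouring of $E(G)$, and then $\ccG\lra(F)_r$ finishes. Your substitute --- a $v(F)$-uniform hypergraph $\mathcal A$ with $\chi(\mathcal A)>r^{e(F)}$ --- is a vertex-colouring condition, and the clause ``some edge of $\mathcal A$ is monochromatic in the induced vertex-colouring'' does not parse: nothing in the tower induces a colouring of $V(\mathcal A)$. (High chromatic number drives the vertex-colouring Theorem~\ref{thm:1535}, but not this edge-colouring statement.) Second, the partite lemma is where the theorem is actually proved, and you simply posit it. As the paper explains around~\eqref{eq:PLG4}, producing a $C_4$-free bipartite $B_\star$ with $B_\star\lra(B)_r$ requires the $g=2$, arbitrary-uniformity case of the girth Ramsey theorem --- the Ramsey theorem for linear hypergraphs --- applied via the star decomposition~\eqref{eq:stdec}; asserting the lemma is not sketching its proof. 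Third, your girth-preservation dichotomy (inside one standard copy / projects short / suppressed by ``genericity'') misses the configuration the paper highlights in Figure~\ref{fig:N2}: two standard copies $B_1,B_2$ share two vertices $x,y$ on the constituent's music lines, and common neighbours $u\in V(B_1)$, $v\in V(B_2)$ lie on the same other music line, giving a $C_4$ whose projection degenerates (so it is \emph{not} a short cycle in the vertical system) and which lies in no single copy. ``Generic placement'' is not available here --- the partite lemma produces one specific graph --- and the actual remedy is a strengthened partite lemma whose copies have clean intersections (intersect in stars), which forces a common neighbour $z$ of $x,y$ inside $B_1\cap B_2$ and pushes the putative $C_4$ into a single standard copy of the previous picture.
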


The proof utilises a strong form of the following fact: for every linear hypergraph $M$ 
(of arbitrary uniformity $k$) and every number of colours $r$ there is a linear 
hypergraph $N$ such that $N\lra (M)_r$. 
In other words, one appeals to the case $g=2$ (and arbitrary $k$) of Theorem~\ref{thm:grth1}.
This suggests that if one wants to continue along those lines, the following firm
decisions ought to be made:
\begin{enumerate}
	\item[$\bullet$] The proof proceeds by induction on $g$.
	\item[$\bullet$] Even if ultimately one should only care about the graph case, 
		one still needs to treat all values of $k$ at the same time. 
\end{enumerate}  
Thus the ``smallest open case'' before this work was the following. 

\begin{prob}
	Extend Theorem~\ref{thm:g=4} to $3$-uniform hypergraphs.
\end{prob}

The solution to this problem is roughly as complicated as the proof 
of Theorem~\ref{thm:grth1} itself and we would like to focus on another aspect
of our proof strategy based on induction on~$g$ first. Suppose 
we have already understood everything about $g=99$ and that we now want to handle
a graph $F$ with $\gth(F)>100$. So we need to construct a graph~$H$ with $\gth(H)>100$
possessing a set of copies $\ccH\subseteq\binom HF$ satisfying e.g.\ $\ccH\lra (F)_2$.
Imagine that some four of the copies in $\ccH$ can be arranged cyclically such that 
any two consecutive copies share a vertex but nothing more 
(see Figure~\ref{fig:N1A}). 
If for each $i\in\ZZ/4\ZZ$ the distance from $q_{i-1}$ to $q_{i}$ within the copy $F_i$
was at most $20$, then we could build a cycle in $H$ through $q_1$, $q_2$, $q_3$,
and $q_4$ whose length would be at most $4\times 20=80$, contrary to $\gth(H)>100$.
We will therefore devote some effort into ensuring that the system $\ccH$ we are about 
to construct contains no four-cycles of copies as in Figure~\ref{fig:N1A}.
As a matter of fact, the existence of Ramsey systems $\ccH$ without these cycles
can be proved as soon as $\gth(F)>4$ and we shall obtain this together with our 
treatment of the case $g=4$. In other words, the strength of the statement we shall 
actually prove by induction on $g$ is somewhere between Theorem~\ref{thm:grth1}
and Theorem~\ref{thm:1522}.  

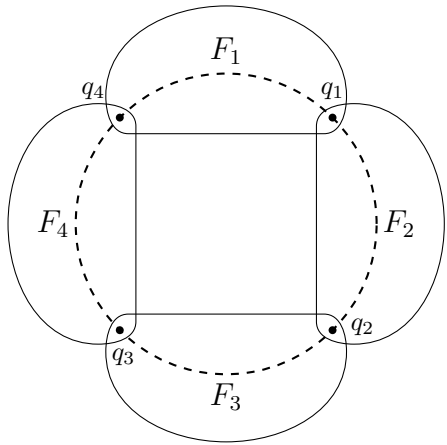
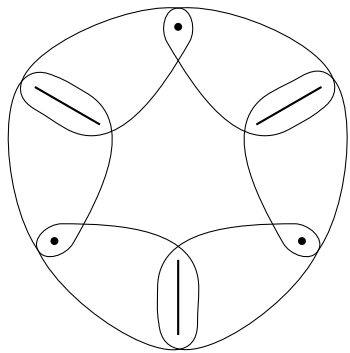
\begin{figure}[ht]
	\centering	
	
		\begin{subfigure}[b]{0.45\textwidth}
		\centering
	
			\begin{tikzpicture}[scale=1]

	\foreach \i in {1,...,4}{
	\coordinate (q\i) at (\i*90-45:2cm);
	\fill (q\i) circle (1.5pt);
}

	\draw [dashed, thick] (0,0) circle (2cm);

\draw  (-1.6,1.7) [out=-90, in=180] to (-1.3,1.2)--(1.3,1.2) [out=0, in = -90] to (1.6, 1.7) [out=90, in = 0] to (0,2.9) [out=180, in = 90] to (-1.6,1.7);

\draw [rotate=90] (-1.6,1.7) [out=-90, in=180] to (-1.3,1.2)--(1.3,1.2) [out=0, in = -90] to (1.6, 1.7) [out=90, in = 0] to (0,2.9) [out=180, in = 90] to (-1.6,1.7);

\draw [rotate=180] (-1.6,1.7) [out=-90, in=180] to (-1.3,1.2)--(1.3,1.2) [out=0, in = -90] to (1.6, 1.7) [out=90, in = 0] to (0,2.9) [out=180, in = 90] to (-1.6,1.7);

\draw [rotate=-90] (-1.6,1.7) [out=-90, in=180] to (-1.3,1.2)--(1.3,1.2) [out=0, in = -90] to (1.6, 1.7) [out=90, in = 0] to (0,2.9) [out=180, in = 90] to (-1.6,1.7);
	
	\node at ($(q1)+(0,.35)$) {\footnotesize $q_1$};
	\node at ($(q2)+(-.35,.35)$) {\footnotesize $q_4$};
	\node at ($(q3)+(.05,-.35)$) {\footnotesize $q_3$};
	\node at ($(q4)+(.4,.05)$) {\footnotesize $q_2$};
	
	\node at (90:2.3cm) {$F_1$};
	\node at (180:2.3cm) {$F_4$};
	\node at (-90:2.3cm) {$F_3$};
	\node at (0:2.3cm) {$F_2$};
			
			\end{tikzpicture}
		 \caption{A four-cycle of copies}
		\label{fig:N1A} 		
	\end{subfigure}
\hfill    
\begin{subfigure}[b]{0.5\textwidth}
\centering
			\begin{tikzpicture}[scale=1]

			\foreach \i in {0,1,2}{
				\coordinate (a\i) at (\i*120+30:1.2cm);
				\coordinate (b\i) at (\i*120+30:2.2cm);
				\coordinate (c\i) at (\i*120-30:1.9);
			}

		\foreach \i in {0,1,2}{
		\draw [rotate=120*\i](0,2.15) [out =180, in= 120] to (-.15,1.7)[out=-60, in=210] to (1.5, .55) [out=30, in=220] to (1.9, .8) [out=35, in=5] to (0, 2.15);
	
		\draw [rotate=120*\i] (0,2.15) [out =0, in= 60] to (.15,1.7) [out=240, in=-30] to (-1.5, .55) [out=150, in=-30] to (-1.9, .8) [out=150, in=175] to (0, 2.15);
	}		

				\foreach \i in {0,1,2}{
				\draw [thick](a\i)--(b\i);
				\fill (c\i) circle (1.5pt);
			}

		\end{tikzpicture}
	\caption{A six-cycle with alternating connectors}
	\label{fig:N1B} 		
\end{subfigure}

				\caption{Two cycles of copies.}
				\label{fig:N1}

	\end{figure} 
For now we just want to say that due to the Ramsey property some copies in the 
systems $\ccH\subseteq \binom HF$
we plan to exhibit need to intersect in entire edges and not just in mere vertices.  
Thus we can also form cycles of copies of a type slightly more general than what we saw 
in Figure~\ref{fig:N1A}. 
That is, we have to allow the so-called {\it connectors} between consecutive 
copies to be either vertices or edges (see Figure~\ref{fig:N1B}). 
An important idea in our analysis of 
cycles of copies, which seems to be new, is that the difficulty of ``excluding'' such cycles 
does not only depend on their {\it length} (i.e., the number of copies they contain), 
but also on the number of times vertex connectors and edge connectors alternate: 
Having many such alternations will turn out to be helpful. 
Thus the cycle drawn in Figure~\ref{fig:N1B} is the ``easiest'' possible cycle of 
length~$6$ and we shall deal with it before approaching the cycles of length~$4$ 
depicted in Figure~\ref{fig:N1A}.
In fact, for general linear hypergraphs cycles of six copies with alternating connectors are
the most complex cycles that could have been handled with existing methods (even though 
apparently this has never been noticed before), while cycles with four vertex connectors 
require some genuinely new ideas. We shall discuss cycles of copies further in 
Section~\ref{subsec:AG}. In particular, we will introduce there our notion of the 
$\Gth$ of a system of copies (spelled with a capital $G$), 
which takes alternations of connectors into account. 
The ensuing Section~\ref{subsec:PCAG} elaborates on the fact that
partite constructions sometimes increase the $\Gth$ of our Ramsey systems of copies. 

\centerline{$*\,\, *\,\, * $}

Let us return to Theorem~\ref{thm:g=4}. The partite construction method essentially
reduces its proof to the bipartite case. Thus the main ingredient is a partite lemma preserving the girth assumption, i.e., a statement of the following form:
\begin{equation}
  \tag{$\star$}\label{eq:PLG4}
  \parbox{\dimexpr\linewidth-6em}{    \strut
    \it
    For every bipartite graph $B$ with $\gth(B)>4$ and every integer $r\geq 2$ 
	there is a bipartite graph $B_\star$ with $\gth(B_\star)>4$ and $B_\star\lra(B)_r$.    \strut
  }
\end{equation}
It is the proof of this statement where the Ramsey theorem for linear hypergraphs
is required. Denoting the vertex classes of $B$ by $X$ and $Y$ we may assume that 
all vertices in~$X$ have the same degree $d\ge 2$. (If this is not the case already,
we attach some pendant edges to the vertices in $X$). Now $B$ can be viewed as a
union of edge-disjoint stars $K_{1, d}$ whose centres are in $X$. This so-called 
{\it star decomposition} of $B$ gives rise to a $d$-uniform hypergraph $F$ on $Y$ whose 
edges correspond to the non-central vertices of those stars, i.e.,
\begin{equation}\label{eq:stdec}
	F=(Y,\{N(x)\colon x\in X\})\,.
\end{equation}
The assumed absence of $4$-cycles in $B$ translates into the linearity of $F$.

It does not help much to apply the Ramsey theorem for linear hypergraphs directly 
to~$F$ itself. Rather, one first constructs an auxiliary, linear $k$-uniform hypergraph $G$, 
where $k=(d-1)r+1$ has the property that for every $r$-colouring of a $k$-set
there is a monochromatic $d$-set (Schubfachprinzip). Moreover, $G$ is constructed together
with some linear order of its vertex set and to render this notationally visible 
we shall write~$G_<$ instead of $G$. We now apply an ordered version of the induction 
hypothesis\footnote{Keeping track of vertex orderings will often be important in the sequel;
but these orderings never complicate the proofs, so we do not treat them carefully in this
outline.} to~$G_<$ with $r^k$ colours, thus obtaining a linear, ordered, $k$-uniform 
hypergraph $H_<$ such that $(H_<)\lra (G_<)_{r^k}$. Going back to bipartite graphs 
we take for every edge $e$ of $H$ a new vertex $v_e$ and join it to all members of $e$.
This yields a bipartite graph $B_\star$ with vertex 
classes $\{v_e\colon e\in E(H_<)\}$ and~$V(H_<)$. Since $H_<$ is linear, $B_\star$ 
is $C_4$-free and it turns out that~$B_\star\lra (B)_r$ can be guaranteed by an 
appropriate choice of~$G_<$. 
Roughly, this is because every colouring $\gamma\colon E(B_\star)\lra [r]$
associates with every edge $e\in E(H_<)$ one of $r^k$ possible colour patterns, namely 
the $k$-tuple consisting of the colours of the $k$ edges from $v_e$ to $e$. 
Owing to the construction of $H_<$ there is some copy $G^\star_<$ of $G_<$ all of whose 
edges receive the same colour pattern. By our choice of $k$ this colour pattern contains 
some colour $\rho_\star\in [r]$ at least $d$ times and an appropriate construction 
of $G_<$ ensures that the copy $G^\star_<$ we have just found contains a monochromatic 
copy of $F$ (whose colour is $\rho_\star$). This is all we want to 
say about the proof of the partite lemma at this juncture. An abstract version of the
construction which leads us from $B$ via $F$, $G_<$, and $H_<$ to $B^\star$, called the 
{\it extension process}, will be discussed in Section~\ref{subsec:EP}.

\centerline{$*\,\, *\,\, * $}

There is one further subtlety in the proof of Theorem~\ref{thm:g=4} we would like 
to emphasise here. The question is why or under what circumstances a partite construction 
based on the above partite lemma does not create four-cycles. The worry one might 
have is whether forbidden cycles can arise when amalgamating graphs 
(previous ``pictures'') over a bipartite graph (as in the partite construction). 
For instance in Figure~\ref{fig:N2} we see two copies $B_1$, $B_2$
of the bipartite graph we subjected to the partite lemma, and two vertices $x$, $y$
belonging to both of them. If at the previous stage of the construction $x$ and $y$ 
had common neighbours $u$, $v$, then altogether a four-cycle $x-u-y-v$ arises.  

\begin{figure}[ht]
	\centering

			\begin{tikzpicture}[scale=1]
			
			\coordinate (x) at (-.5, 0);
			\coordinate (y) at (.5,0);
			\coordinate (z) at (0, 1.5);
			\coordinate (u) at (-2.5, 2.5);
			\coordinate (v) at (2.5,2.5);
			
			\foreach \i in {x,y,z,u,v}{
			\fill (\i) circle (1.5pt);}
		
			\draw [blue, thick]  (-3.5,.07)[out=95, in=-105] to (-3.36,1.43)--(.48,1.43) [out=-75, in = 85] to (.68,.07)--cycle;
			
			\fill [blue!40, opacity=.5]  (-3.5,.05)[out=95, in=-105] to (-3.36,1.45)--(.48,1.45) [out=-75, in = 85] to (.68,.05)--cycle;
			
			\draw [blue, thick]  (3.5,.1)[out=85, in=-75] to (3.37,1.4)--(-.48,1.4) [out=-105, in = 95] to (-.68,.1)--cycle;
			
			\fill [blue!40, opacity=.5]  (3.5,.1)[out=85, in=-75] to (3.36,1.4)--(-.48,1.4) [out=-105, in = 95] to (-.68,.1)--cycle;

		\node [below] at (x) {$x$};
		\node [below] at (y) {$y$};
		\node [above] at (z) {$z$};
		\node [above] at (u) {$u$};
		\node [above] at (v){$v$};
		
		\draw [thick](-4.5,0) -- (4.5,0);
			\draw [thick](-4.5,1.5) -- (4.5,1.5);
			\draw (x)--(u)--(y)--(v)--(x);
			
			\draw [dashed] (x)--(z)--(y);
			
			\draw [red!80!black, thick]  (-1.5, 3.5) [out=180, in=170] to (-2.7, -1) [out = -10, in = 190] to (-.1, -1) [out = 10, in = 0] to (-1.5, 3.5);
			
				\draw [red!80!black, thick]  (1.5, 3.5) [out=180, in=170] to (.1, -1) [out = -10, in = 190] to (2.7, -1) [out = 10, in = 0] to (1.5, 3.5);
			
			\node [blue] at (-4,.8) {$B_1$};
				\node [blue] at (4,.8) {$B_2$};
			
		\end{tikzpicture}

				\caption{Four-cycles in amalgamations}
				\label{fig:N2}

	\end{figure}
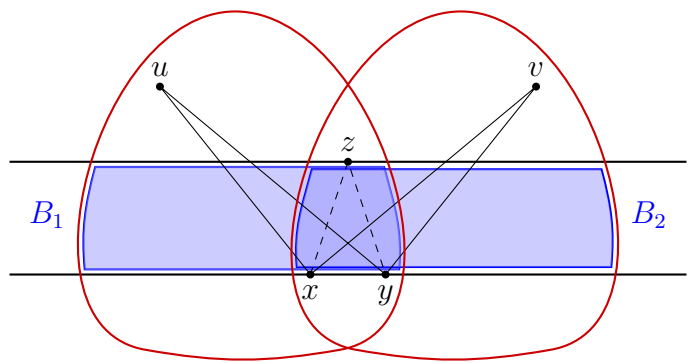 
This problem was addressed in~\cite{NeRo4} by working with a very strong form of the 
Ramsey theorem for linear hypergraphs, which has the effect that we may assume 
$B_1$, $B_2$ to intersect in a star. More precisely, the partite lemma actually
comes together with a system of copies $\ccB\subseteq \binom{B_\star}{B}$ such 
that $\ccB\lra (B)_r$ and any two distinct copies $B_1$, $B_2$ are either disjoint, 
or they intersect in a single vertex, or they intersect in a star. 
Thus the situation in Figure~\ref{fig:N2} requires the existence of a common neighbour $z$
of $x$, $y$ belonging to the intersection of $B_1$ and $B_2$. 
If $z\ne u$ then $x-u-y-z$ is a four-cycle in the left copy of the previous picture,
which is absurd. Similarly, if $z=u$, then the cycle $x-u-y-v$ is entirely contained 
in the right copy of the previous picture.   
There are some other potential cases of four-cycles after the amalgamation that one 
needs to exclude before declaring Theorem~\ref{thm:g=4} to be proved, but we shall 
not go into such details here. 

\centerline{$*\,\,*\,\,*$}

When attempting to extend these ideas to $3$-uniform hypergraphs
one actually does not need a partite lemma applicable to {\it all}
tripartite $3$-uniform 
hypergraphs without $4$-cycles. This is due to the fact that when executing the 
decisive partite construction one can ensure that all $3$-partite hypergraphs one has to 
handle are highly structured. Indeed, the $3$-partite hypergraphs the initial picture 
is composed of are just matchings. Moreover, using standard techniques one can ensure 
that the next picture is composed of $3$-partite hypergraphs built from many copies 
of such matchings by gluing them together along single vertices belonging to the same 
vertex class (see Figure~\ref{fig:N3A}). 
When one proceeds to the next picture, many copies of such hypergraphs get glued 
together along single vertices sitting on another common vertex class 
(see Figure~\ref{fig:N3B}), and so it goes on. Later we shall call partite
hypergraphs that can arise in this manner {\it trains} and we shall prove a partite
lemma for trains by induction on their {\it height}, i.e., the number of times the 
gluing process needs to be iterated in their formation. 

\begin{figure}[ht]
	\centering	
	
		\begin{subfigure}[b]{0.39\textwidth}
		\centering
	
			\begin{tikzpicture}[scale=.55]
	
\draw (-4.5,1) -- (4.5,1);
\draw (-4.5,0)--(4.5,0);
\draw (-4.5,-1)--(4.5,-1);
\phantom{\draw [ultra thick](-4,-1.35)--(3,1.35);}

\draw [red!80!black, thick ] (0,1)--(0,-1);
\draw [red!80!black, thick ] (1,1)--(1,-1);
\draw [red!80!black, thick ] (-1,1)--(-1,-1);

\draw [red!80!black, thick ] (-1,1)--(-2,-1);
\draw [red!80!black, thick ] (-2,1)--(-3,-1);
\draw [red!80!black, thick ] (-3,1)--(-4,-1);

\draw [red!80!black, thick ] (1,1)--(2,-1);
\draw [red!80!black, thick ] (2,1)--(3,-1);
\draw [red!80!black, thick ] (3,1)--(4,-1);

\draw [rounded corners, green!40!black, ultra thick] (-1.2, 1.15)--(-1.2,-1.15)--(1.2,-1.15)--(1.2,1.15)--cycle;

\draw [rounded corners, green!40!black,ultra thick] (-3.15, 1.15)--(-4.25,-1.15)--(-1.85,-1.15)--(-0.75,1.15)--cycle;

\draw [rounded corners, green!40!black, ultra thick] (3.15, 1.15)--(4.25,-1.15)--(1.85,-1.15)--(0.75,1.15)--cycle;

			\end{tikzpicture}
		 \caption{}
		\label{fig:N3A} 		
	\end{subfigure}
\hfill    
\begin{subfigure}[b]{0.6\textwidth}
\centering
			\begin{tikzpicture}[scale=.55]

			\draw (-8.5,1) -- (8.5,1);
			\draw (-8.5,0)--(8.5,0);
			\draw (-8.5,-1)--(8.5,-1);
			
	\draw [red!80!black, thick ] (.5,1)--(.5,-1);
	\draw [red!80!black, thick ] (-.5,1)--(-.5,-1);		
	\draw [red!80!black, thick ] (-.5,1)--(-1.5,-1);
	\draw [red!80!black, thick ] (-1.5,1)--(-2.5,-1);		
	\draw [red!80!black, thick ] (.5,1)--(1.5,-1);
	\draw [red!80!black, thick ] (1.5,1)--(2.5,-1);
			
\draw [shift=({5,0}), red!80!black, thick ] (.5,1)--(.5,-1);
\draw [shift=({5,0}), red!80!black, thick ] (-.5,1)--(-.5,-1);		
\draw [shift=({5,0}), red!80!black, thick ] (-.5,1)--(-1.5,-1);
\draw [shift=({5,0}), red!80!black, thick ] (-1.5,1)--(-2.5,-1);		
\draw [shift=({5,0}), red!80!black, thick ] (.5,1)--(1.5,-1);
\draw [shift=({5,0}), red!80!black, thick ] (1.5,1)--(2.5,-1);

\draw [shift=({-5,0}), red!80!black, thick ] (.5,1)--(.5,-1);
\draw [shift=({-5,0}), red!80!black, thick ] (-.5,1)--(-.5,-1);		
\draw [shift=({-5,0}), red!80!black, thick ] (-.5,1)--(-1.5,-1);
\draw [shift=({-5,0}), red!80!black, thick ] (-1.5,1)--(-2.5,-1);		
\draw [shift=({-5,0}), red!80!black, thick ] (.5,1)--(1.5,-1);
\draw [shift=({-5,0}), red!80!black, thick ] (1.5,1)--(2.5,-1);

		\draw [rounded corners, green!40!black, ultra thick] (-.75, 1.15)--(-.75,-1.15)--(.75,-1.15)--(.75,1.15)--cycle;
		\draw [rounded corners, green!40!black,ultra thick] (-1.75, 1.15)--(-2.85,-1.15)--(-1.2,-1.15)--(-0.25,1.15)--cycle;		
		\draw [rounded corners, green!40!black, ultra thick] (1.75, 1.15)--(2.85,-1.15)--(1.2,-1.15)--(0.25,1.15)--cycle;
			
		\draw [shift=({5,0}), rounded corners, green!40!black, ultra thick] (-.75, 1.15)--(-.75,-1.15)--(.75,-1.15)--(.75,1.15)--cycle;
		\draw [shift=({5,0}), rounded corners, green!40!black,ultra thick] (-1.75, 1.15)--(-2.85,-1.15)--(-1.2,-1.15)--(-0.25,1.15)--cycle;		
		\draw [shift=({5,0}), rounded corners, green!40!black, ultra thick] (1.75, 1.15)--(2.85,-1.15)--(1.2,-1.15)--(0.25,1.15)--cycle;
			
		\draw [shift=({-5,0}), rounded corners, green!40!black, ultra thick] (-.75, 1.15)--(-.75,-1.15)--(.75,-1.15)--(.75,1.15)--cycle;
		\draw [shift=({-5,0}), rounded corners, green!40!black,ultra thick] (-1.75, 1.15)--(-2.85,-1.15)--(-1.2,-1.15)--(-0.25,1.15)--cycle;		
		\draw [shift=({-5,0}), rounded corners, green!40!black, ultra thick] (1.75, 1.15)--(2.85,-1.15)--(1.2,-1.15)--(0.25,1.15)--cycle;	
			
			\draw [rounded corners, violet!80!blue, ultra thick] (-1.9,1.35)--(-3.2,-1.35)--(3.2,-1.35)--(1.9,1.35)--cycle;
			\draw [shift=({5,0}), rounded corners, violet!80!blue, ultra thick] (-1.9,1.35)--(-3.2,-1.35)--(3.2,-1.35)--(1.9,1.35)--cycle;
			\draw [shift=({-5,0}), rounded corners, violet!80!blue, ultra thick] (-1.9,1.35)--(-3.2,-1.35)--(3.2,-1.35)--(1.9,1.35)--cycle;
			
		\end{tikzpicture}
	\caption{}
	\label{fig:N3B} 		
\end{subfigure}

				\caption{Two 3-uniform trains}
				\label{fig:N3}

	\end{figure}
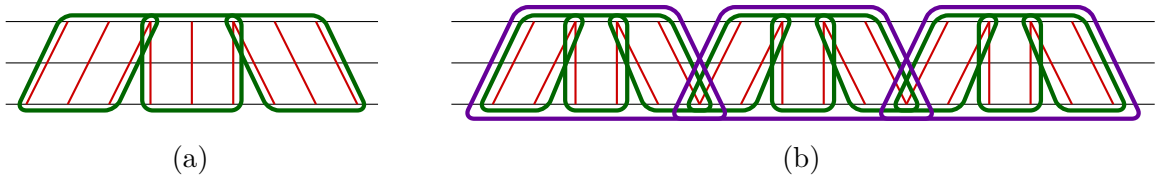 
For reasons of comparison it should perhaps be mentioned that in the $2$-uniform 
case the distinction between bipartite graphs with train structure and general $C_4$-free
bipartite graphs does not exist. This is because the star decomposition we used earlier
(recall~\eqref{eq:stdec})
exemplifies that all $C_4$-free bipartite graphs are trains. 
In the $3$-uniform case, however, the difference is quite pronounced and ``most'' tripartite
hypergraphs without four-cycles fail to admit any train structure. 
   
Dealing with trains gets quickly somewhat technical, but 
fortunately one can analyse a great portion 
of the general case by just looking at much simpler structures, 
which we call {\it pretrains}.
These are hypergraphs equipped with an equivalence relation on their set of edges.
The corresponding equivalence classes will be called the {\it wagons} of the pretrain.
For instance, in the proof of the partite lemma for $C_4$-free bipartite graphs one can 
regard the given bipartite graph $F$ as a pretrain, whose wagons are stars 
with centres in $X$. 
   
It turns out that pretrains are the natural structures for describing the 
extension process the proof of the partite lemma~\eqref{eq:PLG4} was based on. 
The intersections of copies in stars we saw earlier will generalise to intersections 
in entire wagons. It will be a nontrivial, yet important problem to improve this to 
copies intersecting at most in edges. Of course one hopes to accomplish this by means 
of a further partite construction and, therefore, we shall study the behaviour of 
pretrains in such constructions in Section~\ref{subsec:PCEP}. 

In an attempt to point to something essential we are still missing at this 
moment we would like to formulate a concrete test problem. We shall write pretrains 
in the form~$(F, \equiv^F)$, where $F$ is a hypergraph and $\equiv^F$ refers to an 
equivalence relation on $E(F)$. We say that $(F, \equiv^F)$ is an {\it induced 
subpretrain} of another pretrain $(H, \equiv^H)$ if $F$ is an induced subhypergraph 
of $H$ and, moreover, any two edges of $F$ are $\equiv^F$-equivalent if and only if they 
are $\equiv^H$-equivalent. Given a pretrain $(F, \equiv^F)$ and a number of colours $r$
we will be interested in constructing a pretrain $(H, \equiv^H)$ and a 
system $\ccH$ of copies of $(F, \equiv^F)$ in $(H, \equiv^H)$ such that, 
in an obvious sense, the partition relation $\ccH\lra (F, \equiv^F)$ holds.
There are several known methods for obtaining such systems $\ccH$ (see e.g., 
Lemma~\ref{lem:0059} or Proposition~\ref{prop:ofpt}).  

But now assume, in addition, that the given pretrain $(F, \equiv^F)$ is {\it linear} in the 
sense that 
\begin{enumerate}
	\item[$\bullet$] the hypergraph $F$ is linear 
	\item[$\bullet$] and any two wagons intersect in at most one vertex. 
\end{enumerate}
We would like to find $(H, \equiv^H)$ and $\ccH$ as above such that, moreover, 
\begin{enumerate}
	\item[$\bullet$] the pretrain $(H, \equiv^H)$ is linear in the same sense 
	\item[$\bullet$] and any two copies in $\ccH$ intersect at most in an edge. 
\end{enumerate}

Partite constructions are very good at meeting the second requirement, but they appear 
to have difficulties to maintain the linearity of the pretrains. In fact, the only way 
we know for achieving the linearity of $(H, \equiv^H)$ is to use the extension process 
instead, but this process is incapable of producing systems of copies intersecting in 
less than a wagon. 

This conundrum will be resolved by the introduction of a new concept, 
called German $\GTH$. It turns out that this generalises the $\Gth$ of 
systems of copies we studied earlier (before delving into pretrains) and 
interacts very well with partite constructions. $\GTH$ itself will be introduced 
and studied in Section~\ref{subsec:EPAG}. Two of its most essential properties 
will then be demonstrated in Section~\ref{sec:1912}. First, the extension process 
translates ordinary $\Gth$ properties of hypergraph constructions into the 
corresponding $\GTH$ properties of pretrain constructions. Second, $\GTH$ 
properties are indestructible by partite constructions. 

We will then have all the ingredients necessary for proving a strong form
of Theorem~\ref{thm:grth1} in Sections~\ref{sec:trains}\,--\,\ref{subsec:0136}, 
though it is still technically challenging to combine all these pieces together
(cf.\ Theorem~\ref{thm:6653}). 
Deducing Theorem~\ref{thm:1522} and Theorem~\ref{cor:19} from this result will 
then be comparatively routine (see Section~\ref{sec:paradise}). 

\medskip

\centerline{\sc Annotated table of contents} 

\medskip

\centerline{\hyperref[subsec:PC]{\bf Section 3. The partite construction method}}

\noindent\hyperref[sssec:HJ]{\bf \S3.1 --- Partite lemmata} 

\smallskip

\hfill
\begin{minipage}{0.9\textwidth}
	[We introduce the Hales-Jewett construction $\HJ_r(F)=(H, \ccH)$, 
	where $F$, $H$ are $k$-partite {$k$-uniform} hypergraphs and $\ccH\lra (F)_r$.]
\end{minipage}

\smallskip

\noindent\hyperref[sssec:pict]{\bf \S3.2 --- Pictures}

	\smallskip

	\hfill
	\begin{minipage}{0.9\textwidth}
		[We define pictures $(\Pi, \ccP, \psi_\Pi)$ over systems $(G, \ccG)$.
		Then we explain picture zero and partite amalgamations.]
	\end{minipage}
	
	\smallskip

\noindent\hyperref[sssec:irt]{\bf \S3.3 --- The induced Ramsey theorem}

	\smallskip

	\hfill
	\begin{minipage}{0.9\textwidth}
		[We illustrate the partite construction method by proving the induced Ramsey
		theorem for hypergraphs and introduce the notation $\PC(\Phi, \Xi)$.]
	\end{minipage}
	
	\smallskip

\noindent\hyperref[sssec:vindya]{\bf \S3.4 --- Strong inducedness}

	\smallskip

	\hfill
	\begin{minipage}{0.9\textwidth}
		[We define strong inducedness and show that the partite lemma $\HJ$ delivers
		strongly induced copies. Moreover, we define systems with clean intersections 
		and prove that the clean partite lemma $\CPL=\PC(\HJ, \HJ)$ as well as the 
		construction $\Omega^{(2)}=\PC(\Rms, \CPL)$ produce systems of strongly induced 
		copies with clean intersections.]
	\end{minipage}
	
	\smallskip

\noindent\hyperref[sssec:ord]{\bf \S3.5 --- Ordered constructions}

	\smallskip

	\hfill
	\begin{minipage}{0.9\textwidth}
		[If $\Phi$ is an ordered Ramsey construction and $\Xi$ is a partite lemma, 
		then $\PC(\Phi, \Xi)$ is ordered.]
	\end{minipage}
	
	\smallskip

\noindent\hyperref[sssec:fpart]{\bf \S3.6 --- $f$-partite hypergraphs}

	\smallskip

	\hfill
	\begin{minipage}{0.9\textwidth}
		[Similarly, if $\Phi$ is $f$-partite, then so is $\PC(\Phi, \Xi)$.]
	\end{minipage}
	
	\smallskip

\noindent\hyperref[sssec:lin]{\bf \S3.7 --- Linearity}

	\smallskip

	\hfill
	\begin{minipage}{0.9\textwidth}
		[The partite lemmata $\HJ$, $\CPL$, and the construction $\Omega^{(2)}$ are linear.
		We also give two sufficient conditions for $\PC(\Phi, \Xi)$ to be linear.]
	\end{minipage}

\smallskip

\noindent\hyperref[subsec:2346]{\bf \S3.8 --- $A$-intersecting hypergraphs}

	\smallskip

	\hfill
	\begin{minipage}{0.9\textwidth}
		[An $f$-partite hypergraph is $A$-intersecting for a subset $A$ of its index set,
		if the intersections of distinct edges of $F$ are contained in vertex classes 
		whose indices are in $A$. The constructions $\HJ$, $\CPL$, and $\Omega^{(2)}$
		preserve this property.]
	\end{minipage}

	\bigskip\goodbreak

\centerline{\hyperref[subsec:AG]{\bf Section 4. Girth considerations}}

\noindent\hyperref[sssec:GSS]{\bf \S4.1 --- Set systems and girth}

	\smallskip

	\hfill
	\begin{minipage}{0.9\textwidth}
		[We recapitulate standard facts on the classical girth concept.]
	\end{minipage}

	\smallskip

\noindent\hyperref[sssec:coc]{\bf \S4.2 --- Cycles of copies}

	\smallskip

	\hfill
	\begin{minipage}{0.9\textwidth}
		[We introduce several central notions: cycles of copies, tidiness, master copies,
		and, finally, the $\Gth$ of a linear system of hypergraphs.]
	\end{minipage}

	\smallskip

\noindent\hyperref[sssec:157]{\bf \S4.3 --- Semitidiness}

	\smallskip

	\hfill
	\begin{minipage}{0.9\textwidth}
		[We study a relaxation of tidiness leading to an equivalent 
		definition of $\Gth$.]
	\end{minipage}

	\smallskip

\noindent\hyperref[sssec:9902]{\bf \S4.4 --- Orientation}

	\smallskip

	\hfill
	\begin{minipage}{0.9\textwidth}
		[We promise the existence of a Ramsey construction $\Omega^{(g)}$ applicable 
		to ordered $f$-partite hypergraphs $F$ with $\gth(F)>g$ that generates 
		systems $(H, \ccH)$ with $\Gth(H, \ccH^+)>g$.]
	\end{minipage}

	\bigskip
	
\centerline{\hyperref[subsec:PCAG]{\bf Section 5. Girth in partite constructions}}

\noindent\hyperref[sssec:1510]{\bf \S5.1 --- From $(g, g)$ to $g$}

	\smallskip

	\hfill
	\begin{minipage}{0.9\textwidth}
		[We study constructions applicable to hypergraphs $F$ with $\gth(F)>g$. 
		If a partite lemma yields linear systems $(H, \ccH)$ with $\Gth(H, \ccH^+)>(g, g)$,
		then by means of a partite construction we can improve $(g, g)$ to $g$.]
	\end{minipage}
	
	\smallskip
	
\noindent\hyperref[sssec:1514]{\bf \S5.2 --- From $g-1$ to $(g, g)$}

	\smallskip

	\hfill
	\begin{minipage}{0.9\textwidth}
		[Similarly, if for $g\ge 3$ a partite lemma yields systems $(H, \ccH)$ 
		with $\gth(H)>g$ and ${\Gth(H, \ccH^+)>g-1}$, 
		then we can strengthen $g-1$ to $(g, g)$.]
	\end{minipage}
	
	\bigskip
	
\centerline{\hyperref[subsec:EP]{\bf Section 6. The extension process}}

\noindent\hyperref[sssec:1128]{\bf \S6.1 --- Pretrains}

	\smallskip

	\hfill
	\begin{minipage}{0.9\textwidth}
		[We introduce systems of pretrains.]
	\end{minipage}
	
	\smallskip
	
\noindent\hyperref[sssec:1448]{\bf \S6.2 --- Extensions}

	\smallskip

	\hfill
	\begin{minipage}{0.9\textwidth}
		[We define extensions of pretrains and the notation $(F, \equiv^F)\ltimes (X, W)$.]
	\end{minipage}
	
	\smallskip
	
\noindent\hyperref[{sssec:lp}]{\bf \S6.3 --- Linear pretrains}

	\smallskip

	\hfill
	\begin{minipage}{0.9\textwidth}
		[Given linear ordered constructions $\Phi$, $\Psi$ for hypergraphs we 
		define the construction 
		$\Ext(\Phi, \Psi)$, which is applicable to linear ordered pretrains.]
	\end{minipage}

	\bigskip
	
\centerline{\hyperref[subsec:PCEP]{\bf Section 7. Pretrains in partite constructions}}

\noindent\hyperref[sssec:311]{\bf \S7.1 --- A partite lemma for pretrains}

	\smallskip

	\hfill
	\begin{minipage}{0.9\textwidth}
		[The partite lemma $\HJ$ applies to pretrains.]
	\end{minipage}
	
	\smallskip
	
\noindent\hyperref[sssec:2340]{\bf \S7.2 --- Pretrain pictures}

	\smallskip

	\hfill
	\begin{minipage}{0.9\textwidth}
		[We introduce pretrain pictures $(\Pi, \equiv, \ccP, \psi)$.]
	\end{minipage}
	
	\smallskip\goodbreak
	
\noindent\hyperref[sssec:0021]{\bf \S7.3 --- Partite amalgamations}

	\smallskip

	\hfill
	\begin{minipage}{0.9\textwidth}
		[We discuss partite amalgamations of pretrain pictures.]
	\end{minipage}
	
	\smallskip
	
\noindent\hyperref[sssec:314]{\bf \S7.4 --- Proof of Proposition~\ref{prop:ofpt}}

	\smallskip

	\hfill
	\begin{minipage}{0.9\textwidth}
		[If $\Phi$ is a Ramsey construction for hypergraphs and $\Xi$ denotes a 
		partite lemma for pretrains, 
		then $\PC(\Phi, \Xi)$ applies to pretrains.]
	\end{minipage}
	
	\smallskip
	
\noindent\hyperref[subsec:7o]{\bf \S7.5 --- Orientation}

	\smallskip

	\hfill
	\begin{minipage}{0.9\textwidth}
		[We ask for a construction that given a linear pretrain and a number of colours
		yields a linear Ramsey system of pretrains with clean intersections.]
	\end{minipage}	
	
\bigskip
			
\centerline{\hyperref[subsec:EPAG]{\bf Section 8. Basic properties
of \texorpdfstring{$\GTH$}{Girth}}}

\noindent\hyperref[sssec:1657]{\bf \S8.1 --- Basic concepts}

	\smallskip

	\hfill
	\begin{minipage}{0.9\textwidth}
		[We introduce the paraphernalia of German $\GTH$: big cycles, acceptability, 
		and supreme copies.]
	\end{minipage}
	
	\smallskip
	
\noindent\hyperref[sssec:1758]{\bf \S8.2 --- Special cases}

	\smallskip

	\hfill
	\begin{minipage}{0.9\textwidth}
		[We characterise $\GTH$ when there are only edge copies. If the wagons of $\equiv$ 
		are single edges, then $\GTH$ is essentially the same as $\Gth$.]
	\end{minipage}
	
	\smallskip
	
\noindent\hyperref[sssec:1815]{\bf \S8.3 --- Two further facts}

	\smallskip

	\hfill
	\begin{minipage}{0.9\textwidth}
		[We generalise some results on $\Gth$ to $\GTH$.]
	\end{minipage}
	
	\bigskip

\centerline{\hyperref[{sec:1912}]{\bf Section 9. $\GTH$ in constructions}}

\noindent\hyperref[subsec:ExtL]{\bf \S9.1 --- The extension lemma}

	\smallskip

	\hfill
	\begin{minipage}{0.9\textwidth}
		[If $\Phi$ yields strongly induced copies, then $\Gth$ properties of $\Psi$
		translate into $\GTH$ properties of $\Ext(\Phi, \Psi)$.]
	\end{minipage}
	
	\smallskip
	
\noindent\hyperref[subsec:GTHpres]{\bf \S9.2 --- $\GTH$ preservation}

	\smallskip

	\hfill
	\begin{minipage}{0.9\textwidth}
		[If $\Phi$ yields strongly induced copies, then $\PC(\Phi, \Xi)$ inherits $\GTH$ 
		properties from the partite lemma $\Xi$.]
	\end{minipage}

\smallskip
	
\noindent\hyperref[subsec:9o]{\bf \S9.3 --- Orientation}

	\smallskip

	\hfill
	\begin{minipage}{0.9\textwidth}
		[We solve the problem from~\S\ref{subsec:7o} using German $\GTH$.]
	\end{minipage}	
		
	\bigskip

\centerline{\hyperref[{sec:trains}]{\bf Section 10. Trains}}

\noindent\hyperref[sssec:3133]{\bf \S10.1 --- Quasitrains and parameters}

	\smallskip

	\hfill
	\begin{minipage}{0.9\textwidth}
		[We define quasitrains (hypergraphs equipped with a nested sequence of 
		equivalence relations on their sets of edges) and trains.]
	\end{minipage}
	
	\smallskip
	
\noindent\hyperref[sssec:3135]{\bf \S10.2 --- More German girth}

	\smallskip

	\hfill
	\begin{minipage}{0.9\textwidth}
		[We look at $\ggth$ and $\GTH$ notions applicable 
		to (systems of) quasitrains.]
	\end{minipage}
	
	\smallskip
			
\noindent\hyperref[sssec:Karo]{\bf \S10.3 --- Diamonds}

	\smallskip

	\hfill
	\begin{minipage}{0.9\textwidth}
		[We formulate a very strong Ramsey theoretic principle for trains
		and announce some implications, thus 
		explaining the inductive structure of the proof of the girth Ramsey theorem.]
	\end{minipage}
	
	\bigskip
		
\centerline{\hyperref[{sec:TUD}]{\bf Section 11. Trains in the extension process}}

\noindent\hyperref[sssec:3136]{\bf \S11.1 --- Extensions of trains}

	\smallskip

	\hfill
	\begin{minipage}{0.9\textwidth}
		[Preparing a generalisation of the extension process to quasitrains
		we define and study so-called $1$-extensions of quasitrains.]	
	\end{minipage}
	
	\smallskip

\noindent\hyperref[subsec:0135]{\bf \S11.2 --- A generalised extension lemma}

\smallskip

\hfill
\begin{minipage}{0.9\textwidth}
[Assuming $\karo_{\seq{g}}$ we describe a construction $\ups$ applicable to
trains of height $|\seq{g}|+1$.] 
\end{minipage} 

\bigskip

\centerline{\hyperref[{subsec:0136}]{\bf Section 12. Trains in partite constructions}}

\noindent\hyperref[sssec:4137]{\bf \S12.1 --- Quasitrain constructions}

	\smallskip

	\hfill
	\begin{minipage}{0.9\textwidth}
		[If $\Phi$ is a Ramsey construction for hypergraphs and $\Xi$ denotes a 
		partite lemma for quasitrains, then $\PC(\Phi, \Xi)$ is a Ramsey construction 
		for quasitrains.]
	\end{minipage}
	
\smallskip
	
\noindent\hyperref[subsec:tipc]{\bf \S12.2 --- Train amalgamations}

	\smallskip

	\hfill
	\begin{minipage}{0.9\textwidth}
		[When we want to handle trains (as opposed to mere quasitrains), we will 
		always start with a train construction $\Phi$. 
		We prove a general amalgamation lemma for train pictures.]
	\end{minipage}	
	
	\smallskip

\noindent\hyperref[subsec:napl]{\bf \S12.3 --- Amenable partite lemmata}

\smallskip

\hfill
\begin{minipage}{0.9\textwidth}
[We decompose the problem of proving $\karo_{\seq{g}}$ into two simpler tasks.] 
\end{minipage}

\smallskip

\noindent\hyperref[subsec:ngr]{\bf \S12.4 --- Girth resurrection}

\smallskip

\hfill
\begin{minipage}{0.9\textwidth}
[Continuing~\S\ref{subsec:0135} we show that $\PC(\ups, \PC(\ups, \ups))$ 
exemplifies~$\karo_{(2)\circ\seq{g}}$.] 
\end{minipage}

\smallskip

\noindent\hyperref[sssec:1746]{\bf \S12.5 --- Revisability}

\smallskip

\hfill
\begin{minipage}{0.9\textwidth}
[We convert $\karo_{(g)^m\circ\seq{g}}$ into a partite lemma applicable to certain
trains $\seq{F}$ with $\ggth(\seq{F})>(g+1)\circ\seq{g}$ that we call $m$-revisable.] 
\end{minipage} 

\smallskip

\noindent\hyperref[sssec:1748]{\bf \S12.6 --- Train constituents}

\smallskip

\hfill
\begin{minipage}{0.9\textwidth}
[Performing a diagonal partite construction we complete the proof of 
Proposition~\ref{prop:0139}.] 
\end{minipage} 

\bigskip

\centerline{\hyperref[sec:paradise]{\bf Section 13. Paradise}}

\noindent\hyperref[subsec:6142]{\bf \S13.1 --- Forests of copies}

\smallskip

\hfill
\begin{minipage}{0.9\textwidth}
[We relate $\Gth$ to forests of copies by showing that $\Gth(H, \ccN^+)>|\ccN|$ 
implies~$\ccN$ to be a forest of copies (see Lemma~\ref{lem:6822}).] 
\end{minipage} 

\smallskip

\noindent\hyperref[subsec:2142]{\bf \S13.2 --- The final partite construction}

\smallskip

\hfill
\begin{minipage}{0.9\textwidth}
[We prove a generalisation of Theorem~\ref{thm:1522}, stating that if 
$\gth(F)>g$, then the upper bound on the number $|\ccX|$ of additional copies can be 
improved to $\frac{|\ccN|-2}{g-1}$.
Finally, we deduce Theorem~\ref{cor:19} (on the local structure of Ramsey hypergraphs).] 
\end{minipage}  \section{The partite construction method} 
\label{subsec:PC}

A common point of departure for many results in structural Ramsey theory is 
Ramsey's theorem~\cite{Ramsey30} \index{Ramsey's theorem} asserting that given 
natural numbers $k$, $r$, and~$m$ there exists a natural number $M$ for which 
the partition relation 
\[
	M\longrightarrow (m)^k_r
\]
holds. Let us recall that this means: no matter how the edges of a complete 
$k$-uniform hypergraph $K^{(k)}_M$ on $M$ vertices get coloured with $r$ colours,
there will always exist a monochromatic $K^{(k)}_m$. Thus if a $k$-uniform 
hypergraph $F$ and a number of colours $r$ are given, we may apply Ramsey's 
theorem to $m=v(F)$ and obtain a large clique $H=K^{(k)}_M$ arrowing $F$
in the sense that the collection $\ccH=\binom{H}{F}_\nni$ of subhypergraphs of~$H$ 
isomorphic to $F$ has the partition property
\[
	  \ccH\lra(F)_r\,,
\]
where ``$\nni$'' abbreviates ``not necessarily induced''. 
The defect of non-induced copies can be remedied by an
iterative amalgamation method called the {\it partite construction}. Introduced by 
Ne\v{s}et\v{r}il and R{\"o}dl in~\cite{NeRo3a}, this method 
has since then been utilised in a variety of different contexts 
(see e.g.~\cites{BNRR, HN19, LR06, NeRo1, NeRo4}). 
\index{Ne\v{s}et\v{r}il}
The present work heavily relies on the partite construction method as well and 
we shall therefore provide an overview in the remainder of this section.  

\subsection{Partite lemmata} 
\label{sssec:HJ}

The $k$-partite $k$-uniform hypergraphs $F$ occurring in this article will always be
accompanied by $k$-element {\it index sets} $I$ and by fixed partitions 
\[
	V(F)=\bigdcup_{i\in I} V_i(F)
\]
of their vertex sets such that every 
edge $e\in E(F)$ intersects each {\it vertex class}~$ V_i(F)$ in exactly one vertex. 
The most natural 
choice for $I$ is, of course, the set~$[k]$. Later on, however, when we shall be
executing partite constructions, we encounter $k$-partite hypergraphs whose vertex 
classes are already indexed 
differently and in order to avoid excessive relabelling it seems advisable to 
allow general index sets $I$ from the very beginning.

Given two $k$-partite $k$-uniform hypergraphs $F$ and $H$ with the same index set $I$ 
we say that $F$ is a {\it partite subhypergraph} of $H$ if $F$ is a subhypergraph 
of $H$ in the ordinary sense and, moreover, $V_i(F)\subseteq V_i(H)$ holds for 
every $i\in I$. The collection of all {\it partite copies} of~$F$ in~$H$ is denoted 
by $\binom{H}{F}^\pt$.

Suppose now that a $k$-partite $k$-uniform hypergraph $F$ with index set $I$ as well as 
a number of colours $r$ are given. A {\it partite lemma} \index{partite lemma}
is a construction delivering a $k$-partite $k$-uniform hypergraph $H$ with the same index 
set $I$ as well as a system of copies $\ccH\subseteq\binom{H}{F}^\pt$ 
such that $\ccH\longrightarrow (F)_r$.

For the existence of such hypergraphs $H$ we could simply refer to 
the literature. But on several later occasions we need to check that a particular 
construction based on the Hales-Jewett theorem~\cites{HJ63} 
\index{Hales-Jewett theorem} has certain 
additional properties (see~\cite{Sh329} for a beautiful alternative 
proof of the Hales-Jewett theorem).
For this reason, we briefly sketch the so-called {\it Hales-Jewett construction}. 
\index{Hales-Jewett construction}

In the degenerate case, where $F$ has no edges, we simply set $H=F$ and $\ccH=\{F\}$. 
Otherwise we appeal to the Hales-Jewett theorem and obtain a natural number $n$  
such that every $r$-colouring of $E(F)^n$ contains a monochromatic 
combinatorial line. 
The vertex classes of $H$ are defined by $V_i(H)=V_i(F)^n$ 
for every $i\in I$. 
The set~$E(H)$ is constructed together with a {\it canonical bijection} 
$\lambda\colon E(F)^n\longrightarrow E(H)$. 
Given an $n$-tuple $(e_1, \ldots, e_n)$ of edges of~$F$, we write $x_{\nu i}$ for the 
common vertex of $e_\nu$ and~$V_i(F)$, where~$\nu\in [n]$ and $i\in I$, and we define 
the edge $e=\lambda(e_1, \ldots, e_n)$ by demanding for every~$i\in I$ that $e$ 
intersects~$V_i(H)$ in its vertex $(x_{1i}, \ldots, x_{ni})$.

It is well known (and follows from Lemma~\ref{lem:hj-str} below) that every 
combinatorial line $L\subseteq E(F)^n$
gives rise to an induced subhypergraph $F_L$ of $H$ which is isomorphic to $F$ 
and satisfies $E(F_L)=\{\lambda(\seq{e})\colon \seq{e}\in L\}$. Therefore 
\[
	\ccH=\bigl\{
			F_L\colon L \text{ is a combinatorial line in } E(F)^n
		\bigr\}
\]
is a system of copies of $F$ in $H$ and, owing to our choice of $n$ involving 
the Hales-Jewett theorem, it has the desired property 
\[
	\ccH\longrightarrow (F)_r\,.
\]

To facilitate later references to this construction we set $\HJ_r(F)=(H, \ccH)$, 
where $\HJ$ abbreviates ``Hales-Jewett''.

\subsection{Pictures} 
\label{sssec:pict}

Suppose that $F$ and $G$ are two $k$-uniform hypergraphs and 
that $\ccG\subseteq\binom{G}{F}_\nni$ is a system of subhypergraphs of $G$, which 
are isomorphic to $F$ but not necessarily induced. A {\it picture over $(G, \ccG)$}
\index{picture}
is a triple $(\Pi, \ccP, \psi)$ consisting of a $k$-uniform hypergraph $\Pi$, 
a system~$\ccP\subseteq \binom{\Pi}{F}$ of induced copies of $F$ in $\Pi$, 
and a hypergraph homomorphism $\psi\colon \Pi\longrightarrow G$ mapping the 
copies in $\ccP$ onto copies in $\ccG$ (see Figure~\ref{fig:24}). 
Occasionally we shall encounter pictures carrying an additional 
structure compatible with additional structure that might be present on $G$. 

\begin{figure}[ht]
	\centering
	
	\begin{tikzpicture}[scale=.9]

	\draw (0,3.2) -- (0,-3.2);
	\node at (-.7, 3.2) {\large $G$};
	
	\draw (2.5,2)--(6.5,2);
	
	\draw [rounded corners=20] (2,3) rectangle (7,-3);
	
	\draw [<-, thick] (.5,0)--(1.8,0);
	
	\node at (1.2, .3) {$\psi$};
	
	\node at (4.5, 2.3) {$\psi^{-1}(x)$};
	
	\node at (7.5,3) {\large $\Pi$};
	
	\node at (6,.5) [blue!80!black] {\LARGE $\ccP$};
	
	\newcommand{\edge}[3]{
	\draw [rotate around={#3:(#1,#2)}] (#1,#2) ellipse (.15 cm and .5cm);
	\fill [rotate around={#3:(#1,#2)}, color=blue!50!white, opacity=.2] (#1,#2) ellipse (.15 cm and .5cm);
	}
	
	\edge{4}{.5}{-40}
	\edge{3.5}{.5}{35}
	\edge{3.75}{-.2}{0}
	\edge{3.55}{1.15}{-40}
	
	\edge{3.5}{-1.8}{-35}
	\edge{4}{-1.8}{35}
	\edge{4.5}{-1.8}{-35}
	
	\foreach \y in {1.15, .5, -.15, -1.8}{
	\draw [thick] (0,\y) ellipse (.15 cm and .45cm);
	}

	\draw [decorate,
	decoration = {brace, mirror}] (-.4,1.6)--(-.4,-2.25);
	\fill (0,2) circle (2pt);
	\node at (-.3, 2) {$x$};
	
	\node at (-.7,-.3) {$\ccG$};
	
	\end{tikzpicture}

	\caption{Picture $(\Pi, \ccP, \psi)$ over $(G, \ccG)$.}
	\label{fig:24} 
\end{figure} 
Given a picture $(\Pi, \ccP, \psi)$ we call the preimages of the vertices of $G$
under $\psi$ {\it music lines} \index{music line} and put 
\[
	V_x=V_x(\Pi)=\{v\in V(\Pi)\colon \psi(v)=x\}
\]
for every $x\in V(G)$.
Observe that $\{V_x\colon x\in V(G)\}$ partitions $V(\Pi)$, while the edges of~$\Pi$
cross each music line at most once. When visualising a picture we usually draw the music 
lines {\it horizontally} \index{horizontal} and we imagine $(G, \ccG)$ to be drawn 
{\it vertically} \index{vertical} next to 
the picture. Associated with every edge $e\in E(G)$ we have its {\it constituent}~$\Pi^e$,
\index{constituent} which is the $k$-uniform $k$-partite hypergraph with index set $e$ 
and vertex partition $\{V_x\colon x\in e\}$ whose set of edges is defined by 
\[
	E(\Pi^e)=\{f\in E(\Pi)\colon \psi[f]=e\}\,.
\]

A partite construction over $(G, \ccG)$ always commences with the 
corresponding {\it picture zero} (see Figure~\ref{fig:21}). \index{picture zero}
This is a picture $(\Pi_0, \ccP_0, \psi_{0})$
having for each copy $F_\star\in \ccG$ a unique copy $\wh{F}_\star$ projected 
to $F_\star$ by $\psi_0$. It is required that 
\begin{enumerate}
	\item[$\bullet$] for distinct copies $F_\star, F_{\star\star}\in \ccG$ 
		the corresponding copies $\wh{F}_\star, \wh{F}_{\star\star}\in \ccP$
		be vertex-disjoint 
	\item[$\bullet$] and that every vertex or edge of $\Pi_0$ belong to 
		one of the copies 	$\wh{F}_\star\in \ccP$.
\end{enumerate}     
Evidently such a picture zero can always be constructed and, in fact, it is uniquely 
determined up to isomorphism; it has $|\ccG|\cdot |V(F)|$ vertices and $|\ccG|\cdot |E(F)|$
edges. 

\begin{figure}[ht]
	\centering
	
	\begin{tikzpicture}[scale=.8]
	
	\foreach \i in {1,..., 5}{
		\draw[thick] (0,\i)--(15,\i);
		\foreach \j in {1,...,20}{
			\def \k{\j*.71};
			\coordinate (x\j\i) at (\k,\i);
					}
	}

\foreach \i/\j/\k/\l/\o/\p in {1/4/2/5/2/3, 3/4/4/5/4/2, 5/4/6/5/6/1, 7/3/8/5/8/2, 
9/3/10/5/10/1, 11/2/12/5/12/1, 13/3/14/4/14/2, 15/3/16/4/16/1, 17/2/18/4/18/1, 
19/2/20/3/20/1}{ 
	\draw [green!50!black, thick] (x\i\j) -- (x\k\l) -- (x\o\p) -- cycle;
	\fill [green!50!black] (x\i\j) circle (2pt);
	\fill [green!50!black] (x\k\l) circle (2pt);
	\fill [green!50!black] (x\o\p) circle (2pt);
}	
	\end{tikzpicture}

	\caption{Picture zero over $(K_5, \ccG)$, where $\ccG = \binom{K_5}{K_3}$.}
	\label{fig:21} 
\end{figure} 
Now suppose that $\Pi^e$ is a constituent of some picture $(\Pi, \ccP, \psi_\Pi)$
and that, in addition, we have a $k$-partite $k$-uniform hypergraph $H$ with index set $e$ 
together with a system of induced
partite copies $\ccH\subseteq\binom{H}{\Pi^e}^\pt$. We may then define a new picture 
$(\Sigma, \ccQ, \psi_\Sigma)$ by means of the following {\it partite amalgamation} 
(see Figure~\ref{fig:22}). \index{partite amalgamation}
Start with $\Sigma^e=H$ and extend each copy $\Pi_\star^e\in \ccH$ to its own 
copy $\Pi_\star$ of $\Pi$. 
These extensions are to be performed as disjointly as possible, so for any two 
distinct copies $\Pi_\star^e, \Pi_{\star\star}^e\in \ccH$ it is demanded that 
their extensions 
$\Pi_\star$, $\Pi_{\star\star}$ satisfy
$V(\Pi_\star)\cap V(\Pi_{\star\star})=V(\Pi_\star^e)\cap V(\Pi_{\star\star}^e)$.
From each of these extended copies $\Pi_\star$ there is a canonical isomorphism to
the original hypergraph~$\Pi$, which allows us to pull~$\ccP$ and~$\psi_\Pi$ 
back onto~$\Pi_\star$.
Thereby we obtain a collection of pictures 
$\bigl\{(\Pi_\star, \ccP_\star, \psi_{\Pi_\star})\colon \Pi_\star^e\in \ccH\bigr\}$ 
each of which is isomorphic to $(\Pi, \ccP, \psi_\Pi)$. The new 
picture $(\Sigma, \ccQ, \psi_\Sigma)$ is defined to be their union with~$H$. Explicitly
\allowdisplaybreaks
\begin{alignat*}{2}
	V(\Sigma)&=\bigcup_{\Pi^e_\star\in \ccH} V(\Pi_\star)\cup V(H)\,, & \quad \quad\quad
	E(\Sigma)&=\bigcup_{\Pi^e_\star\in \ccH} E(\Pi_\star)\cup E(H)\,, \\ 
		\ccQ &= \bigcup_{\Pi^e_\star\in \ccH} \ccP_\star\,, 
	\qquad \text{and}  &
	\psi_\Sigma&=\bigcup_{\Pi^e_\star\in \ccH} \psi_{\Pi_\star}\cup \psi_H\,,
\end{alignat*}
where, as expected, $\psi_H\colon V(H)\lra e$ is the map sending for each $x\in e$
all vertices in~$V_x(H)$ to $x$.
One checks immediately that the copies in $\ccQ$ are induced, 
that $\psi_\Sigma$ is a hypergraph homomorphism from $\Sigma$ to $G$,
and, finally, that the copies in $\ccQ$ project appropriately. Therefore
$(\Sigma, \ccQ, \psi_\Sigma)$ is indeed a picture.  
The copies~$\Pi_\star$ of $\Pi$ obtained by extending the copies $\Pi^e_\star$ in $\ccH$
are called the {\it standard copies} of $\Pi$ in~$\Sigma$. 
\index{standard copy (in partite constructions)}
In the sequel we shall indicate the partite amalgamation just explained by 
writing 
\[
	(\Sigma, \ccQ, \psi_\Sigma)=(\Pi, \ccP, \psi_\Pi)\conc (H, \ccH)\,.
\]

\begin{figure}[ht]
	\centering
	
	\begin{tikzpicture}[scale=.8]
	
	\def\w{1.7};
	\def\h{4};
	
	\foreach \x in {-4.8,-1.6,1.6,4.8}{
	
		\draw[black,thick] (-\x,0) ellipse (\w cm and \h cm);
}
	\draw (-8,4.5) -- (-8,-4.5);
	
		\foreach \i/\j in {-6.37/-3.2, -3.2/-.005,.005/3.2, 3.2/6.37}{
		\fill [red!90!white, opacity = .2] (\j,1.5) [out =-82, in=82] to (\j,-1.5)--(\i,-1.5)[out =98, in=-98] to (\i,1.5)-- cycle;
	}
	
	\draw (-7.2,1.5) rectangle (7.2,-1.5);
	
	\foreach \i/\j in {-6.37/-3.2, -3.2/-.005,.005/3.2, 3.2/6.37}{
	\draw [red!75!black, ultra thick] (\j,1.5) [out =-82, in=82] to (\j,-1.5)--(\i,-1.5)[out =98, in=-98] to (\i,1.5)-- cycle; 
}

	\foreach \i in {-.4,.4,-1.2,1.2}{
		\draw (-7,\i)--(7,\i);
		\fill (-8,\i) circle (3pt);
	}

	\draw[black,thick] (-8,0) ellipse (.3 cm and 1.6 cm);
	
	\node at (-8.5,.8) {$e$};
	\node at (-8,-5) {$(G, \ccG)$};

	\foreach \i in {-.4,.4,1.2,2,2.7}{	
		\foreach \j in {-5.5,4.2,-2.3,1}{
			\fill (\j,\i) [blue!80!black] circle (3pt);
			\draw [blue!80!black, thick] (\j,-.4)--(\j,2.7);
		}
	}

\foreach \i in {-.4,.4,-1.2,-2,-2.7}{	
	\foreach \j in {-4.2,-1,5.5,2.3}{
		\fill (\j,\i) [blue!80!black] circle (3pt);
		\draw [blue!80!black, thick] (\j,.4)--(\j,-2.7);
	}
}

\foreach \i in {-2.7,2.7, -2,2}{
	\draw (-5.9,\i) -- (-3.8,\i);
	\draw (-2.7,\i) -- (-.6,\i); 
	\draw (5.9,\i) -- (3.8,\i);
	\draw (2.7,\i) -- (.6,\i); 
}

\node at (-.8,5.1) [blue!75!black]{Copies of $F$};

\node at (7.3,-2){$(H,\ccH)$};

\foreach \i/\j in {-5.3/-2.2, 4/.5} 
	\draw [blue!80!black, ultra thick, <-] (\i,3) -- (\j,4.8);
\foreach \i/\j in {-2.2/-1.1, .8/-.5} 
	\draw [blue!80!black, ultra thick, <-] (\i,3) -- (\j,4.8);

	\node at (0,-5.3) {Standard copies of $(\Pi, \ccP, \psi_\Pi)$};
	
	\foreach \i/\j in {-4/-3, 4/3} 
	\draw [ultra thick, <-] (\i,-3.8) -- (\j,-4.8);
	\foreach \i/\j in {-1.2/-1, 1.2/1} 
	\draw [ultra thick, <-] (\i,-4) -- (\j,-4.8);

	\end{tikzpicture}

	\caption{Partite amalgamation $(\Pi, \ccP, \psi_\Pi)\conc (H, \ccH)$}
	\label{fig:22} 
\end{figure} 
\subsection{The induced Ramsey theorem}
\label{sssec:irt}

To perform a partite construction, one needs a Ramsey theoretic result that gets 
applied vertically, as well as a partite lemma that one iteratively utilises horizontally. 
As an illustration of the method, we brief\-ly describe one of the proofs of 
the {\it induced Ramsey theorem} from~\cite{NeRo5}. \index{induced Ramsey theorem}

Given a $k$-uniform 
hypergraph $F$ as well as a number of colours $r$ we intend to construct a hypergraph
$H$ as well as a system of induced copies $\ccH\subseteq \binom{H}{F}$ such that 
$\ccH\longrightarrow (F)_r$. We begin by taking any pair $(G, \ccG)$ consisting 
of a $k$-uniform hypergraph $G$ and a system of not necessarily induced copies
$\ccG\subseteq\binom{G}{F}_\nni$ satisfying 
\begin{equation} \label{eq:213a}
	\ccG\longrightarrow (F)_r\,.
\end{equation}

For instance, by Ramsey's theorem we could take $G$ to be a sufficiently large clique 
and set $\ccG=\binom{G}{F}_\nni$. The precise choice of~$(G, \ccG)$ is, however, 
quite irrelevant and all that matters is the partition relation~\eqref{eq:213a}.       

Let $\{e(1), \ldots, e(N)\}$ enumerate the edges of $G$ in any order and 
perform the following recursive construction of pictures 
$(\Pi_\alpha, \ccP_\alpha, \psi_\alpha)_{0\le \alpha\le N}$ over $(G, \ccG)$. 
Begin with picture zero $(\Pi_0, \ccP_0, \psi_0)$.
If for some positive $\alpha\le N$ the picture 
$(\Pi_{\alpha-1}, \ccP_{\alpha-1}, \psi_{\alpha-1})$ has just been constructed, 
we apply any partite lemma to its constituent $\Pi_{\alpha-1}^{e(\alpha)}$, 
thus getting a pair~$(H_\alpha, \ccH_\alpha)$ with
\begin{equation} \label{eq:213b}
	\ccH_\alpha\subseteq \binom{H_\alpha}{\Pi^{e(\alpha)}_{\alpha-1}}^\pt
	\quad \text{ and } \quad
	\ccH_\alpha\longrightarrow \bigl(\Pi_{\alpha-1}^{e(\alpha)}\bigr)_r\,.
\end{equation}
For instance, at this moment we may just employ the Hales-Jewett construction 
from~\S\ref{sssec:HJ} and set
\[
	(H_\alpha, \ccH_\alpha)=\HJ_r\bigl(\Pi_{\alpha-1}^{e(\alpha)}\bigr)\,,
\]
but any other choice validating~\eqref{eq:213b} is equally fine.
Having selected $(H_\alpha, \ccH_\alpha)$ we define the next picture by
\[
	(\Pi_\alpha, \ccP_\alpha, \psi_\alpha)
	=
	(\Pi_{\alpha-1}, \ccP_{\alpha-1},\psi_{\alpha-1})
	\conc
	(H_\alpha, \ccH_\alpha)\,.
\]

It is known that the final picture $(\Pi_N, \ccP_N, \psi_N)$ has the property
\begin{equation}\label{eq:5544}
	\ccP_N\longrightarrow (F)_r\,,
\end{equation}
i.e., that the system $(\Pi_N, \ccP_N)$ is as requested by the induced 
Ramsey theorem. Indeed, an easy proof by induction on $\alpha$ using~\eqref{eq:213b}
in the induction step shows that if for $0\le \alpha\le N$ the edges of $\Pi_\alpha$
get coloured with $r$ colours, then there is a copy $(\Pi^\star_0, \ccP^\star_0)$ 
of picture zero with the property that for every $\beta\in [\alpha]$ the constituent 
of $\Pi^\star_0$ 
over $e(\beta)$ is monochromatic. In particular, if one colours the edges of the final 
picture with $r$ colours, then there exists a copy of picture zero all of whose 
constituents are monochromatic. The colours of these constituents project to an edge 
colouring of the vertical hypergraph~$G$ and~\eqref{eq:213a} leads to a monochromatic copy of $F$ in $\ccP_N$. Thereby~\eqref{eq:5544} is proved,

As the proof of the girth Ramsey theorem involves a great number of nested 
applications of the partite construction method, it will safe a considerable 
amount of space to establish an appropriate terminology.

Recall that in \S\ref{sssec:HJ}
we denoted the Hales-Jewett construction proving the partite lemma by $\HJ$. 
Similarly, if we appeal to Ramsey's theorem to obtain a system 
$(G, \ccG)$ with ${\ccG\longrightarrow (F)_r}$ whose copies not necessarily induced,  
then we shall write 
\[
	\Rms_r(F)=(G, \ccG)\,. 
\]
Explicitly, this means that $G$ is a 
sufficiently large clique whose size depends on $v(F)$ and~$r$, 
and that $\ccG=\binom{G}{F}_\nni$. So both $\Rms$ and $\HJ$ are examples 
of Ramsey theoretic {\it constructions}. 

Pairs $(H, \ccH)$ consisting of a hypergraph $H$ and a set $\ccH$ of 
subhyergraphs of $H$ will be called {\it systems of hypergraphs}
\index{system of hypergraphs}
and the members of $\ccH$ will be referred to as copies. 
It is neither required that the copies 
of a system be mutually isomorphic nor that they be induced. 

In general, if a {\it construction} $\Phi$ is applied to a hypergraph $F$ and a number 
of colours~$r$, then it delivers a system of hypergraphs $\Phi_r(F)=(H, \ccH)$ 
with ${\ccH\subseteq\binom{H}{F}_\nni}$ and $\ccH\longrightarrow (F)_r$. 
\index{construction}
Not every construction is applicable to every hypergraph. For instance,~$\HJ_r(F)$
is only defined if $F$ is a $k$-partite $k$-uniform hypergraph for some $k\ge 2$.
It will be convenient to call any construction having this property and which, 
moreover, delivers systems of induced partite copies, a {\it partite 
lemma}.   
\index{partite lemma}

The partite construction method is, strictly speaking, not a construction in the 
sense of the previous paragraph, but rather an {\it operation} capable of producing 
a new construction from two given ones. 
More precisely, for a construction~$\Phi$ and a partite lemma $\Xi$ the construction 
$\PC(\Phi, \Xi)$ is defined as follows. Given a hypergraph~$F$ as well a number
of colours $r$, one sets $(G, \ccG)=\Phi_r(F)$ and generates a sequence of pictures
$(\Pi_\alpha, \ccP_\alpha, \psi_\alpha)_{0\le \alpha\le N}$ over $(G, \ccG)$ in 
the same way as above but ensuring the partition relation~\eqref{eq:213b}
by means of the partite lemma $\Xi$, i.e., by setting 
\[
	(H_\alpha, \ccH_\alpha)=\Xi_r\bigl(\Pi_{\alpha-1}^{e(\alpha)}\bigr)\,.
\]
Finally one defines 
\[
	\PC(\Phi, \Xi)_r(F)=(\Pi_N, \ccP_N)\,,
\]
where $N=e(G)$ denotes the index of the final picture. For instance, the construction $\PC(\Rms, \HJ)$
produces systems of hypergraphs that verify the induced Ramsey theorem.  

Let us emphasise at this moment that it is not always easy to foresee
the class of hypergraphs a construction of the form $\PC(\Phi, \Xi)$ 
is defined on. For instance, let $\Lambda$ be a {\it linear partite lemma}, 
i.e., a partite lemma that is only applicable to linear $k$-partite $k$-uniform
hypergraphs and delivers linear systems of $k$-partite $k$-uniform hypergraphs. 
\index{linear partite lemma}
One might then come up with the idea of using the construction $\PC(\Rms, \Lambda)$ 
in order to obtain a Ramsey theorem for linear hypergraphs, i.e., the case $g=2$
of Theorem~\ref{thm:grth1}. The main problem with this approach is that at this 
level of generality it is not completely clear whether the constituents of our pictures 
stay linear throughout the construction. There are two known ways of addressing 
this difficulty, one of which is explained in \S\ref{sssec:lin} later. 

\subsection{Strong inducedness} 
\label{sssec:vindya}
 
Let us explore some properties of constructions derivable from~$\HJ$ and $\Rms$
by means of the partite construction method. 
 
\begin{dfn}
	Given a hypergraph $G$ and a subhypergraph $F$ of $G$ we say that $F$ is 
	{\it strongly induced} \index{strongly induced}
	in $G$ and write $F\Str G$ if for every edge $e\in E(G)$
	there exists an edge $f\in E(F)$ with $e\cap V(F)\subseteq f$.
\end{dfn}

Evidently every strongly induced subhypergraph is, in particular, induced. 
One also checks easily that every hypergraph is a strongly induced subhypergraph 
of itself and that passing to a strongly induced subhypergraph is 
transitive, i.e., that $F\Str G\Str H$ implies $F\Str H$. 

\begin{lemma} \label{lem:hj-str}
	If $F$ is a $k$-partite $k$-uniform hypergraph, $r\in\NN$, 
	and $\HJ_r(F)=(H, \ccH)$, then every $F_\star\in\ccH$ is strongly induced in $H$. 
\end{lemma}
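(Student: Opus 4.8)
The plan is to make the Hales--Jewett construction fully explicit and then argue coordinate by coordinate. If $E(F)=\varnothing$ the assertion is trivial, since then $H=F$, $\ccH=\{F\}$, and every hypergraph is strongly induced in itself; so assume $E(F)\ne\varnothing$ and fix the dimension $n$ together with the canonical bijection $\lambda\colon E(F)^n\lra E(H)$ furnished by the construction. Recall that a combinatorial line $L$ in $E(F)^n$ is determined by a nonempty set $S\subseteq[n]$ of \emph{active} coordinates together with a choice of edges $a_\nu\in E(F)$ for the coordinates $\nu\notin S$, and that $L=\{\seq{e}(t)\colon t\in E(F)\}$, where the $\nu$-th entry of $\seq{e}(t)$ is $t$ for $\nu\in S$ and $a_\nu$ for $\nu\notin S$. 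First I would record a concrete description of the copy $F_\star=F_L\in\ccH$ belonging to $L$: writing $b_{\nu i}\in V_i(F)$ for the vertex of $a_\nu$ (when $\nu\notin S$) and $c_i(t)\in V_i(F)$ for the vertex of $t\in E(F)$, the copy $F_L$ is the image of the embedding $F\hookrightarrow H$ that takes a vertex $v\in V_i(F)$ to the $n$-tuple carrying $v$ in every active coordinate and $b_{\nu i}$ in each coordinate $\nu\notin S$. In particular $V_i(F_L)$ consists precisely of these tuples, while $E(F_L)=\{\lambda(\seq{e}(t))\colon t\in E(F)\}$, and the edge $\lambda(\seq{e}(t))$ meets $V_i(H)$ in the tuple with $c_i(t)$ in the active coordinates and $b_{\nu i}$ in all others.

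Next I would pick an arbitrary edge $f=\lambda(d_1,\dots,d_n)$ of $H$; it meets $V_i(H)$ in the tuple $(z_{1i},\dots,z_{ni})$, where $z_{\nu i}$ is the vertex of $d_\nu$ in $V_i(F)$. Since each vertex class of $F_L$ lies inside the corresponding class of $H$, the set $f\cap V(F_L)$ is exactly the family of vertices of $f$ indexed by the set $J$ of all $i$ for which $(z_{1i},\dots,z_{ni})\in V_i(F_L)$; and by the description above, $i\in J$ if and only if $z_{\nu i}=b_{\nu i}$ for every $\nu\notin S$ and the values $z_{\nu i}$ with $\nu\in S$ all coincide. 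The crucial observation is that membership in $J$ forces the active components of $(d_1,\dots,d_n)$ to agree already in class $i$: fixing any $\nu_0\in S$, we get $z_{\nu i}=z_{\nu_0 i}=c_i(d_{\nu_0})$ for all $\nu\in S$ and all $i\in J$.

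Finally I would produce the edge required by strong inducedness. Put $g=\lambda(\seq{e}(d_{\nu_0}))$, which lies in $E(F_L)$ because $d_{\nu_0}\in E(F)$. Its vertex in $V_i(H)$ is the tuple having $c_i(d_{\nu_0})$ in the active coordinates and $b_{\nu i}$ elsewhere, and for every $i\in J$ the two identities just recorded show that this equals $(z_{1i},\dots,z_{ni})$, i.e.\ the vertex of $f$ in $V_i(H)$. Consequently every vertex of $f\cap V(F_L)$ lies on $g$; as $f\in E(H)$ was arbitrary, this says precisely that $F_L\Str H$. The one mildly delicate step is the middle observation that an index $i\in J$ compels the entries $d_\nu$ with $\nu\in S$ to share a common vertex in class~$i$; granting that, the natural guess for $g$ is the edge of $F_L$ whose line parameter is the common entry $d_{\nu_0}$, and everything else is straightforward verification.
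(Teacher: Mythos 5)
Your proof is correct and follows essentially the same route as the paper's: fix an active (moving) coordinate $\nu_0$, and show that the edge $g$ of $F_L$ whose line parameter is $d_{\nu_0}$ covers $f\cap V(F_L)$, using the fact that a vertex of $f$ lying in $V_i(F_L)$ must match the constant coordinates of $L$ and have all its active coordinates equal. The only difference is notational (your $S$, $a_\nu$, $c_i(t)$ versus the paper's moving set $M$, map $\wt\eta$, and bijections $\eta_i$).
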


\begin{proof}
	In the trivial case $E(F)=\vn$ we have $H=F=F_\star$ and the result is clear. 
	Now suppose $E(F)\ne\vn$ and let $n$ be the Hales-Jewett exponent entering
	the construction of~$(H, \ccH)$. As usual, $F$ and $H$ have index set $I$, 
	and $\lambda$ denotes the canonical bijection
	from $E(F)^n$ onto $E(H)$.  
	
	Let $\eta\colon E(F)\longrightarrow E(F)^n$ be the combinatorial
	embedding with $E(F_\star)=(\lambda\circ\eta)[E(F)]$. This means that there
	are a partition $[n]=C\dcup M$ of the set of {\it coordinates} into a set~$C$ of
	{\it constant coordinates} and a nonempty set $M$ of {\it moving coordinates} 
	as well as a map $\wt{\eta}\colon C\lra E(F)$ such that for every $f\in E(F)$
	and every $\nu\in[n]$ the $\nu^{\mathrm{th}}$ coordinate of $\eta(f)$ is the edge
		\begin{enumerate}
		\item[$\bullet$] $\wt{\eta}(\nu)$ if $\nu\in C$
		\item[$\bullet$] and $f$ if $\nu\in M$.
	\end{enumerate}
	\index{constant coordinate}
	\index{moving coordinate}
		
	Associated with $\eta$ we have bijections $\eta_i\colon V_i(F)\lra V_i(F_\star)$
	for all indices $i\in I$. Explicitly, if $x\in V_i(F)$, 
	then $\eta_i(x)=(x_1, \ldots, x_n)$ satisfies $x_\nu=x$ for all $\nu\in M$, while
	for $\nu\in C$ the vertex $x_\nu$ is, independently of $x$, the vertex of 
	$\wt{\eta}(\nu)$ belonging to $V_i(F)$. Observe that if $x\in V_i(F)$
	and $f\in E(F)$ are incident, then so are $\eta_i(x)$ and $(\lambda\circ\eta)(f)$.
	
	Now let $e=\lambda(f_1, \ldots, f_n)$ be an arbitrary edge of $H$. We are to 
	exhibit an edge~$e'$ of~$F_\star$ with $e\cap V(F_\star)\subseteq e'$. To this 
	end we fix a moving coordinate $\nu(\star)\in M$ and 
	define $e'=(\lambda\circ\eta)(f_{\nu(\star)})$. For any $i\in I$ let 
	$\seq{x}$ be a vertex in $e\cap V_i(F_\star)$. We need to prove $\seq{x}\in e'$. 
	Owing to $\seq{x}\in V_i(F_\star)$ there exists a vertex $x\in V_i(F)$ with 
	$\seq{x}=\eta_i(x)$. Looking at the projection of $\seq{x}\in e$ 
	to the $\nu(\star)^{\mathrm {th}}$ coordinate
	we see $x\in f_{\nu(\star)}$ and thus we have indeed
	$\seq{x}=\eta_i(x)\in (\lambda\circ\eta)(f_{\nu(\star)})=e'$.
\end{proof}

The possible intersections of copies in $(H, \ccH)=\HJ_r(F)$ are
somewhat complicated (and were explicitly described in~\cite{NeRo4}). 
We circumvent this issue by moving on to the construction 
$\CPL=\PC(\HJ, \HJ)$ called the {\it clean partite lemma} (see Corollary~\ref{cor:CPL}).
\index{clean partite lemma} 

\begin{dfn}
	A system of copies $(H, \ccH)$ is said to have {\it clean intersections}
	if for any two distinct copies $F_\star, F_{\star\star}\in \ccH$ there 
	exist edges $e_\star\in E(F_\star)$, $e_{\star\star}\in E(F_{\star\star})$
	with \index{clean intersection}
		\[
		V(F_\star)\cap V(F_{\star\star})=e_\star\cap e_{\star\star}\,.
	\]
\end{dfn}

We proceed with a general result saying that copies with clean intersections 
arise automatically when one performs a partite construction  
employing strongly induced copies vertically. (For historical reasons we refer 
to~\cite{BNRR}*{Lemma~2.12}). 

 \begin{lemma} \label{lem:cleancap}
	If $\Phi$ is a Ramsey construction delivering strongly induced copies,
	then for any partite lemma $\Xi$ the construction $\PC(\Phi, \Xi)$
	delivers systems of strongly induced copies whose intersections are clean.  
\end{lemma}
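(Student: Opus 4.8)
The plan is to establish the two conclusions by rather different means. Strong inducedness will come essentially for free, holding in \emph{every} picture over the pair $(G,\ccG):=\Phi_r(F)$ for the sole reason that $\ccG$ consists of strongly induced subhypergraphs of $G$; the cleanliness of intersections will then be pushed through the partite construction by induction. Write $(\Pi_\alpha,\ccP_\alpha,\psi_\alpha)_{0\le\alpha\le N}$ for the sequence of pictures over $(G,\ccG)$ generated by $\PC(\Phi,\Xi)$, so that $\PC(\Phi,\Xi)_r(F)=(\Pi_N,\ccP_N)$, and recall that $\psi_\alpha$ restricts to an isomorphism from each copy $F_\star\in\ccP_\alpha$ onto a copy $G_\star\in\ccG$; in particular $\psi_\alpha$ is injective on $V(F_\star)$. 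We may assume $E(F)\ne\varnothing$, the degenerate case being trivial.

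First I would record a lemma with no reference to the construction: if $(\Pi,\ccP,\psi)$ is any picture over $(G,\ccG)$, then every $F_\star\in\ccP$ is strongly induced in $\Pi$. Given an edge $g\in E(\Pi)$, its image $\psi[g]$ is an edge of $G$, so $G_\star\Str G$ yields an edge $\phi\in E(G_\star)$ with $\psi[g]\cap V(G_\star)\subseteq\phi$; taking the edge $e_\star\in E(F_\star)$ with $\psi[e_\star]=\phi$ we have $\psi[g\cap V(F_\star)]\subseteq\psi[g]\cap V(G_\star)\subseteq\phi=\psi[e_\star]$, and injectivity of $\psi$ on $V(F_\star)$ (which contains both $g\cap V(F_\star)$ and $e_\star$) turns this into $g\cap V(F_\star)\subseteq e_\star$. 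Applying this to $(\Pi_N,\ccP_N)$ settles the first conclusion.

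For clean intersections I would prove by induction on $\alpha$ that the copies of $\ccP_\alpha$ pairwise intersect cleanly. For picture zero this is immediate since the copies of $\ccP_0$ are pairwise vertex-disjoint and $E(F)\ne\varnothing$. For the step write $(\Pi_\alpha,\ccP_\alpha,\psi_\alpha)=(\Pi_{\alpha-1},\ccP_{\alpha-1},\psi_{\alpha-1})\conc(H_\alpha,\ccH_\alpha)$, where $(H_\alpha,\ccH_\alpha)=\Xi_r(\Pi^{e(\alpha)}_{\alpha-1})$, note that $\ccP_\alpha$ is the union of the systems $\ccP_\beta$ pulled back onto the various standard copies $\Pi_\beta$ of $\Pi_{\alpha-1}$, and fix distinct $F_\star,F_{\star\star}\in\ccP_\alpha$. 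If some single standard copy $\Pi_\beta$ contains both, then $F_\star=\iota_\beta[P]$ and $F_{\star\star}=\iota_\beta[P']$ for distinct $P,P'\in\ccP_{\alpha-1}$ under the canonical isomorphism $\iota_\beta\colon\Pi_{\alpha-1}\to\Pi_\beta$, and applying $\iota_\beta$ to the clean intersection $V(P)\cap V(P')=e\cap e'$ given by the induction hypothesis produces $V(F_\star)\cap V(F_{\star\star})=\iota_\beta[e]\cap\iota_\beta[e']$ with $\iota_\beta[e]\in E(F_\star)$ and $\iota_\beta[e']\in E(F_{\star\star})$. Otherwise choose $\beta\ne\beta'$ with $F_\star\subseteq\Pi_\beta$ and $F_{\star\star}\subseteq\Pi_{\beta'}$; since distinct standard copies overlap only inside $H_\alpha=\Sigma^{e(\alpha)}$, the set $A:=V(F_\star)\cap V(F_{\star\star})$ lies in $V(H_\alpha)$, all of whose vertices project under $\psi_\alpha$ into the single edge $e(\alpha)$ of $G$. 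As $\psi_\alpha[A]\subseteq V(G_\star)$ too, $G_\star\Str G$ applied to $e(\alpha)$ gives an edge $\phi_\star\in E(G_\star)$ with $e(\alpha)\cap V(G_\star)\subseteq\phi_\star$, whence $\psi_\alpha[A]\subseteq\phi_\star=\psi_\alpha[e_\star]$ for the corresponding $e_\star\in E(F_\star)$, and injectivity on $V(F_\star)$ gives $A\subseteq e_\star$. Symmetrically $A\subseteq e_{\star\star}\in E(F_{\star\star})$; since also $e_\star\cap e_{\star\star}\subseteq A$ we obtain $A=e_\star\cap e_{\star\star}$, so the intersection is clean.

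The step I expect to be the main obstacle is the second case above: one must realise that an intersection of two copies living in different standard copies is trapped inside one constituent, hence maps into a single edge of $G$, and that this fact together with the strong inducedness the construction has inherited ``vertically'' from $\ccG$ suffices to squeeze the intersection into a single edge of each copy. Beyond that, the only things to watch are that the dichotomy ``a standard copy contains both copies'' versus ``none does'' is exhaustive --- which it is, since every copy of $\ccP_\alpha$ sits in at least one standard copy --- and that in the first case $P$ and $P'$ are genuinely distinct; the rest is routine bookkeeping with partite amalgamations.
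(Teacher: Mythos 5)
Your proposal is correct and follows essentially the same two-part strategy as the paper: strong inducedness from the vertical projection onto strongly induced copies in $\ccG$, and clean intersections by induction along the partite construction, splitting into the cases where the two copies lie in a common standard copy or in distinct ones. The details (projecting the intersection into the amalgamation edge, using $G_\star\Str G$ to cover it by a single edge, and pulling back by injectivity of $\psi$) coincide with the paper's argument.
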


\begin{proof}
	Let a hypergraph $F$ as well as a number of colours $r$ be given and 
	set~$\Phi_r(F)=(G, \ccG)$. Due to the hypothesis on $\Phi$ the copies in $\ccG$
	are strongly induced. When performing the partite construction $\PC(\Phi, \Xi)_r(F)$ 
	we eventually reach a last picture $(\Pi_N, \ccP_N, \psi_N)$. 
	
	Let us show first that every copy $F_\star\in \ccP_N$ is strongly induced in $\Pi_N$. 
	Given an arbitrary edge $e\in E(\Pi_N)$ we need to find an edge 
	$f_\star\in E(F_\star)$ with 
		\begin{equation} \label{eq:214a}
		V(F_\star)\cap e\subseteq f_\star\,.
	\end{equation}
		The projection $\psi_N$ sends $F_\star$ and $e$ to a copy $F'\in\ccG$ and an 
	edge $e'\in E(G)$. As a consequence of $F'\Str G$ there exists an edge $f'\in E(F')$ 
	with $V(F')\cap e'\subseteq f'$. 
	Now the edge $f_\star\in E(F_\star)$ corresponding to $f'$ satisfies~\eqref{eq:214a},
	since $\psi_N$ establishes a bijection between~$V(F_\star)$ and~$V(F')$.  
	
	It remains to show that the copies in $\ccP_N$ have clean intersections. 
	To this end we argue by {\it induction along the partite construction}. 
	\index{induction along partite construction}
	That is we prove 
	inductively that each of the successively constructed pictures has a 
	system of copies with clean intersections. This is clear for picture zero,
	for disjoint copies have an empty intersection which is, in particular, clean.
	
	For the induction step it suffices to show that if $(G, \ccG)$ is a system 
	with strongly induced copies, $(\Pi, \ccP, \psi_\Pi)$ is a picture over $(G, \ccG)$
	whose copies have clean intersections, $e\in E(G)$, $\Xi_r(\Pi^e)=(H, \ccH)$, and 
	$(\Sigma, \ccQ, \psi_\Sigma)=(\Pi, \ccP, \psi_\Pi)\conc (H, \ccH)$, then 
	the copies in $\ccQ$ have clean intersections as well. 
	
	Consider any two distinct copies 
	$F_\star, F_{\star\star}\in \ccQ$. Recall that each of them belongs to a 
	standard copy of $\Pi$ in $\Sigma$. Let $\Pi_\star$ and $\Pi_{\star\star}$ 
	be such standard copies. If they are equal, then the induction hypothesis 
	shows that the intersection of $F_\star$ and $F_{\star\star}$ is clean.
	
	So suppose $\Pi_\star\ne \Pi_{\star\star}$ from now on. The projection 
	$\psi_\Sigma$ sends $F_\star$ and $F_{\star\star}$ to certain copies 
	$F'$ and $F''$ in $\ccG$. Exploiting that these copies are strongly 
	induced we obtain edges $e'\in E(F')$ and $e''\in E(F'')$
 	satisfying $V(F')\cap e\subseteq e'$ and $V(F'')\cap e\subseteq e''$.
	
	Let $e_\star\in E(F_\star)$ be the inverse image of $e'$ under the projection 
	from $F_\star$ to $F'$ and define $e_{\star\star}\in E(F_{\star\star})$
	similarly with respect to $F''$. Owing to 
		\[
		V(F_\star)\cap V(F_{\star\star})
		\subseteq 
		V(\Pi_\star)\cap V(\Pi_{\star\star})
		\subseteq 
		V(\Pi^e)
	\]
		we have $V(F_\star)\cap V(F_{\star\star})=e_\star\cap e_{\star\star}$,
	meaning that $e_\star$ and $e_{\star\star}$ witness that the intersection of $F_\star$
	and $F_{\star\star}$ is clean. 
\end{proof}

\begin{cor} \label{cor:CPL}
	The clean partite lemma $\CPL=\PC(\HJ, \HJ)$ delivers systems of strongly induced
	copies with clean intersections. 
\end{cor}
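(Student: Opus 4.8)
The plan is simply to chain the two preceding results together. The construction $\HJ$ is a partite lemma, hence in particular a Ramsey construction of the kind admitted as the first argument of $\PC$, and Lemma~\ref{lem:hj-str} tells us precisely that every system $\HJ_r(F)=(H,\ccH)$ consists of strongly induced copies. Thus $\HJ$ meets the hypothesis imposed on $\Phi$ in Lemma~\ref{lem:cleancap}, and we may take $\Phi=\Xi=\HJ$ there.

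Before doing so, I would record that $\CPL=\PC(\HJ,\HJ)$ is genuinely defined on every $k$-partite $k$-uniform hypergraph: the vertical input $\HJ_r(F)$ is again $k$-partite and $k$-uniform, every constituent $\Pi^e$ encountered while generating the pictures is $k$-partite $k$-uniform with index set $e$, and $\HJ$ applies to all such hypergraphs when used horizontally. Hence $\PC(\HJ,\HJ)$ is itself a partite lemma, so the phrase ``the clean partite lemma'' is justified.

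It then remains only to invoke Lemma~\ref{lem:cleancap} with $\Phi=\Xi=\HJ$: since $\Phi=\HJ$ delivers strongly induced copies, the lemma yields that $\PC(\HJ,\HJ)$ delivers systems of strongly induced copies whose intersections are clean, which is exactly the assertion of the corollary. The sole point requiring a word of comment is that $\HJ$ legitimately occupies the r\^{o}le of $\Phi$, i.e.\ that it counts as a Ramsey construction delivering strongly induced copies; this is immediate from Lemma~\ref{lem:hj-str}. Consequently there is no real obstacle here, and the proof is a one-line deduction from the two lemmas.
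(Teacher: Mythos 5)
Your proof is correct and follows essentially the same route as the paper: both arguments feed Lemma~\ref{lem:hj-str} (which shows $\HJ$ yields strongly induced copies) into Lemma~\ref{lem:cleancap} with $\Phi=\Xi=\HJ$, and both then observe that $\CPL$ is itself a partite lemma. The only place where the paper is slightly more careful is in verifying that the final picture $\Pi$ really carries a $k$-partite structure with the original index set $I$ (via the composition $\psi_G\circ\psi_\Pi\colon V(\Pi)\to I$), whereas you note that the vertical input is $k$-partite $k$-uniform and infer the conclusion a touch more quickly; this is a presentational rather than a substantive difference.
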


\begin{proof}
	Let a $k$-partite $k$-uniform hypergraph $F$ with index set $I$ and a number
	of colours $r$ be given. Recall that in order to construct $\CPL_r(F)$ we 
	first need to construct $(G, \ccG)=\HJ_r(F)$ and then we need to construct
	a sequence of pictures over $(G, \ccG)$ using the partite lemma~$\HJ_r(\cdot)$
	in every step. Let us denote the final picture by $(\Pi, \ccP, \psi_\Pi)$.
	
	By Lemma~\ref{lem:hj-str} the copies of $F$ in $\ccG$ are strongly induced in $G$
	and thus, by Lemma~\ref{lem:cleancap}, the copies in $\ccP$ are strongly induced and
	have clean intersections. 
	It remains to explain why~$\CPL$ can be regarded as 
	a partite lemma. Recall from \S\ref{sssec:HJ} that $G$ is again a $k$-partite 
	$k$-uniform hypergraph with index set $I$. This means that the map 
	$\psi_G\colon V(G)\lra I$ with $\psi_G^{-1}(i)=V_i(G)$ for every $i\in I$ is 
	a hypergraph homomorphism from $G$ to $I^+=(I, \{I\})$. Consequently 
	$\psi_G\circ\psi_{\Pi}\colon V(\Pi)\lra I$ is a hypergraph homomorphism 
	from $\Pi$ to $I^+$, which shows that $\Pi$ is indeed of the desired form. 
	Since the copies in $\ccP$ project via $\psi_\Pi$ to copies 
	in $\ccG\subseteq \binom{G}{F}^\pt$,
	it is also clear that, in an obvious sense, $\ccP\subseteq \binom{\Pi}{F}^\pt$.     
\end{proof}

Now it is natural to wonder whether we can gain anything by cleaning further.
For instance, one may consider the partite lemma $\CPL^{(2)}=\PC(\CPL, \CPL)$
and ask whether it has any desirable properties going beyond clean intersections.
An affirmative answer to this question is obtained in Section~\ref{subsec:PCAG} 
below. However, even the higher iterates such
as $\CPL^{(3)}=\PC(\CPL^{(2)}, \CPL^{(2)})$ are insufficient for proving the girth
Ramsey theorem due to the simple reason that when applied to the graph $C_6$ they 
always yield graphs containing a $4$-cycle.  

Our next result asserts that clean intersections delivered by a partite lemma are 
indestructible under further applications of the partite construction method. 
The Propositions~\ref{prop:girthclean} and~\ref{prop:2235} proved later vastly 
generalise this fact.  

\begin{lemma} \label{lem:clean-preserve}
	If $\Phi$ denotes an arbitrary Ramsey construction and $\Xi$ is a partite lemma 
	producing systems of strongly induced copies with clean intersections, 
	then $\PC(\Phi, \Xi)$ has the same properties.  
\end{lemma}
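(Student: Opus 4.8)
The plan is to follow the proof of Lemma~\ref{lem:cleancap} closely, with one essential modification: strong inducedness of the output copies can no longer be deduced by projecting down to $\ccG$ (since $\Phi$ is arbitrary, the copies in $\ccG$ need not even be induced). Instead both properties will be \emph{propagated along the partite construction}. Writing $\PC(\Phi,\Xi)_r(F)=(\Pi_N,\ccP_N)$ as the last picture of a chain $(\Pi_\alpha,\ccP_\alpha,\psi_\alpha)_{0\le\alpha\le N}$ over $(G,\ccG)=\Phi_r(F)$, I would prove by induction on $\alpha$ that the copies in $\ccP_\alpha$ are strongly induced in $\Pi_\alpha$ and have clean intersections. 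For $\alpha=0$ this is clear: the copies $\wh F_\star\in\ccP_0$ are pairwise disjoint and exhaust $\Pi_0$, so each edge of $\Pi_0$ is either contained in the vertex set of a given $\wh F_\star$ or disjoint from it, and any two distinct copies in $\ccP_0$ have (trivially) clean intersections.

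Everything then reduces to the amalgamation step: if $(\Pi,\ccP,\psi_\Pi)$ is a picture over some $(G,\ccG)$ whose copies are strongly induced in $\Pi$ and have clean intersections, $e\in E(G)$, and $(H,\ccH)=\Xi_r(\Pi^e)$, then the amalgamated picture $(\Sigma,\ccQ,\psi_\Sigma)=(\Pi,\ccP,\psi_\Pi)\conc(H,\ccH)$ has the same two properties. This is where the hypothesis on $\Xi$ enters, for it tells us that the copies in $\ccH$ are strongly induced in $H$ and have clean intersections. I would begin by recalling the standard facts that each $F_\star\in\ccQ$ lies in some standard copy $\Pi_\star$ of $\Pi$ (fix one), that the canonical isomorphism $\Pi_\star\cong\Pi$ carries $F_\star$ to a member of $\ccP$ and hence, by the inductive hypothesis, yields $F_\star\Str\Pi_\star$, that $V(\Pi_\star)\cap V(H)=V(\Pi_\star^e)$, and that $V(\Pi_\star)\cap V(\Pi_{\star\star})=V(\Pi_\star^e)\cap V(\Pi_{\star\star}^e)$ for distinct standard copies $\Pi_\star\ne\Pi_{\star\star}$.

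For strong inducedness of $F_\star$ in $\Sigma$, take any edge $e'\in E(\Sigma)$. If $e'\in E(\Pi_\star)$, we are done by $F_\star\Str\Pi_\star$. Otherwise $e'$ lies in $E(H)$ or in $E(\Pi_{\star\star})$ for some standard copy $\Pi_{\star\star}\ne\Pi_\star$, and in either case $e'\cap V(\Pi_\star)\subseteq V(\Pi_\star^e)$ by the recalled intersection identities. In the first case, $\Pi_\star^e\Str H$ applied to $e'$ supplies an edge $a\in E(\Pi_\star^e)$ with $e'\cap V(\Pi_\star^e)\subseteq a$; in the second, the clean intersection of the distinct copies $\Pi_\star^e,\Pi_{\star\star}^e\in\ccH$ supplies edges $a\in E(\Pi_\star^e)$ and $b\in E(\Pi_{\star\star}^e)$ with $V(\Pi_\star^e)\cap V(\Pi_{\star\star}^e)=a\cap b$, whence again $e'\cap V(\Pi_\star)\subseteq a$. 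Since $a\in E(\Pi_\star^e)\subseteq E(\Pi_\star)$ and $V(F_\star)\subseteq V(\Pi_\star)$, we obtain $e'\cap V(F_\star)\subseteq a\cap V(F_\star)$, so applying $F_\star\Str\Pi_\star$ to $a$ yields the required edge $f_\star\in E(F_\star)$ with $e'\cap V(F_\star)\subseteq f_\star$.

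For clean intersections, take distinct $F_\star,F_{\star\star}\in\ccQ$ in standard copies $\Pi_\star,\Pi_{\star\star}$. If $\Pi_\star=\Pi_{\star\star}$, then $F_\star,F_{\star\star}$ correspond under $\Pi_\star\cong\Pi$ to distinct members of $\ccP$, whose clean intersection transports back along the isomorphism. If $\Pi_\star\ne\Pi_{\star\star}$, then $V(F_\star)\cap V(F_{\star\star})\subseteq V(\Pi_\star^e)\cap V(\Pi_{\star\star}^e)=a\cap b$, where $a\in E(\Pi_\star^e)$ and $b\in E(\Pi_{\star\star}^e)$ witness the clean intersection of the distinct copies $\Pi_\star^e,\Pi_{\star\star}^e\in\ccH$; applying $F_\star\Str\Pi_\star$ to $a$ and $F_{\star\star}\Str\Pi_{\star\star}$ to $b$ produces edges $e_\star\in E(F_\star)$, $e_{\star\star}\in E(F_{\star\star})$ with $a\cap V(F_\star)\subseteq e_\star$ and $b\cap V(F_{\star\star})\subseteq e_{\star\star}$, and both inclusions of $V(F_\star)\cap V(F_{\star\star})=e_\star\cap e_{\star\star}$ then follow directly. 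This last part is a verbatim transcription of the corresponding step in Lemma~\ref{lem:cleancap}, with the clean intersections of $\ccH$ taking over the role played there by the strong inducedness of the copies in $\ccG$. I expect the only genuinely delicate point to be the bookkeeping in the strong-inducedness argument, namely keeping precise track of which of the hypergraphs $H$, $\Pi_\star$, $\Pi_{\star\star}$ contains a given edge of $\Sigma$; the remaining verifications are entirely routine.
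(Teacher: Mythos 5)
Your proof is correct and follows essentially the same approach as the paper's: induction along the partite construction, using the inductive hypothesis on $\ccP$ together with the strong-inducedness and clean-intersection hypotheses on $\ccH$ in the amalgamation step. Your case distinction for strong inducedness (splitting into $e'\in E(\Pi_\star)$, $e'\in E(H)$, $e'\in E(\Pi_{\star\star})$) is organised slightly more explicitly than the paper's (which handles the first and third cases together by ``repeating the clean-intersection argument ignoring $e''$''), but the underlying argument is identical.
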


\begin{proof}
	We argue by induction along the partite construction. It is plain that the copies 
	in picture zero are strongly induced and that their intersections are clean. For 
	the induction step we assume that $(G, \ccG)$ is a system of not necessarily induced 
	hypergraphs, that $(\Pi, \ccP, \psi_\Pi)$ is a picture over $(G, \ccG)$ whose copies 
	are strongly induced and have clean intersections, that $(H, \ccH)$ is a partite
	system of hypergraphs with strongly induced copies whose intersections are clean, 
	and finally that $(\Sigma, \ccQ, \psi_\Sigma)=(\Pi, \ccP, \psi_\Pi)\conc (H, \ccH)$.
	We need to show that the copies in $\ccQ$ are strongly induced and that their 
	intersections are clean.
	
	Beginning with the latter task we show first that the intersection of any two distinct
	copies $F_\star, F_{\star\star}\in\ccQ$ is clean. Let $\Pi_\star$ 
	and $\Pi_{\star\star}$ be standard copies of $\Pi$ containing 
	$F_\star$ and~$F_{\star\star}$, respectively. 
	If those copies coincide we may appeal
	to the induction hypothesis, and thus it suffices to treat the 
	case $\Pi_\star\ne\Pi_{\star\star}$. 
	Since $V(\Pi_\star)\cap V(\Pi_{\star\star})\subseteq V(H)$
	and owing to the fact that in $\ccH$ the intersections are clean, there exist 
	edges $e_\star\in E(\Pi_\star)\cap E(H)$ 
	and $e_{\star\star}\in E(\Pi_{\star\star})\cap E(H)$ with 
		\begin{equation} \label{eq:214b}
		V(F_\star)\cap V(F_{\star\star}) 
		\subseteq 
		V(\Pi_\star)\cap V(\Pi_{\star\star}) 
		=  
		e_\star\cap e_{\star\star}\,.
	\end{equation} 
		As the copies of $(\Pi_\star, \ccP_\star)$ and $(\Pi_{\star\star}, \ccP_{\star\star})$
	are strongly induced, there exist 
	edges $e'\in E(F_\star)$ and $e''\in E(F_{\star\star})$ with 
		\begin{equation} \label{eq:214c}
		V(F_\star)\cap e_\star \subseteq e'
		\quad \text{ and } \quad 
 		V(F_{\star\star})\cap e_{\star\star}\subseteq e'' \,.
	\end{equation} 
		Now we have 
		\[
		V(F_\star)\cap V(F_{\star\star}) 
		\overset{\eqref{eq:214b}}{=}
		\bigl(V(F_\star)\cap e_\star\bigr) 
		\cap 
		\bigl(V(F_{\star\star})\cap e_{\star\star}\bigr)
		\overset{\eqref{eq:214c}}{\subseteq } 
		e'\cap e''
	\]
		and, consequently, the edges $e'$ and $e''$ exemplify that the intersection 
	of $F_\star$ and $F_{\star\star}$ is clean. 
	
	It remains to show that every $F_\star\in\ccQ$ is strongly induced in $\Sigma$. 
	That is, given an edge $e\in E(\Sigma)$ we need to prove that there exists an 
	edge $e'\in E(F_\star)$ with $V(F_\star)\cap e\subseteq e'$. Again let~$\Pi_\star$
	be a standard copy containing $F_\star$. 
	If there exists a standard copy $\Pi_{\star\star}$ containing~$e$ we may simply repeat 
	the above argument ignoring $e''$. If no such standard copy exists, then necessarily
	$e\in E(H)$. Since the copy in $\ccH$ extended by $\Pi_\star$ is strongly induced 
	in $H$, there exists an edge $e_0\in E(\Pi_\star)$ with $V(\Pi_\star)\cap e\subseteq e_0$.
	Moreover, $F_\star\Str \Pi_\star$ leads to an edge $e'\in E(F_\star)$ with
	$V(F_\star)\cap e_0\subseteq e'$. Now 
		\[
			V(F_\star)\cap e 
			= 
			V(F_\star)\cap V(\Pi_\star)\cap e 
			\subseteq 
			V(F_\star)\cap e_0
			\subseteq 
			e'
	\]
		shows that $e'$ has the desired property.  
\end{proof}

As an example, we may consider the construction $\Omega^{(2)}=\PC(\Rms, \CPL)$, which 
given a hypergraph $F$ and a number of colours $r$ delivers a 
system $\Omega^{(2)}_r(F)=(H, \ccH)$ of strongly induced copies with clean intersections
such that $\ccH\lra (F)_r$. (The construction~$\Omega^{(2)}$ occurs implicitly
in~\cite{BNRR}). 
\index{$\Omega^{(2)}$}

\subsection{Ordered constructions} 
\label{sssec:ord}

Suppose that $F_<$ is an {\it ordered hypergraph}, i.e., a hypergraph
on whose set of vertices a linear ordering is imposed. 
\index{ordered hypergraph}
Now for any number of colours~$r$ we want to construct an ordered 
hypergraph $H_<$ with 
\begin{equation}\label{eq:5555}
	H_< \lra(F_<)_r
\end{equation}
which means that the monochromatic induced copy of $F_<$ needs to occur with the 
correct ordering. If we were to omit the demand that the ordered monochromatic
copy needs to be induced, we could just use Ramsey's theorem, thus getting 
a pair $\Rms_r(F_<)=(G_<, \ccG)$ consisting of a sufficiently large ordered clique $G_<$
and the system $\ccG=\binom{G_<}{F_<}_\nni$ of all not necessarily induced ordered 
subhypergraphs of $G_<$ isomorphic to $F_<$. 

In order to obtain $H_<$ as in~\eqref{eq:5555} we run the partite construction 
$\PC(\Rms, \HJ)_r(F)$, thus getting a final picture $(\Pi, \ccP, \psi)$
over $(G_<, \ccG)$, which is known to satisfy $\ccP \lra (F)_r$ in the sense of 
(unordered) hypergraphs. Recall that the copies in $\ccP$ are induced and project 
via $\psi$ onto copies in~$\ccG$,
i.e., to copies that are ordered correctly in the vertical world.  
Thus if $\strictif$ denotes any linear ordering on $V(\Pi)$ with the property
\begin{equation} \label{eq:morast}
	\forall x, y\in V(\Pi) \,\,\, 
		\bigl[ \psi(x)<\psi(y) 
		\,\,\, \Longrightarrow \,\,\, 
		x \strictif y \bigr]\,,
\end{equation}
then the copies in $\ccP$ become ordered copies of $F_<$ in $\Pi_\strictif$,
meaning that $\Pi_\strictif$ is as desired. 

Let us observe that this argument generalises as follows: If $\Phi$ denotes 
any {\it ordered} Ramsey construction, and $\Xi$ is a partite lemma,
then $\PC(\Phi, \Xi)$ is again an ordered construction. That is, if for some 
ordered hypergraph $F_<$ and $r\in \NN$ the system $\PC(\Phi, \Xi)_r(F)=(H, \ccH)$
is defined, then we may endow $V(H)$ with a linear ordering $\strictif$ as 
in~\eqref{eq:morast}, and $\ccH\subseteq \binom{H_\strictif}{F_<}$ will hold 
automatically. In other words, $\PC(\Phi, \Xi)$ becomes an ordered construction 
by setting $\PC(\Phi, \Xi)_r(F_<)=(H_\strictif, \ccH)$.

Thus we can likewise consider $\Omega^{(2)}=\PC(\Rms, \CPL)$ to be an ordered 
construction. It still has the benefits discussed in~\S\ref{sssec:vindya}, i.e.,
it delivers strongly induced copies with clean intersections. 

\subsection{\texorpdfstring{$f$}{\it f}-partite hypergraphs}
\label{sssec:fpart}  

The following concept interpolates between $k$-partite $k$-uni\-form hypergraphs
and general $k$-uniform hypergraphs. 

\begin{dfn}
	Let $f\colon I\lra\NN$ be a function from a finite index set $I$ to
	the positive integers such that $k=\sum_{i\in I} f(i)$ is at least $2$. 
	\index{$f$-partite hypergraph}
	An {\it $f$-partite hypergraph} is a $k$-uniform hypergraph $F$
	together with a distinguished partition $V(F)=\bigdcup_{i\in I} V_i(F)$ of its vertex
	set satisfying 
		\begin{equation} \label{eq:216a}
		|e\cap V_i(F)|=f(i) 
		\quad \text{ for all }
			e\in E(F) \text{ and } i\in I\,.
	\end{equation}
	\end{dfn}

For example, if $|I|=k$ and $f$ is the constant function whose value is always $1$, 
then an $f$-partite hypergraph is the same as a $k$-partite $k$-uniform hypergraph. On the other 
end of the spectrum there is the possibility that $|I|=1$ and $f$ attains the 
value $k$, in which case $f$-partite hypergraphs are the same as ordinary $k$-uniform
hypergraphs. 

Let us return to the case of a general function $f\colon I\lra\NN$. 
Extending the terminology from~\S\ref{sssec:HJ} we say that an $f$-partite hypergraph $F$ 
is an {\it $f$-partite subhypergraph} of an $f$-partite hypergraph $H$ if $F$ is a
subhypergraph of $H$ and $V_i(F)\subseteq V_i(H)$ holds for all~$i\in I$. Moreover,
for two $f$-partite hypergraphs $F$ and $H$ the symbol $\binom{H}{F}^\ff$
refers to the set of all $f$-partite subhypergraphs of $H$ that are isomorphic to $F$
in the $f$-partite sense.  
 
For $m\in \NN$ we let $K^f_m$ denote the {\it complete $f$-partite hypergraph} having 
for every index~$i\in I$ a vertex class $V_i$ of size~$m$ and having all edges compatible 
with~\eqref{eq:216a}. It is well known that given $m, r\in \NN$ there exists a natural 
number $M$ which is so large that no matter how the edges of $K^f_M$ get coloured 
with~$r$ colours there will always exist a monochromatic copy of~$K^f_m$. 
Indeed, for $|I|=1$ this is just Ramsey's original theorem and the general version 
of this so-called {\it product Ramsey theorem} is easily established by induction on $|I|$ 
(see e.g.,~\cite{Promel}*{Theorem 5.1.5}).
\index{product Ramsey theorem}

As usual, this result yields a Ramsey theorem for $f$-partite hypergraphs with 
non-induced copies. Extending our earlier notation we denote the corresponding 
construction by $\Rms$ as well. So explicitly for an $f$-partite hypergraph $F$
and a number of colours $r$ the formula $\Rms_r(F)=(G, \ccG)$ indicates that $G$ is 
a sufficiently large complete $f$-partite hypergraph, that 
$\ccG=\binom{G}{F}_\nni^\ff$ is the collection
of all $f$-partite subhypergraphs of $G$ isomorphic to $F$ and, finally, 
that $\ccG\lra(F)_r$. 

We say that a {\it construction $\Phi$ is $f$-partite} if applied to an $f$-partite 
hypergraph $F$ it delivers an {\it $f$-partite system of hypergraphs}, i.e., a system
$(H, \ccH)$ consisting of an $f$-partite hypergraph $H$ and a 
set $\ccH\subseteq\binom{H}{F}^\ff_\nni$ of $f$-partite 
subhypergraphs of $H$. For instance the version of $\Rms$ described in the previous 
paragraph is an $f$-partite construction. 

Given an $f$-partite construction $\Phi$ as well as a partite lemma $\Xi$, we can utilise 
the partite construction method and form 
$\Theta=\PC(\Phi, \Xi)$. It is not hard to see that the construction~$\Theta$ is 
again $f$-partite. (A similar argument occurred in the proof of Corollary~\ref{cor:CPL}.)
 
We may now regard $\Omega^{(2)}=\PC(\Rms, \CPL)$ as an $f$-partite construction, 
which yields a rather strong form of the induced Ramsey theorem for $f$-partite hypergraphs. 
As explained in~\S\ref{sssec:ord} we may actually apply this construction to ordered hypergraphs.

\begin{prop}\label{prop:1738}
	Given an ordered $f$-partite hypergraph $F_<$ and a number of colours $r$ 
	the ordered $f$-partite system $\Omega^{(2)}_r(F_<)=(H_<, \ccH)$ 
	satisfies $\ccH\lra(F_<)_r$, the copies in $\ccH$ are strongly induced, and 
	the intersections of copies in $\ccH$ are clean. \qed 
	\index{$\Omega^{(2)}$}
\end{prop}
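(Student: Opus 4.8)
The statement is a convenient repackaging of facts already assembled in \S\ref{sssec:vindya}--\S\ref{sssec:fpart}, so the plan is to run the partite construction underlying $\PC(\Rms,\CPL)$ once more and then quote the relevant structural lemmas; I do not expect a genuinely new difficulty here.

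First I would establish the partition relation, momentarily suppressing the ordering. Put $(G,\ccG)=\Rms_r(F)$, so that $G$ is a sufficiently large complete $f$-partite hypergraph and $\ccG=\binom{G}{F}^\ff_\nni$ satisfies $\ccG\lra(F)_r$ by the product Ramsey theorem of \S\ref{sssec:fpart}. Build the pictures $(\Pi_\alpha,\ccP_\alpha,\psi_\alpha)_{0\le\alpha\le N}$ over $(G,\ccG)$ exactly as in \S\ref{sssec:irt}, using the partite lemma $\CPL$ to pass from each constituent $\Pi_{\alpha-1}^{e(\alpha)}$ to the next picture; since every edge of $G$ has size $k=\sum_{i\in I}f(i)\ge 2$, these constituents are $k$-partite $k$-uniform and $\CPL$ is applicable to them. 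The induction-along-the-partite-construction argument recalled around \eqref{eq:5544} then shows that for every $r$-colouring of $E(\Pi_N)$ there is a copy of picture zero all of whose constituents are monochromatic; projecting these colours down to an $r$-colouring of $E(G)$ and invoking $\ccG\lra(F)_r$ produces a monochromatic copy of $F$ in $\ccP_N$. Writing $(H,\ccH)=(\Pi_N,\ccP_N)$ we obtain $\ccH\lra(F)_r$.

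Next I would record the three structural refinements, each by citing a single earlier result. For strong inducedness and clean intersections: Corollary~\ref{cor:CPL} tells us that the partite lemma $\CPL$ delivers strongly induced copies with clean intersections, so Lemma~\ref{lem:clean-preserve}, applied with the arbitrary Ramsey construction $\Phi=\Rms$ and the partite lemma $\Xi=\CPL$, shows that $\PC(\Rms,\CPL)$ has both properties as well. For the $f$-partite structure: $\Rms$ is an $f$-partite construction, and, as noted in \S\ref{sssec:fpart}, $\PC(\Phi,\Xi)$ is again $f$-partite whenever $\Phi$ is; hence $\Omega^{(2)}$ is $f$-partite and $\ccH\subseteq\binom{H}{F}^\ff_\nni$, with the copies in fact induced by the previous point. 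For the ordering: following \S\ref{sssec:ord}, regard $\Rms$ as an ordered construction (the vertical clique and the copies in $\ccG$ carry the correct order); since $\CPL$ is a partite lemma, $\PC(\Rms,\CPL)$ is ordered, which means that after equipping $V(H)$ with any linear ordering $\strictif$ satisfying \eqref{eq:morast}, the copies in $\ccH$, projecting via $\psi_N$ onto correctly ordered copies in $\ccG$, become ordered copies of $F_<$ in $H_\strictif$. Setting $\Omega^{(2)}_r(F_<)=(H_\strictif,\ccH)$ then delivers everything asserted.

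The closest this argument comes to having a hard part is checking that the refinements do not interfere with one another. This is straightforward: the ordering $\strictif$ is imposed only at the very end and constrains $V(H)$ alone, so it disturbs neither the $f$-partition nor the combinatorics of the copies; and strong inducedness together with clean intersections were already established by Lemma~\ref{lem:clean-preserve} at the level of the underlying unordered $f$-partite construction. Chaining the four observations in sequence is therefore all that the proof requires.
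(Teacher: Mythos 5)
Your proposal is correct and mirrors the paper's intended reasoning: the paper leaves Proposition~\ref{prop:1738} with a bare \qed precisely because it is the conjunction of Corollary~\ref{cor:CPL}, Lemma~\ref{lem:clean-preserve} (with $\Phi=\Rms$, $\Xi=\CPL$), and the observations in \S\ref{sssec:ord} and \S\ref{sssec:fpart} on ordered and $f$-partite constructions, which is exactly what you assemble. Re-deriving the partition relation via the induction along the partite construction from \S\ref{sssec:irt} is slightly redundant (it is already built into the notion of a construction and of $\PC(\Phi,\Xi)$), but it is harmless and makes the argument self-contained.
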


\subsection{Linearity}
\label{sssec:lin}

Clearly, if $F$ is a non-trivial linear $k$-uniform hypergraph for some $k\ge 3$, 
and $r\ge 2$ is a number of colours, then $\Rms_r(F)$ fails to be linear. For the 
purposes of girth Ramsey theory it is important to know that the other constructions 
we have encountered so far behave better in this regard. We begin this discussion 
with a special case of a result in~\cite{NeRo4}.

\begin{lemma}\label{lem:1819}
	If $F$ is a linear $k$-partite $k$-uniform hypergraph, $r\in\NN$ is a number of colours, 
	and $\HJ_r(F)=(H, \ccH)$, then $H$ is a linear hypergraph as well. 
\end{lemma}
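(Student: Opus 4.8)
The plan is to argue by contradiction straight from the coordinatewise description of the Hales--Jewett construction recalled in~\S\ref{sssec:HJ}. First I would dispose of the degenerate case $E(F)=\vn$, where $H=F$ and linearity is trivially inherited. So assume $E(F)\ne\vn$, let $n$ be the Hales--Jewett exponent entering the construction, so that $V_i(H)=V_i(F)^n$ for each $i\in I$, and let $\lambda\colon E(F)^n\lra E(H)$ be the canonical bijection. The formula to keep at hand is that for $\seq{e}=(e_1,\dots,e_n)$ the edge $\lambda(\seq{e})$ meets $V_i(H)$ exactly in the vertex $(x_{1i},\dots,x_{ni})$, where $x_{\nu i}$ denotes the vertex of $e_\nu$ lying in $V_i(F)$.

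Now suppose, towards a contradiction, that $H$ is not linear, so there are distinct edges $e=\lambda(e_1,\dots,e_n)$ and $f=\lambda(f_1,\dots,f_n)$ of $H$ sharing at least two vertices. Since every edge of $H$ meets each vertex class in a single vertex, these two shared vertices lie in two distinct classes $V_i(H)$ and $V_j(H)$ with $i\ne j$; this is the one small observation the argument hinges on. Writing $y_{\nu i}$ for the vertex of $f_\nu$ in $V_i(F)$, the agreement of $e$ and $f$ on $V_i(H)$ reads $(x_{1i},\dots,x_{ni})=(y_{1i},\dots,y_{ni})$, and reading this coordinatewise shows that $e_\nu$ and $f_\nu$ share a vertex in $V_i(F)$ for every $\nu\in[n]$; the agreement on $V_j(H)$ gives the analogous statement with $j$ in place of $i$. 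Fixing an arbitrary $\nu\in[n]$, if $e_\nu\ne f_\nu$ then $e_\nu,f_\nu$ would be two distinct edges of $F$ meeting in at least two vertices (one in $V_i(F)$, one in $V_j(F)$), contradicting the linearity of $F$; hence $e_\nu=f_\nu$ for all $\nu$, so $(e_1,\dots,e_n)=(f_1,\dots,f_n)$, and the injectivity of $\lambda$ forces $e=f$, against our choice. Therefore $H$ is linear.

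I do not expect a real obstacle here: the whole proof amounts to unwinding the definition of $\lambda$, and the only point that needs a moment's care is the remark that two vertices shared by two distinct edges of a uniform partite hypergraph must lie in different vertex classes --- precisely what lets me extract a linearity violation of $F$ in each coordinate. One could alternatively run the same computation through the vertex-level bijections $\eta_i$ appearing in the proof of Lemma~\ref{lem:hj-str}, but the direct route seems shortest.
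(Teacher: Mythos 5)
Your proof is correct and rests on exactly the same ingredients as the paper's: the coordinatewise description of $\lambda$ and the linearity of $F$ applied in a single coordinate. The paper argues directly (fix a coordinate $\nu$ where the $n$-tuples differ and show the intersection lies in a single vertex class), whereas you run it as a proof by contradiction; these are contrapositive forms of the same argument, so there is no substantive difference.
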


\begin{proof}
	The degenerate case $E(F)=\vn$ being clear we suppose $E(F)\ne\vn$ from now on. 
	Let~$n$ be the Hales-Jewett exponent the construction of $H$ is based on and
	let ${\lambda\colon E(F)^n\lra E(H)}$ denote the canonical bijection.
	For any two distinct edges 
		\[
		e=\lambda(e_1, \ldots, e_n) 
		\quad \text{ and } \quad 
		e'=\lambda(e'_1, \ldots, e'_n)
	\]
		of $H$ we are to prove $|e\cap e'|\le 1$.
	To this end we take an index $\nu\in [n]$ with $e_\nu\ne e'_\nu$. If $e_\nu$
	and~$e'_\nu$ are disjoint, then so are $e$ and $e'$. Otherwise, the linearity 
	of $F$ discloses that $e_\nu$ and~$e'_\nu$ have a unique vertex $x$ in common. If $i$
	denotes the index with $x\in V_i(F)$, then all vertices that $e$ and $e'$ have 
	in common belong to $V_i(H)$, whence $|e\cap e'|\le |e\cap V_i(H)|=1$.
\end{proof}   

In the sequel, a {\it linear construction} will be a construction $\Phi$, which,
when applied to a linear hypergraph $F$ and a number of colours $r$, yields 
a {\it linear system} $\Phi_r(F)=(H, \ccH)$, i.e., a system of hypergraphs whose 
underlying hypergraph $H$ is linear.
\index{linear construction}
\index{linear system (of hypergraphs)}
\index{linear partite lemma}
So in other words the previous lemma asserts that $\HJ$ is a linear partite lemma. 
A picture $(\Pi, \ccP, \psi)$ is said to be {\it linear} if its underlying 
hypergraph $\Pi$ is linear. In the linear
case strong inducedness can be characterised as follows. 

\begin{fact} \label{fact:1836}
	A subhypergraph $F$ of a linear hypergraph $H$ is strongly induced if and only if 
	it has the following three properties.
	\begin{enumerate}[label=\rmlabel]
		\item\label{it:1836a} If an edge $e$ of $H$ intersects $F$ in at least two vertices, 
		then it belongs to $F$.
		\item\label{it:1836b} If an edge $e$ of $H$ has only a single vertex $x$ with $F$ 
		in common, then $x$ is non-isolated in $F$.
		\item\label{it:1836c} If $F$ has no edges, then neither does $H$. \hfill $\Box$
\end{enumerate}
\end{fact}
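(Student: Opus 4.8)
The plan is to verify the two implications of the equivalence separately, working directly from the definition $F\Str H$, which asks that every edge $e\in E(H)$ admit an edge $f\in E(F)$ with $e\cap V(F)\subseteq f$. The linearity hypothesis on $H$ will be invoked in only one place, namely when showing that property~\ref{it:1836a} is \emph{necessary}; everywhere else the argument is a short case distinction according to the size of $e\cap V(F)$.

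For the forward direction I would assume $F\Str H$ and check the three items in turn. If $F$ had no edges while $H$ had an edge $e$, there could be no $f\in E(F)$ with $e\cap V(F)\subseteq f$, contradicting $F\Str H$; this gives~\ref{it:1836c}. For~\ref{it:1836b}, if $e\in E(H)$ meets $V(F)$ in a single vertex $x$, strong inducedness yields $f\in E(F)$ with $\{x\}=e\cap V(F)\subseteq f$, so $x$ lies on an edge of $F$ and is non-isolated. For~\ref{it:1836a}, suppose $e\in E(H)$ with $|e\cap V(F)|\ge 2$ and pick $f\in E(F)$ with $e\cap V(F)\subseteq f$. Since $F$ is a subhypergraph of $H$ we have $f\in E(H)$, and from $e\cap V(F)\subseteq e\cap f$ we get $|e\cap f|\ge 2$, so linearity of $H$ forces $e=f\in E(F)$, as required.

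For the converse I would assume~\ref{it:1836a}--\ref{it:1836c} and take an arbitrary $e\in E(H)$, seeking $f\in E(F)$ with $e\cap V(F)\subseteq f$. If $|e\cap V(F)|\ge 2$, then~\ref{it:1836a} gives $e\in E(F)$ and $f=e$ works. If $e\cap V(F)=\{x\}$ is a singleton, then~\ref{it:1836b} furnishes an edge $f\in E(F)$ with $x\in f$, so $e\cap V(F)=\{x\}\subseteq f$. If $e\cap V(F)=\vn$, then $E(H)\ne\vn$, so the contrapositive of~\ref{it:1836c} gives $E(F)\ne\vn$, and any $f\in E(F)$ works since $\vn\subseteq f$. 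In all three cases the desired edge exists, hence $F\Str H$.

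I do not expect a genuine obstacle here: the only point that needs care is isolating the edgeless degenerate cases, which is exactly what~\ref{it:1836c} does, and the sole substantive use of linearity is to promote "$e$ shares at least two vertices with some edge of $F$" to "$e$ is itself an edge of $F$". Everything else is bookkeeping about the three ranges of $|e\cap V(F)|$.
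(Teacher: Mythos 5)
Your proof is correct and complete; the paper itself leaves Fact~\ref{fact:1836} unproved (the statement ends with a $\Box$), so there is no printed argument to compare against. Your verification is exactly the routine one the authors had in mind: linearity is needed only in the forward direction for~\ref{it:1836a}, the backward direction splits cleanly on $|e\cap V(F)|\in\{0,1,\ge 2\}$, and~\ref{it:1836c} is there precisely to dispose of the edgeless degenerate case.
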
 

Clearly, of these conditions~\ref{it:1836a} is the most important one. 
The next question we would like to address is why the clean partite lemma $\CPL$ 
is linear. 

\begin{lemma}\label{lem:2207}
	If $\Phi$ and $\Xi$ are linear, then so is $\PC(\Phi, \Xi)$.
\end{lemma}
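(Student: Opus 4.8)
The plan is to argue by induction along the partite construction, in the spirit of the proof of Lemma~\ref{lem:clean-preserve}. Fix a linear hypergraph $F$ and a number of colours $r$, and put $(G, \ccG) = \Phi_r(F)$; since $\Phi$ is linear, the hypergraph $G$ is linear, and it stays the vertical hypergraph throughout the run of $\PC(\Phi, \Xi)_r(F)$. I will show that every picture $(\Pi_\alpha, \ccP_\alpha, \psi_\alpha)$ occurring in this construction has a linear underlying hypergraph $\Pi_\alpha$. This is exactly what is needed: on the one hand it makes each constituent $\Pi_{\alpha-1}^{e(\alpha)}$ linear (as a subhypergraph of the linear hypergraph $\Pi_{\alpha-1}$) and hence a legitimate input for the linear partite lemma $\Xi$, so that the construction is even well defined; on the other hand it gives in particular that the final hypergraph $\Pi_N$ is linear, which is the assertion of the lemma.

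For the base case, $\Pi_0$ is a vertex-disjoint union of copies of $F$ and is therefore linear. For the induction step, assume $(\Pi, \ccP, \psi_\Pi)$ is a linear picture over $(G, \ccG)$, fix $e \in E(G)$, set $(H, \ccH) = \Xi_r(\Pi^e)$ --- so $H$ is linear because $\Xi$ is linear --- and let $(\Sigma, \ccQ, \psi_\Sigma) = (\Pi, \ccP, \psi_\Pi) \conc (H, \ccH)$. I must check that $\Sigma$ is linear. Recall that $E(\Sigma) = \bigcup_{\Pi^e_\star \in \ccH} E(\Pi_\star) \cup E(H)$, where the standard copies $\Pi_\star$ are each isomorphic to $\Pi$ and hence linear; that $V(\Pi_\star) \cap V(\Pi_{\star\star}) = V(\Pi^e_\star) \cap V(\Pi^e_{\star\star}) \subseteq V(H)$ for distinct standard copies; and that $E(H) = E(\Sigma^e) = \{g \in E(\Sigma) \colon \psi_\Sigma[g] = e\}$.

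The one new ingredient is the observation that any edge $f \in E(\Sigma)$ with $\psi_\Sigma[f] \ne e$ satisfies $|f \cap V(H)| \le 1$. Indeed $V(H)$ is the union of the music lines over the vertices of $e$, every edge of a picture meets each music line at most once, and the set of music lines met by $f$ is precisely $\psi_\Sigma[f]$; hence $|f \cap V(H)| = |\psi_\Sigma[f] \cap e| \le 1$, the last inequality because $\psi_\Sigma[f]$ and $e$ are two distinct edges of the linear hypergraph $G$. Now take distinct edges $f, f'$ of $\Sigma$. If $\psi_\Sigma[f] = \psi_\Sigma[f'] = e$, then $f, f' \in E(H)$ and $|f \cap f'| \le 1$ by linearity of $H$. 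If $\psi_\Sigma[f] \ne e$ and $f' \in E(H)$ (or symmetrically), then $f \cap f' \subseteq f \cap V(H)$ has at most one vertex. Finally, if $\psi_\Sigma[f] \ne e$ and $\psi_\Sigma[f'] \ne e$, then $f \in E(\Pi_\star)$ and $f' \in E(\Pi_{\star\star})$ for some standard copies: if $\star = \star\star$ then $|f \cap f'| \le 1$ because $\Pi_\star$ is linear, and if $\star \ne \star\star$ then $f \cap f' \subseteq V(\Pi_\star) \cap V(\Pi_{\star\star}) \subseteq V(H)$, so again $|f \cap f'| \le |f \cap V(H)| \le 1$. Thus $\Sigma$ is linear, and the induction goes through.

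The only delicate point --- and the sole place where linearity of $G$, rather than of $\Pi$ or of $H$, is invoked --- is the last case above: one has to prevent two edges that live in different standard copies, or one edge from a standard copy and one from $H$, from sharing two vertices, and the key observation is precisely what rules this out. Everything else is bookkeeping with the description of the partite amalgamation.
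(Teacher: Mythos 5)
Your proof is correct and takes essentially the same route as the paper: an induction along the partite construction, with the inductive step handled by the case analysis on where the two edges live relative to $H$ and the standard copies, and with linearity of $G$, $\Pi$ and $H$ each entering in the expected place. The only stylistic difference is that you factor out the intermediate observation $\psi_\Sigma[f]\ne e \Rightarrow |f\cap V(H)|\le 1$ before starting the case split, whereas the paper embeds the same computation (projecting into $e\cap f$ and invoking linearity of $G$, then injectivity along a music line) directly inside the final case; this is cosmetic, not a different argument. One very small imprecision in your closing remark: linearity of $G$ is also what powers the mixed case ($f'\in E(H)$, $\psi_\Sigma[f]\ne e$) via the same key observation, not only the last case.
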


\begin{proof}
	We prove inductively that all pictures encountered in the partite construction
	are linear. There is no problem with picture zero and, therefore, it suffices to 
	establish the following statement. 
		\begin{quotation}
		\it
		If $(\Pi, \ccP, \psi_\Pi)$ is a linear picture over a linear system $(G, \ccG)$, 
		$e\in E(G)$, and the copies of the linear system $(H, \ccH)$ are isomorphic to 
		the constituent $\Pi^e$, then the picture  
		\[	
			(\Sigma, \ccQ, \psi_\Sigma) = (\Pi, \ccP, \psi_\Pi) \conc (H, \ccH)
		\]
		is linear again. 
	\end{quotation}
		
	Given any two distinct edges $f', f''\in E(\Sigma)$ we are to
	prove $|f'\cap f''|\le 1$. 
	If their projections $\psi_\Sigma[f']$ and $\psi_\Sigma[f'']$ are distinct, 
	this follows from the fact that $G$ is linear. So we may assume 
	that $f=\psi_\Sigma[f']=\psi_\Sigma[f'']$ holds for some $f\in E(G)$. In the 
	special case $e=f$ we may appeal to the linearity of $H$, so suppose $f\ne e$
	from now on. Now $f', f''\not\in E(H)$ implies that there are unique standard 
	copies $\Pi'$ and $\Pi''$ in $\Sigma$ containing~$f'$ and~$f''$, respectively. 
	
	If $\Pi'=\Pi''$, then the linearity of $\Pi$ leads to the desired conclusion, 
	so it remains to consider the case $\Pi'\ne \Pi''$. Given any two vertices
	$x, y\in f'\cap f''$ we need to prove $x=y$. Since the standard copies $\Pi'$ 
	and~$\Pi''$
	were constructed to be as disjoint as possible, we 
	have 
		\[
		f'\cap f''\subseteq V(\Pi')\cap V(\Pi'')\subseteq V(H)\,,
	\]
		whence $\psi_\Sigma(x), \psi_\Sigma(y)\in e\cap f$. Owing to the linearity of~$H$ 
	this yields $\psi_\Sigma(x)=\psi_\Sigma(y)$. In other words, $x$ and $y$ are on 
	the same music line.
	But~$f'$ intersects this music line only once and, consequently, we have 
	indeed $x=y$.    
\end{proof}

\begin{cor}\label{cor:2217}
	The clean partite lemma $\CPL$ is linear.
	\index{clean partite lemma}
\end{cor}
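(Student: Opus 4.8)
The plan is to derive the corollary immediately from the two preceding lemmas. First I would record that Lemma~\ref{lem:1819} says exactly that the Hales--Jewett construction $\HJ$ is a \emph{linear} partite lemma: applied to a linear $k$-partite $k$-uniform hypergraph $F$, the system $\HJ_r(F)=(H,\ccH)$ has linear underlying hypergraph $H$. Since $\HJ$ is in particular a construction, it is thereby eligible to occupy either slot of the operation $\PC(\cdot,\cdot)$ as a linear argument in the sense of \S\ref{sssec:lin}.

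Next I would apply Lemma~\ref{lem:2207} with $\Phi=\Xi=\HJ$: since both inputs are linear, the lemma yields that $\CPL=\PC(\HJ,\HJ)$ is linear, which is the assertion. Combined with Corollary~\ref{cor:CPL}, which already exhibits $\CPL$ as a partite lemma, this shows that $\CPL$ is in fact a \emph{linear} partite lemma, as one wants for later use.

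There is essentially no obstacle, since all the real work has been done upstream; the only point worth a second thought is the well-definedness of $\CPL$ on linear inputs, namely that every constituent $\Pi_{\alpha-1}^{e(\alpha)}$ arising in the partite construction remains linear, so that $\HJ$ (which needs a linear input to guarantee a linear output) may legitimately be invoked at each step. But this is not an additional task: it is precisely the content of the inductive argument behind Lemma~\ref{lem:2207}, in which each amalgamated picture $\Sigma=(\Pi,\ccP,\psi_\Pi)\conc(H,\ccH)$ --- and hence each of its constituents --- is shown to be linear. Thus the proof collapses to the single implication ``$\HJ$ linear (Lemma~\ref{lem:1819}) $\;\Longrightarrow\;$ $\PC(\HJ,\HJ)$ linear (Lemma~\ref{lem:2207})''.
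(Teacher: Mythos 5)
Your proposal is correct and takes the same approach as the paper: cite Lemma~\ref{lem:1819} to obtain that $\HJ$ is linear, then apply Lemma~\ref{lem:2207} to conclude that $\CPL=\PC(\HJ,\HJ)$ is linear. The paper's own proof is exactly this two-line deduction, and your remark about well-definedness of the construction being handled inside the inductive proof of Lemma~\ref{lem:2207} is a correct observation rather than a gap.
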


\begin{proof}
	By Lemma~\ref{lem:1819} the Hales-Jewett construction $\HJ$ is linear; so 
	Lemma~\ref{lem:2207} tells us that $\CPL=\PC(\HJ, \HJ)$ is linear as well. 
\end{proof}

Recall that by Corollary~\ref{cor:CPL} the partite lemma $\CPL$ delivers systems whose copies have 
clean intersections. We observe that if $(H, \ccH)$ is a {\it linear system}, 
then the copies in $\ccH$
have clean intersections if and only if for any two distinct 
copies $F_\star, F_{\star\star}\in\ccH$ the following three statements hold.
\begin{enumerate}[label=\rmlabel]
	\item\label{it:1519a} If $|V(F_\star)\cap V(F_{\star\star})|\ge 2$, then there exists an 
		edge $e\in E(F_\star)\cap E(F_{\star\star})$ with $V(F_\star)\cap V(F_{\star\star})=e$
	\item\label{it:1519b} If $V(F_\star)\cap V(F_{\star\star})$ consists of a single 
		vertex, then this vertex is non-isolated in $F_\star$ and $F_{\star\star}$. 
	\item\label{it:1519c} $E(F_\star), E(F_{\star\star})\ne\vn$.  	
\end{enumerate}
Again, the main property of relevance is~\ref{it:1519a}.

\begin{lemma}\label{lem:1527}
	If $\Omega$ is an arbitrary Ramsey construction and $\Xi$ denotes a linear partite
	lemma delivering systems with strongly induced copies whose intersections are 
	clean, then $\PC(\Omega, \Xi)$ is a linear construction. 
\end{lemma}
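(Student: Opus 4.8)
The plan is to prove, by induction along the partite construction, that every picture produced by $\PC(\Omega, \Xi)_r(F)$ has a linear underlying hypergraph; this is in fact all that needs checking. On the one hand it guarantees that the construction is well defined, since each constituent $\Pi_{\alpha-1}^{e(\alpha)}$ is then a $k$-partite $k$-uniform subhypergraph of a linear hypergraph, hence linear, hence a legitimate input for the linear partite lemma $\Xi$. On the other hand, once the final $\Pi_N$ is linear, the output $(\Pi_N, \ccP_N)$ of $\PC(\Omega, \Xi)_r(F)$ is a linear system, as the notion of a linear construction demands. The base case is immediate: with $(G, \ccG) = \Omega_r(F)$, picture zero is a vertex-disjoint union of copies of the linear hypergraph $F$.

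For the induction step I would follow the proof of Lemma~\ref{lem:2207} as far as possible. Write $\Pi := \Pi_{\alpha-1}$ (linear by the inductive hypothesis), $e := e(\alpha)$, $(H, \ccH) := \Xi_r(\Pi^e)$ and $(\Sigma, \ccQ, \psi_\Sigma) := (\Pi, \ccP, \psi_\Pi) \conc (H, \ccH)$; by the hypothesis on $\Xi$ the hypergraph $H$ is linear and the copies in $\ccH$ are strongly induced in $H$ with clean intersections. Given two distinct edges $f', f'' \in E(\Sigma)$ one checks $|f' \cap f''| \le 1$ by cases: if $f', f'' \in E(H) = E(\Sigma^e)$, this is the linearity of $H$; if $f'$ and $f''$ lie in a common standard copy $\Pi_\star$ of $\Pi$, it is the linearity of $\Pi_\star \cong \Pi$. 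The new cases — disposed of in Lemma~\ref{lem:2207} by the (here unavailable) linearity of $G$, since $f'$ and $f''$ then have distinct $\psi_\Sigma$-images — are, up to symmetry, (i) $f' \in E(H)$ and $f'' \notin E(H)$, so that $f''$ lies in a standard copy $\Pi_\star$ (unique since $f'' \notin E(H)$, as in the proof of Lemma~\ref{lem:2207}), and (ii) $f', f''$ lie in distinct standard copies $\Pi_\star \ne \Pi_{\star\star}$. In case~(i) one has $f' \cap f'' \subseteq V(\Pi_\star) \cap V(H) = V(\Pi_\star^e)$ by construction of the partite amalgamation, and strong inducedness of $\Pi_\star^e \in \ccH$ in $H$ furnishes an edge $g \in E(\Pi_\star^e)$ with $f' \cap V(\Pi_\star^e) \subseteq g$; then $f' \cap f'' \subseteq g \cap f''$ with $g, f''$ distinct edges of the linear hypergraph $\Pi_\star$ (distinct because $g \in E(H)$ while $f'' \notin E(H)$), so $|f' \cap f''| \le 1$. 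In case~(ii) one has $f' \cap f'' \subseteq V(\Pi_\star^e) \cap V(\Pi_{\star\star}^e)$, clean intersections of $\ccH$ provide edges $g_\star \in E(\Pi_\star^e)$ and $g_{\star\star} \in E(\Pi_{\star\star}^e)$ with $V(\Pi_\star^e) \cap V(\Pi_{\star\star}^e) = g_\star \cap g_{\star\star}$, and then $f' \cap f'' \subseteq f' \cap g_\star$ with $f' \ne g_\star$, so linearity of $\Pi_\star$ again gives $|f' \cap f''| \le 1$.

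The crux — and the step I expect to need the most care in writing up — is the reduction underlying cases~(i) and~(ii): two edges of $\Sigma$ with different $\psi_\Sigma$-images can meet only inside a single \emph{seam} $V(\Pi_\star^e)$, respectively $V(\Pi_\star^e) \cap V(\Pi_{\star\star}^e)$, and there strong inducedness (resp.\ clean intersections) pins their meeting inside one edge $g$ of $H$ that is already an edge of the relevant, inductively linear, standard copy, which throws the problem back onto that copy. The degenerate configurations where $\Pi^e$, $H$, or $F$ is edgeless make the relevant cases vacuous — a strongly induced edgeless subhypergraph forces the ambient hypergraph to be edgeless — and need only be mentioned. Finally, although it is not needed for the statement, it is worth noting that Lemma~\ref{lem:clean-preserve} simultaneously shows $\PC(\Omega, \Xi)$ still delivers strongly induced copies with clean intersections, so Lemma~\ref{lem:1527} slots neatly into the series of preservation results.
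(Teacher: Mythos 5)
Your proof is correct and follows essentially the same route as the paper's. The only difference is presentational: the paper phrases the core step as an extremal contradiction (choose a violating pair $\{f_\star, f_{\star\star}\}$ maximizing the number $\Lambda$ of edges in $E(H)$, and show $\Lambda=0$ reduces to $\Lambda\ge1$ via clean intersections and $\Lambda=1$ resolves via strong inducedness), whereas you give a direct case analysis; the ingredients — linearity of $H$ and of the standard copies, strong inducedness to confine $f'\cap f''$ to an edge of $\Pi_\star^e$, and clean intersections for the two-standard-copy case — are identical.
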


\begin{proof}
	Arguing by induction along the partite construction it suffices to prove
	the following picturesque statement.
	\begin{quotation}
		\it
		If $(\Pi, \ccP, \psi_\Pi)$ is a linear picture over a system of 
		hypergraphs $(G, \ccG)$ and $(H, \ccH)$ is a linear $k$-partite $k$-uniform 
		system whose copies are strongly induced and have clean intersections, then 
		the picture 
				\[
			(\Sigma, \ccQ, \psi_\Sigma)=(\Pi, \ccP, \psi_\Pi)\conc (H, \ccH)
		\]
				is linear as well. 
	\end{quotation}
	
	Let us emphasise that while we are assuming here that the 
	hypergraphs $\Pi$ and $H$ are linear, it is allowed that the vertical 
	projection $G$ fails to be linear.
	Assume for the sake of contradiction that $\Sigma$ is nonlinear. Choose a pair 
	of distinct edges $f_\star, f_{\star\star}\in E(\Sigma)$ 
	whose intersection $t=f_\star\cap f_{\star\star}$
	satisfies $|t|\ge 2$ and such that, subject to 
	this, $\Lambda=|E(H)\cap \{f_\star, f_{\star\star}\}|$ is maximal.

	Suppose first that $\Lambda=0$, i.e., that $f_\star, f_{\star\star}\not\in E(H)$. 
	Denote the unique standard copies containing $f_\star$ and $f_{\star\star}$
	by $\Pi_\star$ and $\Pi_{\star\star}$, respectively. 
	Since $\Pi$ is linear, we have $\Pi_\star\ne\Pi_{\star\star}$.  
	Let $\Pi^e_\star, \Pi^e_{\star\star}\in \ccH$ be the copies of the 
	constituent $\Pi^e$ extended by $\Pi_\star$ and $\Pi_{\star\star}$. 
	Recall that these copies are linear and that their intersection is clean.
	Together with $t\subseteq V(\Pi^e_\star)\cap V(\Pi^e_{\star\star})$
	this proves that they have an edge $f$ in common. But now $t\subseteq f_\star\cap f$
	and $f\in E(H)$ show that the pair $\{f_\star, f\}$ contradicts the 
	maximality of $\Lambda$. 
	
	Let us deal with the case $\Lambda=1$ next. By symmetry we may suppose that    
	$f_\star\not\in E(H)$ and $f_{\star\star}\in E(H)$. Define the standard 
	copy $\Pi_\star$ and $\Pi^e_\star\in\ccH$ as in the foregoing paragraph. 
	Due to $t\subseteq V(\Pi^e_\star)\cap f_{\star\star}$ the fact that $\Pi^e_\star$
	is strongly induced in $H$ shows that the edge $f_{\star\star}$ belongs 
	to $\Pi_\star$. Thus we get a contradiction to the linearity of $\Pi$.
	
	Altogether we have thereby proved $\Lambda=2$, i.e., that necessarily 
	$f_\star, f_{\star\star}\in E(H)$. But this contradicts the linearity of $H$. 	    
\end{proof}

\begin{cor}\label{cor:0059}
	The construction $\Omega^{(2)}=\PC(\Rms, \CPL)$ is linear. 
	\index{$\Omega^{(2)}$}
\end{cor}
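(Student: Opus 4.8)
The statement should follow immediately by invoking Lemma~\ref{lem:1527} with $\Omega=\Rms$ and $\Xi=\CPL$. So the plan is simply to verify that $\CPL$ meets all the hypotheses imposed on $\Xi$ there, and that $\Rms$ is an admissible choice for $\Omega$.

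First I would recall that $\CPL$ qualifies as a partite lemma: this was established in the course of the proof of Corollary~\ref{cor:CPL}, where it was checked that for a $k$-partite $k$-uniform hypergraph $F$ the output $\CPL_r(F)=(\Pi,\ccP)$ consists of a $k$-partite $k$-uniform hypergraph $\Pi$ together with a system of induced partite copies $\ccP\subseteq\binom{\Pi}{F}^\pt$ with $\ccP\lra(F)_r$. Next, by Corollary~\ref{cor:2217} the partite lemma $\CPL$ is linear, i.e.\ applied to a linear input it returns a linear system. Finally, Corollary~\ref{cor:CPL} tells us that $\CPL$ delivers systems of strongly induced copies with clean intersections. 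Thus $\CPL$ is precisely a linear partite lemma delivering systems with strongly induced copies whose intersections are clean, which is exactly the hypothesis on $\Xi$ in Lemma~\ref{lem:1527}.

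Since $\Rms$ is a Ramsey construction in the required sense, Lemma~\ref{lem:1527} applies with $\Omega=\Rms$ and $\Xi=\CPL$ and yields that $\PC(\Rms,\CPL)$ is a linear construction. As $\Omega^{(2)}$ is by definition $\PC(\Rms,\CPL)$, this is the desired conclusion.

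I do not anticipate any obstacle here: the corollary is a direct specialisation of Lemma~\ref{lem:1527}, all of whose hypotheses have already been secured by Corollaries~\ref{cor:CPL} and~\ref{cor:2217}. The only thing worth spelling out is that $\CPL$ is genuinely a \emph{linear} partite lemma, but that is exactly the content of Corollary~\ref{cor:2217} combined with the partite-lemma status verified in Corollary~\ref{cor:CPL}.
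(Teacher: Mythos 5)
Your proof is correct and takes exactly the same route as the paper: the paper's proof of this corollary is the one-liner that the hypotheses of Lemma~\ref{lem:1527} are satisfied by Corollary~\ref{cor:CPL} and Corollary~\ref{cor:2217}. You simply spell out the verification in slightly more detail.
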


\begin{proof}
	The assumptions of Lemma~\ref{lem:1527} are satisfied by Corollary~\ref{cor:CPL}
	and Corollary~\ref{cor:2217}.
\end{proof}

\subsection{\texorpdfstring{$A$}{\it A}-intersecting hypergraphs}
\label{subsec:2346}

The definition of the train hypergraphs we shall study later 
(see Figure~\ref{fig:N3}) will contain a demand that certain kinds of edges 
are allowed to intersect in certain vertex classes only. As a very modest step 
into this direction we show in this subsection that the construction $\Omega^{(2)}$ preserves such a property. Later this result will contribute to the base case of our
main induction (see Lemma~\ref{lem:0120}).

\begin{dfn}\label{dfn:n38}
	Given a finite index set $I$ let $f\colon I\lra \NN$ be a function such 
	that $\sum_{i\in I}f(i)$ is at least $2$. Further, let $A$ be a subset
	of $I$ and let $F$ be an $f$-partite hypergraph.
	\index{$A$-intersecting hypergraph}
		\begin{enumerate}[label=\alabel]
		\item\label{it:n38a} We set $V_A(F)=\bigcup_{i\in A}V_i(F)$.
		\item\label{it:n38b} If $e\cap e'\subseteq V_A(F)$ holds for any two distinct 
			edges $e$, $e'$ of $F$ we say that $F$ is {\it $A$-intersecting}. 
	\end{enumerate}
	\end{dfn}

An $f$-partite Ramsey construction or a partite lemma $\Phi$ is said to  
be $A$-intersecting if whenever $\Phi_r(F)=(H, \ccH)$ and $F$ is $A$-intersecting 
for some subset $A$ of the relevant index set, then so is $H$. The next three 
lemmata show that $\HJ$, $\CPL$, and $\Omega^{(2)}$ have this property. 

\begin{lemma}\label{lem:n381}
	The Hales-Jewett construction $\HJ$ is $A$-intersecting. 
	\index{Hales-Jewett construction}
\end{lemma}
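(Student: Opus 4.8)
The plan is to unwind the Hales-Jewett construction and track where intersections of edges of $H$ can land. Suppose $F$ is $f$-partite with index set $I$ and $A$-intersecting for some $A\subseteq I$, and write $\HJ_r(F)=(H,\ccH)$. The degenerate case $E(F)=\vn$ gives $H=F$, which is trivially $A$-intersecting, so assume $E(F)\ne\vn$ and let $n$ be the Hales-Jewett exponent, with canonical bijection $\lambda\colon E(F)^n\lra E(H)$. Recall that the vertex classes are $V_i(H)=V_i(F)^n$ and that for an $n$-tuple $(e_1,\ldots,e_n)$ of edges of $F$, the edge $e=\lambda(e_1,\ldots,e_n)$ meets $V_i(H)$ in the vertex whose $\nu$-th coordinate is the vertex of $e_\nu$ lying in $V_i(F)$ — i.e.\ coordinatewise the construction just copies $F$. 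This coordinatewise description is exactly what makes the argument go through, and it runs parallel to the proof of Lemma~\ref{lem:1819}.

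The key step: take two distinct edges $e=\lambda(e_1,\ldots,e_n)$ and $e'=\lambda(e'_1,\ldots,e'_n)$ of $H$, and let $v\in e\cap e'$ be a common vertex. Say $v\in V_i(H)=V_i(F)^n$, so $v=(x_1,\ldots,x_n)$ with $x_\nu\in V_i(F)$ for each $\nu$. Because $v\in e$, coordinatewise we get $x_\nu\in e_\nu$ for every $\nu\in[n]$; likewise $v\in e'$ gives $x_\nu\in e'_\nu$ for every $\nu$. Now pick a coordinate $\nu_0$ with $e_{\nu_0}\ne e'_{\nu_0}$ — such a coordinate exists since $e\ne e'$ forces $(e_1,\ldots,e_n)\ne(e'_1,\ldots,e'_n)$. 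Then $x_{\nu_0}\in e_{\nu_0}\cap e'_{\nu_0}$ with $e_{\nu_0}\ne e'_{\nu_0}$, so by the $A$-intersecting hypothesis on $F$ we have $x_{\nu_0}\in V_A(F)$, i.e.\ $x_{\nu_0}\in V_j(F)$ for some $j\in A$. But $x_{\nu_0}\in V_i(F)$ as well, so $i=j\in A$, which means $v\in V_A(H)$.

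Since $v\in e\cap e'$ was arbitrary, this shows $e\cap e'\subseteq V_A(H)$, and since $e,e'$ were arbitrary distinct edges of $H$, the hypergraph $H$ is $A$-intersecting, as required. I do not anticipate a genuine obstacle here: the only mild subtlety is handling the degenerate no-edge case separately (as in Lemma~\ref{lem:hj-str} and Lemma~\ref{lem:1819}), and being careful that the coordinate $\nu_0$ with $e_{\nu_0}\ne e'_{\nu_0}$ is the one we feed into the $A$-intersecting property of $F$ rather than trying to argue globally. Everything else is a direct coordinatewise transfer of the defining property from $F$ to $H$.
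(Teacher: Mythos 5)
Your proof is correct and follows essentially the same route as the paper's: both fix a coordinate $\nu$ where the two preimage tuples differ, apply the $A$-intersecting property of $F$ in that coordinate, and transfer the conclusion to $H$. The paper states the transfer more tersely (``$e_\nu\cap e'_\nu\subseteq V_A(F)$, and $e\cap e'\subseteq V_A(H)$ follows''), whereas you spell out the vertex-by-vertex, coordinate-by-coordinate tracking; this extra detail is harmless and arguably clarifying.

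One small caveat: the Hales-Jewett construction $\HJ$ as defined in \S\ref{sssec:HJ} is a partite lemma applicable only to $k$-partite $k$-uniform hypergraphs, so stating the setup with ``$f$-partite'' is a minor overreach — the argument as written uses the fact that each edge meets each vertex class in exactly one vertex, which is the $k$-partite $k$-uniform case. This does not affect the validity of the argument, only the phrasing of the hypothesis.
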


\begin{proof}
	Let $\HJ_r(F)=(H, \ccH)$ for some $A$-intersecting $k$-partite $k$-uniform 
	hypergraph~$F$ and for some number of colours~$r$. As usual, we denote 
	the implied Hales-Jewett exponent by~$n$ and we let $\lambda\colon E(F)^n\lra E(H)$
	be the canonical bijection. 
	
	Given any two distinct edges $e$, $e'$ of $H$ we are to 
	prove $e\cap e'\subseteq V_A(H)$. To this end we write 
	$e=\lambda(e_1, \dots, e_n)$ and $e'=\lambda(e'_1, \dots, e'_n)$
	with appropriate edges $e_1, \dots, e_n$ and $e'_1, \dots, e'_n$ of $F$. 
	Due to $e\ne e'$ there exists a coordinate direction $\nu\in [n]$ 
	such that $e_\nu\ne e'_\nu$. Since $F$ is $A$-intersecting, 
	we have $e_\nu\cap e'_\nu\subseteq V_A(F)$, and $e\cap e'\subseteq V_A(H)$
	follows. 
\end{proof} 

\begin{lemma}\label{lem:n382}
	The clean partite lemma $\CPL$ is $A$-intersecting.
	\index{clean partite lemma} 
\end{lemma}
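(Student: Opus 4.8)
The plan is to run an induction along the partite construction, exactly in the style of the proofs of Lemmata~\ref{lem:2207} and~\ref{lem:clean-preserve}, now keeping track of where edges may intersect. Fix an $A$-intersecting $k$-partite $k$-uniform hypergraph $F$ with index set $I$ and subset $A\subseteq I$, together with a number of colours $r$. Recall that $\CPL_r(F)=\PC(\HJ,\HJ)_r(F)$ is formed by first setting $(G,\ccG)=\HJ_r(F)$ and then generating a sequence of pictures $(\Pi_\alpha,\ccP_\alpha,\psi_\alpha)_{0\le\alpha\le N}$ over $(G,\ccG)$, applying the partite lemma $\HJ$ to a constituent in each step. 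By Lemma~\ref{lem:n381} the hypergraph $G$ is $A$-intersecting, and, as noted in the proof of Corollary~\ref{cor:CPL}, the final hypergraph $\Pi_N$ is $k$-partite $k$-uniform with index set $I$ and $V_i(\Pi_N)=\psi_N^{-1}(V_i(G))$, so that $V_A(\Pi_N)=\psi_N^{-1}(V_A(G))$. Hence it suffices to prove, by induction on $\alpha$, that for any two distinct edges $f',f''\in E(\Pi_\alpha)$ one has $\psi_\alpha[f'\cap f'']\subseteq V_A(G)$.

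For picture zero this is immediate: two distinct edges of $\Pi_0$ lying in the same copy $\wh F_\star$ are carried by the isomorphism that $\psi_0$ establishes between $\wh F_\star$ and $F_\star\in\ccG$ to two distinct edges of $F_\star$; since $\ccG\subseteq\binom GF^\pt$ and $F$ is $A$-intersecting, the copy $F_\star$ is $A$-intersecting in $G$, so those two edges meet inside $V_A(F_\star)\subseteq V_A(G)$; and two edges lying in distinct copies $\wh F_\star,\wh F_{\star\star}$ are disjoint.

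For the inductive step, write $(\Pi_\alpha,\ccP_\alpha,\psi_\alpha)=(\Pi,\ccP,\psi_\Pi)\conc(H,\ccH)$ with $\Pi=\Pi_{\alpha-1}$, $e=e(\alpha)\in E(G)$, and $(H,\ccH)=\HJ_r(\Pi^e)$. First I would record that, viewed as a $k$-partite $k$-uniform hypergraph with index set $e$, the constituent $\Pi^e$ is $A^e$-intersecting for $A^e:=e\cap V_A(G)$: two distinct edges of $\Pi^e$ are distinct edges of $\Pi$, so by the induction hypothesis their intersection projects into $V_A(G)$, while being edges of the constituent it also lies in $\psi_\Pi^{-1}(e)$, hence in $\psi_\Pi^{-1}(e\cap V_A(G))=\bigcup_{x\in A^e}V_x$. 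Applying Lemma~\ref{lem:n381} to $\HJ_r(\Pi^e)$ then shows that $H$ is $A^e$-intersecting. Now let $f',f''$ be distinct edges of $\Sigma:=\Pi_\alpha$. If $\psi_\Sigma[f']\neq\psi_\Sigma[f'']$, then $\psi_\Sigma[f'\cap f'']$ is contained in the intersection of two distinct edges of $G$, hence in $V_A(G)$ because $G$ is $A$-intersecting. If $\psi_\Sigma[f']=\psi_\Sigma[f'']=e$, then $f',f''\in E(H)$ and we conclude from $H$ being $A^e$-intersecting together with $A^e\subseteq V_A(G)$. Otherwise $\psi_\Sigma[f']=\psi_\Sigma[f'']=f$ for some $f\in E(G)\setminus\{e\}$; then $f',f''\notin E(H)$, so each of them lies in a unique standard copy, say $\Pi'$ and $\Pi''$. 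If $\Pi'=\Pi''$, the induction hypothesis transported through the canonical isomorphism $\Pi'\cong\Pi$ gives the claim; if $\Pi'\neq\Pi''$, then since the standard copies are made as disjoint as possible, $f'\cap f''\subseteq V(\Pi')\cap V(\Pi'')\subseteq V(H)$, so $\psi_\Sigma[f'\cap f'']\subseteq e$, and it also lies in $\psi_\Sigma[f']=f$, whence in $e\cap f\subseteq V_A(G)$.

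I expect the only genuine obstacle to be getting this bookkeeping right: identifying $A^e=e\cap V_A(G)$ as the correct subset attached to the constituent $\Pi^e$, and phrasing the induction hypothesis purely through the projections $\psi_\alpha$ — since the intermediate hypergraphs $\Pi_\alpha$ carry no other $I$-index structure — so that it is at once weak enough to survive each amalgamation and strong enough to feed Lemma~\ref{lem:n381} for each new $H$. Once this is set up, every case reduces to a one-line projection computation, exactly as for the $A$-independent invariants treated earlier.
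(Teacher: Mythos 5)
Your proof is correct and takes essentially the same approach as the paper: an induction along the partite construction with the identical three-way case split in the inductive step (distinct projections, projection equals the amalgamation edge, projection equals some other edge). You make explicit one point the paper leaves implicit — namely that the constituent $\Pi^{e}$ is $A^e$-intersecting for $A^e = e\cap V_A(G)$, so that Lemma~\ref{lem:n381} applies at every stage, not just to the vertical system — but this is merely spelling out a step, not a different argument.
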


\begin{proof}
	Let an $A$-intersecting $k$-partite $k$-uniform hypergraph $F$ and a number 
	of colours~$r$ be given. Due to $\CPL=\PC(\HJ, \HJ)$ every picture encountered 
	in the construction of~$\CPL_r(F)$ possesses a $k$-partite structure and we 
	shall prove inductively that all these pictures are $A$-intersecting. 
	As this is clear for picture zero and the vertical system $\HJ_r(F)$ 
	is $A$-intersecting, it thus suffices to prove the following statement. 
	
	\begin{quotation}
	\it 
	Let $G$ be an $A$-intersecting $k$-partite $k$-uniform hypergraph and suppose 
	that 
		\[
		(\Sigma, \ccQ, \psi_\Sigma)=(\Pi, \ccP, \psi_\Pi)\conc (H, \ccH)
	\]
		holds for two pictures $(\Sigma, \ccQ, \psi_\Sigma)$, $(\Pi, \ccP, \psi_\Pi)$
	over some system $(G, \ccG)$, 
	and for a $k$-partite $k$-uniform system $(H, \ccH)$. If $\Pi$ and $H$ are 
	$A$-intersecting, then so is $\Sigma$.   
	\end{quotation}
	
	To verify this we consider any two distinct edges $e'$, $e''$ of $\Sigma$. 
	If their projections to $G$ are distinct, then the assumption that $G$ be 
	$A$-intersecting yields $\psi_\Sigma(e')\cap \psi_\Sigma(e'')\subseteq V_A(G)$
	and the desired inclusion $e'\cap e''\subseteq V_A(\Sigma)$ follows. 
	
	So from now on we may assume $e_\star=\psi_\Sigma(e')=\psi_\Sigma(e'')$
	for some edge $e_\star\in E(G)$. If $e_\star$ coincides with the edge $e\in E(G)$
	over which the amalgamation happens, 
	then $e'\cap e''\subseteq V_A(H)\subseteq V_A(\Sigma)$ is a consequence 
	of $H$ being $A$-intersecting. 
	
	Thus we can assume $e\ne e_\star$ in the sequel, 
	which implies $e\cap e_\star\subseteq V_A(G)$. If $e'$ and $e''$ are in the 
	same standard copy of $\Pi$, we just need to appeal to $\Pi$ being $A$-intersecting
	and if those standard copies are distinct, then $e'\cap e''$
	is contained in $V(H)$ 
	and 
		\[
		\psi_\Sigma[e'\cap e'']\subseteq e\cap e_\star\subseteq V_A(G)
	\]
		leads again to $e'\cap e''\subseteq V_A(\Sigma)$.  
\end{proof}

\begin{lemma}\label{lem:n383}
	The construction $\Omega^{(2)}$ is $A$-intersecting. 
	\index{$\Omega^{(2)}$}
\end{lemma}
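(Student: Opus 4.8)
The plan is to reduce the statement about $\Omega^{(2)}=\PC(\Rms,\CPL)$ to the two results already established, namely that $\CPL$ is $A$-intersecting (Lemma~\ref{lem:n382}) and that $\Rms$ trivially has this property as well, together with a preservation lemma for the partite construction operation $\PC$. So first I would observe that $\Rms$ is $A$-intersecting: when $F$ is $A$-intersecting, $\Rms_r(F)=(G,\ccG)$ is taken with $G$ a complete $f$-partite hypergraph, and by enlarging $V_A(F)$ if necessary we may assume $A=I$, in which case every $f$-partite hypergraph is vacuously $A$-intersecting; alternatively, and more honestly, one notes that if $A\subsetneq I$ then $F$ being $A$-intersecting forces $F$ to be a matching on the classes indexed by $I\setminus A$, and then one takes $G$ to be a correspondingly sparse (rather than complete) $f$-partite hypergraph in the definition of $\Rms_r(F)$. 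The cleanest route is simply to invoke that in the degenerate situation the required inclusion is trivial, and otherwise the relevant $\Rms$-hypergraph can be chosen $A$-intersecting because the product Ramsey theorem applies equally to the $A$-intersecting host.

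Next, the core of the proof is a general statement: if $\Phi$ is an $A$-intersecting $f$-partite Ramsey construction and $\Xi$ is an $A$-intersecting partite lemma, then $\PC(\Phi,\Xi)$ is $A$-intersecting. This is proved by induction along the partite construction, exactly as in the proof of Lemma~\ref{lem:n382}. Given an $A$-intersecting $f$-partite $F$, set $(G,\ccG)=\Phi_r(F)$, which is $A$-intersecting by hypothesis on $\Phi$. Picture zero is $A$-intersecting since disjoint copies of the $A$-intersecting $F$ assemble into an $A$-intersecting hypergraph. For the induction step one shows: if $G$ is $A$-intersecting, $(\Sigma,\ccQ,\psi_\Sigma)=(\Pi,\ccP,\psi_\Pi)\conc(H,\ccH)$ is a partite amalgamation over $(G,\ccG)$ with both $\Pi$ and $H$ being $A$-intersecting, then $\Sigma$ is $A$-intersecting. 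The argument is verbatim the one in Lemma~\ref{lem:n382}: for two distinct edges $e',e''$ of $\Sigma$, either their $\psi_\Sigma$-projections differ, in which case $\psi_\Sigma[e'\cap e'']\subseteq V_A(G)$ by $A$-intersectingness of $G$; or they project to a common edge $e_\star\in E(G)$, and then either $e_\star=e$ (the gluing edge) and we use that $H$ is $A$-intersecting, or $e_\star\ne e$ and we use that $\Pi$ is $A$-intersecting (when $e',e''$ lie in a common standard copy of $\Pi$) or that $e\cap e_\star\subseteq V_A(G)$ combined with $e'\cap e''\subseteq V(H)$ (when they lie in distinct standard copies). In every case $e'\cap e''\subseteq V_A(\Sigma)$.

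Finally, I would apply this general lemma with $\Phi=\Rms$ and $\Xi=\CPL$: $\Rms$ is $A$-intersecting by the observation above, and $\CPL$ is $A$-intersecting by Lemma~\ref{lem:n382}, hence $\Omega^{(2)}=\PC(\Rms,\CPL)$ is $A$-intersecting. The main obstacle I anticipate is not in the partite-construction induction, which is routine once one has seen the proof of Lemma~\ref{lem:n382}, but in the bookkeeping for $\Rms$: one must be careful that the version of $\Rms$ producing the vertical system can indeed be taken to respect the $A$-intersecting property, since the \emph{complete} $f$-partite hypergraph is emphatically not $A$-intersecting when $A\ne I$. The resolution is that whenever $F$ is $A$-intersecting with $A\ne I$ and has at least two edges, one does not need the full complete host: one can restrict to an $A$-intersecting complete-in-the-$A$-coordinates host and still invoke the product Ramsey theorem, so the construction $\Rms$ (suitably read) stays within the $A$-intersecting category; alternatively, since $\Omega^{(2)}$ is only used for the base case of the main induction, one may simply build this choice into the definition of $\Rms$ used there. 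Either way the proof concludes in one line from the general preservation statement.
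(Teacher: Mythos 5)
Your inductive step is the crux, and it does not go through as written: you transport the case analysis from Lemma~\ref{lem:n382} verbatim, and that case analysis begins with ``if the $\psi_\Sigma$-projections differ, use that $G$ is $A$-intersecting.'' For $\Omega^{(2)}=\PC(\Rms,\CPL)$ the vertical hypergraph $G$ produced by $\Rms$ is a \emph{complete} $f$-partite hypergraph, which is not $A$-intersecting whenever $A\ne I$. You see the problem yourself and propose to patch it by redefining $\Rms$ so that it stays inside the $A$-intersecting category, but this patch is genuinely costly and genuinely unsupported: the product Ramsey theorem is stated for complete $f$-partite hosts, not for $A$-intersecting ones, so an $A$-intersecting Ramsey host would have to be constructed separately; and even granting that, the modified $\Rms$ would sit inside every other lemma the paper has already proved about $\Omega^{(2)}$ (strong inducedness, clean intersections, linearity, $\Gth>2$, orderedness), all of which would then need re-verification. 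This is not a bookkeeping nuisance but a genuinely different construction. The opening paragraph's alternative suggestion (``by enlarging $V_A(F)$ if necessary we may assume $A=I$'') also does not work, since $A$ is a given datum of the statement, and enlarging it weakens the conclusion.

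The paper avoids all of this by changing the inductive statement so that $G$ need not be $A$-intersecting at all. The hypothesis shifts from ``$G$ is $A$-intersecting'' to ``$H$ has strongly induced copies with clean intersections'' --- exactly the properties Corollary~\ref{cor:CPL} gives for $\CPL$. The case split is then on whether $e'$ and $e''$ lie in $E(H)$ or not, rather than on their projections to $G$. When $e'\notin E(H)$ and $e''\in E(H)$, strong inducedness $\Pi^e_\star\Str H$ produces an edge $e_\star\in E(\Pi^e_\star)$ with $V(\Pi^e_\star)\cap e''\subseteq e_\star$, which either forces $e''$ into the same standard copy as $e'$ (reduce to $\Pi$ being $A$-intersecting) or bounds $e'\cap e''\subseteq e_\star\cap e''\subseteq V_A(H)$. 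When both are outside $E(H)$ in distinct standard copies, clean intersections yield the covering edge $e_\star$ and one reduces to the previous case. This is not a cosmetic variation of your argument: it is a structurally different inductive hypothesis, and it is the one that actually applies to $\PC(\Rms,\CPL)$ without touching $\Rms$. You should rework the inductive step around the strong-inducedness and clean-intersection hypotheses on $H$ rather than around an $A$-intersecting $G$.
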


\begin{proof}
	Again we argue by induction along the partite construction. There is no problem 
	with picture zero. As $\Omega^{(2)}=\PC(\Rms, \CPL)$ utilises the clean partite 
	lemma, Corollary~\ref{cor:CPL} and Lemma~\ref{lem:n382} show that it suffices to 
	prove the following picturesque statement.
	\begin{quotation}
	\it
	Suppose that
		\[
		(\Sigma, \ccQ, \psi_\Sigma)=(\Pi, \ccP, \psi_\Pi)\conc (H, \ccH)
	\]
		holds for two pictures $(\Sigma, \ccQ, \psi_\Sigma)$, $(\Pi, \ccP, \psi_\Pi)$
	over an $f$-partite system $(G, \ccG)$ and for a $k$-partite $k$-uniform system 	
	$(H, \ccH)$ with strongly induced copies whose intersections are clean. 
	If $\Pi$ and $H$ are $A$-intersecting, then so is $\Sigma$.
	\end{quotation}
	
	Let $e'$ and $e''$ be two distinct edges of $\Sigma$. Each of them is either 
	in $E(H)$ or it is not, and by symmetry there are three possibilities to 
	consider. If both edges belong to $H$, 
	then $e'\cap e''\subseteq V_A(H)\subseteq V_A(\Sigma)$ is clear.
		
	Suppose next that $e'\not\in E(H)$ and $e''\in E(H)$. 
	Let~$\Pi_\star$ be the standard copy of~$\Pi$ containing~$e'$ and 
	let~$\Pi^e_\star\in\ccH$ be the copy extended by~$\Pi_\star$. 
	Due to $\Pi^e_\star\Str H$ there is an edge $e_\star\in E(\Pi^e_\star)$ such 
	that $V(\Pi^e_\star)\cap e''\subseteq e_\star$.
	In the special case $e''=e_\star$ both edges $e'$ and~$e''$ belong to the 
	same standard copy $\Pi_\star$ and 
	$e'\cap e''\subseteq V_A(\Pi_\star)\subseteq V_A(\Sigma)$ follows 
	from the assumption that $\Pi$ be $A$-intersecting. 
	Moreover, if $e''\ne e_\star$, then we have 
		\[
		e'\cap e''
		\subseteq 
		V(\Pi^e_\star)\cap e''
		\subseteq 
		e_\star\cap e''
		\subseteq 
		V_A(H)
		\subseteq 
		V_A(\Sigma)\,.
	\]
		
	It remains to deal with the case $e', e''\not\in E(H)$. 
	Let $\Pi_\star$, $\Pi_{\star\star}$ be the standard copies of~$\Pi$ containing 
	these two edges and let $\Pi^e_\star$, $\Pi^e_{\star\star}$ be the corresponding
	copies in $\ccH$. 
	If they coincide we just need to appeal to the fact that $\Pi$
	is $A$-intersecting, so we can henceforth assume $\Pi^e_\star\ne\Pi^e_{\star\star}$.
	Now the intersection of these two copies is clean and thus there 
	exists an edge $e_\star\in E(\Pi^e_\star)$ covering their intersection. 
	The discussion of the previous paragraph 
	shows $e_\star\cap e''\subseteq V_A(\Sigma)$,
	whence 
	$e'\cap e''\subseteq V(\Pi^e_\star)\cap V(\Pi^e_{\star\star})\cap e''
	\subseteq e_\star\cap e''\subseteq V_A(\Sigma)$.  
\end{proof} \section{Girth considerations} 
\label{subsec:AG}

This section begins by enumerating, for the sake of completeness, some 
folkloric statements related to the classical girth concept introduced in 
Definition~\ref{dfn:girth}. From~\S\ref{sssec:coc} onwards, however, we move 
on to new territory and study a concept of Girth applicable to linear systems 
of hypergraphs (see Definition~\ref{dfn:Girth} below). The notational difference 
between the two notions is that in the former case ``$\gth$'' is written with a 
lower case~``g'', whereas the capital ``G'' in ``$\Gth$'' indicates that we consider
the Girth of a linear system. 
 
\subsection{Set systems and girth}
\label{sssec:GSS}

By a {\it set system} we mean a pair $S=(V, E)$ consisting of a
set of {\it vertices}~$V$ and a collection $E\subseteq \powerset(V)$ of subsets of $V$ 
such that every {\it edge} $e\in E$ has at least two elements. Thus a hypergraph is a 
set system with the special property that its edges are of the same cardinality.
\index{set system}

Definition~\ref{dfn:girth} applies to 
set systems in place of hypergraphs as well and for reasons that will become apparent 
later we formulate the two facts that follow in this more general context. 
First, we study the effect of dropping condition~\ref{it:C2}.

\begin{fact}\label{fact:231a}
	Let $g\ge 2$ be an integer and let $S=(V, E)$ be a set system with $\gth(S)>g$.
	If for some integer $n\in [2, g+1]$ we have a sequence 
		\begin{equation*}
		e_1v_1\ldots e_nv_n
	\end{equation*}
		satisfying~\ref{it:C1},~\ref{it:C3}, and $e_1, \ldots, e_n\in E$, then
	\begin{enumerate}[label=\alabel]
		\item\label{it:1701a} either $e_1=\dots=e_n$
		\item\label{it:1701b} or $n=g+1$ and the edges $e_1, \ldots, e_n$ are distinct.
	\end{enumerate}
\end{fact}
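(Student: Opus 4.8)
The plan is to argue by strong induction on $n$. The base case $n=2$ is immediate: \ref{it:C1} gives $v_1\ne v_2$, so if $e_1\ne e_2$ then $\{v_1,v_2\}\subseteq e_1\cap e_2$ is a $2$-cycle, contradicting $\gth(S)>g\ge 2$; hence $e_1=e_2$, which is alternative~\ref{it:1701a} (alternative~\ref{it:1701b} is impossible here, as it would force $g=1$). For the inductive step take $n\in[3,g+1]$; if $e_1=\dots=e_n$ we are done with~\ref{it:1701a}, so I assume the edges are not all equal and aim to show they are pairwise distinct and $n=g+1$.

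The core of the argument is two reduction moves, each of which turns a hypothetical repetition among the $e_i$ into a strictly shorter cyclic sequence still satisfying \ref{it:C1} and \ref{it:C3}, to which the induction hypothesis applies. First I would rule out consecutive repetitions: if $e_i=e_{i+1}$ for some $i\in\ZZ/n\ZZ$, then deleting $v_i$ together with the edge $e_{i+1}$ leaves a cyclic sequence of length $n-1\ge 2$ that still obeys \ref{it:C1} (fewer, still distinct vertices) and \ref{it:C3} (the only new incidence needed, $v_{i+1}\in e_i\cap e_{i+2}$, follows from $v_{i+1}\in e_{i+1}\cap e_{i+2}$ and $e_{i+1}=e_i$). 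Its set of edges is still $\{e_1,\dots,e_n\}$, which is not a singleton, so the induction hypothesis cannot place it under~\ref{it:1701a}, while case~\ref{it:1701b} would give $n-1=g+1$, against $n\le g+1$. Hence no two consecutive edges coincide.

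Next I would rule out all repetitions: suppose $e_a=e_b=:e$ with $a\ne b$. By the previous step $a$ and $b$ are non-adjacent in $\ZZ/n\ZZ$, so the arc $e\,v_a\,e_{a+1}\,v_{a+1}\,\dots\,e_{b-1}\,v_{b-1}$, closed up again to $e$, is a cyclic sequence of some length $m$ with $2\le m\le n-2\le g-1$ satisfying \ref{it:C1} and \ref{it:C3} (using $v_a\in e_a\cap e_{a+1}$ and $v_{b-1}\in e_{b-1}\cap e_b$ to close it). Since $m<g+1$, the induction hypothesis forces case~\ref{it:1701a} for this arc, so all its edges agree; in particular $e=e_a$ equals $e_{a+1}$, contradicting the previous step. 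Therefore $e_1,\dots,e_n$ are pairwise distinct, so $e_1v_1\dots e_nv_n$ satisfies \ref{it:C2},~\ref{it:C1}, and~\ref{it:C3} and is a genuine $n$-cycle; since $\gth(S)>g$ forbids cycle lengths in $[2,g]$ and $n\le g+1$, we conclude $n=g+1$, i.e.~\ref{it:1701b}.

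The only genuinely fiddly part is the bookkeeping behind these two reductions: verifying that each really preserves \ref{it:C3} read cyclically, that the first one preserves the \emph{set} (not merely the multiset) of edges so that ``not all equal'' is inherited, and that after the first reduction a repeated edge occurs at positions far enough apart for the arc in the second reduction to have length at least~$2$. Once that is in place, the conclusion is forced by $\gth(S)>g$ together with the bound $n\le g+1$, with no further computation required.
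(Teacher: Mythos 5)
Your proof is correct and uses the same key reduction as the paper: both arguments locate a repeated edge in the cyclic sequence and split it into strictly shorter sub-sequences still satisfying~\ref{it:C1} and~\ref{it:C3}, then invoke the inductive/minimality hypothesis. The only structural difference is that you first perform a ``contraction'' step (deleting a vertex and one of two equal consecutive edges) to ensure every remaining repetition yields an arc of length at least two, whereas the paper skips this by observing that when the repeat is adjacent (in its notation, $i=2$) the claim $e_1=\dots=e_{i-1}$ is vacuously true and no sub-sequence argument is needed. Your version is slightly more elaborate but both are essentially the same argument; note also that the paper works by minimal counterexample and shows alternative~\ref{it:1701a}, while you argue by strong induction and land in alternative~\ref{it:1701b} in the non-degenerate case, but these are the same proof read in opposite directions.
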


\begin{proof}
	Otherwise let $\ccC=e_1v_1\ldots e_nv_n$ be a counterexample with $n$ minimum.
	We contend that there is an edge occurring at least twice in $\ccC$. If $n=g+1$
	this is immediate from the failure of~\ref{it:1701b}. If $n\in [2, g]$, then $\gth(S)>g$
	tells us that~$\ccC$ cannot be an $n$-cycle,
	which in turn means that~\ref{it:C2} fails, i.e., that $\ccC$ again contains 
	two equal edges. Now by cyclic symmetry we may suppose that there exists 
	an index $i\in [2, n]$ with $e_1=e_i$. 
	
	Our plan is to prove 
		\begin{equation}\label{eq:231a}
		e_1=\dots =e_{i-1}
		\quad \text{ and } \quad 
		e_i=\dots =e_n\,.
	\end{equation}	
		Together with our choice of $i$ this will show that alternative~\ref{it:1701a}
	holds, thus concluding the proof. 
	
	For reasons of symmetry it suffices to establish the first part of~\eqref{eq:231a}.
	Our claim is obvious for $i=2$ and in case $i\ge 3$ we can apply the minimality of $n$
	to the sequence $\ccD=e_1v_1\ldots e_{i-1}v_{i-1}$. As $\ccD$ contains $i-1\le n-1\le g$
	edges, only option~\ref{it:1701a} can apply to~$\ccD$, which has the desired consequence.
\end{proof}

Second, we immediately obtain the following well known ``transitivity property'' 
of girth that will assist us later when analysing the girth of 
trains (see Lemma~\ref{lem:0036} below). \index{transitivity of girth}

\vbox{
\begin{fact}\label{fact:girth-trans}
	Let an integer $g\ge 2$ and a set system $S=(V, E)$ with $\gth(S)>g$ 
	be given. If for every edge $e\in E$ we have a set system $F_e$ 
	with vertex set $e$ and $\gth(F_e)>g$, then the set system $T=(V, E')$ 
	defined by $E'=\bigcup_{e\in E}E(F_e)$ satisfies $\gth(T)>g$ as well. 
	
	Moreover, if $\gth(F_e)>g+1$ holds for every $e\in E$ 
	and $\ccC=f_1v_1\ldots f_{g+1}v_{g+1}$ is a $(g+1)$-cycle in $T$,
	then $\ccC$ contains at most one edge from every system $F_e$. 
\end{fact}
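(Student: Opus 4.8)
The plan is to lift any short cycle of $T$ to a (possibly degenerate) cycle of $S$, and then to read off both conclusions from Fact~\ref{fact:231a}, which is tailor‑made to control such degenerate cycles.

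First I would record the basic observation on which everything rests. Every edge $f\in E'$ lies in $E(F_e)$ for at least one $e\in E$, and for any such $e$ we have $f\subseteq V(F_e)=e$. Now let $n\in[2,g+1]$, let $\ccC=f_1v_1\ldots f_nv_n$ be an $n$-cycle in $T$, and let $e_1,\ldots,e_n\in E$ be \emph{any} edges with $f_i\in E(F_{e_i})$ for every $i\in\ZZ/n\ZZ$. Since $v_{i-1},v_i\in f_i\subseteq e_i$, we get $v_i\in e_i\cap e_{i+1}$ for all $i$, so the cyclic sequence $e_1v_1\ldots e_nv_n$ consists of edges of $S$ and satisfies conditions~\ref{it:C1} and~\ref{it:C3}. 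As $\gth(S)>g$ and $n\le g+1$, Fact~\ref{fact:231a} now tells us that either $e_1=\dots=e_n$, or $n=g+1$ and $e_1,\ldots,e_n$ are pairwise distinct; and since the $e_i$ form an arbitrary admissible choice, this dichotomy holds for every such choice.

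For the first assertion I would argue by contradiction. If $T$ contained an $n$-cycle $\ccC$ with $n\in[2,g]$, then in the observation the second alternative is impossible, so all the $e_i$ equal one edge $e\in E$; but then $f_1,\ldots,f_n\in E(F_e)$, so $\ccC$ is an $n$-cycle in $F_e$ with $2\le n\le g$, contradicting $\gth(F_e)>g$. Hence $\gth(T)>g$. For the second assertion, assume $\gth(F_e)>g+1$ for every $e\in E$ and let $\ccC=f_1v_1\ldots f_{g+1}v_{g+1}$ be a $(g+1)$-cycle in $T$. Suppose, for contradiction, that some $F_e$ contains two edges of $\ccC$, say $f_j$ and $f_k$ with $j\ne k$. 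Then I would apply the observation with the choice $e_j=e_k=e$ and the remaining $e_i$ chosen arbitrarily: the edges $e_1,\ldots,e_{g+1}$ are then not pairwise distinct, so the dichotomy forces them all to be equal, necessarily to $e$, whence $\ccC$ is a $(g+1)$-cycle in $F_e$, contradicting $\gth(F_e)>g+1$. Thus $\ccC$ meets each system $F_e$ in at most one edge.

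The argument is short, and the only two steps that are not bookkeeping are: the lift $f_i\subseteq e_i$ that converts a cycle of $T$ into a sequence to which Fact~\ref{fact:231a} applies, and — the point I would flag as the crux of the second part — the exploitation of the freedom in attributing each edge of $\ccC$ to a system $F_e$, which is precisely what lets us collapse a $(g{+}1)$-cycle back into a single $F_e$. Keeping the cyclic indices $i\in\ZZ/n\ZZ$ consistent is the only mild nuisance.
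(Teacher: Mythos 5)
Your proof is correct and follows essentially the same strategy as the paper: lift the $T$-cycle to a cyclic sequence of edges of $S$ and apply Fact~\ref{fact:231a}. The one small variation is in the second part. The paper notes that by linearity of $S$ each $f_i$ determines a \emph{unique} $e(i)\in E$ with $f_i\in E(F_{e(i)})$, applies Fact~\ref{fact:231a} to this canonical lift, rules out option~\ref{it:1701a} as in the first part, and reads the conclusion off option~\ref{it:1701b}. You instead argue by contradiction and exploit the \emph{freedom} to attribute $f_j$ and $f_k$ to the same system $F_e$, forcing option~\ref{it:1701a} and collapsing the whole cycle into $F_e$. Both are fine; your version has the minor advantage of not needing the uniqueness of the $e_i$ at all, at the cost of a detour through contradiction where the paper reads the answer off directly.
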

}

\begin{proof}
	Assume first that contrary to $\gth(T)>g$ we have for some $n\in [2, g]$ an $n$-cycle 
	\begin{equation*}		
		\ccC=f_1v_1\ldots f_nv_n
	\end{equation*}
	in	$T$. Owing to the linearity of $S$, there are uniquely determined 
	edges $e(1), \ldots, e(n)\in E$ such that $f_i\in E(F_{e(i)})$ holds
	for every $i\in\ZZ/n\ZZ$. Since $f_i\subseteq e(i)$, the cyclic
	sequence  
		\[
		\ccD=e(1)v_1\ldots e(n)v_n
	\]
	has the properties~\ref{it:C1},~\ref{it:C3} of an $n$-cycle in $S$. 
	Due to $\gth(S)>g\ge n$
	Fact~\ref{fact:231a} applied to $S$ and~$\ccD$ informs us that for some $e\in E$ we 
	have $e=e(1)=\dots =e(n)$.  
	In other words, the sequence~$\ccC$ is an $n$-cycle in $F_e$, contrary 
	to $\gth(F_e)>g$. We have thereby proved that $\gth(T)>g$.
	
	Now suppose moreover that $\gth(F_e)>g+1$ holds for every $e\in E$ 
	and that 
		\[
		\ccC=f_1v_1\ldots f_{g+1}v_{g+1}
	\]
		is a $(g+1)$-cycle in $T$.
	Choosing the edges $e(1), \ldots, e(g+1)\in E$ as before we arrive again at a cyclic 
	sequence $\ccD=e(1)v_1\ldots e(g+1)v_{g+1}$ and Fact~\ref{fact:231a} is still applicable.
	Its option~\ref{it:1701a} would lead to the same contradiction as before, 
	so~\ref{it:1701b} holds and we are done.
\end{proof}

\subsection{Cycles of copies}
\label{sssec:coc}

Suppose that $(H, \ccH)$ is a linear system. Intuitively, a cycle in~$\ccH$
is just a cyclic arrangement of copies any two consecutive ones of which 
are distinct but overlap. Here, ``overlapping'' means having at least a vertex 
and possibly even an edge in common. We are thus led to the following concept.  

\begin{dfn}\label{dfn:coc}
	A {\it cycle of copies} in a linear system $(H, \ccH)$ is a cyclic sequence
		\begin{equation}\label{eq:coc}
		\ccC=F_1q_1F_2q_2\ldots F_nq_n
	\end{equation}
		such that $n\ge 2$ and the following conditions hold.
	\index{cycle of copies} 
	\begin{enumerate}[label=($L\arabic*$)]
		\item\label{it:L1} The copies  $F_1, \ldots, F_n\in \ccH$ 
			satisfy $F_i\ne F_{i+1}$ for all $i\in\ZZ/n\ZZ$.
		\item\label{it:L2} The vertices and edges $q_1, \ldots, q_n\in V(H)\cup E(H)$ are distinct.
		\item\label{it:L3} If $i\in \ZZ/n\ZZ$ and $q_i$ is a vertex, 
			then $q_i\in V(F_i)\cap V(F_{i+1})$.
		\item\label{it:L4} If $i\in \ZZ/n\ZZ$ and $q_i$ is an edge, 
			then $q_i\in E(F_i)\cap E(F_{i+1})$.
	\end{enumerate}
	We say that $q_1, \ldots, q_n$ are the {\it connectors of $\ccC$}, 
	while $F_1, \ldots, F_n$  will be known as its {\it copies}. \index{connector}
\end{dfn}

Suppose now that $\ccC=F_1q_1F_2q_2\ldots F_nq_n$ is a cycle of copies. 
The number $n$ will be called the {\it length} of $\ccC$ and denoted by $n=\len{\ccC}$.
\index{length}
In most of our arguments, however, the length of a cycle of copies will 
only play a secondary r\^{o}le and a more central notion is that of its order, 
which we shall introduce next. 
\index{order}
To this end, we call an index~$i\in\ZZ/n\ZZ$
\begin{enumerate}
\item[$\bullet$] {\it pure} if either both of $q_{i-1}$ and $q_i$ are vertices or both 
	are edges and 
\item[$\bullet$] {\it mixed} if one of $q_{i-1}$ and $q_i$ is a vertex while the 
	other one is an edge. \index{mixed index}\index{pure index}
\end{enumerate}
Recall that by~\ref{it:L2} every $i\in\ZZ/n\ZZ$ is either pure or mixed. 
For parity reasons 
the number of mixed indices has to be even and, therefore, the quantity
\begin{equation}\label{eq:ord}
	\ord{\ccC}=\big|\{i\in\ZZ/n\ZZ\colon i \text{ is pure}\}\big|
		+\tfrac12 \big|\{i\in\ZZ/n\ZZ\colon i \text{ is mixed}\}\big|\,,
\end{equation}
called the {\it order} of $\ccC$, has to be an integer. 
Evidently, the length of a cycle of copies can deviate from its order at most 
by a factor of~$2$, i.e., we have $\len{\ccC}\in [\ord{\ccC}, 2\ord{\ccC}]$. 
Occasionally we shall need to take both the order and the length 
into account and in such situations it is convenient to set \index{$h(\ccC)$}
\begin{equation} \label{eq:232a}
	h(\ccC)=\bigl(\ord{\ccC}, \len{\ccC}\bigr)\in\NN^2\,.
\end{equation}
When relating two ordered pairs of natural numbers by an
inequality we always have the lexicographic ordering of~$\NN^2$ in mind.
This convention puts greater emphasis on the order than on the length, 
for in~\eqref{eq:232a} the order comes first. 
E.g., $h(\ccC)\le (g, n)$ means that either~$\ord{\ccC}<g$ 
or~$\ord{\ccC}=g\,  \&\,  \len{\ccC}\le n$. 

One needs to be careful when defining the Girth of a linear system $(H, \ccH)$
in terms of cycles of copies as introduced above. The reason for this is that 
one can take copies that are arranged like a tree and present them as a cycle of 
copies. It may be instructive to illustrate this point by means of two examples.

\begin{example}\label{exmp:939}
	Suppose that $e'$ and $e''$ are two edges of a copy $F_1$ that have a 
	vertex $x$	in common (see Figure~\ref{fig:N4A}). 
	Let $F_2$ be a further copy having with $F_1$ only the 
	edge~$e'$ in common and, similarly, let $F_3$ be a copy meeting $F_1$ only 
	in $e''$. It is now forced that the copies~$F_2$ and~$F_3$ overlap in $x$ and 
	for transparency we assume that they are otherwise disjoint. This situation 
	gives rise to the cycle of copies $\ccA=F_1e'F_2xF_3e''$ with $h(\ccA)=(2, 3)$.
	It should be clear, however, that such cycles are unavoidable in the Ramsey systems 
	we seek to construct and, therefore, that they should have no bearing on the 
	Girth of our systems. 
\end{example}

\begin{example}\label{exmp:938}
	Let $F_1$, $F_2$, and $F_3$ be three copies which have an edge $e$ in 
	common but are otherwise disjoint (see Figure~\ref{fig:N4B}). 
	If $x_1$, $x_2$, and $x_3$ are any three 
	distinct vertices of~$e$, then $\ccB=F_1x_1F_2x_2F_3x_3$ is a valid example 
	for a cycle of copies with $h(\ccB)=(3, 3)$  that is likewise unavoidable.
\end{example}

\usetikzlibrary {shadows}
\usetikzlibrary{shadows.blur}

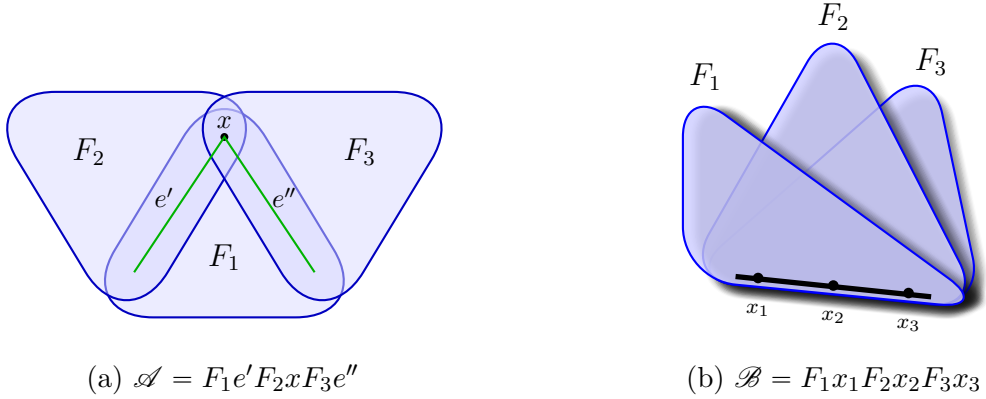
\begin{figure}[ht]
\centering	
	
\begin{subfigure}[b]{0.49\textwidth}
\centering
	
\begin{tikzpicture}[scale=1]

\fill [rounded corners=30, blue!15!white, opacity = .5] (0,2.3)--(2,-1)--(-2,-1)--cycle;
\draw [rounded corners=30, blue!75!black, thick] (0,2.3)--(2,-1)--(-2,-1)--cycle;
\fill [rounded corners=30, blue!15!white, opacity = .5, shift=({-1.3,-.3})] (0,-1)--(2,2.3)--(-2,2.3)--cycle;
\draw [rounded corners=30, blue!75!black, thick, shift=({-1.3,-.3})] (0,-1)--(2,2.3)--(-2,2.3)--cycle;
\fill [rounded corners=30, blue!15!white, opacity = .5, shift=({1.3,-.3})] (0,-1)--(2,2.3)--(-2,2.3)--cycle;
\draw [rounded corners=30, blue!75!black, thick, shift=({1.3,-.3})] (0,-1)--(2,2.3)--(-2,2.3)--cycle;

\fill (0,1.4) circle (1.5pt);
\draw [green!70!black, thick] (-1.2,-.4)--(0,1.4)--(1.2,-.4);

\node at (0,1.6) {\footnotesize $x$};
\node at (-1.8,1.2) {$F_2$};
\node at (1.8,1.2) {$F_3$};
\node at (0,-.2) {$F_1$};
\node at (-.8,.6) {\footnotesize $e'$};
\node at (.8,.6) {\footnotesize $e''$};

			\end{tikzpicture}
		 \caption{$\ccA=F_1e' F_2 x F_3e''$}
		\label{fig:N4A} 		
	\end{subfigure}
\hfill    
\begin{subfigure}[b]{0.5\textwidth}
\centering
			\begin{tikzpicture}[scale=1]

				\fill [ opacity = .8,blue!20, thick, rounded corners=20, blur shadow={shadow blur steps=5, shadow xshift=.3em,
					shadow yshift=-.3em}] (1.3,2.9)-- (-2,0)-- (2,-.4)--cycle;

					\fill [rounded corners=20, white, blur shadow={shadow blur steps=5, shadow xshift=.5em,
					shadow yshift=-.3em}] (0,3.5)--(2,-.4)--(-2,0)--cycle;
			
				\fill [white, blur shadow={shadow blur steps=5, shadow xshift=.6em,
					shadow yshift=-.1em}, thick, rounded corners=20] (-2,2.6)--(2.1,-.4) --(-2,0)--cycle;		
				
			\draw [ blue, thick, rounded corners=20] (1.3,2.9)-- (-2,0)-- (2,-.4)--cycle;
			
			\fill [rounded corners=20, blue!20, opacity=.8, blur shadow={shadow blur steps=5, shadow xshift=.2em,
				shadow yshift=-.3em}] (0,3.5)--(2,-.4)--(-2,0)--cycle;

			\draw [blue, rounded corners=20, thick] (0,3.5)--(2,-.4)--(-2,0)--cycle;

			\fill [blue!20, blur shadow={shadow blur steps=5, shadow xshift=.4em,
			shadow yshift=-.1em, opacity = .8}, thick, rounded corners=20] (-2,2.6)--(2.1,-.4) --(-2,0)--cycle;
		
		\draw [blue, thick, rounded corners=20] (-2,2.6)--(2.1,-.4) --(-2,0)--cycle;
		
		\draw [black, thick, line width=2pt] (-1.3,.04) -- (1.3,-.23);
		
		\fill (-1,.02) circle (2pt);
		\fill (0,-.08) circle (2pt);
		\fill (1,-.18) circle (2pt);
		
		\node at (-1,-.4) {\tiny $x_1$};
		\node at (0,-.5) {\tiny $x_2$};
		\node at (1,-.6) {\tiny $x_3$};

\node at (-1.7, 2.7) {$F_1$};
\node at (0,3.5) {$F_2$};
\node at (1.3,2.9) {$F_3$};
						
		\end{tikzpicture}
	\caption{$\ccB=F_1x_1F_2x_2F_3x_3$}
	\label{fig:N4B} 		
\end{subfigure}  
		
\caption{Two unavoidable cycles of copies}
\label{fig:N4} 
\end{figure} 
These circumstances suggest to declare cycles such as $\ccA$ and $\ccB$ to be 
``untidy'' and to resolve that only tidy cycles of copies are allowed to affect 
the Girth of a system. Intuitively speaking, the reason why the above cycle $\ccA$ 
should not be tidy is that its connectors~$x$ and~$e'$ satisfy  
$x\in e'$. The untidiness of $\ccB$, on the other hand, derives from the 
existence of an edge $e$ containing too many vertex connectors. The next definition 
renders these ideas in a more precise form (see Figure~\ref{fig:23}).  

\begin{dfn} \label{dfn:952} 
	A cycle of copies $\ccC=F_1q_1\ldots F_nq_n$ in a linear system $(H, \ccH)$
	is said to be {\it tidy} if it has the following two properties.
	\begin{enumerate}[label=\upshape{($T\arabic*$)}]		
		\item\label{it:T1} There do not exist connectors $q_i$ and $q_j$ 
			with $q_i\in q_j$.
		\item\label{it:T2} For every edge $f\in E(H)$ the set 
						\[
				M(f)=\{i\in \ZZ/n\ZZ\colon q_i \text{ is a vertex belonging to } f\}
			\]
						can be covered by a set of the from $\{i(\star), i(\star)+1\}$,
			where $i(\star)\in\ZZ/n\ZZ$.
	\end{enumerate}
	\index{tidy}
\end{dfn}  

\begin{figure}[ht]
	\centering

		\begin{subfigure}[b]{0.4\textwidth}
			\centering
	\begin{tikzpicture}[scale=.9]
	
\coordinate (x1) at (-1,0);
\coordinate (x2) at (-1,1.5);
\coordinate (x3) at (1,0);
\coordinate (x4) at (1,1.5);
\coordinate (x5) at (-1.5,-1);

\fill [rounded corners = 8pt, blue!15!white, opacity = .5](1.85,-.45) to[out=180, in= -20] (.8,-.2) to[out = 110, in=-110] (.8,1.8)  to[out=20, in=160] (2.9,1.8) to[out=-70, in=70] (2.9,-.2) to[out=200, in=0] (1.85,-.45);
\draw [rounded corners = 8pt, blue!75!black, thick](1.85,-.45) to[out=180, in= -20] (.8,-.2) to[out = 110, in=-110] (.8,1.8)  to[out=20, in=160] (2.9,1.8) to[out=-70, in=70] (2.9,-.2) to[out=200, in=0] (1.85,-.45);

\fill [rounded corners = 8pt, blue!15!white, opacity = .5] (-1.3,-.3) -- (-1.3,1.6)  to[out=45, in=135] (1.3,1.6) -- (1.3,-.3) -- cycle;
\draw [rounded corners = 8pt, blue!75!black, thick] (-1.3,-.3) -- (-1.3,1.6)  to[out=45, in=135] (1.3,1.6) -- (1.3,-.3) -- cycle;

\fill [rounded corners = 8pt, blue!15!white, opacity = .5]   (1.15,.3) to[out=-70, in=50] (.7,-1.5) to[out=-150, in =-70] (-1.8,-.9)-- cycle;
\draw [rounded corners = 8pt, blue!75!black, thick]   (1.15,.3) to[out=-70, in=50] (.7,-1.5) to[out=-150, in =-70] (-1.8,-.9)-- cycle;

\fill [rounded corners = 8pt, blue!15!white, opacity = .5]   (-1.4,2)to [out = 160, in =50 ] (-2.6,1.5) -- (-2.9,.5) to[out=-90, in=180]  (-1.2,-1.2)  to[out=60, in =-90] (-.75,1.65) to[out=150, in =-20] (-1.4,2);
\draw [rounded corners = 8pt, blue!75!black, thick]   (-1.4,2)to [out = 160, in =50 ] (-2.6,1.5) -- (-2.9,.5) to[out=-90, in=180]  (-1.2,-1.2)  to[out=60, in =-90] (-.75,1.65) to[out=150, in =-20] (-1.4,2);

\draw [green!70!black, thick] (x1) -- (x2);
\draw [green!70!black, thick] (x3) -- (x4);

	\foreach \i in {1,...,5}{
		\fill (x\i) circle (2pt);}
	
\node at (0,1) {$F_2$};
\node at (-2,.6) {$F_1$};
\node at (2.1,.8) {$F_3$};
\node at (0,-1) {$F_4$};

	\end{tikzpicture}

	\caption{An untidy cycle violating~\ref{it:T1}}
	\label{fig:231a} 

	\end{subfigure}
	\hfill    
	\begin{subfigure}[b]{0.4\textwidth}
		\centering
		
			\begin{tikzpicture}[scale=.9]
			
		\coordinate (x1) at (0,0);
		\coordinate (x2) at (-1.5,0);
		\coordinate (x3) at (0,1.5);
		\coordinate (x4) at (0,-1.5);
		\coordinate (x5) at (2.5,1.5);
		\coordinate (x6) at (2.5,-1.5);
		
		\fill [rounded corners = 8pt, blue!15!white, opacity = .5](-.75,-.45) to[out=180, in= -20] (-1.8,-.2) to[out = 110, in=-110] (-1.8,1.7)  to[out=20, in=160] (.3,1.7) to[out=-70, in=70] (.3,-.2) to[out=200, in=0] (-.75,-.45);
		
		\draw [blue!75!black,rounded corners = 8pt, thick](-.75,-.45) to[out=180, in= -20] (-1.8,-.2) to[out = 110, in=-110] (-1.8,1.7)  to[out=20, in=160] (.3,1.7) to[out=-70, in=70] (.3,-.2) to[out=200, in=0] (-.75,-.45);
		
		\fill [rounded corners = 8pt, blue!15!white, opacity = .5](-.75,-1.95) to[out=180, in= -20] (-1.8,-1.7) to[out = 110, in=-110] (-1.8,.2)  to[out=20, in=160] (.3,.2) to[out=-70, in=70] (.3,-1.7) to[out=200, in=0] (-.75,-1.95);
		
			\draw [blue!75!black,rounded corners = 8pt, thick](-.75,-1.95) to[out=180, in= -20] (-1.8,-1.7) to[out = 110, in=-110] (-1.8,.2)  to[out=20, in=160] (.3,.2) to[out=-70, in=70] (.3,-1.7) to[out=200, in=0] (-.75,-1.95);

				\fill[blue!15!white, opacity = .5] (2.5,0) ellipse (.9cm and 1.9cm);
					\draw[blue!75!black,thick] (2.5,0) ellipse (.9cm and 1.9cm);

		\fill[blue!15!white, opacity = .5] (1.25,1.5) ellipse (1.6cm and .8cm);
		\draw[blue!75!black,thick] (1.25,1.5) ellipse (1.6cm and .8cm);
	
		\fill[blue!15!white, opacity = .5] (1.25,-1.5) ellipse (1.6cm and .8cm);
		\draw[blue!75!black,thick] (1.25,-1.5) ellipse (1.6cm and .8cm);
		
	\draw [green!70!black, thick] (x1) -- (x2);
		
		\foreach \i in {1,...,6}{
			\fill (x\i) circle (2pt);}
			
				\draw [thick, shorten <=-1cm,shorten >=-1cm] (x4) --(x5);
				
				\node at (3.4,2.4) {$f$};
			
			\end{tikzpicture}
			
				\caption{Same for~\ref{it:T2}}
				\label{fig:231b}

		\end{subfigure}    
		\caption{}	\label{fig:23}
		\vspace{-1em}
	\end{figure} 
Now it might be tempting to define a linear system to have large Girth if it contains 
no tidy cycles of copies of low order. However, there is one more phenomenon 
we did not take into account yet. 

\begin{example} \label{exmp:958}
	Return to the cycle of copies $\ccA$ considered in Example~\ref{exmp:939}.
	Take any vertices $y'\in e'$ and $y''\in e''$ distinct from $x$.
	Now $\ccA_\star=F_1y'F_2xF_3y''$ is a tidy cycle of 
	copies. Indeed,~\ref{it:T1} holds due to the absence of edge connectors 
	while the failure of~\ref{it:T2} would require the existence of an edge $f\in E(H)$
	containing all three of $x$, $y'$, and $y''$, which is absurd.    
\end{example}

In such a situation we shall say that $F_1$ is a master copy of $\ccA_\star$. The intuitive
reason for this is that everything of relevance happens within this copy.
Treating the edges $e'$ and~$e''$ for the moment as if they were copies, we can 
form the cycle $F_1y'e'xe''y''$, which lives completely in its master copy $F_1$. 
Extending $e'$ and $e''$ to the real copies $F_2$ and $F_3$ only conceals
this situation. 

Before making this precise in Definition~\ref{dfn:master} below we introduce  
a notational device for letting edges play the r\^{o}les of copies. 
With every edge $e\in E(H)$ we associate the subhypergraph $e^+=(e, \{e\})$ of $H$. 
It will be convenient to write $E^+(H)=\{e^+\colon e\in E(H)\}$.
Now $\bigl(H, E^+(H)\bigr)$ is already a legitimate linear system 
and it will be sensible to investigate its Girth (see Lemma~\ref{lem:Gth-gth} below). 

More interestingly, however, with every linear system $(H, \ccH)$ we associate the 
system $(H, \ccH^+)$ defined by $\ccH^+=\ccH\cup E^+(H)$. 
Whenever such systems occur, we call the members of $E^+(H)$ {\it edge copies},
while the other members of $\ccH^+$ will be referred to as {\it real copies}.
\index{edge copy}
\index{real copy}
When we want to direct attention to the fact that the linear systems 
we deal with are of the form $(H, \ccH^+)$, we call them {\it extended linear 
systems}. 
\index{extended linear system}
Let us now return to the question what master copies actually are.
 
\begin{dfn} \label{dfn:master}
	Given a cycle of copies $\ccC=F_1q_1\ldots F_nq_n$ in an extended 
	linear system $(H, \ccH^+)$ as well as a copy $F_\star\in\{F_1, \ldots, F_n\}$ 
	we call $F_\star$ a {\it master copy} of $\ccC$ (see Figure~\ref{fig:42}), 
	if there is a family 
		\[
		\bigl\{f_i\in E(F_\star)\colon i\in \ZZ/n\ZZ \text{ and } F_i\ne F_\star\bigr\}
	\]
		of edges such that the cyclic sequence $\ccD$ obtained from $\ccC$ upon replacing 
	every copy~$F_i\ne F_\star$ by the edge copy $f^+_i$ is again a cycle 
	of copies. When passing from $\ccC$ to $\ccD$ we say that the copies $F_i$ distinct 
	to $F_\star$ get {\it collapsed} to the edge copies $f_i^+$.
	\index{master copy}
	\index{collapse}
\end{dfn}  

\begin{figure}[ht]
	\centering	
			\begin{tikzpicture}[scale=.8]
	
	\def\w{2.5};
	\def\h{1.5};
	
	\fill[blue!80!white, opacity = .1] (0,0) ellipse (3 cm and \h cm);
	\draw[blue!75!black,thick] (0,0) ellipse (3 cm and \h cm);
	
	\fill[blue!80!white, opacity = .1] (0,2.5) ellipse (\h cm and \w cm);
	\draw[blue!75!black,thick] (0,2.5) ellipse (\h cm and \w cm);
	
	\fill[blue!80!white, opacity = .1, rotate=30] (-1,2.6) ellipse (\h cm and \w cm);
	\draw[blue!75!black,thick, rotate = 30] (-1,2.6) ellipse (\h cm and \w cm);
	
	\fill[blue!80!white, opacity = .1, rotate = -30] (1,2.6) ellipse (\h cm and \w cm);
	\draw[blue!75!black,thick, rotate = -30] (1,2.6) ellipse (\h cm and \w cm);
	
	\coordinate (a) at (-1.9,-.2);
	\coordinate (b) at (-.7,.8);
	\coordinate (c) at (.7,.8);
	\coordinate (d) at (1.9,-.2);

	\draw [green!70!black, shorten <= -13pt, shorten >= -13pt, thick](a) -- (b);
	
	\draw [green!70!black, shorten <= -13pt, shorten >= -13pt, thick](b) -- (c);
	
	\draw [green!70!black, shorten <= -13pt, shorten >= -13pt, thick](c) -- (d);

		\foreach \i in {a,b,c,d}{
			\fill (\i) circle (2pt);}
						
	\node at (0,-.7) {$F_1$};
	\node at (0,3.5) {$F_3$};
	\node at (-2.8,2.5) {$F_2$};
	\node at (2.8,2.5) {$F_4$};
	
	\node at (0,1.1) {$f_3$};
	\node at (-1.65,.44) {$f_2$};
	\node at (1.65, .44) {$f_4$};

			\end{tikzpicture}
			
				\caption{Master copy $F_1$}
				\label{fig:42}

	\end{figure}
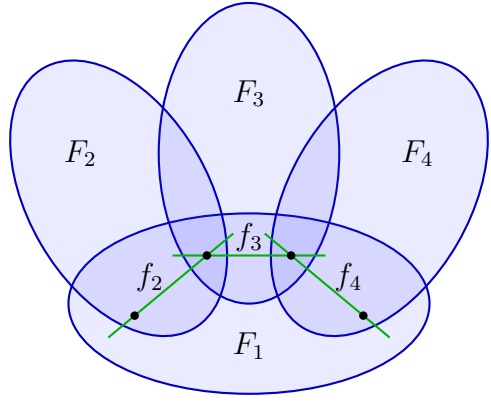 
Notice that a master copy $F_\star$ is allowed to appear multiple times on the cycle $\ccC$. 
If there are two or more occurrences of $F_\star$ in $\ccC$, then the collapsing does not 
change this situation.   
Now we finally reach our Girth concept applying to extended linear systems.
\index{$\Gth$}

\goodbreak

\begin{dfn}\label{dfn:Girth}
	If $(H, \ccH^+)$ is an extended linear system and $g, n\in\NN$, then 
	\begin{enumerate}[label=\alabel]
		\item $\Gth(H, \ccH^+)>(g, n)$ means that every tidy cycle of copies $\ccC$ 
			in~$\ccH^+$ satisfying $h(\ccC)\le (g, n)$ has a master copy 
		 \item and $\Gth(H, \ccH^+)>g$ means that every tidy cycle of copies $\ccC$ 
			in~$\ccH^+$ with $\ord{\ccC}\le g$ possesses a master copy.
	\end{enumerate}
\end{dfn}

Since the length of a cycle of copies is at most twice its order, 
$\Gth(H, \ccH^+)>g$ is equivalent to $\Gth(H, \ccH^+)>(g, 2g)$.
We proceed with two simple facts that follow immediately from our definitions. 

\begin{fact}\label{rem:1200}
	Master copies are always real copies. 
\end{fact}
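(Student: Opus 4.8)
The plan is to argue by contradiction, exploiting the fact that an edge copy is an extremely degenerate copy: it carries exactly one edge. So suppose $F_\star$ is a master copy of some cycle of copies $\ccC=F_1q_1\ldots F_nq_n$ in an extended linear system $(H,\ccH^+)$, but that, contrary to the claim, $F_\star$ is an \emph{edge} copy, say $F_\star=e^+$ with $e\in E(H)$. Then $E(F_\star)=\{e\}$, and this single-edge feature is the whole point.

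By Definition~\ref{dfn:master} there is a family $\bigl\{f_i\in E(F_\star)\colon i\in\ZZ/n\ZZ,\ F_i\ne F_\star\bigr\}$ such that the cyclic sequence $\ccD$ obtained from $\ccC$ by collapsing every copy $F_i\ne F_\star$ to $f_i^+$ is again a cycle of copies. Since $E(F_\star)=\{e\}$, I would observe that necessarily $f_i=e$ for every relevant index $i$, so each collapsed copy equals $e^+=F_\star$, while the copies already equal to $F_\star$ are left untouched. Hence \emph{every} copy appearing in $\ccD$ is equal to $e^+$. But a cycle of copies has length $n\ge 2$, so $\ccD$ contains two consecutive copies, and these then coincide — in direct contradiction with condition~\ref{it:L1} in the definition of a cycle of copies. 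Therefore $F_\star$ cannot be an edge copy, i.e., it is a real copy.

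I do not expect any real obstacle here; the statement is a straightforward unwinding of Definitions~\ref{dfn:coc} and~\ref{dfn:master}, and the only point worth spelling out is that an edge copy has a single edge, which forces the entire collapsed cycle to degenerate into one repeated edge copy and thereby violate the distinctness condition~\ref{it:L1}.
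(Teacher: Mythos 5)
Your proof is correct and follows essentially the same route as the paper: both arguments exploit that an edge copy $e^+$ has a single edge $e$, so collapsing forces every $f_i=e$ and the resulting cyclic sequence has two consecutive equal copies, contradicting~\ref{it:L1}. The only cosmetic difference is that the paper localises the contradiction to one specific pair $F_i=F_\star$, $F_{i+1}$ (using that $F_\star$ must appear in $\ccC$ by definition of master copy), whereas you observe globally that all copies of $\ccD$ degenerate to $e^+$; both are valid.
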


\begin{proof}
	If $F_\star$ is a master copy of the cycle 
	$\ccC=F_1q_1\ldots F_nq_n$, then there needs to exist an index $i\in\ZZ/n\ZZ$ 
	with $F_i=F_\star$.
	Due to~\ref{it:L1} we have $F_{i+1}\ne F_\star$ and, therefore, $F_{i+1}$ is collapsible 
	to an edge copy $f_{i+1}^+$ with $f_{i+1}\in E(F_\star)$. 
	Now if $F_\star$ is an edge copy, 
	then its only edge is $f_{i+1}$ and the collapsed cycle 
	violates~\ref{it:L1}.
\end{proof}

\begin{fact}\label{rem:5027}
	If $(H, \ccH^+)$ is an extended linear system with $\Gth(H, \ccH^+)>(2, 2)$
	and $\ccC=F_1q_1\ldots F_nq_n$ denotes a cycle of copies in $(H, \ccH^+)$
	having a master copy $F_\star$, then the edges $f_i$ exemplifying this state
	this affair satisfy $f_i\in E(F_i)\cap E(F_\star)$. 
\end{fact}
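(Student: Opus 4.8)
The plan is to fix an index $i\in\ZZ/n\ZZ$ with $F_i\neq F_\star$; since the definition of a master copy already guarantees $f_i\in E(F_\star)$, everything comes down to showing $f_i\in E(F_i)$. I would argue through the two connectors $q_{i-1}$, $q_i$ of $\ccC$ that are incident with the $i$-th copy, which are distinct by~\ref{it:L2}, and I would keep in mind that passing from $\ccC$ to the collapsed cycle $\ccD$ leaves all connectors unchanged while turning the $i$-th copy into $f_i^+$. If one of these connectors, say $q_{i-1}$, happens to be an \emph{edge}, then~\ref{it:L4} for $\ccC$ gives $q_{i-1}\in E(F_i)$, while~\ref{it:L4} for $\ccD$ forces $q_{i-1}\in E(f_i^+)=\{f_i\}$; hence $f_i=q_{i-1}\in E(F_i)$. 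The case that $q_i$ is an edge is symmetric.

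It remains to treat the case in which $q_{i-1}$ and $q_i$ are both vertices. Here~\ref{it:L3} applied to $\ccC$ yields $q_{i-1},q_i\in V(F_i)$, and~\ref{it:L3} applied to $\ccD$ at the two positions flanking $f_i^+$ yields $q_{i-1},q_i\in V(f_i^+)=f_i$; so $q_{i-1}$ and $q_i$ are two distinct vertices lying in $V(F_i)\cap f_i$. It now suffices to exhibit an edge $g\in E(F_i)$ with $\{q_{i-1},q_i\}\subseteq g$: then $g$ and $f_i$ are two edges of the linear hypergraph $H$ through two common vertices, so $g=f_i$ and we are done. If $F_i$ is an edge copy, say $F_i=e^+$, then $V(F_i)=e$ and $g=e$ works. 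If $F_i$ is a real copy, I would instead build the cyclic sequence $\ccE=F_i\,q_{i-1}\,f_i^+\,q_i$ and verify it is a tidy cycle of copies in $(H,\ccH^+)$ of length and order $2$: property~\ref{it:L1} holds because a real copy differs from every edge copy, property~\ref{it:L2} is the distinctness of $q_{i-1},q_i$, property~\ref{it:L3} is exactly the incidence $q_{i-1},q_i\in V(F_i)\cap V(f_i^+)$ just established, and the tidiness clauses~\ref{it:T1},~\ref{it:T2} are automatic for a length-$2$ cycle whose connectors are all vertices. Since $h(\ccE)=(2,2)$ and $\Gth(H,\ccH^+)>(2,2)$, the cycle $\ccE$ has a master copy, which by Fact~\ref{rem:1200} is a real copy and therefore can only be $F_i$ itself; the edge witnessing this master-copy structure, after collapsing $f_i^+$, is by~\ref{it:L3} the desired $g\in E(F_i)$ containing $q_{i-1}$ and $q_i$.

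The main obstacle is precisely this last subcase: because the copies in $\ccH$ need not be induced, there is no reason a priori why an edge $f_i$ of $F_\star$, two of whose vertices happen to lie in $V(F_i)$, should be an edge of $F_i$, so one genuinely has to feed the situation back into the hypothesis $\Gth(H,\ccH^+)>(2,2)$ via the auxiliary $2$-cycle $\ccE$. Beyond that, the only points that need care are the routine checks that $\ccE$ is tidy and the bookkeeping that the connectors $q_{i-1}$, $q_i$ are literally the same objects in $\ccC$, in $\ccD$, and in $\ccE$.
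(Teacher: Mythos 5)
Your proof is correct and follows essentially the same route as the paper: reduce to $f_i\in E(F_i)$, dispose of the edge-connector case directly via~\ref{it:L4}, and in the all-vertex case build the length-$2$ cycle $F_i\,q_{i-1}\,f_i^+\,q_i$ (the paper calls it $\ccD$), invoke $\Gth(H,\ccH^+)>(2,2)$ together with Fact~\ref{rem:1200} to conclude $F_i$ is its master copy, and finish by linearity of $H$. The only cosmetic difference is that you split the vertex case further on whether $F_i$ is an edge copy, whereas the paper peels off the case $F_i=f_i^+$ at the outset and lets linearity implicitly rule out the remaining edge-copy possibility; both are fine.
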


\begin{proof}
	Since Definition~\ref{dfn:master} demands $f_i\in E(F_\star)$, we only have 
	to check $f_i\in E(F_i)$, which is clear in the special case $F_i=f_i^+$. 
	So we may assume
	$F_i\ne f_i^+$, whence $\ccD = F_i q_i f_i^+ q_{i-1}$
	is a cycle of copies. If one of the connectors $q_{i-1}$, $q_i$ is an edge, 
	then it has to be equal to $f_i$ and $f_i\in E(F_i)$ follows. If both of these
	connectors are vertices, then $\ccD$ is tidy, $h(\ccD) = (2, 2)$, and 
	by the previous fact $F_i$ is a master copy of $\ccD$. 
	Moreover, the edge copy $f_i^+$ can only be collapsed to itself.
\end{proof}

Next we check that for the system of edge copies Girth is essentially 
the same as ordinary girth.
  
\begin{lemma} \label{lem:Gth-gth}
	If $H$ is a linear hypergraph and $g\ge 2$, then 
		\[
		\gth(H) > g 
		\,\,\, \Longleftrightarrow \,\,\,
		\Gth\bigl(H, E^+(H)\bigr) >(g, g)
		\,\,\, \Longleftrightarrow \,\,\,
		\Gth\bigl(H, E^+(H)\cup\{H\}\bigr) >g\,.
	\]
	\end{lemma}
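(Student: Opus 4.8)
The plan is to prove that both of the inner statements are equivalent to $\gth(H)>g$ and then chain the two equivalences. Throughout I use that $H$ is linear, so $\gth(H)>2$; in particular a cycle of copies of length $2$ in $E^+(H)$ is impossible, since its two distinct vertex connectors would lie in both of the two distinct edges involved, violating linearity. Two easy observations come first. \emph{(i)} In a cycle of copies all of whose copies are edge copies $e^+=(e,\{e\})$, no connector can be an edge: an edge connector $q_i$ would have to lie in $E(F_i)\cap E(F_{i+1})=\{e_i\}\cap\{e_{i+1}\}$, forcing $e_i=e_{i+1}$ and hence $F_i=F_{i+1}$, against~\ref{it:L1}. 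So every connector is a vertex, every index is pure, and $\ord{\ccC}=\len{\ccC}$. \emph{(ii)} In the system $(H,E^+(H))$ there are no real copies, so by Fact~\ref{rem:1200} no cycle of copies there has a master copy; hence $\Gth(H,E^+(H))>(g,g)$ is \emph{equivalent to} the nonexistence of tidy cycles of copies $\ccC$ in $E^+(H)$ with $h(\ccC)\le(g,g)$, i.e.\ with $\ord{\ccC}=\len{\ccC}=n\le g$. The same remark applies to the third system, where $H$ is the only real copy.

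Next I would set up the key dictionary: for $n\le g$, a tidy cycle of copies $\ccC=e_1^+v_1\ldots e_n^+v_n$ in $E^+(H)$ is the same thing as an $n$-cycle $e_1v_1\ldots e_nv_n$ of $H$ with $n\in[3,g]$. From an $n$-cycle to a cycle of copies is immediate ($\ref{it:C2}\Rightarrow\ref{it:L1}$, $\ref{it:C1}\Rightarrow\ref{it:L2}$, $\ref{it:C3}\Rightarrow\ref{it:L3}$, with~\ref{it:L4} vacuous), and~\ref{it:T1} is trivial as all connectors are vertices; the content is that a cycle of \emph{shortest} length also satisfies~\ref{it:T2}. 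This is the crux, and it is where the ``shortcutting'' argument enters: if some $f\in E(H)$ contained vertex connectors $v_i,v_j$ with $i\ne j$ and $\{i,j\}$ non-consecutive in $\ZZ/n\ZZ$, then cutting $\ccC$ at $v_i$ and $v_j$ and reclosing each of the two arcs through $f$ yields two closed walks whose lengths sum to $n+2$ and both lie in $[3,n-1]$; since $f$ lies in the edge set of at most one of the two arcs, at least one of these walks is a genuine cycle of $H$ strictly shorter than $\ccC$ — contradiction. A short separate linearity computation rules out $|M(f)|=3$ in the remaining case $n=3$. Together these give~\ref{it:T2}. Conversely, if $\ccC$ is a tidy cycle of copies in $E^+(H)$ with $\ord{\ccC}=n\le g$ in which some edge repeats, say $e_i=e_j=f$ with $i\ne j$, then~\ref{it:L1} forces $j\notin\{i-1,i,i+1\}$, so $i-1,i,j-1,j$ are four distinct indices all lying in $M(f)$, contradicting~\ref{it:T2} (which forces $|M(f)|\le 2$); hence all edges of $\ccC$ are distinct and, using $n\ge 3$, $\ccC$ is an $n$-cycle of $H$.

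With this dictionary, the first equivalence $\gth(H)>g\Leftrightarrow\Gth(H,E^+(H))>(g,g)$ drops out: the left side says there is no $n$-cycle of $H$ with $n\in[2,g]$, the right side (by observation~\emph{(ii)}) says there is no tidy cycle of copies of order $\le g$ in $E^+(H)$, and the dictionary identifies these, the minimality in the forward direction being supplied by taking a cycle of $H$ of shortest length. For the second equivalence I would similarly show $\gth(H)>g\Leftrightarrow\Gth\bigl(H,E^+(H)\cup\{H\}\bigr)>g$. For ``$\Rightarrow$'', let $\ccC$ be a tidy cycle of copies in $E^+(H)\cup\{H\}$ with $\ord{\ccC}\le g$; if $H$ occurs among its copies, then $H$ is a master copy (collapse each edge copy $e_i^+\ne H$ to itself by taking $f_i=e_i\in E(H)$, so the collapsed sequence is $\ccC$ itself, still a cycle of copies), and if $H$ does not occur, all copies are edge copies, so by the dictionary $\ccC$ would be an $n$-cycle of $H$ with $n\le g$, contradicting $\gth(H)>g$; hence every such $\ccC$ has a master copy. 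For ``$\Leftarrow$'', if $\gth(H)\le g$ then a shortest cycle of $H$ gives, via the dictionary, a tidy cycle of copies of order $n\le g$ lying in $E^+(H)\subseteq E^+(H)\cup\{H\}$ which uses no real copy and hence, by Fact~\ref{rem:1200}, has no master copy, so $\Gth\bigl(H,E^+(H)\cup\{H\}\bigr)\not>g$. Chaining the two equivalences yields the lemma. The only genuinely non-formal step is the shortcutting argument for~\ref{it:T2} together with its converse (extracting distinct edges from~\ref{it:T2}); the rest is unwinding Definitions~\ref{dfn:coc},~\ref{dfn:952}, and~\ref{dfn:Girth}.
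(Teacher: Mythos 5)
Your proof is correct, and it takes a genuinely different organizational route from the paper's. The paper establishes the cyclic chain $\text{(third)}\Rightarrow\text{(middle)}\Rightarrow\text{(first)}\Rightarrow\text{(third)}$: the first step is immediate, the second takes a shortest $n$-cycle of $H$ and \emph{asserts} without detail that minimality yields a tidy cycle of copies (after which the absence of a master copy via Fact~\ref{rem:1200} supplies the contradiction, just as you observe), and the third step splits on whether $H$ occurs among the copies and invokes Fact~\ref{fact:231a} in the case where all copies are edge copies. You instead prove that each of the two displayed $\Gth$-conditions is individually equivalent to $\gth(H)>g$, and you replace the appeal to Fact~\ref{fact:231a} by the converse of your dictionary, which extracts edge-distinctness directly from~\ref{it:T2} by counting four elements of $M(f)$. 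The most substantive piece of your proposal is the shortcutting argument that a shortest $n$-cycle satisfies~\ref{it:T2} — the paper leaves this to the reader — and you correctly note the one case ($n=3$ with $|M(f)|=3$) that shortcutting cannot reach, disposing of it by linearity. The trade-offs are: the paper's route is shorter because it reuses the already-proved Fact~\ref{fact:231a}; yours is longer but self-contained and fills in the omitted tidiness step. One small point of precision: you phrase the ``dictionary'' as an identification, but, as your own parenthetical remark acknowledges, only the converse direction (tidy cycle of edge copies $\to$ $n$-cycle) holds without qualification, whereas the forward direction needs a \emph{shortest} $n$-cycle; this does not affect any of the places where you apply it.
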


\begin{proof}
	Clearly the last condition implies the middle one. Next we assume the middle 
	condition and intend to derive the first. Suppose contrariwise that  
	$e_1x_1\ldots e_nx_n$ is an $n$-cycle in $H$ for some $n\in [2, g]$, chosen 
	in such a way that $n$ is as small as possible. Now $\ccC=e_1^+x_1\ldots e_n^+x_n$
	is a cycle of copies in $E^+(H)$ and the minimality of $n$ shows that $\ccC$ is tidy.
	Due to $h(\ccC)=(n, n)\le (g, g)$ there has to exist a master copy of $\ccC$,
	contrary to Fact~\ref{rem:1200}. 
	
	Thus it remains to prove that assuming $\gth(H) > g$ we can return 
	to 
		\[
		\Gth\bigl(H, E^+(H)\cup\{H\}\bigr)>g\,.
	\]
		Let $\ccC$ denote a tidy cycle of copies in $E^+(H)\cup\{H\}$ with $\ord{\ccC}\le g$.
	If $\ccC$ involves the copy~$H$, then~$H$ is a master copy of $\ccC$ and we are done.
	Now suppose that $\ccC$ is of the form $e_1^+q_1\ldots e_n^+q_n$.
	Notice that~$q_i$ cannot be an edge for any $i\in \ZZ/n\ZZ$, for then~\ref{it:L4} 
	would imply $e_i=q_i=e_{i+1}$, contrary to~\ref{it:L1}. 
	Thus all connectors of $\ccC$ are vertices, all indices are pure and, consequently,  
	we have $n=\ord{\ccC}\le g$. But now Fact~\ref{fact:231a} applied to   
   $e_1q_1\ldots e_nq_n$ yields $e_1=\dots=e_n$, which is absurd.
\end{proof}	

The lemma that follows relates $\Gth$ to concepts introduced earlier. 

\begin{lemma} \label{lem:Gth22}
	If a linear system $(H, \ccH)$ has strongly induced copies with clean intersections,
	then $\Gth(H, \ccH^+) > (2, 2)$.  
\end{lemma}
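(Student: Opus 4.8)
The plan is to unpack what $\Gth(H,\ccH^+)>(2,2)$ asks for and then verify it by a short case analysis. Since the length of any cycle of copies lies in $[\ord\ccC,2\ord\ccC]$ and is always at least $2$, the constraint $h(\ccC)\le(2,2)$ forces $\len\ccC=2$. So it suffices to show that every \emph{tidy} cycle of copies $\ccC=F_1q_1F_2q_2$ in $\ccH^+$ has a master copy. As a preliminary reduction I would observe that $F_1$ and $F_2$ cannot both be edge copies: two distinct edge copies $e^+$ and ${e'}^+$ share at most one vertex (linearity of $H$) and no edge, which cannot support two distinct connectors $q_1\ne q_2$. Hence at least one of $F_1,F_2$ lies in $\ccH$, and after rotating the cyclic sequence I may assume it is $F_1$.

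The key observation will be that there is an edge $f\in E(F_1)$ with $V(F_1)\cap V(F_2)\subseteq f$. If $F_2$ is a real copy, this is precisely the clean-intersections hypothesis, which supplies $e_\star\in E(F_1)$ and $e_{\star\star}\in E(F_2)$ with $V(F_1)\cap V(F_2)=e_\star\cap e_{\star\star}$; take $f=e_\star$. If $F_2={e'}^+$ is an edge copy, then strong inducedness of $F_1$ applied to $e'\in E(H)$ yields $f\in E(F_1)$ with $V(F_1)\cap V(F_2)=V(F_1)\cap e'\subseteq f$. Granting this, I would split into two cases by the types of the connectors. If $q_1,q_2$ are both vertices, then they lie in $f$, so collapsing $F_2$ to $f^+$ gives the cyclic sequence $F_1q_1f^+q_2$, for which conditions ($L1$)--($L4$) are immediate — here ($L1$) uses $F_1\ne f^+$, which holds because $F$ has at least two edges (if $F$ were a single edge, $F_1$ and $F_2$ would be two distinct edges of $H$ sharing the two vertices $q_1,q_2$, against linearity); hence $F_1$ is a master copy of $\ccC$. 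If instead some connector $q_j$ is an edge, then $q_j\in E(F_1)\cap E(F_2)$, so $q_j\subseteq V(F_1)\cap V(F_2)\subseteq f$, and since $q_j$ and $f$ are edges of the uniform hypergraph $F_1$ this gives $q_j=f$ and $V(F_1)\cap V(F_2)=q_j$. Then the other connector $q_{3-j}$ is either a vertex lying in $q_j$, contradicting tidiness ($T1$), or an edge equal to $f=q_j$, contradicting $q_1\ne q_2$; so this case does not arise.

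I do not expect a genuine obstacle, as this lemma is just a base case for the $\GTH$-machinery developed later. The step requiring the most care is the bookkeeping with edge copies: since ``clean intersections'' and ``strongly induced'' are hypotheses on $\ccH$ and not on $\ccH^+$, the configurations involving members of $E^+(H)$ have to be handled separately, and one must remember that for a length-$2$ cycle the tidiness condition ($T2$) is automatic while ($T1$) is the only genuine constraint — it is exactly what rules out the mixed-connector case above. I would also keep an eye on the degenerate case where $F$ consists of a single edge, to be sure that ($L1$) in the collapsed cycle is not silently violated.
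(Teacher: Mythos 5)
Your argument is correct and follows essentially the same route as the paper's: reduce to length-$2$ cycles, rule out two edge copies by linearity, use clean intersections (or strong inducedness, when $F_2\in E^+(H)$) to find an edge $f\in E(F_1)$ covering $V(F_1)\cap V(F_2)$, then collapse $F_2$ to $f^+$ if both connectors are vertices and derive a violation of~(T1) or~(L2) otherwise. The only cosmetic difference is that you extract the common edge $f$ up front, whereas the paper's proof organizes the cases by connector type first and produces $f$ inside each case; your care with the degenerate case where $F$ might equal a single edge is also sound, though the conclusion there (no such $\ccC$ can exist) is the same either way.
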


\begin{proof}
	Consider any cycle of two copies $\ccC=F_1q_1F_2q_2$ in $\ccH^+$.
	Due to~\ref{it:L2} and~\ref{it:L4} it cannot be the case that both connectors are edges.
	
	Suppose next that exactly one of them, say $q_1$, is an edge, while $q_2$ is a vertex. 
	If one of~$F_1$ or~$F_2$ is an edge copy, say~$F_1=f^+$, then $q_2\in f=q_1$
	shows that~$\ccC$ fails to be tidy. If both~$F_1$ and~$F_2$ are real copies, then
	they have the edge $q_1$ in common and, as their intersection is clean, they 
	have nothing else in common. Thus we have again $q_2\in q_1$ and $\ccC$ is not tidy. 
	
	Finally, we consider the case that both $q_1$ and $q_2$ are vertices. By the linearity 
	of $H$, at least one copy of~$\ccC$, say $F_1$, is a real copy. We shall prove
	that~$F_1$ is a master copy of~$\ccC$. If $F_2=f^+$ is an edge copy, then $F_1\Str H$ 
	yields $f\in E(F_1)$ and we just need to collapse $F_2$ to itself. If $F_2$ is a real
	copy, then the assumption that $F_1$ and $F_2$ intersect cleanly shows that these copies 
	have an edge $f$ in common. Now we can collapse $F_2$ to $f^+$.   
\end{proof} 

It may be observed that in the last case if both $F_1$ and $F_2$ are real copies, 
then both of them 
are master copies. Thus a cycle of copies of length $2$ can have two master copies and 
part~\ref{it:1522a} of the following lemma elaborates on this fact.  
We also include a part~\ref{it:1522b} stating that longer cycles of copies can 
have at most one master copy. This observation is never going to be used in the sequel, but 
it may help to explain why cycles consisting of two copies will sometimes require a special
treatment in later arguments. 
   
\begin{lemma}\label{lem:1522}
	Let $\ccC$ be a tidy cycle of copies in an extended linear 
	system $(H, \ccH^+)$ with $\Gth(H, \ccH^+)>(2, 2)$.
	\begin{enumerate}[label=\alabel]
		\item\label{it:1522a} If $\len{\ccC}=2$, then every real copy of $\ccC$ is a master copy. 
		\item\label{it:1522b} If $\len{\ccC}\ge 3$, then $\ccC$ has at 
			most one master copy.  
	\end{enumerate}
\end{lemma}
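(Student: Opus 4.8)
\textit{Part (a).} Write $\ccC=F_1q_1F_2q_2$. I would first check that both connectors are vertices. Since $\len{\ccC}=2$ forces $h(\ccC)\le(2,2)$ and $\ccC$ is tidy, the hypothesis $\Gth(H,\ccH^+)>(2,2)$ provides a master copy of $\ccC$, which by Fact~\ref{rem:1200} is one of $F_1,F_2$ (so at least one of them is real, as also follows directly from linearity of $H$). If one connector, say $q_1$, were an edge, then in the cycle obtained by collapsing the non-master copy to an edge copy $f^+$ of the master, condition~\ref{it:L4} at that edge copy forces $q_1=f$, and then the remaining connector $q_2$ either equals $f$ (against~\ref{it:L2}) or, being a vertex, lies in $f=q_1$ (against~\ref{it:T1}); so $q_1,q_2$ are vertices. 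Now let $F$ be any real copy of $\ccC$. We already have a real master $F_\bullet\in\{F_1,F_2\}$, and if $F_\bullet=F$ we are done; otherwise unpacking ``$F_\bullet$ is a master'' yields an edge $f\in E(F_\bullet)$ with $q_1,q_2\in f$, so $F\,q_1\,f^+\,q_2$ is a tidy cycle of copies of length two, whose master (again by $\Gth(H,\ccH^+)>(2,2)$) must be its unique real copy $F$; unpacking this gives $g\in E(F)$ with $q_1,q_2\in g$, and collapsing the other copy of $\ccC$ to $g^+$ exhibits $F$ as a master.

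\textit{Part (b).} Suppose, for contradiction, that the tidy cycle $\ccC=F_1q_1\dots F_nq_n$ with $n=\len{\ccC}\ge 3$ has two distinct master copies $F_\star\ne F_{\star\star}$, and fix master-collapses of $\ccC$ toward $F_\star$ and toward $F_{\star\star}$. The first and most delicate step is to pin down the connectors. For a position $i$ with $F_i\ne F_\star$, position $i$ of the $F_\star$-collapse $\ccD_\star$ is an edge copy $f_i^+$ with $f_i\in E(F_\star)$; if one of $q_{i-1},q_i$ were an edge then~\ref{it:L4} would force it to equal $f_i$, and then the other connector would either equal it (against~\ref{it:L2}) or lie in it (against~\ref{it:T1}), so both $q_{i-1},q_i$ are vertices, and~\ref{it:L3} places them in $f_i\subseteq V(F_\star)$. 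Since every index $i$ satisfies $F_i\ne F_\star$ or $F_i\ne F_{\star\star}$, it follows that every connector of $\ccC$ is a vertex lying in $V(F_\star)\cap V(F_{\star\star})$.

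In particular $q_1,q_2$ are distinct vertices of $V(F_\star)\cap V(F_{\star\star})$, so $F_\star\,q_1\,F_{\star\star}\,q_2$ is a tidy cycle of copies with $h\le(2,2)$, hence has a real master in $\{F_\star,F_{\star\star}\}$; exactly as in part~(a) — inserting the auxiliary edge copy it produces and reapplying $\Gth(H,\ccH^+)>(2,2)$ if the master is the ``wrong'' one — one gets that both $F_\star$ and $F_{\star\star}$ contain an edge through $\{q_1,q_2\}$, and since $H$ is linear these two edges of $H$ coincide in a common edge $f\in E(F_\star)\cap E(F_{\star\star})$. By~\ref{it:T2} at most two connectors of $\ccC$ lie in $f$, so as $n\ge 3$ some connector $q_j$ lies outside $f$ while $q_j\in V(F_\star)\cap V(F_{\star\star})$. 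But then $F_\star\,f\,F_{\star\star}\,q_j$ is a tidy cycle of copies of order~$1$, so $\Gth(H,\ccH^+)>(2,2)$ gives it a master in $\{F_\star,F_{\star\star}\}$; collapsing the other copy turns it into the edge copy $f^+$, which forces the vertex connector $q_j$ to lie in $f$ — contradiction.

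\textit{The main obstacle.} The crux is the middle argument of part~(b): extracting, from the mere coexistence of two masters, that all connectors are shared vertices and that the two masters have a genuine common edge, even though $\Gth(H,\ccH^+)>(2,2)$ only constrains cycles of copies of length two. The enabling device — used both here and in part~(a) — is to fabricate auxiliary length-two cycles by inserting edge copies and feed them back into the hypothesis; once a common edge and an ``outside'' connector are in hand, the contradiction is the same short pattern ``an order-$\le 1$ tidy cycle must have a master, collapse it, reach an absurdity''.
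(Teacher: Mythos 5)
Your proof is correct. Part~\ref{it:1522a} is essentially the paper's argument spelled out in more detail: rather than citing Fact~\ref{rem:5027} (which says a collapse witness $f$ for a master $F_\star$ automatically lies in $E(F_i)\cap E(F_\star)$), you re-derive its content inline by forming the auxiliary length-two cycle $F\,q_1\,f^+\,q_2$ and applying the $\Gth>(2,2)$ hypothesis a second time; the preliminary observation that the connectors must be vertices is not needed in the paper's version but does no harm.

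Part~\ref{it:1522b} takes a genuinely different route from the paper's. The paper invokes Fact~\ref{rem:5027} to obtain two edges $f, f'\in E(F_1)\cap E(F_i)$ (one from each collapse direction), first argues $f=f'$ by examining the tidy cycle $F_1\,f\,F_i\,f'$, and then derives the contradiction directly from~\ref{it:T1}/\ref{it:T2} applied to the four connectors $q_1, q_n, q_{i-1}, q_i$ (at least three of which are distinct and all compatible with $f$). You instead first establish that \emph{every} connector of $\ccC$ is a vertex lying in $V(F_\star)\cap V(F_{\star\star})$, extract a single common edge $f$ via the fresh length-two cycle $F_\star\,q_1\,F_{\star\star}\,q_2$ together with part~\ref{it:1522a}, use~\ref{it:T2} of $\ccC$ only to locate a connector $q_j\notin f$, and then manufacture the order-$1$ cycle $F_\star\,f\,F_{\star\star}\,q_j$ for the final contradiction. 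Both arguments pivot on~\ref{it:T2} at the end, but yours does more connector pre-processing and one extra auxiliary cycle while avoiding the $f=f'$ reduction; the paper's is slightly more economical. One small wording remark: the sentence ``since every index $i$ satisfies $F_i\ne F_\star$ or $F_i\ne F_{\star\star}$, it follows that every connector \dots lies in $V(F_\star)\cap V(F_{\star\star})$'' is a bit quick --- the inference is correct but really uses that for each $j$ the two neighbours $F_j, F_{j+1}$ are distinct (condition~\ref{it:L1}), so one of them is $\ne F_\star$ and one is $\ne F_{\star\star}$, possibly different ones; it is worth spelling this out.
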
   

\begin{proof}
	For the verification of~\ref{it:1522a} let $\ccC=F_1q_1F_2q_2$. 
	Due to ${\Gth(H, \ccH^+)>(2, 2)\ge h(\ccC)}$ we know that $\ccC$ has a master copy. 
	Suppose that $F_1$ is such a master copy and that $F_2$ is collapsible to $f^+$.
	We need to prove that if $F_2$ is a real copy, then $F_2$ is a master copy of $\ccC$
	as well. Since $F_2\ne f^+$, this can be seen by looking at the cycle $f^+q_1F_2q_2$.
	
	Proceeding with~\ref{it:1522b} we write $\ccC=F_1q_1\ldots F_nq_n$ and assume 
	contrariwise that both $F_1$ and $F_i$ are master copies of $\ccC$, where $i\in [2, n]$
	satisfies $F_1\ne F_i$.
	According to Fact~\ref{rem:5027} there are edges $f, f'\in E(F_1)\cap E(F_i)$ 
	such that the master copy $F_i$ allows to collapse $F_1$ to~$f^+$ 
	and, similarly, $F_1$ allows to collapse $F_i$ to $(f')^+$.
	If $f'\ne f$ then $F_1fF_if'$ is a tidy cycle of copies without a master copy, contrary
	to $\Gth(H, \ccH^+)>(2, 2)$. 
	
	Thus we have $f=f'$.
	Let us now look at the connectors $q_1$, $q_n$, $q_{i-1}$, and $q_i$. 
	Due to $n=\len{\ccC}\ge 3$
	their number is at least $3$. Moreover, the collapsibility of $F_1$ and $F_i$ to $f^+$
	shows that all vertices among them are in $f$, while all edges among them are equal to $f$.
	So if $f\in \{q_1, q_n, q_{i-1}, q_i\}$ we get a contradiction to~\ref{it:T1}, while 
	$f\not \in \{q_1, q_n, q_{i-1}, q_i\}$ causes the failure of~\ref{it:T2}. 
	This contradiction proves that $\ccC$ has indeed at most one master copy. 
\end{proof}

We conclude this subsection with two lemmata that will help us later to analyse 
the Girth of systems arising by partite constructions. 

\begin{lemma}\label{lem:1447}
	Let $\ccC$ be a tidy cycle of copies in an extended linear system $(H, \ccH^+)$.
	If 
		\[
		h(\ccC)<\Gth(H, \ccH^+)\,, 
	\]
		then all connectors of $\ccC$ are vertices. 
\end{lemma}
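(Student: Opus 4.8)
The plan is to argue by contradiction. Suppose $\ccC=F_1q_1\ldots F_nq_n$ is a tidy cycle of copies with $h(\ccC)<\Gth(H,\ccH^+)$, and assume towards a contradiction that some connector $q_j$ is an edge. The inequality $h(\ccC)<\Gth(H,\ccH^+)$ means exactly that $\Gth(H,\ccH^+)>h(\ccC)$, so by Definition~\ref{dfn:Girth}, applied with the pair $h(\ccC)$ in the r\^ole of $(g,n)$, the tidy cycle $\ccC$ has a master copy $F_\star$. I would then fix a family of edges $\{f_i\in E(F_\star)\colon F_i\ne F_\star\}$ witnessing this and pass to the collapsed cycle $\ccD=G_1q_1\ldots G_nq_n$, where $G_i=F_i$ when $F_i=F_\star$ and $G_i=f_i^+$ otherwise; by Definition~\ref{dfn:master} this $\ccD$ is again a cycle of copies, and crucially it has the same connectors $q_1,\ldots,q_n$ as $\ccC$.

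The first step is to pin down where the offending edge $q_j$ sits after collapsing. By~\ref{it:L1} we have $F_j\ne F_{j+1}$, so at least one of these two copies, say $F_k$ with $k\in\{j,j+1\}$, differs from $F_\star$ and is therefore replaced in $\ccD$ by the edge copy $G_k=f_k^+$. Since $q_j\in E(F_j)\cap E(F_{j+1})\subseteq E(F_k)$ and in $\ccD$ the connector $q_j$ still attaches to position $k$, property~\ref{it:L4} for $\ccD$ forces $q_j\in E(G_k)=\{f_k\}$, i.e.\ $f_k=q_j$. In other words, the copy sitting at position $k$ of $\ccD$ is the edge copy $q_j^+$, whose entire vertex set is just $q_j$.

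The second step extracts the contradiction from the connector on the \emph{other} side of $F_k$ in $\ccC$. The two connectors flanking $F_k$ are $q_{k-1}$ and $q_k$; one of them is $q_j$, and I denote the other by $q'$ (so $q'=q_{j-1}$ if $k=j$ and $q'=q_{j+1}$ if $k=j+1$), noting $q'\ne q_j$ by~\ref{it:L2}. In $\ccD$ the connector $q'$ joins $G_k=q_j^+$ to a neighbouring copy, so~\ref{it:L3} and~\ref{it:L4} for $\ccD$ leave only two possibilities: if $q'$ is a vertex then $q'\in V(q_j^+)=q_j$, which violates tidiness~\ref{it:T1}; and if $q'$ is an edge then $q'\in E(q_j^+)=\{q_j\}$, so $q'=q_j$, contradicting~\ref{it:L2}. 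Either way we obtain a contradiction, whence every connector of $\ccC$ must be a vertex.

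I do not expect any real obstacle: the substance of the argument is simply that possessing a master copy lets us shrink the copy adjacent to a suspected edge connector down to that single edge, after which the tidiness condition~\ref{it:T1} (forbidding one connector to lie inside another) rules it out at once. The only care needed is the routine bookkeeping with the index $k\in\{j,j+1\}$ and the identification of the "other" connector $q'$; the bound $n\ge 2$ built into Definition~\ref{dfn:coc} guarantees that $q'$ exists and is distinct from $q_j$, so no separate treatment of short cycles is necessary.
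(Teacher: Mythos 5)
Your proof is correct and follows essentially the same route as the paper: both invoke $\Gth(H,\ccH^+)>h(\ccC)$ to obtain a master copy, collapse a copy adjacent to the offending connector to an edge copy, and then use~\ref{it:L3},~\ref{it:L4},~\ref{it:L2}, and~\ref{it:T1} to rule out that connector being an edge. The paper phrases it directly (for an arbitrary index, both flanking connectors are shown to be vertices) rather than by contradiction, but the bookkeeping with the collapsed cycle and the use of tidiness are identical in substance.
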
	

\begin{proof}
	Let $\ccC=F_1q_1\ldots F_nq_n$ be such a tidy cycle of copies. 
	Definition~\ref{dfn:Girth} tells us that it has a master copy $F_\star$.
	Let $i\in\ZZ/n\ZZ$ be an index for which we want to prove that $q_i$ 
	is a vertex. By~\ref{it:L1} at least one of its neighbouring copies $F_i$
	and $F_{i+1}$ is distinct from $F_\star$ and by symmetry it suffices to 
	treat the case $F_{i}\ne F_\star$. Let $f_i$ be the edge $F_i$ is collapsible 
	to. Due to~\ref{it:L4} every edge among the connectors $q_i$, $q_{i-1}$ 
	is equal to $f_i$ and by~\ref{it:L3} every vertex among them belongs to $f_i$.
	So by~\ref{it:L2} it cannot be the case that both connectors are edges and~\ref{it:T1}
	tells us that $i$ cannot be mixed in $\ccC$. The only remaining case is that 
	both of $q_i$ and $q_{i-1}$ are vertices.    
\end{proof}

\begin{lemma} \label{lem:949}
	Let $\ccC=F_1q_1\ldots F_nq_n$ be a tidy cycle of copies of length $n\ge 3$ in 
	an extended linear system $(H, \ccH^+)$. Suppose that for some set of indices
	$K\subseteq \ZZ/n\ZZ$ we are given a family of edge copies $\{f_k^+\colon k\in K\}$.
	\begin{enumerate}[label=\alabel]
		\item\label{it:949a} If the cyclic sequence $\ccD$ obtained from $\ccC$ upon 
			replacing $F_k$ by $f^+_k$ for every $k\in K$ has the 
			properties~\ref{it:L2}\,--\,\ref{it:L4}, then~$\ccD$ satisfies~\ref{it:L1} 
			as well and, hence, it is a tidy cycle of copies. 
		\item\label{it:949b} Moreover, in this case every master copy of $\ccD$, if there exists 
			any, is a master copy of $\ccC$ as well. 
	\end{enumerate}
\end{lemma}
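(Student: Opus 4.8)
The plan is to handle the two parts in turn, exploiting throughout that the cyclic sequence of connectors $q_1,\dots,q_n$ of $\ccD$ is literally the same as that of $\ccC$; write $G_k=f_k^+$ for $k\in K$ and $G_k=F_k$ for $k\notin K$, so that $\ccD=G_1q_1\dots G_nq_n$. Since conditions~\ref{it:T1} and~\ref{it:T2} refer only to the connectors (and to edges of $H$), they hold for $\ccD$ as soon as they hold for $\ccC$. Hence for part~\ref{it:949a} it suffices to verify~\ref{it:L1} for $\ccD$, and tidiness then comes for free.

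To prove~\ref{it:L1}, I would assume for contradiction that $G_i=G_{i+1}$ for some $i$. If $i,i+1\notin K$ this reads $F_i=F_{i+1}$, contradicting~\ref{it:L1} for $\ccC$. Otherwise at least one of $G_i,G_{i+1}$ is an edge copy, so $G_i=G_{i+1}=e^+$ for a single edge $e\in E(H)$. Now I would examine the three connectors $q_{i-1},q_i,q_{i+1}$; these are pairwise distinct by~\ref{it:L2}, and it is exactly here that the hypothesis $n\ge 3$ is used, to guarantee that $i-1,i,i+1$ are distinct in $\ZZ/n\ZZ$. Each of the positions $i-1,i,i+1$ has $e^+$ as one of its two neighbouring copies in $\ccD$, so by~\ref{it:L3} and~\ref{it:L4} each of $q_{i-1},q_i,q_{i+1}$ is either a vertex lying in $e$ or is equal to $e$; by distinctness at most one of them equals $e$. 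If one does, a second one is a vertex lying in $e$, and these two connectors violate~\ref{it:T1}. If none does, all three are vertices of $e$, so the set $M(e)$ from~\ref{it:T2} contains three distinct indices and cannot be covered by a two-element set $\{i(\star),i(\star)+1\}$, contradicting~\ref{it:T2}. This proves~\ref{it:949a}.

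For part~\ref{it:949b} I would work in the situation of~\ref{it:949a}, so that $\ccD$ is a tidy cycle of copies. Let $F_\star$ be a master copy of $\ccD$; by Fact~\ref{rem:1200} it is a real copy, hence $F_\star=F_j$ for some $j\notin K$, so $F_\star$ occurs among the copies of $\ccC$. By Definition~\ref{dfn:master} there are edges $h_i\in E(F_\star)$ for all $i$ with $G_i\ne F_\star$ such that the sequence $\ccE$ obtained from $\ccD$ by collapsing each such $G_i$ to $h_i^+$ is a cycle of copies. Whenever $F_i\ne F_\star$ one has $G_i\ne F_\star$ (either $G_i=F_i$, or $G_i$ is an edge copy while $F_\star$ is real), so the edge $h_i$ is available for every $i$ with $F_i\ne F_\star$; I would collapse $\ccC$ using precisely these edges, obtaining a cyclic sequence $\ccC'$. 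Then $\ccC'$ and $\ccE$ agree at every position except those $i\in K$ with $F_i=F_\star$, where $\ccC'$ carries the real copy $F_\star$ and $\ccE$ carries the edge copy $h_i^+$ with $h_i\in E(F_\star)$. Since $V(h_i^+)\subseteq V(F_\star)$ and $E(h_i^+)=\{h_i\}\subseteq E(F_\star)$, properties~\ref{it:L2}\,--\,\ref{it:L4} carry over from $\ccE$ to $\ccC'$; and~\ref{it:L1} for $\ccC'$ follows from a short inspection of consecutive positions: two adjacent positions of $\ccC'$ both carrying $F_\star$ would force $F_i=F_{i+1}=F_\star$, contradicting~\ref{it:L1} for $\ccC$, while in every other case the two entries are distinguished either as real copy versus edge copy, or already by~\ref{it:L1} for $\ccE$. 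Hence $\ccC'$ is a cycle of copies obtained from $\ccC$ by collapsing every $F_i\ne F_\star$ to an edge copy of $F_\star$, i.e.\ $F_\star$ is a master copy of $\ccC$.

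I expect the main obstacle to be the bookkeeping in part~\ref{it:949b}: one must notice that collapsing $\ccC$ along the edges $h_i$ does not in general reproduce $\ccE$, because at indices $i\in K$ with $F_i=F_\star$ the collapse leaves the real copy $F_\star$ in place of the edge copy $h_i^+$ that appears in $\ccE$, and then one has to confirm that this slightly ``under-collapsed'' sequence still satisfies~\ref{it:L1}\,--\,\ref{it:L4}. Everything else is a direct unwinding of Definitions~\ref{dfn:coc}, \ref{dfn:952} and~\ref{dfn:master}, with $n\ge 3$ entering only to guarantee that three consecutive indices are distinct.
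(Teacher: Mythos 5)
Your proof is correct and matches the paper's argument: part~(a) examines the three consecutive connectors around a repeated edge copy and derives a contradiction from (T1), (T2), and (L2)--(L4), while part~(b) compares the intended collapse $\ccC'$ of $\ccC$ with the known collapse $\ccE$ of $\ccD$, observing that these differ only at positions where an edge copy $h_i^+$ (with $h_i\in E(F_\star)$) is replaced by the larger real copy $F_\star$. The one cosmetic difference is that the paper closes part~(b) by invoking part~(a) to get (L1) for $\ccC'$, whereas you check (L1) directly by inspecting consecutive entries; both work.
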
  

\begin{proof}
	If part~\ref{it:949a} fails, then $\ccD$ has an edge copy $e^+$ occurring twice
	in consecutive positions. Due to $n\ge 3$, there are three connectors next to 
	these identical edge copies, say $q_i$, $q_{i+1}$, and $q_{i+2}$. As~\ref{it:T2}
	rules out the case $M(e)\supseteq \{i, i+1, i+2\}$, at least one of these connectors must 
	be an edge and by~\ref{it:L4} this edge can only be $e$. By~\ref{it:L2} the edge $e$
	occurs exactly once among $q_i$, $q_{i+1}$, and $q_{i+2}$	while the other two of these 
	connectors are vertices. By~\ref{it:L3} these vertices are in $e$, contrary
	to $\ccC$ satisfying~\ref{it:T1}. This concludes the proof of part~\ref{it:949a}.
	
	Moving on to part~\ref{it:949b} it is convenient to 
	write $\ccD=\wt{F}_1q_1\ldots \wt{F}_nq_n$, where 
		\[
		\wt{F}_i=\begin{cases}
						F_i &   \text{ if $i\not\in K$} \cr
						f_i^+ & \text{ if $i\in K$}
					\end{cases}
	\]
	for every $i\in\ZZ/n\ZZ$. 
	Suppose now that $F_\star\in\bigl\{\wt{F}_1, \ldots, \wt{F}_n\bigr\}$ 
	is a master copy of $\ccD$ and that the family of edges 
	$\ccF=\bigl\{e_i\in E(F_\star)\colon i\in\ZZ/n\ZZ \text{ and } \wt{F}_i\ne F_\star\bigr\}$ 
	exemplifies this. 
	Fact~\ref{rem:1200} shows that $F_\star$ is a real copy, 
	whence $F_\star\in \{F_1, \ldots, F_n\}$.  
	Thus the underlying index set of~$\ccF$ is a superset of $K$.
	By part~\ref{it:949a} the subfamily 
	$\bigl\{e_i\in E(F_\star)\colon i\in\ZZ/n\ZZ \text{ and } F_i\ne F_\star\bigr\}$
	of~$\ccF$ witnesses that $F_\star$ is indeed a master copy of~$\ccC$.
\end{proof}

\subsection{Semitidiness}
\label{sssec:157}

Consider a cycle of copies $\ccC=F_1xF_2eF_3q_3\ldots F_nq_n$ in an extended linear 
system $(H, \ccH^+)$, where the connector $x$ is a vertex, $e$ is an edge, and $x\in e$.
The last condition clearly violates~\ref{it:T1} and, therefore, it causes $\ccC$ to be 
untidy. So in case $\ord{\ccC}<\Gth(H, \ccH^+)$ Definition~\ref{dfn:Girth} does not 
tell us whether $\ccC$ has a master copy. Roughly speaking Lemma~\ref{lem:1342} below
asserts that if $\ccC$ is ``otherwise tidy'', then the existence of such a master
copy can nevertheless be proved. Any attempt to make this precise leads inevitably 
to the following concept. 

\begin{dfn}\label{dfn:1343}
	Let $\ccC=F_1q_1\ldots F_nq_n$ be a cycle of copies in an extended linear 
	system $(H, \ccH^+)$. Set 
		\[
		M(f)=\{i\in \ZZ/n\ZZ\colon q_i \text{ is a vertex belonging to } f\}
	\]
		for every edge $f\in E(H)$. 
	We say that $\ccC$ is {\it semitidy} if it has the following    
	two properties. 
	\index{semitidy}
		\begin{enumerate}[label=\upshape{($S\arabic*$)}]		
		\item\label{it:S1} If for some $i\in \ZZ/n\ZZ$ the connector $q_i$ is an edge, 
			then $M(q_i)\subseteq\{i-1, i+1\}$ and $|M(q_i)|\le 1$.
		\item\label{it:S2} For every edge $f\in E(H)$ that fails to be a connector of $\ccC$
			there exists an index $i(\star)\in \ZZ/n\ZZ$ 
			with $M(f)\subseteq \{i(\star), i(\star)+1\}$. 
	\end{enumerate}
\end{dfn}	

Observe that every tidy cycle is, in particular, semitidy. Conversely, if $\ccC$ is semitidy
and satisfies~\ref{it:S1} in the stronger form that $M(q_i)=\vn$ holds for every edge
connector $q_i$, then~$\ccC$ is tidy.
Now the result we have been alluding to is the following. 

\begin{lemma}\label{lem:1342}
	Given an extended linear system $(H, \ccH^+)$ and $g\in \NN$, we have 
		\[
		\Gth(H, \ccH^+)>g
	\]
		if and only if every semitidy cycle of copies $\ccC$ with $\ord{\ccC}\le g$ possesses 
	a master copy. 
\end{lemma}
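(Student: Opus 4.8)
The backward implication needs nothing new: every tidy cycle of copies is in particular semitidy (as noted immediately after Definition~\ref{dfn:1343}), so if every semitidy cycle of copies of order at most $g$ has a master copy, then so does every tidy one, and this is precisely what $\Gth(H,\ccH^+)>g$ asserts.

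For the forward implication, assume $\Gth(H,\ccH^+)>g$ and let $\ccC=F_1q_1\ldots F_nq_n$ be a semitidy cycle of copies with $\ord{\ccC}\le g$; we must find a master copy. The plan is a strong induction on $h(\ccC)=(\ord{\ccC},\len{\ccC})$, ordered lexicographically. If $\ccC$ is already tidy we are done, since $\ord{\ccC}\le g$ and $\Gth(H,\ccH^+)>g$ furnish a master copy by Definition~\ref{dfn:Girth}. Otherwise $\ccC$ violates~\ref{it:T1}; because it does satisfy~\ref{it:S1} and~\ref{it:S2}, the only possibility is that some connector $q_j$ is an edge with $M(q_j)=\{m\}$ for a single index $m\in\{j-1,j+1\}$, and after reversing the orientation of $\ccC$ (which preserves semitidiness, order, and the existence of a master copy) we may assume $m=j+1$. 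Then $q_{j+1}$ is a vertex with $q_{j+1}\in q_j\in E(F_j)\cap E(F_{j+1})$, so $q_{j+1}\in V(F_j)$, and of course $q_{j+1}\in V(F_{j+2})$ too.

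The next step is to ``bypass'' this edge connector: let $\ccC'$ be obtained from $\ccC$ by deleting the connector $q_j$ and the copy $F_{j+1}$, so that $q_{j+1}$ joins $F_j$ to $F_{j+2}$ directly. Comparing the contribution to $\ord$ of the three old indices around $q_j$ with that of the two new ones (a short case analysis on the types of $q_{j-1}$ and $q_{j+2}$) gives $\ord{\ccC'}\le\ord{\ccC}\le g$, while $\len{\ccC'}=\len{\ccC}-1$; hence $h(\ccC')<h(\ccC)$. One then checks that $\ccC'$ still satisfies~\ref{it:L2}--\ref{it:L4}, that it remains semitidy (its connectors and its non-connector edges all already occur in $\ccC$, so deleting $q_j$ kills the only~\ref{it:T1}-violation running through $q_j$ and, by linearity of $H$, introduces no new failure of~\ref{it:S1} or~\ref{it:S2}), and that, when $F_j\ne F_{j+2}$ and $\len{\ccC'}\ge 2$, it is a genuine cycle of copies. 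The induction hypothesis then supplies a master copy $F_\star$ of $\ccC'$, which one transfers back to $\ccC$: the edges witnessing masterhood for $\ccC'$ keep working for the copies of $\ccC$ other than $F_{j+1}$, and $F_{j+1}$ — which is distinct from the real copy $F_\star$ by Fact~\ref{rem:1200} — is collapsed to the edge copy $q_j^+$, using $q_j\in E(F_{j+1})$ for the edge connector side and $q_{j+1}\in q_j$ for the vertex connector side; Lemma~\ref{lem:949} ensures that the collapsed sequences still satisfy~\ref{it:L1}.

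Two points require genuine care. First, the degenerate endpoints: if the bypass creates a repeated consecutive copy ($F_j=F_{j+2}$) or shrinks the cycle below length $2$, one must instead exhibit a master copy of $\ccC$ directly, using $q_j\in E(F_j)\cap E(F_{j+1})$ and the fact that $F_{j+1}$ collapses onto $q_j^+$; for short cycles Lemma~\ref{lem:1522} is the right tool (it applies because $\Gth(H,\ccH^+)>g$ implies $\Gth(H,\ccH^+)>(2,2)$ as soon as $g\ge 2$). Second — and this is the main obstacle — one must ensure that the master copy $F_\star$ produced for $\ccC'$ can actually absorb the reinserted copy $F_{j+1}$: because an edge connector such as $q_j$ forces any collapse of the copies flanking it to land exactly on $q_j$, the transfer needs $q_j\in E(F_\star)$ whenever $F_\star\notin\{F_j,F_{j+1}\}$, so the argument must either verify that this always holds, or re-choose the master copy among the alternatives offered by Lemma~\ref{lem:1522}, or carry out the surgery in a way that does not throw away the information carried by $q_j$. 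Making this work while keeping~\ref{it:S1} and~\ref{it:S2} in force along the whole reduction is the technical heart of the proof.
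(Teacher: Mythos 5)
Your overall strategy is the same as the paper's: the backward implication is immediate from ``tidy implies semitidy,'' and the forward implication proceeds by strong induction on $h(\ccC)$, reducing to tidy cycles via a surgery that deletes the copy sandwiched between an incident vertex--edge pair of connectors and the edge connector itself, then transferring a master copy from the reduced cycle back to the original. Your analysis of why failure of~\ref{it:T1} is the only possible obstruction, and your checks that the reduced cycle is shorter, has no larger order, and remains semitidy, are all in the spirit of the paper.

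However, you have left a genuine gap precisely where you say ``this is the main obstacle.'' To collapse the reinserted copy $F_{j+1}$ you need $q_j\in E(F_\star)$, and you correctly observe that this is automatic only if $F_\star=F_j$ (or $F_\star=F_{j+1}$, impossible since $F_{j+1}\notin\ccC'$). But you do not actually argue that the induction hypothesis produces $F_j$ as the master copy of $\ccC'$, nor do you carry out any of the alternatives you list. This is not a routine verification: the paper spends a full paragraph showing, by contradiction, that $F_3$ (your $F_j$) \emph{must} be a master copy of $\ccD$ — namely, if it is not, then collapsing $F_3$ to some $f^+$ forces $q_{j+1}\in f$, and then the semitidiness of the original $\ccC$ (conditions~\ref{it:S1} and~\ref{it:S2} applied to $f$, together with $n\ge 4$ obtained via~\eqref{eq:1637} and Lemma~\ref{lem:1522}\ref{it:1522a}) rules out both possibilities for the neighbouring connector $q_{j+2}$. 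A second, easier but still necessary, argument is then needed to see that putting $F_{j+1}$ back does not violate~\ref{it:L1}. Likewise, your treatment of the degenerate case $F_j=F_{j+2}$ is only gestured at; the paper handles it by splitting $\ccC$ into two shorter cycles and showing a common master copy. In short: the architecture is right, but the step you flag as ``the technical heart of the proof'' is exactly where the nontrivial content lies, and your proposal does not supply it.
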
 
 
The proof of this lemma is essentially straightforward and, admittedly, more lengthy 
than illuminating. We would therefore like to say a few words on why we care to acquire 
this knowledge. 

In Section~\ref{subsec:EPAG} we shall study a generalisation of $\Gth$ applicable to 
systems of pretrains and it turns out that the characterisation of $\Gth$ provided
by Lemma~\ref{lem:1342} fits better into this context. For instance, consider 
a system of pretrains with the special property that every wagon consists of a 
single edge. It is then desirable that its $\GTH$ introduced later coincides 
with the $\Gth$ of the corresponding system of hypergraphs. It will turn out that owing to 
Lemma~\ref{lem:1342} this is indeed the case. 

Now even if one has some trust that the characterisation of $\Gth$ provided here 
is really going to be relevant in the future, one may still ask why we bother with the proof 
of Lemma~\ref{lem:1342} rather than declaring it to be the official definition of $\Gth$.
This has the simple reason that the arguments in Section~\ref{subsec:PCAG} 
dealing with $\Gth$ increments obtainable by means of the partite construction method 
become much more transparent if we only have to exhibit master copies of  
cycles that are actually tidy and not just semitidy.     

The proof of Lemma~\ref{lem:1342} proceeds by induction on $h(\ccC)$ and thus it requires 
a way to detect that another cycle $\ccD$ satisfies $h(\ccD)<h(\ccC)$. For this purpose
we shall always utilise the following observation. 

\begin{fact}\label{fact:h}
	Let $n>i\ge 2$. If 
		\[
		\ccC = F_1q_1 \ldots F_nq_n 
			\quad \text{ and } \quad
		\ccD = F_1q_1 \ldots F_iq_i 
	\]
		are cycles of copies, then $h(\ccD) < h(\ccC)$. 
\end{fact}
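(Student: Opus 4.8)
The plan is to compare $h(\ccD)=(\ord{\ccD},\len{\ccD})$ with $h(\ccC)=(\ord{\ccC},\len{\ccC})$ in the lexicographic order of $\NN^2$. The second coordinates are immediate: $\len{\ccD}=i<n=\len{\ccC}$. So everything reduces to checking $\ord{\ccD}\le\ord{\ccC}$, because then either $\ord{\ccD}<\ord{\ccC}$, or $\ord{\ccD}=\ord{\ccC}$ and the strict inequality between the lengths settles the matter; in both cases $h(\ccD)<h(\ccC)$.

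For the comparison of orders I would rewrite \eqref{eq:ord} as $\ord{\ccE}=\sum_{j}w_\ccE(j)$, where for an index $j$ of a cycle of copies $\ccE$ one puts $w_\ccE(j)=1$ if $j$ is pure and $w_\ccE(j)=\tfrac12$ if $j$ is mixed; the point is that $w_\ccE(j)$ depends only on which of the two connectors flanking the copy that carries the index $j$ are vertices and which are edges. In $\ccC$ the copy $F_j$ is flanked by $q_{j-1}$ and $q_j$, and for every $j\in\{2,\dots,i\}$ the very same two connectors flank $F_j$ in $\ccD$; hence $w_\ccC(j)=w_\ccD(j)$ for all such $j$. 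Summing, this yields $\ord{\ccC}-\ord{\ccD}=\bigl(\sum_{j\in\{1\}\cup\{i+1,\dots,n\}}w_\ccC(j)\bigr)-w_\ccD(1)$, which is a sum of $n-i+1$ quantities, each at least $\tfrac12$, minus a single quantity that is at most $1$. Therefore $\ord{\ccC}-\ord{\ccD}\ge\tfrac{n-i+1}{2}-1=\tfrac{n-i-1}{2}$, and this is $\ge 0$ since $n>i$ forces $n-i\ge 1$. Hence $\ord{\ccD}\le\ord{\ccC}$, completing the argument.

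There is no real obstacle here; the single point that demands attention is the cyclic reindexing. In $\ccD$ the indices run over $\ZZ/i\ZZ$, not $\ZZ/n\ZZ$, so the copy $F_1$ of $\ccD$ is flanked by $q_i$ and $q_1$ rather than by $q_n$ and $q_1$. This is exactly why the index $1$ must be pulled out of the matched range $\{2,\dots,i\}$ and its weight $w_\ccD(1)$ estimated crudely by $1$, while on the $\ccC$-side the indices $1,i+1,\dots,n$ are collected and bounded below by $\tfrac12$ each. One should also note at the outset that $\ccD$ is a cycle of copies by hypothesis, so that $\ord{\ccD}$, $\len{\ccD}$ and $h(\ccD)$ are defined and the displayed inequality makes sense.
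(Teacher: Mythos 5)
Your proof is correct and follows essentially the same line as the paper's: compare $h(\ccD)$ and $h(\ccC)$ lexicographically, observe $\len{\ccD}<\len{\ccC}$, reduce to $\ord{\ccD}\le\ord{\ccC}$, write the order as a sum of per-index weights in $\{\tfrac12,1\}$ that depend only on the two flanking connectors, match the weights for indices $2,\dots,i$, and bound the leftover $w_\ccD(1)\le 1$ against the $\ccC$-side leftovers (each $\ge\tfrac12$). The paper phrases the last step as $\eta^\ccD_1\le 1\le\eta^\ccC_1+\eta^\ccC_n$ and silently discards the remaining nonnegative terms, whereas you sum all of them to obtain the quantitative surplus $(n-i-1)/2$; this is only a cosmetic difference. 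Your remark about the cyclic reindexing — that $F_1$ is flanked by $q_i,q_1$ in $\ccD$ but by $q_n,q_1$ in $\ccC$ — is exactly the point the paper's proof is built around.
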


\begin{proof}
	Since $\len{\ccD} < \len{\ccC}$ is clear, it suffices to check 
	that $\ord{\ccD}\le \ord{\ccC}$.
	Setting 
		\[
		\eta^\ccC_j=
			\begin{cases}
				1  & \text{ if $j$ is pure in $\ccC$,} \cr
				1/2 & \text{ if $j$ is mixed in $\ccC$} 
			\end{cases}
	\]
		we have 
		\begin{equation} \label{eq:234a}
		\ord{\ccC}=\eta^\ccC_1+ \dots+ \eta^\ccC_n\,.
	\end{equation}
		
	Let the numbers $\eta^\ccD_1, \ldots, \eta^\ccD_i$ be defined similarly with respect 
	to $\ccD$. Since $\eta^\ccC_j=\eta^\ccD_j$ holds for every $j\in [2, i]$, we have 
		\begin{align*}
		\ord{\ccD}&=(\eta^\ccD_2+\dots +\eta^\ccD_{i})+\eta^\ccD_1 \\
		&\le
		(\eta^\ccC_2+\dots +\eta^\ccC_{i})+1 \\
		&\le
		(\eta^\ccC_2+\dots +\eta^\ccC_{i})+(\eta^\ccC_1+\eta^\ccC_n) \\
		&\le 
		\ord{\ccC}\,. \qedhere
	\end{align*}
	\end{proof}

\begin{proof}[Proof of Lemma~\ref{lem:1342}]
	As tidiness implies semitidiness, the backward implication is clear. 
	For the forward implication we suppose that $\ccC=F_1q_1\ldots F_nq_n$
	is a semitidy cycle of copies in an extended linear system $(H, \ccH^+)$
	with $\ord{\ccC}<\Gth(H, \ccH^+)$. We are to prove that $\ccC$ has a master copy. 
	Arguing by induction on $h(\ccC)$ we assume that every semitidy cycle of copies $\ccC'$
	in $(H, \ccH^+)$ with $h(\ccC')<h(\ccC)$ has a master copy. If $\ccC$ happens 
	to be tidy we just need to appeal to Definition~\ref{dfn:Girth}, so the interesting 
	case is that, despite being semitidy, $\ccC$ fails to be tidy. Up to symmetry this 
	is only possible if $q_1=x$ is a vertex, $q_2=e$ is an edge, and $x\in e$.
	
	Suppose first that $n=2$, i.e., that $\ccC=F_1xF_2e$.	 Now at least one of the 
	copies $F_1$, $F_2$ needs to be distinct from $e^+$. If, say, 
	$F_1\ne e^+$, then the cycle $F_1xe^+e$ exemplifies that $F_1$ is a master copy of $\ccC$. 
	
	So henceforth we may suppose that $n\ge 3$, whence $g\ge 2$. 
	Our goal is to prove that $F_3$ is a 
	master copy of $\ccC$. To this end we first check that 
		\begin{equation}\label{eq:1637}
		F_3 \text{ is a real copy. }
	\end{equation}     
		Otherwise~\ref{it:L4} implies $F_3=e^+$, so if $q_3$ is an edge we have $e=q_3$,
	contrary to~\ref{it:L2}, and if $q_3$ is a vertex we have $|M(e)|\ge 2$, 
	contrary to~\ref{it:S1}. This proves~\eqref{eq:1637}. 
	
	\smallskip
	
	{\it \hskip2em First Case. $F_1\ne F_3$}
	
	\smallskip
	 
   Now $\ccD=F_1xF_3q_3\ldots F_nq_n$ is a semitidy cycle of copies 
   with $h(\ccD)<h(\ccC)$, so by our induction hypothesis it has a master copy. 
   
   Assume first that $F_3$ fails to be such a master copy. Due to~\eqref{eq:1637}
   and Lemma~\ref{lem:1522}\ref{it:1522a} we know that $\ccD$ consists of at least 
   three copies, 
   whence $n\ge 4$. Moreover, using the true master copy of $\ccD$ we can collapse $F_3$
   to an appropriate edge copy $f^+$. Since we already have $x\in f$ and $\ccC$ is semitidy,
   it cannot be the case that $q_3$ is a vertex. But now only the case $q_3=f$ remains
   and $x\in f$ contradicts~\ref{it:S1}.   
   
   We have thereby proved that $F_3$ is a master copy of $\ccD$. This fact allows us to 
   collapse, with the exception of $F_2$, all copies of $\ccC$ that are distinct from $F_3$. 
   Provided we can finally collapse $F_2$ to $e^+$ this establishes that $F_3$ is 
   indeed a master copy of $\ccC$. The only reason why this last collapse could be 
   illegal is that it might cause the resulting ``cycle'' to violate~\ref{it:L1}. 
   Owing to~\eqref{eq:1637} this can only occur if we have just collapsed~$F_1$ to~$e^+$. 
   In this case $q_n$ cannot be a vertex, because this would 
   imply $\{1, n\}\subseteq M(e)$, contrary to~\ref{it:S1}. But~$q_n$ cannot be an 
   edge either, for then~\ref{it:L4} implies $q_n=e$, contrary to~\ref{it:L2}. 
   Altogether,~$F_3$ is the desired master copy of $\ccC$.
   	
	\smallskip
	
	{\it \hskip2em Second Case. $F_1=F_3$}
	
	\smallskip
	
	Notice that this is only possible if $n\ge 4$. It will be convenient to set $F=F_1=F_3$. 
	Now~$\ccC$ splits into the shorter cycles 
		\[
		\ccA=FxF_2e
		\quad \text{ and } \quad
		\ccB=Fq_3\ldots F_nq_n\,,
	\]
		and it suffices to prove that $F$ is a common master copy of $\ccA$ and $\ccB$.  
	By collapsing $F_2$ to~$e^+$ we see that $F$ is indeed a master copy of $\ccA$.
	
	Assume towards contradiction that $F$ fails to be a master copy of $\ccB$.
	Owing to~\eqref{eq:1637} and Lemma~\ref{lem:1522}\ref{it:1522a} we have $n\ge 5$. 
	Furthermore, Fact~\ref{fact:h} yields $h(\ccB)<h(\ccC)$ and, as one easily 
	checks, $\ccB$ is semitidy. So our induction hypothesis shows that $\ccB$ 
	has some master copy. 
	In particular, $F$ is collapsible to an appropriate edge copy $f^+$ in $\ccB$. 
	Every edge connector among $q_3$ and $q_n$ needs to be equal to $f$ and every 
	vertex connector among them needs to be incident with $f$. But in view of $n\ge 5$ 
	this yields a contradiction to the semitidiness of~$\ccC$.   
\end{proof}

\subsection{Orientation}\label{sssec:9902}

The notion of $\Gth$ allows us to formulate a strengthening of 
Theorem~\ref{thm:grth1} that will play a central r\^{o}le in our 
work.

\begin{quotation}
	For every integer $g\ge 2$ there exists a Ramsey construction 
	$\Omega^{(g)}$ that given an ordered $f$-partite hypergrsph $F$ 
	with $\gth(F)>g$ and a number of colours $r$ produces an ordered 
	$f$-partite system $\Omega^{(g)}_r(F)=(H, \ccH)$ satisfying
	$\ccH\lra (F)_r$ and $\Gth(H, \ccH^+)>g$.
\end{quotation}

For clarity we would like to point out that due to Lemma~\ref{lem:Gth-gth}
we have $\gth(H)>g$ in this situation, which shows that the existence
of $\Omega^{(g)}$ is indeed a much stronger result than Theorem~\ref{thm:grth1}.
In case one is willing to believe this statement without proof, one can now jump 
directly to Section~\ref{sec:paradise} and read why it implies Theorem~\ref{thm:1522} 
and Theorem~\ref{cor:19}---so all prerequisites 
for this deduction have been covered already. 

The construction $\Omega^{(2)}=\PC(\Rms, \CPL)$ has been presented
in the previous section, but we do not know yet that the systems $(H, \ccH)$ it 
delivers satisfy ${\Gth(H, \ccH^+)>2}$. We shall reach this knowledge in the next 
section when studying the $\Gth$ of pictures that arise in partite constructions
(see Corollary~\ref{cor:2201}).

In some sense, the existence of $\Omega^{(g)}$ is the statement we shall actually 
prove by induction on $g$, but the passage from $\Omega^{(g)}$ to $\Omega^{(g+1)}$
involves some further nested inductions whose intermediate stages deal with trains
(see Figure~\ref{fig:N3}). For a lack of an appropriate language we need to defer 
a more detailed description of our intended induction scheme to~\S\ref{sssec:Karo}.  \section{Girth in partite constructions}
\label{subsec:PCAG}

As we saw in Section~\ref{subsec:PC}, the partite construction method allows us to 
pass from strongly induced copies to copies with clean intersections 
(Lemma~\ref{lem:cleancap}) and clean intersections themselves are preserved under 
further applications of this method (Lemma~\ref{lem:clean-preserve}). Both facts can,
roughly speaking, be regarded as dealing with cycles consisting of two copies. The aim
of the present section is twofold. First, we want to improve the latter result by 
addressing cycles of order two, rather than cycles of length two. Second, we develop 
generalistions to arbitrary $\Gth$. 

\subsection{From \texorpdfstring{$(g, g)$}{(g, g)} to \texorpdfstring{$g$}{g}.}
\label{sssec:1510}
    
Let us return to the question raised in~\S\ref{sssec:vindya} whether there is anything 
to gain if one attempts to clean the clean partite lemma~$\CPL$ further. 
In the light of Lemma~\ref{lem:Gth22} we know that if $F$ denotes a linear $k$-partite 
{$k$-uniform} hypergraph,~$r$ signifies a number of colours, and $\CPL_r(F)=(H, \ccH)$,
then $\Gth(H, \ccH^+)>(2, 2)$. It turns out that if we run any partite
construction using $\CPL$ as our partite lemma, then the resulting final picture
$(\Pi, \ccP, \psi)$ has the stronger property $\Gth(\Pi, \ccP^+)> 2$.  
In fact we shall show the following more general result, whose special case $g=2$
corresponds to the aforementioned fact.  

\begin{prop}\label{prop:girthclean}
	Suppose for any $g\ge 2$ that $\Xi$ is a partite lemma 
	\begin{enumerate}
		\item[$\bullet$] applicable to $k$-partite $k$-uniform hypergraphs $B$ 
			with $\gth(B) >g$ 
		\item[$\bullet$] and delivering linear systems $\Xi_r(B)=(H, \ccH)$ 
		   with $\Gth(H, \ccH^+) > (g, g)$. 
	\end{enumerate}
	If $\Phi$ denotes any further Ramsey construction, then the construction 
	$\Theta=\PC(\Phi, \Xi)$ applies to all hypergraphs $F$ with $\gth(F) >g$ 
	and yields linear systems $\Theta_r(F)=(\Pi, \ccP)$ 
	satisfying 
		\[
		\Gth(\Pi, \ccP^+) > g\,.
	\]
	\end{prop}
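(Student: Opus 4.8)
The plan is to argue by induction along the partite construction, exactly in the style of Lemmata~\ref{lem:clean-preserve} and~\ref{lem:1527}: we prove that every picture $(\Pi_\alpha, \ccP_\alpha, \psi_\alpha)$ occurring in the formation of $\Theta_r(F)=\PC(\Phi, \Xi)_r(F)$ is linear and satisfies $\Gth(\Pi_\alpha, \ccP_\alpha^+) > g$. Linearity is already covered by the earlier machinery (Lemma~\ref{lem:2207} together with the hypothesis that $\Xi$ yields linear systems), so the real content is the $\Gth$ bound. The base case is picture zero, where the real copies are pairwise disjoint: any tidy cycle of copies of order at most $g$ can involve at most one real copy (two disjoint real copies share no connector), and any tidy cycle using only one real copy plus edge copies is easily seen to have that real copy as a master copy after collapsing the edge copies to themselves --- here one uses that the edge connectors force the relevant edges to lie in the single real copy. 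So it suffices to establish the following amalgamation step.

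\emph{Induction step.} Suppose $(G, \ccG)$ is a system of hypergraphs (with $\gth$ of the relevant constituent exceeding $g$), $(\Pi, \ccP, \psi_\Pi)$ is a linear picture over $(G, \ccG)$ with $\Gth(\Pi, \ccP^+) > g$, $e\in E(G)$, $\Xi_r(\Pi^e)=(H, \ccH)$ so that $\Gth(H, \ccH^+) > (g, g)$ by hypothesis, and $(\Sigma, \ccQ, \psi_\Sigma) = (\Pi, \ccP, \psi_\Pi)\conc(H,\ccH)$. We must show $\Gth(\Sigma, \ccQ^+) > g$. Let $\ccC = F_1 q_1 \ldots F_n q_n$ be a tidy cycle of copies in $\ccQ^+$ with $\ord{\ccC}\le g$; we seek a master copy. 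Each real copy of $\ccC$ belongs to a standard copy of $\Pi$ in $\Sigma$; pick standard copies $\Pi_i$ containing $F_i$ (for edge copies $F_i = f_i^+$, either $f_i\in E(H)$ or $f_i$ sits inside a unique standard copy). The decisive observation, as in the proof of Lemma~\ref{lem:1527}, is that distinct standard copies of $\Pi$ intersect only inside $V(H)$, i.e.\ inside the constituent $\Sigma^e = H$. I would split into cases according to how many distinct standard copies the cycle visits.

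\emph{Case 1: all real copies of $\ccC$ lie in a single standard copy $\Pi_\star$.} Since $(\Pi_\star, \ccP_\star^+) \cong (\Pi, \ccP^+)$ has $\Gth > g$, the cycle $\ccC$ --- after replacing any edge copies $f_i^+$ with $f_i\notin E(\Pi_\star)$ by the corresponding edges of $\Pi_\star$, which Lemma~\ref{lem:949}\ref{it:949a} permits when $n\ge 3$, and handling $n=2$ directly via Lemma~\ref{lem:1522}\ref{it:1522a} --- has a master copy inside $\Pi_\star$, and by Lemma~\ref{lem:949}\ref{it:949b} this is a master copy of $\ccC$. \emph{Case 2: $\ccC$ visits several standard copies.} Here one ``contracts'' each maximal run of consecutive copies of $\ccC$ lying in a common standard copy $\Pi_j$: by Case~1-type reasoning inside $\Pi_j$ (using $\Gth(\Pi, \ccP^+)>g$, strong inducedness, and clean intersections within $H$, which hold because $\Xi$ delivers systems with clean intersections --- a consequence of $\Gth(H,\ccH^+)>(2,2)$ and Lemma~\ref{lem:Gth22}, or can be assumed) each such run collapses to a single edge of $H$ connecting the two boundary connectors, both of which lie in $V(H)\cup E(H)$. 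This produces a shorter cyclic sequence $\ccC'$ of copies in the \emph{extended} system $(H, \ccH^+)$; one checks it is still tidy (or at least semitidy, invoking Lemma~\ref{lem:1342}) and that $\ord{\ccC'}\le \ord{\ccC}\le g$, with length at most $g$ because the number of distinct standard copies visited is at most $\ord{\ccC}$ --- this is where passing from length to order genuinely buys something. Then $\Gth(H,\ccH^+)>(g,g)$ supplies a master copy of $\ccC'$ inside $H$; unwinding the contraction (re-expanding the runs inside their standard copies, which were themselves master-copy-collapsible) yields a master copy of the original $\ccC$.

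\emph{Main obstacle.} The delicate point is the bookkeeping in the contraction of Case~2: showing that a maximal run of copies of $\ccC$ within one standard copy $\Pi_j$, together with its two boundary connectors lying in $H$, really does collapse to a single $H$-edge connecting those connectors, and that the resulting contracted cycle $\ccC'$ is (semi)tidy with $\ord{\ccC'} \le \ord{\ccC}$ and $\len{\ccC'}\le g$. The order bound relies on the fact that each distinct standard copy contributes at least one ``unit'' to the order of $\ccC$ (a pure or half a mixed index at the transition), so the number of runs is at most $\ord{\ccC}\le g$; combined with the fact that within each run the $\Gth(\Pi,\ccP^+)>g$ hypothesis forces a master copy and hence a legitimate collapse to an $H$-edge, this gives $\ccC'$ of length at most $g$ and order at most $g$, placing it exactly in the range where $\Gth(H,\ccH^+)>(g,g)$ applies. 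I expect the tidiness verification for $\ccC'$ --- ruling out an edge of $H$ carrying three vertex-connectors of $\ccC'$, and ruling out a vertex-connector lying on an edge-connector --- to be the most technical part, handled by transporting the tidiness of $\ccC$ through the projection $\psi_\Sigma$ and through $F_\star \Str \Pi_\star$, much as in the proofs of Lemmata~\ref{lem:cleancap} and~\ref{lem:1527}.
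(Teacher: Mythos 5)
Your overall architecture mirrors the paper's Lemma~\ref{lem:251}: segment the cycle by standard copies, use the order bound to get at most $g$ segments, pass to a cycle in $(H,\ccH^+)$, invoke $\Gth(H,\ccH^+)>(g,g)$, and unwind. The base case and Case~1 are fine. But Case~2 has a genuine gap in the contraction step.

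You claim that ``each such run collapses to a single edge of $H$ connecting the two boundary connectors.'' This is not justified and is generally false. The hypothesis $\Gth(\Pi,\ccP^+)>g$ tells you about master copies of \emph{tidy cycles}, not about collapsing a \emph{path} of copies inside a standard copy to a single edge; the two boundary connectors of a run can sit on different music lines of $V(H)$ with no edge of $H$ containing both. Moreover, even granting the collapse, your $\ccC'$ would then consist entirely of edge copies of $H$, and by Fact~\ref{rem:1200} such a cycle cannot have a master copy --- so $\Gth(H,\ccH^+)>(g,g)$ cannot ``supply a master copy of $\ccC'$'' as your step~4 requires, and the plan dead-ends. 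The paper avoids both problems by replacing each segment $I_j$ with its \emph{leader} $\Pi^e_j$ (the copy of the constituent $\Pi^e$ in $\ccH^+$ extended by the standard copy hosting $I_j$, or the edge copy itself for singleton edge-copy segments), forming the cycle of leaders $\ccD=\Pi^e_1 r_1\ldots\Pi^e_t r_t$; no collapsing of runs to edges is attempted at this stage. The order argument then bounds $\len{\ccD}\le g$, giving $h(\ccD)\le(g,g)$. Once a master copy $\Pi^e_\star$ of $\ccD$ is obtained, the collapse-edges $f_i\in E(\Pi^e_\star)$ are brought to bear on the tidiness condition~\ref{it:T2} of $\ccC$, which forces any segment whose leader differs from $\Pi^e_\star$ to be a \emph{singleton} (this is Claim~\ref{clm:1629}); only those singleton copies are then collapsed to $f_i^+$, landing a tidy cycle entirely inside the one standard copy $\Pi_\star$, where $\Gth(\Pi,\ccP^+)>g$ finishes via Lemma~\ref{lem:949}\ref{it:949b}. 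In short: the paper first finds a master copy downstairs in $(H,\ccH^+)$, then deduces which segments are short, then collapses; your proposal tries to collapse first, which is not available.

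You should also note that the $t=2$ case needs separate treatment (cf.\ Claim~\ref{clm:1230} and Lemma~\ref{lem:1522}\ref{it:1522a}): the claim that offside segments are singletons rests on $t\ge 3$, since with only two segments the~\ref{it:T2} argument does not pin down the positions of $r_1$, $r_2$ in the same way.
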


The heart of the matter is, of course, the following picturesque statement that 
will allow us to carry an inductive proof along the partite construction. 

\begin{lemma} \label{lem:251}
	Suppose that $g\ge 2$, that $(G, \ccG)$ is an arbitrary system of $k$-uniform 
	hypergraphs, and that 
		\[
		(\Sigma, \ccQ, \psi_\Sigma)=(\Pi, \ccP, \psi_\Pi) \conc (H, \ccH)
	\]
		holds for two pictures $(\Pi, \ccP, \psi_\Pi)$ and $(\Sigma, \ccQ, \psi_\Sigma)$ 
	over $(G, \ccG)$ 
	as well as a linear $k$-partite system $(H, \ccH)$. If 
		\[
		\Gth(\Pi, \ccP^+) > g
		\quad \text{ and } \quad
	 	\Gth(H, \ccH^+) > (g, g)\,, 
	\]
		then $\Gth(\Sigma, \ccQ^+) > g$.
\end{lemma}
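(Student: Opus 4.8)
\emph{Setup and the engine.} Write $e\in E(G)$ for the edge along which the amalgamation takes place, so that $\Sigma^e=H$ and to each copy $\Pi^e_\star\in\ccH$ there corresponds a standard copy $\Pi_\star\cong\Pi$ of $\Pi$ inside $\Sigma$. Recall that every edge of $\Sigma$ belongs either to $E(H)$ or to $E(\Pi_\star)$ for a unique $\star$, that distinct standard copies meet only inside $H$, with $V(\Pi_\star)\cap V(\Pi_{\star\star})=V(\Pi^e_\star)\cap V(\Pi^e_{\star\star})\subseteq V(H)$, and that the constituent $\Pi^e_\star$ is precisely the part of $\Pi_\star$ lying in $H$. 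From $\Gth(H,\ccH^+)>(g,g)$ and Lemma~\ref{lem:Gth-gth} we obtain $\gth(H)>g$, equivalently $\Gth\bigl(H,E^+(H)\bigr)>(g,g)$; since master copies are always real (Fact~\ref{rem:1200}) whereas every member of $E^+(H)$ is an edge copy, this means that $\bigl(H,E^+(H)\bigr)$ admits \emph{no} tidy cycle of copies with $h\le(g,g)$ whatsoever. This impossibility is the engine driving the proof.

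\emph{The plan.} Let $\ccC=F_1q_1\ldots F_nq_n$ be a tidy cycle of copies in $(\Sigma,\ccQ^+)$ with $\ord{\ccC}\le g$; we must produce a master copy. Call $F_i$ \emph{aerial} if it is not an edge copy $h^+$ with $h\in E(H)$; an aerial copy lies in a unique standard copy $\Pi_{\sigma(i)}$, and whenever a connector $q_i$ avoids $V(H)\cup E(H)$ its two neighbours are aerial with $\sigma(i)=\sigma(i+1)$. Consider first the \emph{one-room case}, in which all aerial copies of $\ccC$ lie in a single standard copy $\Pi_\star$. Using $\Gth(H,\ccH^+)>(2,2)$ and the linearity of $H$ — essentially the two-cycle reasoning behind Lemmas~\ref{lem:Gth22} and~\ref{lem:1522} — one checks that every earthbound edge copy occurring between two aerial copies already belongs to $\Pi^e_\star\subseteq\Pi_\star$, and that any maximal earthbound chain flanked by aerial copies can be replaced by a single edge copy of $\Pi^e_\star$ without increasing $h(\ccC)$. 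After these harmless modifications $\ccC$ is realised as a tidy cycle of copies in $\bigl(\Pi_\star,\ccP_\star^+\bigr)\cong(\Pi,\ccP^+)$ of order $\le g<\Gth(\Pi,\ccP^+)$, so Definition~\ref{dfn:Girth} supplies a master copy; being real by Fact~\ref{rem:1200}, it is a master copy of $\ccC$ in $(\Sigma,\ccQ^+)$ too.

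\emph{The main obstacle.} There remains the case that $\ccC$ visits at least two standard copies. Decompose $\ccC$ into maximal \emph{excursions} — maximal runs of consecutive copies confined to a single room $\Pi_\tau$, each bounded by two \emph{floor connectors}, which necessarily lie in $\Pi^e_\tau$ and which, by Lemma~\ref{lem:1447} applied inside $\Pi_\tau$, are moreover vertices. I would collapse every such room excursion into a path of edge copies of $\Pi^e_\tau\subseteq H$ whose internal connectors are vertices, turning $\ccC$ into a tidy cycle of copies $\ccC'$ living entirely in $\bigl(H,E^+(H)\bigr)$; since all connectors of $\ccC'$ are then vertices, every index of $\ccC'$ is pure and $\ord{\ccC'}=\len{\ccC'}$, so once the collapse is carried out within the budget $\ord{\ccC'}\le\ord{\ccC}\le g$ we get $h(\ccC')\le(g,g)$, contradicting the engine; hence the multi-room case does not arise, and $\Gth(\Sigma,\ccQ^+)>g$. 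Performing the collapse within this budget is the technical heart of the argument: for a single excursion in $\Pi_\tau$ I would close it up inside $\Pi_\tau$ — observing that, since $\gth(F)>g$ and $\gth(H)>g$, a would-be cycle of edge copies of bounded order has to degenerate (Fact~\ref{fact:231a}) — and invoke $\Gth(\Pi_\tau,\ccP_\tau^+)>g$ to conclude that its two floor endpoints are joined inside $\Pi^e_\tau$ by a path of length at most the order that the excursion contributed to $\ccC$. One then splices these floor paths around the cycle, uses Lemma~\ref{lem:949} to see that the spliced sequence is again a genuine tidy cycle, and invokes the transitivity of girth (Fact~\ref{fact:girth-trans}) for the ambient girth bookkeeping. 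The delicate point — and the one I expect to cost the most work — is exactly this accounting: certifying that an excursion of order $s$ is never replaced by a floor path of length exceeding $s$, which is the precise mechanism turning the ``$(g,g)$'' hypothesis on $H$ into the ``$g$'' conclusion on $\Sigma$.
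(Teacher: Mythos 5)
The multi-room case of your argument has a fatal conceptual error: you aim to derive a contradiction and conclude that multi-room cycles ``do not arise,'' but they do arise, and the task is to exhibit a master copy, not to rule them out. Concretely, take two standard copies $\Pi_1,\Pi_2$ extending copies $\Pi^e_1,\Pi^e_2\in\ccH$ that intersect in an edge $f$, take $F_1\in\ccP_1$ and $F_2\in\ccP_2$ both containing $f$, and take distinct $x,y\in f$. Then $\ccC=F_1xF_2y$ is a perfectly good tidy cycle of order $2\le g$ in $(\Sigma,\ccQ^+)$ visiting two rooms. Its master copy exists (collapse $F_2$ to $f^+$ using $f\in E(F_1)$, which one extracts from $\Gth(\Pi_1,\ccP_1^+)>2$), but your collapse replaces both excursions by the \emph{same} edge $f^+$ because $H$ is linear, so your $\ccC'$ degenerates and no contradiction is obtained. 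Your proof never addresses this degeneration, which is not a corner case but the generic behaviour precisely when $\ccC$ has a master copy.

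This traces to a strategic misuse of the hypothesis on $H$. You feed your reduct $\ccC'$ into $\Gth\bigl(H,E^+(H)\bigr)>(g,g)$, i.e., you only use the consequence $\gth(H)>g$. Since $\ccC'$ consists solely of edge copies, Fact~\ref{rem:1200} says it can never have a master copy, so the only way it can fail to contradict girth is to fail to be a cycle at all --- and you have no argument from there. The full hypothesis $\Gth(H,\ccH^+)>(g,g)$, with the \emph{real} copies of $\ccH$ participating, is what the paper exploits: the reduct is formed by replacing each segment of $\ccC$ by its \emph{leader} $\Pi^e_i\in\ccH$ (not by a floor path of edge copies), producing a cycle $\ccD$ in $(H,\ccH^+)$ whose order is at most $g$ and whose length is at most $g$ (this is the content of Claim~\ref{clm:1826}, and it is exactly the $(g,g)$ versus $g$ bookkeeping you correctly identified as the heart of the matter). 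Because $\ccD$ contains real copies, it \emph{can} and does have a master copy $\Pi^e_\star$; one then collapses the other segments of $\ccC$ to edges of $\Pi^e_\star$, reducing the cycle to a single standard copy $\Pi_\star$, and finishes with $\Gth(\Pi_\star,\ccP_\star^+)>g$. Your floor-path collapse discards this machinery entirely.

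Two smaller defects. First, you invoke Lemma~\ref{lem:1447} ``inside $\Pi_\tau$'' to conclude the floor connectors of an excursion are vertices, but an excursion is a path, not a cycle, so that lemma does not apply; the paper instead derives this from Lemma~\ref{lem:1447} applied to $\ccD$ with the hypothesis on $(H,\ccH^+)$. Second, in your one-room case the claim that a maximal earthbound chain flanked by aerial copies can be replaced by a single edge copy of $\Pi^e_\star$ is unjustified: the two flanking connectors lie in $V(\Pi^e_\star)$ but need not lie in a common edge of $\Pi^e_\star$, and the intermediate earthbound edges need not belong to $\Pi^e_\star$ at all.
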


\begin{proof}
	Consider a tidy cycle of copies $\ccC=F_1q_1\ldots F_nq_n$ 
	in $(\Sigma, \ccQ^+)$ with ${\ord{\ccC}\le g}$. We are to prove that $\ccC$ has a 
	master copy. 
	
	By a {\it segment} we shall mean 
	a subsequence $I=F_iq_i\ldots F_j$ of $\ccC$ starting and ending with a 
	copy and for which either 
		\begin{enumerate}[label=\rmlabel]
		\item\label{it:1313i} there exists a standard copy $(\Pi_\star, \ccP_\star^+)$
			with $F_i, \ldots, F_j\in \ccP_\star^+$, 
		\item\label{it:1313ii} or $i=j$ and $F_i$ is an edge copy corresponding to
			an edge of $H$.
	\end{enumerate}  
		
	In the former case we call the copy $\Pi_\star^e\in\ccH$ extended by 
	the standard copy $\Pi_\star$ the {\it leader} of $I$ and in the latter
	case $F_i$ itself is considered to be its own {\it leader}. Observe that the leader 
	of a segment is always a member of $\ccH^+$. A {\it segmentation}
	of $\ccC$ is a sequence of the form
		\begin{equation}\label{eq:1338}
		\ccC=I_1r_1\ldots I_tr_t\,,
	\end{equation}
		where $I_1, \ldots, I_t$ are segments 
	and $\{r_1, \ldots, r_t\}\subseteq \{q_1, \ldots, q_n\}$.
	Such segmentations exist, for each of the copies $F_1, \ldots, F_n$ forms 
	a segment on its own. From now on we let the 
	segmentation~\eqref{eq:1338} be chosen in such a way that $t$ is minimal.
	
	Denote the leaders of $I_1, \ldots, I_t$ by $\Pi^e_1, \ldots, \Pi^e_t$, 
	respectively. If $\Pi^e_i=\Pi^e_{i+1}$ holds for some $i\in [t-1]$, then 
	$I_ir_iI_{i+1}$ is again a segment, contrary to the 
	minimality of $t$. Similarly, we may assume by cyclic symmetry that 
	in case $t\ge 2$ we have $\Pi_1^e\ne \Pi^e_t$. 
	
	If $t=1$, then the segment $I_1$ contains at least two copies from $\ccC$,
	wherefore $\Pi^e_1\in\ccH$. The standard copy 
	$\Pi_1$ extending $\Pi^e_1$ hosts the entire cycle $\ccC$ and 
	$\Gth\bigl(\Pi_1, \ccP_1^+\bigr) > g$ yields the desired master copy. 
	
	Now suppose $t\ge 2$ and, consequently, $\Pi^e_i\ne \Pi^e_{i+1}$
	for all $i\in\ZZ/t\ZZ$. It follows that the vertices and edges $r_1, \ldots, r_t$
	belong to $H$, whence
	\[
		\ccD=\Pi^e_1r_1\ldots \Pi^e_tr_t
	\]
		is a cycle of copies in $(H, \ccH^+)$. 
	
	\begin{clm}\label{clm:1826}
		The length of $\ccD$ is at most $g$.
	\end{clm}
	
	\begin{proof}
		Define the numbers $\eta_1, \ldots, \eta_n$ by
				\[
			\eta_i=
				\begin{cases}
					1  & \text{ if $i$ is pure in $\ccC$,} \cr
					1/2 & \text{ if $i$ is mixed in $\ccC$.} 
				\end{cases}
		\]
				For each $\tau\in\ZZ/t\ZZ$ 
		let~$\theta_\tau$ be the sum of all $\eta_k$ for which $F_k$ belongs
		to the segment $I_\tau$. Owing to 
				\[
			\theta_1+\dots+\theta_t 
			= 
			\eta_1+\dots+\eta_n 
			= 
			\ord{\ccC} \le g
		\]
				it suffices to show $\theta_1, \ldots, \theta_t\ge 1$. Fix an arbitrary index
		$\tau\in\ZZ/t\ZZ$. If the segment $I_\tau$ contains at least two copies 
		from~$\ccC$, then $\theta_\tau\ge 2/2=1$ is clear. It remains to deal with the 
		case that the segment $I_\tau$ consists of 
		a single copy, say $F_k$. Observe that $q_{k-1}=r_{\tau-1}$, $q_k=r_\tau$,
		and $\theta_\tau=\eta_k$. There is no problem if $k$ is pure in $\ccC$,
		so assume towards a contradiction that~$k$ is mixed. In other words, one of
		$q_{k-1}$ and $q_k$ is a vertex, the other one is an edge, and both belong 
		to $F_k$. Since the copies in $\ccQ^+$ cross the music lines of $\Sigma$ at
		most once, all this can only happen if the edge among $q_{k-1}$, $q_k$
		contains the vertex. But this option is ruled out by~\ref{it:T1}.	
	\end{proof}
	
	We infer $\ord{\ccD}\le g$, 
	and $h(\ccD)\le (g, g)<\Gth(H, \ccH^+)$. Moreover, the tidiness 
	of $\ccC$ implies that $\ccD$ is tidy as well. 
	Therefore 
		\begin{equation}\label{eq:1229}
		\text{$\ccD$ has a master copy,} 
	\end{equation}
		Lemma~\ref{lem:1447} discloses that the connectors $r_1, \ldots, r_t$ are 
		vertices, and by Fact~\ref{rem:1200} the copies in $\ccH$ need to be real. 
	
	\begin{clm}\label{clm:1230}
		We may assume that $n\ge t\ge 3$.
	\end{clm}
	
	\begin{proof}
		Let us first look at the case that $n=2$ and, consequently, $t=2$. 
		By symmetry and~\eqref{eq:1229} we may suppose that $\Pi^e_1$ 
		is a master copy of $\ccD$, i.e., that there 
		is an edge $f\in E(\Pi^e_1)$ such that $\ccE=\Pi^e_1r_1f^+r_2$ is a cycle of copies
		in $(H, \ccH^+)$. 
		Now we have $\ccC=F_1q_1F_2q_2$ and, if $F_2$ belongs to the segment with leader $\Pi^e_2$,
		then $F_1q_1f^+q_2$ is a cycle of copies witnessing that~$F_1$ is a master copy of $\ccC$.
		
		So henceforth we may assume that $n\ge 3$. If $t=2$, then $\ccD$ has a copy	 leading 
		a segment containing at least two copies of $\ccC$. Suppose that $\Pi^e_1$
		is such a copy and that~$\Pi_1$ denotes the standard copy extending it. 
		Now $\Pi^e_1$ needs to be a real copy of $\ccD$ and Lemma~\ref{lem:1522}\ref{it:1522a}
		reveals that, actually, $\Pi^e_1$ is a master copy of $\ccD$. So there is 
		some edge $f\in E(\Pi_1)$
		such that $\Pi_e^1r_1f^+r_2$ is a cycle of copies in $(H, \ccH^+)$. 
		Since $\ccC$ satisfies~\ref{it:T2} with respect to~$f$, the connectors $r_1$ 
		and $r_2$ occur in consecutive positions on $\ccC$ and by our choice of $\Pi^e_1$
		it follows that the segment $I_2$ contains a unique copy of $\ccC$. 
		Now by Lemma~\ref{lem:949}\ref{it:949a} we can collapse this copy of $\ccC$ to $f^+$, 
		thus obtaining a new cycle which lives completely in the standard copy $\Pi_1$.
		Since $\Gth(\Pi_1, \ccP_1^+)>g$, the new cycle has a master copy and by 
		Lemma~\ref{lem:949}\ref{it:949b} it follows that $\ccC$ has a master copy as well. 
\end{proof}
	
	Utilising~\eqref{eq:1229} we now pick a copy $\Pi^e_\star$ of $\ccD$ 
	and a family of edges 
		\[
		\bigl\{f_i\in E(\Pi_\star^e)\colon i\in \ZZ/t\ZZ\text{ and }\Pi^e_i\ne \Pi^e_\star\bigr\}
	\]
		exemplifying that $\Pi^e_\star$ is a master copy of $\ccD$.
	Denote the standard copy extending $\Pi^e_\star$ by~$\Pi_\star$.
		
	\begin{clm} \label{clm:1629}
		If $i\in \ZZ/t\ZZ$ is an index with $\Pi^e_i\ne \Pi^e_\star$, then the 
		segment $I_i$ consists of a single copy $\wh{F}_i$.
	\end{clm}
	
	\begin{proof}
		Recall that $\ccC$ satisfies~\ref{it:T2}. Applying this fact to the 
		edge $f_i$ we learn that the vertices $r_{i-1}$ and $r_i$ occur in 
		consecutive positions on $\ccC$. Due to $t\ge 3$ this proves the claim.	
	\end{proof}
	
	Consider the cyclic sequence $\ccE$ obtained from $\ccC$
	upon collapsing every copy $\wh{F}_i$ that Claim~\ref{clm:1629} delivers
	to the corresponding edge copy $f_i^+$. Since $\ccC$ is tidy and $n\ge 3$, 
	Lemma~\ref{lem:949}\ref{it:949a} shows that $\ccE$ is again a tidy cycle 
	of copies. As this cycle belongs entirely to $(\Pi_\star, \ccP_\star^+)$, it 
	needs to have a master copy and owing to Lemma~\ref{lem:949}\ref{it:949b}
	we infer that $\ccC$ has a master copy as well. 
\end{proof}

\begin{proof}[Proof of Proposition~\ref{prop:girthclean}]
	On first sight it may not be obvious whether $\Theta_r(F)$ is  
	defined for every hypergraph $F$ with $\gth(F) >g$. Given any such 
	hypergraph $F$ as well as a number of colours $r$ we 
	set $\Phi_r(F)=(G, \ccG)$ and construct the corresponding picture 
	zero $(\Pi_0, \ccP_0, \psi_0)$. Every copy $F_0\in \ccP_0$ is isomorphic 
	to $F$, which by Lemma~\ref{lem:Gth-gth} implies
		\[
		\Gth\bigl(F_0, E^+(F_0)\cup\{F_0\}\bigr)>g\,.
	\]
		Since the copies in $\ccP_0$ are disjoint, it follows that $\Gth(\Pi_0, \ccP_0^+)>g$.
	As usual we take an enumeration $E(G)=\{e(1), \ldots, e(N)\}$ and start running the
	partite construction. 
	
	Consider any positive $\alpha\le N$ for which 
	\begin{enumerate}
		\item[$\bullet$] the picture $(\Pi_{\alpha-1}, \ccP_{\alpha-1}, \psi_{\alpha-1})$ 
			exists 
		\item[$\bullet$] and satisfies $\Gth(\Pi_{\alpha-1}, \ccP_{\alpha-1}^+) > g$. 
	\end{enumerate}
	
	A further application of Lemma~\ref{lem:Gth-gth} shows, in particular, that
	$\gth\bigl(\Pi^{e(\alpha)}_{\alpha-1}\bigr)>g$. 
	Thus the constituent $\Pi^{e(\alpha)}_{\alpha-1}$ can be plugged into the partite 
	lemma $\Xi$ and we obtain a linear system $\Xi_r(\Pi^{e(\alpha)}_{\alpha-1}\bigr)=(H, \ccH)$ 
	with $\Gth(H, \ccH^+)>(g, g)$.
	By Lemma~\ref{lem:251} the new picture
		\[
		(\Pi_{\alpha}, \ccP_{\alpha}, \psi_{\alpha})
		=
		(\Pi_{\alpha-1}, \ccP_{\alpha-1}, \psi_{\alpha-1})\conc (H, \ccH)
	\]
		satisfies $\Gth(\Pi_{\alpha}, \ccP_{\alpha}^+) > g$ again.
	
	This completes an induction on $\alpha$. In the last step we learn that 
	the final picture $(\Pi_N, \ccP_N, \psi_N)$ is well defined and, as desired, 
	that $\Gth\bigl(\Pi_{N}, \ccP_{N}^+\bigr) > g$.  
\end{proof}

\begin{cor}\label{cor:2201}
	The ordered $f$-partite Ramsey construction $\Omega^{(2)}=\PC(\Rms, \CPL)$ is applicable 
	to linear hypergraphs 
	and delivers linear systems $(H, \ccH)$ with $\Gth(H, \ccH^+)>2$. 
	In particular, there is a partite lemma producing such systems.
	\index{$\Omega^{(2)}$}
\end{cor}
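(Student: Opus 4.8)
The plan is to read this corollary off from Proposition~\ref{prop:girthclean} together with the properties of the clean partite lemma $\CPL$ collected in Section~\ref{subsec:PC}. First I would verify that $\CPL$, with $g=2$, fulfils the requirements placed on the partite lemma~$\Xi$ in Proposition~\ref{prop:girthclean}. By Corollary~\ref{cor:CPL}, $\CPL$ is a partite lemma; it is applicable to every $k$-partite $k$-uniform hypergraph, hence in particular to every such hypergraph $B$ with $\gth(B)>2$. By Corollary~\ref{cor:2217} it is linear, so for linear $B$ the system $\CPL_r(B)=(H, \ccH)$ is linear. And by Corollary~\ref{cor:CPL} the copies in $\ccH$ are strongly induced with clean intersections, whence Lemma~\ref{lem:Gth22} yields $\Gth(H, \ccH^+)>(2,2)$. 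Thus all hypotheses of Proposition~\ref{prop:girthclean} with $g=2$ are met.

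Next I would invoke Proposition~\ref{prop:girthclean} with $\Phi=\Rms$ and $\Xi=\CPL$: it says that $\Omega^{(2)}=\PC(\Rms, \CPL)$ is applicable to every hypergraph $F$ with $\gth(F)>2$, i.e.\ to every linear hypergraph, and that the delivered system $\Omega^{(2)}_r(F)=(\Pi, \ccP)$ is linear and satisfies $\Gth(\Pi, \ccP^+)>2$. That $\Omega^{(2)}$ may simultaneously be regarded as an ordered $f$-partite construction has already been recorded (see \S\ref{sssec:ord}, \S\ref{sssec:fpart}, and Proposition~\ref{prop:1738}), so this establishes the first assertion of the corollary.

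For the concluding sentence I would exhibit the partite lemma $\CPL^{(2)}=\PC(\CPL, \CPL)$. Since its vertical ingredient $\CPL$ is itself a partite lemma delivering partite copies, $\CPL^{(2)}$ is again a partite lemma (by the same reasoning as in the proof of Corollary~\ref{cor:CPL}, together with the discussion in \S\ref{sssec:fpart}), and by Lemma~\ref{lem:2207} it is linear. As $\CPL$ delivers strongly induced copies by Corollary~\ref{cor:CPL}, Proposition~\ref{prop:girthclean} applies once more, now with $\Phi=\Xi=\CPL$ and $g=2$, and yields that $\CPL^{(2)}$ turns linear input into linear systems $(H, \ccH)$ with $\Gth(H, \ccH^+)>2$, as required.

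I do not expect any genuine obstacle; the argument is pure bookkeeping. The one spot that deserves a little attention is matching the hypotheses of Proposition~\ref{prop:girthclean} verbatim---in particular checking that $\CPL$ is applicable to \emph{all} linear $k$-partite $k$-uniform hypergraphs (not merely to some convenient subclass) and returns \emph{linear} systems---but this is exactly the content of Corollaries~\ref{cor:CPL} and~\ref{cor:2217}, so nothing new has to be proved.
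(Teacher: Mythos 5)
Your argument is correct and follows essentially the same route as the paper: the paper's proof is the single sentence ``By Corollary~\ref{cor:CPL} and Lemma~\ref{lem:Gth22} the clean partite lemma $\CPL$ satisfies the assumptions of Proposition~\ref{prop:girthclean} for $g=2$,'' and your more expanded version makes explicit the same check (including the appeal to Corollary~\ref{cor:2217} for linearity, which the paper's terse proof relies on implicitly). The one small divergence is in the final ``in particular'' sentence: you exhibit $\CPL^{(2)}=\PC(\CPL,\CPL)$ as a fresh partite lemma, which does work, but the paper's intended reading is almost certainly more economical---since $\Omega^{(2)}$ is already declared to be an ordered $f$-partite construction, restricting it to inputs with $f\equiv 1$ (i.e.\ $k$-partite $k$-uniform hypergraphs) directly yields a partite lemma with the stated properties, with no further construction needed.
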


\begin{proof}
	By Corollary~\ref{cor:CPL} and Lemma~\ref{lem:Gth22} the clean partite lemma $\CPL$ 
	satisfies the assumptions of Proposition~\ref{prop:girthclean} for $g=2$.
	\index{clean partite lemma}
\end{proof}

\subsection{From \texorpdfstring{$g-1$}{g-1} to \texorpdfstring{$(g, g)$}{({\it g, g})}}
\label{sssec:1514}

Imagine that we have a partite lemma $\Xi^{(4)-}$ applicable to \mbox{$k$-partite}, 
$k$-uniform hypergraphs $F$ with $\gth(F)>4$ and delivering systems of hypergraphs 
$(H, \ccH^+)$ with $\gth(H)>4$ and $\Gth(H, \ccH^+)>3$. 
Can we then derive the case $g=4$ of Theorem~\ref{thm:grth1} by means of the partite 
construction method? If so, can we even build Ramsey systems without the four-cycles 
of copies shown in Figure~\ref{fig:N1A}? As the next result shows,
the answers so these questions become affirmative when we resolve to utilise 
strongly induced copies vertically.

\begin{prop} \label{prop:1737}
	Let $\Omega$ denote a linear Ramsey construction delivering systems of strongly 
	induced copies and let $g\ge 3$ be an integer.
	Suppose that $\Xi$ refers to a partite lemma 
		\begin{enumerate}
		\item[$\bullet$] applicable to $k$-partite $k$-uniform hypergraphs $B$ with $\gth(B)>g$
		\item[$\bullet$] yielding systems of copies $\Xi_r(B)=(H, \ccH)$ with 
						\[	
				\gth(H)>g
				\quad \text{ and } \quad 
				\Gth(H, \ccH^+)>g-1\,.
			\]
				\end{enumerate}
		If $\Theta=\PC(\Omega, \Xi)$, then for every hypergraph $F$ with $\gth(F)>g$ 
	and every $r\in \NN$ the system $\Theta_r(F)=(H, \ccH)$ exists and satisfies 
	$\Gth(H, \ccH^+)>(g, g)$. 
\end{prop}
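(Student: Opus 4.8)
The plan is to argue by induction along the partite construction $\Theta=\PC(\Omega,\Xi)$, the heart of the matter being a picturesque statement in the spirit of Lemma~\ref{lem:251}: if $g\ge 3$, if $(G,\ccG)$ is a system of $k$-uniform hypergraphs whose copies are strongly induced, if $(\Sigma,\ccQ,\psi_\Sigma)=(\Pi,\ccP,\psi_\Pi)\conc(H,\ccH)$ holds for two pictures over $(G,\ccG)$ and a linear $k$-partite system $(H,\ccH)$ with $\gth(H)>g$ and $\Gth(H,\ccH^+)>g-1$, and if $\Gth(\Pi,\ccP^+)>(g,g)$, then $\Gth(\Sigma,\ccQ^+)>(g,g)$. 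Granting this, the proposition will follow exactly as Proposition~\ref{prop:girthclean} followed from Lemma~\ref{lem:251}: since $\gth(F)>g\ge 3$ forces $F$ to be linear, the system $\Omega_r(F)=(G,\ccG)$ has strongly induced copies, picture zero satisfies $\Gth(\Pi_0,\ccP_0^+)>(g,g)$ because its copies of $F$ are disjoint and each inherits, via Lemma~\ref{lem:Gth-gth}, a $\Gth$ exceeding $g$ and hence exceeding $(g,g)$, and one then runs the partite construction, invoking the picturesque statement in each step while using Lemma~\ref{lem:Gth-gth} to guarantee that the constituents have girth $>g$, so that $\Xi$ is applicable.

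For the picturesque statement I would take a tidy cycle of copies $\ccC=F_1q_1\ldots F_nq_n$ in $(\Sigma,\ccQ^+)$ with $h(\ccC)\le(g,g)$ and, exactly as in the proof of Lemma~\ref{lem:251}, pick a segmentation $\ccC=I_1r_1\ldots I_tr_t$ with $t$ minimal, whose segments have leaders $\Pi^e_1,\ldots,\Pi^e_t\in\ccH^+$ with consecutive leaders distinct and, in the cyclic sense, $\Pi^e_1\ne\Pi^e_t$. If $t=1$ the whole cycle lives in a single standard copy $\Pi_1\cong\Pi$, and $\Gth(\Pi_1,\ccP_1^+)>(g,g)\ge h(\ccC)$ already produces a master copy. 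If $t\ge 2$ the connectors $r_\tau$ lie in $H$, so $\ccD=\Pi^e_1r_1\ldots\Pi^e_tr_t$ is a tidy cycle of copies in $(H,\ccH^+)$; the computation of Claim~\ref{clm:1826}, which uses nothing about $\ccC$ beyond its tidiness (precisely~\ref{it:T1}), then gives $\len{\ccD}=t\le\ord{\ccC}\le g$, so that $\ord{\ccD}\le g$.

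If $\ord{\ccD}\le g-1$, then $h(\ccD)\le(g-1,g-1)<\Gth(H,\ccH^+)$, so $\ccD$ has a master copy, and from here the argument should run word for word as in the proof of Lemma~\ref{lem:251} from~\eqref{eq:1229} onwards: one reduces to a single standard copy by collapsing the single-copy segments isolated by Claim~\ref{clm:1629}, applies $\Gth(\Pi_\star,\ccP_\star^+)>(g,g)$ there, and transports the master copy back through Lemma~\ref{lem:949}, treating the case $t=2$ as in Claim~\ref{clm:1230}. The genuinely new case---and the only place where the hypothesis that $\Omega$ delivers strongly induced copies is needed---is $\ord{\ccD}=g$. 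Then $g=\ord{\ccD}\le\len{\ccD}=t\le\ord{\ccC}\le g$ forces $t=\ord{\ccC}=\len{\ccC}=g$, every segment consists of a single copy $F_k$, every index of $\ccC$ is pure, and consequently all connectors of $\ccC$ are of one kind, with $\Pi_k\ne\Pi_{k+1}$ for all $k$.

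In this case the plan is to derive a contradiction. Since $\psi_\Pi$ maps each $F_k$ isomorphically onto a strongly induced copy $F'\subseteq G$ which satisfies $|V(F')\cap e|\ge 2$ (because $F_k$ meets $V(H)$ in both of its neighbouring connectors), and since $G$ is linear, one obtains $e\in E(F')$ and hence that $V(F_k)\cap V(H)$ is contained in a single edge $e^\dagger_k\in E(F_k)$; the same linearity of $G$ shows that $e^\dagger_k$, having at least two vertices in $V(H)$, actually belongs to $E(H)$ (for the segments that are edge copies of $H$ this is immediate). Thus the two connectors adjacent to $F_k$ both lie in $e^\dagger_k$, and tidiness~\ref{it:T2} of $\ccC$, forbidding three consecutive connectors in one edge, forces first $e^\dagger_k\ne e^\dagger_{k+1}$ and then that $e^\dagger_1,\ldots,e^\dagger_g$ are pairwise distinct. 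If all connectors of $\ccC$ are vertices this yields a genuine $g$-cycle $e^\dagger_1q_1\ldots e^\dagger_gq_g$ in $H$, contradicting $\gth(H)>g$; if they are all edges, each connector next to $F_k$, having $k$ vertices inside $e^\dagger_k$, must equal $e^\dagger_k$, forcing two equal connectors and contradicting~\ref{it:L2}. So the case $\ord{\ccD}=g$ is impossible and $\ccC$ always has a master copy. The step I expect to need the most care is precisely this one: checking that strong inducedness together with the linearity of $G$ pins $V(F_k)\cap V(H)$ inside an edge of $H$ itself (and not merely inside an edge of the standard copy $\Pi_k$), since it is this that turns the configuration into an honest short cycle of~$H$.
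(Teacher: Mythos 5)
Your proposal is correct and follows essentially the same route as the paper's proof of Lemma~\ref{lem:1753} and its ensuing deduction of Proposition~\ref{prop:1737}: segment the cycle, pass to the reduct~$\ccD$ of leaders, show $\ord{\ccD}\le g-1$ (which you phrase as ruling out the case $\ord{\ccD}=g$, but the content is identical to the paper's Claim~\ref{clm:1831}), and then run the tail of the proof of Lemma~\ref{lem:251} unchanged. Your treatment of the two equality sub-cases (all-edge connectors handled via the music-line bound, all-vertex connectors handled by pinning $V(F_k)\cap V(H)$ inside an edge of $H$ via strong inducedness and linearity of $G$ and then producing a $g$-cycle in~$H$) matches the paper's argument in Claim~\ref{clm:1831}, modulo the minor notational slip that ``$h(\ccD)\le(g-1,g-1)$'' should simply read ``$\ord{\ccD}\le g-1$, hence $\ccD$ has a master copy by Definition~\ref{dfn:Girth}''; the paper instead verifies distinctness of the $e_i$ by appealing to Fact~\ref{fact:231a} rather than directly to~\ref{it:T2}, but these are equivalent.
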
 

The corresponding picturesque statement reads as follows. 

\begin{lemma}\label{lem:1753}
	Suppose $g\ge 3$ and that
		\[
		(\Sigma, \ccQ, \psi_\Sigma)
		=
		(\Pi, \ccP, \psi_\Pi)
		\conc 
		(H, \ccH)
	\]
		holds for two pictures $(\Pi, \ccP, \psi_\Pi)$ and $(\Sigma, \ccQ, \psi_\Sigma)$
	over a linear system $(G, \ccG)$ with strongly induced copies and a $k$-partite 
	$k$-uniform system $(H, \ccH)$. If 
		\[
		\Gth(\Pi, \ccP^+)>(g, g)\,,
		\quad 
		\gth(H) > g\,,
		\quad \text{ and } \quad 
		\Gth(H, \ccH^+) > g-1\,,
	\]
		then $\Gth(\Sigma, \ccQ^+)>(g, g)$.
\end{lemma}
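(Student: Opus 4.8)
The plan is to mirror the proof of Lemma~\ref{lem:251} almost step by step, the one genuinely new point being that the vertical system $(H,\ccH)$ now only guarantees $\Gth(H,\ccH^+)>g-1$ instead of $>(g,g)$, so that the cycle of leaders we extract may have order exactly $g$; this is compensated by the two extra hypotheses, namely $\gth(H)>g$ and the strong inducedness of the copies in $\ccG$ (which, by the argument in the proof of Lemma~\ref{lem:cleancap}, propagates to the copies in $\ccP$ and $\ccQ$, so that $F_\star\Str\Sigma$ for every $F_\star\in\ccQ$). Concretely, one takes a tidy cycle of copies $\ccC=F_1q_1\ldots F_nq_n$ in $(\Sigma,\ccQ^+)$ with $h(\ccC)\le(g,g)$ and must exhibit a master copy. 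As in Lemma~\ref{lem:251} one introduces segments, picks a segmentation $\ccC=I_1r_1\ldots I_tr_t$ with $t$ minimal, and denotes the leaders by $\Pi^e_1,\ldots,\Pi^e_t\in\ccH^+$ with $\Pi^e_i\ne\Pi^e_{i+1}$. If $t=1$ the whole cycle lies in a single standard copy $(\Pi_\star,\ccP_\star^+)$ which is isomorphic to $(\Pi,\ccP)$, so $\Gth(\Pi_\star,\ccP_\star^+)>(g,g)\ge h(\ccC)$ yields a master copy and we are done.

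So assume $t\ge2$; then $r_1,\ldots,r_t\in V(H)\cup E(H)$ and $\ccD=\Pi^e_1r_1\ldots\Pi^e_tr_t$ is a cycle of copies in $(H,\ccH^+)$, tidy because $\ccC$ is tidy and tidiness only constrains connectors and edges of $H\subseteq\Sigma$. The order-budget computation used to prove that $\len{\ccD}\le g$ in Lemma~\ref{lem:251} shows that each segment carries weight at least $1$, hence $t\le\ord{\ccC}$ and therefore $\ord{\ccD}\le\len{\ccD}=t\le\ord{\ccC}\le g$. If $\ord{\ccD}\le g-1$, then $h(\ccD)\le(g-1,g-1)<\Gth(H,\ccH^+)$ (using $g-1\le 2(g-1)$ since $g\ge3$), so $\ccD$ has a master copy outright.

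The remaining, and in my view the only really delicate, case is $\ord{\ccD}=g$. Here $t\ge\ord{\ccD}=g$ together with $t\le\ord{\ccC}\le g$ forces $t=g=\ord{\ccC}$, every segment-weight $=1$, every index of $\ccC$ pure; and since $h(\ccC)\le(g,g)$ gives $n\le g$ while $n\ge t=g$, also $n=g$, i.e.\ each segment is a single copy and all connectors of $\ccC$ (equivalently of $\ccD$) have the same type. If they are all edges, then an edge copy among the $\Pi^e_\tau$ would force two equal consecutive connectors, so all $\Pi^e_\tau$ are real; passing to the (unique, by linearity) common vertex of each pair of consecutive edge connectors inside the copy between them produces a cyclic sequence with distinct vertices and edges, contradicting $\gth(H)>g$ via Fact~\ref{fact:231a} --- unless all those edges coincide, which makes $M(\cdot)$ of that edge all of $\ZZ/g\ZZ$ and violates~\ref{it:T2} as $g\ge3$. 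If all connectors are vertices, one shows for every $\tau$ that the two connectors bordering $\Pi^e_\tau$ lie in a common edge $e_\tau\in E(\Pi^e_\tau)\subseteq E(H)$ --- trivially if $\Pi^e_\tau$ is an edge copy, and otherwise this is exactly where strong inducedness of the $\ccQ$-copies is invoked --- whereupon $e_1q_1\ldots e_gq_g$ again contradicts $\gth(H)>g$ by Fact~\ref{fact:231a}, the degenerate ``all $e_\tau$ equal'' alternative being ruled out by~\ref{it:T2} once more. Thus $\ord{\ccD}=g$ cannot occur, and in every case $\ccD$ has a master copy. I expect the bookkeeping in this paragraph --- extracting an honest $g$-cycle of $H$ from the extremal cycle $\ccD$, and in particular locating the edges $e_\tau$ via strong inducedness --- to be the main obstacle.

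Once $\ccD$ has a master copy $\Pi^e_\star$, the endgame is that of Lemma~\ref{lem:251}. One first disposes of the small cases $n=2$ and $t=2$ exactly as in the corresponding claim there, so that one may assume $n\ge t\ge3$; then Lemma~\ref{lem:1447} makes the connectors $r_1,\ldots,r_t$ vertices and Fact~\ref{rem:1200} makes the $\Pi^e_\tau$ occurring in $\ccD$ real. Fixing edges $\{f_i\in E(\Pi^e_\star)\}$ witnessing the master copy and letting $\Pi_\star$ be the standard copy extending $\Pi^e_\star$, property~\ref{it:T2} of $\ccC$ applied to each $f_i$ forces every segment whose leader differs from $\Pi^e_\star$ to consist of a single copy; collapsing those copies to the corresponding $f_i^+$ gives, by Lemma~\ref{lem:949}\ref{it:949a}, a tidy cycle $\ccE$ lying entirely in $(\Pi_\star,\ccP_\star^+)$ with $h(\ccE)=h(\ccC)\le(g,g)$. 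Since $\Gth(\Pi_\star,\ccP_\star^+)=\Gth(\Pi,\ccP^+)>(g,g)$, the cycle $\ccE$ has a master copy, and Lemma~\ref{lem:949}\ref{it:949b} pushes it back to a master copy of $\ccC$. Finally, Proposition~\ref{prop:1737} is deduced from this picturesque lemma by the usual induction along the partite construction, with picture zero handled via $\gth(F)>g$, Lemma~\ref{lem:Gth-gth} and disjointness, exactly as in the proof of Proposition~\ref{prop:girthclean}.
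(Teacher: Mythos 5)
Your proposal follows the paper's route closely and correctly isolates the one genuinely new step, namely ruling out the possibility $\ord{\ccD}=g$ (Claim~\ref{clm:1831} in the paper). The endgame, the small-case claims, and the deduction of Proposition~\ref{prop:1737} are identical to the paper's. However, your treatment of the extremal case has two genuine problems.

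For the all-edges case, your plan is to ``pass to the common vertex of each pair of consecutive edge connectors inside the copy between them'' and build a cycle in $H$ contradicting $\gth(H)>g$. But there is no reason in general for two edges $q_{\tau-1}, q_\tau\in E(H)$ to intersect, and the constraint that makes everything work here is not a cycle in $H$ at all: since in this extremal case the segment $I_\tau$ is a single copy $F_\tau\in\ccQ^+$ containing both connectors, and $F_\tau$ crosses each music line of $\Sigma$ at most once while $q_{\tau-1}$ and $q_\tau$ each cross every music line over $e$, one is forced to conclude $q_{\tau-1}=q_\tau$, contradicting~\ref{it:L2} directly (this is the paper's argument, applied to $F_1$). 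The detour through common vertices, Fact~\ref{fact:231a}, and~\ref{it:T2} is unnecessary and, as stated, not justified.

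For the all-vertices case, you assert that the edge $e_\tau\in E(\Pi^e_\tau)\subseteq E(H)$ with $q_{\tau-1},q_\tau\in e_\tau$ is produced by ``strong inducedness of the $\ccQ$-copies.'' This is circular: $F_\tau\Str\Sigma$ only lets you cover $V(F_\tau)\cap f$ by an edge of $F_\tau$ for an edge $f\in E(\Sigma)$ \emph{that you already have}, and producing an edge of $\Sigma$ through both $q_{\tau-1}$ and $q_\tau$ is precisely what you are trying to do. The correct mechanism is to project to $G$: the vertices $\psi_\Sigma(q_{\tau-1})$ and $\psi_\Sigma(q_\tau)$ are distinct (since $F_\tau$ crosses each music line at most once) and both lie in the amalgamation edge $e\in E(G)$; applying $\psi_\Sigma[F_\tau]\Str G$ with the edge $e$, and using the linearity of $G$, one sees that the covering edge of $\psi_\Sigma[F_\tau]$ must be $e$ itself, and pulling back one obtains $e_\tau\in E(F_\tau)\cap E(H)$ through both connectors. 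In other words, the hypothesis that is being used at this step is the strong inducedness of the copies in $\ccG$ together with the linearity of $G$, not strong inducedness of copies in $\ccQ$. Your earlier remark that $\ccG$-strong-inducedness propagates to $\ccQ$ is true but does not give you what you need here.

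Two cosmetic remarks: the inequality $h(\ccD)\le(g-1,g-1)$ you write in the case $\ord\ccD\le g-1$ is not quite right (there is no bound on $\len{\ccD}$ by $g-1$); what is true and suffices is $h(\ccD)\le(g-1,t)\le(g-1,g)\le(g-1,2(g-1))<\Gth(H,\ccH^+)$. Also the existence of the required $e_\tau$ in the all-vertices case has to be argued for edge copies $F_\tau$ as well, which the projection argument handles uniformly since $E^+(G)$-copies are trivially strongly induced in $G$.
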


The deduction of Proposition~\ref{prop:1737} from Lemma~\ref{lem:1753} is very similar 
to the proof of Proposition~\ref{prop:girthclean} based on Lemma~\ref{lem:251} and we 
omit the details.\footnote[1]{Strictly speaking, Lemma~\ref{lem:251} and 
Lemma~\ref{lem:1753} are the only results of this section that will be quoted later.} 
During the proof of Lemma~\ref{lem:1753}
it is helpful to keep in mind that a cycle of copies $\ccC$ satisfies 
$h(\ccC)\le (g, g)$ if and only if either $h(\ccC)=(g, g)$ or $\ord{\ccC}\le g-1$. 
In the former case, $\ccC$ has length $g$ and all connectors of~$\ccC$ are of the 
same type. On the whole, the proofs of Lemma~\ref{lem:251} and Lemma~\ref{lem:1753} are very similar and, therefore, 
it suffices to indicate the appropriate changes.   

\begin{proof}[Proof of Lemma~\ref{lem:1753}]
	Consider a tidy cycle of copies $\ccC=F_1q_1\ldots F_nq_n$ in $(\Sigma, \ccQ^+)$
	with $h(\ccC)\le (g, g)$. We need to prove that $\ccC$ possesses a master copy. 
	Define segments of $\ccC$, their leaders, and segmentations of $\ccC$ in the same 
	way as in the proof of Lemma~\ref{lem:251}. Again let 
		\[
		\ccC=I_1r_1\ldots I_tr_t
	\]
	be a segmentation of $\ccC$ such that $t$ is minimal. 
	As we are done otherwise, we may suppose 
	$t\ge 2$ and that the leaders of any two consecutive segments are distinct. Let 
		\[
		\ccD=\Pi^e_1r_1\ldots \Pi^e_tr_t
	\]
		denote the associated cycle of leaders, which lives in $(H, \ccH^+)$. The point 
	of Claim~\ref{clm:1826} was to verify $\ord{\ccD} < \Gth(H, \ccH^+)$, which 
	requires a different reasoning now. 
	
	\begin{clm}\label{clm:1831}
		We have $\ord{\ccD}\le g-1$.
	\end{clm}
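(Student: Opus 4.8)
The plan is to split according to whether $\ord{\ccC}=g$ or $\ord{\ccC}\le g-1$; by the remark preceding the lemma, $h(\ccC)\le(g,g)$ amounts exactly to this dichotomy. In both cases I keep the weighting used in the proof of Claim~\ref{clm:1826}: set $\eta_i=1$ if the index $i$ is pure in $\ccC$ and $\eta_i=\tfrac12$ if it is mixed, and for a segment $I_\tau$ of the minimal segmentation let $\theta_\tau$ be the sum of the $\eta_k$ over the copies $F_k$ lying in $I_\tau$. Exactly as in Claim~\ref{clm:1826} one has $\theta_\tau\ge 1$ for every $\tau$ (a singleton segment sitting at a mixed index is impossible by~\ref{it:T1}, and a segment containing at least two copies already contributes $\ge 2\cdot\tfrac12$), while $\theta_1+\dots+\theta_t=\eta_1+\dots+\eta_n=\ord{\ccC}$. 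Since $\ord{\ccD}\le\len{\ccD}=t$ always holds, it therefore suffices to bound $t$.

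If $\ord{\ccC}\le g-1$ we get $t\le\theta_1+\dots+\theta_t=\ord{\ccC}\le g-1$ at once, and we are done. So assume $\ord{\ccC}=g$, i.e.\ $h(\ccC)=(g,g)$. Then $\len{\ccC}=n=g$ — always $\len{\ccC}\ge\ord{\ccC}$, and $h(\ccC)\le(g,g)$ forces $\len{\ccC}\le g$ — whence every index of $\ccC$ is pure, each $\eta_i=1$, and $\theta_\tau$ is simply the number of copies of $\ccC$ contained in $I_\tau$. If $t\le g-1$ we conclude as before, so it only remains to rule out the possibility $t=g$; in that case every segment consists of a single copy, the leaders of consecutive segments are distinct, and, all indices of $\ccC$ being pure, the connectors $q_1,\dots,q_g$ all carry the same type.

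I expect the elimination of this rigid configuration to be the crux, and the plan is to show it is incompatible with $\gth(H)>g$. If the common type is ``edge'', then no $F_i$ can be an edge copy, since its unique edge cannot simultaneously equal the two distinct connectors $q_{i-1},q_i$ that~\ref{it:L4} places in $E(F_i)$; hence each $F_i$ is a genuine copy inside a standard copy $\Pi_{\sigma(i)}$ of $\Pi$ with $\Pi_{\sigma(i)}\ne\Pi_{\sigma(i+1)}$. An edge belonging to two distinct standard copies has all of its vertices on music lines over $e$ and is therefore an edge of their common constituent over $e$, so every $q_i$ is in fact an edge of $H$, and walking around $\ccC$ one reads off a cyclic sequence of at most $g$ edges of the linear hypergraph $H$, which one uses to produce a short cycle in $H$ and contradict $\gth(H)>g$. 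If instead the common type is ``vertex'', then, since distinct standard copies meet only in $V(H)$, all the connectors lie in $V(H)$; here one invokes that the copies of $\ccG$ are strongly induced, so that by Lemmata~\ref{lem:cleancap} and~\ref{lem:clean-preserve} the copies in $\ccP$, and hence in $\ccQ$, are strongly induced with clean intersections, whence each $V(F_i)\cap V(F_{i+1})$ is either the single vertex $q_i$ or an edge of $H$ common to $F_i$ and $F_{i+1}$; pushing this structure down into the constituents $\Pi^e_i\in\ccH$ (all of whose edges lie in $H$) again yields a cyclic sequence of at most $g$ edges and vertices of $H$, contradicting $\gth(H)>g$.

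The step I anticipate to be most delicate is precisely this comparison of the cycle $\ccC$ living in $\Sigma$ with the configuration it induces down in $H$ — turning ``few copies of $\ccC$ joined by connectors of a single type'' into an honest short cycle of $H$. Granting it, the case $t=g$ cannot arise, so in every surviving situation $t\le g-1$, and therefore $\ord{\ccD}\le\len{\ccD}=t\le g-1$, as claimed.
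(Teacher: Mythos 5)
Your reduction to the case $h(\ccC)=h(\ccD)=(g,g)$ with $n=t=g$ and all connectors of one type agrees with the paper's route (the paper quotes Fact~\ref{fact:h} where you redo the $\theta_\tau$ bookkeeping, but the effect is the same). The gap lies in the final elimination, which you leave at a sketch in both subcases, and in neither does the sketch survive scrutiny.

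In the edge-connector case, your plan of "walking around $\ccC$" to produce a short cycle in $H$ never says where the \emph{vertices} of that cycle come from — you only have $g$ edges of $H$, and the connectors supply nothing else. This detour is also unnecessary: the paper observes that $F_1$ would then contain the two distinct edges $q_n, q_1\in E(H)$, both projecting via $\psi_\Sigma$ onto $e$, which is impossible because any copy in $\ccQ^+$ crosses each music line of $\Sigma$ at most once. That one-line contradiction replaces everything you attempt here.

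In the vertex-connector case there are two problems. First, you invoke Lemmas~\ref{lem:cleancap} and~\ref{lem:clean-preserve} to conclude that the copies in $\ccQ$ have clean intersections — but those lemmas concern the final picture of a run of $\PC(\Phi,\Xi)$, whereas Lemma~\ref{lem:1753} is a free-standing picturesque statement about a single amalgamation and makes no such hypothesis. What it \emph{does} hypothesise is that the copies in $\ccG$ are strongly induced; the paper uses exactly this, together with the linearity of $G$, to extract for each $i$ an edge $e_i\in E(F_i)\cap E(H)$ with $q_{i-1},q_i\in e_i$. Second, your closing phrase "contradicting $\gth(H)>g$" is not actually available: the cyclic sequence $e_1q_1\ldots e_gq_g$ need not be a cycle in $H$, since the $e_i$ might coincide, and Fact~\ref{fact:231a} only tells you that if they do not all coincide then $n=g+1$, which fails here. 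The paper therefore falls back on tidiness — if all $e_i$ equal a single edge $e_0$, then $|M(e_0)|\ge g\ge 3$, violating~\ref{it:T2}. So the final contradiction is with tidiness, not with girth, and this branch must be argued.
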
 
	
	\begin{proof}
		Repeated applications of Fact~\ref{fact:h} show $h(\ccD)\le h(\ccC)$, so the 
		claim could only fail if $h(\ccD)=h(\ccC)=(g, g)$, which we assume from now on. 
		In particular, this entails that every segment $I_\tau$ with $\tau\in\ZZ/t\ZZ$
		consists of a single copy and that the connectors of $\ccC$ and~$\ccD$ are 
		the same. In other words, we have $n=t=g$ and $q_i=r_i$ for all $i\in\ZZ/n\ZZ$.
		
		If all these connectors are edges, then $F_1$ contains two distinct edges 
		$q_1$ and $q_n$ from~$H$, contrary to the fact that $F_1$ crosses every music 
		line of $\Sigma$ at most once. 
		
		It remains to consider the case that all connectors are vertices. Now for every 
		$i\in \ZZ/n\ZZ$ the copy $F_i$ projects via $\psi_\Sigma$ to a copy $\wt{F}_i$
		in $\ccG^+$. This copy contains the distinct vertices $\psi_\Sigma(q_{i-1})$
		and $\psi_\Sigma(q_{i})$. Owing to $\wt{F}_i\Str G$ this leads to an 
		edge $\wt{e}_i$ 
		with $\psi_\Sigma(q_{i-1}), \psi_\Sigma(q_{i})\in \wt{e}_i\in E(\wt{F}_i)$.  
		But $G$ is linear, so $\wt{e}_i$ is actually the edge $H$ is projected to 
		by $\psi_\Sigma$. By looking at the inverse of the projection map one obtains 
		an edge $e_i\in E(F_i)\cap E(H)$ with $q_{i-1}, q_i\in e_i$. In view of 
		Fact~\ref{fact:231a} the only possibility how the cyclic sequence 
		$e_1q_1\ldots e_nq_n$ can fail to violate the assumption $\gth(H)>g$ is that
		we have $e=e_1=\dots =e_n$ for some edge $e\in E(H)$. 
		But now $q_1, \ldots, q_n\in M(e)$ and $n=g\ge 3$ contradict the tidiness 
		of $\ccC$. 
	\end{proof} 
	
	Mutatis mutandis the rest of the proof carries over. 
\end{proof}   \section{The extension process}
\label{subsec:EP}

This section deals with a Ramsey theoretic construction pioneered by
Ne{\v{s}}et{\v{r}}il and R\"odl, who utilised it for 
proving the $2$-uniform case of Theorem~\ref{thm:grth1} for $g=4$ in~\cite{NeRo4}. 
\index{Ne\v{s}et\v{r}il}
The process is applied there in a rather concrete manner, exploiting that 
every $C_4$-free bipartite graph~$F$ is expressible as an edge-disjoint union of stars, 
any two of which intersect in at most one vertex. Such a decomposition of the set of 
edges contains, of course, the same information as an equivalence relation on~$E(F)$ 
that regards any two edges of $F$ to be equivalent if they pertain to the same star. 
This perspective leads us to the concept of a pretrain and thereby to a rather general 
abstract framework for discussing the extension process. 
  
\subsection{Pretrains}
\label{sssec:1128}

A {\it pretrain} is a pair $(H, \equiv)$ consisting of a 
hypergraph $H$ and an equivalence relation $\equiv$ on $E(H)$.
\index{pretrain}
If $H$ is an ordered hypergraph, we call~$(H, \equiv)$ 
an {\it ordered pretrain}. 
\index{ordered pretrain}
Similarly one defines the notion of an {\it $f$-partite pretrain}.
\index{$f$-partite pretrain}
A {\it wagon} of a pretrain $(H, \equiv)$ is a subhypergraph of $H$ without isolated 
vertices whose set of edges forms an equivalence class of $\equiv$.
\index{wagon}

\begin{dfn} \label{dfn:1129}
	Given two pretrains~$(F, \equiv^F)$ and~$(H, \equiv^H)$, the former is said to be 
	a {\it subpretrain} of the latter if 
	\begin{enumerate}[label=\rmlabel]
		\item\label{it:spt1} $F$ is a subhypergraph of $H$ 
		\item\label{it:spt2} and any two edges of~$F$ are equivalent with respect to~$\equiv^F$ if 
			and only if they are equivalent with respect to~$\equiv^H$.
	\end{enumerate}
	\index{subpretrain}
\end{dfn}		
		
Condition~\ref{it:spt1} is the only one that will occasionally receive further specifications.
For instance, it may happen that $F$ is an induced subhypergraph of $H$ and 
then~$(F, \equiv^F)$ is said to be an {\it induced subpretrain} of~$(H, \equiv^H)$.
\index{induced subpretrain} 
If~$(F, \equiv^F)$ and~$(H, \equiv^H)$ are ordered pretrains, then for $(F, \equiv^F)$ 
to be an {\it ordered subpretrain} of~$(H, \equiv^H)$ we require~\ref{it:spt1} to hold
in the stronger sense that $F$ is an ordered subhypergraph of $H$. 
\index{ordered subpretrain}
A similar modification of~\ref{it:spt1} is required for {\it $f$-partite subpretrains}.
\index{$f$-partite subpretrains}

The demand~\ref{it:spt2} tells us that $\equiv^F$ is entirely determined by $F$ 
and $\equiv^H$, and thus we shall frequently suppress the equivalence relation when 
talking about subpretrains. Notice that when passing from~$(H, \equiv^H)$ 
to~$(F, \equiv^F)$ every wagon $W_H$ of $H$ either {\it vanishes} in the sense 
that $E(W_H)\cap E(F)=\varnothing$, or it {\it contracts} to a wagon $W_F$ of $F$ 
with $E(W_F)=E(W_H)\cap E(F)$. 
\index{vanishing (of wagons)}
\index{contraction (of wagons)}
  
It should be clear that every pretrain is a subpretrain of itself and that 
the subpretrain relation is transitive. 
  
A {\it system of pretrains} is a triple $(H, \equiv, \ccH)$ consisting of a pretrain 
$(H, \equiv)$ and of a collection~$\ccH$ of subpretrains of $(H, \equiv)$. 
\index{system of pretrains}
Moreover $\binom{(H, \equiv^H)}{(F, \equiv^F)}$ refers to the collection of all 
induced subpretrains of $(H, \equiv^H)$ {\it isomorphic} to~$(F, \equiv^F)$, where 
the notion of {\it pretrain isomorphism} is assumed to be self-explanatory. 
The specifiers $\nni$, $\pt$, and $\ff$ may be 
attached to~$\binom{(H, \equiv^H)}{(F, \equiv^F)}$
in the usual way. For $\ccH\subseteq \binom{(H, \equiv^H)}{(F, \equiv^F)}$ 
and $r\in \NN$ the partition relation 
\[
	\ccH\lra (F, \equiv^F)_r
\]
means that no matter how the edges of $H$ are coloured with $r$ colours, there will 
always exist a copy $(F_\star, \equiv^{F_\star})\in\ccH$ whose edges are the same colour. 

To aid the readers orientation we remark that the induced Ramsey theorem does also hold 
for pretrains in place of hypergraphs and that, in fact, this result can be shown by 
means of the same partite construction (see Proposition~\ref{prop:ofpt} below).  

\subsection{Extensions}
\label{sssec:1448}

For a concise description of the extension process we require a little bit of preparation. 
First of all, suppose that we start with a pretrain $(F, \equiv^F)$ and enlarge its wagons
as disjointly as possible, thus obtaining a new pretrain $(H, \equiv^H)$. We then say 
that~$(H, \equiv^H)$ is an extension of $(F, \equiv^F)$. Let us say the same thing again 
in a more precise way.

\begin{dfn}\label{dfn:1811}
	Given a subpretrain $(F, \equiv^F)$ of a pretrain $(H, \equiv^H)$ we say 
	that $(H, \equiv^H)$ is an {\it extension} of $(F, \equiv^F)$ provided
	the following conditions hold.
	\index{extension (of pretrains)}
	\begin{enumerate}[label=\rmlabel]
		\item\label{it:1811a} The hypergraphs $F$ and $H$ have the same isolated 
			vertices.\footnote[1]{Here and at several other places that follow one 
			might get the 
			impression that we are overly pedantic in our treatment of isolated vertices.
			After all, for results such as Theorem~\ref{thm:grth1} it does not matter 
			whether~$F$ is allowed to have isolated vertices or not. (If necessary, one 
			could first remove the isolated vertices from~$F$, apply the theorem, and put 
			the isolated vertices back at the end.) Nevertheless, hypergraphs with 
			isolated vertices frequently arise in an auxiliary r\^{o}le throughout 
			our constructions. For instance the picture zero shown in Figure~\ref{fig:21},
			even though it has no isolated vertices in itself, consists of ten constituents
			each of which possesses six isolated vertices. Therefore, when starting with 
			this picture we occasionally have to apply a partite lemma to a bipartite 
			graph with isolated vertices.}
		\item\label{it:1811b} Every wagon $W$ of $H$ contracts to a wagon of $F$, i.e., 
			satisfies $E(W)\cap E(F)\ne\varnothing$.
		\item\label{it:1811c} If two distinct wagons $W_H^\star$ and $W_H^{\star\star}$ of $H$ 
			contract to the wagons $W_F^\star$ and $W_F^{\star\star}$ of~$F$, then 
			$V(W_H^\star)\cap V(W_H^{\star\star})=V(W_F^\star)\cap V(W_F^{\star\star})$.
	\end{enumerate} 
\end{dfn}

If $(H, \equiv^H)$ is an extension of $(F, \equiv^F)$, then  
$\binom{(H, \equiv^H)}{(F, \equiv^F)}$ can have 
more than one element. As it turns out to be useful later, 
we call $(F, \equiv^F)$ itself 
the {\it standard copy} of $(F, \equiv^F)$ in~$(H, \equiv^H)$. 
\index{standard copy (in a pretrain extension)}
If $(F, \equiv^F)$ and $(H, \equiv^H)$ are ordered and a 
pretrain $(H_\star, \equiv^{H_\star})$ is order-isomorphic to $(H, \equiv^H)$, then
it should be clear what we mean by the {\it standard copy} 
of~$(F, \equiv^F)$ in $(H_\star, \equiv^{H_\star})$. 

We proceed with an easy statement that follows directly from the definition
of extensions.   

\begin{lemma}\label{lem:5210}
	Suppose that the pretrain $(H, \equiv^H)$ is an extension of the 
	pretrain $(F, \equiv^F)$ and that the wagon $X$ of $(H, \equiv^H)$
	contracts to the wagon $W$ of $(F, \equiv^F)$. If $e\in E(X)$, then 
		\[
		e\cap V(W) = e\cap V(F)\,.
	\]
	\end{lemma}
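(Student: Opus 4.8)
The plan is to prove the equality by two inclusions. The inclusion $e\cap V(W)\subseteq e\cap V(F)$ is immediate, since $W$ is a subhypergraph of $F$ and hence $V(W)\subseteq V(F)$. The real work is the reverse inclusion $e\cap V(F)\subseteq e\cap V(W)$, which I would verify pointwise: fix a vertex $v\in e\cap V(F)$ and locate it in $V(W)$.

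First I would record two preliminary facts. Since wagons have no isolated vertices, $V(X)=\bigcup E(X)$ and $V(W)=\bigcup E(W)$; and because $X$ contracts to $W$ (as discussed right after Definition~\ref{dfn:1129}), we have $E(W)=E(X)\cap E(F)$, and this set is nonempty. Now take $v\in e\cap V(F)$ as above. Because $v$ lies on the edge $e\in E(X)\subseteq E(H)$, it is non-isolated in $H$, so condition~\ref{it:1811a} of Definition~\ref{dfn:1811} forces $v$ to be non-isolated in $F$ too; hence there is an edge $f\in E(F)$ with $v\in f$.

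Next I would bring in conditions~\ref{it:1811b} and~\ref{it:1811c}. Let $Y$ be the unique wagon of $H$ whose edge class contains $f$. By~\ref{it:1811b} it contracts to a wagon $W'$ of $F$ with $E(W')=E(Y)\cap E(F)$, and since $f\in E(Y)\cap E(F)$ we get $v\in f\subseteq V(W')$. If $Y=X$, then $f\in E(X)\cap E(F)=E(W)$, so $v\in f\subseteq V(W)$ and we are done. If $Y\ne X$, then $X$ and $Y$ are distinct wagons of $H$ contracting to $W$ and $W'$ respectively, so condition~\ref{it:1811c} yields $V(X)\cap V(Y)=V(W)\cap V(W')$; combining $v\in e\subseteq V(X)$ with $v\in f\subseteq V(Y)$ gives $v\in V(W)\cap V(W')\subseteq V(W)$, as required.

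I do not anticipate a serious obstacle; this is essentially a direct unwinding of the definition of an extension. The only place demanding a little attention is the degenerate possibility that $v$ is isolated in $F$, which is precisely excluded by clause~\ref{it:1811a}---a small illustration of why the bookkeeping about isolated vertices is worth carrying along.
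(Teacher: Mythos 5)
Your proof is correct and follows essentially the same route as the paper's. The only cosmetic difference is that you first pick an edge $f\in E(F)$ through the vertex and then pass to the wagon $Y$ of $H$ containing it, whereas the paper directly picks the wagon $W'$ of $F$ containing the vertex and then the wagon $X'$ of $H$ contracting to it; both versions then split on whether that wagon equals $X$ (resp.\ $W$) and finish with~\ref{it:1811c} in the non-trivial case.
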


\begin{proof}
	Since $V(W)\subseteq V(F)$, the left side is a subset of the right side. 
	For the converse direction we consider an arbitrary vertex $x\in e\cap V(F)$.
	Because of $x\in e$ and clause~\ref{it:1811a} of Definition~\ref{dfn:1811} we
	know that $x$ cannot be isolated in $F$ and, hence, there is a wagon $W'$ 
	of $(F, \equiv^F)$ with $x\in V(W')$. If $W=W'$ we are done immediately, 
	so suppose $W\ne W'$ from now on. Denoting the wagon of $(H, \equiv^H)$
	contracting to $W'$ by $X'$ we have 
		\[
		x\in V(X)\cap V(X') = V(W)\cap V(W')
	\]
		by Definition~\ref{dfn:1811}\ref{it:1811c} and thus, in particular, $x\in V(W)$.
\end{proof}

Due to the next fact we can often restrict our attention to pretains all of whose 
wagons are isomorphic to each other. 

\begin{fact}\label{exmp:2345}
	Every ordered pretrain $(F, \equiv^F)$ has an extension $(\wh{F}, \equiv^{\wh{F}})$ 
	all of whose wagons are order-isomorphic to the disjoint union of all wagons 
	of $(F, \equiv^F)$. \qed
\end{fact}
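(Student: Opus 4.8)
The plan is to build $\wh{F}$ by attaching to each wagon of $F$ a fresh, disjoint copy of every other wagon. Let $W_1,\dots,W_m$ enumerate the wagons of $(F,\equiv^F)$ and let $D$ denote the disjoint union of $W_1,\dots,W_m$, turned into an ordered hypergraph by declaring that the $j$-th summand forms a block --- all of $W_1$ first, in the order inherited from $F$, then all of $W_2$, and so on --- so that for each $j$ the inclusion of the $j$-th summand is an embedding of ordered hypergraphs $W_j\hookrightarrow D$. (If $m\le 1$ we may simply take $\wh{F}=F$, so assume $m\ge 2$.) Now let $\wh{F}$ have vertex set $V(F)$ together with, for every $j\in[m]$ and every $i\ne j$, the vertices of a fresh copy $W_i^{(j)}$ of $W_i$, these copies being pairwise disjoint and disjoint from $V(F)$; let its edge set be $E(F)\cup\bigcup_{j}\bigcup_{i\ne j}E(W_i^{(j)})$; and let $\equiv^{\wh{F}}$ be the equivalence relation whose classes are the sets $E(\wh{W}_j):=E(W_j)\cup\bigcup_{i\ne j}E(W_i^{(j)})$ for $j\in[m]$. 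Then $\wh{W}_j$ is a wagon of $\wh{F}$ which, as a hypergraph, is the disjoint union of $W_j$ and the $W_i^{(j)}$ with $i\ne j$, and hence is isomorphic to $D$.

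The verification that $(F,\equiv^F)$ is a subpretrain of $(\wh{F},\equiv^{\wh{F}})$ and that the latter is an extension of the former is routine. Two edges of $F$ lie in a common class of $\equiv^{\wh{F}}$ exactly when they lie in a common $W_j$, i.e.\ exactly when they are $\equiv^F$-equivalent, so Definition~\ref{dfn:1129} is met. For Definition~\ref{dfn:1811}: since the fresh vertices all lie in wagons and no new edge meets $V(F)$, the hypergraphs $F$ and $\wh{F}$ have the same isolated vertices, giving~\ref{it:1811a}; each $\wh{W}_j$ contains the nonempty set $E(W_j)$ and therefore contracts to the wagon $W_j$ of $F$, giving~\ref{it:1811b}; and for $j\ne k$ the wagons $\wh{W}_j$ and $\wh{W}_k$ can share only vertices of $V(F)$, namely those of $V(W_j)\cap V(W_k)$, since all fresh copies are pairwise disjoint and disjoint from $V(F)$, which is precisely~\ref{it:1811c}.

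It remains to equip $\wh{F}$ with a linear order, and this is the only step that needs genuine care: a wagon $W_j$ need not occupy a contiguous stretch of $F$'s order and distinct wagons may share vertices, so it is not evident a priori that a single linear order on $V(\wh{F})$ can both restrict to $F$'s order and render every $\wh{W}_j$ order-isomorphic to the \emph{same} ordered hypergraph $D$. I would construct the order by extending $F$'s order step by step, processing $j=1,\dots,m$ in turn: at stage $j$, insert the block $W_i^{(j)}$ with $i<j$ (carrying its internal copy of $W_i$'s order) immediately below $\min W_j$ --- the $\le$-least vertex of the real wagon $W_j$ --- placing these blocks for $i=1,\dots,j-1$ consecutively in this order, and insert the block $W_i^{(j)}$ with $i>j$ immediately above $\max W_j$, placing the blocks for $i=j+1,\dots,m$ consecutively in this order. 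Each such insertion is possible and leaves the relative order of all previously placed vertices unchanged, so the process never runs into an inconsistency. The resulting order restricts to $F$'s order on $V(F)$ by construction, and for each $j$ the bijection $\wh{W}_j\to D$ sending the real $W_j$ onto the $j$-th summand identically and each $W_i^{(j)}$ onto the $i$-th summand is an isomorphism of ordered hypergraphs: inside $\wh{W}_j$ the vertices of $W_i^{(j)}$ with $i<j$ precede all of $W_j$, which precedes the vertices of $W_i^{(j)}$ with $i>j$, the blocks appear in increasing order of $i$ and internally as in $W_i$, and this matches the block order defining $D$. Hence $(\wh{F},\equiv^{\wh{F}})$ is an extension of $(F,\equiv^F)$ all of whose wagons are order-isomorphic to the disjoint union of the wagons of $F$. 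The main obstacle is exactly this final compatibility of the ordering; the pretrain bookkeeping around it is immediate.
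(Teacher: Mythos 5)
Your proof is correct and realizes exactly the construction the paper has in mind (cf.\ Figure~\ref{fig:WA}, whose caption explicitly points out that $F$ is disconnected from the rest of $\wh{F}$, just as in your construction). The ordering concern you flag is legitimate but resolves cleanly for precisely the reason your argument isolates: the vertex sets $V(\wh{W}_j)$ intersect only inside $V(F)$, where the order is already fixed, so the insertion constraints for distinct wagons never conflict; your below-$\min W_j$ / above-$\max W_j$ scheme is one explicit way to realize this, and no further care is needed.
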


In this situation we say that $(\wh{F}, \equiv^{\wh{F}})$ arises from $(F, \equiv^F)$ 
by {\it wagon assimilation} (see Figure~\ref{fig:WA}).
\index{wagon assimilation}

When extending wagons we sometimes want to say in an exact manner how the old wagons
are supposed to ``sit'' in the new wagons and the definition that follows will help us 
to verbalise our intentions.  

\begin{dfn} \label{dfn:1500}
	We say that $(X, W)$ is an {\it ordered hypergraph pair} if $X$ is an ordered 
	hypergraph and $W$ is an ordered subhypergraph of $X$.
	\index{ordered hypergraph pair}
	Two ordered hypergraph pairs~$(X, W)$ and $(X', W')$ are called {\it isomorphic}
	if $X$ is isomorphic to $X'$ and the unique isomorphism from~$X$ to~$X'$ maps~$W$ 
	onto~$W'$. 
\end{dfn}

\begin{figure}[h]
	\centering	
	
		\begin{subfigure}[b]{0.45\textwidth}
		\centering
	
			\begin{tikzpicture}[scale=.9]

	\newcommand{\edge}[6]{\draw [color={#6},thick, rotate around={#5:(#1,#2)}] (#1-#3, #2) [out=-90, in= 180] to (#1,#2-#4)  [out=0, in=-90] to (#1+#3,#2) [out=90, in=0] to (#1,#2+#4)  [out=-180, in=90] to (#1-#3,#2);}

\def\w{1.25};
\def\h{1};

\edge{-2.5}{0}{\w}{\h}{0}{black};
\edge{2.5}{0}{\w}{\h}{0}{black};
\edge{0}{0}{\w}{\h}{0}{black};

\coordinate (v) at (-2.5,0);

\draw (-3.6,.3)--(v)--(-2.1,.9)--(v)--(-1.25,0)--(v)--(-2,-.8)--(v)--(-3.1,-.8);
\draw (-1.25,0)--(-.7,.6)--(0,-.6)--(.7,.6)--(1.25,0);
\draw (1.25,0)--(2,.85)--(3.5,-.1)--(2.65,-.95)--(1.25,0);
\draw (2.75,.375)--(1.95,-.475);

\fill (1.25,0) circle (1.5pt);
\fill (-1.25,0) circle (1.5pt);

\phantom{\fill (0,-1.6) circle (1pt);}

			\end{tikzpicture}
		 \caption{$(F,\equiv^F)$}
		\label{fig:WAa} 		
	\end{subfigure}
\hfill    
\begin{subfigure}[b]{0.5\textwidth}
\centering
			\begin{tikzpicture}[scale=.9]
				
				\coordinate (v) at (-2.5,0);
				
		\draw (-3.6,.3)--(v)--(-2.1,.9)--(v)--(-1.3,0)--(v)--(-2,-.8)--(v)--(-3.1,-.8);
		\draw (-1.3,0)--(-.7,.7)--(0,-.7)--(.7,.7)--(1.3,0);
		\draw (1.3,0)--(1.8,1)--(3.55,.2)--(3,-.85)--(1.3,0);
		\draw (2.675,.6)--(2.15,-.425);

		\draw[shift={(0,-.5)}] (0,4.6)--(0,3.8)--(.7,4.2)--(0,3.8)--(.5,3.2)--(0,3.8)--(-.5,3.2)--(0,3.8)--(-.7,4.2);
		
		\draw [shift={(2.6,-2)}] (0,4.6)--(0,3.8)--(.7,4.2)--(0,3.8)--(.5,3.2)--(0,3.8)--(-.5,3.2)--(0,3.8)--(-.7,4.2);
		
		\draw (0,1.2)--(.9,1.2)--(.9,2.4)--(-.9,2.4)--(-.9,1.2)--(0,1.2)--(0,2.4);
				
					\draw [shift={(-2.6,1.5)}] (0,1.2)--(.9,1.2)--(.9,2.4)--(-.9,2.4)--(-.9,1.2)--(0,1.2)--(0,2.4);
					
		\draw (-3.6,1.2)--(-3.1,2.4)--(-2.6,1.2)--(-2.1,2.4)--(-1.6,1.2);
		
		\draw [shift={(5.2,1.6)}] (-3.6,1.2)--(-3.1,2.4)--(-2.6,1.2)--(-2.1,2.4)--(-1.6,1.2);

			\def\w{3};
			\def\h{1.3};
			\coordinate (a) at (-1,1.5);
			\coordinate (b) at (2,1);
			\coordinate (c) at (2,-3);
			
			\draw [thick] (0,-1.2)[out=0, in=-95] to(1.3,0) [out=95,in=-85] to (1.1,3.5) [out=95,in=0] to(0,4.5) [out=180, in=85] to (-1.1,3.5) [out=-95, in=85] to (-1.3,0) [out=-95,in=180] to (0,-1.2);
			
				\draw [thick, shift={(-2.6,0)}] (0,-1.2)[out=0, in=-95] to(1.3,0) [out=95,in=-85] to (1.1,3.5) [out=95,in=0] to(0,4.5) [out=180, in=85] to (-1.1,3.5) [out=-95, in=85] to (-1.3,0) [out=-95,in=180] to (0,-1.2);
			
		\draw [thick, shift={(2.6,0)}] (0,-1.2)[out=0, in=-95] to(1.3,0) [out=95,in=-85] to (1.1,3.5) [out=95,in=0] to(0,4.5) [out=180, in=85] to (-1.1,3.5) [out=-95, in=85] to (-1.3,0) [out=-95,in=180] to (0,-1.2);
		
		\draw [blue!70!black, dashed, thick] (-4,-.2)[out=-90, in= 180] to (0, -1.4)[out=0, in=-90] to (4,-.2) [out=90, in=0] to (0,1)  [out=-180, in=90] to (-4,-.2);

			\node [blue!70!black]at (4.4,.7) {\Large $F$};

		\end{tikzpicture}
	\caption{$(\widehat F, \equiv^{\widehat{F}})$}
	\label{fig:WAb} 		
\end{subfigure}  
					
\caption{The pretrain $(\widehat{F},\equiv^{ \widehat{F}})$ arises from $(F,\equiv^F)$ 
by wagon assimilation. Notice that $F$ is disconnected from the rest of $\widehat F$.}
\label{fig:WA} 
\end{figure} 

We may now say what it means to extend all wagons of an ordered pretrain ``in the same way''.  
      
\begin{dfn}\label{dfn:1529}
	Let $(X, W)$ be an ordered hypergraph pair such that neither~$X$ nor~$W$ has 
	isolated vertices, and let $(F, \equiv^F)$ be an ordered pretrain all of whose 
	wagons are order-isomorphic to $W$.  
	\index{$(F, \equiv^F)\ltimes (X, W)$}
	We write $(F, \equiv^F)\ltimes (X, W)$ for the extension of $(F, \equiv^F)$
	having the following property: If $X_\star$ is a wagon 
	of $(F, \equiv^F)\ltimes (X, W)$ contracting to the wagon~$W_\star$ 
	of $(F, \equiv^F)$, then the ordered
	hypergraph pair $(X_\star, W_\star)$ is isomorphic to $(X, W)$ (see 
	Figure~\ref{fig:31}). 
\end{dfn}

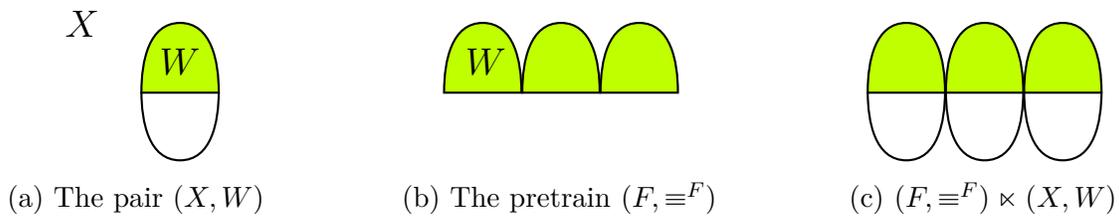
\begin{figure}[h]
	\centering
	
		\def\h{.9}
		\begin{subfigure}[b]{0.3\textwidth}
			\centering
	\begin{tikzpicture}[scale=1]

\draw [ultra thick] (-.5, 0) [out=-90, in= 180] to (0,-\h)  [out=0, in=-90] to (.5,0);
\fill [white] (-.5, 0) [out=-90, in= 180] to (0,-\h)  [out=0, in=-90] to (.5,0);

\draw [ultra thick] (-.5, 0) [out=90, in= 180] to (0,\h)  [out=0, in=90] to (.5,0)-- cycle;
\fill [lime] (-.5, 0) [out=90, in= 180] to (0,\h)  [out=0, in=90] to (.5,0)-- cycle;

\node at (0,.4) {\large $W$};
\node at (-1.3,\h) {\large $X$};
	
	\end{tikzpicture}

	\caption{{The pair $(X,W)$}}
	\label{fig:31a} 

	\end{subfigure}
	\hfill    
	\begin{subfigure}[b]{0.3\textwidth}
		\centering
		
			\begin{tikzpicture}[scale=1]
			
	\foreach \x in {0,1.04,-1.04}{
	\def\a{\x-.5}
	\def\b{\x+.5}
			\draw [ultra thick] (\a,0) [out=90, in= 180] to (\x,\h)  [out=0, in=90] to (\b,0)-- cycle;
			\fill [lime] (\a, 0) [out=90, in= 180] to (\x,\h)  [out=0, in=90] to (\b,0)-- cycle;
}		

			\node at (-1,.4) {\large $W$};
		
			\phantom{\draw [ultra thick] (-.5, 0) [out=-90, in= 180] to (0,-\h)  [out=0, in=-90] to (.5,0);}
			
			\end{tikzpicture}
		
					\caption{The pretrain $(F, \equiv^F)$}
				\label{fig:31b}

		\end{subfigure}    
	\hfill    
	\begin{subfigure}[b]{0.3\textwidth}
		\centering
		
		\begin{tikzpicture}[scale=1]
			
				\foreach \x in {0,1.04,-1.04}{
				\def\a{\x-.5}
				\def\b{\x+.5}
				\draw [ultra thick] (\a, 0) [out=-90, in= 180] to (\x,-\h)  [out=0, in=-90] to (\b,0);
				\fill [white] (\a, 0) [out=-90, in= 180] to (\x,-\h)  [out=0, in=-90] to (\b,0);
				\draw [ultra thick] (\a,0) [out=90, in= 180] to (\x,\h)  [out=0, in=90] to (\b,0)-- cycle;
				\fill [lime] (\a, 0) [out=90, in= 180] to (\x,\h)  [out=0, in=90] to (\b,0)-- cycle;	
			}
			
		\end{tikzpicture}
		
		\caption{$(F, \equiv^F)\ltimes (X,W)$}
		\label{fig:31c} 		
	\end{subfigure}   
		\caption{Illustration of Definition~\ref{dfn:1529}}	\label{fig:31}
			\end{figure} 
Strictly speaking the ordered pretrain $(F, \equiv^F)\ltimes (X, W)$ has thereby 
not been defined in a unique manner, for there are no rules as to how two new vertices 
from distinct wagons compare under the order relation. 
The ambiguity that remains, however, has no bearing on later developments 
and, therefore, we shall ignore it in the sequel. 
That is we talk about~$(F, \equiv^F)\ltimes (X, W)$ as if it were uniquely determined. 
A later collapsing argument will hinge on a certain tameness property of 
this construction that we shall introduce next.

\begin{dfn}\label{dfn:n854}
	Suppose that the pretrain $(H, \equiv^H)$ is an extension of $(F, \equiv^F)$. 
	We say that this extension is {\it tame}
	if $F$ is strongly induced in $H$ and, moreover, the following holds:
	For every edge $e\in E(H)$ and every vertex $x\in V(F)\cap e$ there exists 
	an edge $e'$ such that $x\in e'\in E(F)$ and~$e\equiv^H e'$. 
	\index{tame extension}
\end{dfn} 

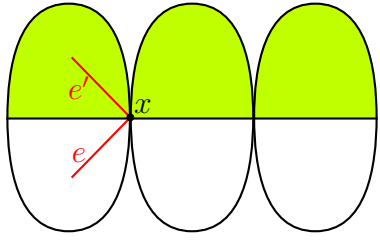
\begin{figure}[h]
\begin{center}	
\begin{tikzpicture}[scale=1]
			
	\def\h{1.5}  	\def\w{.8}   	
	\coordinate (x) at (-\w-.02,0);
	
	\def\nn{2*\w+.04}
	\def\mm{-2*\w-.04}
		\foreach \x in {0,\nn,\mm}{
		\def\a{\x-\w}
		\def\b{\x+\w}
		\draw [ultra thick] (\a, 0) [out=-90, in= 180] to (\x,-\h)  [out=0, in=-90] to (\b,0);
		\fill [white] (\a, 0) [out=-90, in= 180] to (\x,-\h)  [out=0, in=-90] to (\b,0);
		\draw [ultra thick] (\a,0) [out=90, in= 180] to (\x,\h)  [out=0, in=90] to (\b,0)-- cycle;
		\fill [lime] (\a, 0) [out=90, in= 180] to (\x,\h)  [out=0, in=90] to (\b,0)-- cycle;	
	}

\draw [red, thick] (-1.6, .8)--(x)--(-1.6,-.8);

\fill (x) circle (1.5pt);
\node at (-.65,.15) {$x$};
\node [red] at (-1.5,.4)  {$e'$};
\node [red] at (-1.5,-.5)  {$e$};

\end{tikzpicture}
\caption{A tame extension}
\end{center}
\end{figure}

\begin{lemma}\label{lem:5757}
	Let $(\wh{F}, \equiv^{\wh{F}})$ be the ordered pretrain that arises 
	from $(F, \equiv^F)$ by wagon assimilation. 
	Suppose further that all wagons of $(\wh{F}, \equiv^{\wh{F}})$ 
	are isomorphic to $W$ and that $(X, W)$ is an ordered hypergraph pair without
	isolated vertices. 
	If $W$ is strongly induced in~$X$, then  
	$(\wh{F}, \equiv^{\wh{F}})\ltimes (X, W)$ is a tame extension of $(F, \equiv^F)$.
\end{lemma}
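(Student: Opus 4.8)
The plan is to unravel the two constructions into a single notation and then check the three demands implicit in Definition~\ref{dfn:n854}: that $(H,\equiv^H):=(\wh F,\equiv^{\wh F})\ltimes(X,W)$ is an extension of $(F,\equiv^F)$, that $F$ is strongly induced in $H$, and that every edge $e\in E(H)$ through a vertex $x\in V(F)$ is $\equiv^H$-equivalent to some edge of $F$ through $x$. (We may assume $E(F)\ne\varnothing$, the empty case being trivial, and we suppress vertex orderings, which play no r\^{o}le here.) Let $W_1,\dots,W_m$ be the wagons of $(F,\equiv^F)$. Since $(\wh F,\equiv^{\wh F})$ is an extension of $(F,\equiv^F)$, the map assigning to a wagon of $\wh F$ the wagon of $F$ it contracts to is a bijection (surjective because every edge of $F$ lies in a wagon of $\wh F$, injective because distinct wagons share no edge), so we may label the wagons of $\wh F$ as $X_1,\dots,X_m$ with $X_j$ contracting to $W_j$. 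By the construction of wagon assimilation (Fact~\ref{exmp:2345}) the wagon $X_j$ arises from $W_j$ by disjointly adjoining fresh copies of the other wagons of $F$; in particular $W_j$ is a union of connected components of $X_j$, equivalently no edge of $X_j$ outside $W_j$ meets $V(F)$, which is exactly the feature recorded in Figure~\ref{fig:WA}. In the same fashion, as $H$ is an extension of $\wh F$, we may label the wagons of $H$ as $\tilde X_1,\dots,\tilde X_m$ with $\tilde X_j$ contracting to $X_j$, and Definition~\ref{dfn:1529} gives an isomorphism of ordered hypergraph pairs $(\tilde X_j,X_j)\cong(X,W)$.

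First I would confirm that $(H,\equiv^H)$ is an extension of $(F,\equiv^F)$ by verifying the three clauses of Definition~\ref{dfn:1811}, chaining in each case the corresponding property of $\wh F$ over $F$ with that of $H$ over $\wh F$. Clause~\ref{it:1811a} holds because all three pretrains have the same isolated vertices. For clause~\ref{it:1811b}, since $E(F)\subseteq E(\wh F)$ we have $E(\tilde X_j)\cap E(F)=\bigl(E(\tilde X_j)\cap E(\wh F)\bigr)\cap E(F)=E(X_j)\cap E(F)=E(W_j)\ne\varnothing$, so $\tilde X_j$ contracts to the wagon $W_j$. For clause~\ref{it:1811c}, if $i\ne j$ then $X_i\ne X_j$ and $\tilde X_i\ne\tilde X_j$, whence $V(\tilde X_i)\cap V(\tilde X_j)=V(X_i)\cap V(X_j)=V(W_i)\cap V(W_j)$ by applying clause~\ref{it:1811c} first to $H$ over $\wh F$ and then to $\wh F$ over $F$. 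That $(F,\equiv^F)$ is a subpretrain of $(H,\equiv^H)$ follows from transitivity of the subpretrain relation.

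Next I would deal with strong inducedness and the tameness clause together. Fix an edge $e\in E(H)$; it lies in a unique wagon $\tilde X_j$. From $W\Str X$ and $(\tilde X_j,X_j)\cong(X,W)$ we obtain $X_j\Str\tilde X_j$, and since $W_j$ is a union of components of $X_j$ we also have $W_j\Str X_j$; transitivity of $\Str$ then yields $W_j\Str\tilde X_j$, so there is an edge $f\in E(W_j)$ with $e\cap V(W_j)\subseteq f$. Applying Lemma~\ref{lem:5210} to the extension $(H,\equiv^H)$ of $(F,\equiv^F)$ with the wagon $\tilde X_j$ contracting to $W_j$ gives $e\cap V(W_j)=e\cap V(F)$, hence $e\cap V(F)\subseteq f\in E(W_j)\subseteq E(F)$; as $e$ was arbitrary, $F$ is strongly induced in $H$. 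For the tameness clause, let $x\in V(F)\cap e$; by the same instance of Lemma~\ref{lem:5210} we have $x\in e\cap V(F)=e\cap V(W_j)$, so $x$ lies in some edge $e'\in E(W_j)\subseteq E(F)$ (wagons have no isolated vertices); and $e$ and $e'$ both belong to the wagon $\tilde X_j$, that is to a single $\equiv^H$-class, so $e\equiv^H e'$. This is precisely what Definition~\ref{dfn:n854} asks for.

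The one genuinely delicate point is the assertion that in the wagon assimilation $\wh F$ the original wagon $W_j$ sits inside $X_j$ as a union of connected components, i.e.\ that the freshly added edges of $X_j$ avoid $V(F)$: this is where one must appeal to the actual construction behind Fact~\ref{exmp:2345} rather than to the bare extension axioms, and it is exactly the disconnectedness visible in Figure~\ref{fig:WA}. Everything else --- the bijectivity of the contraction maps on wagons, the chaining of the three extension clauses, and the handling of isolated vertices and of the empty-edge case --- is routine bookkeeping.
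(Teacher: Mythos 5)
Your proof is correct, and it diverges from the paper's argument at a couple of points that are worth noting. The paper leaves the verification that $(\wh F,\equiv^{\wh F})\ltimes(X,W)$ is an \emph{extension} of $(F,\equiv^F)$ implicit (Definition~\ref{dfn:n854} presupposes it), whereas you chain the three clauses of Definition~\ref{dfn:1811} through $\wh F$ explicitly, which is cleaner bookkeeping. For strong inducedness the paper applies Lemma~\ref{lem:5210} one level up, namely to the extension of $\wh F$ by $H$: from $W\Str X$ it gets $W_\star\Str X_\star$, hence an edge $e'\in E(W_\star)$ with $V(\wh F)\cap e\subseteq e'$, and then uses the disconnectedness of $F$ inside $\wh F$ to force $e'\in E(F)$---an argument that requires a separate special case when $V(F)\cap e=\varnothing$. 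You instead compose $W_j\Str X_j$ (from the union-of-components structure of the assimilation, which is the same disconnectedness property applied \emph{before} rather than \emph{after}) with $X_j\Str\tilde X_j$, obtaining $W_j\Str\tilde X_j$, and then apply Lemma~\ref{lem:5210} directly to the extension of $F$ by $H$; this lands you in $E(F)$ in one step and absorbs the empty-intersection case uniformly, since a wagon $W_j$ always has an edge. Both arguments hinge on the same two facts---$W\Str X$ transferred via the pair isomorphism, and the disconnectedness baked into Fact~\ref{exmp:2345}---so the difference is one of organization; your version trades the paper's case split for an extra transitivity step, and the extra paragraph on the extension axioms makes the logical dependence on Definition~\ref{dfn:1811} visible rather than tacit.
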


\begin{proof}
	We begin with the strong inducedness.  
	Given an edge $e$ of $(\wh{F}, \equiv^{\wh{F}})\ltimes (X, W)$
	we need to find an edge $e'$ of $F$ such that $V(F)\cap e\subseteq e'$.
	
	Let us first deal with the special case that $V(F)\cap e=\vn$. Now we can
	take an arbitrary edge $e'$ of $F$ and, hence, our claim could only fail 
	if $F$ has no edges. But then none of the hypergraphs $W$, $X$, 
	and $(\wh{F}, \equiv^{\wh{F}})\ltimes (X, W)$ can have any edges (recall that 
	$W\Str X$), meaning that there is no edge $e$ to consider. 
	
	So we may assume $V(F)\cap e\ne \vn$ from now on. 
	Let $X_\star$ be the wagon of $(\wh{F}, \equiv^{\wh{F}})\ltimes (X, W)$ 
	containing $e$ and denote its contraction to $(\wh{F}, \equiv^{\wh{F}})$ 
	by $W_\star$. Since the ordered hypergraph pairs $(X_\star, W_\star)$ 
	and $(X, W)$ are isomorphic, the assumption $W\Str X$ 
	implies $W_\star \Str X_\star$.
	Thus there exists an edge $e'\in E(W_\star)$ such 
	that $V(W_\star)\cap e\subseteq e'$.
	Owing to Lemma~\ref{lem:5210} we infer $V(\wh{F})\cap e\subseteq e'$. 
	In particular, $e'\in E(\wh{F})$ covers the nonempty intersection $V(F)\cap e$.
	Since $F$ is disconnected from the rest of $\wh{F}$ (cf.\ Figure~\ref{fig:WA})
	and $V(F)\cap e\ne\vn$, this entails $e'\in E(F)$. 
	So altogether $e'$ is as required, i.e.,~$F$ is 
	indeed strongly induced in $(\wh{F}, \equiv^{\wh{F}})\ltimes (X, W)$. 
		
	Notice that in the case $V(F)\cap e\ne \vn$ the edge $e'$ we have just exhibited 
	lies in the same wagon of $(\wh{F}, \equiv^{\wh{F}})\ltimes (X, W)$ as the given 
	edge $e$. This proves that the moreover-part of Definition~\ref{dfn:n854}
	is satisfied as well.	
\end{proof}

\subsection{Linear pretrains}
\label{sssec:lp}

There are two possible linearity conditions one can impose on pretrains
depending on whether one wants the edges or the wagons to avoid intersections
in two or more vertices. The linear pretrains considered in this article avoid
both possibilities at the same time. 

\begin{dfn}\label{dfn:838}
	A pretrain $(H, \equiv^H)$ is said to be {\it linear} if 
	\index{linear pretrain}
	\begin{enumerate}[label=\rmlabel]
		\item\label{it:838a} the hypergraph $H$ is linear
		\item\label{it:838b} and $|V(W')\cap V(W'')|\le 1$ holds for 
			any two distinct wagons $W'$, $W''$.	
	\end{enumerate}
	A system of pretrains $(H, \equiv, \ccH)$ is {\it linear} if its underlying 
	pretrain $(H, \equiv)$ is linear. 
	\index{linear system of pretrains}
\end{dfn}

For later use we record an easy consequence of linearity. 

\begin{fact}\label{f:wagstr}
	Every wagon $W$ of a linear pretrain $(H, \equiv^H)$ is strongly induced in $H$.
\end{fact}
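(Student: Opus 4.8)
\textbf{Proof proposal for Fact~\ref{f:wagstr}.}

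The plan is to verify the three conditions of Fact~\ref{fact:1836}, which characterise strong inducedness inside a linear hypergraph. First note two structural observations that make the verification essentially mechanical: since the edge set of any wagon is an equivalence class of $\equiv^H$ and equivalence classes are nonempty, we have $E(W)\ne\varnothing$; moreover, by the very definition of a wagon, $W$ has no isolated vertices. The second observation immediately gives conditions~\ref{it:1836b} and~\ref{it:1836c}: if an edge $e$ of $H$ meets $V(W)$ in a single vertex $x$, then $x\in V(W)$ is non-isolated in $W$ because $W$ has none; and the hypothesis of~\ref{it:1836c} is vacuous since $E(W)\ne\varnothing$.

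The only condition requiring an argument is~\ref{it:1836a}. Here I would take an edge $e\in E(H)$ with $|e\cap V(W)|\ge 2$ and show $e\in E(W)$. The edge $e$ belongs to exactly one equivalence class of $\equiv^H$, hence to a unique wagon $W'$ (whose vertex set is $\bigcup_{f\in E(W')}f$), so in particular $e\subseteq V(W')$. If $W'=W$ we are done, since then $e\in E(W)$. Otherwise $W$ and $W'$ are distinct wagons, and clause~\ref{it:838b} of Definition~\ref{dfn:838} yields $|V(W)\cap V(W')|\le 1$; but $e\cap V(W)\subseteq V(W')\cap V(W)$ has size at least $2$, a contradiction. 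Thus $W'=W$ and $e\in E(W)$, confirming~\ref{it:1836a}. By Fact~\ref{fact:1836}, $W$ is strongly induced in $H$.

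Alternatively, and perhaps more transparently, one can argue directly from the definition of $\Str$: given $e\in E(H)$, if $|e\cap V(W)|\ge 2$ then the preceding paragraph shows $e\in E(W)$, so $f=e$ works; if $e\cap V(W)=\{x\}$, pick any edge $f\in E(W)$ incident to $x$ (it exists since $x$ is non-isolated in $W$); and if $e\cap V(W)=\varnothing$, any $f\in E(W)$ works since $E(W)\ne\varnothing$. In all three cases $e\cap V(W)\subseteq f$ for some $f\in E(W)$, which is exactly $W\Str H$.

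The argument has no real obstacle; the single point of substance is the use of the wagon-intersection condition~\ref{it:838b} to rule out an edge that straddles $V(W)$ in two or more vertices without lying in $W$, and that step is a one-line contradiction. Everything else follows from $E(W)\ne\varnothing$ and the absence of isolated vertices in a wagon.
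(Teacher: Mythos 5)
Your proof is correct and follows essentially the same route as the paper's: verify the three conditions of Fact~\ref{fact:1836}, dispatch~\ref{it:1836b} and~\ref{it:1836c} from $E(W)\ne\varnothing$ and the absence of isolated vertices, and handle~\ref{it:1836a} by identifying the wagon $W'$ containing $e$ and invoking Definition~\ref{dfn:838}\ref{it:838b} to force $W'=W$. The ``alternative'' paragraph is just an unwinding of the same argument against the raw definition of $\Str$, not a different approach.
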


\begin{proof}
	Due to the linearity of $H$ it suffices to check the three 
	statements~\ref{it:1836a}\,--\,\ref{it:1836c} in Fact~\ref{fact:1836}.
	The last two of them are clear, since $W$ has at least one edge and no 
	isolated vertices. Now suppose that some edge $e$ of $H$ intersects $V(W)$
	in at least two vertices. If~$W'$ denotes the wagon to which~$e$ belongs, 
	then $|V(W)\cap V(W')|\ge |V(W)\cap e|\ge 2$ and 
	Definition~\ref{dfn:838}\ref{it:838b}
	imply $W=W'$, whence $e\in E(W)$.
\end{proof}

The {\it extension process}, which we shall now describe, is an operation 
transforming two given ordered linear Ramsey construction $\Phi$ and $\Psi$ applicable 
to hypergraphs into a Ramsey construction $\Ext(\Phi, \Psi)$ applicable to ordered 
linear pretrains.
\index{extension process}
 
Given $\Phi$ and $\Psi$, an ordered linear pretrain $(F, \equiv^F)$, and a number of colours $r$,
we explain how to construct the system of 
pretrains $\Ext(\Phi, \Psi)_r(F, \equiv^F)=(H, \equiv^H, \ccH)$ in eight steps. 
 
\begin{enumerate}[label=\nlabel]
	\item\label{it:ext1} Let $(\wh{F}, \equiv^{\wh{F}})$ be obtained from $(F, \equiv^F)$
		by wagon assimilation (see Fact~\ref{exmp:2345}). So all wagons 
		of $(\wh{F}, \equiv^{\wh{F}})$ are isomorphic to the same ordered hypergraph $W$.
		Moreover, $(\wh{F}, \equiv^{\wh{F}})$ is again a linear pretrain.
	\item\label{it:ext2} Construct $\Phi_r(W)=(X, \ccX)$ and assume, without loss 
		of generality, that $X$ has no isolated vertices. 
		Notice that, by hypothesis on $\Phi$, the hypergraph $X$ is ordered and linear.
	\item\label{it:ext3} Define the ordered pretrain $(G, \equiv^G)$ to be the disjoint union of
				\[
			\bigl\{(\wh{F}, \equiv^{\wh{F}})\ltimes (X, W_\star)\colon W_\star\in\ccX\bigr\}\,.
		\]
				This is a linear pretrain containing $|\ccX|$ standard copies of $(F, \equiv^F)$.
		All of its wagons are order-isomorphic to $X$ (see Figure~\ref{fig:32}).
		\begin{figure}[ht]
	\centering
	\begin{subfigure}[b]{.15\textwidth}
			\centering
	\begin{tikzpicture}[scale=.7]
		
		\def\h{1.5}
	\def\w{.7}
	
\draw [thick, purple] (-\w, 1) [out=-90, in= 180] to (0,1-\h)  [out=0, in=-90] to (\w,1) [out=90, in=0] to (0,1+\h)  [out=-180, in=90] to (-\w,1);

\foreach \y in {.1, 1, 1.9}{
	\draw [thick] (0, \y) circle (.53 cm);
	\node at (0, \y) {$W$};
}
			\end{tikzpicture}
		
	\caption{{$(X, \ccX)$}}
	\label{fig:32a} 
	\end{subfigure}
	\hfill    
	\begin{subfigure}[b]{0.2\textwidth}
		\centering
		
			\begin{tikzpicture}[scale=.7]
				
		\def\r{.6}
	\foreach \x in {-2*\r, 0, 2*\r}{
		\draw [thick] (\x,1) circle (.98*\r cm);
		\node at (\x,1) {$W$};
	}
		\fill (\r,1) circle (2.5pt);
		\fill (-\r,1) circle (2.5pt);

			\end{tikzpicture}
		
					\caption{$(\widehat{F}, \equiv^{\widehat{F}})$}
				\label{fig:32b}

		\end{subfigure}    
	\hfill    
	\begin{subfigure}[b]{0.6\textwidth}
		\centering
		
		\begin{tikzpicture}[scale=1.1]
			
		\newcommand{\edge}[5]{\draw [rotate around={#5:(#1,#2)}, purple] (#1-#3, #2) [out=-90, in= 180] to (#1,#2-#4)  [out=0, in=-90] to (#1+#3,#2) [out=90, in=0] to (#1,#2+#4)  [out=-180, in=90] to (#1-#3,#2);}
		
		\def\w{.32}
		\def\h{.85}
		
		\edge{-3.56}{-.05}{\w}{\h}{20}
		\edge{-2.78}{0}{\w}{\h}{0}
		\edge{-2}{-.05}{\w}{\h}{-20}
		
		\edge{-.65}{0}{\w}{\h}{0}
		\edge{0}{0}{\w}{\h}{0}
		\edge{.65}{0}{\w}{\h}{0}
		
		\edge{2}{.05}{\w}{\h}{-20}
		\edge{2.78}{0}{\w}{\h}{0}
		\edge{3.56}{.05}{\w}{\h}{20}
		
		\fill (.33,0) circle (1.2pt);
		\fill (-.33,0) circle (1.2pt);
		\fill (-2.46,-.49) circle (1.2pt);
		\fill (-3.09,-.49) circle (1.2pt);
		\fill (2.46,.49) circle (1.2pt);
		\fill (3.09,.49) circle (1.2pt);
		
		\def\r{.27}
		
		\draw (0,0) circle (.315);
		\draw (-.635,0) circle (.3);
		\draw (.635,0) circle (.3);
		\draw (-.65,.5) circle (\r);
		\draw (-.65,-.5) circle (\r);
		\draw (.65,.5) circle (\r);
		\draw (.65,-.5) circle (\r);
		\draw (0,.5) circle (\r);
		\draw (0,-.5) circle (\r);
		
		\draw (-2.78,0) circle (\r);
		\draw (-3.56,-.04) circle (\r);
		\draw (-2,-.04) circle (\r);
		\draw (-3.73,.42) circle (\r);
		\draw (-3.39,-.5) circle (.29);
		\draw (-1.83,.42) circle (\r);
		\draw (-2.17,-.5) circle (.29);
		\draw (-2.78,.49) circle (\r);
		\draw (-2.78,-.5) circle (.3);
		
		\draw (2.78,0) circle (\r);
		\draw (3.56,.04) circle (\r);
		\draw (2,.04) circle (\r);
		\draw (3.73,-.42) circle (\r);
		\draw (3.39,.5) circle (.29);
		\draw (1.83,-.42) circle (\r);
		\draw (2.17,.5) circle (.29);
		\draw (2.78,-.49) circle (\r);
		\draw (2.78,.5) circle (.3);

		\end{tikzpicture}
		
		\caption{$(G, \equiv^G)$ and $M$}
		\label{fig:32c} 		
	\end{subfigure}  
		\caption{Steps~\ref{it:ext1}\,--\,\ref{it:ext4} of the extension process}	
		\label{fig:32}
	\end{figure} 	\item\label{it:ext4} Define $M$ to be the ordered $|V(X)|$-uniform linear 
		hypergraph with $V(M)=V(G)$ whose edges correspond to the wagons of $(G, \equiv^G)$. 
	\item\label{it:ext5} Construct the linear system $(N, \ccN)=\Psi_{r^{e(X)}}(M)$. Notice 
		that, in particular, $N$ is a $|V(X)|$-uniform hypergraph arrowing $M$ with $r^{e(X)}$
		colours. 
	\item\label{it:ext6} Let $(H, \equiv^H)$ be the ordered linear pretrain obtained 
			by inserting ordered copies of~$X$ into the edges of $N$ and declaring 
			them to be wagons.
	\item\label{it:ext7} Now every copy $M_\star\in \ccN$ gives rise 
		to a copy of $(G, \equiv^G)$ in $(H, \equiv^H)$, which has the same vertex set. 
		Let $\ccH_\bullet\subseteq\binom{(H, \equiv^H)}{(G, \equiv^G)}$ be the system 
		of all these copies. Every copy in $\ccH_\bullet$ 
		contains a system $|\ccX|$ standard copies of $(F, \equiv^F)$. 
		We write $\ccH$ for the system of all (at most) $|\ccN|\cdot |\ccX|$ 
		copies of $(F, \equiv^F)$ in $(H, \equiv^H)$ that arise in this manner. 
	\item\label{it:ext8} Finally, we set 
		$\Ext(\Phi, \Psi)_r(F, \equiv^F)=(H, \equiv^H, \ccH)$.
\end{enumerate}

The next result explains our interest in this extension process. 
			
\begin{lemma}\label{lem:0059}
	If $\Phi$ and $\Psi$ denote linear ordered Ramsey constructions, 
	$(F, \equiv^F)$ is an ordered linear pretrain, $r\in\NN$, 
	and $\Ext(\Phi, \Psi)_r(F, \equiv^F)=(H, \equiv^H, \ccH)$, then 
		\[
		\ccH\lra (F, \equiv^F)_r\,.
	\]
	\end{lemma}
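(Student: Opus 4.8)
The plan is to run the familiar two-level ``colour pattern'' reduction on which the extension process is built. Fix a colouring $\gamma\colon E(H)\lra [r]$; I must exhibit a $\gamma$-monochromatic member of $\ccH$, and I may assume $E(F)\ne\vn$, the other case being trivial. Recall from steps~\ref{it:ext2}, \ref{it:ext5}, and~\ref{it:ext6} of the construction that $N$ is a $|V(X)|$-uniform ordered hypergraph with $\ccN\lra (M)_{r^{e(X)}}$, and that $H$ is obtained from $N$ by inserting, for every edge $e\in E(N)$, an ordered copy $X_e$ of $X$ on the vertex set $e$ and declaring it a wagon; since $N$ and $X$ are ordered, each $X_e$ comes with a canonical order-isomorphism to $X$. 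The first step is to transfer $\gamma$ to $N$: for $e\in E(N)$ let $\Gamma(e)\in [r]^{E(X)}$ be the \emph{colour pattern} of $X_e$, i.e.\ the map carrying every $a\in E(X)$ to the $\gamma$-colour of the edge of $X_e$ corresponding to $a$ under the canonical identification. Since $\bigl|[r]^{E(X)}\bigr|=r^{e(X)}$, the partition relation $\ccN\lra (M)_{r^{e(X)}}$ produces a copy $M_\star\in\ccN$ on which $\Gamma$ is constant, say equal to $p\in [r]^{E(X)}$.

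The second step descends one more level. Now $p$ is an $r$-colouring of $E(X)$, and because $(X,\ccX)=\Phi_r(W)$ satisfies $\ccX\lra (W)_r$, there is a copy $W_\star\in\ccX$ with $p|_{E(W_\star)}$ constant, say equal to $\rho_\star\in [r]$. By step~\ref{it:ext7} the copy $M_\star$ gives rise to a copy $(G_\star,\equiv^{G_\star})$ of $(G,\equiv^G)$ in $(H,\equiv^H)$ containing one standard copy of $(F,\equiv^F)$ for each element of $\ccX$; let $F_\star\in\ccH$ be the standard copy associated with the $W_\star$ just selected. I claim that every edge of $F_\star$ has $\gamma$-colour $\rho_\star$, which finishes the proof.

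To verify the claim I will unwind the position of a standard copy of $(F,\equiv^F)$ inside $H$. From steps~\ref{it:ext1}, \ref{it:ext3}, and~\ref{it:ext7}---that is, from wagon assimilation (Fact~\ref{exmp:2345}), the operation $\ltimes$ of Definition~\ref{dfn:1529}, and the recovery of copies from $\ccN$---one reads off that each wagon $\widetilde{W}$ of $F_\star$ is contained in a wagon $X_e$ of $H$ with $e\in E(M_\star)$, and that, under the canonical identification $X_e\cong X$, the wagon $\widetilde{W}$ is carried to a subhypergraph of $W_\star$: indeed the wagon of the ambient copy of $\wh{F}$ sits in $X_e$ exactly as $W_\star$ sits in $X$ (Definition~\ref{dfn:1529}), and within it $\widetilde{W}$ occupies the position of the corresponding wagon of $F$ inside its assimilated wagon. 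Hence every edge $f$ of $\widetilde{W}$ corresponds to some $f'\in E(W_\star)\subseteq E(X)$ and therefore
\[
	\gamma(f)=\Gamma(e)(f')=p(f')=\rho_\star\,,
\]
where the middle equality uses $e\in E(M_\star)$ and the last uses $f'\in E(W_\star)$. As $\widetilde{W}$ and $f$ ranged over all wagons of $F_\star$ and all their edges, $F_\star$ is the desired monochromatic copy.

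The conceptual content is entirely in the two nested Ramsey applications; the part that demands care---and the only genuine obstacle---is the bookkeeping in the third step: one has to follow a single wagon of a standard copy of $(F,\equiv^F)$ through wagon assimilation, through the $\ltimes$-construction of step~\ref{it:ext3}, through the insertion of ordered copies of $X$ into $E(N)$ in step~\ref{it:ext6}, and through the copy-recovery of step~\ref{it:ext7}, all the while checking that the order-isomorphisms to $X$ appearing at the various stages are mutually compatible, so that the colour-pattern identification used in the final display is legitimate.
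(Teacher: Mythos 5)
Your argument is the same as the paper's, running the two-level colour-pattern reduction: first $\gamma$ induces an auxiliary colouring of $E(N)$ by colour patterns, $\ccN\lra(M)_{r^{e(X)}}$ produces a pattern-monochromatic $M_\star$, then that common pattern colours $E(X)$ and $\ccX\lra(W)_r$ produces $W_\star$, and the standard copy of $(F,\equiv^F)$ in the $(\wh{F},\equiv^{\wh{F}})\ltimes(X,W_\star)$ piece of the copy of $(G,\equiv^G)$ over $M_\star$ is then $\gamma$-monochromatic. The only difference is that the paper records this last step more compactly by observing that the standard copy of $(\wh{F},\equiv^{\wh{F}})$ (rather than just of $(F,\equiv^F)$) is already monochromatic, whereas you unwind the bookkeeping at the level of individual wagons of the standard $F$-copy; this is just a more verbose presentation of the same correct argument.
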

  
\begin{proof}
	Let $\gamma\colon E(H)\lra [r]$ be an arbitrary colouring. Each wagon 
	of $(H, \equiv^H)$ is isomorphic to $X$ and thus it receives one 
	of $r^{e(X)}$ possible colour patterns. These colour patterns induce an 
	auxiliary colouring of $E(N)$ and by our construction of $(N, \ccN)$ 
	in Step~\ref{it:ext5} there exists a copy
	$M_\star\in\ccN$ which is monochromatic with respect to this auxiliary 
	colouring. The common colour pattern of its wagons can be regarded as 
	a colouring $\delta\colon E(X)\lra [r]$. By Step~\ref{it:ext2} there 
	exists a copy $W_\star\in \ccX$ that is monochromatic with respect 
	to $\delta$. The copy of $(G, \equiv^G)$ corresponding to $M_\star$ 
	contains a copy of $(\wh{F}, \equiv^{\wh{F}})\ltimes (X, W_\star)$.
	The standard copy of $(\wh{F}, \equiv^{\wh{F}})$ therein is monochromatic with
	respect to $\gamma$. In particular, there exists a monochromatic copy 
	of $(F, \equiv^F)$ belonging to $\ccH$.      
\end{proof}

It should be clear that 
\begin{enumerate}
	\item[$\bullet$] if $\Phi$ and $\Psi$ deliver systems of induced subhypergraphs, then 
		$\Ext(\Phi, \Psi)$ delivers systems of induced pretrains
	\item[$\bullet$] and that $\Phi$ and $\Psi$ are $f$-partite, then so is $\Ext(\Phi, \Psi)$.
\end{enumerate}
 \section{Pretrains in partite constructions}
\label{subsec:PCEP}

We organise the material in this section in such a manner that it constitutes 
a proof of the induced Ramsey theorem for pretrains. This result, however, only 
serves as a point of reference and the arguments occurring in the proof will be more 
relevant in the sequel than the theorem itself. 

\begin{prop} \label{prop:ofpt}
	Given an ordered $f$-partite pretrain $(F, \equiv^F)$ as well as a number of 
	colours $r$, there exists an ordered $f$-partite system of pretrains 
	$(H, \equiv^H, \ccH)$ such that 
		\[
		\ccH\lra(F, \equiv^F)_r\,.
	\]
\end{prop}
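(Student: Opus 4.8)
The plan is to realise $(H, \equiv^H, \ccH)$ as the output of the partite construction operation $\PC(\Phi, \Xi)$, where the vertical ingredient $\Phi$ is an ordered $f$-partite Ramsey construction for hypergraphs and the horizontal ingredient $\Xi$ is a partite lemma applicable to pretrains. Since the present statement asks for nothing beyond induced copies --- no linearity, no girth --- the most economical choices suffice: for $\Phi$ one may take $\Rms$ in its ordered $f$-partite incarnation from \S\ref{sssec:fpart} (the construction $\Omega^{(2)}$ would do as well), and for $\Xi$ one takes the Hales--Jewett construction $\HJ$, which, as promised in \S\ref{sssec:311}, applies to $k$-partite $k$-uniform pretrains $(\Pi^e, \equiv^{\Pi^e})$ and produces a partite system of pretrains $(H, \equiv^H, \ccH)$ with $\ccH \lra (\Pi^e, \equiv^{\Pi^e})_r$ whose partite copies are induced (indeed strongly induced) subpretrains. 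The proof of the latter is a small extension of Lemma~\ref{lem:hj-str}: a combinatorial line $L \subseteq E(\Pi^e)^n$ pulls the equivalence relation back faithfully along the isomorphism $F_L \cong \Pi^e$, and no edge of $H$ outside $E(F_L)$ gets identified with an edge of $F_L$ beyond what the pretrain structure of $\Pi^e$ already prescribes.

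First I would put in place the bookkeeping that lets a pretrain structure ride along a partite construction. Setting $(G, \ccG) = \Phi_r(F)$ but forgetting $\equiv^F$ for the moment, one has $\ccG \lra (F)_r$ in the ordinary hypergraph sense. In picture zero $(\Pi_0, \ccP_0, \psi_0)$ the copies $\wh F_\star \in \ccP_0$ are vertex-disjoint, so pulling $\equiv^F$ back along each isomorphism $\wh F_\star \cong F$ defines a single equivalence relation $\equiv^{\Pi_0}$ on $E(\Pi_0)$, and the quadruple $(\Pi_0, \equiv^{\Pi_0}, \ccP_0, \psi_0)$ is \emph{pretrain picture zero} in the sense of \S\ref{sssec:2340}. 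Next I would check, following \S\ref{sssec:0021}, that the partite amalgamation of pretrain pictures is well defined: given a pretrain picture $(\Pi, \equiv^\Pi, \ccP, \psi_\Pi)$, an edge $e \in E(G)$, and the output $(H, \equiv^H, \ccH) = \HJ_r(\Pi^e, \equiv^{\Pi^e})$ of the pretrain partite lemma applied to the constituent, the edge set of $\Sigma$ is $\bigcup_{\Pi^e_\star \in \ccH} E(\Pi_\star) \cup E(H)$, any two of these pieces overlap only inside a constituent copy $\Pi^e_\star \in \ccH$, and there the pulled-back relation $\equiv^{\Pi_\star}$ and the relation $\equiv^H$ agree, since both restrict to the pretrain structure of $\Pi^e$. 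Hence the relations $\equiv^H$ and $\{\equiv^{\Pi_\star}\}_{\Pi^e_\star \in \ccH}$ patch together into one equivalence relation $\equiv^\Sigma$ on $E(\Sigma)$; one then checks that the standard copies of $\Pi$ become subpretrains of $(\Sigma, \equiv^\Sigma)$ isomorphic to $(\Pi, \equiv^\Pi)$ and that the copies in $\ccQ = \bigcup \ccP_\star$ remain \emph{induced} subpretrains, precisely because $\equiv^\Sigma$ restricted to any such copy is the pretrain structure it already carried inside its standard copy.

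With this in hand I would iterate over an enumeration $E(G) = \{e(1), \dots, e(N)\}$, at stage $\alpha$ applying $\HJ$ to the pretrain constituent over $e(\alpha)$ in the current picture and forming the pretrain amalgamation, and finally set $(H, \equiv^H, \ccH) = (\Pi_N, \equiv^{\Pi_N}, \ccP_N)$. That the whole construction stays ordered and $f$-partite follows verbatim from the reasoning of \S\ref{sssec:ord} and \S\ref{sssec:fpart}. It remains to establish $\ccP_N \lra (F, \equiv^F)_r$, and here one transcribes the proof of the induced Ramsey theorem from \S\ref{sssec:irt} into the pretrain category: an induction on $\alpha$, invoking the Ramsey property of $\HJ_r$ in the induction step, shows that every $r$-colouring of $E(\Pi_N)$ admits a copy of pretrain picture zero all of whose constituents are monochromatic; the colours of these constituents push forward along $\psi_N$ to an $r$-colouring of $E(G)$; and $\ccG \lra (F)_r$ yields a monochromatic $F' \in \ccG$, which lifts to a copy $F_\star \in \ccP_N$ each of whose edges lies in a constituent over one of the (equally coloured) edges of $F'$ and is therefore the same colour. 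As $F_\star$ is an induced subpretrain isomorphic to $(F, \equiv^F)$, this proves the proposition. The genuine content lies in the second paragraph: setting up pretrain pictures and showing that partite amalgamation glues the equivalence relations coherently while preserving the ``induced subpretrain'' relation; everything else, the Ramsey argument included, is a faithful copy of the corresponding facts for hypergraphs from Section~\ref{subsec:PC}.
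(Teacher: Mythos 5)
Your proposal is essentially the paper's own proof: both run the partite construction $\PC(\Rms,\HJ)$ in the pretrain category, using the pretrain version of $\HJ$ from \S\ref{sssec:311} as the horizontal partite lemma, the pretrain picture zero and amalgamation machinery of \S\S\ref{sssec:2340}--\ref{sssec:0021} (your ``patch together'' is the transitivisation carried out in Lemma~\ref{lem:1825}), and the usual induction along the construction to transfer the Ramsey property down to the final picture. No substantive difference.
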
 

Observe that this result is incomparable in strength to Lemma~\ref{lem:0059}.
For instance, we defined constructions of the form $\Ext(\Omega, \Phi)$ as applying 
to linear pretrains only.\footnote[1]{One can relax this requirement somewhat, but not 
indefinitely.} 
On the other hand, the proof of Proposition~\ref{prop:ofpt} presented in 
\S\ref{sssec:314} relies on a construction that often yields nonlinear 
pretrains $(H, \equiv^H)$ even when the given pretrain $(F, \equiv^F)$ is linear.

\subsection{A partite lemma for pretrains}
\label{sssec:311}

Let $(F, \equiv^F)$ be a $k$-partite $k$-uniform pretrain and let~$r\in \NN$
be a number of colours. The Hales-Jewett construction introduced in \S\ref{sssec:HJ}
leads to a system $\HJ_r(F)=(H, \ccH)$. 
\index{Hales-Jewett construction}

On $E(H)$ we can define an equivalence relation $\equiv^H$ by taking, essentially, 
the product of the equivalence relations $\equiv^F$ that we have on the ``factors'' of~$H$. 
More precisely, if $H$ is the $n^{\mathrm{th}}$ Hales-Jewett power of $F$ and 
$\lambda\colon E(F)^n\longrightarrow E(H)$ denotes the canonical 
bijection, then we define
\[
	\lambda(e_1, \ldots, e_n)\equiv^H \lambda(e'_1, \ldots, e'_n)
	\,\,\, \Longleftrightarrow \,\,\,
	\forall \nu\in [n]\,\, e_\nu\equiv^F e'_\nu
\]
for all $e_1, \ldots, e_n, e'_1, \ldots, e'_n\in E(F)$. 
 
\begin{clm} \label{clm:1128}
	If $F_\star\in \ccH$ and $\equiv^{F_\star}$ denotes the equivalence relation on 
	$E(F_\star)$ rendering \mbox{$(F, \equiv^F)$} and $(F_\star, \equiv^{F_\star})$ naturally 
	isomorphic, then $(F_\star, \equiv^{F_\star})$ is an induced subpretrain 
	of~$(H, \equiv^H)$.
\end{clm}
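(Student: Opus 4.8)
We need to show that for every copy $F_\star \in \ccH$ in the Hales–Jewett construction, the natural equivalence relation $\equiv^{F_\star}$ induced on $E(F_\star)$ (via the isomorphism $F \cong F_\star$) coincides with the restriction of $\equiv^H$ to $E(F_\star)$. Since it is already known (Lemma~\ref{lem:hj-str}, and the remarks preceding it) that $F_\star$ is an induced subhypergraph of $H$, clause~\ref{it:spt1} of Definition~\ref{dfn:1129} holds for free, and the entire content of the claim is clause~\ref{it:spt2}: that two edges of $F_\star$ are $\equiv^{F_\star}$-equivalent if and only if they are $\equiv^H$-equivalent.

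First I would unwind the notation exactly as in the proof of Lemma~\ref{lem:hj-str}. Write $[n] = C \dcup M$ for the partition of coordinates into constant coordinates $C$ and (nonempty) moving coordinates $M$ associated with the combinatorial line giving $F_\star$, and let $\wt\eta \colon C \lra E(F)$ be the corresponding map. Then the combinatorial embedding $\eta\colon E(F)\lra E(F)^n$ sends $f\in E(F)$ to the $n$-tuple whose $\nu$-th entry is $\wt\eta(\nu)$ for $\nu\in C$ and $f$ for $\nu\in M$. By construction $E(F_\star) = \{(\lambda\circ\eta)(f)\colon f\in E(F)\}$, and the isomorphism $F\cong F_\star$ carries $f$ to $(\lambda\circ\eta)(f)$; hence $\equiv^{F_\star}$ is defined by declaring $(\lambda\circ\eta)(f) \equiv^{F_\star} (\lambda\circ\eta)(f')$ iff $f \equiv^F f'$.

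The verification then splits into the two directions of the biconditional in clause~\ref{it:spt2}. For the forward direction, suppose $f \equiv^F f'$; then for every $\nu\in [n]$ the $\nu$-th coordinates of $\eta(f)$ and $\eta(f')$ are $\equiv^F$-equivalent — in the moving coordinates because they equal $f$ and $f'$, in the constant coordinates because they are literally equal — so by the definition of $\equiv^H$ as the coordinatewise product we get $(\lambda\circ\eta)(f) \equiv^H (\lambda\circ\eta)(f')$. For the converse, suppose $(\lambda\circ\eta)(f) \equiv^H (\lambda\circ\eta)(f')$; looking at any single moving coordinate $\nu\in M$ (which exists since $M\ne\vn$), the definition of $\equiv^H$ forces the $\nu$-th coordinates, namely $f$ and $f'$, to satisfy $f\equiv^F f'$. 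Thus $\equiv^{F_\star}$ and $\equiv^H\!\restriction_{E(F_\star)}$ agree, which together with $F_\star$ being an induced subhypergraph gives that $(F_\star,\equiv^{F_\star})$ is an induced subpretrain of $(H,\equiv^H)$.

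I do not expect any genuine obstacle here: the claim is essentially a bookkeeping statement, and the only point requiring a little care is making sure the equivalence relation $\equiv^H$ is well-defined and independent of the representation $\lambda(e_1,\dots,e_n)$ of an edge — but since $\lambda$ is a bijection this is immediate. The nonemptiness of $M$ is the one hypothesis that is actually used (in the converse direction), and it is guaranteed by the construction of combinatorial lines. If one wants to be thorough one should also note the degenerate case $E(F)=\vn$, where $H=F=F_\star$ and everything is trivial; otherwise the argument above applies verbatim.
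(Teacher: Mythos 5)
Your proof is correct and takes essentially the same approach as the paper: both reduce the claim to Definition~\ref{dfn:1129}\ref{it:spt2}, unwind $\equiv^H$ via the canonical bijection $\lambda$ and the combinatorial embedding $\eta$, and observe that coordinatewise equivalence reduces to equivalence in the (nonempty) moving coordinates. The paper packages this as a chain of biconditionals rather than splitting into two implications, but the content is identical.
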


\begin{proof}
	We already saw in Lemma~\ref{lem:hj-str} that $F_\star$ is an induced subhypergraph 
	of $H$. It remains to show that any two edges of $F_\star$ are equivalent with respect 
	to $\equiv^{F_\star}$ if and only if they are equivalent with respect to $\equiv^H$.
	
	Suppose that $F_\star$ is given by the 
	combinatorial embedding $\eta\colon E(F)\lra E(F)^n$, which in 
	turn depends, as in the proof of Lemma~\ref{lem:hj-str}, on the partition $[n]=C\dcup M$ 
	of $[n]$ into constant and moving coordinates and on the 
	map $\wt{\eta}\colon C\lra E(F)$. 
	\index{constant coordinate}
	\index{moving coordinate}
	Let $e, e'\in E(F)$ be arbitrary and write $\eta(e)=(e_1, \ldots, e_n)$ as well
	as $\eta(e')=(e'_1, \ldots, e'_n)$. Now it remains to observe  
		\begin{align*}
		(\lambda\circ \eta)(e)\equiv^H (\lambda\circ\eta)(e') 
		&\,\,\, \Longleftrightarrow \,\,\, 
		\forall \nu\in [n]\,\, e_\nu\equiv e'_\nu \\
		&\,\,\, \Longleftrightarrow \,\,\, 
		\forall \nu\in M\,\, e_\nu\equiv e'_\nu \\
		&\,\,\, \Longleftrightarrow \,\,\,
		e \equiv^F e' \,,
	\end{align*}
		where the last equivalence exploits $M\ne\vn$. 
\end{proof}

Concerning notation, it seems best to denote the system of pretrains
$\{(F_\star, \equiv^{F_\star})\colon F_\star\in\ccH\}$ by $\ccH$ again, 
so that we may refer to the system of pretrains $(H, \equiv^H, \ccH)$. 
Besides, we write 
\[
	\HJ_r(F, \equiv^F)=(H, \equiv^H, \ccH)
\]
for the above construction; so from now on $\HJ_r(\cdot)$ applies 
to pretrains as well.   

\subsection{Pretrain pictures}
\label{sssec:2340}

Consider a pretrain $(F, \equiv^F)$ as well as a system of hypergraphs $(G, \ccG)$
with $\ccG\subseteq\binom{G}{F}_\nni$.
A {\it pretrain picture over} $(G, \ccG)$ is defined 
to be a quadruple $(\Pi, \equiv, \ccP, \psi)$ such 
that 
\begin{enumerate}
	\item[$\bullet$] $(\Pi, \ccP, \psi)$ is a picture over $(G, \ccG)$ (in the 
		sense of~\S\ref{sssec:pict}),
	\item[$\bullet$] $(\Pi, \equiv)$ is a pretrain,
	\item[$\bullet$] and every copy $(F_\star, \equiv^{F_\star})\in\ccP$ is a 
		subpretrain of $(\Pi, \equiv)$.
\end{enumerate}
\index{pretrain picture}

In this context, the {\it pretrain picture zero} $(\Pi_0, \equiv_0, \ccP_0, \psi_0)$
is defined in the expected way: One starts with picture zero $(\Pi_0, \ccP_0, \psi_0)$
as defined in~\S\ref{sssec:pict} and determines the equivalence relation $\equiv_0$
on $E(\Pi)$ in such a manner that
\begin{enumerate}
	\item[$\bullet$] all copies in $\ccP_0$ become isomorphic to $(F, \equiv^F)$ as 
		pretrains,
	\item[$\bullet$] and edges belonging to different copies are nonequivalent 
		with respect to $\equiv_0$. 
\end{enumerate}
\index{pretrain picture zero}

The second bullet may seem arbitrary at this moment, but proves to be useful later. 
Essentially, there will be notions of cycles and $\GTH$ for systems of pretrains,
and in those cycles wagons can serve as connectors. Now if the wagons were allowed 
to spread over several copies in $\ccP_0$, then we could jump from one copy to the 
next using wagon connectors and already the $\GTH$ of picture zero could be out of 
control.    

\subsection{Partite amalgamations} 
\label{sssec:0021}

Now suppose that $(\Pi, \equiv^\Pi, \ccP, \psi_\Pi)$ is a pretrain picture 
over a system of hypergraphs $(G, \ccG)$,
that $e\in E(G)$, and that $(H, \equiv^H, \ccH)$ is a system of pretrains all of whose copies 
are isomorphic to $\bigl(\Pi^e, \equiv^{\Pi^e}\bigr)$. As demanded by the partite construction 
method, we aim at defining a new picture
\[
	(\Sigma, \equiv^\Sigma\ccQ, \psi_\Sigma) 
	= 
	(\Pi, \equiv^\Pi, \ccP, \psi_\Pi) \conc (H, \equiv^H, \ccH)
\]
over $(G, \ccG)$. 
As in \S\ref{sssec:pict} we construct
\[
	(\Sigma, \ccQ, \psi_\Sigma) = (\Pi, \ccP, \psi_\Pi) \conc (H, \ccH)
\]
and it remains to define an equivalence relation $\equiv^\Sigma$ on $E(\Sigma)$. 
For every standard copy $\Pi_\star$ in~$\Sigma$ 
we copy $\equiv^\Pi$ onto $\Pi_\star$, thus getting a 
pretrain $(\Pi_\star, \equiv^{\Pi_\star})$. Essentially $\equiv^{\Sigma}$ is going to 
be the transitivisation of the free amalgamation 
of $\{\equiv^{\Pi_\star}\colon \Pi_\star \text{ is a standard copy}\}$ over~$\equiv^H$. 

Our official definition of this equivalence relation is somewhat lengthy, but 
its main properties can be summarised as follows. 

\begin{lemma}\label{lem:1825}
	If $(\Pi, \equiv^\Pi, \ccP, \psi_\Pi)$ is a pretrain picture over $(G, \ccG)$,
	$(H, \equiv^H, \ccH)$ is a system of pretrains,
	and 
		\[
		(\Sigma, \ccQ, \psi_\Sigma) = (\Pi, \ccP, \psi_\Pi) \conc (H, \ccH)\,,
	\]
		then there is an equivalence relation $\equiv^\Sigma$ on $E(\Sigma)$ 
	with the following properties. 
	\begin{enumerate}[label=\alabel]
		\item\label{it:1832a} If $\Pi_\star$ is a standard copy in $\Sigma$, then 
			$(\Pi_\star, \equiv^{\Pi_\star})$ is a subpretrain of $(\Sigma, \equiv^\Sigma)$.
		\item\label{it:1832b} $(H, \equiv^H)$ is a subpretrain of $(\Sigma, \equiv^\Sigma)$.
		\item\label{it:1832c} 
				\begin{enumerate}[label=\rmlabel]
					\item\label{it:1832c1} If $\Pi_\star$ is a standard copy 
							and $e_\star\in E(\Pi_\star)$, $e_0\in E(H)$ satisfy 
							$e_\star\equiv^\Sigma e_0$, then there exists an 
							edge $e'\in E(H)\cap E(\Pi_\star)$ with 
							$e_\star \equiv^\Sigma e'\equiv^\Sigma e_0$.
					\item\label{it:1832c2} Similarly, if $\Pi_\star$, $\Pi_{\star\star}$ are distinct 
							standard copies and two edges $e_\star\in E(\Pi_\star)$, 
							$e_{\star\star}\in E(\Pi_{\star\star})$ satisfy
							$e_\star \equiv^\Sigma e_{\star\star}$, then there exist edges 
							$e'\in E(H)\cap E(\Pi_\star)$ and $e''\in E(H)\cap E(\Pi_{\star\star})$ 
							with 
							$e_\star \equiv^\Sigma e'\equiv^\Sigma e'' \equiv^\Sigma e_{\star\star}$.
					\end{enumerate}
		\item\label{it:1832d} $(\Sigma, \equiv^\Sigma, \ccQ, \psi_\Sigma)$ is a pretrain 
			picture over $(G, \ccG)$. 
	\end{enumerate}
\end{lemma}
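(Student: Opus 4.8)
The plan is to construct $\equiv^\Sigma$ explicitly as a transitive closure and then read off~\ref{it:1832a}--\ref{it:1832d} from a single structural lemma about the relation before closure. Define a relation $R$ on $E(\Sigma)$ by declaring $e\mathrel{R}f$ whenever either $e,f\in E(H)$ and $e\equiv^H f$, or there is a standard copy $\Pi_\star$ of $\Pi$ in $\Sigma$ with $e,f\in E(\Pi_\star)$ and $e\equiv^{\Pi_\star}f$, where $\equiv^{\Pi_\star}$ denotes the copy of $\equiv^\Pi$ transported along the canonical isomorphism $\Pi_\star\cong\Pi$. Since every edge of $\Sigma$ lies in $E(H)$ or in some $E(\Pi_\star)$, the relation $R$ is reflexive and symmetric, hence its transitive closure $\equiv^\Sigma$ is an equivalence relation, and $(\Sigma,\equiv^\Sigma)$ is a pretrain. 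Property~\ref{it:1832d} will then be almost immediate once~\ref{it:1832a} is in hand: $(\Sigma,\ccQ,\psi_\Sigma)$ is a picture over $(G,\ccG)$ by \S\ref{sssec:pict}, each copy $(F_\star,\equiv^{F_\star})\in\ccQ$ is a subpretrain of the standard copy $\Pi_\star$ containing it (because $(\Pi,\equiv^\Pi,\ccP,\psi_\Pi)$ is a pretrain picture), and the subpretrain relation is transitive.

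The two facts that make the construction behave like the amalgamated sum it is meant to be, and which I would isolate first, are: \emph{(I)} for each standard copy $\Pi_\star$ one has $E(\Pi_\star)\cap E(H)=E(\Pi^e_\star)$, and on this common interface the relations $\equiv^{\Pi_\star}$ and $\equiv^H$ coincide (both restrict to the transported copy of $\equiv^{\Pi^e}$, using that $\Pi^e_\star\in\ccH$ is a subpretrain of $(H,\equiv^H)$ and is the constituent of $\Pi_\star$ over $e$); and \emph{(II)} for distinct standard copies $\Pi_\star\neq\Pi_{\star\star}$ every edge of $E(\Pi_\star)\cap E(\Pi_{\star\star})$ already lies in $E(\Pi^e_\star)\cap E(\Pi^e_{\star\star})$. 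Fact~(II) holds because the standard copies are glued as disjointly as possible, so $V(\Pi_\star)\cap V(\Pi_{\star\star})\subseteq V(H)$; hence such an edge $f$ has all its vertices on music lines over $e$, so $\psi_\Sigma[f]$ is an edge of $G$ contained in $e$ and of the same cardinality, forcing $\psi_\Sigma[f]=e$, i.e.\ $f\in E(\Sigma^e)=E(H)$. In particular the sets $E(\Pi_\star)\setminus E(H)$ are pairwise disjoint.

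The heart of the proof is then the following claim, proved by induction on $m$: if $g_0\mathrel{R}g_1\mathrel{R}\dots\mathrel{R}g_m$ is an $R$-chain, then (i) if $g_0,g_m\in E(H)$ then $g_0\equiv^H g_m$; (ii) if $g_0,g_m\in E(\Pi_\star)$ for a single standard copy then $g_0\equiv^{\Pi_\star}g_m$; (iii) if $g_0\in E(\Pi_\star)$ and $g_m\in E(H)$ then there is $e'\in E(\Pi^e_\star)$ with $g_0\equiv^{\Pi_\star}e'$ and $e'\equiv^H g_m$; and (iv) if $g_0\in E(\Pi_\star)$, $g_m\in E(\Pi_{\star\star})$ with $\Pi_\star\neq\Pi_{\star\star}$, then there are $e'\in E(\Pi^e_\star)$ and $e''\in E(\Pi^e_{\star\star})$ with $g_0\equiv^{\Pi_\star}e'$, $e'\equiv^H e''$, $e''\equiv^{\Pi_{\star\star}}g_m$. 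In the inductive step one inspects the last link $g_{m-1}\mathrel{R}g_m$: whenever the structure witnessing this link differs from the one the relevant endpoint belongs to, Fact~(II) pins the shared edge down to some $E(\Pi^e_\bullet)$ and Fact~(I) transfers the accumulated equivalence between $\equiv^H$ and $\equiv^{\Pi_\bullet}$ there; the remaining cases are direct applications of the induction hypothesis together with transitivity of $\equiv^H$ and of each $\equiv^{\Pi_\star}$. This case analysis is the only laborious part of the argument and is the step I expect to be the main obstacle, though each individual case is routine; keeping the four alternatives straight (and remembering that an edge outside $E(H)$ lies in exactly one standard copy) is the whole difficulty.

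Finally I would deduce the lemma. Property~\ref{it:1832a} is the conjunction of the trivial forward implication $e\equiv^{\Pi_\star}f\Rightarrow e\mathrel{R}f\Rightarrow e\equiv^\Sigma f$ with case~(ii) of the claim for the converse; property~\ref{it:1832b} is analogous using case~(i); \ref{it:1832c1} is case~(iii), with the degenerate subcase $e_\star=e_0\in E(H)\cap E(\Pi_\star)=E(\Pi^e_\star)$ handled by taking $e'=e_\star$; and \ref{it:1832c2} is case~(iv), with the degenerate subcase $e_\star=e_{\star\star}$ handled via Fact~(II). Property~\ref{it:1832d} then follows as explained in the first paragraph. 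Conceptually the only non-formal ingredients are the interface-consistency Fact~(I) and the overlap Fact~(II); once these are in place, $\equiv^\Sigma$ is exactly the transitivisation of the free amalgamation of the $\equiv^{\Pi_\star}$ over $\equiv^H$, and the stated properties are its expected behaviour.
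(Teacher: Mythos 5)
Your proposal is correct and follows essentially the same route as the paper, just approached from the opposite end: you define $\equiv^\Sigma$ as the transitive closure of the one-step relation $R$ and then prove by induction on chain length that every $\equiv^\Sigma$-pair already reduces to a chain of length at most three passing through $E(H)$; the paper defines that at-most-three-step relation explicitly (by a five-way case split on whether the endpoints are ``old'' edges in $E(H)$ or ``new'' edges in some standard copy) and then verifies it is already transitive. The two verifications are the same casework, and both hinge on exactly your Facts~(I) and~(II), the first of which is the paper's displayed observation that $\equiv^{\Pi_\star}$ and $\equiv^H$ agree on $E(\Pi_\star)\cap E(H)$, and the second of which is the remark that every edge of $\Sigma$ outside $E(H)$ lies in a unique standard copy. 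Both proofs then read off~\ref{it:1832a}\,--\,\ref{it:1832d} the same way.
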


\begin{proof}
	Since the members of $\ccH$ are subpretrains of $(H, \equiv^H)$, every standard 
	copy $\Pi_\star$ satisfies
		\begin{equation} \label{eq:2052}
	\forall e', e''\in E(\Pi_\star)\cap E(H) \,\,\, 
		\bigl[e'\equiv^{\Pi_\star} e''
		\,\,\, \Longleftrightarrow \,\,\, 
		e'\equiv^H e''\bigr]\,.
	\end{equation}
	
	Let us call the edges in $E(\Sigma)\sm E(H)$ {\it new} and the edges of $H$ {\it old}.
	Observe that every new edge belongs to a unique standard copy. Our first step is to 
	define a relation $\equiv^\Sigma$ on~$E(\Sigma)$. To this end we consider any 
	two edges $e_\star$ and $e_{\star\star}$ of $\Sigma$. If $e_\star$ happens to be new,
	we denote the standard copy it belongs to by $\Pi_\star$. Similarly, if $e_{\star\star}$ 
	is new, its standard copy is denoted by $\Pi_{\star\star}$. 
	We define $e_\star\equiv^\Sigma e_{\star\star}$ to hold if one of the following 
	five cases occurs.
	\begin{enumerate}[label=\glabel]
		\item\label{it:2001} Both $e_\star$ and $e_{\star\star}$ are new, 
			$\Pi_\star\ne \Pi_{\star\star}$, and there exist edges $e'\in E(H)\cap E(\Pi_\star)$
			and $e''\in E(H)\cap E(\Pi_{\star\star})$ with 
			$e_\star \equiv^{\Pi_\star} e'\equiv^H e'' \equiv^{\Pi_{\star\star}}e_{\star\star}$. 
		\item\label{it:2002} Both $e_\star$ and $e_{\star\star}$ are new, 
			$\Pi_\star= \Pi_{\star\star}$, and $e_\star\equiv^{\Pi_\star} e_{\star\star}$.
		\item\label{it:2003} The edge $e_\star$ is new, $e_{\star\star}$ is old, and 
			there is an edge $e'\in E(H)\cap E(\Pi_\star)$ with 
			$e_\star \equiv^{\Pi_\star} e'\equiv^H e_{\star\star}$.
		\item\label{it:2004} The edge $e_\star$ is old, $e_{\star\star}$ is new, and 
			there is an edge $e''\in E(H)\cap E(\Pi_{\star\star})$ with 
			$e_\star \equiv^H e'' \equiv^{\Pi_{\star\star}}e_{\star\star}$.
		\item\label{it:2005} Both $e_\star$ and $e_{\star\star}$ are old and 
			$e_\star\equiv^H e_{\star\star}$.
	\end{enumerate}
	
	Observe that the hypotheses of these five cases are mutually exclusive and cover 
	all possibilities.
	
	\begin{clm}
		The relation $\equiv^\Sigma$ is indeed an equivalence relation.
	\end{clm}
	
	\begin{proof}
		Reflexivity and symmetry are clear. The proof of transitivity is not 
		difficult but requires to look at a large number of cases depending on whether
		the three edges under consideration are old or new and on which of the standard
		copies the new ones belong to coincide. Leaving the other cases as exercises we 
		will only display the case of three new edges $e_1\equiv^\Sigma e_2\equiv^\Sigma e_3$
		living in three standard copies $\Pi_1$, $\Pi_2$, and $\Pi_3$ 
		with $\Pi_1\ne \Pi_2\ne\Pi_3$. Now both equivalences are in case~\ref{it:2001} 
		and we obtain four 
		auxiliary edges $e'_1\in E(H)\cap E(\Pi_1)$, $e'_2, e''_2\in E(H)\cap E(\Pi_2)$,
		and $e''_3\in E(H)\cap E(\Pi_3)$ 
		with 
				\[
			e_1\equiv^{\Pi_1} e'_1\equiv^H e'_2\equiv^{\Pi_2} e_2
			\quad \text{ and } \quad
			e_2\equiv^{\Pi_2} e''_2\equiv^H e''_3\equiv^{\Pi_3} e_3\,.
		\]
				
		As $\equiv^{\Pi_2}$ is an equivalence relation, it follows 
		that $e'_2\equiv^{\Pi_2} e''_2$, which in view of~\eqref{eq:2052} 
		yields $e'_2\equiv^H e''_2$.
		The transitivity of $\equiv^H$ leads 
		to $e_1\equiv^{\Pi_1} e'_1\equiv^H e''_3\equiv^{\Pi_3} e_3$ and in 
		case $\Pi_1\ne \Pi_3$ this, together with~\ref{it:2001}, proves the 
		desired relation $e_1\equiv^\Sigma e_3$.
		In the special case $\Pi_1= \Pi_3$ we appeal to~\eqref{eq:2052} again and 
		obtain $e_1\equiv^{\Pi_1} e_3$, which due to~\ref{it:2002} 
		implies $e_1\equiv^\Sigma e_3$.	
	\end{proof}
	
	Let us prove part~\ref{it:1832a} next. Given a standard copy $\Pi_\star$ 
	and two edges $e_\star, e_{\star\star}\in E(\Pi_\star)$, we are to prove that 
		\begin{equation}\label{eq:2210}
		e_\star\equiv^{\Sigma} e_{\star\star}
		\,\,\,\Longleftrightarrow\,\,\, 
		e_\star\equiv^{\Pi_\star} e_{\star\star}\,.
	\end{equation}
		
	Depending on whether $e_\star$ and $e_{\star\star}$ are old or new the  
	clauses~\ref{it:2002}\,--\,\ref{it:2005} provide a statement equivalent 
	to $e_\star\equiv^{\Sigma} e_{\star\star}$ and in all four cases~\eqref{eq:2052}
	shows that the forward implication holds. 
	
	The backward implication in~\eqref{eq:2210} is clear if $e_\star$ and $e_{\star\star}$ 
	are either both old or both new. If only $e_\star$ is new,~\ref{it:2003} asks for a 
	witness $e'$ and we can simply take $e'=e_{\star\star}$.
	Similarly, if only $e_{\star\star}$ is new, then $e''=e_\star$ exemplifies~\ref{it:2004}.
	This concludes the proof of~\eqref{eq:2210} and, hence, of part~\ref{it:1832a} of the lemma. 
	
	Part~\ref{it:1832b} is much easier and follows from the fact that 
	$\equiv^\Sigma$-equivalence of old edges is decided by~\ref{it:2005} alone. 
	
	Condition~\ref{it:1832c}\ref{it:1832c1} follows from~\ref{it:2003} if $e_\star$ is new 
	and if $e_\star$ is old we just need to set $e'=e_\star$.
	Similarly, for the verification of~\ref{it:1832c}\ref{it:1832c2} one needs to consider 
	four possibilities 
	depending on whether $e_\star$ and $e_{\star\star}$ are old or new. The main case is 
	that both are new and then~\ref{it:2001} yields the desired edges. If only $e_\star$
	is new but $e_{\star\star}$ is old we use~\ref{it:2003} and set $e''=e_{\star\star}$.
	The cases where $e_\star$ is old are similar using~\ref{it:2004},~\ref{it:2005}, and
	$e'=e_\star$. This completes the proof of~\ref{it:1832c}.
	
	The only thing we need to prove for~\ref{it:1832d} is that the copies 
	in $\ccQ$ are subpretrains of $(\Sigma, \equiv^\Sigma)$. Owing to the transitivity 
	of the subpretrain relation this is a direct consequence of~\ref{it:1832a}.  
\end{proof}

\subsection{Proof of Proposition~\ref{prop:ofpt}}
\label{sssec:314}

Given an ordered $f$-partite pretrain $(F, \equiv^F)$ together with a number of colours $r$
we can construct the system $\Rms_r(F)=(G, \ccG)$ as in Section~\ref{subsec:PC}
and enumerate $E(G)=\{e(1), \ldots, e(N)\}$ as usual. Starting with the pretrain picture 
zero $(\Pi_0, \equiv_0, \ccP_0, \psi_0)$ introduced in \S\ref{sssec:2340} we  
construct recursively a 
sequence 
\[
	(\Pi_\alpha, \equiv_\alpha, \ccP_\alpha, \psi_\alpha)_{\alpha\le N}
\]
of pretrain pictures in the 
expected way. That is, if the pretrain picture 
\[
	(\Pi_{\alpha-1}, \equiv_{\alpha-1}, \ccP_{\alpha-1}, \psi_{\alpha-1})
\]
has just been constructed for some positive $\alpha\le N$, we apply the pretrain
construction~$\HJ_r(\cdot)$ to its constituent over $e(\alpha)$, thus getting 
a system of pretrains $(H_\alpha, \equiv^{H_\alpha}, \ccH_\alpha)$, and as explained 
in~\S\ref{sssec:0021} we construct the next pretrain picture 
\[
	(\Pi_\alpha, \equiv_\alpha, \ccP_\alpha, \psi_\alpha)
	=
	(\Pi_{\alpha-1}, \equiv_{\alpha-1}, \ccP_{\alpha-1}, \psi_{\alpha-1})
	\conc
	(H_\alpha, \equiv^{H_\alpha}, \ccH_\alpha)\,.
\]
For the usual reason, the final picture satisfies the partition relation 
\[
	 (\Pi_N, \equiv_N, \ccP_N)\lra (F, \equiv^F)_r\,,
\]
and thereby Proposition~\ref{prop:ofpt} is proved. \qed

Similarly, whenever we have a Ramsey construction $\Phi$ for hypergraphs and a 
partite lemma $\Xi$ for pretrains, we obtain the Ramsey construction $\PC(\Phi, \Xi)$
applicable to pretrains. For instance, we can now regard $\CPL=\PC(\HJ, \HJ)$ also
as a partite lemma for pretrains and then we can move on to the pretrain 
construction $\Omega^{(2)}=\PC(\Rms, \CPL)$. 
\index{clean partite lemma}
\index{$\Omega^{(2)}$}
This construction provides an alternative
proof of Proposition~\ref{prop:ofpt}, which has the obvious advantage to yield systems
of pretrains whose underlying hypergraphs have clean intersections.

\subsection{Orientation}
\label{subsec:7o}

To motivate the material in the next two sections we would brief\-ly like to discuss
the following problem, which seems rather important to us: 
Given a linear pretrain $(F, \equiv^F)$ and a number of colours $r$,
can we find a linear system of pretrains $(H, \equiv^H, \ccH)$ 
such that 
\begin{enumerate}
	\item[$\bullet$] $\ccH\lra (F, \equiv^F)_r$ 
	\item[$\bullet$] and any two copies in $\ccH$ have a clean intersection? 
		(Recall that due to the linearity of $F$ this means, roughly speaking, 
		that any two copies intersect ``at most in an edge''---cf. the discussion 
		before Lemma~\ref{lem:1527}).
\end{enumerate}

As the aforementioned construction $\Omega^{(2)}$ yields such clean intersections, 
it may be tempting to just set $(H, \equiv^H, \ccH)=\Omega^{(2)}_r(F, \equiv^F)$.
However, it can happen for linear pretrains $(F, \equiv^F)$ that the 
pretrain $(H, \equiv^H)$ obtained by means of~$\Omega^{(2)}$ fails to be linear. 
Indeed, Corollary~\ref{cor:0059} only tells us that the hypergraph $H$ needs to 
be linear, but there is no reason why any two wagons of $\equiv^H$ should intersect 
in at most one vertex. As a matter of fact, it could be shown that already our initial 
construction $\HJ$ does not preserve this kind of linearity
and the same holds for the constructions $\CPL$ and $\Omega^{(2)}$ derived from it. 
 
Fortunately we already know a pretrain construction
yielding linear pretrains, namely $\Ext(\Omega^{(2)}, \Omega^{(2)})$ 
(see Lemma~\ref{lem:0059} and the eight-step definition preceding it). 
But the copies provided by this construction can intersect in entire wagons and, 
hence, their intersections are in general quite far from being clean. 
The method of the present subsection suggests a way to remedy this situation by 
looking instead at the 
construction~$\PC\bigl(\Omega^{(2)}, \Ext(\Omega^{(2)}, \Omega^{(2)})\bigr)$. 
At this moment, however, it is not yet clear whether this is a sensible construction, 
for it is difficult to foresee whether at some moment we encounter a picture whose 
constituents are nonlinear pretrains,
in which case it would be impossible to apply the partite lemma 
$\Ext(\Omega^{(2)}, \Omega^{(2)})$.    

An important insight we shall only gain in Section~\ref{sec:1912} is that actually this 
obstruction does not arise. 
So, in other words, $\PC\bigl(\Omega^{(2)}, \Ext(\Omega^{(2)},\Omega^{(2)})\bigr)$
applied to a linear pretrain yields a linear Ramsey pretrain and it is immediately 
clear that this construction solves our problem.  \section{Basic properties of \texorpdfstring{$\GTH$}{Girth}}
\label{subsec:EPAG}

In systems of pretrains there are certain undesirable kinds of 
``short cyclic configurations of copies'', such as two copies intersecting 
the same two wagons in edges, but these cannot be detected by Girth
as defined in \S\ref{sssec:coc}. The goal of the present section is to define 
and study a concept of $\GTH$ applicable to systems of pretrains which, roughly 
speaking, takes care of cyclic configurations of copies that become visible at 
the level of wagons. In the next section, we shall then see that $\GTH$ is highly 
compatible both with the extension process and with the partite construction method. 

\subsection{Basic concepts}
\label{sssec:1657}

Recall that a linear system of hypergraphs has small $\Gth$ 
if all its cycles of copies of low order have one of two properties rendering 
them negligible: either they fail to be semitidy or they have a master copy. 
In the same way we shall define a system of pretrains to have small $\GTH$ 
if all its so-called big cycles of low order behave similarly. 
The notion of a big cycle is defined as follows. 
  
\begin{dfn}\label{dfn:1605}
	A {\it big cycle} in a system of pretrains $(H, \equiv, \ccH)$ is a cyclic sequence
		\[
			\ccC=F_1q_1F_2q_2\ldots F_nq_n
	\]
		with $n\ge 2$ satisfying the following conditions.
	\index{big cycle (in a system of pretrains)}
	\begin{enumerate}[label=($B\arabic*$)]
		\item\label{it:B1} The {\it copies} $F_1, \ldots, F_n$ are in $\ccH$ and we 
			have $F_i\ne F_{i+1}$ for every $i\in \ZZ/n\ZZ$.		
		\item\label{it:B2} The {\it connectors} $q_1, \ldots, q_n$ are distinct and each of them 
			is either a vertex or a wagon of $(H, \equiv)$.  
			\index{connector}
		\item\label{it:B3} If $i\in \ZZ/n\ZZ$ and $q_i$ is a vertex, 
			then $q_i\in V(F_i)\cap V(F_{i+1})$.
		\item\label{it:B4} If $i\in \ZZ/n\ZZ$ and $q_i$ is a wagon, 
			then $E(q_i)\cap E(F_i)\ne\varnothing$ and $E(q_i)\cap E(F_{i+1})\ne\varnothing$.
	\end{enumerate}
\end{dfn}

The main difference between big cycles and the cycles of copies introduced 
in Definition~\ref{dfn:coc} is that now wagons rather than edges act as 
connectors, which causes a corresponding change in the fourth condition 
from~\ref{it:L4} to~\ref{it:B4}. A similar modification allows us to define the order 
of a big cycle. 
Notably, if $\ccC=F_1q_1\ldots F_nq_n$ is a big cycle, we call an 
index~$i\in\ZZ/n\ZZ$
\begin{enumerate}
\item[$\bullet$] {\it pure} if either both of $q_{i-1}$ and $q_i$ are wagons or both are 
	vertices and
	\index{pure index}
\item[$\bullet$] {\it mixed} if one of $q_{i-1}$ and $q_i$ is a wagon while the other 
	one is a vertex,
	\index{mixed index}
\end{enumerate}
and then we call the integer
\[
	\ord{\ccC}=\big|\{i\in\ZZ/n\ZZ\colon i \text{ is pure}\}\big|
		+\tfrac12 \big|\{i\in\ZZ/n\ZZ\colon i \text{ is mixed}\}\big|
\]
the {\it order} of $\ccC$. 
\index{order}

Extended systems can be introduced in the same way as in Section~\ref{subsec:AG}.
Explicitly, if~$(H, \equiv)$ is a pretrain, then every edge $e\in E(H)$ gives rise 
to a subpretrain with vertex set~$e$, having~$e$ as its only edge, and endowed with the 
only possible equivalence relation. There cannot arise confusion if we denote this 
subpretrain again by $e^+$. Moreover, we keep using the notation $E^+(H)$ for 
$\{e^+\colon e\in E(H)\}$, but this time meaning the collection of all subpretrains 
of $(H, \equiv)$ of the form $e^+$. Besides, if $(H, \equiv, \ccH)$ is a system of
pretrains, we shall again write $\ccH^+=\ccH\cup E^+(H)$ and call $(H, \equiv, \ccH^+)$
an {\it extended system of pretrains}. 
\index{extended system of pretrains}
Finally, we keep referring to copies of the 
form $e^+$ as {\it edge copies}, 
\index{edge copy}
while other copies of extended systems of pretrains will be 
called {\it real copies}. 
\index{real copy}
The $M(f)$-notation we have already seen in 
Section~\ref{subsec:AG} can now be applied to wagons. 

\begin{dfn}
	Given a big cycle $\ccC=F_1q_1\ldots F_nq_n$
	in an extended system of pretrains $(H, \equiv, \ccH^+)$ we set 
		\[
		M^\ccC(W)=\bigl\{i\in \ZZ/n\ZZ\colon \text{$q_i$ is a vertex and $q_i\in V(W)$}\bigr\}
	\]
		for every wagon $W$ of the pretrain $(H, \equiv)$. If $\ccC$ is clear from the context, 
	we may omit it and just write $M(W)$ instead.
\end{dfn}

The semitidy cycles of copies considered earlier lead us to acceptable 
big cycles. 

\begin{dfn}\label{dfn:1633}
	A big cycle 
		\[
		\ccC=F_1q_1\ldots F_nq_n
	\]
		in an extended system of pretrains $(H, \equiv, \ccH^+)$ is said to be {\it acceptable} 
	if it has the following three properties. 
	\index{acceptable big cycle}
	\begin{enumerate}[label=($A\arabic*$)]
		\item\label{it:A1} If $\ord{\ccC}=1$, then at least one copy of $\ccC$ is real.
		\item\label{it:A2} If $i\in \ZZ/n\ZZ$ and $q_i$ is a wagon, 
				then $M(q_i) \subseteq\{i-1, i+1\}$. Moreover, if $|M(q_i)|=2$, then there exists 
				no edge $f\in E(H)$ with $q_{i-1}, q_{i+1}\in f$ (see Figure~\ref{fig:A2}).
		\item\label{it:A3} If a wagon $W$ of $(H, \equiv)$ does not appear 
				among $q_1, \ldots, q_n$, then there exists some $i(\star)\in \ZZ/n\ZZ$
				with $M(W)\subseteq \{i(\star), i(\star)+1\}$.
	\end{enumerate}
\end{dfn}

\begin{figure}[ht]
	\centering	
			\begin{tikzpicture}[scale=.7]
	
	\def\w{2.5};
	\def\h{1.3};
	
\fill[blue!80!white, opacity = .1, rotate around={-40:(3.7,2.3)}] (3.7,2.3) ellipse (\h cm and \w cm);
\draw[blue!75!black,thick, rotate around={-40:(3.7,2.3)}] (3.7,2.3) ellipse (\h cm and \w cm);
\fill[blue!80!white, opacity = .1, rotate around={40:(3.2,-1)}] (3.2,-1) ellipse (\h cm and \w cm);
\draw[blue!75!black,thick, rotate around={40:(3.2,-1)}] (3.2,-1) ellipse (\h cm and \w cm);
\fill[blue!80!white, opacity = .1, rotate around={40:(-3.7,2.3)}] (-3.7,2.3) ellipse (\h cm and \w cm);	
\draw[blue!75!black,thick, rotate around={40:(-3.7,2.3)}] (-3.7,2.3) ellipse (\h cm and \w cm);
\fill[blue!80!white, opacity = .1, rotate around={-40:(-3.2,-1)}] (-3.2,-1) ellipse (\h cm and \w cm);
\draw[blue!75!black,thick, rotate around={-40:(-3.2,-1)}] (-3.2,-1) ellipse (\h cm and \w cm);

\fill[purple!70] (2.2,-.5) ellipse (.5cm and .3 cm);
	\draw[black,thick] (2.2,-.5) ellipse (.5cm and .3 cm);
\fill[purple!70] (-2.2,-.5) ellipse (.5cm and .3 cm);
	\draw[black,thick] (-2.2,-.5) ellipse (.5cm and .3 cm);		
	
\draw [red, thick](-3,-1) rectangle (3,1);
\fill (2.7,0.7) circle (2pt);
\fill (-2.7,0.7) circle (2pt);
	
\node [red]at (0,-1.4) {$q_i$};
\node at (-6.5,3) {$F_{i-1}$};
\node at (6.5,3) {$F_{i+2}$};
\node at (-6,-1.7) {$F_{i}$};
\node at (6,-2) {$F_{i+1}$};
\node at (-3,1.3) {$q_{i-1}$};
\node at (3,1.3) {$q_{i+1}$};
\end{tikzpicture}
			
\caption{$|M(q_i)|=2$ and there is no $f\in E(H)$ such 
that $\{q_{i-1}, q_{i+1}\}\subseteq f$.
The purple ellipses represent edges that $q_i$ has in common with $F_i$ and $F_{i+1}$.}\label{fig:A2} 	
\end{figure}
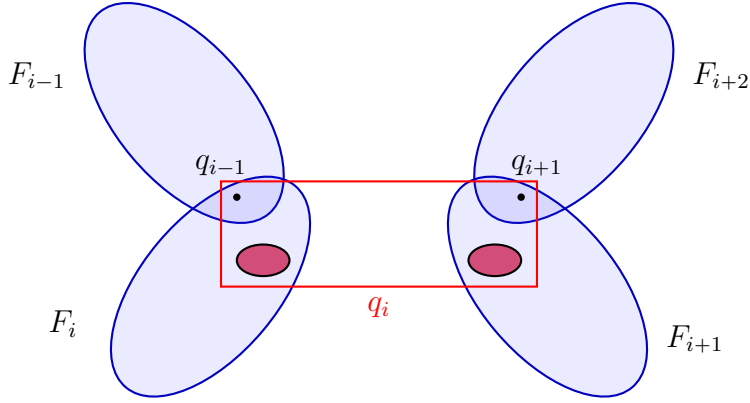 
Let us offer two reasons why we believe that acceptability is a natural concept. 
First, it turns out in Claim~\ref{clm:2036} below that if all wagons consist 
of single edges, then in an obvious sense acceptable cycles are the same as semitidy
cycles. Second, if a big cycle $\ccC=F_1q_1\ldots F_nq_n$ fails to have the 
properties~\ref{it:A2} and~\ref{it:A3}, then it ``decomposes'' into two or more 
shorter cycles of at most the same order. 

For instance, if contrary to~\ref{it:A2} the connector $W=q_n$ 
is a wagon with $M(W)=\{i\}$ for some $i\ne 1, n-1$, then there exists an 
edge $e\in E(W)$ with $q_i\in e$ and~$\ccC$ decomposes into the cycles 
$F_1q_1\ldots F_iq_ie^+W$ and $e^+q_iF_{i+1}q_{i+1}\ldots F_nW$. Similar 
decompositions of $\ccC$ can be found whenever $M(W) \not\subseteq\{1, n-1\}$.
Moreover, if ${M(W)=\{1, n-1\}}$ holds and an edge $f\in E(H)$ with $q_1, q_{n-1}\in f$
exists, then one can decompose~$\ccC$ into the cycles
$\ccA=f^+ q_1F_2q_2\ldots F_{n-1}q_{n-1}$ and $\ccB=F_1 q_1 f^+q_{n-1} F_n W$. 
Without giving details we remark that for linear systems of pretrains $(H, \equiv, \ccH)$ 
condition~\ref{it:A3} can be motivated in a 
similar way. Finally,~\ref{it:A1} just deals with the simplest possible case and the 
reason for declaring big cycles of order $1$ consisting of two edge copies to be 
inacceptable will become clearer in Example~\ref{exmp:2011}.
The next concept has no analogue in Section~\ref{subsec:AG}.

\begin{dfn}\label{dfn:1643}
	Given an extended system of pretrains $(H, \equiv, \ccH^+)$
	we say that a sequence~$P$ is a {\it piece} if 
	\index{piece}
	\begin{enumerate}[label=\rmlabel]
		\item\label{it:1741a} either $P=f^+$ consists of a single edge copy 
		\item\label{it:1741b} or $P=f_1^+Wf_2^+$, where $W$ is a wagon and $f_1^+$, $f_2^+$
			are distinct edge copies with $f_1, f_2\in E(W)$.
	\end{enumerate}
	
	Pieces of the form~\ref{it:1741a} are called {\it short}, 
	\index{short piece}
	while pieces of type~\ref{it:1741b} are said to be {\it long}. 
	\index{long piece}
	If $F_\star\in \ccH^+$ is
	a copy and $P$ is a piece, we shall refer to $P$ as an {\it $F_\star$-piece} if 
	either $P=f^+$ is short and $f\in E(F_\star)$, or if $P=f_1^+Wf_2^+$ is long 
	and $f_1, f_2\in E(F_\star)$.
\end{dfn}

The natural generalisation of master copies to the present context allows to collapse 
copies to such pieces, but the rules concerning long pieces are rather restrictive.

\begin{dfn}\label{dfn:1746}
	Let $\ccC=F_1q_1\ldots F_nq_n$ be a big cycle in an extended system 
	of pretrains $(H, \equiv, \ccH^+)$. A copy $F_\star$ occurring in $\ccC$ is 
	said to be a {\it supreme copy} of $\ccC$ if there exists a family of 
	$F_\star$-pieces $\{P_i\colon i\in\ZZ/n\ZZ \text{ and } F_i\ne F_\star\}$
	with the following properties.
	\index{supreme copy}
	\begin{enumerate}[label=\rmlabel]
		\item\label{it:1751a} The cyclic sequence $\ccD$ obtained from $\ccC$ upon 
			replacing every copy $F_i$ distinct from~$F_\star$ by the 
			corresponding $F_\star$-piece $P_i$ is again a big cycle. 
		\item\label{it:1751b} If $i\in\ZZ/n\ZZ$, $F_i\ne F_\star$, and 
			$P_i=(f'_i)^+W_i(f''_i)^+$ is long, then 
				\begin{enumerate}[label=\glabel]
					\item\label{it:alpha} the connectors $q_{i-1}$, $q_i$ are vertices
					\item\label{it:beta} and there is no edge $f$ with $q_{i-1}, q_i\in f\in E(H)$.
				\end{enumerate}
	\end{enumerate}
\end{dfn}

As this definition is central to everything that follows, we would like to illustrate it 
with two figures. 

\begin{example}
	Suppose first that $\ccC=F_1q_1F_2q_2F_3q_3$ is a big cycle with vertex 
	connectors $q_1$, $q_2$ and a wagon connector $W=q_3$ (see Figure~\ref{fig:74}).
	Under what circumstances is $F_2$ a supreme copy of $\ccC$? 
	Certainly this requires $F_1$ and $F_3$ to be collapsible to 
	some $F_2$-pieces $P_1$,~$P_3$. 
	Due to condition~\ref{it:1751b}\ref{it:alpha} these pieces need to be short. 
	Therefore $P_1=f_1^+$ and $P_3=f_3^+$ have to be edge copies 
	with $f_1, f_3\in E(F_2)$. Moreover, $f_1^+q_1F_2q_2f_3^+q_3$ has to be a big cycle, 
	which requires $q_1\in f_1$, $q_2\in f_3$, and $f_1, f_3\in E(q_3)$.

	\begin{figure}[ht]
\centering	
\begin{tikzpicture}[scale=1]
	
\newcommand{\edge}[6]{\draw [color={#6},thick, rotate around={#5:(#1,#2)}] (#1-#3, #2) [out=-90, in= 180] to (#1,#2-#4)  [out=0, in=-90] to (#1+#3,#2) [out=90, in=0] to (#1,#2+#4)  [out=-180, in=90] to (#1-#3,#2);}
\newcommand{\fulledge}[6]{\fill [color={#6}, opacity=.5, rotate around={#5:(#1,#2)}] (#1-#3, #2) [out=-90, in= 180] to (#1,#2-#4)  [out=0, in=-90] to (#1+#3,#2) [out=90, in=0] to (#1,#2+#4)  [out=-180, in=90] to (#1-#3,#2);}
	
\def\w{1.8};
\def\h{1};
\coordinate (a) at (-1,1.5);
\coordinate (b) at (2,1);
\coordinate (c) at (2,-3);

\fulledge{-3}{0}{\w}{\h}{-70}{blue!15!white};
\edge{-3}{0}{\w}{\h}{-70}{blue!75!black};
\fulledge{3}{0}{\w}{\h}{70}{blue!15!white};
\edge{3}{0}{\w}{\h}{70}{blue!75!black};
\fulledge{0}{-1.5}{2.5}{1.3}{0}{blue!15!white};
\edge{0}{-1.5}{2.5}{1.3}{0}{blue!75!black};
	
\draw [thick, red] (-3.7,.2) rectangle (3.7, -1.9);
\draw [thick, red](-2.37, -1.24)--(-1,-1.7);
\draw [thick, red](2.37, -1.24)--(1,-1.7);
	
\fill [black] (-2.2,-1.3) circle (2pt);
\fill [black](2.2,-1.3) circle (2pt);
	
\draw [thick, red](-3.4, -.1)--(-3,-1.1);
\draw [thick, red](3.4, -.1)--(3,-1.1);
	
	\node [blue!40!black]at (2.15,-1.05) {$q_2$};
	\node [blue!40!black]at (-2.15,-1.05) {$q_1$};
	
	\node [blue!40!black]at (-3,-.5) {$e_1$};
	\node [blue!40!black]at (3,-.5) {$e_3$};
	
	\node [blue!40!black] at (1.3, -1.33) {$f^+_3$};
	\node [blue!40!black] at (-1.3, -1.3) {$f^+_1$};

	\node [black] at (-4.7,1) {$F_{1}$};
	\node [black] at (4.7,1){$F_{3}$};
	\node [black] at (0,-3.2){$F_2=F_\star$};
	\node [red] at (0,.5) {$W=q_3$};			
\end{tikzpicture}
			
\caption{A cycle with supreme copy $F_2$. Since the connector $q_3$ is a wagon, 
the copies $F_1$, $F_3$ need to get collapsed to short $F_2$-pieces.}
\label{fig:74} 
\end{figure} \end{example}

\begin{example}
	Suppose next that $\ccC=F_1q_1F_2q_2F_3q_3F_4q_4$ is a big cycle 
	all of whose connectors are vertices (see Figure~\ref{fig:75}). 
	In order to determine whether $F_2$
	is a supreme copy of $\ccC$ we first ask ourselves which of the pairs $\{q_4, q_1\}$,
	$\{q_2, q_3\}$, and $\{q_3, q_4\}$ are covered by edges. Assume that there are edges
	$f_1$, $f_3$ such that $q_4, q_1\in f_1$ and $q_2, q_3\in f_3$, but that no edge 
	contains~$q_3$ and $q_4$. Due to condition~\ref{it:1751b}\ref{it:beta} the 
	copies $F_1$, $F_3$ need to get collapsed to short pieces and by~\ref{it:B3} 
	these pieces need to be $f_1^+$, $f_3^+$. Moreover, there exists no short 
	piece~$F_4$ could be collapsed to and thus there needs to be some long $F_2$-piece 
	$(f'_4)^+W(f''_4)^+$ we can use for this purpose.  
			
	\begin{figure}[ht]
\centering	
\begin{tikzpicture}[scale=1]

\newcommand{\edge}[6]{\draw [color={#6},thick, rotate around={#5:(#1,#2)}] (#1-#3, #2) [out=-90, in= 180] to (#1,#2-#4)  [out=0, in=-90] to (#1+#3,#2) [out=90, in=0] to (#1,#2+#4)  [out=-180, in=90] to (#1-#3,#2);}
\newcommand{\fulledge}[6]{\fill [color={#6}, opacity=.5, rotate around={#5:(#1,#2)}] (#1-#3, #2) [out=-90, in= 180] to (#1,#2-#4)  [out=0, in=-90] to (#1+#3,#2) [out=90, in=0] to (#1,#2+#4)  [out=-180, in=90] to (#1-#3,#2);}
	
\def\w{1.8};
\def\h{1.2};
\coordinate (a) at (-1,1.5);
\coordinate (b) at (2,1);
\coordinate (c) at (2,-3);
	
\fulledge{-1.9}{.2}{\w}{\h}{-60}{blue!15!white};
\edge{-1.9}{.2}{\w}{\h}{-60}{blue!75!black};
\fulledge{1.9}{.2}{\w}{\h}{60}{blue!15!white};
\edge{1.9}{.2}{\w}{\h}{60}{blue!75!black};
\fulledge{0}{.7}{\h}{\w}{0}{blue!15!white};
\edge{0}{.7}{\h}{\w}{0}{blue!75!black};
\fulledge{0}{-1.2}{2.5}{1.3}{0}{blue!15!white};
\edge{0}{-1.2}{2.5}{1.3}{0}{blue!75!black};
	
\draw [thick, red](-.78, -.3)--(-.2,-1.8);
\draw [thick, red](.78, -.3)--(.2,-1.8);

	\fill [black] (-1.9,-1.24) circle (2pt);
	\fill [black](1.9,-1.24) circle (2pt);
	\fill (.7,-.5) circle (2pt);
	\fill (-.7,-.5) circle (2pt);
	
	\draw [thick, blue!40!black](-2.3, -1.48)--(-.4,-.32);
	\draw [thick, blue!40!black](2.3, -1.48)--(.4,-.32);

	\node [black]at (2.05,-1.08) {\tiny $q_2$};
	\node [black]at (-2.05,-1.08) {\tiny $q_1$};
	
	\node [black]at (.8,-.15) {\tiny $q_3$};
	\node [black]at (-.8,-.15) {\tiny $q_4$};
	
	\node [blue!40!black] at (1.65, -.7) {\tiny $f^+_3$};
	\node [blue!40!black] at (-1.65, -.7) {\tiny $f^+_1$};

\node [black] at (-3.7,1.3) {$F_{1}$};
\node [black] at (3.7,1.3){$F_{3}$};
\node [black] at (1.1,2.2){$F_4$};
\node [black] at (0,-2.9){$F_2=F_\star$};
\node [red] at (0,-1.5) {$W$};
\node [red] at (-.7,-1.6){\tiny $(f_4'')^+$};
\node [red] at (.7,-1.6){\tiny $(f_4')^+$};
\end{tikzpicture}
\caption{A big cycle with supreme copy $F_2$. The copies $F_1$, $F_3$ are collapsed 
to the short pieces $f_1^+$, $f_3^+$, respectively. Moreover, $F_4$ gets collapsed 
to the long piece $(f_4')^+ W (f_4'')^+$. This requires that no edge 
through $q_3, q_4$ exists.}
\label{fig:75} 
\end{figure} \end{example}
 
\begin{remark}\label{rem:1852}
	\begin{enumerate}[labelsep=0pt, itemindent=20pt, leftmargin=0pt, label=\nlabel]
		\item\label{it:1852a} $\,\,$ In the situation of Definition~\ref{dfn:1746} one can 
			prove with the help of~\ref{it:1751b}\ref{it:alpha} that the new big cycle~$\ccD$ 
			obtained in~\ref{it:1751a} has the same 
			order as $\ccC$. The point of condition~\ref{it:1751b}\ref{it:beta} is that in case
			such an edge $f$ exists it should be better to choose $P_i=f^+$. In other words, 
			long pieces are only allowed if they preserve the order and are
			unavoidable. 
		\item\label{it:1852b} $\,\,$ The same argument as in Fact~\ref{rem:1200} shows that 
			supreme copies are always real copies.
		\end{enumerate} 
\end{remark}

We are now sufficiently prepared for the central concept of this section. 
Recall that in Definition~\ref{dfn:838} we defined a system of pretrains to be 
linear if its underlying pretrain is linear, which in turn means that neither 
edges nor wagons can intersect in more than one vertex.
	 
\begin{dfn}\label{dfn:1820}
	Given an extended system of pretrains $(H, \equiv, \ccH^+)$ as well as a 
	positive integer $g$ we write $\GTH(H, \equiv, \ccH^+)>g$ if 
	\index{$\GTH$}
	\begin{enumerate}
		\item[$\bullet$] the pretrain $(H, \equiv)$ is linear
		\item[$\bullet$] and every acceptable big cycle $\ccC$ in $(H, \equiv, \ccH^+)$ 
			whose order is at most $g$ has a supreme copy.
	\end{enumerate}
\end{dfn}

Let us clarify that the notions ``acceptability'' and ``supreme copy'' are defined 
in all systems of pretrains no matter whether they are linear or not. Accordingly,
one can also ask for nonlinear systems whether they satisfy the statement ``every big cycle 
of order at most~$g$ has a supreme copy''. The answer can very well be affirmative for
a system $(H, \equiv, \ccH^+)$ that fails to be linear, but in such cases we do 
not regard $\GTH(H, \equiv, \ccH^+)>g$ as being true. 

\begin{example}\label{exmp:2011}
	Let $(H, \equiv, \ccH^+)$ be an extended linear system of pretrains.
	Up to symmetry big cycles of order $1$ are of the form $\ccC=F_1xF_2W$, 
	where $x$ is a vertex and $W$ is a wagon. Such cycles always satisfy the acceptability 
	conditions~\ref{it:A2} and~\ref{it:A3}, so the only possibility for them to be 
	inacceptable is that~\ref{it:A1} fails and both $F_1$, $F_2$ are edge copies. 
	This configuration is tantamount to a vertex $x\in V(W)$ having at least the degree $2$
	in $W$. 
	As a matter of fact, we only impose condition~\ref{it:A1} in the definition of 
	acceptability for the reason that we want this to be permissible without impairing 
	the $\GTH$ of $(H, \equiv, \ccH^+)$. 
	
	Next, we figure out what it means to say 
	that $F_1$ is a supreme copy of $\ccC$. By Remark~\ref{rem:1852}\ref{it:1852b} this 
	requires $F_1$ to be real. Moreover, due to 
	Definition~\ref{dfn:1746}\ref{it:1751b}\ref{it:alpha} the copy~$F_2$ can only be collapsed 
	to a short piece $f_1^+$ and an edge $f_1\in E(F_1)$ is appropriate for this purpose 
	if $x\in f_1\in E(W)$. Suppose now that this happens and that $F_2$ is a real copy as well.
	Then $f_1^+xF_2W$ is an acceptable big cycle too and $F_2$ is a supreme 
	copy of this cycle if there exists a further edge $f_2\in E(F_2)$ with $x\in f_2\in E(W)$.
\end{example}

For later reference we summarise this discussion as follows. 

\begin{lemma}\label{lem:GTH1}
	Given an extended linear system of pretrains $(H, \equiv^H, \ccH^+)$ the following 
	two statements are equivalent.
		\begin{enumerate}[label=\alabel]
		\item\label{it:8234a} $\GTH(H, \equiv^H, \ccH^+)>1$;
		\item\label{it:8234b} For every big cycle $\ccC=F_1xF_2W$ there are 
			edges 
						\[
				f_1\in E(F_1)\cap E(W)
				\quad \text{ and } \quad
				f_2\in E(F_2)\cap E(W)
			\]
						such that $x\in f_1\cap f_2$.
	\end{enumerate}
\end{lemma}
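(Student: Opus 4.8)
The plan is to unwind the definitions; there is no hard content here, only careful bookkeeping. The first step is a structural observation about order: every big cycle has $\ord{\ccC}\ge 1$ (the number of mixed indices is even, so if all indices are mixed then $\ord{\ccC}=\tfrac12 n\ge 1$, and if some index is pure then $\ord{\ccC}\ge 1$ as well), and $\ord{\ccC}=1$ forces $n=2$ with exactly one vertex connector and one wagon connector. Thus the big cycles of order at most $1$ are precisely those of the shape $\ccC=F_1xF_2W$ appearing in~\ref{it:8234b}, and—since linearity of $(H,\equiv^H)$ is a standing hypothesis—statement~\ref{it:8234a} says exactly that every \emph{acceptable} such big cycle has a supreme copy.

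The second step is to carry out the two local analyses on which everything rests. \emph{Acceptability is automatic apart from~\ref{it:A1}:} for $\ccC=F_1xF_2W$ the vertex $x$ is the only vertex connector, so $M(W')$ has at most one element for every wagon $W'$; hence~\ref{it:A2} and~\ref{it:A3} hold trivially and the ``moreover'' clause of~\ref{it:A2} is vacuous, so $\ccC$ is acceptable iff~\ref{it:A1} holds, i.e.\ (as $\ord{\ccC}=1$) iff at least one of $F_1,F_2$ is real. \emph{Supreme copies are detected by a single edge:} I claim $F_1$ is a supreme copy of $\ccC=F_1xF_2W$ iff $F_1$ is real and there is an edge $f_1\in E(F_1)\cap E(W)$ with $x\in f_1$ (and symmetrically for $F_2$). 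Indeed, to make $F_1$ supreme one must collapse $F_2\ne F_1$ to an $F_1$-piece $P_2$; the connectors flanking $F_2$ are $x$ and $W$, and since $W$ is not a vertex, clause~\ref{it:1751b}\ref{it:alpha} forbids $P_2$ from being long, so $P_2=f_1^+$ is short with $f_1\in E(F_1)$; the requirement~\ref{it:1751a} that $F_1xf_1^+W$ be a big cycle then translates, via~\ref{it:B3} and~\ref{it:B4}, into $x\in f_1$ and $f_1\in E(W)$ (the remaining clause $F_1\ne f_1^+$ being free, as supreme copies are real by Remark~\ref{rem:1852}\ref{it:1852b}), while~\ref{it:1751b} itself is vacuous for a short piece. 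Reading this computation backwards gives the converse.

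Given these facts the two implications are short. For \ref{it:8234b}$\Rightarrow$\ref{it:8234a}: an acceptable big cycle of order $\le 1$ is some $\ccC=F_1xF_2W$ with, say, $F_1$ real by~\ref{it:A1}; \ref{it:8234b} supplies $f_1\in E(F_1)\cap E(W)$ with $x\in f_1$, so by the second local analysis $F_1$ is a supreme copy. For \ref{it:8234a}$\Rightarrow$\ref{it:8234b} I would argue copy by copy: given any big cycle $\ccC=F_1xF_2W$ and $i\in\{1,2\}$, if $F_i=e^+$ is an edge copy then~\ref{it:B3} and~\ref{it:B4} directly give $x\in e\in E(W)$, so $f_i=e$ works; if $F_i$ is real then $\ccC$ is acceptable of order $1$ and hence has a supreme real copy $F_\star$, and if $F_\star=F_i$ we are done by the second local analysis. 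The one case needing a detour is $F_\star=F_j$ with $j\ne i$: then $F_j$ is real, the analysis yields $g\in E(F_j)\cap E(W)$ with $x\in g$, and replacing $F_j$ by $g^+$ produces the auxiliary big cycle $F_i\,x\,g^+\,W$, which one checks is a legitimate acceptable big cycle of order $1$ (clauses~\ref{it:B1}--\ref{it:B4} reduce to facts already in hand, and~\ref{it:A1} holds since $F_i$ is real); its supreme copy, being real, must be $F_i$, so the analysis applies once more. This ``bounce through an edge copy'' is the only mildly delicate point; I do not anticipate a genuine obstacle, as the argument is entirely a matter of matching up definitions.
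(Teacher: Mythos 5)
Your proposal is correct and takes essentially the same route as the paper: the paper's proof defers the key local analysis (what supremacy means for cycles $F_1xF_2W$, and the ``bounce'' through the auxiliary cycle $f_1^+xF_2W$) to Example~\ref{exmp:2011} and simply cites it, while you re-derive that analysis inline. The extra material you supply at the start—that every big cycle of order at most $1$ actually has order exactly $1$ and the shape $F_1xF_2W$—is a useful explicitation but corresponds to the ``Up to symmetry big cycles of order $1$ are of the form $\ccC=F_1xF_2W$'' remark opening the Example.
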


\begin{proof}
	Suppose first that~\ref{it:8234a} holds and let $\ccC=F_1xF_2W$
	be a big cycle.
	If $F_1$ is a real copy the existence of $f_1$ was proved in 
	Example~\ref{exmp:2011} and if $F_1$ is an edge copy it suffices to take~$f_1$ to be 
	the only edge of $F_1$. Similarly, the desired edge $f_2$ exists as well and 
	thereby~\ref{it:8234b} is proved. 
	
	For the converse direction we let $\ccC=F_1xF_2W$ be an acceptable big cycle 
	of order~$1$. Due to~\ref{it:A1} and symmetry we can suppose that the copy $F_1$
	is real. If the edge $f_1$ is obtained from assumption~\ref{it:8234b}, then $F_2$
	is collapsible to the short $F_1$-piece $f_1^+$ and thus~$F_1$ is a supreme 
	copy of $\ccC$. 	 
\end{proof} 

We proceed with some further observations on cycles of length $2$.

\begin{lemma}\label{lem:n1624}
	Let $(H, \equiv^H, \ccH^+)$ be an extended linear system of pretrains. 
	Provided that $\GTH(H, \equiv^H, \ccH^+)>2$, there are no big cycles of length $2$
	both of whose connectors are wagons. 
\end{lemma}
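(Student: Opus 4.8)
The plan is to suppose, towards a contradiction, that $\ccC=F_1q_1F_2q_2$ is a big cycle in $(H,\equiv^H,\ccH^+)$ in which both connectors $q_1$ and $q_2$ are wagons, and to derive a contradiction from $\GTH(H,\equiv^H,\ccH^+)>2$. The first step is to check that $\ccC$ is acceptable and that $\ord{\ccC}=2$. Since both connectors are wagons, every index of $\ccC$ is pure, so $\ord{\ccC}=2$; in particular~\ref{it:A1}, which only restricts big cycles of order $1$, is vacuous for $\ccC$. Moreover $\ccC$ has no vertex connectors whatsoever, so $M^\ccC(W)=\vn$ for every wagon $W$ of $(H,\equiv^H)$, which makes~\ref{it:A2} and~\ref{it:A3} hold trivially. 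Hence Definition~\ref{dfn:1820}, applied with $g=2$, guarantees that $\ccC$ has a supreme copy $F_\star$, and by Remark~\ref{rem:1852}\ref{it:1852b} the copy $F_\star$ is real; thus $F_\star\in\{F_1,F_2\}$ and, after a cyclic rotation of $\ccC$ if necessary, I may assume $F_\star=F_1$.

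Next I would unwind Definition~\ref{dfn:1746} in this situation. The only index $i\in\ZZ/2\ZZ$ with $F_i\ne F_\star$ is $i=2$ (recall $F_1\ne F_2$ by~\ref{it:B1}), so supremacy of $F_1$ yields a single $F_1$-piece $P_2$ for which the cyclic sequence $\ccD$ obtained from $\ccC$ by replacing $F_2$ with $P_2$ is again a big cycle. The two connectors of $\ccC$ adjacent to $F_2$ are $q_{2-1}=q_1$ and $q_2$, both of which are wagons rather than vertices; hence clause~\ref{it:1751b}\ref{it:alpha} rules out $P_2$ being long, and so $P_2=f^+$ is a short $F_1$-piece for some edge $f\in E(F_1)$. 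Consequently $\ccD=F_1q_1f^+q_2$.

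The final step extracts the contradiction from the fact that $\ccD$ is a big cycle. Applying condition~\ref{it:B4} to the wagon connector $q_1$ of $\ccD$ gives $E(q_1)\cap E(f^+)\ne\vn$, i.e.\ $f\in E(q_1)$, and applying~\ref{it:B4} to $q_2$ gives $f\in E(q_2)$ in the same way; thus $f\in E(q_1)\cap E(q_2)$. But $q_1$ and $q_2$ are distinct wagons of $(H,\equiv^H)$, so their edge sets are two distinct $\equiv^H$-classes and are therefore disjoint, and no edge of $H$ can lie in both. This contradiction shows that no such big cycle $\ccC$ exists. I do not expect a substantive obstacle in carrying this out; the only point that calls for a little care is the verification that all three acceptability conditions are genuinely vacuous for a length-$2$ big cycle with two wagon connectors, since that is what licenses the appeal to $\GTH(H,\equiv^H,\ccH^+)>2$.
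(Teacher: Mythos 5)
Your proposal is correct and follows essentially the same route as the paper's proof: verify that a length-$2$ big cycle with two wagon connectors is acceptable, invoke $\GTH>2$ to get a supreme real copy (say $F_1$), use clause~\ref{it:1751b}\ref{it:alpha} of Definition~\ref{dfn:1746} to force the collapse of $F_2$ to a short piece $f^+$, and then derive the contradiction $f\in E(q_1)\cap E(q_2)$ from~\ref{it:B4}. The paper states the argument more tersely but the underlying steps are identical.
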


\begin{proof}
	Assume that $\ccC=F_1W_1F_2W_2$ is a big cycle in $(H, \equiv^H, \ccH^+)$ with
	wagons $W_1$, $W_2$. Since $\ccC$ has no vertex connectors, it satisfies all 
	three acceptability conditions, and thus it needs to have a supreme copy, say $F_1$. 
	Now we can collapse $F_2$ to a short piece $f^+$ 
	(see Definition~\ref{dfn:1746}\ref{it:1751b}\ref{it:alpha}) 
	and the fact that $F_1W_1f^+W_2$ is again a big cycle yields the 
	contradiction $f\in E(W_1)\cap E(W_2)$.
\end{proof}

In analogy with Lemma~\ref{lem:1522}\ref{it:1522a} we have the following result.

\begin{lemma}\label{lem:2037}
	If $\ccC$ is a big cycle of length $2$ in an extended linear system of pretrains
	$(H, \equiv, \ccH^+)$ with $\GTH(H, \equiv, \ccH^+)>2$, then every real copy 
	of $\ccC$ is a supreme copy of $\ccC$.
\end{lemma}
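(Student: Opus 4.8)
The plan is to reduce this to Lemma~\ref{lem:GTH1}, exploiting the fact that a big cycle of length $2$ with $\GTH>2$ can only have vertex connectors. First I would let $\ccC=F_1q_1F_2q_2$ be the given big cycle. By~\ref{it:B2} the connectors $q_1$, $q_2$ are each a vertex or a wagon, so a priori there are three cases up to symmetry: both wagons, one wagon and one vertex, or both vertices. Lemma~\ref{lem:n1624} rules out the case that both $q_1$ and $q_2$ are wagons, since $\GTH(H, \equiv, \ccH^+)>2$.

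Next I would dispose of the mixed case, say $q_1=x$ a vertex and $q_2=W$ a wagon. Here $\ord{\ccC}=1$, so $\ccC$ is acceptable (it has a real copy by hypothesis, and~\ref{it:A2},~\ref{it:A3} hold automatically for cycles with a single vertex connector, exactly as in Example~\ref{exmp:2011}). Since $\ord{\ccC}=1\le 2$ we get a supreme copy. If $F_1$ is already supreme we are in good shape, but the claim asks that \emph{every} real copy be supreme, so I would argue directly. Applying Lemma~\ref{lem:GTH1}\ref{it:8234b} (which follows from $\GTH>1$, a consequence of $\GTH>2$) to $\ccC=F_1xF_2W$ yields edges $f_1\in E(F_1)\cap E(W)$ and $f_2\in E(F_2)\cap E(W)$ with $x\in f_1\cap f_2$. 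Now if $F_1$ is a real copy, then $F_2$ can be collapsed to the short $F_1$-piece $f_1^+$: indeed $f_1^+ x F_2 W$ is a big cycle (we have $x\in f_1$ by~\ref{it:B3}, $f_1\in E(W)$ so~\ref{it:B4} holds at $W$, and $f_1\in E(F_1)$, $f_2\in E(F_2)\cap E(W)$), and the piece $f_1^+$ is short so condition~\ref{it:1751b} is vacuous. Hence $F_1$ is supreme; by symmetry the same works for $F_2$.

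It remains to treat the case where both $q_1$ and $q_2$ are vertices, which is the heart of the argument. Write $\ccC=F_1x_1F_2x_2$ with $x_1, x_2\in V(H)$ distinct. This cycle has order $2$ and satisfies acceptability: condition~\ref{it:A1} is vacuous since $\ord{\ccC}=2\ne 1$, and~\ref{it:A2},~\ref{it:A3} hold vacuously because $\ccC$ has no wagon connectors and no vertex connector lies on a ``third'' wagon in a way that could fail (each $M(W)$ is a subset of $\{1,2\}=\{i(\star),i(\star)+1\}$ with $i(\star)=1$). So by $\GTH(H, \equiv, \ccH^+)>2\ge\ord{\ccC}$ the cycle $\ccC$ has some supreme copy, and by Remark~\ref{rem:1852}\ref{it:1852b} it is real. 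Let $F_1$ be a supreme copy (say); then $F_2$ is collapsible to an $F_1$-piece $P_2$ with $\{x_1,x_2\}$ as the connectors around $P_2$ after collapsing, so that $F_1 x_1 P_2 x_2$ is a big cycle. I would now show that \emph{any} real copy, say $F_2$, is also supreme: since $F_1$ is supreme there is an $F_1$-piece $P$ (short or long) such that $F_1 x_1 P x_2$ is a big cycle realizing the relevant collapse; reversing roles, I would use $P$ to build a big cycle $P x_2 F_1 x_1$ (a cyclic rotation) exhibiting $F_1$ as collapsed to a piece, but what I actually need is a collapse of $F_1$ to an $F_2$-piece. The key observation is that collapsibility of $F_2$ to an $F_1$-piece $P_2=(f'_2)^+$ or $(f'_2)^+W(f''_2)^+$ forces, via condition~\ref{it:B3}/\ref{it:B4}, that the edges $f'_2,f''_2$ of $F_1$ arising in $P_2$ contain $x_1$ respectively $x_2$; but since $\ccC$ itself has $x_1\in V(F_2)$ and $x_2\in V(F_2)$ by~\ref{it:B3}, the symmetric pieces built from $F_2$'s own edges through $x_1$ and $x_2$ do the job — provided $F_2$ actually has edges through $x_1$ and $x_2$, which holds because $x_1,x_2\in V(F_2)$ and, in a big cycle of order $2$ with $\GTH>2$, one argues these vertices are non-isolated (otherwise the cycle degenerates). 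The main obstacle is precisely this last point: showing that in the all-vertex-connector case the supreme-copy witness can be transferred from $F_1$ to $F_2$, i.e.\ that the long-piece condition~\ref{it:1751b}\ref{it:beta} and the piece-edge incidences are symmetric in $F_1\leftrightarrow F_2$. I would handle this by a short case analysis on whether the (unique, by linearity) wagons through $x_1$ and through $x_2$ coincide, mirroring the reasoning in the proof of Lemma~\ref{lem:GTH1} and in Example~\ref{exmp:2011}, and using Lemma~\ref{lem:n1624} once more to exclude the degenerate sub-case where both $x_1,x_2$ would have to lie in a common edge forcing a length-$2$ all-wagon cycle.
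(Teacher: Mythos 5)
Your overall structure matches the paper's: exclude the all-wagon case via Lemma~\ref{lem:n1624}, handle the mixed case (order~$1$) via Example~\ref{exmp:2011}/Lemma~\ref{lem:GTH1}, and treat the all-vertex case separately. The mixed case is fine apart from a transcription slip: to show that a real copy $F_1$ is supreme in $\ccC=F_1xF_2W$ you collapse $F_2$ to the short $F_1$-piece $f_1^+$, and the resulting cycle is $F_1xf_1^+W$, not $f_1^+xF_2W$ (the latter would be the collapse of $F_1$); the conditions you check are those of the correct cycle, so this is cosmetic.

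The all-vertex case, however, is not closed. You correctly observe that $\ccC=F_1xF_2y$ is acceptable of order $2$, so $\GTH(H,\equiv,\ccH^+)>2$ produces some supreme copy, and you reduce to the situation where $F_2$ is supreme (witnessed by some $F_2$-piece $P$) while $F_1$ is a real copy whose supremacy you still owe. At that point you assert that ``the symmetric pieces built from $F_2$'s own edges through $x_1$ and $x_2$ do the job,'' but this does not produce the required object: to exhibit $F_1$ as a supreme copy you need an $F_1$-piece (a piece built from \emph{$F_1$'s} edges), and you must also verify the long-piece admissibility clauses of Definition~\ref{dfn:1746}\ref{it:1751b}. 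Neither follows merely from $x,y\in V(F_2)$, and your closing paragraph acknowledges that you would still have to do ``a short case analysis'' --- i.e.\ the argument is not there.

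The missing idea is a reversal. Form the auxiliary big cycle $\ccD=F_1xPy$ obtained by replacing $F_2$ in $\ccC$ by the $F_2$-piece $P$. One checks that $\ccD$ is acceptable of order $2$ (when $P$ is long, the moreover-part of~\ref{it:A2} for its middle wagon is exactly guaranteed by Definition~\ref{dfn:1746}\ref{it:1751b}\ref{it:beta}). Then $\GTH>2$ gives $\ccD$ a supreme copy, which by Remark~\ref{rem:1852}\ref{it:1852b} must be $F_1$, the only real copy in $\ccD$. Unwinding the $F_1$-supremacy witnesses of $\ccD$ produces the desired $F_1$-piece for $\ccC$: if $P=f^+$ is short, the $F_1$-piece that replaces $f^+$ in $\ccD$ works verbatim in $\ccC$; if $P=(f')^+W(f'')^+$ is long, by Definition~\ref{dfn:1746}\ref{it:1751b}\ref{it:alpha} the edge copies $(f')^+,(f'')^+$ must be replaced by short $F_1$-pieces $f_\star^+,f_{\star\star}^+$, and $f_\star^+Wf_{\star\star}^+$ is the long $F_1$-piece witnessing supremacy of $F_1$ in $\ccC$. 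Without this step the all-vertex case, which is the heart of the lemma, does not go through.
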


\begin{proof}
	The case $\ord{\ccC}=1$ has already been discussed in Example~\ref{exmp:2011} 
	and the case that both connectors of $\ccC$ are wagons was excluded in the 
	previous lemma. 
	 	
	Thus it remains to treat the case that $\ccC$ is of the form $F_1xF_2y$ for some 
	vertices $x$ and~$y$, and that, without loss of generality, $F_1$ is a real copy. 
	Evidently, $\ccC$ is acceptable and, therefore, it needs to possess a supreme
	copy. The interesting case occurs if $F_2$ is a supreme copy of~$\ccC$. Let 
	the $F_2$-piece $P$ exemplify this fact. This means that $PxF_2y$ is a big cycle, 
	and it follows that $\ccD=F_1xPy$ is a big cycle, too. 
	Clearly, $\ccD$ has order $2$. 
			
	\smallskip
	
	{\it \hskip2em First Case. The piece $P=f^+$ is short.}
	
	\smallskip
	
	Now $\ccD$ is acceptable and due to Remark~\ref{rem:1852}\ref{it:1852b} 
	its supreme copy is $F_1$. If the $F_1$-piece~$Q$ witnesses 
	this state of affairs, then~$Q$ exemplifies the supremacy of $F_1$ in $\ccC$ 
	as well.    
	
	\smallskip
	
	{\it \hskip2em Second Case. The piece $P=(f')^+W(f'')^+$ is long.}
	
	\smallskip
	
	Again $\ccD$ is acceptable, the moreover-part of~\ref{it:A2} being ensured by 
	Definition~\ref{dfn:1746}\ref{it:1751b}\ref{it:beta}. As before, $F_1$ is 
	a supreme copy of $\ccD$ and by Definition~\ref{dfn:1746}\ref{it:1751b}\ref{it:alpha}
	there exist short $F_1$-pieces $f_\star^+$, $f_{\star\star}^+$ such that 
	$F_1xf_\star^+Wf_{\star\star}^+y$ is a big cycle. Now $f_\star^+Wf_{\star\star}^+$
	is a long $F_1$-piece verifying that~$F_1$ is a supreme copy of $\ccC$.
\end{proof}

$\GTH$ relates to strong inducedness in the following way. 

\begin{lemma}\label{lem:2107}
	If an extended system of pretrains $(H, \equiv, \ccH^+)$ satisfies
		\[
		\GTH(H, \equiv, \ccH^+)>2\,,
	\]
	then every edge $f$ intersecting a copy $F_\star\in \ccH^+$ 
	in at least two vertices belongs to that copy.  
\end{lemma}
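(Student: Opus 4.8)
Suppose $f\in E(H)$ meets a copy $F_\star\in\ccH^+$ in at least two vertices, say $x,y\in f\cap V(F_\star)$ with $x\ne y$, but $f\notin E(F_\star)$. The plan is to manufacture from this configuration a short acceptable big cycle of order $2$ that admits no supreme copy, contradicting $\GTH(H,\equiv,\ccH^+)>2$. The natural candidate is the big cycle
\[
	\ccC=F_\star\,x\,f^+\,y\,,
\]
a big cycle of length $2$ with two vertex connectors: condition~\ref{it:B1} holds because $f\notin E(F_\star)$ means $F_\star\ne f^+$, condition~\ref{it:B2} holds since $x\ne y$ are both vertices, and~\ref{it:B3} holds by the assumption $x,y\in f\cap V(F_\star)$.

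**Key steps.** First I would observe that $\ccC$ is acceptable. Indeed,~\ref{it:A2} and~\ref{it:A3} are about wagon connectors, of which $\ccC$ has none, so they are vacuous; and $\ord{\ccC}=2>1$, so~\ref{it:A1} is vacuous as well. Next, since $\GTH(H,\equiv,\ccH^+)>2$ and $\ord{\ccC}=2$, the cycle $\ccC$ must have a supreme copy, which by Remark~\ref{rem:1852}\ref{it:1852b} is a real copy, so the supreme copy is $F_\star$ (the only real copy in $\ccC$, as $f^+$ is an edge copy). Thus $f^+$ is collapsible to some $F_\star$-piece $P$ with $P\,x\,F_\star\,y$ — equivalently, after relabelling, a big cycle — being valid, and I would analyse the two possibilities for $P$. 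Since collapsing an edge copy $f^+$ can only replace it by the same $f^+$ (an edge copy's only edge is $f$, and a long piece $f_1^+W f_2^+$ would need $f_1,f_2\in E(f^+)=\{f\}$, forcing $f_1=f_2$, which is disallowed), we must have $P=f^+$. But then the requirement that $P$ be an $F_\star$-piece forces $f\in E(F_\star)$ (Definition~\ref{dfn:1746} and the short-piece clause of Definition~\ref{dfn:1643}), contradicting our assumption $f\notin E(F_\star)$.

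**Main obstacle.** The only delicate point is bookkeeping around the degenerate cycle of length $2$ and making sure the replacement of $f^+$ in $\ccC$ really forces $P=f^+$; this is essentially the observation used repeatedly that an edge copy can only be collapsed to itself (cf.\ the proof of Fact~\ref{rem:5027} and Remark~\ref{rem:1852}). I would spell this out: in $\ccC=F_\star q_1 f^+ q_2$ with $q_1=x$, $q_2=y$ vertices, if $f^+$ is replaced by an $F_\star$-piece $P$ so that $F_\star q_1 P q_2$ is again a big cycle, then $P$ must contain an edge incident with $x$ and an edge incident with $y$, both equivalent-or-equal according to whether $P$ is short or long; the short case is exactly $P=f^+$ (forcing $f\in E(F_\star)$), and the long case $P=(f')^+W(f'')^+$ is excluded because there is nothing for $F_\star$'s neighbour $f^+$ to be — there is no further copy to collapse, $P$ is what $f^+$ itself becomes — and an edge copy is never a long piece of anything. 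Hence the contradiction, and $f\in E(F_\star)$ after all. The whole argument is short; the writing effort is in phrasing the ``$f^+$ collapses only to $f^+$'' step cleanly.
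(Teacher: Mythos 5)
The setup is right and matches the paper: you form the big cycle $\ccC=F_\star\,x\,f^+\,y$, check it is acceptable of order $2$, invoke $\GTH>2$ to get a supreme copy, and conclude from Remark~\ref{rem:1852}\ref{it:1852b} that this supreme copy is $F_\star$. But the step where you conclude that $f^+$ must collapse to $f^+$ itself is based on a misreading of Definition~\ref{dfn:1746}. When $F_\star$ is the supreme copy, the edge copy $f^+$ is replaced by an $F_\star$-piece $P$, which by Definition~\ref{dfn:1643} consists of edges of $F_\star$, \emph{not} of $f^+$. Your parenthetical justification---that a long piece $f_1^+ W f_2^+$ would require $f_1,f_2\in E(f^+)=\{f\}$---simply does not follow from the definitions; there is no demand that the piece use edges of the copy being collapsed, only of the supreme copy. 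So your ruling-out of the long piece does not go through as written.

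What actually excludes the long piece is Definition~\ref{dfn:1746}\ref{it:1751b}\ref{it:beta}: the connectors of the collapsed copy are $x$ and $y$, and since there exists an edge (namely $f$) with $x,y\in f\in E(H)$, a long $F_\star$-piece is forbidden. So $P=(f')^+$ is a short $F_\star$-piece, giving $f'\in E(F_\star)$, and the requirement that $F_\star\,x\,(f')^+\,y$ be a big cycle forces $x,y\in f'$. You then still need the linearity of $H$---implicit in $\GTH(H,\equiv,\ccH^+)>2$ via Definition~\ref{dfn:1820}---to conclude $f=f'$ from $x,y\in f\cap f'$ and $x\ne y$, which is the one further step you never state. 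With those two corrections the proof closes cleanly, as in the paper.
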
  

Notice that due to the linearity of $H$ the three items~\ref{it:1836a}\,--\,\ref{it:1836c} 
listed in Fact~\ref{fact:1836} tell us what it means to say that a copy 
in $\ccH^+$ is strongly induced in $H$. The first of these conditions coincides with 
the conclusion of the lemma above. The remaining conditions,~\ref{it:1836b} 
and~\ref{it:1836c}, are usually satisfied in practice but we cannot deduce them 
from the assumption $\GTH(H, \equiv, \ccH^+)>2$. 

\begin{proof}[Proof of Lemma~\ref{lem:2107}]
	Suppose that $x, y\in V(F_\star)\cap f$ 
	are distinct. 
	If $F_\star=f^+$, then $f\in E(F_\star)$ is clear, so suppose $F_\star\ne f^+$ from 
	now on. The big cycle $\ccC=F_\star x f^+ y$ is acceptable, its order is $2$ and, 
	therefore, it possesses a supreme copy. By Remark~\ref{rem:1852}\ref{it:1852b}
	this supreme copy can only be $F_\star$ and the existence of an edge containing $x$
	and $y$ ensures that $f^+$ collapses to a short piece $(f')^+$ with $f'\in E(F_\star)$.
	Since $x, y\in f\cap f'$, the linearity of $H$ implies $f=f'$ and thus we have indeed 
	$f\in E(F_\star)$.
\end{proof}

We conclude our list of preliminary observations on $\GTH$ as follows.

\begin{fact}\label{f:1644}
	If a big cycle $\ccC$ in an extended linear system of pretrains has a supreme 
	copy~$F_\star$, then all vertex connectors of $\ccC$ belong to $F_\star$.  
\end{fact}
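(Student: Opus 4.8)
The plan is to extract the conclusion directly from the big cycle $\ccD$ that witnesses the supremacy of $F_\star$, using essentially only condition~\ref{it:B3} of Definition~\ref{dfn:1605}. Write $\ccC = F_1q_1\ldots F_nq_n$ and let $\{P_i\colon i\in\ZZ/n\ZZ,\ F_i\ne F_\star\}$ be a family of $F_\star$-pieces exemplifying that $F_\star$ is supreme, with $\ccD$ the big cycle obtained by replacing every copy $F_i\ne F_\star$ by $P_i$ (see Definition~\ref{dfn:1746}). The first observation to record is that the connectors $q_1,\ldots,q_n$ survive this substitution unchanged: the only effect of the replacement on connectors is that a wagon $W_i$ may get inserted between the two edge copies of a long piece $P_i$. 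Hence each $q_j$ is again a connector of $\ccD$, of the same type (vertex or wagon).

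Now fix an index $j$ with $q_j$ a vertex, and let $L$ denote the copy immediately preceding $q_j$ in $\ccD$. I would distinguish three cases according to what became of $F_j$. If $F_j = F_\star$, then $L = F_\star$. If $F_j\ne F_\star$ and $P_j = f_j^+$ is short, then $L = f_j^+$ with $f_j\in E(F_\star)$ by Definition~\ref{dfn:1643}. If $F_j\ne F_\star$ and $P_j = (f'_j)^+W_j(f''_j)^+$ is long, then $L = (f''_j)^+$ with $f''_j\in E(F_\star)$, again by Definition~\ref{dfn:1643}; moreover in this case $q_j$ is automatically a vertex by Definition~\ref{dfn:1746}\ref{it:1751b}\ref{it:alpha}, which is consistent with our standing assumption. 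In all three cases $V(L)\subseteq V(F_\star)$, since for an edge copy $g^+$ with $g\in E(F_\star)$ one has $V(g^+)=g\subseteq V(F_\star)$.

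Finally, because $q_j$ is a vertex connector of the big cycle $\ccD$, condition~\ref{it:B3} applied to $\ccD$ gives $q_j\in V(L)$, whence $q_j\in V(F_\star)$. As $j$ was an arbitrary index with $q_j$ a vertex, every vertex connector of $\ccC$ lies in $V(F_\star)$, as claimed. I do not anticipate any serious obstacle in this argument; the only point requiring a little care is the bookkeeping in the case analysis — identifying which edge copy of a short or long piece $P_j$ ends up immediately to the left of $q_j$ in $\ccD$ — together with the remark, immediate from Definition~\ref{dfn:1643}, that every edge occurring in an $F_\star$-piece belongs to $E(F_\star)$.
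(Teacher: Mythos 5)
Your proof is correct and follows essentially the same route as the paper: you read the conclusion off the witnessing big cycle $\ccD$ via condition~\ref{it:B3}, identifying the edge copy of the relevant $F_\star$-piece adjacent to $q_j$. The only cosmetic difference is that the paper invokes~\ref{it:B1} to pick the neighbour of $q_i$ that is distinct from $F_\star$ and then argues by symmetry, whereas you fix the left neighbour and include the trivial case $F_j=F_\star$ explicitly; both bookkeepings work.
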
 

\begin{proof}
	Let $q_i$ be a vertex connector of $\ccC=F_1q_1\dots F_nq_n$. Due to~\ref{it:B1}
	at least one of the copies $F_i$, $F_{i+1}$ is distinct from $F_\star$, so by symmetry 
	we may assume $F_i\ne F_\star$. Consider the $F_\star$-piece $P_i$ to which $F_i$ 
	collapses. If $P_i=f^+$ is short, then $q_i\in f\subseteq V(F_\star)$ follows. 
	Similarly, if $P_i=(f')^+W(f'')^+$ is long, then we have $q_i\in f''\subseteq V(F_\star)$.
\end{proof}

\subsection{Special cases}
\label{sssec:1758}

The next item on our agenda are simplified characterisations of 
$\GTH(H, \equiv, \ccH^+)$ applicable to systems of pretrains with special properties. 
First, the simplest case for $\ccH^+$ is that $\ccH^+=E^+(H)$ and we address this 
situation in Lemma~\ref{lem:2310} below. Second, the case where $\ccH^+$ is arbitrary 
and every wagon consists of a single edge is dealt with in Lemma~\ref{lem:0217}; 
it turns out that $\GTH$ is then essentially the same as $\Gth$.   

\begin{dfn}\label{dfn:2256}
	For a pretrain $(H, \equiv)$ and an integer $g\ge 2$ we write 
	$\ggth(H, \equiv)> g$ if~$H$ is linear and  
	the girth of the set system one obtains from~$(H, \equiv)$ 
	upon replacing the wagons by new edges (and deleting the original edges) 
	exceeds~$g$. 
	\index{$\ggth$}
\end{dfn}

Due to Fact~\ref{fact:231a} this means, explicitly, that if for some $n\in [2, g]$ we have 
a cyclic sequence $W_1q_1\ldots W_nq_n$ such that 
\begin{enumerate}
	\item[$\bullet$] $W_1, \ldots, W_n$ are wagons of $(H, \equiv)$, 
	\item[$\bullet$] the vertices $q_1, \ldots, q_n$ of $H$ are distinct,
	\item[$\bullet$] and $q_i\in V(W_i)\cap V(W_{i+1})$ holds for every $i\in\ZZ/n\ZZ$,
\end{enumerate}
then $W_1=\dots=W_n$. 
Notice that a pretrain $(H, \equiv)$ is linear in the sense 
of Definition~\ref{dfn:838} if and only if $\ggth(H, \equiv) > 2$. 

Let us emphasise that the lowercase ``$\mathfrak{g}$'' in Definition~\ref{dfn:2256} as 
opposed to the capital~``$\mathfrak{G}$'' in Definition~\ref{dfn:1820} indicates that
here we deal with individual objects as opposed to the systems treated there. So the rule
on capitalisation is the same as in Section~\ref{subsec:AG}. It might further be helpful 
to bear in mind that German letters highlight the importance of equivalence relations 
on the edge sets, while Roman letters are used in cases where such pretrain structures
are absent or ignored.
 
\begin{lemma}\label{lem:2310}
	If $(H, \equiv)$ is a linear pretrain and $g\ge 2$, then 
		\[
		\ggth(H, \equiv)>g
		\, \Longleftrightarrow \,
		\GTH\bigl(H, \equiv, E^+(H)\bigr)>g
		\, \Longleftrightarrow \,
		\GTH\bigl(H, \equiv, E^+(H)\cup\{(H, \equiv)\}\bigr) > g\,.
	\]
	\end{lemma}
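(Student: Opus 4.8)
The statement is a chain of three equivalences about a linear pretrain $(H, \equiv)$. The middle implication $\GTH\bigl(H, \equiv, E^+(H)\cup\{(H,\equiv)\}\bigr)>g \Rightarrow \GTH\bigl(H, \equiv, E^+(H)\bigr)>g$ is trivial: any acceptable big cycle in the smaller system $E^+(H)$ is also an acceptable big cycle in the larger one, so it inherits a supreme copy, and a supreme copy of such a cycle must be a real copy (Remark~\ref{rem:1852}\ref{it:1852b})—but wait, the only real copy available could be $(H,\equiv)$ itself, which is not in $E^+(H)$. So I need to be slightly more careful here and argue instead as in the proof of Lemma~\ref{lem:Gth-gth}: if $\ccC$ is a big cycle entirely inside $E^+(H)$, then it has no real copies at all, so the supreme copy it acquires in the larger system would be $(H, \equiv)$, and I must check that this is impossible or reinterpret. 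Actually the clean way, mirroring Lemma~\ref{lem:Gth-gth}, is to prove the cycle $\Rightarrow$ directly: middle implies first, first implies third.

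\textbf{Step 1: middle $\Rightarrow$ left.} Assume $\GTH\bigl(H, \equiv, E^+(H)\bigr)>g$ and suppose towards contradiction that $\ggth(H, \equiv)\le g$. Then there is a cyclic sequence $W_1q_1\ldots W_nq_n$ of wagons and distinct vertices with $q_i\in V(W_i)\cap V(W_{i+1})$, $2\le n\le g$, not all $W_i$ equal; choose such a configuration with $n$ minimal, which by Fact~\ref{fact:231a} (applied to the set system obtained by replacing wagons by edges) forces the $W_i$ to be distinct. For each $i$ pick an edge $e_i\in E(W_i)$ with $q_{i-1}, q_i\in e_i$—here I use minimality of $n$ so the $q$'s lie in single edges; actually for the translation to a big cycle I pick $e_i\in E(W_i)$ containing $q_i$ and $e_i$ containing $q_{i-1}$; by linearity of $H$ and minimality these can be arranged consistently. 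Then $\ccC=e_1^+q_1 e_2^+q_2\ldots e_n^+ q_n$—no, the connectors should alternate copy/connector. The correct object is $\ccC = e_1^+ W_1 e_1'{}^+ q_1 \ldots$; this requires care. The cleaner route: form the big cycle whose copies are edge copies $e_i^+\in E^+(H)$ and whose connectors alternate between the vertices $q_i$ and the wagons $W_i$, i.e. $\ccC = e_1^+ q_1 e_2^+ W_2 e_2^+ \ldots$—still not right since consecutive copies must differ. I will instead take for each wagon two edges and build $\ccC = f_1^+ x_1 g_1^+ W_1 f_2^+ x_2 g_2^+ W_2 \ldots$, but this needs the wagon to meet two distinct edges, which holds iff the $q$'s are genuinely distinct. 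This big cycle has order $n\le g$, is acceptable (minimality gives the acceptability conditions, as in the proof of Lemma~\ref{lem:Gth-gth}), and by hypothesis has a supreme copy—but every copy in it is an edge copy, contradicting Remark~\ref{rem:1852}\ref{it:1852b}. Hence $\ggth(H, \equiv)>g$.

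\textbf{Step 2: left $\Rightarrow$ right.} Assume $\ggth(H, \equiv)>g$; we show $\GTH\bigl(H, \equiv, E^+(H)\cup\{(H,\equiv)\}\bigr)>g$. Let $\ccC$ be an acceptable big cycle of order $\le g$ in this extended system. If $\ccC$ uses the copy $(H, \equiv)$ as one of its $F_i$, then $(H,\equiv)$ is itself a supreme copy: every other copy $F_j = e_j^+$ is an edge copy, which collapses to itself as the short piece $e_j^+$ with $e_j\in E(H)=E((H,\equiv))$, and condition~\ref{it:1751b} is vacuous for short pieces—so we are done. Otherwise $\ccC = e_1^+ q_1 \ldots e_n^+ q_n$ consists purely of edge copies. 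As in the proof of Lemma~\ref{lem:Gth-gth}, no connector $q_i$ can be a wagon: if $q_i$ were a wagon then $E(q_i)\cap E(e_i^+)\ne\vn$ forces $e_i\in E(q_i)$, and likewise $e_{i+1}\in E(q_i)$, so both $e_i, e_{i+1}$ lie in the wagon $q_i$; combined with~\ref{it:B1} ($e_i\ne e_{i+1}$) this is consistent, so I cannot rule it out so quickly. Instead: if $q_i$ is a wagon, acceptability~\ref{it:A2} controls $M(q_i)$, and the fact that $e_i, e_{i+1}\in E(q_i)$ together with the structure of the cycle lets me collapse along $q_i$; I push the analysis through to conclude all connectors are vertices. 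Then $\ord{\ccC} = n$, all indices pure, so $n\le g$, and $e_1 q_1 \ldots e_n q_n$ is a cyclic sequence of edges with distinct connecting vertices $q_i\in e_i\cap e_{i+1}$; since each $e_i$ lies in a wagon $W(e_i)$, the sequence $W(e_1) q_1 \ldots W(e_n) q_n$ witnesses $\ggth(H,\equiv)\le n\le g$ unless all $W(e_i)$ coincide, in which case Fact~\ref{fact:231a} applied inside that single wagon (where $\gth$ need not be large—but the $e_i$ are distinct) gives $e_1 = \cdots = e_n$, contradicting~\ref{it:B1}. Either way we reach a contradiction, so there is no such cycle purely of edge copies, and $\GTH>g$ follows.

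\textbf{Main obstacle.} The delicate point is Step 2's case analysis when a connector is a wagon: unlike in Lemma~\ref{lem:Gth-gth} where edge connectors were immediately impossible, here wagon connectors are not obviously forbidden for a cycle of edge copies, and I expect to need the full force of acceptability conditions~\ref{it:A2},~\ref{it:A3} together with the linearity of $(H,\equiv)$—in particular the fact that two edges of the same wagon meeting in two vertices would violate $\ggth(H,\equiv)>2$—to either collapse such connectors away or derive the contradiction directly. This mirrors, but is genuinely more involved than, the corresponding step in the proof of Lemma~\ref{lem:Gth-gth}, and is where the bulk of the writing will go; the translation-of-cycles bookkeeping in Step 1 is routine by comparison once one is careful about how wagons are represented by pairs of their edges.
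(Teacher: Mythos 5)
Your overall architecture matches the paper's: argue the middle condition is equivalent to each of the other two, with the third one dispatched by noting that $(H, \equiv)$ is a supreme copy of any acceptable big cycle that contains it. But both halves of the middle $\Leftrightarrow$ first equivalence have real gaps.

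In Step~1, the uniform "two edges per wagon" cycle $\ccC = f_1^+ x_1 g_1^+ W_1 f_2^+ \ldots$ is not necessarily a valid big cycle. If some wagon $W_i$ contains a single edge through both adjacent vertices $q_{i-1}$, $q_i$, then you are forced to take $f_i = g_i$, and~\ref{it:B1} fails. The paper's proof resolves exactly this by a case distinction: when such an edge $f_i$ exists it is used as a short piece $f_i^+$, and otherwise one picks two edges $f'_i \ne f''_i$ and inserts a long piece $(f'_i)^+ W_i (f''_i)^+$. Only then does it verify (by minimality of $n$, as you anticipated) that the resulting big cycle is acceptable.

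In Step~2, the claim that you can "push the analysis through to conclude all connectors are vertices" is false. Wagon connectors are genuinely allowed in acceptable big cycles consisting entirely of edge copies; the only constraint coming from~\ref{it:B2} and~\ref{it:B4} is that two \emph{consecutive} connectors cannot both be wagons. The paper instead groups the edge copies between consecutive vertex connectors $q_{i(\mu-1)}$ and $q_{i(\mu)}$ and observes that all their underlying edges lie in a common wagon $W_\mu$; the resulting "wagon cycle" $W_1 q_{i(1)} \ldots W_m q_{i(m)}$ then violates $\ggth(H,\equiv)>g$ unless all $W_\mu$ coincide. In that final case, your appeal to Fact~\ref{fact:231a} inside a single wagon is also a misstep — no lower bound on the girth of an individual wagon is available — and the contradiction is instead obtained from~\ref{it:A3} (which bounds $n\le 2$ when $W$ is not a connector) together with the linearity of $H$, or from~\ref{it:A2} and~\ref{it:A1} when $W$ does appear as a connector.
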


\begin{proof}
	Evidently, the last condition implies the middle one. Conversely, the middle 
	condition also implies the last one, for $(H, \equiv)$ is a supreme copy of every big 
	cycle in the system of pretrains $\bigl(H, \equiv, E^+(H)\cup\{(H, \equiv)\}\bigr)$ 
	that contains $(H, \equiv)$. It therefore remains to show that the first and 
	middle statement are equivalent.
	
	Suppose first that $\GTH\bigl(H, \equiv, E^+(H)\bigr)>g$, but that for some $n\in[2, g]$
	there exists a cyclic sequence $W_1q_1\ldots W_nq_n$ violating $\ggth(H, \equiv)>g$.
	If we choose this counterexample with $n$ as small as possible, 
	then $W_1, \ldots, W_n$ are distinct. Moreover, the linearity of $(H, \equiv)$ 
	yields $n\ge 3$.
		
	Let us select for every index $i\in \ZZ/n\ZZ$ a piece $P_i$ as follows. If there 
	exists an edge $f_i\in W_i$ with $q_{i-1}, q_i\in f_i$, then let $P_i=f_i^+$.
	If there is no such edge, then we pick edges $f'_i, f''_i\in E(W_i)$ with 
	$q_{i-1}\in f'_i$ and $q_i\in f''_i$, which is possible due to the fact that wagons 
	have no isolated vertices. Moreover, the absence of $f_i$ entails $f'_i\ne f''_i$
	and thus $P_i=(f'_i)^+W_i(f''_i)^+$ is a long piece. Now $\ccC=P_1q_1\ldots P_nq_n$
	is a big cycle in $\bigl(H, \equiv, E^+(H)\bigr)$ with $\ord{\ccC}=n\in [3, g]$.
	By $n\ge 3$ it has the property~\ref{it:A1} of acceptability 
	and one checks easily that 
	any violation of~\ref{it:A2} or~\ref{it:A3} would yield a contradiction to the minimality 
	of~$n$. Thus $\ccC$ is acceptable and Definition~\ref{dfn:1746} entails that $\ccC$
	has a supreme copy. However,~$\ccC$ does not even contain a real copy, so we 
	arrive at a contradiction to Remark~\ref{rem:1852}\ref{it:1852b}.
	
	It remains to prove that, conversely, assuming $\ggth(H, \equiv)>g$ 
	we can derive 
		\[	
		\GTH\bigl(H, \equiv, E^+(H)\bigr)>g\,. 
	\]
		Consider any acceptable big cycle 
	$\ccC=f_1^+q_1\ldots f_n^+q_n$ in $\bigl(H, \equiv, E^+(H)\bigr)$ with $\ord{\ccC}\le g$.
	Due to~\ref{it:A1} the order of $\ccC$ is at least $2$. 
	Notice that owing to~\ref{it:B2} and~\ref{it:B4} is cannot be the case that two 
	consecutive connectors of $\ccC$ are wagons. Therefore, the order of $\ccC$
	is the number of its vertex connectors. By symmetry we may suppose that 
		\[
		1\le i(1)<i(2)<\ldots <i(m)= n
	\]
		are the indices of the vertex connectors of $\ccC$,
	where $m=\ord{\ccC}\in [2, g]$. As $q_1$ and~$q_2$ cannot be consecutive wagons, $i(1)$
	is either $1$ or $2$ and in both cases there exists a wagon~$W_1$ containing all edges 
	$f_i$ with $i\in [1, i(1)]$. Similarly, for every $\mu\in [2, m]$   
	there exists a wagon~$W_\mu$ containing all edges $f_i$ with $i\in [i(\mu-1)+1, i(\mu)]$.
	Now $W_1q_{i(1)}\ldots W_mq_{i(m)}$ is a ``wagon cycle'' and $\ggth(H, \equiv)>g\ge m$
	tells us that there exists a wagon~$W$ with 
		\[
		W=W_1=\dots = W_m\,.
	\]
		
	In particular,
		\[
		f_1, \ldots, f_n\in E(W)\,,
		\qquad 
		q_{i(1)}, \ldots, q_{i(m)} \in V(W)\,,
	\]
		and the only wagon possibly appearing on $\ccC$ is $W$. 
		
	If $W$ does not appear on $\ccC$, then~\ref{it:A3} yields $n=m=|M^\ccC(W)|\le 2$, 
	contrary to the linearity of $H$.
	This shows that~$W$ appears on $\ccC$ and~\ref{it:A2} discloses $m=1$, contrary 
	to~\ref{it:A1}.
\end{proof}

The next result essentially says that if every edge is its own wagon, then $\GTH$ is the same 
as $\Gth$.

\begin{lemma} \label{lem:0217}
	Let $(H, \ccH)$ be a linear system and let $g\ge 2$ be a natural number.
	If $\equiv$ denotes the equivalence relation on $E(H)$ whose equivalence classes are 
	single edges, then 
		\[
		\Gth(H, \ccH^+) >g
		\,\,\, \Longleftrightarrow \,\,\,
		\GTH(H, \equiv, \ccH^+) >g\,.
	\]
	\end{lemma}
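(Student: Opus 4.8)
The plan is to prove both implications by relating the two notions of cycles via a natural correspondence that exploits the special structure of $\equiv$, namely that every wagon is a single edge and hence $\ggth(H,\equiv)>g$ is automatic from the linearity of $H$ (in fact $\ggth(H,\equiv)$ coincides with $\gth(H)$ here, but we only need linearity). The key observation is that when every equivalence class is a singleton, a long piece $f_1^+ W f_2^+$ would require $f_1,f_2\in E(W)$ with $f_1\ne f_2$, which is impossible since $W$ has only one edge. Therefore in $(H,\equiv,\ccH^+)$ every piece is short, i.e.\ of the form $f^+$, and the notion of a supreme copy collapses precisely to the notion of a master copy from Definition~\ref{dfn:master}: collapsing $F_i$ to a short $F_\star$-piece $f_i^+$ with $f_i\in E(F_\star)$ is exactly collapsing $F_i$ to an edge copy in the sense of $\Gth$. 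Likewise, since wagons and edges coincide, a big cycle $F_1q_1\ldots F_nq_n$ in $(H,\equiv,\ccH^+)$ is literally the same thing as a cycle of copies in $(H,\ccH^+)$ in the sense of Definition~\ref{dfn:coc}, with $q_i$ a wagon if and only if it is an edge; the notions of pure, mixed, and order transfer verbatim, so $\ord{\ccC}$ is unambiguous across both frameworks.

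First I would spell out this dictionary carefully: a sequence $\ccC=F_1q_1\ldots F_nq_n$ with $F_i\in\ccH^+$ and each $q_i$ a vertex or edge of $H$ is a cycle of copies in $(H,\ccH^+)$ (conditions \ref{it:L1}--\ref{it:L4}) if and only if it is a big cycle in $(H,\equiv,\ccH^+)$ (conditions \ref{it:B1}--\ref{it:B4}), because $E(q_i)\cap E(F)=\{q_i\}\cap E(F)$ when $q_i^+$ is a wagon, which matches $q_i\in E(F)$. Next I would observe that $\ccC$ has a master copy (Definition~\ref{dfn:master}) if and only if it has a supreme copy (Definition~\ref{dfn:1746}): since all pieces are short, clause~\ref{it:1751b} of the supreme-copy definition is vacuous, and what remains is exactly the master-copy condition that replacing each $F_i\ne F_\star$ by an edge copy $f_i^+$ with $f_i\in E(F_\star)$ yields again a (big) cycle. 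This equivalence is independent of acceptability/semitidiness and holds cycle by cycle.

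The one real subtlety is matching up the hypothesis classes: $\Gth(H,\ccH^+)>g$ quantifies over \emph{tidy} cycles of order at most $g$, whereas $\GTH(H,\equiv,\ccH^+)>g$ quantifies over \emph{acceptable} big cycles of order at most $g$. So the heart of the proof is the claim that under the singleton-wagon identification, a cycle is tidy (Definition~\ref{dfn:952}) if and only if the corresponding big cycle is acceptable (Definition~\ref{dfn:1633}). For the nontrivial direction I would argue: \ref{it:T1} (no connector lies inside another) together with \ref{it:T2} imply \ref{it:A1} (a tidy cycle of order $1$ cannot consist of two edge copies, since \ref{it:L1} together with \ref{it:L4}/\ref{it:B4} and \ref{it:T1} already rule out both connectors being edges and then a vertex connector $x$ with $x\in$ the edge connector violates \ref{it:T1}—more directly, a cycle of two edge copies $e_1^+ q_1 e_2^+ q_2$ of order $1$ has one vertex and one edge connector with the vertex in the edge, contradicting \ref{it:T1}); \ref{it:A2}'s first part $M(q_i)\subseteq\{i-1,i+1\}$ follows from \ref{it:T1} since a vertex connector $q_j$ with $q_j\in q_i$ is forbidden, and the moreover-part (if $|M(q_i)|=2$ there is no edge $f$ with $q_{i-1},q_{i+1}\in f$) follows from \ref{it:T2} applied to such an $f$, which would then contain three vertex connectors $q_{i-1}, q_i$—wait, $q_i$ is an edge here—so rather $q_{i-1},q_{i+1}$ plus the analysis that $M(q_i)=\{i-1,i+1\}$ already; I would check this case split against Figure~\ref{fig:A2}; and \ref{it:A3} is immediate from \ref{it:T2}. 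Conversely, acceptability plus the absence of long pieces gives back \ref{it:T1} and \ref{it:T2} by reversing these implications, with the edge-connector cases handled by \ref{it:B4} forcing equality $q_i=f_i$. Alternatively, and perhaps more cleanly, one can route through Lemma~\ref{lem:1342}: $\Gth(H,\ccH^+)>g$ iff every \emph{semitidy} cycle of order $\le g$ has a master copy, and then show semitidy cycles (Definition~\ref{dfn:1343}) correspond exactly to acceptable big cycles under the singleton identification—conditions \ref{it:S1}, \ref{it:S2} translate to \ref{it:A2}, \ref{it:A3} almost literally, and \ref{it:A1} is a free extra constraint that only shrinks the class of big cycles we must handle, hence is harmless. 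I expect this bookkeeping between \ref{it:S1}/\ref{it:S2} and \ref{it:A1}/\ref{it:A2}/\ref{it:A3}, together with the edge-connector corner cases, to be the main obstacle; everything else is the transparent dictionary above.

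Assembling the argument: for the forward implication, assume $\Gth(H,\ccH^+)>g$; given an acceptable big cycle $\ccC$ of order $\le g$ in $(H,\equiv,\ccH^+)$, the dictionary turns it into a semitidy cycle of order $\le g$ in $(H,\ccH^+)$, which by Lemma~\ref{lem:1342} has a master copy, which is a supreme copy of $\ccC$; since $(H,\equiv)$ is linear (as $H$ is), $\GTH(H,\equiv,\ccH^+)>g$. For the backward implication, assume $\GTH(H,\equiv,\ccH^+)>g$; given a (semi)tidy cycle $\ccC$ of order $\le g$ in $(H,\ccH^+)$, if it happens to have order $1$ and consist of two edge copies one checks directly—as in the proof of Lemma~\ref{lem:Gth-gth}—that it cannot be tidy (and for the semitidy version via Lemma~\ref{lem:1342} one notes such a cycle $e_1^+ x e_2^+$ is inacceptable by \ref{it:A1} so nothing is required); otherwise the dictionary makes it an acceptable big cycle, which has a supreme copy, necessarily short-piece-collapsing, hence a master copy of $\ccC$; thus $\Gth(H,\ccH^+)>g$. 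This completes both directions.
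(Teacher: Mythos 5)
Your proposal is correct and its final assembly (route through Lemma~\ref{lem:1342}, observe that singleton wagons preclude long pieces so master copy $=$ supreme copy, and match semitidiness with acceptability) is exactly the paper's proof, stated in its Claims~\ref{clm:2036} and~\ref{clm:2055}. One small logical slip at the end deserves correction: you write that a semitidy cycle $e_1^+\, x\, e_2^+\, q$ of order $1$ ``is inacceptable by~\ref{it:A1} so nothing is required'', but that reasoning runs backwards --- in the implication $\GTH>g\Rightarrow\Gth>g$ you must produce a master copy for every semitidy cycle, and an inacceptable big cycle is precisely one where $\GTH>g$ gives you nothing. The correct observation, which the paper's ``clearly'' in Claim~\ref{clm:2036} presupposes, is that such a configuration simply cannot occur: if both copies are edge copies $e_1^+, e_2^+$ and the order is $1$, then one connector is an edge, and~\ref{it:L4}/\ref{it:B4} force $e_1=e_2$, violating~\ref{it:L1}/\ref{it:B1}; thus every cycle of order~$1$ contains a real copy and~\ref{it:A1} is automatic.
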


\begin{proof}
	For simplicity we treat cycles of copies in $(H, \ccH^+)$ as if they were big cycles 
	in $(H, \equiv, \ccH^+)$ and vice versa. 
	Pedantically speaking, this is not quite precise, for in the former case the 
	non-vertex connectors are just edges of $H$, while in the latter 
	case they are wagons and thus of the form $(e, \{e\})$ with $e\in E(H)$. Moreover, 
	in the former case the copies are just subhypergraphs of $H$ belonging to $\ccH^+$,
	whereas in the latter case they are, officially, subpretrains of $(H, \equiv)$ 
	and thus accompanied by equivalence relations on their sets of 
	edges. 
	Ignoring these extremely minor differences it will be convenient 
	to identify the two concepts for the current purposes; thus when speaking 
	of cycles in the remainder of this proof we shall mean either cycles of copies 
	in $(H, \ccH^+)$ or the corresponding big cycles in $(H, \equiv, \ccH^+)$.   
	It turns out that there is a quite literal translation from 
	Definition~\ref{dfn:1343} to Definition~\ref{dfn:1820}.
	
	\begin{clm} \label{clm:2036}
		A cycle is semitidy if and only if it is acceptable.
	\end{clm}    
	
	\begin{proof}
		Consider a semitidy cycle $\ccC$. Clearly, $\ccC$ satisfies~\ref{it:A1},
		and~\ref{it:S1},~\ref{it:S2} imply~\ref{it:A2} and~\ref{it:A3}, respectively.
		Thus semitidiness does indeed imply acceptability.
		
		Conversely, let $\ccC$ be an acceptable cycle. 
		If~\ref{it:S1} fails for some edge connector $e$, then~\ref{it:A2} shows that
		the connectors next to $e$ are two distinct vertices in~$e$. However, this 
		causes~$e$ itself to violate the moreover-part of~\ref{it:A2}. This proves~\ref{it:S1}.
		Condition~\ref{it:S2} follows from~\ref{it:A3}.  
	\end{proof}
	
	\begin{clm} \label{clm:2055}
		A cycle has a master copy if and only if it has a supreme copy. 
	\end{clm} 
	
	\begin{proof}
		If $F_\star$ is a master copy of a cycle $\ccC$, then the family 
		of edges of $F_\star$ exemplifying this fact leads to a corresponding 
		family of short $F_\star$-pieces exemplifying that $F_\star$ is a supreme copy
		of~$\ccC$.
		Taking into account that our special choice of $\equiv$ precludes the existence 
		of long pieces the converse direction is proved similarly.
	\end{proof}
	
	Owing to Lemma~\ref{lem:1342} our claims show that we are done. 
\end{proof}

\subsection{Two further facts}
\label{sssec:1815}

We conclude this section by returning to general systems of pretrains and proving 
two statements concerning big cycles and $\GTH$. The first of them asserts that not 
too many edge copies of an acceptable big cycle can belong to the same wagon.
 
\begin{lemma}\label{lem:2145}
	Given an acceptable big cycle $\ccC=F_1q_1\ldots F_nq_n$ of length $n\ge 3$
	in an extended system of pretrains $(H, \equiv, \ccH^+)$ and a wagon $W$
	of $(H, \equiv )$ there exists an index~$i(\star)$ such that the set 
		\[
		Q(W)=\bigl\{i\in\ZZ/n\ZZ\colon F_i=e_i^+
		\text{ is an edge copy with } e_i\in E(W)\bigr\}
	\]
		satisfies $Q(W)\subseteq \{i(\star), i(\star)+1\}$. Moreover, if $|Q(W)|=2$,
	then $q_{i(\star)}=W$ and the connectors $q_{i(\star)-1}$, $q_{i(\star)+1}$
	are vertices (see Figure~\ref{fig:71}). 
	In particular, if a vertex connector is between two edge copies, 
	then their underlying edges belong to distinct wagons. 
\end{lemma}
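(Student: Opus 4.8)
Here is my plan for proving Lemma~\ref{lem:2145}.

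\textbf{Setup and reduction.} Fix an acceptable big cycle $\ccC=F_1q_1\ldots F_nq_n$ with $n\ge 3$ and a wagon $W$ of $(H, \equiv)$. The key observation is that the index set $Q(W)$ is closely related to the set $M(W)=M^\ccC(W)$ of vertex-connector indices landing in $V(W)$, but now it records the \emph{copies} (rather than connectors) that are edge copies with underlying edge in $E(W)$. My plan is to split into two cases depending on whether $W$ itself appears among the connectors $q_1, \ldots, q_n$ or not, mirroring the structure of Definition~\ref{dfn:1633}.

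\textbf{The case $W$ is not a connector.} Suppose $i\in Q(W)$, so $F_i=e_i^+$ with $e_i\in E(W)$. By~\ref{it:B3} and~\ref{it:B4}, every connector $q_{i-1}$, $q_i$ is either a vertex lying in $e_i\subseteq V(W)$, or a wagon sharing an edge with $F_i=e_i^+$; but in the latter case that wagon would have $e_i$ as a common edge with $W$, hence would equal $W$ (distinct wagons share no edge), contradicting that $W$ is not a connector. So both $q_{i-1}$ and $q_i$ are vertices in $V(W)$, i.e. $\{i-1, i\}\subseteq M(W)$. Applying this to \emph{all} $i\in Q(W)$ shows that $\bigcup_{i\in Q(W)}\{i-1,i\}\subseteq M(W)$, and then~\ref{it:A3} forces $M(W)\subseteq\{i(\star), i(\star)+1\}$ for some $i(\star)$. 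If $|Q(W)|\ge 2$, picking $i<j$ in $Q(W)$ we would get $\{i-1,i,j-1,j\}\subseteq\{i(\star),i(\star)+1\}$, which is impossible once $n\ge 3$ makes these indices genuinely distinct modulo $n$; thus $|Q(W)|\le 1$ in this case, so the first assertion holds trivially and the ``moreover'' is vacuous.

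\textbf{The case $W$ appears as a connector.} Say $q_\ell=W$ for some $\ell$. For any $i\in Q(W)$, the same argument as above shows each of $q_{i-1}, q_i$ is either a vertex in $V(W)$ or equals $W$; but~\ref{it:B2} says the connectors are distinct, so at most one of them equals $W$. If $i\in Q(W)$ and neither $q_{i-1}$ nor $q_i$ equals $W$, then $\{i-1,i\}\subseteq M(W)$, and~\ref{it:A2} (applied to the wagon connector $q_\ell=W$) gives $M(W)\subseteq\{\ell-1,\ell+1\}$; since $\{i-1,i\}$ are two \emph{consecutive} indices contained in $\{\ell-1,\ell+1\}$ (two non-consecutive indices when $n\ge 3$), this is impossible. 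Hence for every $i\in Q(W)$ one of $q_{i-1}, q_i$ equals $W$, i.e. $i\in\{\ell, \ell+1\}$. This yields $Q(W)\subseteq\{\ell,\ell+1\}$, so we may take $i(\star)=\ell$. If $|Q(W)|=2$, then $Q(W)=\{\ell,\ell+1\}$, meaning $q_\ell=W$ sits between the two edge copies $F_\ell$ and $F_{\ell+1}$; I must then check that $q_{\ell-1}$ and $q_{\ell+1}$ are vertices. Indeed, if $q_{\ell-1}$ were a wagon, it would share an edge with the edge copy $F_\ell=e_\ell^+$ and hence equal the wagon containing $e_\ell$, which is $W=q_\ell$, violating distinctness of connectors; symmetrically for $q_{\ell+1}$ and $F_{\ell+1}$. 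This gives the ``moreover'' part, and the final sentence (a vertex connector between two edge copies forces their edges into distinct wagons) is the contrapositive of what we just proved.

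\textbf{Main obstacle.} The only real subtlety is bookkeeping with the cyclic index arithmetic when $n$ is small (especially $n=3$), to make sure that ``two consecutive indices contained in a two-element set $\{a,a+1\}$ forces them to \emph{be} $\{a,a+1\}$'' and that the various distinctness conclusions from~\ref{it:B2} are genuinely available; I would state explicitly at the outset that $n\ge 3$ guarantees that indices differing by $1$ or $2$ modulo $n$ are distinct, so that~\ref{it:A2} and~\ref{it:A3} can be invoked without degenerate collisions. Everything else is a direct unwinding of Definitions~\ref{dfn:1605} and~\ref{dfn:1633} together with the fact that distinct wagons of a pretrain share no edge.
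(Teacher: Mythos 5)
Your overall strategy mirrors the paper's (split on whether $W$ appears as a connector, then use B3/B4 to deduce that connectors adjacent to an edge copy in $Q(W)$ are vertices of $W$ or $W$ itself, and invoke A2/A3). But there is a genuine gap in the second case, precisely where you flagged the ``main obstacle.''

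When $W=q_\ell$ is a connector and $i\in Q(W)$ with neither $q_{i-1}$ nor $q_i$ equal to $W$, you conclude $\{i-1,i\}\subseteq M(W)\subseteq\{\ell-1,\ell+1\}$ and declare this impossible because ``$\{\ell-1,\ell+1\}$ are two non-consecutive indices when $n\ge 3$.'' That claim is false when $n=3$: in $\ZZ/3\ZZ$ the indices $\ell-1$ and $\ell+1$ differ by $2\equiv -1$, so they \emph{are} consecutive, and indeed $\{i-1,i\}=\{\ell-1,\ell+1\}$ is achieved with $i=\ell-1=\ell+2$. In that scenario $\ell-1\in Q(W)$ but $\ell-1\notin\{\ell,\ell+1\}$, which would break the desired containment $Q(W)\subseteq\{\ell,\ell+1\}$. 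Your remark that ``indices differing by 1 or 2 mod $n$ are distinct when $n\ge 3$'' establishes distinctness but not non-consecutiveness, which is what the argument actually needs.

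To close the gap you must invoke the moreover-part of acceptability condition~\ref{it:A2}, which you never use: in the offending $n=3$ configuration one has $|M(W)|=2$ with $M(W)=\{\ell-1,\ell+1\}$, yet $F_{\ell-1}=e_{\ell-1}^+$ is an edge copy and by~\ref{it:B3} both $q_{\ell-1}$ and $q_{\ell-2}=q_{\ell+1}$ lie in the single edge $e_{\ell-1}$. That exhibits an edge $f=e_{\ell-1}\in E(H)$ with $q_{\ell-1},q_{\ell+1}\in f$, contradicting the moreover clause of~\ref{it:A2}. With that observation inserted, your proof matches the paper's.
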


\begin{figure}[ht]
	\centering	
			\begin{tikzpicture}[scale=1]
	
		\newcommand{\edge}[6]{\draw [color={#6},ultra thick, rotate around={#5:(#1,#2)}] (#1-#3, #2) [out=-90, in= 180] to (#1,#2-#4)  [out=0, in=-90] to (#1+#3,#2) [out=90, in=0] to (#1,#2+#4)  [out=-180, in=90] to (#1-#3,#2);}
		\newcommand{\fulledge}[6]{\fill [color={#6}, opacity=.5, rotate around={#5:(#1,#2)}] (#1-#3, #2) [out=-90, in= 180] to (#1,#2-#4)  [out=0, in=-90] to (#1+#3,#2) [out=90, in=0] to (#1,#2+#4)  [out=-180, in=90] to (#1-#3,#2);}
	
	\def\w{1.8};
	\def\h{1};
	\coordinate (a) at (-1,1.5);
	\coordinate (b) at (2,1);
	\coordinate (c) at (2,-3);
	
	\fulledge{-1}{1.5}{\w}{\h}{-1}{blue!15!white};
	\edge{-1}{1.5}{\w}{\h}{-1}{blue!75!black};
	\fulledge{2}{1}{\w}{\h}{-26}{blue!15!white};
	\edge{2}{1}{\w}{\h}{-26}{blue!75!black};
	\fulledge{1.2}{-3}{\w}{\h}{-1}{blue!15!white};
	\edge{1.2}{-3}{\w}{\h}{-1}{blue!75!black};
	
		\draw [red, thick, rotate around ={10:(3,-1.25)}](2.2,.3) rectangle (4,-3);
	
	\fulledge {2.5}{-.5}{.25}{.6}{10}{blue!15!white};		
	\edge {2.5}{-.5}{.25}{.6}{10}{blue!75!black};
	\fulledge {2.8}{-2.3}{.25}{.6}{10}{blue!15!white};
	\edge {2.8}{-2.3}{.25}{.6}{10}{blue!75!black};
	
	\fill (2.45,-.06) circle (2pt);
	\fill (2.8,-2.7) circle (2pt);
	\fill (.5,1.2) circle (2pt);
	
	\node at (.6,1.5) {$q_1$};
	\node at (2.3,.4) {$q_2$};
	\node at (2.2,-2.8) {$q_4$};
	
	\node [red] at (5.5,-1.2) {$W=q_3=q_{i(\star)}$};
	\node at (-1,1.5) {$F_1$};
	\node at (1.9,1.2){$F_2$};
	\node at (1.2,-3){$F_5$};
	
	\node at (3.35,-1) {\tiny $e^+_3=F_3$};
	\node at (3.7,-2.6) {\tiny $e^+_4=F_4$};
	
\end{tikzpicture}
			
\caption{The case $Q(W)=\{i(\star), i(\star)+1\}$ (and $i(\star)=3$) of Lemma~\ref{lem:2145}}
\label{fig:71} 
\end{figure} 
\begin{proof}
	Notice that if $i\in Q(W)$, then by~\ref{it:B3} and~\ref{it:B4}
	each of the connectors $q_{i-1}$, $q_i$ is either a vertex belonging to $V(W)$
	or $W$ itself. Thus if $W$ does not appear on $\ccC$, then the acceptability 
	condition~\ref{it:A3} implies $|Q(W)|\le 1$. Furthermore, if $W=q_{i(\star)}$
	appears on~$\ccC$, then~\ref{it:A2} discloses $Q(W)\subseteq \{i(\star), i(\star)+1\}$.
	Finally, if this holds with equality, then~$q_{i(\star)-1}$ and $q_{i(\star)+1}$
	are indeed vertices.  
\end{proof}

We proceed with a natural analogue of Lemma~\ref{lem:949} that 
reads as follows. 

\begin{lemma}\label{lem:1448} 
	Let $\ccC=F_1q_1\ldots F_nq_n$ be an acceptable big cycle of length $n\ge 3$ 
	in a linear extended system of pretrains $(H, \equiv, \ccH^+)$. Suppose that for 
	some set of indices $K\subseteq \ZZ/n\ZZ$ we have a family of pieces $\{P_k\colon k\in K\}$
	such that if for some $k\in K$ the piece $P_k=(f'_k)^+W_k(f''_k)^+$ is long, 
	then 
	\begin{enumerate}[label=\glabel]		
		\item\label{it:1801a} the connectors $q_{k-1}$, $q_k$ are vertices,
		\item\label{it:1801b} there is no edge $f\in E(H)$ with $q_{k-1}, q_k\in f$,
		\item\label{it:1801c} and $W_k\not\in\{q_1, \ldots, q_n\}$.
	\end{enumerate}
	Let $\ccD$ be the cyclic sequence obtained from $\ccC$ upon replacing 
	every copy $F_k$ with $k\in K$ by the corresponding piece $P_k$. 
	\begin{enumerate}[label=\alabel]
		\item\label{it:1448a} If $\ccD$ satisfies~\ref{it:B3} and~\ref{it:B4}, then it 
			is an acceptable big cycle.  
		\item\label{it:1448b} Moreover, if $\ccD$ has a supreme copy, then so does $\ccC$.
	\end{enumerate}
\end{lemma}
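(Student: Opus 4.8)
The structure of Lemma~\ref{lem:1448} is designed to parallel Lemma~\ref{lem:949}, so the plan is to imitate that proof, inserting wagon-aware versions of the key combinatorial observations. For part~\ref{it:1448a} the task is to verify that $\ccD$ satisfies~\ref{it:B1} (no two consecutive copies coincide) and the three acceptability conditions~\ref{it:A1}\,--\,\ref{it:A3}, given that~\ref{it:B3} and~\ref{it:B4} already hold by hypothesis. First I would treat~\ref{it:B1}: if $\ccD$ had an edge copy $e^+$ appearing twice consecutively, then since $n\ge 3$ there are three relevant connectors, say $q_{i}, q_{i+1}, q_{i+2}$, sitting next to these identical edge copies. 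By~\ref{it:B4}/\ref{it:B3} each of these connectors is either the wagon containing $e$ or a vertex of $e$; I would rule out the all-three-vertices-in-$e$ case using~\ref{it:T1}-type reasoning (here, the acceptability of~$\ccC$ together with the fact that at most one of $q_i$, $q_{i+1}$, $q_{i+2}$ can be a wagon by~\ref{it:B2}), and rule out the case where one of them is a wagon via~\ref{it:A2} applied to~$\ccC$, mirroring the argument in Lemma~\ref{lem:949}\ref{it:949a}. The subtlety compared to Lemma~\ref{lem:949} is that a ``double edge copy'' in $\ccD$ can arise in two ways: either $F_i, F_{i+1}$ were both collapsed and got the same short piece $e^+$, or a single copy $F_i$ was collapsed to a long piece $(f')^+W(f'')^+$ whose two edge copies could coincide with a neighbouring edge copy in $\ccC$ or in $\ccD$. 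The long-piece case is where conditions~\ref{it:1801a}\,--\,\ref{it:1801c} earn their keep: \ref{it:1801c} guarantees $W_k$ is a fresh connector, \ref{it:1801a} forces the flanking connectors to be vertices, and~\ref{it:1801b} prevents an edge through $q_{k-1}, q_k$. I would use these to show the long piece cannot create a forbidden repetition and that the order of $\ccD$ equals the order of $\ccC$ (cf.\ Remark~\ref{rem:1852}\ref{it:1852a}).

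Next I would check acceptability of $\ccD$. For~\ref{it:A1}: if $\ord{\ccD}=1$ then $\ccD=P_i q P_j W$ with two pieces; I must show at least one is a real copy. Since collapsing only replaces copies by pieces containing edge copies (and possibly a wagon), any copy of $\ccC$ that was \emph{not} collapsed remains a real copy of $\ccD$; and $\ord{\ccC}=\ord{\ccD}=1$ with~$\ccC$ acceptable forces $\ccC$ to have a real copy among exactly two copies — if that real copy was collapsed, it was because it was distinct from the surviving $F_\star$... actually the cleanest route is: the surviving copy $F_\star$ itself is real (it is a copy of $\ccH$, and we never collapse it), unless $\ccD$ has \emph{no} uncollapsed copy, which is impossible since collapsing is only defined relative to a chosen uncollapsed $F_\star$ — but Lemma~\ref{lem:1448} does not fix such an $F_\star$ in advance. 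I would handle this by noting $K\ne\ZZ/n\ZZ$: if every copy were collapsed, $\ccD$ would consist entirely of edge copies and wagons, and by~\ref{it:B1} (just proved) plus the decomposition discussion following Definition~\ref{dfn:1633} this cannot be an acceptable big cycle of order $\le g$ without a real copy — so in fact $K\ne\ZZ/n\ZZ$ is forced, giving a real copy in $\ccD$. For~\ref{it:A2} and~\ref{it:A3}: the wagon connectors and the sets $M^{\ccD}(W)$ are inherited from $\ccC$ almost verbatim, because collapsing a copy $F_k$ to a short piece does not change any connectors, and collapsing to a long piece $(f')^+W_k(f'')^+$ only \emph{inserts} a new wagon connector $W_k$ between the two (vertex, by~\ref{it:1801a}) connectors $q_{k-1}, q_k$. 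For this new wagon connector one checks $M^{\ccD}(W_k)\subseteq\{$positions of $q_{k-1}, q_k\}$ directly and the moreover-part of~\ref{it:A2} is exactly~\ref{it:1801b}. For every other wagon the relevant $M$-set is unchanged, so~\ref{it:A2} and~\ref{it:A3} transfer from the acceptability of $\ccC$.

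For part~\ref{it:1448b} I would follow Lemma~\ref{lem:949}\ref{it:949b} closely. Write $\ccD=\wt F_1 q_1\ldots \wt F_n q_n$ in the obvious piecewise notation (now with wagon connectors possibly inserted, so really $\ccD$ has some connectors $q_i$ together with new wagons $W_k$; I'd set this up carefully). Suppose $G_\star$ is a supreme copy of $\ccD$, witnessed by a family of $G_\star$-pieces. By Remark~\ref{rem:1852}\ref{it:1852b}, $G_\star$ is a real copy, hence $G_\star=F_\ell$ for some $\ell\notin K$. I then need to manufacture, for each $i$ with $F_i\ne G_\star$, a $G_\star$-piece witnessing that $G_\star$ is supreme in $\ccC$. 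For $i\in K$ we already have the piece $P_i$ from the hypothesis; I must argue $P_i$ is a legitimate $G_\star$-piece — i.e.\ that $f_i$ (or $f'_i, f''_i$) lies in $E(G_\star)$, not merely in $E(\wt F_i)$ — and here the point is that $G_\star$ being supreme in $\ccD$ means $G_\star$ ``covers'' the piece $P_i$ in $\ccD$: either $P_i$ appears verbatim in $\ccD$ and the supreme structure of $\ccD$ forces $G_\star$ to collapse the \emph{neighbours} of $P_i$, from which $f_i\in E(G_\star)$ follows by compatibility with~\ref{it:B3}/\ref{it:B4}, or more simply, the composition of ``$\ccC\to\ccD$'' with ``$\ccD\to$ cycle in $G_\star$'' directly exhibits $\ccC$'s collapse to a cycle in $G_\star$. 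For $i\notin K$ with $F_i\ne G_\star$, the supreme witness for $\ccD$ gives a $G_\star$-piece for $\wt F_i=F_i$ directly. The only thing to verify is that condition~\ref{it:1751b} (the long-piece restrictions~\ref{it:alpha}, \ref{it:beta}) is preserved — but the flanking connectors of $F_i$ in $\ccC$ are the same as in $\ccD$ (possibly with an inserted wagon, which only makes~\ref{it:1751b} easier), so this is immediate. \textbf{Main obstacle.} The bookkeeping around long pieces is the delicate part: a long piece inserted when forming $\ccD$ changes the connector sequence, so ``$\ccC$ and $\ccD$ have the same connectors'' is false and every inherited-from-$\ccC$ argument (acceptability transfer, the $M$-set computations, and matching supreme witnesses across the two cycles) must be re-examined to account for the inserted wagon connectors $W_k$. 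Conditions~\ref{it:1801a}\,--\,\ref{it:1801c} are precisely what make these re-examinations go through, and getting the indexing right — especially showing $\ord{\ccD}=\ord{\ccC}$ so that ``$\le g$'' is preserved when we invoke $\GTH(H,\equiv,\ccH^+)>g$ elsewhere — is where most of the care will go.
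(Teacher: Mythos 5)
Your architecture for part~\ref{it:1448a} is basically correct and follows the paper's proof of Lemma~\ref{lem:949}\ref{it:949a}: show~\ref{it:B2} first (the new wagons are distinct from the old ones by~\ref{it:1801c} and from each other by~\ref{it:A3}), then~\ref{it:B1} via a case analysis on a hypothetical repeated edge copy flanked by three connectors, then transfer~\ref{it:A2} and~\ref{it:A3} by noting that the $M$-sets are essentially inherited from $\ccC$ while a newly inserted wagon's $M$-set is covered by~\ref{it:1801a}\,--\,\ref{it:1801b}. One correction: your treatment of~\ref{it:A1} is unnecessarily convoluted and the ``$K\ne\ZZ/n\ZZ$ is forced'' reasoning is not needed (and not quite right as a general claim). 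Acceptability condition~\ref{it:A1} is vacuous here simply because $\len{\ccD}\ge \len{\ccC}=n\ge 3$ forces $\ord{\ccD}\ge 2$; there is nothing further to check.

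Part~\ref{it:1448b}, however, contains a genuine gap. You propose to ``argue $P_i$ is a legitimate $G_\star$-piece — i.e.\ that $f_i$ (or $f'_i,f''_i$) lies in $E(G_\star)$''. This is not the right object to chase, and in fact it is generally false: the pieces $P_k$ are built from edges of the \emph{original} copies $F_k$, and there is no reason why those edges should belong to the supreme copy $F_\star=G_\star$ of $\ccD$. The correct move, which you gesture at in passing (``or more simply, the composition\ldots'') but do not execute, is the one the paper takes. Write $\ccD=G_1r_1\ldots G_mr_m$ and fix the family $\{Q_j\}$ of $F_\star$-pieces witnessing the supremacy of $F_\star$ in $\ccD$. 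Then for every $k$ with $F_k\ne F_\star$ you build $R_k$ from the $Q$'s, not from the $P$'s: if $k\notin K$ take $R_k=Q_{i(k)}$ directly; if $k\in K$ and $P_k$ is short, note $G_{i(k)}=P_k$ is an edge copy hence $\ne F_\star$, so $Q_{i(k)}$ exists and again $R_k=Q_{i(k)}$; if $k\in K$ and $P_k=(f'_k)^+W_k(f''_k)^+$ is long, observe that since the wagon $W_k=r_{i(k)}$ sits between the two edge copies $(f'_k)^+=G_{i(k)}$ and $(f''_k)^+=G_{i(k)+1}$, Definition~\ref{dfn:1746}\ref{it:1751b}\ref{it:alpha} forces $Q_{i(k)}$ and $Q_{i(k)+1}$ to be short, and then $R_k=Q_{i(k)}W_kQ_{i(k)+1}$ is a long $F_\star$-piece inheriting~\ref{it:1801a}\,--\,\ref{it:1801c} from the construction. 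Your plan as written would stall at the first long piece, because ``$P_i$ is a $G_\star$-piece'' is not something you can prove.
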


Notice that the conditions~\ref{it:1801a} and~\ref{it:1801b} are the same as in 
Definition~\ref{dfn:1746}\ref{it:1751b}.

\begin{proof}[Proof of Lemma~\ref{lem:1448}]
	A wagon connector of $\ccD$ will be called {\it new} if 
	it occurs in the middle of a long piece $P_k$ and otherwise, i.e., if has been inherited 
	from  $\ccC$, it will be called {\it old}. This distinction is not necessary for vertex
	connectors, since all of them are ``old''. Notice that condition~\ref{it:1801c} imposed 
	on admissible long pieces $P_k$ says that no wagon is at the same time old and new. 
	
	\smallskip
	\noindent
	{\bf Proof of part~\ref{it:1448a}.} The two claims that follow will establish that 
	$\ccD$ is indeed a big cycle. 
	
	\begin{clm}
		The wagons of $\ccD$ are distinct and, hence, $\ccD$ satisfies~\ref{it:B2}.
	\end{clm}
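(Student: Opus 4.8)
The plan is to prove that the wagons of $\ccD$ are distinct by separately handling old wagons and new wagons, and using acceptability of $\ccC$ to rule out coincidences. First I would observe that the wagon connectors of $\ccD$ are of two kinds: the old ones, namely those connectors $q_i$ of $\ccC$ that are wagons (these are inherited unchanged by $\ccD$), and the new ones, namely the wagons $W_k$ appearing in the middle of long pieces $P_k$ for $k\in K$. Since $\ccC$ satisfies~\ref{it:B2}, its connectors $q_1,\dots,q_n$ are already distinct, so no two old wagons can coincide. By condition~\ref{it:1801c} imposed on the admissible long pieces, no $W_k$ equals any $q_j$, so no new wagon coincides with an old wagon.

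Next I would show that two distinct long pieces cannot contribute the same new wagon. Suppose $k,k'\in K$ are distinct with $P_k=(f'_k)^+W_k(f''_k)^+$ and $P_{k'}=(f'_{k'})^+W_{k'}(f''_{k'})^+$ both long and $W_k=W_{k'}=W$. Since all the pieces collapse copies of $\ccC$ to $F_\star$-pieces, the four edges $f'_k, f''_k, f'_{k'}, f''_{k'}$ all lie in $E(F_\star)$; moreover each of them lies in $E(W)$. Then, by~\ref{it:1801a}, the four connectors $q_{k-1},q_k,q_{k'-1},q_{k'}$ are all vertices belonging to $V(W)$. Here I would invoke acceptability of $\ccC$: since $W\notin\{q_1,\dots,q_n\}$ (by~\ref{it:1801c}), condition~\ref{it:A3} forces $M^\ccC(W)\subseteq\{i(\star),i(\star)+1\}$ for a single index $i(\star)$. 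But $\{k-1,k,k'-1,k'\}\subseteq M^\ccC(W)$ has at least three elements once $k\ne k'$ (the indices $k-1,k$ are consecutive, $k'-1,k'$ are consecutive, and the two pairs are distinct pairs of consecutive indices along $\ZZ/n\ZZ$ with $n\ge 3$), a contradiction. Hence $W_k\ne W_{k'}$, and this exhausts all cases, so the wagons of $\ccD$ are pairwise distinct; combined with the fact that no connector can simultaneously be a vertex and a wagon (vertices of $H$ are never wagons), $\ccD$ satisfies~\ref{it:B2}.

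The step I expect to be the main obstacle is the bookkeeping of how many distinct consecutive-index pairs can fit inside a two-element set $M^\ccC(W)$: one must be careful that $n\ge 3$ is genuinely used, since for $n=2$ the indices wrap around degenerately and the argument would break. I would state this counting observation cleanly as a small sub-remark before applying~\ref{it:A3}, noting that if $k,k'$ are distinct indices in $\ZZ/n\ZZ$ with $n\ge 3$, then $\{k-1,k\}\ne\{k'-1,k'\}$ and their union cannot be covered by a set of the form $\{i(\star),i(\star)+1\}$. After this claim, the companion claim (that the vertex connectors of $\ccD$ are distinct) is immediate since they are literally inherited from $\ccC$, and the verification that $\ccD$ is a big cycle is completed by the hypothesis that $\ccD$ satisfies~\ref{it:B3} and~\ref{it:B4} together with~\ref{it:B1}, the latter following from a Lemma~\ref{lem:2145}-style argument ruling out two consecutive identical edge copies; the remaining acceptability conditions~\ref{it:A1}--\ref{it:A3} for $\ccD$ would then be checked by transporting the corresponding properties of $\ccC$ across the collapse, using~\ref{it:1801a}, \ref{it:1801b}, and~\ref{it:1801c} exactly as in the proof of Lemma~\ref{lem:949}\ref{it:949a}.
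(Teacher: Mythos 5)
Your proof is correct and follows the paper's approach exactly: the key step is that two distinct long pieces contributing the same new wagon $W$ would force $\{k-1,k,k'-1,k'\}\subseteq M^\ccC(W)$ (at least three elements since $n\ge 3$), violating acceptability condition~\ref{it:A3}, while old-versus-old and new-versus-old coincidences are ruled out by~\ref{it:B2} for $\ccC$ and by~\ref{it:1801c}, respectively. One small caveat: the pieces in Lemma~\ref{lem:1448} are not assumed to be $F_\star$-pieces (there is no $F_\star$ in the hypotheses of that lemma), so the assertion $f'_k,f''_k,f'_{k'},f''_{k'}\in E(F_\star)$ is extraneous, though harmless, because your argument only needs $f'_k,f''_k\in E(W_k)$, which holds for every long piece by definition.
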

	
	\begin{proof}
		This could only fail, if some new wagon occurs twice, i.e., if there are 
		distinct indices $k, \ell\in K$ such that the pieces $P_k$ and~$P_{\ell}$ 
		are long and contain the same wagon $W_k=W_{\ell}$ in the middle.
		However, this would necessitate $\{k-1, k, \ell-1, \ell\}\subseteq  M^\ccC(W_k)$ 
		and, hence, $|M^\ccC(W_k)|\ge 3$, contrary to~\ref{it:A3}. 
	\end{proof}
	
	\begin{clm}
		The cyclic sequence $\ccD$ satisfies~\ref{it:B1} and, hence, it is a big cycle.  
	\end{clm}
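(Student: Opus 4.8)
The plan is to show that in $\ccD$ no two edge copies can sit in consecutive positions, which is the content of~\ref{it:B1}. Recall that in $\ccC$ we have $F_i\ne F_{i+1}$ for every $i$, so a violation of~\ref{it:B1} in $\ccD$ can only arise between two newly inserted pieces, or between a newly inserted piece and a neighbouring copy that happens to equal one of the edge copies produced by the collapsing. Concretely, the only way to produce two identical consecutive entries in $\ccD$ is to have two consecutive positions both occupied by the same edge copy $e^+$; since the pieces $P_k$ have the form $f^+$ or $(f')^+Wf''^+$, such a clash happens only at the \emph{ends} of adjacent pieces, i.e.\ between the last edge copy of $P_k$ and the first edge copy of $P_{k+1}$ when $k,k+1\in K$ are consecutive and these two edge copies coincide.

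So first I would assume, for contradiction, that some edge copy $e^+$ occurs in two consecutive positions of $\ccD$, located between the connectors $q_{i-1}$, $q_i$, $q_{i+1}$ of $\ccC$ for an appropriate index $i$ (using $n\ge 3$ to guarantee three genuine surrounding connectors, exactly as in the proof of Lemma~\ref{lem:949}\ref{it:949a}). The two occurrences of $e^+$ both come from collapsing copies $F_i$ and $F_{i+1}$ (or, in the long-piece case, from the end-edge of one piece and the start-edge of the next) with the common underlying edge $e$. By~\ref{it:B3} and~\ref{it:B4} each of $q_{i-1}$, $q_i$, $q_{i+1}$ is then either a vertex lying in $e$, or a wagon containing $e$. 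Since by~\ref{it:B2} the connectors are distinct, at most one of the three can be the wagon-type connector equal to some $W$ with $e\in E(W)$; in fact two of them must be distinct vertices of $e$.

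Next I would derive the contradiction from acceptability of $\ccC$. If all three of $q_{i-1},q_i,q_{i+1}$ were vertices lying in $e$, then $\{i-1,i,i+1\}\subseteq M^\ccC(e)$ for the wagon containing $e$ — but we must be careful, since $e$ lies in a wagon $W$ and it is $M^\ccC(W)$ that acceptability controls; here $\{i-1,i,i+1\}\subseteq M^\ccC(W)$, so $|M^\ccC(W)|\ge 3$, contradicting~\ref{it:A2} if $W$ appears on $\ccC$ and~\ref{it:A3} otherwise. If instead exactly one of the three connectors, say $q_j$ with $j\in\{i-1,i,i+1\}$, is a wagon, then~\ref{it:B4} forces that wagon to be some $W$ with $e\in E(W)$; the other two connectors are distinct vertices of $e\subseteq V(W)$, and together with $j$ this again yields $|M^\ccC(W)|\ge 2$ with the two indices \emph{not} both adjacent to $j$ in the cyclic order whenever $n\ge 4$, or — when $n=3$ — a violation of the moreover-clause of~\ref{it:A2} since an edge $f=e$ through $q_{j-1},q_{j+1}$ exists. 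One should also rule out the subcase where the repeated $e^+$ arises as the interior-adjacent edges of two long pieces; but condition~\ref{it:1801c} forbids a long piece's middle wagon from appearing elsewhere as a connector, and condition~\ref{it:1801b} forbids an edge through the bordering vertex connectors, which is exactly what is needed. Having ruled out every case, $\ccD$ satisfies~\ref{it:B1}, completing the proof that $\ccD$ is a big cycle; the acceptability conditions~\ref{it:A1}\,--\,\ref{it:A3} for $\ccD$ are then inherited from those for $\ccC$ by a routine check (order is preserved by Remark~\ref{rem:1852}\ref{it:1852a}, and the new wagon connectors satisfy~\ref{it:A2},~\ref{it:A3} because of~\ref{it:1801b},~\ref{it:1801c}), which settles~\ref{it:1448a}; part~\ref{it:1448b} then follows exactly as in Lemma~\ref{lem:949}\ref{it:949b}, using Remark~\ref{rem:1852}\ref{it:1852b} to see that a supreme copy of $\ccD$ is a real copy and hence one of the $F_i$, whose associated subfamily of pieces witnesses supremacy in $\ccC$.

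The main obstacle I anticipate is the careful bookkeeping of the small-length case $n=3$ and the interaction between long pieces and the acceptability conditions: one must make sure that when two inserted long pieces are adjacent, their bordering edge copies genuinely cannot coincide, and this is precisely where hypotheses~\ref{it:1801b} and~\ref{it:1801c} (rather than the weaker conditions in Definition~\ref{dfn:1746}) earn their keep. Everything else is a faithful transcription of the argument for Lemma~\ref{lem:949}, with ``edge'' replaced by ``wagon'' and $M(\cdot)$ reinterpreted accordingly.
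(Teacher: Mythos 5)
Your high-level plan---assume a repeated edge copy $e^+$ in $\ccD$, examine the three flanking connectors, and derive a contradiction to the acceptability of $\ccC$---is the same as the paper's. There is, however, a genuine gap. You label the three flanking connectors $q_{i-1}, q_i, q_{i+1}$ and treat them as connectors of $\ccC$, but they are connectors of $\ccD$, and after inserting a long piece $(f')^+W(f'')^+$ the sequence $\ccD$ acquires the middle wagon $W$ as a connector that does not appear in $\ccC$ at all. The paper therefore works with $r_{i-1}, r_i, r_{i+1}$ (connectors of $\ccD$) and splits the unique wagon among them into \emph{old} (inherited from $\ccC$, where~\ref{it:A2} applies) and \emph{new} (the middle of a long piece, where~\ref{it:A2} is inapplicable). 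Your ``exactly one wagon'' subcase applies~\ref{it:A2} throughout and so implicitly handles only the old case. The new case is the hardest step: when $W_\star = r_{i-1}$ is new, the paper uses $\len{\ccD} > \len{\ccC} \ge 3$ to look one step further out, collects three distinct vertices $r_{i-2}, r_i, r_{i+1}$ of $W_\star$, notes via~\ref{it:1801c} that $W_\star$ is not a $\ccC$-connector, and then invokes~\ref{it:A3}. Your proposal has no analogue of this, so the concluding ``having ruled out every case'' is premature. (The case $W_\star = r_i$ is also different from what you describe: there the moreover-part of~\ref{it:A2} applies for every $n$, not only $n = 3$.)

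A secondary point: you attribute the exclusion of coinciding ``interior-adjacent edges of two long pieces'' to~\ref{it:1801b} and~\ref{it:1801c}, but that configuration would force the two flanking middle wagons to contain the same edge and hence coincide, contradicting~\ref{it:B2}, which the preceding claim already established for $\ccD$ and which the paper explicitly uses to reduce to the case of at most one wagon among $r_{i-1}, r_i, r_{i+1}$. The hypotheses~\ref{it:1801a}--\ref{it:1801c} enter elsewhere (in the~\ref{it:B2} claim and in the new-wagon branch), not where you place them.
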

	
	\begin{proof}	
		Otherwise, some edge copy $e^+$ occurs twice in consecutive positions, i.e., $\ccD$ 
		has a subsequence of the form $r_{i-1}e^+r_ie^+r_{i+1}$. 
		Let $W_\star$ denote the unique wagon with ${e\in E(W_\star)}$. Notice that all wagons
		among $r_{i-1}$, $r_{i}$, and $r_{i+1}$ are equal to $W_\star$, while all vertices 
		among these connectors are in $e$. Thus, if all three of these connectors are vertices,
		then $|M^\ccC(W_\star)|\ge 3$, contrary to~\ref{it:A2} or~\ref{it:A3}. 
		Owing to~\ref{it:B2} it remains to consider the case that two of these connectors 
		are vertices, while the third one is equal to $W_\star$.
	
		Suppose first that $W_\star=r_i$. Now $W_\star$ cannot be new, 
		for $e^+W_\star e^+$ does not qualify as a piece. However, if $W_\star$ is
		old, then the edge $e$ contradicts the moreover-part of $W_\star$ satisfying~\ref{it:A2}. 
		
		By symmetry, it only remains to discuss the case that $W_\star=r_{i-1}$. 
		If $W_\star$ is old, then~$r_i$,~$r_{i+1}$ 
		correspond to indices in $M^\ccC(W_\star)$ and we obtain a contradiction to~\ref{it:A2}. 
		Finally, let $W_\star$ be new. The existence of a new wagon 
		implies $\len{\ccD} > \len{\ccC}\ge 3$, i.e., $\len{\ccD}\ge 4$. 
		So $r_{i-2}$, $r_i$, and $r_{i+1}$ are three distinct vertices of $W_\star$  
		that witness $\big|M^\ccC(W_\star)\big|\ge 3$. This contradiction to~\ref{it:A3} concludes 
		the proof that $\ccD$ is indeed a big cycle. 
	\end{proof}
	
	It remains to show that~$\ccD$ is acceptable. Notice that condition~\ref{it:A1} is clear.
	
	\begin{clm}
		The big cycle $\ccD$ satisfies~\ref{it:A2}.
	\end{clm}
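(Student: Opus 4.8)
The plan is to show that the new big cycle $\ccD$ satisfies condition~\ref{it:A2}, by carefully tracing how each wagon connector of $\ccD$ can arise. A wagon connector $W$ of $\ccD$ is either old (inherited from $\ccC$) or new (sitting in the middle of a long piece $P_k$). In either case I must verify $M^\ccD(W)\subseteq\{i-1,i+1\}$ whenever $W$ occupies position $i$ in $\ccD$, and moreover, if $|M^\ccD(W)|=2$, that no edge $f\in E(H)$ contains both neighbouring connectors.

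First I would relate $M^\ccD(W)$ to $M^\ccC(W)$. The only vertex connectors of $\ccD$ are the vertex connectors of $\ccC$, so $M^\ccD(W)$ is a subset of $M^\ccC(W)$ (modulo the relabelling of positions caused by long pieces, which replace a single copy $F_k$ by a length-three block $(f'_k)^+W_k(f''_k)^+$ and thereby shift indices). If $W$ is old, then $W$ already appeared as a connector of $\ccC$, so~\ref{it:A2} applied to $\ccC$ gives $M^\ccC(W)\subseteq\{j-1,j+1\}$ where $j$ is the position of $W$ in $\ccC$; since replacing copies by pieces never inserts a vertex connector, the two neighbours of $W$ in $\ccD$ correspond to the two neighbours in $\ccC$, and the inclusion plus the moreover-part transfer directly. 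If $W$ is new, say $W=W_k$ for a long piece $P_k=(f'_k)^+W_k(f''_k)^+$, then by hypothesis~\ref{it:1801c} we have $W_k\notin\{q_1,\dots,q_n\}$, so $W$ does not appear as a connector of $\ccC$, and~\ref{it:A3} applied to $\ccC$ gives $M^\ccC(W)\subseteq\{k(\star),k(\star)+1\}$ for some index. But in $\ccD$ the wagon $W_k$ sits between $(f'_k)^+$ and $(f''_k)^+$, whose other neighbours are $q_{k-1}$ and $q_k$ respectively; these are vertices by~\ref{it:1801a}, and they are the only candidates for membership in $M^\ccD(W_k)$. Hypothesis~\ref{it:1801b} says precisely that no edge of $H$ contains both $q_{k-1}$ and $q_k$, which is exactly the moreover-part of~\ref{it:A2} for $W_k$ in $\ccD$. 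So the claim holds for new wagons.

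The main obstacle I anticipate is bookkeeping: one has to be careful that when several long pieces are inserted, the positions $i-1$, $i$, $i+1$ around a given wagon connector of $\ccD$ genuinely correspond to the positions predicted by~\ref{it:A2} or~\ref{it:A3} in $\ccC$, and that two distinct long pieces cannot produce overlapping or conflicting constraints — but this last point is already handled by the earlier claim in the proof showing the wagons of $\ccD$ are distinct, together with~\ref{it:A3} bounding $|M^\ccC(W)|\le 2$. A subtle sub-case is when an old wagon connector $W$ of $\ccD$ happens to have a neighbour that is itself a newly-inserted edge copy $(f'_k)^+$ or $(f''_k)^+$; here one uses that the vertex connectors flanking the long piece are unchanged, so the set $M^\ccD(W)$ is still controlled by the original $M^\ccC(W)$.

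Once~\ref{it:A2} is established, the remaining acceptability condition~\ref{it:A3} — for wagons of $(H,\equiv)$ that do not appear as connectors of $\ccD$ at all — follows by the same comparison: such a wagon also fails to appear in $\ccC$ (a wagon appearing in $\ccD$ but not $\ccC$ would have to be new, hence a connector of $\ccD$), so~\ref{it:A3} for $\ccC$ applies, and no vertex connectors were added in passing to $\ccD$, so $M^\ccD(W)\subseteq M^\ccC(W)$ is covered by a set of the form $\{i(\star),i(\star)+1\}$. This completes part~\ref{it:1448a}. Part~\ref{it:1448b} should then be immediate in the spirit of Lemma~\ref{lem:949}\ref{it:949b}: a supreme copy $F_\star$ of $\ccD$ is a real copy by Remark~\ref{rem:1852}\ref{it:1852b}, hence one of the $F_i$ with $i\notin K$, and composing the $F_\star$-pieces witnessing supremacy in $\ccD$ with the pieces $P_k$ (each of which is an $F_\star$-piece once we know $F_\star$ is among the uncollapsed copies) yields a family of $F_\star$-pieces exhibiting $F_\star$ as a supreme copy of $\ccC$, with the long-piece side conditions~\ref{it:1751b} inherited from~\ref{it:1801a}--\ref{it:1801c}.
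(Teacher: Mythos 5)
Your argument is correct and takes essentially the same route as the paper: for old wagon connectors the inclusion and moreover-part of~\ref{it:A2} transfer from the acceptability of $\ccC$, while for a new wagon $W_k$ the inclusion follows by applying~\ref{it:A3} to $\ccC$ (which is legitimate thanks to~\ref{it:1801c}) and the moreover-part is exactly~\ref{it:1801b}. The paper bundles the moreover-part into a single short contradiction argument covering both cases at once, but the content coincides with yours.
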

	
	\begin{proof}
		The first part of~\ref{it:A2} holds for old wagons because it 
		holds in $\ccC$ and it holds for new wagons because of~\ref{it:A3}.
		If for some wagon $W$ of $\ccD$ 
		there is an edge $f$ as in the moreover-part of~\ref{it:A2}, then~\ref{it:beta}
		shows that $W$ is old and we reach a contradiction to the fact that $\ccC$
		satisfies~\ref{it:A2}. 	
	\end{proof}
	
	\begin{clm}
		Moreover, $\ccD$ satisfies~\ref{it:A3} and is, hence, acceptable.
	\end{clm}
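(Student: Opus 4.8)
We must verify that $\ccD$ satisfies the acceptability condition~\ref{it:A3}, i.e.\ that if a wagon $W$ of $\ccD$ fails to occur among its connectors, then $M^\ccD(W)\subseteq\{i(\star),i(\star)+1\}$ for some index $i(\star)$. The strategy is to distinguish whether $W$ is a \emph{new} wagon (sitting in the middle of some long piece $P_k$) or an \emph{old} wagon (inherited from $\ccC$). The key observation is that the vertex connectors of $\ccD$ are exactly the vertex connectors of $\ccC$, so $M^\ccD(W)=M^\ccC(W)$ for every wagon $W$ that $\ccC$ already knows about, while a genuinely new wagon contributes no vertex connectors at all.

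\textbf{First I would dispose of the new wagons.} Suppose $W=W_k$ is the wagon in the middle of a long piece $P_k=(f'_k)^+W_k(f''_k)^+$ for some $k\in K$. By hypothesis~\ref{it:1801c} this wagon does not occur among $q_1,\dots,q_n$, and since the only wagon connectors of $\ccD$ that are new are the middles of long pieces, and those are pairwise distinct by the first claim above, $W_k$ occurs exactly once as a connector of $\ccD$ --- namely inside $P_k$ itself. Hence $W_k$ \emph{does} appear as a connector of $\ccD$, so it is simply not a wagon to which~\ref{it:A3} applies. Thus every new wagon is irrelevant here.

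\textbf{Next I would handle the old wagons.} Let $W$ be an old wagon of $\ccD$ that does not occur among the connectors of $\ccD$. Since passing from $\ccC$ to $\ccD$ only replaces certain copies by edge copies and wagons but never deletes a vertex connector or creates a new one, the set $M^\ccD(W)$ of indices whose connector is a vertex lying in $V(W)$ coincides with $M^\ccC(W)$. Moreover $W$ cannot occur among $q_1,\dots,q_n$ either: if it did, then (since $W$ is old, i.e.\ inherited) it would still occur as a connector of $\ccD$, contradicting our assumption. Therefore $W$ is a wagon not appearing on $\ccC$, so the acceptability of $\ccC$ via~\ref{it:A3} yields $i(\star)\in\ZZ/n\ZZ$ with $M^\ccC(W)\subseteq\{i(\star),i(\star)+1\}$, and hence $M^\ccD(W)=M^\ccC(W)\subseteq\{i(\star),i(\star)+1\}$, which is exactly~\ref{it:A3} for $\ccD$. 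Combining the two cases, $\ccD$ satisfies~\ref{it:A3}, and together with the previous claims it is an acceptable big cycle, completing the proof of part~\ref{it:1448a}.

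\textbf{Where the subtlety lies.} The only point requiring genuine care is the bookkeeping that no vertex connector is gained or lost when collapsing copies to pieces --- long pieces are inserted precisely in slots where the two neighbouring connectors $q_{k-1},q_k$ are vertices (by~\ref{it:1801a}), so the new wagon in a long piece sits \emph{between} two inherited vertex connectors and never swallows or spawns one. This is what makes the identity $M^\ccD(W)=M^\ccC(W)$ exact for old wagons and forces new wagons to be absent from the $M$-sets entirely; once this is pinned down, the rest is a direct appeal to the acceptability of $\ccC$.

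\textbf{On part~\ref{it:1448b}.} For completeness I would then deduce part~\ref{it:1448b} exactly as in Lemma~\ref{lem:949}\ref{it:949b}: writing $\ccD$ with copies $\wt F_i$ equal to $F_i$ off $K$ and to a piece on $K$, a supreme copy $F_\star$ of $\ccD$ is a real copy (Remark~\ref{rem:1852}\ref{it:1852b}), hence $F_\star\in\{F_1,\dots,F_n\}$, so the underlying index set of the exemplifying family of $F_\star$-pieces contains $K$; restricting that family to the indices $i$ with $F_i\ne F_\star$ (and checking, via part~\ref{it:1448a} applied in reverse, that the resulting cyclic sequence is again a big cycle) witnesses that $F_\star$ is a supreme copy of $\ccC$ as well.
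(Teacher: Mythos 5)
There is a genuine gap in your treatment of the old wagons. You write that $M^\ccD(W)=M^\ccC(W)$ and then conclude $M^\ccD(W)\subseteq\{i(\star),i(\star)+1\}$ directly from~\ref{it:A3} for $\ccC$, but this conflates two different index sets. When a long piece $P_k=(f'_k)^+W_k(f''_k)^+$ is inserted in place of the copy $F_k$, the subsequence $q_{k-1}F_kq_k$ of $\ccC$ becomes $q_{k-1}(f'_k)^+W_k(f''_k)^+q_k$ in $\ccD$: the vertex connectors $q_{k-1}$ and $q_k$ are preserved, but in $\ccD$ they are no longer consecutive, being separated by the wagon connector $W_k$. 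So if $M^\ccC(W)=\{k-1,k\}$ consists precisely of those two positions, the corresponding positions in $\ccD$ are not of the form $\{j, j+1\}$, and~\ref{it:A3} does not follow by simple re-indexing. Your ``Where the subtlety lies'' paragraph acknowledges that long pieces sit between two inherited vertex connectors, but it does not address the fact that this insertion \emph{destroys} the consecutiveness needed for~\ref{it:A3}.

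The paper resolves exactly this case by an additional argument relying on linearity. If $q_{k-1},q_k\in V(W)$ and the long piece $P_k$ is inserted between them, then (by condition~\ref{it:B3} applied to the collapsed cycle) we also have $q_{k-1}\in f'_k\subseteq V(W_k)$ and $q_k\in f''_k\subseteq V(W_k)$, so the two distinct vertices $q_{k-1}$, $q_k$ lie in both $V(W)$ and $V(W_k)$. The linearity of the pretrain $(H,\equiv)$ then forces $W=W_k$; but $W_k$ appears as a connector of $\ccD$, contradicting the hypothesis that $W$ does not. That contradiction is what rules out the bad case, and it cannot be obtained by bookkeeping of vertex connectors alone.
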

	
	\begin{proof}
		Let $W$ be a wagon not appearing in $\ccD$. Since $W$ cannot belong to $\ccC$
		either and~$\ccC$ satisfies~\ref{it:A3}, the only problem that could arise is
		that the set $M^\ccC(W)$ consists of two consecutive indices and that we want 
		to insert a long piece $P=(f')^+W_\star(f'')^+$ between the corresponding vertex
		connectors. If this happens, then these connectors belong to both wagons $W$ 
		and $W_\star$. Due to the linearity of $(H, \equiv)$ it follows that $W=W_\star$.
		As $\ccD$ contains the piece $P$, this contradicts the assumption that $W$ lies 
		outside $\ccD$. 
	\end{proof}
	
	\noindent
	{\bf Proof of part~\ref{it:1448b}.} 
	Rewrite $\ccD=G_1r_1\ldots G_mr_m$ and suppose that 
		\[
		\bigl\{Q_i\colon i\in \ZZ/m\ZZ \text{ and } G_i\ne F_\star\bigr\}
	\]
		is a family of $F_\star$-pieces exemplifying that $F_\star$ is a supreme copy of $\ccD$.
	Recall that $F_\star$ is a real copy and, consequently, it appears in $\ccC$. 
	In the light of part~\ref{it:1448a} it suffices to prove that for every 
	index $k\in\ZZ/n\ZZ$ with $F_k\ne F_\star$ there exists an $F_\star$-piece 
	$R_k$ such that 
	\begin{enumerate}[label=\nlabel]
		\item\label{it:1254a} the cyclic sequence arising from $\ccC$ if one exchanges $F_k$
			by $R_k$ satisfies~\ref{it:B3} and~\ref{it:B4}
		\item\label{it:1254b} and if $R_k$ is long, then it has the 
			properties~\ref{it:1801a},~\ref{it:1801b}, and~\ref{it:1801c}.
	\end{enumerate} 
	
	If $k\not\in K$, then in the passage from $\ccC$ to $\ccD$ the copy $F_k$ is preserved 
	and receives a new index $i(k)\in \ZZ/m\ZZ$. Moreover, we can take $R_k=Q_{i(k)}$.
	Suppose next that $k\in K$ and that the piece $P_k$ is short. Now the copy $F_k$ of $\ccC$ 
	got replaced by an edge copy $P_k=G_{i(k)}$ in $\ccD$. Since $F_\star$ is a real copy, 
	we have 
	$F_\star\ne G_{i(k)}$ and, therefore, we can again take $R_k=Q_{i(k)}$. 
	
	Suppose finally that $k\in K$ and that the piece $P_k=(f'_k)^+W_k(f''_k)^+$ is long. 
	In~$\ccD$ we have new indices $(f'_k)^+=G_{i(k)}$, $W_k=r_{i(k)}$, 
	and $(f''_k)^+=G_{i(k)+1}$. Moreover, the pieces~$Q_{i(k)}$ and~$Q_{i(k)+1}$ are short
	and $R_k=Q_{i(k)}W_kQ_{i(k)+1}$ is a long $F_\star$-piece with the desired properties. 
\end{proof} \section{\texorpdfstring{$\GTH$}{Girth} in constructions}
\label{sec:1912}

As we shall see in this section, the extension process and partite constructions 
are in perfect harmony with $\GTH$. 

\subsection{The extension lemma}
\label{subsec:ExtL}

Roughly speaking, we shall prove now that the extension process described in
Section~\ref{subsec:EP} converts $\Gth$ into $\GTH$---in the sense that if a 
hypergraph construction $\Psi$ yields systems with large $\Gth$, then the 
pretrain construction $\Ext(\Phi, \Psi)$ yields systems with 
large $\GTH$, provided that $\Phi$ generates strongly induced copies.  
In other words, German $\GTH$ seems to be the correct notion for analysing 
cycles in systems of pretrains obtained by means of the extension process. 
 
\begin{lemma}\label{lem:1955}
	Suppose that $g\ge 2$, 
	\begin{enumerate}
		\item[$\bullet$] that $\Phi$ is a linear ordered Ramsey construction for 
			hypergraphs delivering systems with strongly induced copies
		\item[$\bullet$] and that $\Psi$ denotes a Ramsey construction applicable to 
			ordered hypergraphs $M$ with $\gth(M) > g$ and producing 
			ordered systems of hypergraphs $\Psi_r(M)=(N, \ccN)$ with $\Gth(N, \ccN^+)>g$.
	\end{enumerate}
	If $(F, \equiv^F)$ denotes a pretrain with $\ggth(F, \equiv^F)>g$, then
	for every number of colours $r$ the system
		\[
		\Ext(\Phi, \Psi)_r(F, \equiv^F)=(H, \equiv^H, \ccH)
	\]
		is defined and satisfies $\GTH(H, \equiv^H, \ccH^+)>g$. 
\end{lemma}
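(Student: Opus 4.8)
First I would unpack the eight-step construction of $\Ext(\Phi,\Psi)_r(F,\equiv^F)=(H,\equiv^H,\ccH)$ and verify that it is actually defined for $(F,\equiv^F)$ with $\ggth(F,\equiv^F)>g$: wagon assimilation yields $(\wh F,\equiv^{\wh F})$, all of whose wagons are isomorphic to $W$; then $\Phi_r(W)=(X,\ccX)$ produces a linear ordered $X$; step~\ref{it:ext3} builds $(G,\equiv^G)$, step~\ref{it:ext4} the $|V(X)|$-uniform linear hypergraph $M$, and crucially $\gth(M)>g$ holds because $M$ records precisely the wagon-incidences of $(G,\equiv^G)$, which are those of $(\wh F,\equiv^{\wh F})$ up to the disjoint-copy structure, and $\ggth(\wh F,\equiv^{\wh F})=\ggth(F,\equiv^F)>g$. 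Hence $\Psi_{r^{e(X)}}(M)=(N,\ccN)$ is defined and satisfies $\Gth(N,\ccN^+)>g$. The pretrain $(H,\equiv^H)$ arises by inserting copies of $X$ into the edges of $N$ and declaring them wagons; since $N$ is linear and the wagon-incidence set-system of $(H,\equiv^H)$ is exactly $N$, Fact~\ref{fact:girth-trans}-type reasoning gives $\ggth(H,\equiv^H)>g$; in particular $(H,\equiv^H)$ is linear, so the linearity half of $\GTH(H,\equiv^H,\ccH^+)>g$ is in place, and it remains only to produce supreme copies for acceptable big cycles of order $\le g$.

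The heart of the argument is a translation of a big cycle $\ccC=F_1q_1\ldots F_nq_n$ in $(H,\equiv^H,\ccH^+)$ into a cycle of copies in $(N,\ccN^+)$. Each real copy $F_i\in\ccH$ is a copy of $(F,\equiv^F)$ living inside a single copy of $(G,\equiv^G)$ (corresponding to some $M_\star\in\ccN$), and its wagons sit inside copies of $X$, i.e.\ inside edges of $N$; so $F_i$ determines a subset of $E(N)$ that is a (not-necessarily-induced) copy of $M$. Edge copies $e^+$ in $\ccC$ lie inside a unique wagon, hence inside a unique edge of $N$. A wagon connector $q_i$ of $\ccC$ \emph{is} one of the inserted copies of $X$, i.e.\ corresponds to an edge of $N$; a vertex connector $q_i\in V(H)=V(N)$ is a vertex of $N$. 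The plan is: group the copies of $\ccC$ into maximal runs that stay within one member $M_\star\in\ccN$ (``segments'', exactly as in the proofs of Lemmas~\ref{lem:251} and~\ref{lem:1753}), obtain from the leaders a cycle $\ccD$ in $(N,\ccN^+)$, check $\ord{\ccD}\le g$ and that $\ccD$ is tidy (using $\ggth(H,\equiv^H)>g$ to control how vertex/wagon alternations of $\ccC$ map to the pure/mixed structure of $\ccD$, analogously to Claim~\ref{clm:1826} / Claim~\ref{clm:1831}), invoke $\Gth(N,\ccN^+)>g$ to get a master copy of $\ccD$, and then — using that $\Phi$ delivers strongly induced copies, so that each segment sits strongly-inducedly inside its leader $M_\star$ — collapse each off-master segment to a piece (a short $F_\star$-piece when two connectors land on one edge of $N$, a long $F_\star$-piece $(f')^+W(f'')^+$ when they are two vertices of one inserted $X$-copy with no common $N$-edge), apply Lemma~\ref{lem:1448} to conclude that $\ccC$ has a supreme copy. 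The base cases $\ord{\ccC}\le 1$ and $\len{\ccC}=2$ are handled directly via Lemmas~\ref{lem:GTH1}, \ref{lem:n1624}, \ref{lem:2037}, using strong inducedness of $W$ in $X$ (Lemma~\ref{lem:5757}) to locate the required edges within a single wagon.

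I expect two main obstacles. The first is a correctness/bookkeeping issue: making precise the dictionary ``segment of $\ccC$ $\leftrightarrow$ copy of $M$ in $N$'', ``wagon connector of $\ccC$ $\leftrightarrow$ edge connector of $\ccD$'', ``vertex connector $\leftrightarrow$ vertex connector'', and checking that the order is not increased — this requires the same $\eta$-weight argument as Fact~\ref{fact:h} and Claim~\ref{clm:1826}, plus care that a wagon connector $q_i$ of $\ccC$, being itself an inserted copy of $X$ spanning exactly one $N$-edge, cannot be straddled by a single segment (that is where $\gth(M)>g$, i.e.\ linearity of $N$, and the tidiness conditions \ref{it:T1}, \ref{it:T2} feed in). The second, genuinely harder, obstacle is the long-piece mechanism: when I collapse a segment of $\ccC$ whose two boundary connectors are vertices $x,x'$ of a common wagon $W$ but are \emph{not} covered by any edge of $H$, I must produce a long piece $(f')^+W(f'')^+$ with $x\in f'$, $x'\in f''$, $f',f''\in E(F_\star)$, and verify the side conditions \ref{it:1801a}--\ref{it:1801c} of Lemma~\ref{lem:1448} (in particular that $W$ does not itself occur on $\ccC$, which follows from acceptability condition~\ref{it:A3}); getting the existence of $f',f''$ inside $F_\star$ rather than merely inside $H$ is exactly where the strong inducedness hypothesis on $\Phi$ — transported through wagon assimilation via Lemma~\ref{lem:5757} and through the partite construction via Lemma~\ref{lem:cleancap} — is indispensable, and threading it through all the cases of where $x,x'$, and the relevant $N$-edges sit is the delicate part of the write-up.
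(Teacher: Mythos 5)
Your high-level strategy is right, but there is a concrete flaw in the inline plan, and the paper's actual proof is much more modular: it factors through the two preparatory lemmas proved immediately before this one, Lemma~\ref{lem:n023} (derivations preserve $\GTH$) and Lemma~\ref{lem:n540} (scattering preserves $\GTH$). With those in hand, the proof of Lemma~\ref{lem:1955} is only a few lines. One verifies $\gth(M)>g$ (as you do) so the construction is defined; takes $\equiv^N$ to be the equivalence relation on $E(N)$ with singleton classes, so that Lemma~\ref{lem:0217} converts $\Gth(N,\ccN^+)>g$ into $\GTH(N,\equiv^N,\ccN^+)>g$; observes that $(H,\equiv^H,\ccH_\bullet)$ is \emph{derived} from $(N,\equiv^N,\ccN)$ and applies Lemma~\ref{lem:n023} to get $\GTH(H,\equiv^H,\ccH_\bullet^+)>g$; and finally observes that $\ccH$ is \emph{scattered} in $\ccH_\bullet$ (using Lemma~\ref{lem:5757}) and applies Lemma~\ref{lem:n540}. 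The two ``main obstacles'' you name are precisely the content of Lemma~\ref{lem:n023}, so you are re-proving that lemma inside the proof of this one.

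The flaw: you propose to segment $\ccC$ into maximal runs staying within one $M_\star\in\ccN$, ``exactly as in the proofs of Lemmas~\ref{lem:251} and~\ref{lem:1753}'', but the extension process does not carry the standard-copy structure of a partite amalgamation, and this segmentation degenerates. If $F_i\ne F_{i+1}$ are two distinct real copies inside the same $G_\star\in\ccH_\bullet$, they are distinct members of the scattered family inside $G_\star$ and are therefore vertex-disjoint (Definition~\ref{dfn:n155}), so no connector $q_i$ can exist between them. Hence no segment contains more than one real copy, and the leader-based argument collapses. What the proof of Lemma~\ref{lem:n023} actually uses is a bounded-size \emph{twin decomposition}: each item is a single real copy, a single edge copy, or a twin $(e')^+W(e'')^+$ conducted by some edge of $N$. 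Substituting twins for segments would make your outline workable, but you would then have rediscovered the proof of Lemma~\ref{lem:n023} rather than invoked it.
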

	
The proof of any statement addressing pretrain systems of 
the form $\Ext(\Phi, \Psi)_r(F, \equiv^F)$ ultimately needs to refer back 
to the eight-step description of the extension process given immediately before Lemma~\ref{lem:0059}.
It turns out, however, that our argument becomes both more transparent 
and more reusable on later occasions if we look at the construction from the 
perspective of the system called $(N, \equiv^N, \ccN)$ there. 
For this reason some additional concepts seem to be useful. The first of 
them describes the relationship between the hypergraphs $H$ and $N$. 

\begin{dfn}\label{dfn:n150}    
	Let $H$ and $N$ be two hypergraphs on the same vertex set. 
	We say that~$H$ is {\it living} in $N$ (see Figure~\ref{fig:HNA})
	if \index{living}
	\begin{enumerate}[label=\rmlabel]
		\item\label{it:n150-1} for every edge $e$ of $H$ there exists an 
			edge $f$ of $N$ covering it,
		\item\label{it:n150-2} and every $f\in E(N)$ induces a subhypergraph
			of $H$ without isolated vertices. 
	\end{enumerate}
\end{dfn}

Clearly if $N$ is linear, then the edge $f$ guaranteed by~\ref{it:n150-1}
is uniquely determined by $e$. Next we look at the transition from $\ccN$
to $\ccH_\bullet$ in Step~\ref{it:ext7} of the extension process.

\begin{dfn}\label{dfn:n151}    
	 Let $(N, \ccN)$ be a linear system of hypergraphs. Suppose further that the 
	 hypergraph $H$ is living in $N$.
	 For every copy $M\in\ccN$ the subhypergraph $M_H$ of $H$ defined 
	 by $V(M_H)=V(M)$ and $E(M_H)=\bigcup_{f\in E(M)}E(H[f])$ 
	 is said to be {\it derived} from $M$ (see Figure~\ref{fig:HNB}). 
	 Setting $\ccH=\{M_H\colon M\in\ccN\}$ we call $(H, \ccH)$ 
	 a system of copies {\it derived} from $(N, \ccN)$.
\end{dfn}

\begin{figure}[ht]
\centering	

\begin{subfigure}[b]{0.45\textwidth}
		\centering

			\begin{tikzpicture}[scale=.7]
				
	\foreach \i in {0,...,5} {
		\coordinate (v\i) at (60*\i:1.57);
}

	\coordinate (b0) at (-1.55,0);
	\coordinate (b1) at (-2.4,1.1);
	\coordinate (b2) at (-3.9,1.1);
	\coordinate (b4) at (-3.9, -1.1);
	\coordinate (b5) at (-2.4,-1.1);
	
	\coordinate (a0) at (2.2,3.9);
	\coordinate (a1) at (.5,3.6);
	\coordinate (a2) at (.8,2.9);
	\coordinate (a3) at (2.1, 2.2);
	\coordinate (a4) at (2.8,2.2);
	
		\coordinate (c0) at (1.5,-2.7);
	\coordinate (c1) at (2.9,-3);
	\coordinate (c2) at (2.1,-1.6);
	\coordinate (c3) at (.3,-2.7);
	\coordinate (c4) at (1,-4);
	
	\draw [thick, green!70!black] (v5)--(c0)--(c1)--(c2)--(v5)--(c3)--(c4)--(c0);
	
	\draw [thick, green!70!black] (v1) --(a2)--(a0)--(a3)--(v1);
	\draw [thick, green!70!black] (a1)--(a2)--(a3)--(a4);	
	\draw [thick, green!70!black] (v0)--(v1)--(v2)--(v3)--(v4)--(v5)--(v0);	
	\draw [thick, green!70!black] (b0)--(b1)--(-3.05,0)--(b1)--(b2)--(-3.05,0)--(b4)--(b5)--(b0);		
	\draw [thick, blue!70!black](.0,0) circle (1.57cm);
	\draw [thick, blue!70!black](-3.14,0) circle (1.57cm);
	\draw [thick, blue!70!black] (1.57,2.72) circle (1.57cm);
	\draw [thick, blue!70!black] (1.57,-2.72) circle (1.57cm);
				
	\fill [blue!70!black] (-1.525,0) circle (1.5pt);
	\fill (v1) circle (1.5pt);
		\fill (v5) circle (1.5pt);

\node [blue!70!black] at (3.5,0) {\Large $N$};
\node [green!70!black] at (1,3.5) {$H$};

\end{tikzpicture}
			
\caption{A graph $H$ living in a $6$-uniform hypergraph $N$}
\label{fig:HNA} 		
\end{subfigure}
\hfill   
\begin{subfigure}[b]{0.45\textwidth}
\centering
			\begin{tikzpicture}[scale=.7]
	\foreach \i in {0,...,5} {
		\coordinate (v\i) at (60*\i:1.57);
}

	\coordinate (b0) at (-1.55,0);
	\coordinate (b1) at (-2.4,1.1);
	\coordinate (b2) at (-3.9,1.1);
	\coordinate (b4) at (-3.9, -1.1);
	\coordinate (b5) at (-2.4,-1.1);
	
	\coordinate (a0) at (2.2,3.9);
	\coordinate (a1) at (.5,3.6);
	\coordinate (a2) at (.8,2.9);
	\coordinate (a3) at (2.1, 2.2);
	\coordinate (a4) at (2.8,2.2);
	
		\coordinate (c0) at (1.5,-2.7);
	\coordinate (c1) at (2.9,-3);
	\coordinate (c2) at (2.1,-1.6);
	\coordinate (c3) at (.3,-2.7);
	\coordinate (c4) at (1,-4);
	
	\draw [dashed, gray, thick] (v5)--(c0)--(c1)--(c2)--(v5)--(c3)--(c4)--(c0);
	\draw [dashed, gray, thick]  (v1) --(a2)--(a0)--(a3)--(v1);
	\draw [dashed, gray, thick] (a1)--(a2)--(a3)--(a4);	
	\draw [thick, green!70!black] (v0)--(v1)--(v2)--(v3)--(v4)--(v5)--(v0);	
	\draw [thick, green!70!black] (b0)--(b1)--(-3.05,0)--(b1)--(b2)--(-3.05,0)--(b4)--(b5)--(b0);		
	\draw [thick, blue!70!black](.0,0) circle (1.57cm);
	\draw [thick, blue!70!black](-3.14,0) circle (1.57cm);
	\draw [dashed, gray, thick](1.57,2.72) circle (1.57cm);
	\draw [dashed, gray, thick](1.57,-2.72) circle (1.57cm);
				
	\fill [blue!70!black] (-1.525,0) circle (1.5pt);
	\fill (v1) circle (1.5pt);
		\fill (v5) circle (1.5pt);
			
\node [blue!70!black] at (-1.5,1.5) {\Large $M$};
\node [green!70!black] at (-1.5,-1.5) {$M_H$};
\end{tikzpicture}		
			
\caption{The copy $M_H$ is derived from $M$ (which has two edges)}
\label{fig:HNB} 		
\end{subfigure}  		
\caption{}		
\end{figure} 
Let us briefly pause and check that being strongly induced is a property of copies preserved under such derivations.  

\begin{fact}\label{f:strder}
	Let $(N, \ccN)$ be a linear system of hypergraphs with strongly induced copies.
	If a hypergraph $H$ is living in $N$, then the copies of the derived system 
	$(H, \ccH)$ are strongly induced as well. 
\end{fact}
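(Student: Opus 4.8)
The plan is to unwind the two relevant definitions and reduce the claim to the definition of ``strongly induced'' in the presence of linearity, i.e.\ to checking conditions~\ref{it:1836a}\,--\,\ref{it:1836c} of Fact~\ref{fact:1836}. So let $M\in\ccN$ be arbitrary and let $M_H$ be the copy derived from it, meaning $V(M_H)=V(M)$ and $E(M_H)=\bigcup_{f\in E(M)}E(H[f])$. We want $M_H\Str H$. Note first that $H$ is a linear hypergraph: every edge of $H$ is covered by an edge of $N$ by Definition~\ref{dfn:n150}\ref{it:n150-1}, and since $N$ is linear any two edges of $H$ meet in at most a single vertex (two edges of $H$ that meet in two vertices would be covered by a common edge of $N$, but they lie in edges of $N$ whose intersection has size at most one). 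Hence Fact~\ref{fact:1836} is applicable to $M_H$ and $H$, and it suffices to verify the three listed properties.

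First I would check~\ref{it:1836a}. Suppose an edge $e\in E(H)$ meets $V(M_H)=V(M)$ in at least two vertices $x,y$. Let $f\in E(N)$ be the unique edge of $N$ covering~$e$ (unique by linearity of~$N$). Since $M$ is strongly induced in~$N$ and $x,y\in V(M)\cap f$, there is an edge $f'\in E(M)$ with $V(M)\cap f\subseteq f'$; in particular $x,y\in f'$. But $N$ is linear, $x,y\in f\cap f'$, and $f,f'\in E(N)$, so $f=f'$, whence $f\in E(M)$. Therefore $e$ induces a subhypergraph of $H[f]$ and, as $e\in E(H)$ and $e\subseteq f$, we get $e\in E(H[f])\subseteq E(M_H)$, as required.

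Next, property~\ref{it:1836c}: if $M_H$ has no edges then $E(M)=\varnothing$ (since every edge of $M_H$ comes from some $H[f]$ with $f\in E(M)$, and each such $H[f]$ has an edge because $f$ induces a subhypergraph of $H$ without isolated vertices by Definition~\ref{dfn:n150}\ref{it:n150-2}, so $M$ having an edge would force $M_H$ to have one). Now $M\Str N$ and $E(M)=\varnothing$ force $E(N)=\varnothing$ by~\ref{it:1836c} applied to $M\Str N$, and then $H$ has no edges by Definition~\ref{dfn:n150}\ref{it:n150-1}. Finally,~\ref{it:1836b}: suppose $e\in E(H)$ meets $V(M_H)$ in exactly one vertex~$x$; we must show $x$ is non-isolated in $M_H$. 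Let $f\in E(N)$ cover $e$. Then $x\in V(M)\cap f$, so $V(M)\cap f$ is nonempty; since $M\Str N$ there is an edge $f'\in E(M)$ with $V(M)\cap f\subseteq f'$, hence $x\in f'\in E(M)$. As $f'$ induces a subhypergraph of $H$ without isolated vertices, $x$ lies on some edge $e'\in E(H[f'])\subseteq E(M_H)$, so $x$ is non-isolated in $M_H$.

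I do not expect any genuine obstacle here; the one point that needs a little care is the repeated use of linearity of $N$ to pass from ``$V(M)\cap f\subseteq f'$ with $x\in f\cap f'$'' to ``$f=f'$'' in the proof of~\ref{it:1836a} (this is what lets the edge of $H$ land inside an edge of $M$ rather than merely inside an edge of $N$). Everything else is a direct translation through Definitions~\ref{dfn:n150} and~\ref{dfn:n151}, using that $M$ is strongly induced in $N$ and that $N$ is linear; the argument is short enough that it can be written out in full in a few lines.
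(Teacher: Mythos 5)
Your proofs of the three conditions~\ref{it:1836a}\,--\,\ref{it:1836c} are correct, but the opening claim that ``$H$ is a linear hypergraph'' is not justified, and in fact does not follow from the hypotheses. Your parenthetical argument breaks down exactly when two edges of~$H$ that share two vertices are covered by the \emph{same} edge of~$N$: then there is no violation of the linearity of~$N$ to derive a contradiction from, yet $H$ is not linear. Nothing in Definition~\ref{dfn:n150} prevents this. So the invocation of Fact~\ref{fact:1836} as a reduction is technically unjustified as written.

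That said, the flaw is easily repaired, because the three conditions of Fact~\ref{fact:1836} imply strong inducedness \emph{unconditionally}; linearity of~$H$ is only needed for the converse direction. Indeed, if $|V(M_H)\cap e|\ge 2$ then~\ref{it:1836a} gives $e\in E(M_H)$ and we may take $e'=e$; if $|V(M_H)\cap e|=1$ then~\ref{it:1836b} furnishes a witnessing edge through the unique common vertex; and if $V(M_H)\cap e=\varnothing$ then~\ref{it:1836c} guarantees $E(M_H)\ne\varnothing$ and any edge works. So your verification of~\ref{it:1836a}\,--\,\ref{it:1836c} already closes the argument, just not via the quoted Fact.

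For comparison, the paper's proof avoids the issue entirely by working directly with the definition of strongly induced: it fixes $e\in E(H)$, lets $f\in E(N)$ cover~$e$, uses $M\Str N$ to get $f'\in E(M)$ with $V(M)\cap f\subseteq f'$, and then splits into $|V(M_H)\cap e|\ge 2$ (where linearity of~$N$ forces $f=f'$, so $e\in E(M_H)$) and $|V(M_H)\cap e|\le 1$ (where the fact that $H[f']$ has no isolated vertices supplies the needed edge). This is the same underlying reasoning as yours, but it never needs a detour through Fact~\ref{fact:1836} and so never raises the question of whether $H$ is linear. Your proof would be fine if you either made this direct argument or explicitly noted that only the easy direction of Fact~\ref{fact:1836} is being used, which holds without linearity of~$H$.
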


\begin{proof}
	Consider an arbitrary copy $M\in\ccN$, its derived copy $F\in\ccH$, as well as 
	an edge $e\in E(H)$. We are to prove that the set $x=V(F)\cap e$ can be covered 
	by an appropriate edge $e'$ of $F$. To this end we let $f\in E(N)$ be the 
	edge covering $e$. Due to $M\Str N$ there is an edge $f'\in E(M)$ such 
	that $x\subseteq V(M)\cap f\subseteq f'$.
	
	If $|x|\ge 2$, then the linearity of $N$ and $x\subseteq f\cap f'$ imply $f=f'$,
	whence $e\in E(F)$, which allows us to take $e'=e$. 
	If, on the other hand, $|x|\le 1$, then the fact that $H[f']$ has no isolated 
	vertices yields an edge $e'\in E(H[f'])\subseteq E(F)$ such that $x\subseteq e'$.
\end{proof}

\begin{dfn}\label{dfn:n152}    
	 Let $(N, \equiv^N)$ be a pretrain whose underlying 
	 hypergraph~$N$ is linear. Suppose further that the hypergraph $H$ 
	 is living in $N$. The equivalence relation $\equiv^H$ on~$E(H)$ 
	 {\it derived} from $\equiv^N$ is defined by declaring $e\equiv^H e'$  
	 for two edges $e, e'\in E(H)$ if there are edges $f, f'\in E(N)$
	 such that $e\subseteq f$, $e'\subseteq f'$, and $f\equiv^N f'$.
	 In this situation the pretrain $(H, \equiv^H)$ is said to be {\it derived}
	 from $(N, \equiv^N)$.
\end{dfn}

Another way to think about this construction is that to every wagon $W_N$ 
of $(N, \equiv^N)$ there corresponds a unique wagon $W_H$ of $(H, \equiv^H)$ 
such that $E(W_H)=\bigcup_{f\in E(W_N)}E(H[f])$, called the wagon {\it derived} 
from $W_N$. Owing to Definition~\ref{dfn:n150}\ref{it:n150-2} we have $V(W_N)=V(W_H)$.
Combined with the fact that, conversely, every wagon of $(H, \equiv^H)$ is 
derived from some wagon of $(N, \equiv^N)$ this shows that if $(N, \equiv^N)$
and $H$ are linear, then so is $(H, \equiv^H)$. 
Roughly speaking, our next result asserts that derivations cannot decrease $\GTH$.
	
\begin{lemma}\label{lem:n023}
	Let the linear hypergraph $H$ be living in the linear hypergraph $N$. 
	If a system of pretrains $(N, \equiv^N, \ccN)$ 
	satisfies $\GTH(N, \equiv^N, \ccN^+)>g$ for some integer $g\ge 1$, then the 
	system $(H, \equiv^H, \ccH)$ derived from it satisfies $\GTH(H, \equiv^H, \ccH^+)>g$ 
	as well. 
\end{lemma}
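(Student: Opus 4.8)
The plan is to take an acceptable big cycle $\ccC=F_1q_1\ldots F_nq_n$ in $(H,\equiv^H,\ccH^+)$ with $\ord{\ccC}\le g$ and produce a supreme copy, by ``lifting'' $\ccC$ to a corresponding big cycle $\widetilde{\ccC}$ in $(N,\equiv^N,\ccN^+)$, applying the hypothesis $\GTH(N,\equiv^N,\ccN^+)>g$ there, and pushing the resulting supreme copy back down to $\ccC$. First I would set up the correspondence between the two systems: every real copy $F_i\in\ccH$ is derived from a unique copy $M_i\in\ccN$; every wagon $W$ of $(H,\equiv^H)$ is derived from a unique wagon $W^N$ of $(N,\equiv^N)$ with $V(W^N)=V(W)$; and every edge copy $e^+$ with $e\in E(H)$ sits inside the edge copy $f^+$ where $f\in E(N)$ is the unique edge covering $e$ (using linearity of $N$ and Definition~\ref{dfn:n150}\ref{it:n150-1}). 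Note that $V(H)=V(N)$, so vertex connectors lift to themselves.

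The main technical point is that this lift is compatible with the conditions~\ref{it:B1}--\ref{it:B4} and with acceptability. Replacing each $F_i$ by $M_i$, each wagon connector $W$ by $W^N$, and leaving vertex connectors fixed yields a cyclic sequence $\widetilde{\ccC}$ in $(N,\equiv^N,(\ccN\cup E^+(N))^+)$ — but here one must be careful about two things. Consecutive copies $F_i\neq F_{i+1}$ might lift to $M_i=M_{i+1}$; in that case their shared connector $q_i$ (a vertex or a wagon inside $N$) still witnesses an overlap, and I would simply delete one of the two identical copies together with one connector, shortening the cycle without increasing its order (this is exactly the kind of bookkeeping handled by Fact~\ref{fact:h}-style arguments, and it can only make the order smaller). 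Similarly one checks that $\widetilde{\ccC}$ satisfies~\ref{it:B3},~\ref{it:B4}: for a vertex connector $q_i\in V(F_i)\cap V(F_{i+1})$ we get $q_i\in V(M_i)\cap V(M_{i+1})$ since $V(F_j)\subseteq V(M_j)$; for a wagon connector $W$ with $E(W)\cap E(F_i)\neq\varnothing$ we get an edge $e$ of $F_i$ in $W$, hence the covering edge $f\in E(N)$ lies in $W^N$ and meets $E(M_i)$. Acceptability of $\widetilde{\ccC}$ (conditions~\ref{it:A1}--\ref{it:A3}) follows because the $M^{\ccC}(W)$-sets only shrink or stay the same under the lift, using $V(W^N)=V(W)$ and the fact that an edge $f\in E(N)$ through two vertex connectors would, since $H[f]$ has no isolated vertices and those vertices lie in $H$, force a comparable configuration downstairs — here the ``no edge $f\in E(H)$ through $q_{i-1},q_{i+1}$'' clause of~\ref{it:A2} lifts cleanly because any edge of $N$ through both vertices would, by linearity and Definition~\ref{dfn:n150}, be covered-by/cover some edge of $H$ through both, contradicting~\ref{it:A2} in $\ccC$. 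After these reductions $\widetilde{\ccC}$ is an acceptable big cycle in $(N,\equiv^N,\ccN^+)$ of order $\le g$, so it has a supreme copy; by Remark~\ref{rem:1852}\ref{it:1852b} this is a real copy, say $M_\star$, hence $M_\star=M_{i_0}$ for some $i_0$, and I take $F_\star=F_{i_0}$.

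Finally I would push the supremacy of $M_\star$ back down to $F_\star$. The family of $M_\star$-pieces witnessing supremacy of $M_\star$ in $\widetilde{\ccC}$ consists of short pieces $f_i^+$ with $f_i\in E(M_\star)=E(M_{i_0})$ and long pieces $(f_i')^+W_i^N(f_i'')^+$ with $f_i',f_i''\in E(M_\star)$, $W_i^N$ a wagon of $(N,\equiv^N)$. I would convert each of these into an $F_\star$-piece: for a copy $F_i\neq F_\star$ collapsed via the short piece $f_i^+$, since the connector(s) $q_{i-1},q_i$ adjacent to $F_i$ are vertices or wagons-of-$H$ lying inside $f_i$ (resp.\ $W^N=$ lift of a wagon of $H$), and $H[f_i]$ has no isolated vertices, I pick an edge $e_i\in E(H[f_i])\subseteq E(F_\star)$ containing the relevant vertex connector(s), giving a short $F_\star$-piece $e_i^+$; when both adjacent connectors are vertices of $f_i$ this may require one edge, when they are two distinct vertices they might lie in different edges of $H[f_i]$, in which case I produce a long $F_\star$-piece $e_i^+W_ie_i'^+$ inside the wagon of $H$ derived from the wagon containing $f_i$ — exactly as in the proof of Lemma~\ref{lem:2310}. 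Long pieces of $\widetilde{\ccC}$ are handled the same way, and the side conditions~\ref{it:1751b}\ref{it:alpha}--\ref{it:beta} transfer because they held for $\widetilde{\ccC}$ and the relevant connectors/edges live in $H$. One then verifies via Lemma~\ref{lem:1448} (applied with $\ccC$ the downstairs cycle and the family of pieces just constructed, after undoing the reduction that shortened $\ccC$ to $\widetilde{\ccC}$) that $\ccD$ obtained from $\ccC$ by these collapses is an acceptable big cycle with supreme copy $F_\star$, whence $F_\star$ is a supreme copy of $\ccC$. The hardest part is the careful case analysis in the last step — matching short/long pieces upstairs to short/long $F_\star$-pieces downstairs while keeping~\ref{it:B3},~\ref{it:B4} and the long-piece side conditions intact — but this is structurally the same bookkeeping as in Lemmas~\ref{lem:2310} and~\ref{lem:1448} and should go through without surprises.
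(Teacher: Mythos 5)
The high-level plan — lift $\ccC$ to a big cycle in $(N,\equiv^N,\ccN^+)$, invoke $\GTH(N,\equiv^N,\ccN^+)>g$, and push the resulting supreme copy back to $\ccC$ — is exactly the paper's strategy, and most of the low-level verifications of~\ref{it:B2}--\ref{it:B4} that you sketch are fine. But there are genuine gaps at the two places you wave through.

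First, the mechanism for handling consecutive equal lifted copies is inadequate. You propose to ``simply delete one of the two identical copies together with one connector,'' but that deleted copy still appears in $\ccC$ and therefore still needs an $F_\star$-piece when you push supremacy back down; the supremacy of $M_\star$ in the shortened lift says nothing about it. Worse, a deleted edge copy $e^+$ need not have $e\in E(F_\star)$, so it cannot be collapsed to itself. The paper resolves this with the \emph{twin decomposition}: a subsequence $(e')^+W(e'')^+$ flanked by vertex connectors $q',q''$ is recognised as a twin precisely when there is a \emph{conductor} $f\in E(N)$ with $q',q''\in f$, and the whole twin is replaced upstairs by $f^+$. The crucial payoff, which your deletion approach misses entirely, is that when $M_\star$ acts on $\ccD$, the copy $f^+$ is forced (by Definition~\ref{dfn:1746}\ref{it:1751b}\ref{it:alpha}, linearity of $N$, and the fact that both $q',q''$ lie on $f$) to collapse to $f^+$ itself, hence $f\in E(M_\star)$, and \emph{then} the ``no isolated vertices'' property of $H[f]$ yields edges of $F_\star$ through $q'$ and $q''$ with which to collapse $(e')^+$ and $(e'')^+$. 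Without the twin/conductor bookkeeping this chain of implications is not available, and one cannot even define the downstairs pieces for the deleted copies.

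Second, your argument for why acceptability lifts is wrong. You assert that an edge $f\in E(N)$ through two vertex connectors ``would, by linearity and Definition~\ref{dfn:n150}, be covered-by/cover some edge of $H$ through both,'' but $H[f]$ having no isolated vertices only gives separate $H$-edges through each vertex, not a common one. The paper instead verifies the moreover-part of~\ref{it:A2} by showing that such an $f$ forces the two adjacent copies of $\ccC$ to be edge copies (Claim~\ref{clm:n181}) and hence forces a twin to have been present — contradicting the maximality of the twin decomposition. This in turn relies on the minimality device that opens the paper's proof: $\ccC$ is chosen to be a counterexample minimizing the number $\nu$ of real copies, and Claim~\ref{clm:n181} is proved by collapsing a putative real copy to an edge copy (via Lemma~\ref{lem:1448}) and appealing to that minimality. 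Your proof has no minimality argument and no analogue of Claim~\ref{clm:n181}, yet both are invoked repeatedly — in the verification that $\ccD$ satisfies~\ref{it:B1} (the $m\ge 3$ case), in the proof of~\ref{it:A2}, and in Claim~\ref{clm:n465} (the ruling out of~\ref{it:1801c} failing). Without these devices the ``no surprises'' assessment at the end of your proposal is not warranted; the cases where a real copy sits at a mixed index with the vertex inside the wagon, or where a wagon of $N$ would secretly satisfy a forbidden incidence, are exactly where the naive lift breaks.
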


\begin{proof}
	Let $\delta$ be the bijective map assigning to each wagon of $(H, \equiv^H)$ 
	the wagon of $(N, \equiv^N)$ it is derived from. 
	We already know that the pretrain $(H, \equiv^H)$ is linear. Thus it 
	remains to show that every acceptable big cycle  
		\[
		\ccC=F_1q_1\ldots F_nq_n
	\]
		in $(H, \equiv^H, \ccH^+)$ with $\ord{\ccC}\le g$ has a supreme copy. 
	Assume for the sake of contradiction that $\ccC$ is a counterexample to
	this statement and, moreover, that among all possibilities~$\ccC$ has been 
	chosen in such a way that 
		\[
		\nu=\big|\bigl\{i\in\ZZ/n\ZZ\colon F_i \text{ is a real copy}\bigr\}\big|
	\]
		is minimal. 
	
	\begin{clm}\label{clm:n181}
		If $n\ge 3$, the index $i\in \ZZ/n\ZZ$ is mixed, and the vertex among
		$q_{i-1}$, $q_i$ belongs to the wagon, then $F_i$ is an edge 
		copy. 
	\end{clm}  
	
	\begin{proof}
		By symmetry it suffices to deal with the case that $q_{i-1}$ is a vertex, 
		$q_i$ is a wagon, and $q_{i-1}\in V(q_i)$. Assume towards a contradiction that 
		$F_i$ is a real copy. Pick an edge $e_i\in E(H)$ with $q_{i-1}\in e_i\in E(q_i)$
		and denote the cyclic sequence obtained from $\ccC$ upon collapsing $F_i$ to $e_i^+$
		by $\ccD$. Owing to Lemma~\ref{lem:1448}\ref{it:1448a} applied to $\{i\}$ and $e_i^+$
		here in place of~$K$ and~$P_i$ there $\ccD$ is an acceptable big cycle. 
		(Notice that the clauses~\ref{it:1801a}\,--\,\ref{it:1801c} of Lemma~\ref{lem:1448}
		are irrelevant here, because the piece $e_i^+$ is short). 
		 
		Now $\ord{\ccD}=\ord{\ccC}\le g$ and $\ccD$ contains fewer real copies than $\ccC$.
		So by the minimality of $\nu$ we know that $\ccD$ possesses a supreme copy. 
		Due to Lemma~\ref{lem:1448}\ref{it:1448b} the cycle $\ccC$ 
		has a supreme copy as well, contrary to the choice of $\ccC$.  
	\end{proof}
	
	After these preliminaries we briefly describe our strategy for finding 
	a supreme copy of~$\ccC$, which consists of three major steps. 
	\begin{enumerate}[label=\nlabel]
		\item\label{it:pss1} We translate $\ccC$ into a cyclic sequence $\ccD$
			with respect to the system $(N, \equiv^N, \ccN^+)$.
		\item\label{it:pss2} Second we check that $\ccD$ is an acceptable big cycle 
			whose order is at most $g$. 
			Now the hypothesis $\GTH(N, \equiv^N, \ccN^+)>g$
			yields a supreme copy $M_\star$ of $\ccD$ and a family~$\ccQ$ of pieces 
			witnessing the supremacy of $M_\star$.
		\item\label{it:pss3} Finally, we need to translate $M_\star$ and $\ccQ$ 
			back to a supreme copy $F_\star$ of $\ccC$ and a family $\ccP$
			of $F_\star$-pieces witnessing the supremacy of $F_\star$.
	\end{enumerate}
	
	The first step of this plan depends on the notion of 
	a twin in $\ccC$, that we shall now explain. 
	Suppose that $Z=(e')^+W(e'')^+$ is a subsequence
	of~$\ccC$, where $e', e''\in E(H)$ 
	are edges and $W$ is a wagon connector. Owing to~\ref{it:B2} 
	and~\ref{it:B4} the other connectors of~$\ccC$ next to~$(e')^+$ and~$(e'')^+$ 
	are vertices.

\usetikzlibrary{decorations.pathreplacing}

\begin{figure}[ht]
	\centering

			\begin{tikzpicture}[scale=1]

		\draw [thick, red!80!black, rounded corners](-3.5, -.5) rectangle (3.5,2);
		
			\draw [green!75!black, thick] (-3.3,0)--(3.3,0);
			
			\draw (-2.9,-.3) -- (-2.2,1.8);
			\draw (2.9,-.3)--(2.2,1.8);
	
		\fill (-2.8,0) circle (1.5pt);
		\fill (2.8,0) circle (1.5pt);
		
		\node [red!80!black] at (4, 1.7) {\Large $W$};
		\node [green!75!black] at (0,.25) {$f$};
		\node at (-2.5,1.5) {$e'$};
		\node at (2.6, 1.5){$e''$};
		\node at (-2.9, .25) {$q'$};
		\node at  (3, .25) {$q''$};
		
		\end{tikzpicture}

				\caption{A twin $(e')^+W(e'')^+$ with conductor $f$.}
				\label{fig:Zwilling} 			
	\end{figure}
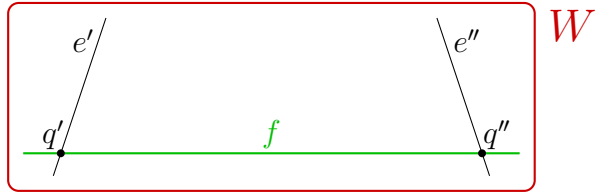 	
	In other words, $\ccC$ actually has a subsequence of the form $q'Zq''$, 
	where~$q'$ and~$q''$ are vertices. If there exists an edge $f\in E(N)$ 
	containing $q'$ and $q''$ we call $Z$ a {\it twin} and an edge~$f$ 
	verifying this fact is said to be a {\it conductor} of $Z$ 
	(see Figure~\ref{fig:Zwilling}).
	\index{twin}
	\index{conductor}
	
	Due to~\ref{it:A1} there cannot exist any twins if $n=2$. 
	Moreover, if for $n\ge 3$ we have a twin $Z=(e')^+X(e'')^+$, then the 
	vertices $q'$, $q''$ mentioned in the previous paragraph are distinct and 
	by the linearity of $N$ the conductor of $Z$ is uniquely determined.   
	
	Clearly the twins are mutually non-overlapping in the sense that each edge 
	copy $F_i$ belongs to at most one twin. So by symmetry we may 
	suppose that~$F_1$ and $F_n$ do not appear together in a twin. 
	
	We say that 
	\begin{equation}\label{eq:n403}
		\ccC=Z_1r_1\ldots Z_mr_m
	\end{equation}
	is the {\it twin decomposition} of $\ccC$ 
	if $\{r_1, \ldots, r_m\}\subseteq \{q_1, \ldots, q_n\}$, every $Z_j$ is 
	either a single edge copy, a single real copy, or a twin, and conversely every 
	twin is in the set $\{Z_1, \ldots, Z_m\}$. We have $m\ge 2$, as there are 
	no twins in the case $n=2$. Set 
		\[
		\overline{r}_j=
			\begin{cases}
					\delta(r_j) & \text{ if $r_j$ is a wagon} \cr
					r_j & \text{ if $r_j$ is a vertex}
			\end{cases}
	\]
	for every $j\in \ZZ/m\ZZ$. According to the rules 
	that follow we shall determine certain copies $M_1, \dots, M_m\in \ccN^+$, 
	the intention being that 
		\[
		\ccD=M_1\overline{r}_1\ldots M_m\overline{r}_m
	\]
	will turn out to be the desired big cycle in $(N, \equiv^N, \ccN^+)$.
	Let $j\in\ZZ/m\ZZ$ be given.
	\begin{enumerate}[label=\alabel]
		\item\label{it:4156a} If $Z_j$ is a real copy, let $M_j\in\ccN$ be the 
			copy it is derived from.
		\item\label{it:4156b} If $Z_j=e^+_j$ is an edge copy, 
			let $f_j\in E(N)$ be the edge covering $e_j$ and set $M_j=(f_j)^+$.
		\item\label{it:4156c} Finally, if $Z_j=(e'_j)^+W_j(e''_j)^+$ is a twin 
			with conductor $f_j\in E(N)$, then we set $M_j=f_j^+$.
	\end{enumerate}
	
	Having thus defined $\ccD$ we proceed with the second part of our plan. 
	
	\begin{clm}
		The cyclic sequence $\ccD$ is a big cycle 
		in $(N, \equiv^N, \ccN^+)$.
	\end{clm}
	
	\begin{proof}
		The demands~\ref{it:B2},~\ref{it:B3} are clear and~\ref{it:B4} follows 
		easily from the fact that twins are surrounded by vertex connectors. 
		So it remains to prove~\ref{it:B1}.
		
		Assume contrariwise that some copy $M_\star$ occurs twice 
		in consecutive positions in $\ccD$, say $M_\star=M_j=M_{j+1}$ for 
		some $j\in\ZZ/m\ZZ$. Due to~\ref{it:4156a} and the fact that $\ccC$
		satisfies~\ref{it:B1} we know that $M_\star=f_\star^+$ is an edge copy. 
		Let $X_\star$ be the wagon of $(N, \equiv^N)$ containing $f_\star$ and write 
		$W_\star=\delta^{-1}(X_\star)$ for its derived wagon. Each of the three 
		rules~\ref{it:4156a}\,--\,\ref{it:4156c} could in principle give rise to 
		the edge copy~$f_\star^+$. Depending on which of them was used in the
		definitions of $M_j$, $M_{j+1}$ there arise three possibilities for 
		each of $Z_j$, $Z_{j+1}$. Both of them are either the copy $H[f_\star]$
		derived from~$f_\star^+$ (as in~\ref{it:4156a}), 
		a single edge copy covered by $f_\star$ (as in~\ref{it:4156b}), 
		or a twin conducted by $f_\star$ (as in~\ref{it:4156c}). 
		Furthermore, each of the connectors $r_{j-1}$, $r_j$, and $r_{j+1}$ is 
		either $W_\star$ or a vertex belonging to $f_\star$.
		
		\smallskip
		
		{\hskip2em \it First case: We have $m=2$}
		
		\smallskip
		
		Suppose first that none of $Z_j$, $Z_{j+1}$ is a twin, so that $n=2$. 
		Due to the linearity of $H$ and~\ref{it:A1} it cannot be the case 
		that $\ccC$ consists of two edge copies. Thus~$H[f]$ appears in~$\ccC$ 
		and by collapsing the other copy, which has to be an edge copy, 
		to itself we see that~$H[f]$ is a supreme 
		copy of~$\ccC$, contrary to the choice of~$\ccC$. 
		
		Thus we can assume by symmetry that $Z_j$ is a twin, which causes the 
		connectors of~$\ccD$ to be vertices. As the wagon in the middle of $Z_j$ 
		contains those vertices, the linearity of~$(N, \equiv^N)$ shows that this 
		wagon has to be $W_\star$. Now $Z_{j+1}$ cannot be a twin as well (because 
		it also would need to contain the wagon $W_\star$), and by~\ref{it:A2}
		$Z_{j+1}$ cannot be a single edge copy; so only the case $Z_{j+1}=H[f]$ remains.
		Utilising the fact that this copy has no isolated vertices we find two edges
		$e', e''\in E(H[f])$ containing the connectors $r_{j-1}$, $r_j$. By collapsing 
		the two edge copies in $Z_j$ to $(e')^+$ and $(e'')^+$ we see that $H[f]$ is a 
		supreme copy, which again contradicts the choice of $\ccC$. 		
		 		 		
		\smallskip
		
		{\hskip2em \it Second case: We have $n\ge m\ge 3$}
		
		\smallskip
		
		Now $r_{j-1}$, $r_j$, and $r_{j+1}$ are three distinct connectors 
		and due to $|M^\ccC(W_\star)|\le 2$ one of them is equal to $W_\star$, while 
		the two other ones are vertices in $f_\star$. 
		If $r_j=W_\star$, then Claim~\ref{clm:n181} tells us that $Z_j$, $Z_{j+1}$
		are single edge copies. So $Z_jr_jZ_{j+1}$ is a twin conducted
		by $f_\star$, which contradicts the fact that~\eqref{eq:n403} is the twin
		decompositions of $\ccC$. 
		
		By symmetry it remains to consider the case that $r_{j-1}=W_\star$, 
		whilst $r_j$ and $r_{j+1}$ are vertices. 
		Since the wagon $W_\star$ satisfies~\ref{it:A2}
		with respect to $\ccC$, this is only possible if $r_{j-1}$ and $r_{j+1}$
		are consecutive in $\ccC$. So $m=3$ and Claim~\ref{clm:n181} implies 
		that $Z_{j-1}$, $Z_j$ are single edge copies, which in turn 
		causes $Z_{j-1}r_{j-1}Z_j$ to be a twin with conductor~$f_\star$. 
		Again this contradicts~\eqref{eq:n403} being the twin decomposition 
		of $\ccC$.
	\end{proof}
	
	\begin{clm}\label{clm:n420}
		The big cycle $\ccD$ is acceptable.
	\end{clm}
	
	\begin{proof} 	
		Assume first that contrary to~\ref{it:A1} we have $\ccD=f_1^+Xf_2^+x$,
		where $f_1, f_2\in E(N)$, the connector $X$ is a wagon of $(N, \equiv^N)$, 
		and $x$ is a vertex. Since twins are surrounded by vertex connectors, there 
		cannot be any twins in $\ccC$. Thus $\ccC$ is of the 
		form $\ccC=F_1WF_2x$, where~$W$ is derived from~$X$. 
		As $\ccC$ is required to satisfy~\ref{it:A1}, we may 
		assume that $F_1$ is real, and by~\ref{it:4156a} $F_1=H[f_1]$ is the 
		copy derived from $f_1^+\in \ccN^+$. So $F_1$ has no isolated vertices and 
		due to $x\in f_1$  
		there exists an edge $e_2\in E(F_1)$ passing through $x$. 
		By collapsing~$F_2$ to $e_2^+$ we see that $F_1$ is a supreme copy of $\ccC$,
		which contradicts the assumption that $\ccC$ be a counterexample.  		 
		
		Working towards~\ref{it:A2} we consider any wagon connector
		$\overline{r}_j$ of $\ccD$. If $r_j=q_i$, then the acceptability of $\ccC$ 
		yields $M^\ccC(q_i)\subseteq \{i-1, i+1\}$, whence 
		$M^\ccD(\overline{r}_j)\subseteq\{j-1, j+1\}$. Now suppose that 
		$|M^\ccD(\overline{r}_j)|=2$ and that some edge $f\in E(N)$ satisfies 
		$r_{i-1}, r_{i+1}\in f$. Evidently this is only possible if $n\ge m\ge 3$ and
		$M^\ccD(\overline{r}_j)=\{j-1, j+1\}$, which in turn implies  
		$M^\ccC(q_i)=\{i-1, i+1\}$. Claim~\ref{clm:n181} tells us that $F_i$, $F_{i+1}$
		are edge copies and, therefore, $F_iq_iF_{i+1}$ is a twin conducted by $f$.
		This contradiction to the fact that~\eqref{eq:n403} is the twin decomposition 
		of $\ccC$ establishes that $\ccD$ satisfies the 
		moreover-part of~\ref{it:A2} as well. 
		
		It remains to verify~\ref{it:A3}. To this end we consider an arbitrary 
		wagon $W_\star$ of $(H, \equiv^H)$ such that 
		$X_\star=\delta(W_\star)$ fails to be a wagon connector of~$\ccD$. 
		If~$W_\star$ is a connector of $\ccC$, then it appears in the 
		middle of a twin and by~\ref{it:A2} the set $M^\ccD(X_\star)$ can 
		be covered by a pair of consecutive indices.
		If, on the other hand, $W_\star$ is absent from $\ccC$, 
		we can appeal to~\ref{it:A3}. Thereby Claim~\ref{clm:n420} is proved. 	
	\end{proof}
	
	Owing to $\ord{\ccD}=\ord{\ccC}\le g$ 
	and $\GTH(N, \equiv^N, \ccN^+)>g$ Claim~\ref{clm:n420} shows that $\ccD$ 
	has a supreme copy~$M_\star$. Let the family of $M_\star$-pieces 
		\[
		\ccQ=\{Q_j\colon j\in\ZZ/m\ZZ \text{ and } M_j\ne M_\star\}
	\]
	exemplify the supremacy of $M_\star$. Recall that $M_\star$ is a real copy by 
	Remark~\ref{rem:1852}\ref{it:1852b}, whence there is a real 
	copy $F_\star\in\ccH$  
	derived from it and appearing in~$\ccC$.
	Coming to Step~\ref{it:pss3} of the plan outlined above 
	we shall now show that~$F_\star$ is a supreme copy of $\ccC$.  
	 
	\begin{clm}\label{clm:n465}
		For the index set $K=\{i\in\ZZ/n\ZZ\colon F_i\ne F_\star\}$ there is a family 
				\[
			\ccP = \bigl\{P_i\colon i\in K\bigr\}
		\]
				of $F_\star$-pieces such that for every $i\in K$ the following holds. 
		\begin{enumerate}[label=\nlabel]
			\item\label{it:4652a} The cyclic sequence obtained from $\ccC$ 
					upon replacing~$F_i$ by~$P_i$ has the 
					properties~\ref{it:B3} and~\ref{it:B4}.
			\item\label{it:4652b} If the piece $P_i$ is long, then it satisfies the 
					clauses~\mbox{\ref{it:1801a}\,--\,\ref{it:1801c}} 
					from Lemma~\ref{lem:1448}.
 		\end{enumerate}	
	\end{clm}
		
	\begin{proof}
		Given any $i\in K$ we need to explain how to find the required piece $P_i$.
		Let us start with the case that for some twin $Z_j=(e'_j)^+W_j(e''_j)^+$,
		say with conductor~$f_j$, we have $F_i\in\{(e'_j)^+, (e''_j)^+\}$. 
		By Definition~\ref{dfn:1746}\ref{it:1751b}\ref{it:beta}
		applied to $\ccD$ and $(N, \equiv^N, \ccN^+)$ here rather than $\ccC$
		and $(H, \equiv, \ccH^+)$ there
		the piece $Q_j$ to which $M_j=f_j^+$ collapses is short; due to 
		the linearity of $N$ the only possibility is $Q_j=f_j^+$,
		which implies $f_j\in E(M_\star)$. Since $H[f_j]$ has no isolated vertices, 
		we can collapse $(e'_j)^+$, $(e''_j)^+$ to two edge 
		copies $(e^\star_j)^+$, $(e^{\star\star}_j)^+$ 
		with $r_{j-1}\in e^\star_j\in E(H[f_j])$ 
		and $r_j\in e^{\star\star}_j\in E(H[f_j])$.
		
		So from now on we can assume that $q_{i-1}F_iq_i=r_{j-1}Z_jr_j$ 
		holds for some $j\in \ZZ/m\ZZ$.
		Because of $i\in K$ we have $M_j\ne M_\star$ and, therefore, $\ccQ$ provides
		an $M_\star$-piece $Q_j$.
		
		\smallskip
		
		{\it \hskip2em First Case. The $M_\star$-piece $Q_j=f_j^+$ is short.}
		
		\smallskip
		
		Suppose first that one of the connectors $q_{i-1}$ and~$q_i$, say $q_i$, 
		is a wagon. Now $q_{i-1}$ needs to be a vertex, there exists an 
		edge $e_i\in H[f_j]$ with $q_{i-1}\in e_i$, 
		and the short $F_\star$-piece $P_i=e_i^+$ is as desired. 
				
		It remains to consider the case that $q_{i-1}$ and $q_i$ are vertices. 
		If there exists an edge $e_i\in E(H)$ containing both of them, 
		then the linearity of $N$ yields $e_i\subseteq f_j$ 
		and it is permissible to set $P_i=e_i^+$.
		From now on we assume that such an edge $e_i$ does not exist. 
		If $e'_i, e''_i$ are two edges of $H[f_j]$ with $q_{i-1}\in e'_i$ as well 
		as $q_i\in e''_i$, and $W_i$ denotes the unique wagon of $(H, \equiv ^H)$
		with $f_j\in E(\delta(W_i))$, then $P_i=(e'_i)^+ W_i (e''_i)^+$ is 
		a long $F_\star$-piece satisfying~\ref{it:4652a}
		and the demands~\ref{it:1801a},~\ref{it:1801b} mentioned in~\ref{it:4652b}.
		If~\ref{it:1801c} fails, then the first part of~\ref{it:A2} discloses $n=3$ 
		and $q_{i+1}=W_i$. 
		But then $q_{i-1}, q_i\in V(q_{i+1})$, so by Claim~\ref{clm:n181}
		both~$F_{i+1}$ and~$F_{i+2}$ are edge copies. 
		Thus the supreme copy $M_\star$ fails to appear on $\ccD$, which is absurd.    
		 
		\smallskip
		
		{\it \hskip2em Second Case. The $M_\star$-piece $Q_j=(f'_j)^+ X_j(f''_j)^+$ 
		is long.}
		
		\smallskip
		
		Here $X_j$ is a wagon of $(N, \equiv^N)$ that fails to appear on $\ccD$, 
		whence $W_j=\delta^{-1}(X_j)$ is a wagon of $(H, \equiv^H)$ that is 
		distinct from $r_1, \ldots, r_m$. As $H[f'_j]$ and $H[f''_j]$ have no 
		isolated vertices, there are edges~$e'_j, e''_j\in E(F_\star)$ 
		such that $r_{j-1}\in e'_j$ and $r_j\in e''_j$.
		Now $P_i=(e'_j)^+ W_j (e''_j)^+$ is an $F_\star$-piece 
		satisfying~\ref{it:1801a} and~\ref{it:1801b}, because $Q_j$ has these 
		properties as well. 
		Proceeding with~\ref{it:1801c} we assume 
		contrariwise that $W_j\in\{q_1, \ldots, q_n\}$. As we already 
		know $W_j\not\in\{r_1, \ldots, r_m\}$ this is only possible if $W_j$ 
		is in the middle of some twin $Z_k$, where $k\in (\ZZ/m\ZZ)\sm\{j\}$. 
		Now $\{j-1, j, k-1, k\}\subseteq M^\ccD(X_j)$ and the acceptability 
		of~$\ccD$ yield~$m=2$.
		Combined with $M_\star\ne M_k, M_j$ this contradicts the fact that $M_\star$
		appears in~$\ccD$. 
	\end{proof}
	
	If $n=2$ this shows immediately that $F_\star$ is a supreme copy of $\ccC$
	and for $n\ge 3$ we need to point out additionally that owing to 
	Lemma~\ref{lem:1448}\ref{it:1448a} all collapses suggested 
	by the claim can be carried out simultaneously. 
\end{proof}

The next concept is motivated by the relationship between the systems $\ccH_\bullet$
and $\ccH$ occurring in Step~\ref{it:ext7} of the extension process. 

\begin{dfn}\label{dfn:n155}
\begin{enumerate}[labelsep=0pt, itemindent=20pt, leftmargin=0pt, label=\alabel]
	\item\label{it:n155-1} $\,\,$
	If a pretrain $(G, \equiv^G)$ is the disjoint union of the family of pretrains
	$\{(G_i, \equiv^i)\colon i\in I\}$ and for every $i\in I$ the pretrain 
	$(G_i, \equiv^i)$ is a tame extension of its subpretrain $F_i$, then the system
	of pretrains $\ccS_G=\{F_i\colon i\in I\}$ is said to be {\it scattered} in $G$. 
	\item\label{it:n155-2} $\,\,$
	Let $(H, \equiv^H, \ccH_\bullet)$ be a system of pretrains. If for every 
	copy $G\in \ccH_\bullet$ we have a system~$\ccS_G$ of subpretrains scattered 
	in $G$, then the system $\ccH=\bigcup_{G\in\ccH_\bullet} \ccS_G$ is said to 
	be {\it scattered} in $\ccH_\bullet$. 
\end{enumerate} 
	\index{scattered}
\end{dfn}

\begin{lemma}\label{lem:n540}
	Let $(H, \equiv^H, \ccH_\bullet)$ be a system of pretrains satisfying 
	$\GTH(H, \equiv^H, \ccH_\bullet^+)>g$ for some integer $g\ge 1$. 
	If $\ccH$ is scattered in $\ccH_\bullet$, 
	then $\GTH(H, \equiv^H, \ccH^+)>g$ follows. 
\end{lemma}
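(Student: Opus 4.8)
\textbf{Proof plan for Lemma~\ref{lem:n540}.}
The key observation is that every copy in $\ccH$ is an induced subpretrain of a copy in $\ccH_\bullet$, and the scattering is ``tame'' in the precise sense of Definition~\ref{dfn:n854}. So the plan is to take an acceptable big cycle $\ccC = F_1q_1\ldots F_nq_n$ in $(H, \equiv^H, \ccH^+)$ with $\ord{\ccC}\le g$ and, among all counterexamples (big cycles with no supreme copy), choose one with the minimal number $\nu$ of indices $i$ for which $F_i$ is a real copy that is \emph{not} already a member of $\ccH_\bullet$. (Edge copies $e^+\in E^+(H)$ lie in both $\ccH^+$ and $\ccH_\bullet^+$, so they are harmless.) If $\nu=0$ then every real copy of $\ccC$ lies in $\ccH_\bullet$, hence $\ccC$ is an acceptable big cycle in $(H,\equiv^H,\ccH_\bullet^+)$, and the hypothesis $\GTH(H,\equiv^H,\ccH_\bullet^+)>g$ produces a supreme copy directly (a supreme copy for the $\ccH_\bullet^+$-system is also one for the $\ccH^+$-system, since the notion of $F_\star$-piece only refers to edges of $F_\star$, and every real copy arising this way lies in $\ccH_\bullet\subseteq\ccH$ by Remark~\ref{rem:1852}\ref{it:1852b}). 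This contradicts the choice of $\ccC$, so no counterexample exists.

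The heart of the argument is the reduction step when $\nu\ge 1$. Pick an index $i$ with $F_i$ a real copy of $\ccH$ not in $\ccH_\bullet$; say $F_i$ is scattered in $G_i\in\ccH_\bullet$, so that the extension $G_i$ of $F_i$ is tame and, in particular, $F_i\Str G_i$. I want to ``promote'' $F_i$ to $G_i$ inside $\ccC$, i.e.\ replace $F_i$ by $G_i$ and argue that the result $\ccC'$ is still an acceptable big cycle with the same order, no supreme copy, and fewer non-$\ccH_\bullet$ real copies. The connector conditions~\ref{it:B3},~\ref{it:B4} for $\ccC'$ are immediate since $V(F_i)\subseteq V(G_i)$ and $E(F_i)\subseteq E(G_i)$ and, crucially, $F_i$ and $G_i$ have the \emph{same} wagons (a wagon $W$ with $E(W)\cap E(F_i)\ne\vn$ has $E(W)\cap E(G_i)\supseteq E(W)\cap E(F_i)\ne\vn$; conversely tameness via Definition~\ref{dfn:n854} forces any edge of $G_i$ meeting $V(F_i)$ to lie in a wagon already meeting $F_i$, and one checks the vertex connectors $q_{i-1},q_i$ lie in $V(F_i)$). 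Condition~\ref{it:B1} ($F_i\ne F_{i\pm1}$) needs that neither neighbour equals $G_i$; if, say, $F_{i+1}=G_i$ then one can instead delete $F_i$ and shortcut the cycle (the connector $q_i$, being a vertex or wagon meeting $F_{i+1}=G_i$, can be merged with $q_{i-1}$, which also meets $G_i$), producing a shorter cycle of order $\le g$ — but one must handle the degenerate length-$2$ cases, appealing to Lemmas~\ref{lem:GTH1}, \ref{lem:n1624}, \ref{lem:2037}. Acceptability~\ref{it:A1}--\ref{it:A3} transfers because it is stated purely in terms of connectors and the $M(W)$-sets, which are unchanged. Finally, if $\ccC'$ had a supreme copy $F_\star$ with a witnessing family of $F_\star$-pieces, then since every $F_\star$-piece is built from edges of $F_\star$, and $F_\star\in\ccH_\bullet$ has $F_\star\cap$(anything scattered below it) well-behaved, the same family exhibits a supreme copy of $\ccC$ — here one uses tameness to relate pieces inside $G_i$ back to pieces inside $F_i$ when $F_\star$ happens to be $G_i$ itself: an edge $f\in E(G_i)$ containing a relevant vertex connector (which lies in $V(F_i)$) satisfies, by the moreover-part of Definition~\ref{dfn:n854}, $f\equiv^H f'$ for some $f'\in E(F_i)$ through that vertex, so the short/long $G_i$-piece can be rewritten as an $F_i$-piece. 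This last rewriting is the step I expect to be the main obstacle: one must carefully check that a \emph{long} $G_i$-piece $(f')^+ W (f'')^+$ rewrites to a long $F_i$-piece with the same wagon $W$ and still satisfies the restrictive clauses~\ref{it:1801a}--\ref{it:1801c} of Definition~\ref{dfn:1746}, using that $W$ is a common wagon of $F_i$ and $G_i$ and that tameness supplies edges of $F_i$ through the two vertex connectors.

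In writing this up I would lead with the minimal-counterexample setup, then state and prove the promotion/contraction step as the single technical claim, then finish in one line by iterating it down to $\nu=0$. Most of the case analysis (length $2$, neighbour collisions, short vs.\ long pieces) parallels arguments already carried out in the proof of Lemma~\ref{lem:n023}, so I would cross-reference rather than repeat. The only genuinely new ingredient is the interplay between tameness and the piece-rewriting, which is where all the care goes.
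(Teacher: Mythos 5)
Your proposal heads in the right direction — use the hypothesis on $\ccH_\bullet^+$ by lifting the cycle there and then push a supreme copy back down — but it differs from the paper's proof and has a genuine gap in the ``push back'' direction.

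The paper's argument replaces \emph{all} real copies at once, forming $\ccD=G_1q_1\ldots G_nq_n$ (where $G_i\in\ccH_\bullet^+$ is a parent of $F_i$), rather than promoting one copy at a time. This sidesteps the collision bookkeeping you allude to; the only collision to exclude is two consecutive $G_i=G_{i+1}$ being real, and that is killed directly by scattering (a connector between them would have to lie in $V(F_i)\cap V(F_{i+1})$ with $F_i,F_{i+1}$ sitting in \emph{disjoint} subpretrains of $G_i$). Incidentally, your claim that ``$F_i$ and $G_i$ have the same wagons'' is false in general (a wagon can meet $G_i$ without meeting $F_i$), but it is also unnecessary: the conditions~\ref{it:B1}--\ref{it:B4} and~\ref{it:A1}--\ref{it:A3} only refer to the connectors, which are unchanged.

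The gap is the piece-rewriting step when the supreme copy $F_\star$ of $\ccD$ equals some $G_{i_0}$, and you need $F_{i_0}$ to be a supreme copy of $\ccC$. Two things must be established that your sketch does not. First, by Fact~\ref{f:1644} all vertex connectors of $\ccD$ lie in $V(G_{i_0})$ — but for the rewriting you need them in $V(F_{i_0})$, and $F_{i_0}\Str G_{i_0}$ alone does not give this. The paper (Claim~\ref{clm:1452}) proves it by a detour: if a vertex connector $q_i\notin V(F_{i_0})$ then the adjacent copies are edge copies, Lemma~\ref{lem:2145} forces their underlying edges into distinct wagons $W_i$, $W_{i+1}$, and the extension property (Definition~\ref{dfn:1811}\ref{it:1811c}) gives $q_i\in V(W_i)\cap V(W_{i+1})\subseteq V(F_{i_0})$. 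Second, when several indices $j$ have $G_j=F_\star$, the supreme copy of $\ccC$ must be a \emph{single} copy $F_{i_0}$, so you need all those $F_j$ to coincide; this again is a scattering argument (the cycle cannot jump between the disjoint subpretrains $G_i$ of $G_\star$), and it is what lets the paper isolate a unique $F_\circ$. Without these two facts, the tameness moreover-part is not enough: it produces an edge of $F_i$ through a given vertex of $V(F_i)$ in the correct wagon, but you first have to know the connector \emph{is} in $V(F_i)$. Finally, the paper routes the remaining copies through Lemma~\ref{lem:1448} to collapse them to $G_\star$-pieces and then invokes the all-copies-scattered case (Claim~\ref{clm:1452}); this modular structure is worth adopting rather than trying to rewrite pieces on the fly.
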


\begin{proof}
	As in Definition~\ref{dfn:n155}\ref{it:n155-2} 
	we write $\ccH=\bigcup_{G\in\ccH_\bullet} \ccS_G$,
	where for every copy $G\in\ccH_\bullet$ the system $\ccS_G$ is scattered in $G$.
	
	\begin{clm}\label{clm:1452}
		Suppose $(G_\star, \equiv^{G_\star})\in\ccH_\bullet$ 
		and that $\ccC=F_1q_1\ldots F_nq_n$ is an acceptable 
		big cycle in $(H, \equiv^H, \ccH^+)$ with $\ord{\ccC}\le g$. If all
		copies of $\ccC$ belong to $\ccS_{G_\star}\cup E^+(G_\star)$, 
		then $\ccC$ has a supreme copy.  
	\end{clm}
	
	\begin{proof}
		Since $\ccS_{G_\star}$ is scattered in $G_\star$, we can express $G_\star$
		as a disjoint union of a family $\{G_i\colon i\in I\}$ of subpretrains 
		such that for every member of $\ccS_{G_{\star}}$ there is a unique $G_i$
		tamely extending it. The entire cycle $\ccC$ cannot jump from one $G_i$ 
		to another one and thus there is a pretrain $G_\circ$ in this family such 
		that all edge copies of $\ccC$ correspond to edges of $G_\circ$ and all real 
		copies of $\ccC$ coincide with the unique copy $F_\circ\in\ccS_{G_\star}$ 
		contained in $G_\circ$. Due to $\GTH(H, \equiv^H, \ccH_\bullet^+)>g$ only the 
		case that $F_\circ$ is a real copy appearing on $\ccC$ is interesting.
		We shall show that then $F_\circ$ itself is a supreme copy of $\ccC$. 
		
		As a first step towards this goal we prove that all vertex connectors 
		of $\ccC$ belong to~$F_\circ$.
		Assume (reductio ad absurdum) that this fails for some vertex connector $q_i$. 
		Now $F_i$, $F_{i+1}$ are edge copies and $n\ge 3$, so Lemma~\ref{lem:2145} shows 
		that the underlying edges of $F_i$, $F_{i+1}$ belong to distinct wagons, 
		say~$W_i$,~$W_{i+1}$. Since $G_\circ$ is an extension of $F_\circ$,
		we have indeed $q_i\in V(W_i)\cap V(W_{i+1})\subseteq V(F_\circ)$.
		
		Now let $K=\{i\in\ZZ/n\ZZ\colon F_i\ne F_\circ\}$. We shall show that for 
		every $i\in K$ there is a short $F_\circ$-piece $P_i$ such that in $\ccC$
		we can collapse $F_i$ to $P_i$. If $n=2$ this will show immediately 
		that~$F_\circ$ is a supreme copy of $\ccC$ (because then we have $|K|=1$), 
		and if $n\ge 3$ Lemma~\ref{lem:1448}\ref{it:1448a} tells us that all 
		these collapses can be carried out simultaneously. 
		
		Consider an arbitrary index $i\in K$. We already know that $F_i=f_i^+$ is an 
		edge copy. If the connectors $q_{i-1}$ and $q_i$ are vertices, 
		then $F_\circ\Str G_\circ$ and $q_{i-1}, q_i\in V(F_\circ)\cap f_i$ 
		imply $f_i\in E(F_\circ)$, for which reason it is permissible to set 
		$P_i=F_i$. 
		
		If, on the other hand, one of the connectors $q_{i-1}$, $q_i$, say $q_i$,
		is a wagon, then $q_{i-1}$ is a vertex and the moreover-part of 
		Definition~\ref{dfn:n854} shows that there is an edge $e_i$ such that 
		$q_{i-1}\in e_i\in E(q_i)\cap E(F_\circ)$. Clearly $P_i=e_i^+$ is as desired. 
	\end{proof}
		
	Now let $\ccC=F_1q_1\dots F_nq_n$ be an arbitrary acceptable big cycle 
	in $(H, \equiv^H, \ccH^+)$ whose order is at most $g$. 
	We would like to construct an auxiliary acceptable big cycle 
		\[
		\ccD=G_1q_1\dots G_nq_n
	\]
		in $(H, \equiv^H, \ccH_\bullet^+)$, which inherits its connectors from $\ccC$.
	To this end we associate with every copy $F_i\in \ccH^+$ 
	a copy $G_i\in \ccH_\bullet^+$ such that 
		\begin{enumerate}
		\item[$\bullet$] either $F_i=G_i$ is an edge copy
		\item[$\bullet$] or $F_i$ is a real copy and $F_i\in\ccS_{G_i}$.
	\end{enumerate}
		 
	Evidently $\ccD$ has the properties~\ref{it:B2}\,--\,\ref{it:B4}	of an 
	acceptable big cycle in $(H, \equiv^H, \ccH_\bullet^+)$. Moreover, if 
	some copy $G_\star\in \ccH_\bullet^+$ appears twice in consecutive positions 
	of $\ccD$, then $G_\star$ is a real copy and we get a contradiction to the 
	fact that $\ccH_{G_\star}$ is scattered in $G_\star$. 
	So altogether,~$\ccD$ is a big cycle. 
	
	The acceptability of $\ccD$ and $\ord{\ccD}=\ord{\ccC}\le g$ are clear.
	Thus our assumption $\GTH(H, \equiv^H, \ccH_\bullet^+)>g$ leads to 
	a supreme copy $G_\star$ of $\ccD$. Recall that $G_\star$ is a real copy by 
	Remark~\ref{rem:1852}\ref{it:1852b}. 
	Set $K=\{i\in \ZZ/n\ZZ \text{ and } G_i\ne G_\star\}$
	and let the family of $G_\star$-pieces 
	$\{P_i\colon i\in K\}$ exemplify the supremacy 
	of $G_\star$ in $\ccD$. Starting with $\ccC$ we can still replace every 
	copy $F_i$ with $i\in K$ by the piece $P_i$. If $n=2$ this shows immediately 
	that $\ccC$ has a supreme copy and for $n\ge 3$ 
	Lemma~\ref{lem:1448}\ref{it:1448a} tells us that we obtain an 
	acceptable big cycle~$\ccE$ in this manner. 
	Due to Claim~\ref{clm:1452} $\ccE$ has a supreme 
	copy and by Lemma~\ref{lem:1448}\ref{it:1448b} so does~$\ccC$. 
\end{proof}

\begin{proof}[Proof of Lemma~\ref{lem:1955}]
	We assume that the reader has the eight-step 
	description~\ref{it:ext1}\,--\,\ref{it:ext8}
	of $\Ext(\Phi, \Psi)$ given immediately before Lemma~\ref{lem:0059} on the desk 
	and without further explanation we use the same notation as there. Let us remark that 
	the pretrain $(G, \equiv^{G})$ generated in Step~\ref{it:ext3} 
	satisfies $\ggth\bigl(G, \equiv^{G}\bigr)>g$, whence 
	$\gth(M) >g$. So the application of~$\Psi_{r^{e(X)}}$ in Step~\ref{it:ext5} is 
	justified,
	meaning that the system $(H, \equiv^H, \ccH^+)$ does indeed exist. 
	It remains to establish that its $\GTH$ exceeds $g$.  
	
	Let $\equiv^N$ be the equivalence relation on $E(N)$ whose equivalence classes
	are single edges. By Step~\ref{it:ext6} the hypergraph $H$ is living in $N$ 
	and the pretrain $(H, \equiv^H)$ is derived from $(N, \equiv^N)$. Similarly, 
	the first part of Step~\ref{it:ext7} says that the system $\ccH_\bullet$ is 
	derived from~$\ccN$.  
	Due to $\Gth(N, \ccN^+)>g$ and Lemma~\ref{lem:0217} we 
	know $\GTH(N, \equiv^N, \ccN^+)>g$ and, therefore, Lemma~\ref{lem:n023} 
	discloses $\GTH(H, \equiv^H, \ccH_\bullet^+)>g$.
	
	For every copy $(G_\star, \equiv^{G_\star})\in \ccH_\bullet$ the $|\ccX|$ 
	standard copies of $(F, \equiv^F)$ form a scattered system 
	(cf.\ Lemma~\ref{lem:5757}) and thus $\ccH$ is scattered in $\ccH_\bullet$.
	Now $\GTH(H, \equiv^H, \ccH^+)>g$ follows from Lemma~\ref{lem:n540}.
\end{proof} 

\subsection{\texorpdfstring{$\GTH$}{Girth} preservation}
\label{subsec:GTHpres}

The next result and its proof share strong similarities with the material in 
Section~\ref{subsec:PCAG}. However, the focus there was on making incremental 
progress, whereas here we only care about maintaining what we already have. 

\begin{prop} \label{prop:2235}
	Suppose that $g\ge 2$ and that $\Xi$ is a partite lemma 
	\begin{enumerate}
		\item[$\bullet$] applicable to $k$-partite $k$-uniform 
			pretrains $(B, \equiv^B)$ with $\ggth(B, \equiv^B)>g$
	 	\item[$\bullet$] and producing $k$-partite $k$-uniform systems 
			of pretrains $(H, \equiv^H, \ccH)$ with 
						\[
				\GTH(H, \equiv^H, \ccH^+)>g\,.
			\]
				\end{enumerate}	
	If $\Phi$ denotes an arbitrary linear Ramsey construction for hypergraphs 
	generating systems of strongly induced copies, 
	then the pretrain construction $\PC(\Phi, \Xi)$  
	\begin{enumerate}
		\item[$\bullet$] applies to all  
			pretrains $(F, \equiv^F)$ with $\ggth(F, \equiv^F)>g$
	 	\item[$\bullet$] and delivers systems 
			of pretrains $(H, \equiv^H, \ccH)$ with $\GTH(H, \equiv^H, \ccH^+)>g$. 
	\end{enumerate}	
\end{prop}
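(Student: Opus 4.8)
The strategy mirrors the proof of Proposition~\ref{prop:girthclean}: we reduce the statement to a picturesque lemma which is then verified by induction along the partite construction. First, I would check that $\PC(\Phi,\Xi)_r(F,\equiv^F)$ is well defined whenever $\ggth(F,\equiv^F)>g$. Given such a pretrain and a number of colours $r$, one sets $(G,\ccG)=\Phi_r(F)$ and forms the pretrain picture zero $(\Pi_0,\equiv_0,\ccP_0,\psi_0)$ from \S\ref{sssec:2340}. Because the copies in $\ccP_0$ are isomorphic to $(F,\equiv^F)$ and are arranged disjointly with non-equivalent edges in different copies (the second bullet in the definition of pretrain picture zero), every wagon of $(\Pi_0,\equiv_0)$ sits inside one copy, so $\ggth(\Pi_0,\equiv_0)>g$ and, by Lemma~\ref{lem:2310} applied inside each copy, $\GTH(\Pi_0,\equiv_0,\ccP_0^+)>g$. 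Enumerating $E(G)=\{e(1),\dots,e(N)\}$, one then argues inductively: if the picture $(\Pi_{\alpha-1},\equiv_{\alpha-1},\ccP_{\alpha-1},\psi_{\alpha-1})$ exists and satisfies $\GTH(\Pi_{\alpha-1},\equiv_{\alpha-1},\ccP_{\alpha-1}^+)>g$, then in particular $\ggth$ of the constituent $\Pi^{e(\alpha)}_{\alpha-1}$ exceeds $g$ (since the constituent inherits linearity and its wagon-girth from the ambient picture), so the partite lemma $\Xi$ applies and produces $(H,\equiv^H,\ccH)$ with $\GTH(H,\equiv^H,\ccH^+)>g$; the partite amalgamation of \S\ref{sssec:0021} then yields the next picture.

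\textbf{The picturesque lemma.} The heart of the matter is the following analogue of Lemma~\ref{lem:251}: if $(\Pi,\equiv^\Pi,\ccP,\psi_\Pi)$ is a pretrain picture over a system $(G,\ccG)$ of hypergraphs (not necessarily linear) with strongly induced copies, $e\in E(G)$, $(H,\equiv^H,\ccH)$ is a $k$-partite $k$-uniform system of pretrains with $\GTH(H,\equiv^H,\ccH^+)>g$, and $(\Sigma,\equiv^\Sigma,\ccQ,\psi_\Sigma)=(\Pi,\equiv^\Pi,\ccP,\psi_\Pi)\conc(H,\equiv^H,\ccH)$, then $\GTH(\Pi,\equiv^\Pi,\ccP^+)>g$ implies $\GTH(\Sigma,\equiv^\Sigma,\ccQ^+)>g$. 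To prove this one first checks that $(\Sigma,\equiv^\Sigma)$ is linear — here one uses Lemma~\ref{lem:1825}\ref{it:1832a},\ref{it:1832b},\ref{it:1832c} together with the fact that the constituents stay linear (which in turn rests on $\Phi$ delivering strongly induced copies vertically, via an argument like Lemma~\ref{lem:1527}): two wagons of $\Sigma$ in the same standard copy are disjoint-or-share-one-vertex because $\Pi$ is linear, and two wagons in different standard copies have their intersection contained in $V(H)$ and then in a single edge of $H$ by the cleanness that strong inducedness forces, hence they share at most one vertex. Then one takes an acceptable big cycle $\ccC=F_1q_1\ldots F_nq_n$ in $(\Sigma,\equiv^\Sigma,\ccQ^+)$ with $\ord{\ccC}\le g$ and, exactly as in Lemma~\ref{lem:251}, introduces \emph{segments} (maximal runs of copies living in one standard copy, or single edge copies from $H$), \emph{leaders}, and a minimal \emph{segmentation} $\ccC=I_1r_1\ldots I_tr_t$. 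If $t=1$ the whole cycle lives in one standard copy and we are done; if $t\ge2$ the leaders form a big cycle $\ccD=\Pi^e_1r_1\ldots\Pi^e_tr_t$ in $(H,\equiv^H,\ccH^+)$, and the analogue of Claim~\ref{clm:1826} bounds $\ord{\ccD}\le g$. Crucially, wagon connectors of $\ccC$ between two distinct standard copies must actually be wagons of $H$ (since a wagon of $\Sigma$ contracting across standard copies lives in $H$ by Lemma~\ref{lem:1825}\ref{it:1832c2}), and mixed indices of $\ccC$ at segment boundaries are handled using acceptability condition~\ref{it:A2} exactly as tidiness~\ref{it:T1} was used before. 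Then $\GTH(H,\equiv^H,\ccH^+)>g$ supplies a supreme copy of $\ccD$, one disposes of the small cases $n=2$ and $t=2$ by the analogues of Lemma~\ref{lem:2037} and Lemma~\ref{lem:1448}, and for $t\ge3$ one collapses the single-copy segments pointed at by the supreme copy of $\ccD$ (using acceptability~\ref{it:A3}, which forces the two bordering connectors to be consecutive vertices, so the collapse is legal by Lemma~\ref{lem:1448}\ref{it:1448a}) and obtains a big cycle living entirely in one standard copy; $\GTH(\Pi,\equiv^\Pi,\ccP^+)>g$ then gives a supreme copy, and Lemma~\ref{lem:1448}\ref{it:1448b} propagates it back to $\ccC$.

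\textbf{The main obstacles.} Two points require genuine care. The first is verifying linearity of $(\Sigma,\equiv^\Sigma)$, i.e.\ that wagons do not intersect in more than one vertex — the $\HJ$-based constructions are known \emph{not} to preserve this (cf.\ the discussion in \S\ref{subsec:7o}), so the only thing saving us is that here the partite lemma $\Xi$ is \emph{assumed} to output linear systems and the vertical $\Phi$ delivers strongly induced copies, which together force wagons of $\Sigma$ straddling two standard copies to be inherited from the linear $H$. The second obstacle is the bookkeeping around \emph{twins} and long pieces: unlike in Section~\ref{subsec:PCAG}, a supreme copy of $\ccD$ may collapse leaders to long pieces, and one must make sure that when these are pulled back to $\ccC$ one produces legitimate $F_\star$-pieces satisfying clauses~\ref{it:1801a}--\ref{it:1801c} of Lemma~\ref{lem:1448}; this is entirely analogous to Claim~\ref{clm:n465} in the proof of Lemma~\ref{lem:n023}, and the absence of twins spanning distinct standard copies (again a consequence of the cleanness forced by strong inducedness) is what makes it go through. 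Once the picturesque lemma is established, Proposition~\ref{prop:2235} follows by the inductive template of the proof of Proposition~\ref{prop:girthclean}, and I would simply say ``the deduction is identical to the proof of Proposition~\ref{prop:girthclean} from Lemma~\ref{lem:251}'' rather than repeating it.
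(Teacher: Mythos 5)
Your overall plan --- reduce to a picturesque amalgamation lemma, introduce segments, leaders, and a minimal segmentation, bound the order of the cycle of leaders, and propagate supreme copies back to $\ccC$ via the collapsing lemma --- matches the paper's proof (Lemma~\ref{lem:0052}, Stages B and D). There is, however, a genuine gap in your argument for the linearity of the pretrain $(\Sigma,\equiv^\Sigma)$.

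You propose to deduce linearity from ``the cleanness that strong inducedness forces,'' but $\Xi$ is only assumed to yield systems with $\GTH>g$, not systems whose copies have clean intersections, so the Lemma~\ref{lem:1527}-style route is unavailable. Moreover, your case split between two wagons ``in the same standard copy'' and ``in different standard copies'' is not a dichotomy: a single wagon of $\equiv^\Sigma$ can spread across several standard copies of $\Pi$ by passing through~$H$, which is exactly the phenomenon codified in Lemma~\ref{lem:1825}\ref{it:1832c}\ref{it:1832c2}. The paper's Stage~C argument is considerably harder: it first establishes that wagons of $(\Sigma,\equiv^\Sigma)$ are strongly induced subhypergraphs (Claim~\ref{clm:1516}, itself a big-cycle argument involving reducts), and then, in the case where no edge covers both intersection vertices, it carries out a segmentation-and-reduct analysis together with a dedicated configuration lemma (Claim~\ref{clm:2203}); your outline has no substitute for this. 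Two smaller points. First, the picturesque lemma must keep $(G,\ccG)$ linear --- your parenthetical ``not necessarily linear'' is wrong. Linearity of $G$ is used in the edge-copy case of Claim~\ref{clm:2131} to conclude $\psi_\Sigma[f_\star]=e$; fortunately it is available, since $\Phi$ is assumed to be a linear construction. Second, the twin/conductor machinery you import from the proof of Lemma~\ref{lem:n023} does not actually appear here; when a leader of $\ccD$ collapses to a long piece, conditions~\ref{it:A2} and~\ref{it:A3} force the corresponding segment $I_\tau$ to have the precise form $F_jWF_{j+1}$ (not that ``the two bordering connectors are consecutive vertices''), after which Lemma~\ref{lem:1448} applies directly.
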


One may brief\-ly wonder why this is a useful result. After all, the 
main property $\PC(\Phi, \Xi)$ is shown to have is already assumed for $\Xi$.   
Indicating only one example of a successful application of 
Proposition~\ref{prop:2235} we would like to point out that by Lemma~\ref{lem:cleancap} the copies in the systems 
produced by $\PC(\Phi, \Xi)$ will always have clean intersections, while this need not 
be the case for the systems generated by $\Xi$ (whose copies can intersect in entire 
wagons). For this reason, $\PC(\Phi, \Xi)$ can have more 
desirable properties than $\Xi$ itself.   
As usual, the main work going into the proof of Proposition~\ref{prop:2235}
concerns a picturesque lemma.     

\begin{lemma}\label{lem:0052}
	Suppose that $g\ge 2$ and that 
		\begin{equation}\label{eq:0912}
		(\Sigma, \equiv^\Sigma, \ccQ, \psi_\Sigma)
		=
		(\Pi, \equiv^\Pi, \ccP, \psi_\Pi)
		\conc
		(H, \equiv^H, \ccH)
	\end{equation}
	holds for two pretrain pictures $(\Pi, \equiv^\Pi, \ccP, \psi_\Pi)$ 
	and $(\Sigma, \equiv^\Sigma, \ccQ, \psi_\Sigma)$ over a linear system~$(G, \ccG)$ 
	with strongly induced copies and for a $k$-partite $k$-uniform system of 
	pretrains~$(H, \equiv^H, \ccH)$.
	If 
		\[
		\GTH(\Pi, \equiv^\Pi, \ccP^+)>g
		\quad \text{ and } \quad 
		\GTH(H, \equiv^H, \ccH^+)>g\,, 
	\]
		then $\GTH(\Sigma, \equiv^\Sigma, \ccQ^+)>g$.
\end{lemma}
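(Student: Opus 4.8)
The argument will be modelled closely on the proof of Lemma~\ref{lem:251} (and its refinement Lemma~\ref{lem:1753}): we run an inductive argument over the partite construction, and the heart of the matter is the passage from one picture to the next recorded in~\eqref{eq:0912}. So let $\ccC=F_1q_1\ldots F_nq_n$ be an acceptable big cycle in $(\Sigma, \equiv^\Sigma, \ccQ^+)$ with $\ord{\ccC}\le g$; we must produce a supreme copy. First I would record, using Lemma~\ref{lem:1825}\ref{it:1832a}--\ref{it:1832c}, that every wagon of $(\Sigma,\equiv^\Sigma)$ either lies inside $H$ or inside a unique standard copy $\Pi_\star$, and that the pretrain $(\Sigma,\equiv^\Sigma)$ is linear (this last fact should be separated out, perhaps proved just as in Lemma~\ref{lem:2207}, or assumed to have been dealt with earlier since it is needed for $\GTH(\Sigma,\equiv^\Sigma,\ccQ^+)$ even to be meaningful). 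I would then import the notion of a \emph{segment} from the proof of Lemma~\ref{lem:251}, now allowing the leader of a segment to be either a standard copy $\Pi_\star$ (containing a block $F_i,\dots,F_j$ of consecutive copies of $\ccC$ that all lie in $\ccP_\star^+$) or a single edge copy corresponding to an edge of $H$. Choosing a segmentation $\ccC=I_1r_1\ldots I_tr_t$ with $t$ minimal, the connectors $r_1,\dots,r_t$ lie in $H$ (vertices or wagons of $H$), and $\ccD=\Pi^e_1 r_1\ldots\Pi^e_t r_t$ is a cyclic sequence over $(H,\equiv^H,\ccH^+)$.

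\textbf{Key steps.} The main work is to verify (i) that $\ccD$ is an \emph{acceptable} big cycle with $\ord{\ccD}\le g$, and (ii) that a supreme copy of $\ccD$ pulls back to a supreme copy of $\ccC$. For (i) the order estimate is the analogue of Claim~\ref{clm:1826}: assign $\eta_i=1$ or $1/2$ according as $i$ is pure or mixed in $\ccC$, let $\theta_\tau$ be the sum of the $\eta_k$ over copies $F_k$ in segment $I_\tau$, note $\sum\theta_\tau=\ord{\ccC}\le g$, and check $\theta_\tau\ge 1$; the only delicate case is a one-copy segment with a mixed index, which is excluded because copies cross music lines at most once together with the acceptability conditions~\ref{it:A2}. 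Acceptability of $\ccD$ itself — conditions~\ref{it:A1},~\ref{it:A2},~\ref{it:A3} — I would derive from acceptability of $\ccC$: a wagon connector of $\ccD$ is a wagon connector of $\ccC$, so its $M$-set is inherited, and the ``no covering edge'' clause of~\ref{it:A2} transfers because an edge of $H$ witnessing a failure for $\ccD$ would witness one for $\ccC$; condition~\ref{it:A1} for $\ccD$ (the order-one case) needs a separate short argument, and the small-$t$ cases $t=1,2$ should be handled by hand as in Claims~\ref{clm:1230} and the $t=1$ paragraph of Lemma~\ref{lem:251}, using Lemma~\ref{lem:2037} (every real copy of a length-two big cycle is supreme) where the earlier proof used Lemma~\ref{lem:1522}\ref{it:1522a}.

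\textbf{Pulling back.} Having obtained from $\GTH(H,\equiv^H,\ccH^+)>g$ a supreme copy $\Pi^e_\star$ of $\ccD$, together with a family of $\Pi^e_\star$-pieces $\{P_\tau\}$, I would argue — exactly paralleling Claim~\ref{clm:1629} — that whenever $\Pi^e_\tau\ne\Pi^e_\star$ the segment $I_\tau$ consists of a single copy $\widehat F_\tau$ (this uses~\ref{it:A3} applied to the wagon in the middle of a long piece $P_\tau$, and~\ref{it:A2} for a short $P_\tau$, together with $t\ge 3$). Then collapse each such $\widehat F_\tau$ to the corresponding piece $P_\tau$ — here the piece structure of $\GTH$ (short versus long pieces) matches up directly, and the side conditions~\ref{it:1801a}--\ref{it:1801c} needed to invoke Lemma~\ref{lem:1448}\ref{it:1448a} are precisely the ones packaged into Definition~\ref{dfn:1746}\ref{it:1751b} for $\ccD$. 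The resulting acceptable big cycle $\ccE$ lives entirely inside one standard copy $(\Pi_\star,\equiv^{\Pi_\star},\ccP_\star^+)$, so $\GTH(\Pi,\equiv^\Pi,\ccP^+)>g$ gives it a supreme copy, and Lemma~\ref{lem:1448}\ref{it:1448b} promotes this to a supreme copy of $\ccC$. The deduction of Proposition~\ref{prop:2235} from Lemma~\ref{lem:0052} is then the routine induction along the partite construction (checking at picture zero that the copies are disjoint tame extensions, hence $\GTH>g$ there, and invoking $\ggth$-transitivity to see the constituents stay $\ggth>g$), entirely analogous to the proof of Proposition~\ref{prop:girthclean}.

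\textbf{Main obstacle.} I expect the genuinely tricky part to be the bookkeeping around \emph{wagon} connectors $r_\tau$ of $\ccD$ versus the connectors of $\ccC$: a vertex connector $q_i$ of $\ccC$ can sit \emph{inside} a wagon of $\Sigma$ that happens to be split between a standard copy and $H$, so one must be careful that the $M$-sets and the acceptability conditions really do transfer between $\ccC$ and $\ccD$, and that the long pieces produced when pulling back respect Definition~\ref{dfn:1746}\ref{it:1751b}\ref{it:beta} — i.e.\ that one does not introduce a long piece when an edge covering the two vertex connectors is actually available inside the standard copy. This is exactly the point where the proof of Lemma~\ref{lem:251} used the hypothesis $\Gth(H,\ccH^+)>(g,g)$ rather than just $>g$; here the cleaner statement $\GTH(H,\equiv^H,\ccH^+)>g$ suffices because, unlike in Section~\ref{subsec:PCAG}, we are not trying to \emph{increase} the invariant, only to preserve it, so the length-versus-order subtlety does not arise and the one-copy-segment analysis closes without the extra length bound.
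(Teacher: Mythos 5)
Your high-level plan (segmentation, reduct, pullback, $t\le 2$ treated specially) matches the paper's proof, and you correctly identify that $\GTH(H,\equiv^H,\ccH^+)>g$ suffices here where Lemma~\ref{lem:251} needed $\Gth>(g,g)$. However, there are two genuine gaps.

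\textbf{Linearity of the pretrain.} You say that linearity of $(\Sigma,\equiv^\Sigma)$ ``should be separated out, perhaps proved just as in Lemma~\ref{lem:2207}, or assumed to have been dealt with earlier.'' Lemma~\ref{lem:2207} only establishes linearity of the \emph{hypergraph}~$\Sigma$; linearity of the \emph{pretrain} additionally requires that any two wagons of $(\Sigma,\equiv^\Sigma)$ meet in at most one vertex (Definition~\ref{dfn:838}\ref{it:838b}), and this is a required part of $\GTH(\Sigma,\equiv^\Sigma,\ccQ^+)>g$ by Definition~\ref{dfn:1820}. The paper devotes the entirety of its Stage~C to this, including: showing wagons of $(\Sigma,\equiv^\Sigma)$ are strongly induced (its Claim~\ref{clm:1516}); a new auxiliary fact that two vertices of $V(F_\star)\cap V(H)$ are covered by a common edge in $E(F_\star)\cap E(H)$ (its Claim~\ref{clm:2131}); and a special configuration lemma about two wagons spanning two distinct standard copies (its Claim~\ref{clm:2203}, which is where $\GTH(H,\equiv^H,\ccH^+)>g\ge 2$ and Lemma~\ref{lem:2037} are needed). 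None of this is ``dealt with earlier''; you cannot invoke $\GTH(\Sigma,\equiv^\Sigma,\ccQ^+)>g$ without it.

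\textbf{The pullback does not reduce to single-copy segments.} You argue ``exactly paralleling Claim~\ref{clm:1629}'' that whenever $\Pi^e_\tau\ne\Pi^e_\star$ the segment $I_\tau$ consists of a single copy. This is false in the pretrain setting when the supremacy witness $Q_\tau$ is a \emph{long} piece $(f')^+\overline{W}(f'')^+$: the argument using~\ref{it:A3} then forces $I_\tau$ to be of the form $F_j W F_{j+1}$, a segment of two copies and the interposed wagon $W$, after which one collapses $F_j$ to $(f')^+$ and $F_{j+1}$ to $(f'')^+$ separately. Your proposal would fail to collapse the cycle correctly here. Related to this, your $t=2$ treatment cannot be handled ``as in Claims~\ref{clm:1230}'' because the third case of the paper's Claim~\ref{clm:2337} (both segments have at least two copies) again invokes Claim~\ref{clm:2203} and the wagon-between-two-copies phenomenon; this has no analogue in Section~\ref{subsec:PCAG}. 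You do flag this as the ``main obstacle,'' but the proposal does not resolve it.
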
  

\begin{proof}
	The picturesque statement in the proof of Lemma~\ref{lem:2207} reveals 
	that $\Sigma$ is a linear hypergraph. So it remains to establish
		\begin{enumerate}[label=\alabel]
		\item\label{it:2127a} that any two wagons of $(\Sigma, \equiv^\Sigma)$ 
			intersect in at most one vertex 
		\item\label{it:2127b} and that every acceptable big cycle $\ccC$ 
			in $(\Sigma, \equiv^\Sigma, \ccQ^+)$ with $\ord{\ccC}\le g$ has a supreme copy.
	\end{enumerate}
		
	\noindent
	{\bf Stage A: First observations.}
	The assumption that the copies in $\ccG$ be strongly induced will be used in the following 
	way.
	
	\begin{clm}\label{clm:2131}
		If $F_\star\in \ccQ^+$ and $x, y\in V(F_\star)\cap V(H)$
		are two distinct vertices, then there exists an edge $f$ with 
		$x, y\in f\in E(F_\star)\cap E(H)$. 
	\end{clm}
	
	\begin{proof}
		Let the amalgamation~\eqref{eq:0912} be constructed over the edge $e\in E(G)$. 
		Since $F_\star$ intersects every music line of the pretrain 
		picture $(\Sigma, \equiv^\Sigma, \ccQ, \psi_\Sigma)$ at most once, 
		the vertices $\psi_\Sigma(x), \psi_\Sigma(y)\in e\cap \psi_\Sigma[V(F_\star)]$ 
		are distinct.  
		So if $F_\star=f_\star^+$ is an edge copy, then the linearity of $G$ 
		yields $\psi_\Sigma[f_\star]=e$, for which reason $f=f_\star$ is as desired. 
		If the copy $F_\star$ is real, then~$\psi_\Sigma$ projects it into $\ccG$
		and thus onto a strongly induced subhypergraph of $G$. In particular, this projection 
		needs to contain the edge $e$. The edge $f\in E(F_\star)$ projected to $e$ has the 
		desired property. 
	\end{proof}
	
	We proceed by transferring the hypothesis $\GTH(\Pi, \equiv^\Pi, \ccP^+)>g$ to standard 
	copies. More precisely, we deal with the special case of cycles 
	in $(\Sigma, \equiv^\Sigma, \ccQ^+)$ all of whose copies belong to the same standard copy 
	of the previous picture. 
	  	 
	\begin{clm}\label{clm:1327}
		Let $(\Pi_\star, \equiv^{\Pi_\star}, \ccP_\star)$ be a standard copy. If 
		all copies of an acceptable 
		big cycle~$\ccC$ in $(\Sigma, \equiv^\Sigma, \ccQ^+)$ with $\ord{\ccC}\le g$
		belong to $\ccP_\star^+$, then $\ccC$ 
		possesses a supreme copy. 
	\end{clm}

	\begin{proof}
		Let $\ccC=F_1q_1\ldots F_nq_n$. 
		Strictly speaking, $\ccC$ does not need to be a big cycle in
		the pretrain system $(\Pi_\star, \equiv^{\Pi_\star}, \ccP_\star^+)$,
		for the wagon connectors of $\ccC$ are wagons of $\equiv^\Sigma$ 
		rather than wagons of $\equiv^{\Pi_\star}$. However,~\ref{it:B4} shows that every 
		wagon among $q_1, \ldots, q_n$ contracts to~$(\Pi_\star, \equiv^{\Pi_\star})$.
		Thus we can define 
				\[
			\overline{q}_i=
				\begin{cases}
					\text{the contraction of $q_i$ to $(\Pi_\star, \equiv^{\Pi_\star})$,} & 
							\text{ if $q_i$ is a wagon} \cr
					q_i, & \text{ if $q_i$ is a vertex}
				\end{cases}
		\]
		for every $i\in\ZZ/n\ZZ$ and then 
		\[
			\overline{\ccC}=F_1\overline{q}_1\ldots F_n\overline{q}_n
		\]
		will be a big cycle in the system of pretrains 
		$(\Pi_\star, \equiv^{\Pi_\star}, \ccP_\star^+)$ with 
		$\ord{\overline{\ccC}}=\ord{\ccC}\le g$. It is easily confirmed 
		that $\overline{\ccC}$ is acceptable and, therefore, $\overline{\ccC}$
		possesses a supreme copy $F_\star\in\{F_1, \dots, F_n\}$. 
		Let 
				\[
			\overline{\ccM}
			=
			\{\overline{P}_i\colon i\in\ZZ/n\ZZ \text{ and } F_i\ne F_\star\}
		\]
				be a family of $F_\star$-pieces with respect to the ambient 
		system $(\Pi_\star, \equiv^{\Pi_\star}, \ccP_\star^+)$ that exemplifies 
		the supremacy of $F_\star$ in $\overline{\ccC}$. 
		
		We intend to show that $F_\star$ is a supreme copy of $\ccC$ as well and to
		this end we need to convert $\overline{\ccM}$ into an appropriate family 
		of $F_\star$-pieces with respect to $(\Sigma, \equiv^\Sigma, \ccQ^+)$.
		For every index~$i$ such that $\overline{P}_i$ is short we just 
		set $P_i=\overline{P}_i$. If $\overline{P}_i=(f'_i)^+\overline{W}_i(f''_i)^+$
		is long, however, then we put $P_i=(f'_i)^+ W_i (f''_i)^+$, 
		where $W_i$ denotes the wagon of $(\Sigma, \equiv^\Sigma)$ contracting 
		to~$\overline{W}_i$. 
		
		It will turn out that  
				\[
			\ccM
			=
			\{P_i\colon i\in\ZZ/n\ZZ \text{ and } F_i\ne F_\star\}
		\]
				is the desired family of $F_\star$-pieces. The main point to be checked here 
		is that the long pieces in this family still satisfy 
		Definition~\ref{dfn:1746}\ref{it:1751b}\ref{it:beta} with respect 
		to the larger pretrain~$(\Sigma, \equiv^\Sigma)$. 
		Assume for the sake of contradiction that $P_i$
		is a long piece and that some edge~$f$ in~$E(\Sigma)\sm E(\Pi_\star)$
		contains the connectors $q_{i-1}$, $q_i$. Due to the linearity of $G$ this 
		requires $f\in E(H)$. But now Lemma~\ref{lem:2107} applied to the system 
		$(H, \equiv^H, \ccH^+)$ shows that~$f$ belongs to the 
		copy $\Pi^e_\star\in\ccH$ extended by~$\Pi_\star$, contrary 
		to $f\not\in E(\Pi_\star)$. This concludes the proof that~$F_\star$ is a
		supreme copy of $\ccC$. 
	\end{proof}
	
	Next we study a peculiar kind of big cycles in our amalgamation that will 
	occasionally require a special treatment later. 
	
	\begin{clm}\label{clm:2203}
		Let $(\Pi_\star, \ccP_\star)$ and $(\Pi_{\star\star}, \ccP_{\star\star})$ be two 
		distinct standard copies of $(\Pi, \ccP)$. 
		If 
				\[
			\ccC=F_1W_\star F_2xF_3W_{\star\star}F_4y
		\]
				is a big cycle in $(\Sigma, \equiv^\Sigma, \ccQ^+)$ 
		such that the connectors $x$, $y$ are vertices, $W_\star$, $W_{\star\star}$ 
		are wagons, and $F_1, F_2\in \ccP_\star^+$ as well 
		as $F_3, F_4\in \ccP_{\star\star}^+$,
		then there exists an edge $f\in E(H)$ with $x, y\in f$. 
 	\end{clm}

\usetikzlibrary{decorations.pathreplacing}

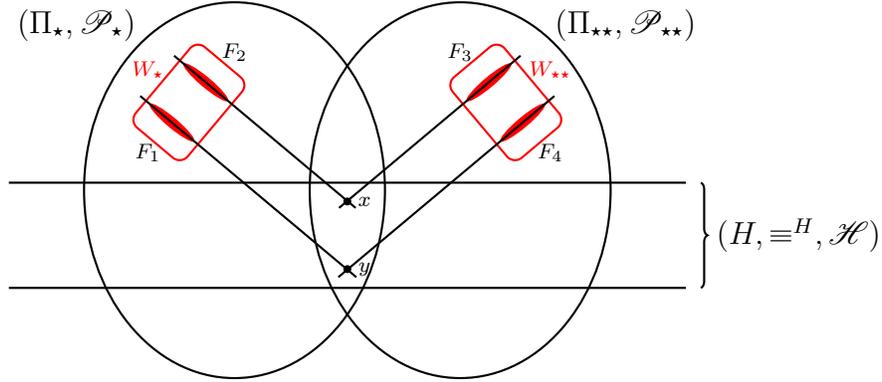
\begin{figure}[ht]
	\centering

			\begin{tikzpicture}[scale=1]

		\def\ob{50:(0,.5)};
		\def\oo{-50:(0,.5)};
		
		\draw [thick](-4.5,-.2) -- (4.5,-.2);
			\draw [thick](-4.5,1.2) -- (4.5,1.2);
		
	\fill [red, rotate around={\ob}](-.35, 3.25) ellipse (2pt and 11pt);
	\fill [red, rotate around={\ob}](.35, 3.25) ellipse (2pt and 11pt);
			
	\fill [red, rotate around={\oo}](-.35, 3.25) ellipse (2pt and 11pt);
	\fill [red, rotate around={\oo}](.35, 3.25) ellipse (2pt and 11pt);

		\draw [thick](-1.5,1.1) ellipse (2cm and 2.5cm);
		\draw [thick](1.5,1.1) ellipse (2cm and 2.5cm);
		
		\draw [thick, red, rounded corners, rotate around={\ob}](-.7, 2.8) rectangle (.7, 3.7);
		
		\draw [thick, rotate around={\ob}] (-.35,3.8)--(-.35,.05);
		\draw [thick, rotate around={\ob}] (.35,3.8)--(.35,.65);

			\draw [thick, red, rounded corners, rotate around={\oo}](-.7, 2.8) rectangle (.7, 3.7);
		
		\draw [thick, rotate around={\oo}] (-.35,3.8)--(-.35,.65);
		\draw [thick, rotate around={\oo}] (.35,3.8)--(.35,.05);
		
		\fill (0,.05) circle (1.5pt);
		\fill (0,.95) circle (1.5pt);
		
		\node at (.23, .05) {\tiny $y$};
		\node at (.23, .95) {\tiny $x$};
		\node at (-2.65,1.6) {\tiny $F_1$};
		\node at (-1.5, 2.93){\tiny $F_2$};
		\node [red]at (-2.65, 2.7) {\tiny $W_{\star}$};
		\node at  (-3.6, 3.3) {$(\Pi_\star, \ccP_\star)$};
		
		\node at (2.7,1.6) {\tiny $F_4$};
		\node at (1.5, 2.93){\tiny $F_3$};
		\node [red]at (2.7, 2.7) {\tiny $W_{\star\star}$};
		\node at  (3.7, 3.3) {$(\Pi_{\star\star}, \ccP_{\star\star})$};

			\draw [thick, decorate,
			decoration = {brace}] (4.7,1.2) --  (4.7,-.2);
			
			\node at (6,.5) {$(H, \equiv^H, \ccH)$};

		\end{tikzpicture}

				\caption{The configuration analysed in Claim~\ref{clm:2203}}
				\label{fig:912}

	\end{figure} 	
	\begin{proof}
		Notice that $x, y\in V(H)$.
		Due to the linearity of the pretrain $(H, \equiv^H)$ it cannot be the case 
		that $W_{\star}$, $W_{\star\star}$ contract to wagons $W^H_{\star}$, $W^H_{\star\star}$ 
		of $(H, \equiv^H)$ with $x, y\in V(W^H_{\star})\cap V(W^H_{\star\star})$. 
		By symmetry we may therefore suppose that 
				\begin{equation}\label{eq:1428}
			\text{if the contraction $W^H_{\star}$ exists, 
				then $\{x, y\}\not\subseteq V(W^H_{\star})$.}
		\end{equation}
				
		Let $\Pi^e_\star, \Pi^e_{\star\star}\in \ccH$ be the copies extended by $\Pi_\star$, 
		$\Pi_{\star\star}$. Now $\ccD=\Pi^e_\star x \Pi^e_{\star\star} y$ is a big cycle 
		in~$(H, \ccH^+)$. If either of its copies is an edge copy, then the existence of 
		the desired edge $f$ is clear, so we can assume that $\Pi^e_\star$,  
		$\Pi^e_{\star\star}$ are both real.
		
		Owing to $\GTH(H, \equiv^H, \ccH^+)>g\ge 2$ Lemma~\ref{lem:2037} tells us 
		that $\Pi^e_\star$ is a supreme copy of~$\ccD$. Let the $\Pi^e_\star$-piece $P$
		exemplify this fact. If $P=f^+$ is short we have found the desired edge $f$. It 
		remains to consider the case that $P=(f')^+W^H(f'')^+$ is long, where, let us
		emphasise, $W^H$ is a wagon of $(H, \equiv^H)$. 
		
		By $x, y\in V(W^H)$ and~\eqref{eq:1428} it cannot be the case 
		that $W^H= W^H_\star$. Thus the wagon~$W$ of $(\Sigma, \equiv^\Sigma)$
		contracting to $W^H$ is distinct from $W_\star$. A fortiori the contractions 
		$\overline{W}$, $\overline{W}_\star$ of~$W$,~$W_\star$ 
		to $(\Pi_\star, \equiv^{\Pi_\star})$ are distinct as well and, therefore,   
		\[
			\ccE=F_1\overline{W}_\star F_2x(f')^+\overline{W}(f'')^+y
		\]
		(see Figure~\ref{fig:912e}) is a big cycle 
		in $(\Pi_\star, \equiv^{\Pi_\star}, \ccP_\star^+)$.
		
		\begin{figure}[ht]
\centering	
\begin{tikzpicture}[scale=1]
\def\ob{50:(0,.5)};
\def\oo{-50:(0,.5)};
		
\draw [thick](-3.3,-.2) -- (.5,-.2);
\draw [thick](-3.6,1.2) -- (.8,1.2);
			
\fill [red, rotate around={\ob}](-.35, 3.25) ellipse (2pt and 11pt);
\fill [red, rotate around={\ob}](.35, 3.25) ellipse (2pt and 11pt);
			
\draw [thick](-1.4,1.1) ellipse (2.1cm and 2.5cm);
\draw [thick, red, rounded corners, rotate around={\ob}](-.7, 2.8) rectangle (.7, 3.7);
\draw [thick, rotate around={\ob}] (-.35,3.8)--(-.35,.05);
\draw [thick, rotate around={\ob}] (.35,3.8)--(.35,.65);
		
\draw [green!70!black, thick] (-.8,-.2)--(.17,1.2);
\draw [green!70!black, thick] (-.8, 1.2)--(.17, -.2);
		
\fill (0,.05) circle (1.5pt);
\fill (0,.95) circle (1.5pt);
		
\node at (.23, .05) {\tiny $y$};
\node at (.23, .95) {\tiny $x$};
\node at (-2.65,1.6) {\tiny $F_1$};
\node at (-1.5, 2.93){\tiny $F_2$};
\node [red]at (-2.65, 2.7) {\tiny $\overline{W}_{\star}$};
\node at (-3.6, 3.3) {$(\Pi_\star, \ccP_\star)$};
		
\draw [green!70!black, thick, rounded corners] (-1,1.3) rectangle(.33, -.3);
	
\node [green!70!black] at (-.8,.85) {\tiny $f''$};
\node [green!70!black] at (-.75,.05) {\tiny $f'$};
\node [green!70!black] at (0,1.6) {\tiny $\overline{W}$};
			
\end{tikzpicture}

\caption{The big cycle $\ccE$.}
\label{fig:912e} 
\end{figure} 		
		Clearly, it satisfies the acceptability condition~\ref{it:A1}, and~\ref{it:A2}
		could only fail if the desired edge $f$ exists. Moreover,~\ref{it:A3} holds 
		due to the fact that the only wagon of the linear 
		pretrain $(\Pi_\star, \equiv^{\Pi_\star})$ that contains $x$ and $y$ is $\overline{W}$ 
		and, hence, a connector of $\ccE$. 
		
		Altogether we may assume that $\ccE$ is acceptable.
		Due to 
				\[
			\GTH(\Pi_\star, \equiv^{\Pi_\star}, \ccP^+)
			>
			g
			\ge 
			2
			=
			\ord{\ccE}
		\]
				this implies that $\ccE$ has a supreme copy. By Fact~\ref{f:1644} such a 
		supreme copy needs to contain both~$x$ and~$y$ and thus Claim~\ref{clm:2131} 
		leads again to the desired edge $f$.
	\end{proof}
	
	\noindent
	{\bf Stage B: Segmentations.}
	When considering any (not necessarily acceptable) big cycle $\ccC=F_1q_1\ldots F_nq_n$ 
	in $(\Sigma, \equiv^\Sigma)$
	we can define {\it segments} of $\ccC$, their {\it leaders}, and {\it segmentations} 
	of~$\ccC$ as in the proof of Lemma~\ref{lem:251}. Unless the copies of $\ccC$ are 
	contained in a single standard copy of $(\Pi, \ccP^+)$ we obtain a segmentation
		\[
		\ccC=I_1r_1\ldots I_tr_t
	\]
		with $t\ge 2$ such that any two consecutive ones among the 
	leaders $\Pi^e_1, \ldots, \Pi^e_t$ are distinct. As usual we want 
	to pass to a big cycle in $(H, \equiv^H, \ccH^+)$ by replacing every segment 
	$I_\tau$ by its leader $\Pi^e_\tau$. 
	
	The resulting cyclic sequence clearly 
	has the properties~{\ref{it:B1}\,--\,\ref{it:B3}}. However, the wagons among 
	$r_1, \ldots, r_t$ usually fail to be wagons of $(H, \equiv^H)$ and thus they
	are, pedantically speaking, not allowed to serve as wagons of such cycles. 
	Due to the clauses~\ref{it:1832b} and~\ref{it:1832c} of Lemma~\ref{lem:1825}, 
	however, we know that such wagon connectors $r_\tau$ contract to $(H, \equiv^H)$.
	More precisely, if the leaders $\Pi^e_\tau$ and $\Pi^e_{\tau+1}$ are in $\ccH$, 
	then~\ref{it:1832c}\ref{it:1832c2} shows that they contain edges of $r_\tau$.
	Similarly, if only one of them is in $\ccH$ while the other one is an edge copy,
	we appeal to~\ref{it:1832c}\ref{it:1832c1}
	and the case that both are edge copies is unproblematic due to~\ref{it:1832b}.
	   
	Therefore we can define  
		\[
		\overline{r}_\tau=
			\begin{cases}
				\text{the contraction of $r_\tau$ to $(H, \equiv^H)$} &
					 	\text{ if $r_\tau$ is a wagon} \cr
				r_\tau & \text{ if $r_\tau$ is a vertex}
			\end{cases}
	\]
		for every $\tau\in\ZZ/t\ZZ$ and the cyclic sequence 
		\[
		\ccD=\Pi^e_1\overline{r}_1\ldots \Pi^e_t\overline{r}_t
	\]
	will be a big cycle in $(H, \equiv^H, \ccH^+)$. 
			Let us observe that Fact~\ref{fact:h}
	also holds for big cycles instead of cycles of copies, because its proof is based on
	a computation that can be carried out in both scenarios. Utilising the thus modified
	form of Fact~\ref{fact:h} repeatedly we obtain $\ord{\ccD}\le \ord{\ccC}$. 
	It will be convenient to call $\ccD$ a {\it reduct} of $\ccC$. 
	\index{reduct}
	
	\smallskip
	\noindent
	{\bf Stage C: The proof of~\ref{it:2127a}.} Recall that by Fact~\ref{f:wagstr} 
	the wagons of linear pretrains are strongly induced. Thus the following claim 
	can be regarded as a first step towards proving that $(\Sigma, \equiv^\Sigma)$ 
	is linear.
	
	\begin{clm}\label{clm:1516}
		Every wagon of $(\Sigma, \equiv^\Sigma)$ is strongly induced. 
	\end{clm}

	\begin{proof}
		Let $W$ denote an arbitrary wagon of $(\Sigma, \equiv^\Sigma)$. 
		Due to the definition of 
		wagons we have $E(W)\ne \vn$ and no vertex of $W$ is isolated. Hence it remains 
		to check that if an edge $f\in E(\Sigma)$ intersects $V(W)$ in two distinct vertices 
		$x$ and $y$, then $f\in V(W)$. To this end we consider pairs of 
		edges $e_x, e_y\in E(W)$
		with $x\in e_x$ and $y\in e_y$. If $e_x$ and $e_y$ can be chosen in such a way that
		any two of the three edges $e_x$, $e_y$, and $f$
		coincide, we conclude $f\in E(W)$ either directly or by appealing to the linearity 
		of the hypergraph $\Sigma$.
		Assuming that this is not the case we obtain, for each choice of $(e_x, e_y)$, 
		a big cycle 
				\[
			\ccC=f^+xe_x^+We_y^+y
		\]
				of order $2$. 
		It should perhaps be pointed out that $\ccC$ is always inacceptable, for 
		the edge~$f$ exemplifies that the wagon $W$ violates the moreover-part of~\ref{it:A2}.
		Nevertheless, $\ccC$ can have acceptable reducts and we can learn something by 
		looking at supreme copies of such reducts.  
		
		If the edges $e_x$, $e_y$ can be selected in such a way that together with $f$
		they belong to a common standard copy $(\Pi_\star,  \equiv^{\Pi_\star})$, 
		then $f\in E(W)$ follows from the fact that the contraction of $W$ 
		to $(\Pi_\star, \equiv^{\Pi_\star})$ is strongly induced in $\Pi_\star$. 
		
		So we can suppose that such a choice of $e_x$, $e_y$ is impossible. Now let 
		these edges and a segmentation 
		\begin{equation}\label{eq:1556}
			\ccC=I_1r_1\ldots I_tr_t
		\end{equation}
		be determined in such a manner that $t\in \{2, 3\}$ is minimal.
		Write $\ccD=\Pi^e_1\overline{r}_1\ldots \Pi^e_t\overline{r}_t$ for 
		the cor\-re\-sponding reduct. Up to symmetry one of the following three cases occurs. 
		
		\smallskip
		
		{\it \hskip2em First Case. $\ccD=f^+ x \Pi_2^e y$}
	
		\smallskip
		
		Let $\Pi_2$ be the standard copy of $\Pi$ extending $\Pi^e_2$ and containing 
		the edges $e_x$, $e_y$. Lemma~\ref{lem:2107} applied to the 
		system $(H, \equiv^H, \ccH^+)$, the copy $\Pi^e_2$, and the edge $f$ yields 
		$f\in E(\Pi^e_2)$. Thus all copies of $\ccC$ can be found in the same standard 
		copy $\Pi_2$ and we are in the case discussed immediately before~\eqref{eq:1556}.  
		
		\smallskip
		
		{\it \hskip2em Second Case. $\ccD=\Pi^e_1 \overline{W} \Pi^e_2 y$}
	
		\smallskip
		
		Here $\Pi^e_1$ extends to a standard copy $\Pi_1$ of $\Pi$ that contains the 
		edges $f$, $e_x$. Moreover,~$\overline{W}$ denotes the contraction of $W$ 
		to $(H, \equiv^H)$.
		Due to Lemma~\ref{lem:GTH1} there exists an edge~$e'_y$ 
		in $E(\Pi^e_1)\cap E(\overline{W})$ passing through~$y$. 
		By choosing~$e'_y$ instead of~$e_y$ we again arrive at a situation where 
		everything relevant happens within the same standard copy, namely $\Pi_1$.    	
		
		\smallskip
		
		{\it \hskip2em Third Case. $\ccD=f^+ x \Pi^e_2 \overline{W} \Pi^e_3 y$ and, 
		hence, $t=3$.}
	
		\smallskip
		
		If $x, y\in V(\overline{W})$, then $\overline{W}\Str H$ 
		yields $f\in E(\overline{W})\subseteq E(W)$ and we are done immediately. 
		Now assume for the sake of contradiction that $\{x, y\}\not\subseteq V(\overline{W})$, 
		which causes $\ccD$ to be acceptable. Without loss of generality we may 
		suppose that $\Pi^e_2$ is a supreme copy of~$\ccD$. The $\Pi^e_2$-piece 
		$\Pi^e_3$ collapses to needs to be short. If~$(e'_y)^+$ denotes that piece, 
		then by choosing~$e'_y$ instead of~$e_y$ we are led to a case with $t=2$, 
		contrary to the minimality of~$t$. 
	\end{proof}

	Now we are ready to proceed to the linearity of the pretrain $(\Sigma, \equiv^\Sigma)$. 
	Given any two vertices~$x$ and $y$, we are to prove that the set $\ccW$ of all wagons $W$
	with $x, y\in V(W)$ has at most one element. If there exists an edge $f\in E(\Sigma)$ with 
	$x, y\in f$, then Claim~\ref{clm:1516} discloses $f\in E(W)$ for every $W\in \ccW$ 
	and $|\ccW|\le 1$ follows. Thus we may assume that 
		\begin{equation}\label{eq:1518}
		\text{ no $f\in E(\Sigma)$ contains both $x$ and $y$}.
	\end{equation}
		If $x, y\in V(H)$, then the linearity of $(H, \equiv^H)$ implies that at most one 
	wagon $W_\star^H$ of this pretrain satisfies $x, y\in V(W^H_\star)$. If such a wagon
	exists, then we denote the unique wagon of $(\Sigma, \equiv^\Sigma)$ contracting to it 
	by $W_\star$. 
	
	Now assume for the sake of contradiction that there exist distinct $W', W''\in \ccW$. 
	In case $W_\star$ exists we suppose, additionally, that $W_\star\in \{W', W''\}$. 
	For every quadruple $\seq{e}=(e'_x, e'_y, e''_x, e''_y)$ of edges with 
		\[
		x\in e'_x\in E(W')\,, \quad
		y\in e'_y\in E(W')\,, \quad
		x\in e''_x\in E(W'')\,, 
		\quad\text{ and }\quad
		y\in e''_y\in E(W'')\,, 
	\]
	the statement~\eqref{eq:1518} shows that 
	\[
		\ccC=(e'_x)^+ W' (e'_y)^+ y (e''_y)^+ W'' (e''_x)^+ x
	\]
	is a big cycle. As the standard copies of $(\Pi, \equiv^\Pi)$ are linear 
	pretrains, there is no such quadruple $\seq{e}$ whose four edges are in the 
	same standard copy. 
	Thus every choice of~$\seq{e}$ gives rise to at least one segmentation 
	\begin{equation}\label{eq:1533}
		\ccC=I_1r_1\ldots I_tr_t
	\end{equation}
	of the associated cycle $\ccC$. For the rest of the argument we fix $\seq{e}$
	and the segmentation~\eqref{eq:1533} in such a manner that $t\in \{2, 3, 4\}$ is minimal. 
	Let $\ccD=\Pi^e_1\overline{r}_1\ldots \Pi^e_t\overline{r}_t$ denote the corresponding 
	reduct of $\ccC$.

	\smallskip
		
		{\it \hskip2em First Case. $t=2$}
	
	\smallskip
	
	Let us consider the connectors of $\ccD$. Due to~Claim~\ref{clm:2203} 
	and~\eqref{eq:1518} it cannot be the case that both of them are vertices. 
	Moreover, Lemma~\ref{lem:n1624} excludes the case that $r_1$, $r_2$ are the 
	two wagons of $\ccC$. Up to symmetry this means that only the possibility 
	$\ccD=\Pi^e_1 \overline{W'} \Pi^e_2 x$ remains, where $\Pi^e_1$ denotes the 
	leader of $(e'_x)^+$. 
	
	Now Lemma~\ref{lem:GTH1} provides an 
	edge $f$ such that $x\in f\in E(\overline{W'})\cap E(\Pi^e_2)$ and 
	the quadruple $(f, e'_y, e''_x, e''_y)$ can play the r\^{o}le of~$\seq{e}$.
	However, all four edges of this quadruple pertain to the same standard 
	copy $\Pi_2$, which is absurd.  
	 
	\smallskip
		
		{\it \hskip2em Second Case. $t\in \{3, 4\}$}
	
	\smallskip
		
	We contend that $\ccD$ is acceptable. 		
	The property~\ref{it:A1} is clear,~\ref{it:A2} follows from~\eqref{eq:1518}, 
	and~\ref{it:A3} only requires attention if~$t=4$ and the wagon $W$ we want to 
	test contains both~$x$ and~$y$. We already know, however, 
	that there exists at most one such wagon, namely~$W^H_\star$. By our choice of $W'$,~$W''$ 
	and by $t=4$ this wagon is, if it exists, a connector of $\ccD$ and thus irrelevant 
	for~\ref{it:A3}. 
	
	SO $\ccD$ is indeed acceptable and due to $\ord{\ccD} = 2$ it follows 
	that $\ccD$ has a supreme copy, say $\Pi_2^e$. If both segments $I_1$, $I_3$ 
	contained more than one copy, then the contradiction 
	$2\le |\ccC|-|\ccD|=4-t\le 1$ would arise. Consequently we may assume 
	that $I_1$ consists of a single copy, which causes the corresponding 
	index to be mixed in $\ccD$. If $P_1=f^+$ denotes the short $\Pi_2^e$-piece 
	$\Pi^e_1$ collapses to, then by including $f$ into the quadruple $\seq{e}$ 
	(instead of the edge corresponding to $I_1$) we reach a contradiction to 
	the minimality of $t$. 
	   
	This concludes the proof that $(\Sigma, \equiv^\Sigma)$ is linear.
	
	\smallskip
	
	\noindent
	{\bf Stage D: The proof of~\ref{it:2127b}.}
	Consider any acceptable big cycle $\ccC=F_1q_1\ldots F_nq_n$ 
	in $(\Sigma, \equiv^\Sigma, \ccQ^+)$ whose order is at most~$g$. 
	We are to prove that $\ccC$ has a supreme copy. 
	
	Assuming that Claim~\ref{clm:1327} does not imply this immediately, 
	we pick a segmentation 
		\[
		\ccC=I_1r_1\ldots I_tr_t
	\]
		of $\ccC$ whose length $t\ge 2$ is minimal and pass to the associated reduct
		\[
		\ccD=\Pi^e_1\overline{r}_1\ldots \Pi^e_t\overline{r}_t\,.
	\]
	We commence with the simplest case. 
	
	\begin{clm}\label{clm:GTH1}
		If $\ord{\ccD}=1$, then $\ccC$ has a supreme copy. 
	\end{clm}
	
	\begin{proof}
		Suppose $\ccD=\Pi^e_1 x \Pi^e_2 \overline{W}$, where $x$ is a vertex 
		and $\overline{W}$ is a wagon of $(H, \equiv^H)$.
		
		Let us first deal with the special case $n=|\ccC|=2$, where $\ccC=F_1 x F_2 W$.
		Due to~\ref{it:A1} and symmetry we may assume that the copy $F_1$ is real. 
		Now Lemma~\ref{lem:GTH1} applied to $\ccD$ and~$\Pi^e_1$ yields an edge $f$ such that 
		$x\in f\in E(\overline{W})\cap E(\Pi^e_1)$. Since all copies of the 
		acceptable big cycle $\ccE=F_1 x f^+ W$ are in the same standard copy of $\Pi$,
		Claim~\ref{clm:1327} shows that $\ccE$ has a supreme copy. 
		Remark~\ref{rem:1852}\ref{it:1852b} tells us that this supreme copy must be $F_1$. 
		The short $F_1$-piece $Q$ to which we can collapse $f^+$ witnesses that $F_1$
		is a supreme copy of $\ccC$ as well. 
		 
		It remains to treat the case $n\ge 3$. If both segments $I_1$, $I_2$ contained 
		at least two copies of $\ccC$, then the wagon $W$ would exemplify that $\ccC$ 
		violates~\ref{it:A2}. So we may assume that $I_1$ consists of a single copy, 
		say~$F_1$. This time we utilise Lemma~\ref{lem:GTH1} for obtaining an edge $f$ 
		with $x\in f\in E(\overline{W})\cap E(\Pi^e_2)$. Lemma~\ref{lem:1448}\ref{it:1448a} 
		tells us that in $\ccC$ we can replace $F_1$ by $f^+$, thus arriving at another 
		acceptable big cycle $\ccE=f^+ x I_2 W$. The advantage of $\ccE$ is that all its 
		copies belong to the same standard copy of $\Pi$. Thus Claim~\ref{clm:1327} 
		tells us that $\ccE$ has some supreme copy and owing to 
		Lemma~\ref{lem:1448}\ref{it:1448b} so does $\ccC$.
	\end{proof}
	
	So in the sequel we can assume $\ord{\ccD}\in [2, \ord{\ccC}]$. In particular, 
	$\ccD$ satisfies~\ref{it:A1} and one confirms easily that it inherits the 
	properties~\ref{it:A2},~\ref{it:A3} from~$\ccC$. Altogether $\ccD$ is an acceptable 
	big cycle whose order is at most $g$; thus 
		\begin{equation}\label{eq:1821}
		\ccD \text{ has a supreme copy.}
	\end{equation}
			
	Next, we isolate the following special case. 

	\begin{clm}\label{clm:2337}
		If $t=2$, then $\ccC$ has a supreme copy. 
	\end{clm}
	
	\begin{proof}
		As a consequence of Lemma~\ref{lem:n1624} it cannot be the case that 
		both connectors of~$\ccD$ are wagons. So Claim~\ref{clm:GTH1} allows 
		us to assume that $r_1$, $r_2$ are vertices. We distinguish several 
		cases depending on whether the segments $I_1$, $I_2$ consist of single 
		copies. Up to symmetry there are three possibilities. 
		
		\smallskip
		
		{\it \hskip2em First Case. $n=2$}
	
		\smallskip
		 
		Claim~\ref{clm:2131} leads to an edge $f\in E(F_1)\cap E(F_2)\cap E(H)$.
		By~\ref{it:B1} it cannot be the case that both copies of $\ccC$ are equal to $f^+$ 
		and, hence, we may suppose that $F_1\ne f^+$. So we can collapse $F_2$ to
		the $F_1$-piece $f^+$, thus inferring that $F_1$ is a supreme copy of $\ccC$.
		
		\smallskip
		
		{\it \hskip2em Second Case. $n\ge 3$ and $I_2$ consists of a singly copy.}
	
		\smallskip
		
		To simplify the notation we assume $I_2=F_n$. Due to Claim~\ref{clm:2131} 
		there exists an edge $f\in E(F_n)\cap E(H)$. 
		Now Lemma~\ref{lem:1448}\ref{it:1448a} informs us that $\ccE=I_1 r_1 f^+ r_2$
		is an acceptable big cycle in $(\Sigma, \equiv^\Sigma, \ccQ^+)$. Moreover,   
		$r_1, r_2\in f\cap V(\Pi^e_1)$ and Lemma~\ref{lem:2107} imply $f\in E(\Pi^e_1)$,
		for which reason all copies of $\ccE$ belong to a common standard copy of $\Pi$. 
		By Claim~\ref{clm:1327} it follows that $\ccE$ has a supreme copy  
		and, finally, by Lemma~\ref{lem:1448}\ref{it:1448b} so does $\ccC$. 
		
		\smallskip
		
		{\it \hskip2em Third Case. Both $I_1$, $I_2$ contain at least two copies.}
	
		\smallskip 
		
		Suppose for the sake of contradiction that some edge $f$ of $\Sigma$ 
		covers $\{r_1, r_2\}$. Let $W$ be the wagon of $(\Sigma, \equiv^\Sigma)$
		containing $f$. Due to~\ref{it:A2} we know that $W$ cannot serve as a 
		connector of $\ccC$ and thus the description of the present case entails 
		that $W$ violates~\ref{it:A3}. This proves that 
				\begin{equation}\label{eq:norrf}
			\text{ there is no $f\in E(\Sigma)$ such that $r_1, r_2\in f$}.
		\end{equation}
				
		In particular, the copies $\Pi^e_1$ and $\Pi^e_2$ need to be real 
		and due to Lemma~\ref{lem:2037}
		both of them are supreme copies of~$\ccD$. By Claim~\ref{clm:2203} and symmetry 
		we may suppose that $I_1$ does not consist of two copies and an interposed wagon. 
		Let $P$ be a $\Pi^e_1$-piece with the property that collapsing~$\Pi^e_2$ to $P$ 
		establishes the supremacy of $\Pi_e^1$ in $\ccD$. Because of~\eqref{eq:norrf}
		we know that $P$ is long. Write $P=(f')^+\overline{W}(f'')^+$
		and denote the wagon of $(\Sigma, \equiv^\Sigma)$ contracting 
		to~$\overline{W}$ by~$W$.
		Owing to~\ref{it:A3} this wagon needs to appear in~$\ccC$. 
		Now~\ref{it:A2} tells us that the connectors of~$\ccC$ next to $W$ 
		are $r_1$ and $r_2$. 
		
		So our special choice of~$I_1$ guarantees that $I_2$ consists of two 
		copies and the wagon $W$ between them. 
		Without loss of generality we may suppose that $I_2=F_{n-1}WF_n$. 	
		Now Lemma~\ref{lem:1448} applies to $K=\{n-1, n\}$, $P_{n-1}=(f')^+$, 
		and $P_n=(f'')^+$.
		So by part~\ref{it:1448a} of the lemma $I_1r_1(f')^+W(f'')^+r_2$ 
		is an acceptable big cycle, by 
		Claim~\ref{clm:1327} this cycle has a supreme copy, and by 
		Lemma~\ref{lem:1448}\ref{it:1448b} $\ccC$ has a supreme copy as well. 
	\end{proof}
		
	Throughout the remainder of this proof we suppose that $n\ge t\ge 3$.
	Let $\Pi^e_\star$ be a supreme copy  of $\ccD$ (see~\eqref{eq:1821})  
	and let 
		\[
		\bigl\{Q_\tau\colon \tau\in \ZZ/t\ZZ \text{ and } \Pi_\tau^e \ne \Pi_\star^e\bigr\}
	\]
		be a family of $\Pi^e_\star$-pieces acting as supremacy witnesses.	
	Our goal is to associate with every copy $F_k$ of $\ccC$ that belongs 
	to a segment $I_\tau$ whose leader $\Pi_\tau^e$
	is distinct from $\Pi_\star^e$ a $\Pi^e_\star$-piece~$P_k$ such that 
	\begin{enumerate}[label=\nlabel]
		\item\label{it:2006a} if one eliminates $F_k$ from $\ccC$ and inserts $P_k$
			instead of it, then the resulting cyclic sequence satisfies~\ref{it:B3} 
			and~\ref{it:B4},
		\item\label{it:2006b} and if $P_k$ is long, then it has the 
			properties~\ref{it:1801a},~\ref{it:1801b}, and~\ref{it:1801c} 
			from Lemma~\ref{lem:1448}.
	\end{enumerate} 
	
	Once we have such pieces $P_k$ at our disposal, we can perform all replacements 
	indicated in~\ref{it:2006a} simultaneously and Lemma~\ref{lem:1448}\ref{it:1448a} 
	informs us that the cyclic sequence that results is an acceptable big cycle.
	All copies of this cycle will be contained in the same standard 
	copy~$(\Pi_\star, \equiv^{\Pi_\star}, \ccP_\star^+)$, 
	where $\Pi_\star$ extends $\Pi^e_\star$. Owing 
	to Claim~\ref{clm:1327}	the modified cycle needs to have a supreme copy, 
	so by Lemma~\ref{lem:1448}\ref{it:1448b} the 
	original big cycle $\ccC$ has a supreme copy as well. 
	
	Thus it suffices to exhibit the $\Pi^e_\star$-pieces described above.
	Notice that the connectors~$\overline{r}_{\tau-1}$ and~$\overline{r}_\tau$ cannot be wagons 
	at the same time, for then the piece $Q_\tau$ could not exist. 
	Suppose next that one of those connectors, say $\overline{r}_{\tau-1}$, is a vertex 
	while the other one, i.e. $\overline{r}_{\tau}$, is a wagon. In this case the 
	piece $Q_\tau$ needs to be short. Besides, since the wagon $r_\tau$ satisfies~\ref{it:A2}
	in $\ccC$, the segment~$I_\tau$ has length $1$ and we can set $P_k=Q_\tau$.
	
	The last case we need to consider is that both $r_{\tau-1}$ and $r_\tau$ are vertices. 
	Suppose first that the piece $Q_\tau=f^+$ is short and let $W$ be the wagon 
	of $(\Sigma, \equiv^\Sigma)$ with $f\in E(W)$. An easy argument using~\ref{it:A2}
	shows that $W$ cannot be a connector of $\ccC$. So~\ref{it:A3} applies to~$W$.
	We deduce that the segment~$I_\tau$ has length $1$ and again we can set $P_k=Q_\tau$.
	
	So henceforth we may assume that the piece $Q_\tau=(f')^+\overline{W}(f'')^+$ is long. 
	The easiest possibility would be that the interval $I_\tau$ is of the 
	form $F_jW F_{j+1}$, where $W$ denotes the wagon of $(\Sigma, \equiv^\Sigma)$ 
	contracting to $\overline{W}$, for then we just need to set $P_j=(f')^+$ 
	and $P_{j+1}=(f'')^+$.  
	
	Let us now assume for the sake of contradiction that $I_\tau$ is not of this 
	special form. 
	Observe first that the admissibility of $Q_\tau$ as a supremacy witness
	entails that 
		\begin{equation}\label{eq:0014}
		\text{no edge of $H$ contains both $r_{\tau-1}$ and $r_{\tau}$.}
	\end{equation}
		If the connectors $r_{\tau-1}$ and $r_\tau$ are consecutive 
	in $\ccC$, then Claim~\ref{clm:2131} applied to the copy between them
	yields an edge that contradicts~\eqref{eq:0014}. This shows that   
		\begin{equation}\label{eq:1850}
		\text{the connectors $r_{\tau-1}$ and $r_\tau$ fail to be consecutive in $\ccC$}.
	\end{equation}
		
	Now condition~\ref{it:A3} tells us that the wagon~$W$ 
	has to appear somewhere in $\ccC$. On the other hand, $\overline{W}$ cannot appear 
	in $\ccD$, for then the piece $Q_\tau$ would again not be admissible. 
	In other words, $W$ is hidden in one of the segments. But by~\ref{it:A2}
	and $t\ge 3$ this is only possible if $I_\tau$ is indeed of the desired form.  
\end{proof}

The following should now be straightforward, but let us elaborate.

\begin{proof}[Proof of Proposition~\ref{prop:2235}]
	Consider a pretrain $(F, \equiv^F)$ with $\ggth(F, \equiv^F)>g$ and a 
	number of colours $r$. Construct the linear system $\Phi_r(F)=(G, \ccG)$ with 
	strongly induced copies and enumerate the edges of $G$ arbitrarily 
	as $E(G)=\{e(1), \ldots, e(N)\}$. By Lemma~\ref{lem:2310} the picture zero 
	$(\Pi_0, \equiv_0, \ccP_0, \psi_0)$ corresponding to this situation satisfies 
	${\GTH(\Pi_0, \equiv_0, \ccP_0^+)>g}$. 
	
	Starting with picture zero we intend to execute a partite construction, i.e., to
	construct recursively a sequence of pictures 
	$(\Pi_\alpha, \equiv_\alpha, \ccP_\alpha, \psi_\alpha)_{\alpha\le N}$ 
	over $(G, \ccG)$.
	Suppose that for some $\alpha\in [N]$ we have just constructed a picture 
	$(\Pi_{\alpha-1}, \equiv_{\alpha-1}, \ccP_{\alpha-1}, \psi_{\alpha-1})$ with 
		\[
		\GTH(\Pi_{\alpha-1}, \equiv_{\alpha-1}, \ccP_{\alpha-1}^+)>g\,.
	\]
		In particular, we have 
	$\GTH\bigl(\Pi_{\alpha-1}, \equiv_{\alpha-1}, E^+(\Pi_{\alpha-1})\bigr)>g$
	and Lemma~\ref{lem:2310} reveals 
		\[
		\ggth(\Pi_{\alpha-1}, \equiv_{\alpha-1}) > g\,.
	\]
		
	This in turn implies that $\Xi_r(\cdot)$ applies to the 
	constituent 
	$\bigl(\Pi_{\alpha-1}^{e(\alpha)}, \equiv_{\alpha-1}^{e(\alpha)}\bigr)$, 
	where $\equiv_{\alpha-1}^{e(\alpha)}$ denotes the restriction 
	of $\equiv_{\alpha-1}$ 
	to $E\bigl(\Pi_{\alpha-1}^{e(\alpha)}\bigr)$. In other words, 
		\[
		\Xi_r\bigl(\Pi_{\alpha-1}^{e(\alpha)}, \equiv_{\alpha-1}^{e(\alpha)}\bigr)
		= 
		(H_\alpha, \equiv^{H_\alpha}, \ccH_\alpha)
	\]
	is defined and satisfies $\GTH(H_\alpha, \equiv^{H_\alpha}, \ccH_\alpha^+) >g$.
	By Lemma~\ref{lem:0052} the new picture
		\[
		   (\Pi_\alpha, \equiv_\alpha, \ccP_\alpha, \psi_\alpha)
		   =
		   (\Pi_{\alpha-1}, \equiv_{\alpha-1}, \ccP_{\alpha-1}\psi_{\alpha-1})
		   \conc
		   (H_\alpha, \equiv^{H_\alpha}, \ccH_\alpha)
	\]
		has the property $\GTH(\Pi_\alpha, \equiv_\alpha, \ccP_\alpha^+)>g$
	and thus the partite construction goes on. 
	
	Eventually we obtain the final picture 
	$\PC(\Phi, \Xi)_r(F, \equiv^F)=(\Pi_N, \equiv_N, \ccP_N)$ with
		\[
		\GTH(\Pi_N, \equiv_N, \ccP_N^+)>g\,. \qedhere
	\]
	\end{proof}

\subsection{Orientation}\label{subsec:9o} 
The main results of this section lead to a quick solution of the problem 
discussed in~\S\ref{subsec:7o}. The construction 
$\ups=\Ext(\Omega^{(2)}, \Omega^{(2)})$ applies to all linear, ordered, 
\mbox{$f$-partite} 
pretrains. Due to the case $g=2$ of Lemma~\ref{lem:1955} the systems of pretrains
$(H, \equiv^H, \ccH)$ generated by $\ups$ satisfy $\GTH(H, \equiv^H, \ccH^+)>2$
(the hypotheses of this lemma were verified in Proposition~\ref{prop:1738} 
and Corollary~\ref{cor:2201}).

Now we can use $\ups$ as a partite lemma and look at the 
construction $\Lambda=\PC(\Omega^{(2)}, \ups)$. The case $g=2$ 
of Proposition~\ref{prop:2235} informs us that for every linear, 
ordered, $f$-partite pretrain $(F, \equiv^F)$ and every number of 
colours $r$ the system of pretrains $\Lambda_r(F, \equiv^F)=(H, \equiv^H, \ccH)$
is defined and satisfies $\GTH(H, \equiv^H, \ccH^+)>2$. So, in particular, 
$(H, \equiv^H)$ is a linear pretrain arrowing $(F, \equiv^F)$ with $r$ colours. 
Moreover, and this is the main advantage of $\Lambda$ over $\ups$, the copies 
in $\ccH$ have clean intersections due to Lemma~\ref{lem:cleancap}.     \section{Trains}
\label{sec:trains}

Let us start this section with an observation of central importance to our
girth Ramsey theoretic endeavour: a ``typical'' $k$-partite $k$-uniform hypergraph 
can never occur as a constituent of a picture that arises when the partite 
construction method is carried out over a linear system~$(G, \ccG)$. 

For instance, all constituents of picture zero consist of a matching together with 
some isolated vertices (cf. Figure~\ref{fig:21}). The first picture is obtained by 
applying the intended partite lemma to the constituent over some edge $e(1)\in E(G)$ 
and performing the usual amalgamation. 
Evidently, every constituent of the new picture over an edge $e\in E(G)$ 
with $e(1)\cap e=\vn$ is still a matching augmented by some isolated vertices. 
More interestingly, if an edge $e\ne e(1)$ 
intersects $e(1)$, then by the linearity of $G$ this intersection occurs in a single 
vertex~$x$ and, accordingly,
the new constituent over $e$ is obtained by taking several disjoint copies of the old 
constituent and identifying a couple of vertices on the music line $V_x$. However, there
occur no such identifications on the other music lines $V_z$ with $z\in e\sm\{x\}$.
Notice that we can endow the new constituent with a pretrain structure 
by declaring two edges to be equivalent if they belong to the same copy of the old 
constituent. 
It should also be clear that this pretrain is going to be linear if our partite 
lemma delivers copies with clean intersections.

Similarly, the most complicated possibility for a constituent of the second picture
is that one takes plenty of disjoint copies of such a pretrain and identifies some 
of their vertices, all identified vertices belonging to a common music line. 
As the partite construction progresses, there iteratively arise more and more complex 
objects all of which fit into the framework we shall now develop (see also 
Figure~\ref{fig:N3}). 

\subsection{Quasitrains and parameters}
\label{sssec:3133}

We begin by encoding the recursive formation of our 
objects in terms of a sequence of nested equivalence relations.    
  
\begin{dfn} \label{dfn:2206}
	Given a positive integer $m$ we say that 
	$\seq{F}=(F, \equiv_0, \ldots, \equiv_m)$ is a {\it quasitrain of height $m$} 
	if $F$ is a hypergraph and $\equiv_0, \ldots, \equiv_m$ are equivalence relations
	on~$E(F)$ such that the following holds.  
	\index{quasitrain}
	\index{height}
	\begin{enumerate}[label=\rmlabel]
			\item\label{it:2206a} Every wagon of the pretrain $(F, \equiv_0)$ consists of 
					a single edge. 
			\item\label{it:2206b} If $e', e''\in E(F)$, $\mu\in [0, m)$, 
					and $e'\equiv_\mu e''$, then $e'\equiv_{\mu+1} e''$.
			\item\label{it:2206c} Any two edges of $F$ are equivalent with respect 
					to $\equiv_m$.
	\end{enumerate}
	For every $\mu\in [0, m]$ the wagons of the pretrain $(F, \equiv_\mu)$ 
	are called the {\it $\mu$-wagons} of $\seq{F}$.
	\index{$\mu$-wagon}
\end{dfn}

\begin{example}\label{exmp:0814}
	Quasitrains of height $1$ are triples $(F, \equiv_0, \equiv_1)$, where $F$
	is a hypergraph, the \mbox{$0$-wagons} are single edges 
	(by~\ref{it:2206a}),
	and all edges belong to the same $1$-wagon
	(by~\ref{it:2206c}). Consequently, there is a natural bijective correspondence  
	between quasitrains of height $1$ and hypergraphs. At a later moment it will be 
	convenient to call $(F, \equiv_0, \equiv_1)$ the quasitrain of height $1$ 
	{\it associated with} the hypergraph $F$. 
\end{example}

\begin{example}\label{exmp:3140}
	Similarly, quasitrains of height $2$ are quadruples 
	$(F, \equiv_0, \equiv_1, \equiv_2)$
	with the properties that $(F, \equiv_0, \equiv_2)$ is a quasitrain of height $1$
	and $(F, \equiv_1)$ is a pretrain. In particular, quasitrains of height $2$ 
	are in natural bijective correspondence with pretrains. 	
\end{example}

The train concept to which we proceed next is taking into account what 
we said earlier about amalgamations occurring only in prespecified music lines. 
Recall that in Definition~\ref{dfn:n38}\ref{it:n38a} we introduced the following
convenient piece of notation for such situations: given an $f$-partite 
hypergraph $F$ with index set $I$ and a subset $A\subseteq I$ the 
union $\bigcup_{i\in A}V_i(F)$ of the vertex classes 
whose indices belong to $A$ is denoted by $V_A(F)$.

\begin{dfn}\label{dfn:3140}    
	Suppose that $m\in\NN$, that $f\colon I\longrightarrow \NN$ is a function 
	from a finite index set $I$ to $\NN$ with $\sum_{i\in I} f(i)\ge 2$,
	and that $\seq{A}=(A_1, \ldots, A_m)\in \powerset(I)^{m}$ 
	is a sequence of subsets of $I$.
	A structure $\seq{F}=(F, \equiv_0, \ldots, \equiv_m)$
	is said to be an {\it $f$-partite train of height $m$ with parameter $\seq{A}$}
	(see Figure~\ref{fig:monday}) 
	if it has the following two properties.
	\index{train}
	\index{parameter}
	\begin{enumerate}[label=\rmlabel]
		\item\label{it:3140a} $\seq{F}$ is an $f$-partite quasitrain of height $m$.
		\item\label{it:3140b} If $\mu\in[m]$ and $W'_{\mu-1}$, $W''_{\mu-1}$ are
			two distinct $(\mu-1)$-wagons of $\seq{F}$ included in a common $\mu$-wagon,
			then 
			$V(W'_{\mu-1})\cap V(W''_{\mu-1})\subseteq V_{A_\mu}(F)$.
	\end{enumerate} 
\end{dfn}
	 
\begin{example}\label{exmp:2230}
	Trains of height $1$ have one-term sequences serving as their parameters.
	If $\seq{F}=(F, \equiv_0, \equiv_1)$ is an $f$-partite quasitrain of height $1$ 
	(cf. Example~\ref{exmp:0814}), $I$ denotes the domain of $f$, 
	and $A_1\subseteq I$, then $\seq{F}$ is a train with parameter $(A_1)$ 
	if and only if 
	$e'\cap e''\subseteq V_{A_1}(F)$ holds for any two distinct 
	edges $e', e''\in E(F)$, i.e., if $F$ is $A_1$-intersecting in the sense
	of Definition~\ref{dfn:n38}\ref{it:n38b}.
\end{example}

\begin{figure}
\centering

\begin{tikzpicture}[scale=1]

	\def\a{1.3}
	\def\b{0}
	\def\c{-1.3}
	
\coordinate (a1) at (-2.7,\a);
\coordinate (a2) at (-1.4,\a);
\coordinate (a3) at (1,\a);
\coordinate (a4) at (2.05,\a);
\coordinate (a5) at (2.75,\a);

\coordinate (b1) at (-3.1,\b);
\coordinate (b2) at (-2.3,\b);
\coordinate (b3) at (-1.4,\b);
\coordinate (b4) at (-.5,\b);
\coordinate (b5) at (.4,\b);
\coordinate (b6) at (1,\b);
\coordinate (b7) at (1.7,\b);
\coordinate (b8) at (2.4,\b);
\coordinate (b9) at (3.1,\b);

\coordinate (c1) at (-2.7,\c);
\coordinate (c2) at (-1.4,\c);
\coordinate (c3) at (1,\c);
\coordinate (c4) at (2.05,\c);
\coordinate (c5) at (2.75,\c);
	
\draw [thick] (-4,\a) -- (4,\a);
\draw [thick] (-4,\b)--(4,\b);
\draw [thick] (-4,\c)--(4,\c);

\redge{(b2)}{(c1)}{(b1)}{(a1)}{4pt}{.6pt}{green!70!black}{opacity=0};
\redge{(b3)}{(c2)}{(b2)}{(a2)}{4pt}{.6pt}{green!70!black}{opacity=0};
\redge{(b5)}{(c3)}{(b4)}{(a2)}{3pt}{.6pt}{blue!80!black}{opacity=0};
\redge{(b6)}{(a3)}{(b7)}{(c3)}{4pt}{.6pt}{blue!80!black}{opacity=0};
\redge{(b7)}{(a4)}{(b8)}{(c4)}{4pt}{.6pt}{red!70!black}{opacity=0};
\redge{(b8)}{(a5)}{(b9)}{(c5)}{4pt}{.6pt}{red!70!black}{opacity=0};

\foreach \i in {1,...,5} {
				\fill (a\i) circle (1.5pt);
				\fill (c\i) circle (1.5pt);
			}
\foreach \i in {1,...,9} 
				\fill (b\i) circle (1.5pt);

\node [green!70!black] at (-2.1,\a+.5) {$W'$};
\node [blue!80!black] at (0,\a+.5) {$W''$};
\node [red!70!black] at (2.5,\a+.5) {$W'''$};

\node at (-4.5,\a){$1$};
\node at (-4.5,\b){$2$};
\node at (-4.5,\c){$3$};

			\end{tikzpicture}
\caption{An $f$-partite train of height~$1$ with parameter $(A_1)$, 
where $f\colon [3]\longrightarrow\NN$ is defined by $f(1)=f(3)=1$, 
$f(2)=2$, and $A_1=\{1, 2\}$.}
\label{fig:monday}
\end{figure}
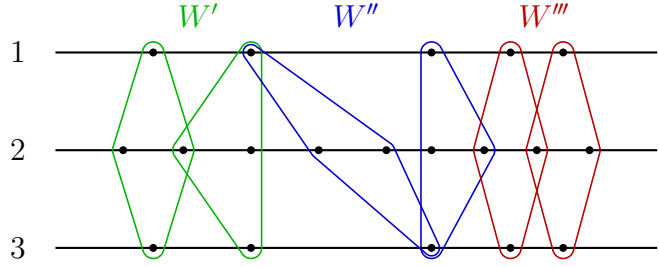
 
When compared to the discussion at the beginning of this section, 
there are two directions in which our definition of trains might 
appear to be too general. 
First, we only motivated the need for $k$-partite $k$-uniform 
trains, whereas Definition~\ref{dfn:3140} allows the underlying hypergraphs of trains
to be $f$-partite for general functions $f$. The reason for this is that we intend 
to subject trains to the extension process and, as we saw in Section~\ref{subsec:EP},
when we apply a construction of the form $\Ext(\Phi, \Psi)$ to a $k$-partite 
$k$-uniform pretrain, the construction $\Psi$ needs to be applicable to 
general $f$-partite hypergraphs. 

Another surprise in Definition~\ref{dfn:3140} might be that we allow the sets 
$A_1, \dots, A_m$ constituting the parameter of the train to be arbitrary subsets 
of $I$. 
After all, we have just tried to convey the idea that for trains created by partite 
constructions we can demand $|A_\mu|\le 1$ for every $\mu\in [m]$, and that this 
very fact will be responsible for the girth increment the achievement of which
is the reason for starting to consider trains at all. 
On the other hand, when we have some construction applicable to $f$-partite trains
and attempt to clean it by means of the partite construction method, then from the 
point of view of the constituents of the involved pictures the sizes of the 
sets in the parameter can appear to have been enlarged. 
This tension between conflicting demands on 
the parameters will completely resolve itself in Section~\ref{subsec:0136}.    
 
It is probably clear what we mean by subtrains, but let us elaborate for the sake of 
completeness.

\begin{dfn}\label{dfn:0033}
	Given two quasitrains 
		\[	
		\seq{F}=(F, \equiv_0^F, \ldots, \equiv_m^F)
		\quad \text{ and } \quad 
		\seq{G}=(G, \equiv_0^G, \ldots, \equiv_m^G)
	\]
		of the same height $m$ we say that $\seq{F}$ 
	is a {\it subquasitrain} of $\seq{G}$ if
	\index{subquasitrain} 
	\begin{enumerate}[label=\rmlabel]
		\item\label{it:0033a} $F$ is a subhypergraph of $G$
		\item\label{it:0033b} and $\forall e', e''\in E(F)\,\, \forall \mu\in [0, m] \,\,
			\bigl[e'\equiv^F_\mu e'' \,\,\, \Longleftrightarrow \,\,\, e'\equiv^G_\mu e''\bigr]$.
	\end{enumerate}
	If, moreover, both $\seq{F}$ and $\seq{G}$ are $f$-partite trains having the same 
	parameter $\seq{A}$ and~\ref{it:0033a} holds in the stronger form that 
	$F$ is an $f$-partite subhypergraph of $G$, then $\seq{F}$ is called 
	a {\it subtrain} of $\seq{G}$.
	\index{subtrain}
\end{dfn}   

The subtrain relation is, of course, reflexive and transitive. 
A {\it quasitrain system of height~$m$} is a structure of the form 
\[
	(H, \equiv_0^H, \ldots, \equiv_m^H, \ccH)\,,
\]
where $\seq{H}=(H, \equiv_0^H, \ldots, \equiv_m^H)$ is a quasitrain of height $m$  
and $\ccH$ is a collection of subquasitrains of $\seq{H}$. 
\index{quasitrain system}
It will be convenient to denote this system by $(\seq{H}, \ccH)$ as well. 
We regard the notion of an {\it extended quasitrain system} $(\seq{H}, \ccH^+)$ 
to be self-explanatory.

\subsection{More German girth}
\label{sssec:3135}

Next we need to generalise $\ggth$ to quasitrains, and $\GTH$ to 
systems of quasitrains. In both contexts we work with the free monoid~$\gM$ 
generated by $\NN_{\ge 2}$. 
Thus the elements of~$\gM$ can be thought of as sequences of elements 
from $\NN_{\ge 2}$ and the composition of $\gM$, denoted by~$\circ$, 
is the concatenation of sequences.
The {\it empty sequence} $\vn$ is the neutral element of $\gM$. 
For a sequence $\seq{g}=(g_1, \dots, g_m)\in\gM$ we call $m=|\seq{g}|$ 
the {\it length} of $\seq{g}$ and we use $\inf(\seq{g})$ as an abbreviation 
for $\inf\{g_1, \dots, g_m\}$. 
So we have $\inf(\varnothing)=\infty$ and in all other cases the infimum is just 
a minimum. 
For integers $g\ge 2$ and $m\ge 1$ the sequence $(g, \dots, g)$ consisting 
of $m$ terms equal to $g$ is the $m^\mathrm{th}$ power of the one-term sequence $(g)$ 
and thus we shall denote it by $(g)^m$. 
Finally, we write $\gM_\le$ for the subset of $\gM$ consisting of all nondecreasing 
sequences (including $\vn$), and we set $\gM^\times_\le=\gM_\le\sm\{\vn\}$.
   
A straightforward adaptation of Definition~\ref{dfn:2256} leads to the following 
$\ggth$ notion for quasitrains. 

\begin{dfn} \label{dfn:0027}
	Given a quasitrain $\seq{F}=(F, \equiv_0, \ldots, \equiv_m)$ 
	of height~$m$ and a sequence $\seq{g}=(g_1, \dots, g_m)\in \gM$ of length $m$ 
	we write $\ggth(\seq{F}) > \seq{g}$ if for every 
	$\mu\in [m]$ and every $\mu$-wagon $W_\mu$ we have 
		\[
		\ggth(W_\mu, \equiv_{\mu-1}^{W_\mu}) >g_\mu\,,
	\]
		where $\equiv^{W_\mu}_{\mu-1}$ denotes the restriction of $\equiv_{\mu-1}$
	to $E(W_\mu)$. A quasitrain $\seq{F}$ of height $m$ with $\ggth(\seq{F}) > (2)^m$ 
	is said to be {\it linear}.
	\index{$\ggth$}
	\index{linear quasitrain}
\end{dfn} 

The transitivity of ordinary girth leads to the following statement.

\begin{lemma}\label{lem:1557}
	Let $\seq{g}=(g_1, \dots, g_m)\in \gM^\times_\le$ be given. 
	If $\seq{F}=(F, \equiv_0^F, \ldots, \equiv_m^F)$
	denotes a quasitrain of height $m$ with $\ggth(\seq{F}) > \seq{g}$, then 
	$\ggth(F, \equiv_{\mu-1}) >g_\mu$
	holds for every $\mu\in [m]$. In particular, we have $\gth(F) >g_1$.  
\end{lemma}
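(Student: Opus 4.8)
The plan is to fix an arbitrary $\mu\in[m]$ and show $\ggth(F,\equiv^F_{\mu-1})>g_\mu$; the final assertion $\gth(F)>g_1$ then follows by taking $\mu=1$ and recalling that $\equiv^F_0$ has singleton wagons, so that $\ggth(F,\equiv^F_0)>g_1$ literally says $\gth(F)>g_1$ (this uses Definition~\ref{dfn:2256} together with Fact~\ref{fact:231a} as recorded after that definition). The key point is that $(F,\equiv^F_{\mu-1})$ decomposes as a ``nested'' union: the $\mu$-wagons tile $E(F)$, and within each $\mu$-wagon $W_\mu$ the relation $\equiv^F_{\mu-1}$ restricts to $\equiv^{W_\mu}_{\mu-1}$. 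The hypothesis $\ggth(\seq F)>\seq g$ gives $\ggth(W_\mu,\equiv^{W_\mu}_{\mu-1})>g_\mu$ for every $\mu$-wagon $W_\mu$, so we want to amalgamate these local girth bounds into a global one. The natural tool is Fact~\ref{fact:girth-trans} (transitivity of girth), applied to the set system whose ``edges'' are the vertex sets $V(W_\mu)$ of the $\mu$-wagons, each equipped with the set system obtained from $(W_\mu,\equiv^{W_\mu}_{\mu-1})$ by replacing $(\mu-1)$-wagons by new edges.

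First I would set up the outer set system $S$: its vertex set is $V(F)$ and its edges are the sets $V(W_\mu)$ as $W_\mu$ ranges over the $\mu$-wagons of $\seq F$. I claim $\gth(S)>g_\mu$, in fact $>(2)$-worth more. Indeed, $\gth(S)>g_m\ge\cdots\ge g_\mu$ is morally what $\ggth(F,\equiv^F_m)$ and the chain of inclusions of the $\equiv_\nu$ would give, but more directly: the $\mu$-wagons are precisely the wagons of the pretrain $(F,\equiv^F_\mu)$, which is linear (since $\ggth(\seq F)>\seq g\ge (2)^m$ forces $\ggth(F,\equiv^F_\mu)>2$ by monotonicity in the chain $\equiv^F_\mu\subseteq\cdots\subseteq\equiv^F_m$, hmm — more carefully, linearity of $\seq F$ means $\ggth(\seq F)>(2)^m$, and this gives $\ggth(W_\nu,\equiv^{W_\nu}_{\nu-1})>2$ for all $\nu$, from which Fact~\ref{fact:girth-trans} inductively yields $\ggth(F,\equiv^F_\mu)>2$, i.e. $(F,\equiv^F_\mu)$ is a linear pretrain). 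Actually the cleanest route is to prove the whole lemma by downward induction on $\mu$ from $\mu=m$, using Fact~\ref{fact:girth-trans} at each step: $\ggth(F,\equiv^F_{\mu-1})>g_\mu$ follows from $\ggth(F,\equiv^F_\mu)>g_\mu$ (available since $\seq g$ is nondecreasing, so $g_{\mu+1}\ge g_\mu$ and we have $\ggth(F,\equiv^F_\mu)>g_{\mu+1}$ from the previous induction step — or $\ggth(F,\equiv^F_m)>g_m$ trivially as the base) together with the local bounds $\ggth(W_\mu,\equiv^{W_\mu}_{\mu-1})>g_\mu$, by viewing $(F,\equiv^F_{\mu-1})$ as the refinement of $(F,\equiv^F_\mu)$ obtained by subdividing each $\mu$-wagon.

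Concretely, for the induction step I would apply Fact~\ref{fact:girth-trans} with $g:=g_\mu$, with $S$ the set system on $V(F)$ whose edges are $\{V(W_\mu):W_\mu\text{ a }\mu\text{-wagon}\}$, and with $F_{V(W_\mu)}$ the set system on $V(W_\mu)$ whose edges correspond to the $(\mu-1)$-wagons contained in $W_\mu$ (replacing their edge sets by single new edges). Then $\gth(S)>g_\mu$ holds because $S$ is exactly the set system extracted from $(F,\equiv^F_\mu)$ by replacing wagons by edges, and $\ggth(F,\equiv^F_\mu)>g_\mu$ by the induction hypothesis (noting $g_{\mu+1}\ge g_\mu$); and $\gth(F_{V(W_\mu)})>g_\mu$ is precisely $\ggth(W_\mu,\equiv^{W_\mu}_{\mu-1})>g_\mu$, which is the hypothesis $\ggth(\seq F)>\seq g$. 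Fact~\ref{fact:girth-trans} then gives $\gth(T)>g_\mu$ for $T=(V(F),\bigcup_{W_\mu}E(F_{V(W_\mu)}))$, and this $T$ is exactly the set system obtained from $(F,\equiv^F_{\mu-1})$ by replacing $(\mu-1)$-wagons by edges; hence $\ggth(F,\equiv^F_{\mu-1})>g_\mu$, as desired. Taking $\mu=1$ and using that $\equiv^F_0$-wagons are single edges gives $\gth(F)>g_1$.

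The main obstacle I anticipate is purely bookkeeping: one must check that the $\mu$-wagons of $\seq F$ are literally the wagons of the pretrain $(F,\equiv^F_\mu)$ and that a $(\mu-1)$-wagon is always contained in a unique $\mu$-wagon (immediate from Definition~\ref{dfn:2206}\ref{it:2206b}, which says $\equiv_{\mu-1}$ refines $\equiv_\mu$), so that the two-level structure feeding Fact~\ref{fact:girth-trans} is well defined and has no isolated vertices on the inner level. One also needs the inner set systems $F_{V(W_\mu)}$ to have vertex set exactly $V(W_\mu)$, which holds because wagons have no isolated vertices. None of these points is difficult, but they must be stated. A secondary subtlety is that Fact~\ref{fact:girth-trans} requires $S$ to be linear (indeed $\gth(S)>g_\mu\ge 2$ already forces this), so one should remark that the monotonicity of the $\equiv_\nu$ and the assumption $\seq g\in\gM^\times_\le$ guarantee $\gth(S)>2$ at every stage of the induction.
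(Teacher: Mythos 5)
Your proposal is correct and follows essentially the same route as the paper's proof: downward induction on $\mu$ starting from the trivial base case $\mu=m$, with the induction step furnished by Fact~\ref{fact:girth-trans} applied to the outer set system of $\mu$-wagon vertex sets (girth $>g_\mu$ by the induction hypothesis and monotonicity of $\seq{g}$) and the inner set systems of $(\mu-1)$-wagons inside each $\mu$-wagon (girth $>g_\mu$ by hypothesis), followed by the observation that $0$-wagons are single edges. The only difference is presentational — the paper states the step more tersely, whereas you spell out the two-level set-system decomposition and flag the bookkeeping points (unique $\mu$-wagon containing each $(\mu-1)$-wagon, no isolated vertices, linearity of $S$), all of which are indeed the right things to verify.
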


\begin{proof}
	The case $\mu=m$ is clear, for by Definition~\ref{dfn:2206}\ref{it:2206c}
	there is only one $m$-wagon of~$\seq{F}$ and by Definition~\ref{dfn:0027}
	applied to that wagon we obtain indeed $\ggth(F, \equiv_{m-1}) >g_m$. 
	
	Arguing by decreasing induction 
	on $\mu$ we now assume that some $\mu\in[m-1]$ has the property 
	$\ggth(F, \equiv_{\mu}) > g_{\mu+1}\ge g_\mu$, meaning that the $\mu$-wagons 
	of $\seq{F}$ form a set system whose girth exceeds~$g_\mu$. 
	According to Definition~\ref{dfn:0027} each of these $\mu$-wagons is composed of 
	$(\mu-1)$-wagons the vertex sets of which form a set system whose girth 
	is larger than~$g_\mu$ as well. Due to Fact~\ref{fact:girth-trans} this 
	implies $\ggth(F, \equiv_{\mu-1}) > g_\mu$
	and the induction is complete. 
	
	As the $0$-wagons consist of single edges, the special case~$\mu=1$ 
	yields~$\gth(F) >g_1$. 
\end{proof}

The reason why we are keeping track of suitable parameters when discussing trains 
is that they are needed for the following important yet innocent looking variant of 
this argument.   
  
\begin{lemma} \label{lem:0036}
	Let $\seq{F}=(F, \equiv_0, \ldots, \equiv_m)$ be a $k$-partite 
	$k$-uniform train whose parameter $\seq{B}=(B_1, \dots, B_m)$ 
	satisfies $|B_\mu|\le 1$ for every $\mu\in [m]$. 
	If $\ggth(\seq{F}) > (g)^m$ holds for some integer $g\ge 2$, 
	then $\gth(F) > g+1$.
\end{lemma}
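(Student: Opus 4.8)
The plan is to argue by induction on the height $m$, with Fact~\ref{fact:girth-trans} (transitivity of girth) as the engine and the hypothesis $|B_\mu|\le 1$ as the device that upgrades the bound $\gth(F)>g$ to $\gth(F)>g+1$. First I would record what Lemma~\ref{lem:1557} gives here: since $\seq g=(g)^m\in\gM^\times_\le$, we get $\gth(F)>g$ and, taking $\mu=m$, also $\ggth(F,\equiv_{m-1})>g$; by Definition~\ref{dfn:2256} the latter says precisely that the set system $S$ on $V(F)$ whose edges are the vertex sets of the $(m-1)$-wagons of $\seq F$ is linear and has $\gth(S)>g$. (Distinct $(m-1)$-wagons intersect in at most one vertex and each has at least two vertices, so they have distinct vertex sets and $S$ is honestly a set system.) As we already have $\gth(F)>g$, it remains only to rule out $(g+1)$-cycles in $F$.

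The inductive content is the claim that every $(m-1)$-wagon $W$ of $\seq F$ satisfies $\gth(W)>g+1$. For $m=1$ this is trivial, since the $0$-wagons are single edges. For $m\ge 2$ I would check that restricting $\equiv_0,\dots,\equiv_{m-1}$ to $E(W)$ turns $W$ into a $k$-partite $k$-uniform train of height $m-1$ with parameter $(B_1,\dots,B_{m-1})$: the restrictions still meet every vertex class in one vertex, nestedness and the parameter condition~\ref{it:3140b} pass to subhypergraphs, and girth only grows when one passes to a subsystem, so $\ggth$ of this subtrain exceeds $(g)^{m-1}$; the induction hypothesis then yields $\gth(W)>g+1$.

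Now $F$ is exactly the set system obtained from $S$ by replacing each edge $V(W)$ with the hypergraph $W$, i.e.\ $T=F$ in the notation of Fact~\ref{fact:girth-trans} with $F_{V(W)}=W$, and we have just verified $\gth(F_e)>g+1$ for every edge $e$ of $S$. Hence the moreover-part of Fact~\ref{fact:girth-trans} applies: if $\ccC=e_1v_1\ldots e_{g+1}v_{g+1}$ were a $(g+1)$-cycle in $F$, then $e_1,\dots,e_{g+1}$ would lie in pairwise distinct $(m-1)$-wagons $W_1,\dots,W_{g+1}$. For each $i$ we then have $v_i\in e_i\cap e_{i+1}\subseteq V(W_i)\cap V(W_{i+1})$ with $W_i\ne W_{i+1}$, and both $W_i$, $W_{i+1}$ lie inside the unique $m$-wagon of $\seq F$ (Definition~\ref{dfn:2206}\ref{it:2206c}); so clause~\ref{it:3140b} with $\mu=m$ gives $v_i\in V_{B_m}(F)$ for every $i\in\ZZ/(g+1)\ZZ$. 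If $B_m=\vn$ this is impossible. If $B_m=\{i_0\}$, all connectors lie in the single vertex class $V_{i_0}(F)$; but each $e_i$ contains both $v_{i-1}$ and $v_i$ while meeting $V_{i_0}(F)$ in just one vertex, so $v_{i-1}=v_i$ for all $i$, and since there are $g+1\ge 3$ connectors this contradicts~\ref{it:C1}. Therefore $F$ has no $(g+1)$-cycle, and $\gth(F)>g+1$.

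The only genuinely fiddly point is the bookkeeping in the second paragraph — confirming that the truncation of a train to a top-level wagon is again a train with the expected parameter and $\ggth$ — but this is routine. The mathematical heart is the last paragraph: the moreover-part of Fact~\ref{fact:girth-trans} forces a would-be $(g+1)$-cycle to pass through $g+1$ different $(m-1)$-wagons, after which $|B_m|\le 1$ traps all of its connectors in a single vertex class, which a $k$-uniform $k$-partite hypergraph cannot sustain.
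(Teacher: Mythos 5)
Your proof is correct and is essentially the same argument as the paper's: both hinge on the moreover-part of Fact~\ref{fact:girth-trans} forcing a would-be $(g+1)$-cycle to traverse $g+1$ distinct lower-level wagons, after which clause~\ref{it:3140b} of Definition~\ref{dfn:3140} and the hypothesis $|B_\mu|\le 1$ trap all connectors in a single vertex class, contradicting $k$-partiteness. The only organizational difference is that you run an explicit induction on the height $m$ (applying the lemma itself to each $(m-1)$-wagon viewed as a train of height $m-1$), whereas the paper takes the minimal $\mu$ at which some $\mu$-wagon fails $\gth>g+1$ and derives the same contradiction at that level; these are two packagings of the identical core argument.
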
 

\begin{proof}
	We are to prove that the girth of the unique $m$-wagon of $F$ exceeds $g+1$.
	Arguing indirectly, we let $\mu\in [0, m]$ be minimal with the property that 
	for some $\mu$-wagon~$W_\mu$ of $\seq{F}$ the statement $\gth(W_\mu)>g+1$ fails.
	
	As the $0$-wagons of $\seq{F}$ are single edges, we have $\mu>0$. Therefore $W_\mu$ 
	is comprised of $(\mu-1)$-wagons and by the minimality of $\mu$ the girth of each of 
	them is larger than $g+1$. According to Definition~\ref{dfn:0027} the vertex sets 
	of these $(\mu-1)$-wagons form a set system whose girth is larger than $g$. 
	Due to Fact~\ref{fact:girth-trans} it follows that $W_\mu$ contains a $(g+1)$-cycle
	$e_1v_1\ldots e_{g+1}v_{g+1}$ and 
	distinct $(\mu-1)$-wagons $W^1_{\mu-1}, \ldots, W^{g+1}_{\mu-1}$
	with $e_i\subseteq V(W^{i}_{\mu-1})$ for every $i\in \ZZ/(g+1)\ZZ$.
	Now clause~\ref{it:3140b} of Definition~\ref{dfn:3140} 
	implies $v_1, \ldots, v_{g+1}\in V_{B_\mu}$. Together with $|B_\mu|\le 1$ 
	this shows, in particular, that $v_1$, $v_{g+1}$ are in the same vertex 
	class of $F$. On the other hand, these two vertices are distinct and belong 
	to the edge~$e_1$. We have thereby reached a contradiction to the assumption 
	that $F$ be $k$-partite and $k$-uniform for some $k\ge 2$, and the proof is complete. 
\end{proof}
   
\begin{dfn}\label{dfn:0115}
	Given an extended quasitrain 
	system $(H, \equiv_0, \ldots, \equiv_m, \ccH^+)=(\seq{H}, \ccH^+)$ of height $m$,
	and a sequence $\seq{g}=(g_\ell, \dots, g_m)\in\gM$ whose length is at most $m$ 
	we write $\GTH(\seq{H}, \ccH^+)>\seq{g}$
	if \index{$\GTH$}
		\begin{enumerate}
		\item[$\bullet$]
			for every $\mu\in [\ell, m]$ we have $\GTH(H, \equiv_{\mu-1}, \ccH^+)>g_\mu$,
		\item[$\bullet$] and $\GTH(H, \equiv_m, \ccH^+)>1$.
	\end{enumerate} 
	\end{dfn} 

In practice we will always have $\ell\in\{1, 2\}$ when working with this definition. 
The case $\ell=1$ is certainly more natural, as it corresponds to having a demand on
$\GTH(H, \equiv_{\mu-1}, \ccH^+)$ for every possible value of $\mu$. 
If a train system $(\seq{H}, \ccH)$ is created by means of the extension process, 
however, there will be copies intersecting in entire $1$-wagons 
(see~\S\ref{subsec:0135}) and thus there is nothing interesting that could be said 
about $\GTH(H, \equiv_0, \ccH^+)$.
This lack of information is the reason why we sometimes have to resort to the 
case $\ell=2$. 
Fortunately, the partite construction method allows us to regain control 
over $\GTH(H, \equiv_0, \ccH^+)$ and thus we can prevent this deficit from 
spreading (see~\S\ref{subsec:ngr}). So we never need to deal with the possibility 
$\ell>2$. 
Let us end this discussion by recording a direct consequence of Lemma~\ref{lem:2310}.

\begin{cor}\label{cor:1921}
	If a sequence $\seq{g}\in\gM$ and a quasitrain 
	$\seq{H} = (H, \equiv_0, \ldots, \equiv_m)$ 
	of height $m=|\seq{g}|$
	satisfy $\GTH\bigl(\seq{H}, E^+(H)\bigr) > \seq{g}$,
	then $\ggth(\seq{H})>\seq{g}$ holds as well. \qed
\end{cor}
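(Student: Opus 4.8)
\textbf{Proof proposal for Corollary~\ref{cor:1921}.}
The plan is to unfold both sides of the claimed implication according to the relevant definitions and then appeal to Lemma~\ref{lem:2310} applied to each pretrain $(H, \equiv_\mu)$ separately. Write $\seq{g}=(g_1,\dots,g_m)$, so that $m=|\seq{g}|$ and $\seq{H}=(H,\equiv_0,\dots,\equiv_m)$. By Definition~\ref{dfn:0115} (used here with $\ell=1$, since $\seq{g}$ has the full length $m$), the hypothesis $\GTH\bigl(\seq H, E^+(H)\bigr)>\seq g$ means precisely that $\GTH\bigl(H,\equiv_{\mu-1},E^+(H)\bigr)>g_\mu$ holds for every $\mu\in[m]$, together with $\GTH\bigl(H,\equiv_m,E^+(H)\bigr)>1$. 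Our goal, by Definition~\ref{dfn:0027}, is to show that for every $\mu\in[m]$ and every $\mu$-wagon $W_\mu$ of $\seq H$ we have $\ggth\bigl(W_\mu,\equiv_{\mu-1}^{W_\mu}\bigr)>g_\mu$.

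First I would fix $\mu\in[m]$ and observe that $\GTH\bigl(H,\equiv_{\mu-1},E^+(H)\bigr)>g_\mu$ in particular forces $g_\mu\ge 2$ (so the $\ggth$ notation below is legitimate), and that, since this $\GTH$ is defined, the pretrain $(H,\equiv_{\mu-1})$ is linear. Now Lemma~\ref{lem:2310}, applied to the linear pretrain $(H,\equiv_{\mu-1})$ with the integer $g_\mu$, gives the chain of equivalences
\[
	\ggth(H,\equiv_{\mu-1})>g_\mu
	\,\Longleftrightarrow\,
	\GTH\bigl(H,\equiv_{\mu-1},E^+(H)\bigr)>g_\mu
	\,\Longleftrightarrow\,
	\GTH\bigl(H,\equiv_{\mu-1},E^+(H)\cup\{(H,\equiv_{\mu-1})\}\bigr)>g_\mu\,,
\]
and in particular the middle condition (which is exactly our hypothesis for this $\mu$) yields $\ggth(H,\equiv_{\mu-1})>g_\mu$. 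By Definition~\ref{dfn:2256} this says that the set system obtained from $(H,\equiv_{\mu-1})$ by replacing the $(\mu-1)$-wagons by new edges has girth exceeding $g_\mu$; restricting attention to the edges contained in a single $\mu$-wagon $W_\mu$ can only delete edges, hence cannot decrease the girth, so $\ggth\bigl(W_\mu,\equiv_{\mu-1}^{W_\mu}\bigr)>g_\mu$ as well. (One may also simply invoke Definition~\ref{dfn:0027} read in reverse: $\ggth(H,\equiv_{\mu-1})>g_\mu$ is the girth of the full set system of $(\mu-1)$-wagons, while $\ggth(\seq F)>\seq g$ only asks for the analogous statement within each $\mu$-wagon, so the former is formally stronger.) Running this over all $\mu\in[m]$ gives exactly $\ggth(\seq H)>\seq g$.

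I do not anticipate a serious obstacle here; the only points requiring any care are purely bookkeeping ones: checking that all the entries $g_\mu$ are at least $2$ so that $\ggth$ is even defined (which follows from the $\GTH>g_\mu$ hypotheses, as each such inequality presupposes linearity of the corresponding pretrain, and linearity is $\ggth>2$), and noting that the trailing condition $\GTH(H,\equiv_m,E^+(H))>1$ in the hypothesis is not needed for the conclusion — the definition of $\ggth$ for a height-$m$ quasitrain only involves the exponents $g_1,\dots,g_m$ and the wagon restrictions $\equiv_{\mu-1}^{W_\mu}$, with no constraint coming from $\equiv_m$ itself. Since no short cyclic configuration at the top level is being asserted, the statement follows verbatim from $m$ applications of Lemma~\ref{lem:2310}, and hence the corollary is immediate. \qed
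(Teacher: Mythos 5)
Your proof is correct and is exactly the argument the paper intends (the paper offers no written proof, just a pointer to Lemma~\ref{lem:2310}): Definition~\ref{dfn:0115} unfolds the hypothesis into per-level $\GTH$ statements, Lemma~\ref{lem:2310} converts each to $\ggth(H,\equiv_{\mu-1})>g_\mu$, and the wagon-wise bound required by Definition~\ref{dfn:0027} follows because restricting a set system to a sub-collection of its edges cannot decrease its girth. One small remark: the requirement $g_\mu\ge 2$ does not need to be extracted from the $\GTH$ hypotheses as you suggest; it is immediate from the fact that $\gM$ is the free monoid generated by $\NN_{\ge 2}$, so every entry of $\seq{g}$ is at least~$2$.
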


\subsection{Diamonds}
\label{sssec:Karo}

We shall now give a precise description of the induction scheme the proof of 
the girth Ramsey theorem is based on. The central statement concerns 
trains of large $\ggth$ and reads as follows. 
     
\begin{dfn}\label{dfn:0121}
	For every sequence $\seq{g}\in \gM_\le^\times$ of length $m$ the 
	Ramsey theoretic principle~$\karo_{\seq{g}}$
	asserts that there exists a construction $\Psi^{\seq{g}}$ associating with 
	every ordered $f$-partite train 
	\[
		\seq{F}=(F, \equiv^F_0, \ldots, \equiv^F_m)
	\]
	of height $m$ satisfying $\ggth(\seq{F})>\seq{g}$ and with every number of 
	colours $r$ an ordered $f$-partite train system 
	\[
		\Psi^{\seq{g}}_r(\seq{F})
		=
		(H, \equiv^H_0, \ldots, \equiv^H_m, \ccH)
		=
		(\seq{H}, \ccH)
	\]
	with the same parameter as $\seq{F}$ such that the copies in $\ccH$ are strongly 
	induced, \index{$\karo$}
	\[
		\ccH\longrightarrow (\seq{F})_r\,,
		\quad \text{ and } \quad 
		\GTH(\seq{H}, \ccH^+)>\seq{g}\,. 
	\]
\end{dfn}

Essentially we shall prove all these principles ``by induction on $\seq{g}$''.
Up to some changes in the language, the base case has already been analysed. 

\begin{lemma}\label{lem:0120}
	The principle $\karo_{(2)}$ holds. 
\end{lemma}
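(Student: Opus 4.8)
The principle $\karo_{(2)}$ concerns ordered $f$-partite trains $\seq{F}=(F,\equiv^F_0,\equiv^F_1)$ of height $1$ with $\ggth(\seq{F})>(2)$. By Example~\ref{exmp:0814} such a train is, up to the bookkeeping of the equivalence relations, just an ordered $f$-partite hypergraph $F$, and by Example~\ref{exmp:2230} the condition $\ggth(\seq{F})>(2)$ says exactly that $F$ is $A_1$-intersecting, where $(A_1)$ is the parameter of $\seq{F}$ (here $\ggth(W_1,\equiv_0^{W_1})>2$ for the unique $1$-wagon $W_1=F$ means $F$ is linear in the weak sense that no two edges meet in two vertices $\ldots$ wait, in fact $\equiv_0$ has single-edge wagons, so $\ggth(F,\equiv_0)>2$ is precisely linearity of $F$). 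So I must produce a construction $\Psi^{(2)}$ that, given such a linear $A_1$-intersecting ordered $f$-partite hypergraph $F$ and a number of colours $r$, outputs an ordered $f$-partite train system $(\seq H,\ccH)$ of height $1$, with the same parameter $(A_1)$, whose copies are strongly induced, with $\ccH\lra(\seq F)_r$ and $\GTH(\seq H,\ccH^+)>(2)$, i.e. (Definition~\ref{dfn:0115} with $\ell=1$) both $\GTH(H,\equiv_0^H,\ccH^+)>2$ and $\GTH(H,\equiv_1^H,\ccH^+)>1$.

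\textbf{First step: build $\Psi^{(2)}$ from $\Omega^{(2)}$.} The natural candidate is to set $\Psi^{(2)}_r(\seq F)=\Omega^{(2)}_r(F)$, reinterpreted as a height-$1$ quasitrain system in the obvious way: the $0$-wagons of $H$ are single edges and the $1$-wagon is all of $H$. By Corollary~\ref{cor:0059} (linearity), Proposition~\ref{prop:1738} (strong inducedness, clean intersections, $\ccH\lra(F)_r$, and applicability to ordered $f$-partite inputs), and Corollary~\ref{cor:2201} together with Proposition~\ref{prop:girthclean} we get that $\Omega^{(2)}$ applied to a linear hypergraph produces a linear system $(H,\ccH)$ of strongly induced copies with clean intersections, satisfying $\ccH\lra(F)_r$ and $\Gth(H,\ccH^+)>2$. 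Moreover by Lemma~\ref{lem:n383} the construction $\Omega^{(2)}$ is $A$-intersecting, so $H$ is $A_1$-intersecting whenever $F$ is; by Example~\ref{exmp:2230} this means the resulting height-$1$ quasitrain has parameter $(A_1)$, i.e.\ it is genuinely a train with the same parameter as $\seq F$. Strong inducedness of the copies and the arrow relation transfer verbatim.

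\textbf{Second step: translate $\Gth$ into $\GTH$.} It remains to verify $\GTH(\seq H,\ccH^+)>(2)$. Let $\equiv_0^H$ be the equivalence relation on $E(H)$ whose classes are single edges, and $\equiv_1^H$ the total relation. For the first clause, $\GTH(H,\equiv_0^H,\ccH^+)>2$: since $\equiv_0^H$ has single-edge wagons, Lemma~\ref{lem:0217} gives $\GTH(H,\equiv_0^H,\ccH^+)>2 \Longleftrightarrow \Gth(H,\ccH^+)>2$, and the right side holds by the first step. For the second clause, $\GTH(H,\equiv_1^H,\ccH^+)>1$: here every wagon of $(H,\equiv_1^H)$ is $H$ itself, so $\ggth(H,\equiv_1^H)>g$ for every $g$ trivially, and one applies Lemma~\ref{lem:GTH1} --- it suffices to check that for every big cycle $\ccC=F_1xF_2W$ in $(H,\equiv_1^H,\ccH^+)$ (with $W$ necessarily equal to $H$, since there is only one $1$-wagon) there are edges $f_1\in E(F_1)\cap E(H)$ and $f_2\in E(F_2)\cap E(H)$ with $x\in f_1\cap f_2$. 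But $E(F_i)\cap E(W)=E(F_i)$ and $x\in V(F_1)\cap V(F_2)$ is a non-isolated vertex of each $F_i$ (this is part of what clean intersections guarantee, cf.\ condition~\ref{it:1519b} in the characterisation after Corollary~\ref{cor:2217}), so such $f_1,f_2$ exist. Hence $\GTH(H,\equiv_1^H,\ccH^+)>1$, and Definition~\ref{dfn:0115} gives $\GTH(\seq H,\ccH^+)>(2)$, completing the verification that $\Psi^{(2)}=\Omega^{(2)}$ witnesses $\karo_{(2)}$.

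\textbf{Main obstacle.} None of the individual verifications is deep; the only thing requiring care is making sure that all the separate properties of $\Omega^{(2)}$ that we need --- linearity, strong inducedness, clean intersections, the $A$-intersecting property, applicability to ordered $f$-partite hypergraphs with isolated vertices, and $\Gth(H,\ccH^+)>2$ --- have actually been established \emph{simultaneously} for one and the same construction, rather than piecemeal for slightly different variants. Here the relevant combination is exactly what is packaged in Proposition~\ref{prop:1738} plus Corollary~\ref{cor:0059}, Corollary~\ref{cor:2201}, and Lemma~\ref{lem:n383}, all of which refer to the same $\Omega^{(2)}=\PC(\Rms,\CPL)$; so the only real work is to assemble these citations and perform the two dictionary translations ($\Gth\leftrightarrow\GTH$ via Lemma~\ref{lem:0217} and the height-$1$ case via Lemma~\ref{lem:GTH1}), together with the identification of the parameter via Example~\ref{exmp:2230}.
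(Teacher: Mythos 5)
Your proposal follows essentially the same route as the paper's proof: set $\Psi^{(2)}_r(\seq F)=\Omega^{(2)}_r(F)$, re-read the output as a height-$1$ quasitrain system, and collect the needed properties from Proposition~\ref{prop:1738}, Corollary~\ref{cor:2201}, Lemma~\ref{lem:n383}, and Lemma~\ref{lem:0217}. The one place where you diverge is the verification of $\GTH(H,\equiv^H_1,\ccH^+)>1$. The paper handles this by first assuming that $F$ has no isolated vertices (in which case the vertex connector of any big cycle $F_1xF_2W$ is automatically non-isolated in $F_1$ and $F_2$, so Lemma~\ref{lem:GTH1} applies trivially), and then treats the general case by removing isolated vertices from $F$, performing the construction, and re-attaching them so that each copy in~$\ccH$ gets its own fresh isolated vertices. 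You instead argue directly that shared vertices are forced to be non-isolated, which is a legitimate simplification that avoids the case split; but as written, it has a small gap. You justify the non-isolatedness of $x\in V(F_1)\cap V(F_2)$ by citing only the clean-intersection condition~\ref{it:1519b}, which applies only to pairs of \emph{real} copies in~$\ccH$. When exactly one of $F_1$, $F_2$ is an edge copy $e^+$, clean intersections say nothing; you then need strong inducedness of the real copy to produce the required edge through~$x$ (and one should also note that~\ref{it:1519b} only addresses singleton intersections, while~\ref{it:1519a} covers the larger ones). You have strong inducedness available, so the gap is easily closed, but the argument as stated does not close it. You should either add the explicit appeal to strong inducedness for the mixed case (and to~\ref{it:1519a} for larger intersections), or fall back on the paper's isolated-vertex reduction.
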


\begin{proof}
	Let an ordered $f$-partite train $(F, \equiv^F_0, \equiv^F_1)$ of height~$1$
	with parameter~$(A)$ and a number of colours~$r$ be given. For reasons of 
	simplicity we assume first that $F$ has no isolated vertices.
	
	Now we form the hypergraph system $\Omega^{(2)}_r(F)=(H, \ccH)$, 
	and consider the quasitrain $\seq{H}=(H, \equiv^H_0, \equiv^H_1)$
	associated with~$H$ (as in Example~\ref{exmp:0814}). 
	Corollary~\ref{cor:2201} informs us that~$H$ is an ordered $f$-partite 
	hypergraph, $\ccH\lra(F)_r$, and $\Gth(H, \ccH^+)>2$.
	Since $F$ is $A$-intersecting (cf.\ Example~\ref{exmp:2230}),
	Lemma~\ref{lem:n383} shows that $H$ is $A$-intersecting as well 
	or, in other words, that $\seq{H}$ is a train with parameter $(A)$.
	\index{$A$-intersecting hypergraph}
	\index{$\Omega^{(2)}$}
 
	By Proposition~\ref{prop:1738} the copies in $\ccH$ are strongly induced in $H$.
	Moreover, Lemma~\ref{lem:0217} yields $\GTH(H, \equiv^H_0, \ccH^+)>2$.
	Since $F$ has no isolated vertices, Lemma~\ref{lem:GTH1} implies
	$\GTH(H, \equiv^H_1, \ccH^+)>1$.  
	According to Definition~\ref{dfn:0115} this shows 
   ${\GTH(\seq{H}, \ccH^+)>(2)}$ and altogether the system 
	$(\seq{H}, \ccH)$ is as demanded by~$\karo_{(2)}$.
	This concludes our discussion of the case that $F$ has no isolated vertices. 
		
	In the general case we remove the isolated vertices from $F$, 
	perform the construction we have just seen, and then we put the isolated 
	vertices back. 
	This needs to be done in such a way that each copy in $\ccH$ ends up getting 
	its own isolated vertices, so that $\GTH(H, \equiv^H_1, \ccH^+)>1$ remains valid.  
\end{proof}

There will be two quite distinct kinds of induction steps for 
traversing~$\gM_\le^\times$.
First, the extension process together with some subsequent cleaning steps
will allow us to prove the following implication.
\begin{prop}\label{prop:0142}
	For every nonempty nondecreasing sequence $\seq{g}\in\gM_\le^\times$ the 
	principle~$\karo_{\seq{g}}$ implies $\karo_{(2)\circ\seq{g}}$.
\end{prop}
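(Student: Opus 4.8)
The plan is to prove $\karo_{(2)\circ\seq{g}}$ from $\karo_{\seq{g}}$ by exhibiting the required construction $\Psi^{(2)\circ\seq{g}}$ as a composition of the extension process with a cleaning partite construction, exactly in the spirit of the orientation in \S\ref{subsec:9o}. So suppose $\seq{g}=(g_1,\dots,g_m)\in\gM_\le^\times$ and that $\karo_{\seq{g}}$ holds, with witnessing construction $\Psi^{\seq{g}}$. We are given an ordered $f$-partite train $\seq{F}=(F,\equiv_0^F,\dots,\equiv_{m+1}^F)$ of height $m+1$ with some parameter $\seq{A}=(A_1,\dots,A_{m+1})$ satisfying $\ggth(\seq{F})>(2)\circ\seq{g}$, i.e.\ $\ggth(F,\equiv_0^F)>2$ (so $(F,\equiv_0^F)$ is a linear pretrain---recall Example~\ref{exmp:0814} and the remark after Definition~\ref{dfn:2256}) and $\ggth(W_{\mu},\equiv_{\mu-1}^{W_\mu})>g_{\mu-1}$ for the $\mu$-wagons with $\mu\in[2,m+1]$.

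The key idea is that the pretrain structure $\equiv_1^F$ ``peels off'' the bottom layer: view $\seq{F}$ as living inside the pretrain $(F,\equiv_1^F)$, whose wagons $W$ carry the shorter train structure $(W,\equiv_0^{W},\dots)$, reindexed down by one, of height $m$ and $\ggth>\seq{g}$. First I would run the extension process $\Ext(\Omega^{(2)},\Psi^{\seq{g}})$, or rather a train-adapted version of it in the spirit of \S\ref{subsec:0135}, applied to this bottom pretrain layer: concretely, wagon-assimilate so that all $1$-wagons of $\seq{F}$ become isomorphic to a single ordered $f$-partite train $\seq{W}$ of height $m$ with $\ggth(\seq{W})>\seq{g}$; apply $\Psi^{\seq{g}}_r(\seq{W})=(\seq{X},\ccX)$, which by $\karo_{\seq{g}}$ has strongly induced copies, $\ccX\lra(\seq{W})_r$, and $\GTH(\seq{X},\ccX^+)>\seq{g}$; then build $M$ on the new wagons, apply $\Omega^{(2)}$ (in its linear, $A$-intersecting, ordered, strongly-induced form, cf.\ Corollary~\ref{cor:0059}, Lemma~\ref{lem:n383}, \S\ref{sssec:ord}, Proposition~\ref{prop:1738}) to $M$ with $r^{e(X)}$ colours, and reassemble. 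This produces an ordered $f$-partite train system $(\seq{N},\ccN)$ of height $m+1$, with the same parameter $\seq{A}$, such that $\ccN\lra(\seq{F})_r$ (by the same pigeonhole argument as in Lemma~\ref{lem:0059}), the copies are strongly induced, $\GTH$ in directions $\mu\in[2,m+1]$ exceeds $\seq{g}$ by the extension lemma (Lemma~\ref{lem:1955}, applied wagon-by-wagon, with the derived-from / scattered apparatus of \S\ref{subsec:ExtL}), and $\GTH(N,\equiv_{m+1}^N,\ccN^+)>1$ since $F$ has its own isolated-vertex padding as in Lemma~\ref{lem:0120}. The price of the extension process is exactly the defect flagged in \S\ref{subsec:0135}: copies in $\ccN$ intersect in entire $1$-wagons, so we have said nothing useful about $\GTH(N,\equiv_0^N,\ccN^+)$, and moreover $\GTH(N,\equiv_0^N,\ccN^+)>2$ is precisely what ``$(2)\circ$'' demands.

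The second half of the plan is a cleaning partite construction that resurrects the bottom direction. Following \S\ref{subsec:9o}, I would set $\Psi^{(2)\circ\seq{g}}=\PC\bigl(\Omega^{(2)},\ \Ext(\Omega^{(2)},\Psi^{\seq{g}})\bigr)$ (more precisely the train-adapted versions), so that $\Ext(\Omega^{(2)},\Psi^{\seq{g}})$ plays the r\^ole of a partite lemma $\Xi$ for trains. One must check $\Xi$ is applicable throughout the construction---the constituents of all intermediate pictures remain linear trains---which is the content of the train-versions of Lemma~\ref{lem:2207}/Lemma~\ref{lem:1527} together with Proposition~\ref{prop:2235}; the crucial input is that $\Omega^{(2)}$ delivers strongly induced copies with clean intersections (Proposition~\ref{prop:1738}), which keeps the constituents linear. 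Then the partite construction does three things simultaneously: it preserves the $\GTH$ in directions $[2,m+1]$ (by Proposition~\ref{prop:2235}, applied layer by layer, i.e.\ the quasitrain version of Lemma~\ref{lem:0052}); it upgrades the bottom direction from no information to $\GTH(H,\equiv_0^H,\ccH^+)>2$ via the ``girth resurrection'' phenomenon---here one invokes Proposition~\ref{prop:girthclean} (or rather its pretrain analogue) in the layer $\mu=1$, using that the partite lemma delivers systems with $\GTH(\cdot,\equiv_0,\cdot^+)>(2,2)$, which holds because $\Ext(\Omega^{(2)},\Psi^{\seq{g}})$ yields linear systems with clean intersections by Lemma~\ref{lem:cleancap}, hence $\Gth>(2,2)$ by Lemma~\ref{lem:Gth22}; and it makes all copy intersections clean (Lemma~\ref{lem:cleancap} again), so the ``intersecting in a whole $1$-wagon'' defect disappears. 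Finally $\PC$ preserves arrowing, ordered structure, $f$-partiteness, strong inducedness, and the parameter $\seq{A}$ by the corresponding results in \S\ref{subsec:PC}. Packaging: $\GTH$ exceeding $2$ in direction $0$ and exceeding $\seq{g}$ in directions $[2,m+1]$, together with $\GTH(\cdot,\equiv_{m+1},\cdot^+)>1$, is exactly $\GTH(\seq{H},\ccH^+)>(2)\circ\seq{g}$ by Definition~\ref{dfn:0115}, so $(\seq{H},\ccH)=\Psi^{(2)\circ\seq{g}}_r(\seq{F})$ is as required.

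The main obstacle I expect is the interface between the extension process and the multi-layer $\GTH$: one has to make rigorous the ``reindex the bottom pretrain layer and apply $\Psi^{\seq g}$ wagon-by-wagon'' step, i.e.\ prove a generalised extension lemma for trains (the content promised in \S\ref{subsec:0135}) saying that $\Ext$ converts the $\ggth>\seq g$ on the lower $m$ layers of the wagons into $\GTH>\seq g$ on layers $[2,m+1]$ of the output, while leaving layer $0$ uncontrolled and producing genuine trains with the correct parameter. This requires the living/derived/scattered machinery of \S\ref{subsec:ExtL} and a careful bookkeeping of how $(\mu-1)$-wagons of the new train relate to $(\mu-2)$-wagons of the old one; the $\ggth(M)>g_1$ needed to justify applying $\Omega^{(2)}$ in the $M$-step comes from $\ggth(\seq W)>\seq g$ via the wagon-cycle argument of Lemma~\ref{lem:1557}. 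The subsequent partite-construction half is then comparatively routine, being a layer-by-layer application of Proposition~\ref{prop:2235} plus one invocation of Proposition~\ref{prop:girthclean} in the bottom layer, all glued by the standard ``induction along the partite construction.''
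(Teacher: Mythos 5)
Your two-step skeleton---first run a train-adapted extension process to get $\ups=\Ext(\Omega^{(2)},\Psi^{\seq g})$ controlling $\GTH$ in the upper directions but saying nothing about $\equiv_0$, then clean and resurrect the bottom direction by a partite construction---is the right shape, and the first half matches Lemma~\ref{lem:1904} closely. But the second half has a genuine gap that is not cosmetic. You write that $\ups$ ``yields linear systems with clean intersections by Lemma~\ref{lem:cleancap}, hence $\Gth>(2,2)$ by Lemma~\ref{lem:Gth22}.'' Lemma~\ref{lem:cleancap} is about the output of a \emph{partite construction} $\PC(\Phi,\Xi)$, not of an \emph{extension process} $\Ext(\Phi,\Psi)$; and in fact the extension process does the opposite. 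As the paper flags in \S\ref{subsec:7o} and again at the opening of \S\ref{subsec:0135}, the copies produced by $\ups$ can intersect in entire $1$-wagons. If $F_1,F_2\in\ccH$ share a whole $1$-wagon $W$ and $e_1,e_2\in E(W)$ are distinct, then $F_1e_1F_2e_2$ is a tidy cycle of copies with $h(\ccC)=(2,2)$ and no master copy (collapsing either side to an edge copy would force $e_1=e_2$), so $\Gth(\ups_r(\seq F))\not>(2,2)$. Without $\Gth>(2,2)$ at the partite-lemma level, the ``from $(g,g)$ to $g$'' resurrection (Lemma~\ref{lem:251} / Proposition~\ref{prop:girthclean} at $g=2$) that you invoke has no hypothesis to run on. The paper's fix is to insert one more nesting level: form $\Xi=\PC(\ups,\ups)$ first (Lemma~\ref{lem:2222}). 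Since $\ups$ \emph{does} deliver strongly induced copies vertically, Lemma~\ref{lem:cleancap} now gives clean intersections for $\Xi$, hence $\Gth>(2,2)$ by Lemma~\ref{lem:Gth22}, while $\GTH(\cdot,\ccH^+)>\seq g$ on the upper directions survives by Lemma~\ref{lem:n209}. This makes $\Xi$ a $((2)\circ\seq g)$-amenable partite lemma in the sense of Definition~\ref{dfn:n1034}, and the final construction is the three-level nest $\PC(\ups,\Xi)=\PC(\ups,\PC(\ups,\ups))$, not the two-level $\PC(\cdot,\ups)$ you propose.

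A secondary issue is your choice of $\Omega^{(2)}$ in the vertical r\^ole. Applied to a train $\seq F$ of height $m+1>1$, $\Omega^{(2)}$ only produces a hypergraph system $(G,\ccG)$, not a train system carrying the parameter $\seq A$; but parameter preservation for train pictures (Lemma~\ref{lem:N128}) and the whole machinery of \S\ref{subsec:tipc} presuppose a train system vertically. The paper avoids this by observing that $\ups$ is itself already a Ramsey construction for $((2)\circ\seq g)$-trains in the sense of Definition~\ref{dfn:n1034} (this is exactly what Lemma~\ref{lem:1904} delivers), so it can play the vertical r\^ole in Lemma~\ref{lem:n1035}. Using $\Omega^{(2)}$ instead would require a separate argument that the constituents remain trains with the correct parameter throughout, which is precisely what the train-picture framework is designed to make unnecessary.
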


Second, we shall later describe a diagonal variant of the 
partite construction method and establish the following result.

\begin{prop}\label{prop:0139}
	Suppose that $\seq{g}\in\gM_\le$ is a (possibly empty) nondecreasing sequence
	and that $g$ is an integer satisfying $\inf(\seq{g})>g\ge 2$. If for every 
	positive integer $m$ the principle $\karo_{(g)^m\circ\seq{g}}$ holds, then 
	$\karo_{(g+1)\circ\seq{g}}$ is likewise valid.    
\end{prop}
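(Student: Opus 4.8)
The plan is to prove this "diagonal step" by fixing a nondecreasing sequence $\seq{g}\in\gM_\le$ and an integer $g\ge 2$ with $\inf(\seq{g})>g$, and building a single construction $\Psi^{(g+1)\circ\seq{g}}$ out of the constructions $\Psi^{(g)^m\circ\seq{g}}$ furnished by the hypothesis. The target train $\seq{F}$ has height $m+|\seq{g}|$ where $m:=1+|\seq{g}|$-\emph{free}; more precisely, if $\seq{F}=(F,\equiv_0,\dots,\equiv_{n})$ with $n=1+|\seq{g}|$ and $\ggth(\seq{F})>(g+1)\circ\seq{g}$, then by Lemma~\ref{lem:1557} we have $\gth(F)>g+1$, and the bottom $(g+1)$-wagon carries a train of height~$1$ over its constituent $(g)$-wagons. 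The idea (cf.\ the annotated table of contents, \S\ref{sssec:1748}) is to regard such an $\seq{F}$ as an $m$-\emph{revisable} train for suitable $m$ in the sense promised in \S\ref{sssec:1746}: the bottom level of $\seq{F}$ can be "resolved" into $m$ successive copies of the level-$g$ structure, so that after padding the parameter appropriately $\seq{F}$ becomes a train of height $m+|\seq{g}|$ with $\ggth>(g)^m\circ\seq{g}$, to which $\Psi^{(g)^m\circ\seq{g}}$ applies. The height $m$ must be chosen large enough that $r^?$-many colours can be absorbed; concretely $m$ is determined by a Schubfachprinzip-type bound coming from the number of colour patterns a colouring of the wagons produces, exactly as in the extension-process discussion in \S6 of the overview.

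The key steps, in order, are as follows. (1) Formalise $m$-revisability: show that every ordered $f$-partite train $\seq{F}$ of height $1+|\seq{g}|$ with $\ggth(\seq{F})>(g+1)\circ\seq{g}$ and with the parameter having its bottom set of size $\le1$ can be viewed as, or embedded in, an $m$-revisable train, and that $\karo_{(g)^m\circ\seq{g}}$ converts into a partite lemma $\Xi$ applicable to such revisable trains producing systems with the right strong-inducedness, arrowing, and $\GTH>(g+1)\circ\seq{g}$ properties. This is the content of \S\ref{sssec:1746} and is where the hypothesis on \emph{all} $m$ gets used: $m$ is not known in advance, it is dictated by $r$ and by $\seq{F}$. (2) Run a \emph{diagonal partite construction}: over a vertical Ramsey system obtained from (say) $\Omega^{(|\seq{g}|+1)}$ or an appropriate iterate, perform a partite construction whose successive pictures have constituents that are themselves revisable trains of varying heights, applying at each stage the partite lemma from step (1) at the height appropriate to that constituent. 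Girth is maintained throughout by the $\GTH$-preservation machinery of Section~\ref{sec:1912}, in particular Proposition~\ref{prop:2235} (applied at each level $\mu$ of the train) together with Lemma~\ref{lem:0036}, which is precisely the tool that upgrades $\ggth>(g)^m$ on the constituents to $\gth>g+1$ on the amalgam. (3) Verify that the final picture $(\seq{H},\ccH)$ is an $f$-partite train of height $1+|\seq{g}|$ \emph{with the original parameter} of $\seq{F}$ — here one invokes Lemma~\ref{lem:n383}-style $A$-intersecting preservation (the constituents may look as if their parameters grew, but after the construction the sizes collapse back, as anticipated at the end of \S\ref{sssec:3133}) — satisfies $\ccH\lra(\seq{F})_r$ by the usual partite-construction colour-counting argument, has strongly induced copies, and satisfies $\GTH(\seq{H},\ccH^+)>(g+1)\circ\seq{g}$. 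Setting $\Psi^{(g+1)\circ\seq{g}}$ equal to the resulting construction establishes $\karo_{(g+1)\circ\seq{g}}$.

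I expect the main obstacle to be step~(2), the bookkeeping of the \emph{diagonal} partite construction, for two reasons. First, unlike an ordinary partite construction where the partite lemma is applied to constituents of a fixed type, here the constituents at different stages are revisable trains of different heights, so one must carefully set up a well-ordering of the edges of the vertical hypergraph along which the height "diagonalises" and check that at every stage the hypotheses of the partite lemma from step~(1) are met — this is where the "for every positive integer $m$" in the statement is essential and where a naive single application fails. Second, tracking $\GTH$ through the diagonal construction requires applying Proposition~\ref{prop:2235} simultaneously at every level $\mu\in[0,1+|\seq{g}|]$ of the train structure, and checking that the $\ell=2$ versus $\ell=1$ subtlety in Definition~\ref{dfn:0115} (the loss of information about $\GTH(H,\equiv_0,\ccH^+)$ caused by copies meeting in full $1$-wagons) is repaired by the final construction, exactly as promised in \S\ref{subsec:ngr}; the girth resurrection of \S\ref{subsec:ngr} (that $\PC(\ups,\PC(\ups,\ups))$ exemplifies $\karo_{(2)\circ\seq{g}}$) is the template, but here it must be carried out one level up and relative to the parameter constraint $|B_\mu|\le1$ that makes Lemma~\ref{lem:0036} fire. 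Everything else — the colour-counting for $\ccH\lra(\seq{F})_r$, strong inducedness via Lemma~\ref{lem:cleancap} and Fact~\ref{f:strder}, and the parameter preservation — should be routine given the lemmas already in place.
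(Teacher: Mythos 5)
You have the right skeleton: $m$-revisability, a diagonal partite construction that applies the different $\Psi^m$ as the construction progresses, and Lemma~\ref{lem:0036} (firing because $|B_\mu|\le 1$) as the engine that converts $\ggth>(g)^m$ at the fine scale into $\gth>g+1$ one level up. But there is a genuine gap in step~(2), and it is the crux of the proof: you attribute the choice of $m$ to ``a Schubfachprinzip-type bound coming from the number of colour patterns,'' which is the mechanism of the \emph{extension} process in Section~\ref{subsec:EP} (used for Proposition~\ref{prop:0142}), not of this diagonal construction. Here $m$ is never fixed up front; it grows by one at each amalgamation, and the heart of the argument is the incremental claim---Lemma~\ref{lem:3111}---that if a constituent $\seq{\Pi}^{e_\star}$ is $\alpha$-revisable and one amalgamates over a different edge $e$ with a system satisfying $\Gth(H,\ccH^+)>g$, then the new constituent $\seq{\Sigma}^{e_\star}$ is $(\alpha+1)$-revisable. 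The reason this works, which your plan does not articulate, is that the new refinement level $\sim^\Sigma_\alpha$ interposed into the revision records ``belonging to the same standard copy,'' and its associated parameter set $B_{\alpha+1}=e\cap e_\star$ has size at most $1$ precisely because the vertical hypergraph $G$ is linear; the $\ggth>g$ condition at that new level is then supplied by $\Gth(H,\ccH^+)>g$ of the horizontal system. Saying that ``one must carefully set up a well-ordering of the edges along which the height diagonalises'' gives no mechanism and would not survive an attempt to carry it out. You also seem to treat revisability as an automatic consequence of a $\ggth$ bound plus a size constraint on the parameter of $\seq{F}$ itself; that is false---Definition~\ref{dfn:N206} imposes $|B_\mu|\le 1$ on the sets inside the \emph{revision}, not on $A_1$, and Fact~\ref{f:revis} goes only one way (the paper stresses that the implication is far from reversible). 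Revisability of constituents is a property built inductively along the partite construction, starting from $1$-revisability of the constituents of picture zero (Fact~\ref{f:pnc}).

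Several smaller points are off as well. The partite lemma obtained from $\Psi^m$ via Lemma~\ref{clm:0158} delivers $\GTH>(g)\circ\seq{g}$, not $(g+1)\circ\seq{g}$; the $(g+1)$ first shows up in $\ggth(\seq{H})$ and in the $\Gth>(g+1,g+1)$ bound on the pictures, which one must upgrade with Lemma~\ref{lem:1753}. The vertical Ramsey construction is taken from Corollary~\ref{cor:k2l} (any construction exemplifying $\karo_{(2)^{|\seq{g}|+1}}$), not from $\Omega^{(|\seq{g}|+1)}$, because one needs the vertical system to be a \emph{train} system so that Lemma~\ref{lem:N128} and Lemma~\ref{lem:n209} apply. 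And the output of the diagonal construction is only a $((g+1)\circ\seq{g})$-amenable partite lemma $\Xi$ in the sense of Definition~\ref{dfn:n1034}; concluding $\karo_{(g+1)\circ\seq{g}}$ still requires one further partite construction $\PC(\Phi,\Xi)$ and Lemma~\ref{lem:n1035}. This final assembly is absent from your plan.
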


Throughout the remainder of this subsection, we assume that these two 
propositions are true and explore some of their consequences.
In particular, we show that they really yield all karo principles. 

\begin{lemma}
	For every sequence $\seq{g}\in\gM_\le^\times$ and every 
	integer $g$ such that $\inf(\seq{g})\ge g\ge 2$ the principle 
	$\karo_{\seq{g}}$ implies $\karo_{(g)\circ \seq{g}}$.
\end{lemma}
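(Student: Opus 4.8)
The plan is to derive $\karo_{(g)\circ\seq{g}}$ from $\karo_{\seq{g}}$ by iterating Proposition~\ref{prop:0139}, using Proposition~\ref{prop:0142} only as the mechanism that keeps feeding us the hypotheses needed for that proposition. Write $\seq{g}=(g_1,\dots,g_m)$; by assumption $g_1=\inf(\seq{g})\ge g\ge 2$, so $(g)\circ\seq{g}$ is again a nondecreasing sequence in $\gM_\le^\times$. The key observation is that Proposition~\ref{prop:0139}, applied with the given $g$ and with $\seq{g}$ as the ``tail'', tells us that $\karo_{(g+1)\circ\seq{g}}$ follows once we know $\karo_{(g)^k\circ\seq{g}}$ for \emph{every} positive integer $k$ --- but we want $\karo_{(g)\circ\seq{g}}$, not $\karo_{(g+1)\circ\seq{g}}$. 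So the real target is the family $\{\karo_{(g)^k\circ\seq{g}}\colon k\ge 1\}$, and the statement to prove is exactly its $k=1$ member.

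First I would prove, by induction on $k\ge 1$, that $\karo_{(g)^k\circ\seq{g}}$ holds for all $k$. For the base case $k=1$ we must produce $\karo_{(g)\circ\seq{g}}$ --- which looks circular, but is not: when $g=2$ the sequence $(2)\circ\seq{g}=(2)\circ\seq{g}$ is handled by Proposition~\ref{prop:0142} directly from $\karo_{\seq{g}}$, and when $g\ge 3$ we instead run a \emph{downward} induction on $g$, knowing that $\karo_{(g-1)^j\circ\seq{g}}$ holds for all $j$ (since $\inf(\seq{g})\ge g-1$, this is a legitimate instance of the statement we are proving for the smaller value $g-1$, whose base case $g-1=2$ is covered by Proposition~\ref{prop:0142}), and then invoking Proposition~\ref{prop:0139} with ``$g$'' there equal to $g-1$ to obtain $\karo_{(g)\circ\seq{g}}$. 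Concretely: to get $\karo_{(g)\circ\seq{g}}$ one applies Proposition~\ref{prop:0139} with tail $\seq{g}$ and integer $g-1$, whose hypothesis ``$\karo_{(g-1)^m\circ\seq{g}}$ for every $m$'' is the full strength of the lemma at parameter $g-1$. Thus the lemma for $g$ reduces to the lemma for $g-1$, bottoming out at $g=2$ where Proposition~\ref{prop:0142} applies since $(2)^m\circ\seq{g}=(2)\circ((2)^{m-1}\circ\seq{g})$ and one peels off the leading $2$'s one at a time.

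Having secured $\karo_{(g)^k\circ\seq{g}}$ for all $k$ as a byproduct of this nested induction, the conclusion $\karo_{(g)\circ\seq{g}}$ is simply the case $k=1$. The only genuine content is organising the two inductions so they do not chase their own tails: the outer induction is on the integer $g$ (with base $g=2$ supplied by Proposition~\ref{prop:0142}), and for each fixed $g$ the inductive hypothesis ``$\karo_{(g-1)^j\circ\seq{h}}$ for every nondecreasing $\seq{h}$ with $\inf(\seq{h})\ge g-1$ and every $j$'' is exactly what Proposition~\ref{prop:0139} needs in order to climb from $g-1$ to $g$. One must be slightly careful that the tail appearing in Proposition~\ref{prop:0139} is allowed to be empty, but $\gM_\le$ includes $\vn$, so if $\seq{g}$ itself had length forcing a degenerate case the statement is still covered.

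The main obstacle I anticipate is purely bookkeeping: making sure the quantifier ``for every positive integer $m$'' in Proposition~\ref{prop:0139} is matched by an induction that genuinely establishes \emph{all} powers $(g)^m\circ\seq{g}$ simultaneously, rather than just the single power we happen to want. In other words, the lemma must be strengthened in the proof to the statement ``for every nondecreasing $\seq{g}$, every integer $g\le\inf(\seq{g})$, and every $m\ge 1$, $\karo_{\seq{g}}$ implies $\karo_{(g)^m\circ\seq{g}}$,'' and then one proves this stronger statement by the double induction above (primary on $g$, base $g=2$ via Proposition~\ref{prop:0142}; the passage $g-1\rightsquigarrow g$ via Proposition~\ref{prop:0139}). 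The original lemma is the $m=1$ instance. No calculation is involved --- the difficulty is entirely in choosing the right induction loading so that the hypothesis of Proposition~\ref{prop:0139} is available exactly when needed.
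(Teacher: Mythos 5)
Your proposal is correct and follows essentially the same route as the paper: induction on $g$ with base case $g=2$ supplied by Proposition~\ref{prop:0142}, and the step from $g-1$ to $g$ (equivalently, from $g$ to $g+1$) obtained by iterating the induction hypothesis to secure $\karo_{(g-1)^m\circ\seq{g}}$ for all $m$ and then invoking Proposition~\ref{prop:0139}. The paper phrases the higher powers as ``iterative applications of the induction hypothesis'' rather than as a separately stated strengthening, which avoids the bookkeeping you worry about at the end, but the underlying argument is identical.
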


\begin{proof}
	We argue by induction on $g$. Proposition~\ref{prop:0142} provides
	the base case $g=2$. Now suppose that the lemma 
	holds for some integer $g\ge 2$ and that some nonempty nondecreasing 
	sequence $\seq{g}\in\gM_\le^\times$ satisfying $\karo_{\seq{g}}$ 
	and $\inf(\seq{g})\ge g+1$ is given. 
	Iterative applications of the induction hypothesis 
	establish $\karo_{(g)^m\circ\seq{g}}$ for every positive integer $m$
	and thus Proposition~\ref{prop:0139} yields the desired principle 
	$\karo_{(g+1)\circ\seq{g}}$.
\end{proof}

\begin{lemma}\label{lem:karog}
	For every integer $g\ge 2$ the principle $\karo_{(g)}$ holds.
\end{lemma}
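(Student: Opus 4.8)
The plan is to prove $\karo_{(g)}$ for every $g\ge 2$ by induction on $g$, using the two propositions just stated. The base case $g=2$ is exactly Lemma~\ref{lem:0120}. For the induction step, suppose $\karo_{(g)}$ holds for some $g\ge 2$; we must derive $\karo_{(g+1)}$. The key observation is that iterating the previous lemma (the one immediately above, proved from Propositions~\ref{prop:0142} and~\ref{prop:0139}) starting from $\karo_{(g)}$ lets us climb to longer powers of $(g)$.

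More precisely, I would first record that by the preceding lemma applied with $\seq{g}=(g)$ and the integer $g$ itself (note $\inf((g))=g\ge g\ge 2$, so the hypothesis $\inf(\seq{g})\ge g\ge 2$ is satisfied), the principle $\karo_{(g)}$ implies $\karo_{(g)\circ(g)}=\karo_{(g)^2}$. Repeating this — each time applying the preceding lemma with $\seq{g}=(g)^m$, which still has infimum $g$ and is nonempty and nondecreasing — yields $\karo_{(g)^m}$ for every positive integer $m$ by an inner induction on $m$. Thus from the single hypothesis $\karo_{(g)}$ we obtain the whole family $\{\karo_{(g)^m}\colon m\ge 1\}$.

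Now I would invoke Proposition~\ref{prop:0139} with $\seq{g}=\vn$ (the empty sequence, which is nondecreasing and permitted) and with the integer $g$: its hypothesis $\inf(\vn)=\infty>g\ge 2$ holds, and its assumption ``for every positive integer $m$ the principle $\karo_{(g)^m\circ\vn}=\karo_{(g)^m}$ holds'' has just been established. Proposition~\ref{prop:0139} then delivers $\karo_{(g+1)\circ\vn}=\karo_{(g+1)}$, completing the induction step. By induction on $g$, $\karo_{(g)}$ holds for all $g\ge 2$.

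The argument is essentially bookkeeping: the real content has already been packaged into Lemma~\ref{lem:0120} and Propositions~\ref{prop:0142}, \ref{prop:0139}, together with the preceding lemma. The only point requiring a moment's care is checking that the side conditions on sequences and infima match up at each invocation — in particular that $\vn$ is a legitimate value of $\seq{g}$ in Proposition~\ref{prop:0139} (it is, being the neutral element of $\gM_\le$), and that the chain $\karo_{(g)}\Rightarrow\karo_{(g)^2}\Rightarrow\cdots$ really is an application of the preceding lemma at each stage with the infimum staying equal to $g$. I do not anticipate any genuine obstacle here; the heavy lifting is deferred to the later sections that prove the two propositions.
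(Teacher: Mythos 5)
Your proof is correct and takes essentially the same route as the paper: induction on $g$ with Lemma~\ref{lem:0120} as base case, iterating the preceding lemma to get $\karo_{(g)^m}$ for all $m$, then invoking Proposition~\ref{prop:0139} with the empty sequence. You merely spell out the side-condition checks that the paper leaves implicit.
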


\begin{proof}
	Referring to Lemma~\ref{lem:0120} as a base case we argue by induction on $g$. 
	In the induction step we suppose that $g\ge 2$ denotes an integer such 
	that $\karo_{(g)}$ is true. Repeated applications of the previous lemma disclose 
	$\karo_{(g)^m}$ for every positive integer $m$ and appealing once more to 
	Proposition~\ref{prop:0139}, this time with the empty sequence, 
	we infer $\karo_{(g+1)}$.	  
\end{proof}

It should be clear that the two foregoing lemmata suffice for proving the following 
statement by induction on $|\seq{g}|$.

\begin{cor}\label{cor:1412}
	For every sequence $\seq{g}\in\gM_\le^\times$ the principle 
	$\karo_{\seq{g}}$ is valid. \qed
\end{cor}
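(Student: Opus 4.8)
The plan is to prove $\karo_{\seq{g}}$ for every $\seq{g}\in\gM_\le^\times$ by induction on the length $|\seq{g}|=m$. The induction will bottom out on Lemma~\ref{lem:karog} (the one-term sequences) and the inductive step will be supplied essentially verbatim by the lemma immediately preceding this corollary, namely the implication $\karo_{\seq{g}}\Rightarrow\karo_{(g)\circ\seq{g}}$ valid whenever $\inf(\seq{g})\ge g\ge 2$.

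First I would record the base case $m=1$. A nondecreasing sequence of length~$1$ is of the form $(g)$ with $g\ge 2$, and $\karo_{(g)}$ is exactly the content of Lemma~\ref{lem:karog}. For the inductive step, suppose $m\ge 2$ and that $\karo_{\seq{h}}$ holds for every $\seq{h}\in\gM_\le^\times$ with $|\seq{h}|<m$. Given a nondecreasing sequence $\seq{g}=(g_1,\dots,g_m)\in\gM_\le^\times$, write $g=g_1$ and $\seq{g}'=(g_2,\dots,g_m)$. Since $\seq{g}$ is nondecreasing we have $\inf(\seq{g}')\ge g_2\ge g_1=g\ge 2$, and $\seq{g}'\in\gM_\le^\times$ because $g_2\ge 2$; moreover $|\seq{g}'|=m-1<m$, so the induction hypothesis yields $\karo_{\seq{g}'}$. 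Now the preceding lemma, applied with $\seq{g}'$ in place of $\seq{g}$ and $g$ as above, gives $\karo_{(g)\circ\seq{g}'}=\karo_{\seq{g}}$, completing the induction.

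There is genuinely nothing hard here: the corollary is a bookkeeping consequence of the two lemmata that precede it, and the only point requiring a moment's care is checking that when one strips the first term off a nondecreasing sequence the hypotheses of the preceding lemma ($\inf(\seq{g}')\ge g_1$, and $\seq{g}'$ nonempty and nondecreasing) are automatically met --- which they are, precisely because $\seq{g}$ was nondecreasing and had length at least~$2$. Of course the real mathematical weight sits upstream, in Propositions~\ref{prop:0142} and~\ref{prop:0139} (from which Lemma~\ref{lem:karog} and the preceding lemma are derived), but those are assumed at this point in the text. Hence the proof of the corollary itself is the short induction just sketched. \qed
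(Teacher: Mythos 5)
Your proof is correct and follows exactly the route the paper intends: the paper dispenses with Corollary~\ref{cor:1412} by simply remarking that the two preceding lemmata prove it by induction on $|\seq{g}|$, and your write-up supplies that induction (base case from Lemma~\ref{lem:karog}, inductive step from the lemma on $\karo_{\seq{g}}\Rightarrow\karo_{(g)\circ\seq{g}}$) together with the easy check that stripping the first term of a nondecreasing sequence preserves the hypotheses.
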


Moreover, when unravelling the meaning of Lemma~\ref{lem:karog} one arrives 
at a fairly strong form of the girth Ramsey theorem first announced
in \S\ref{sssec:9902}. 
 
\begin{thm}\label{thm:6653}
	For every integer $g\ge 2$ there exists a Ramsey construction 
	$\Omega^{(g)}$ that given an ordered $f$-partite hypergraph $F$ 
	with $\gth(F)>g$ and a number of colours~$r$ produces an ordered 
	$f$-partite system $\Omega^{(g)}_r(F)=(H, \ccH)$ satisfying
	$\ccH\lra (F)_r$ and $\Gth(H, \ccH^+)>g$. 
\end{thm}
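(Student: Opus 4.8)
\textbf{Proof of Theorem~\ref{thm:6653} (plan).}
The plan is to extract $\Omega^{(g)}$ from the principle $\karo_{(g)}$, which is available by Lemma~\ref{lem:karog}. So let $\Psi^{(g)}$ be the construction witnessing $\karo_{(g)}$. Given an ordered $f$-partite hypergraph $F$ with $\gth(F)>g$ and a number of colours $r$, I would first pass to the quasitrain of height $1$ associated with $F$ in the sense of Example~\ref{exmp:0814}, i.e.\ $\seq{F}=(F, \equiv^F_0, \equiv^F_1)$, where the $0$-wagons are single edges and all edges lie in the single $1$-wagon. This is an $f$-partite train of height $1$; its parameter is $(A_1)$ for $A_1$ equal to the full index set $I$ of $f$ (every $F$ is trivially $I$-intersecting, cf.\ Example~\ref{exmp:2230}). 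By Definition~\ref{dfn:0027}, a height-$1$ quasitrain $\seq{F}$ satisfies $\ggth(\seq{F})>(g)$ precisely when $\ggth(W_1,\equiv_0^{W_1})>g$ for the unique $1$-wagon $W_1=F$; since the $0$-wagons are single edges, replacing wagons by edges leaves $F$ unchanged, so this just says $\gth(F)>g$, which holds by hypothesis.

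Next I would apply $\Psi^{(g)}$ to $\seq{F}$ with $r$ colours, obtaining an ordered $f$-partite train system
\[
	\Psi^{(g)}_r(\seq{F})=(H,\equiv^H_0,\equiv^H_1,\ccH)=(\seq{H},\ccH)
\]
with the same parameter $(I)$ as $\seq{F}$, with strongly induced copies, with $\ccH\lra(\seq{F})_r$, and with $\GTH(\seq{H},\ccH^+)>(g)$. Setting $\Omega^{(g)}_r(F)=(H,\ccH)$, I must verify the two conclusions of the theorem. The partition relation $\ccH\lra(F)_r$ is immediate from $\ccH\lra(\seq{F})_r$ once one observes that colouring the edges of $H$ and finding a monochromatic copy of $\seq{F}$ is the same thing as finding a monochromatic copy of $F$ (the quasitrain structure on a height-$1$ object adds no information, by Example~\ref{exmp:0814}). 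For the girth bound: unravelling Definition~\ref{dfn:0115} with $\seq{g}=(g)$, the hypothesis $\GTH(\seq{H},\ccH^+)>(g)$ means both $\GTH(H,\equiv^H_0,\ccH^+)>g$ and $\GTH(H,\equiv^H_1,\ccH^+)>1$. Since the $0$-wagons of $\seq{H}$ are single edges, the equivalence relation $\equiv^H_0$ is exactly the one whose classes are single edges, so Lemma~\ref{lem:0217} gives $\Gth(H,\ccH^+)>g$ directly from $\GTH(H,\equiv^H_0,\ccH^+)>g$. (In particular, applying this to $\ccH^+$ with $\ccH=\varnothing$, or just invoking Lemma~\ref{lem:Gth-gth}, one also recovers $\gth(H)>g$, as promised in~\S\ref{sssec:9902}.)

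The only genuinely delicate point is bookkeeping about isolated vertices, which is why the base case Lemma~\ref{lem:0120} is stated with the caveat it has: if $F$ has isolated vertices one should strip them, run the construction, and put them back so that each copy in $\ccH$ receives its own isolated vertices — but this is exactly the routine manoeuvre already carried out in the proof of Lemma~\ref{lem:0120}, and $\karo_{(g)}$ delivers a construction $\Psi^{(g)}$ that internally handles this, so nothing new is needed here. I do not expect any real obstacle: all the substance is in Lemma~\ref{lem:karog} (hence ultimately in Propositions~\ref{prop:0142} and~\ref{prop:0139}), and the present theorem is just the translation of $\karo_{(g)}$ back from the language of trains to the language of ordinary hypergraphs via Example~\ref{exmp:0814} and Lemma~\ref{lem:0217}. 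If anything, the one line to be careful about is that $\Psi^{(g)}$ as supplied by $\karo_{(g)}$ returns a train \emph{with the same parameter}, and that taking the parameter to be the full index set $I$ imposes no constraint — so feeding $F$ into it is always legitimate. \qed
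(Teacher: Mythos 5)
Your proposal is correct and follows the same route as the paper's own proof: pass to the height-$1$ quasitrain associated with $F$ (viewed as a train with parameter $(I)$), apply the construction furnished by $\karo_{(g)}$ from Lemma~\ref{lem:karog}, and translate the conclusion $\GTH(H,\equiv^H_0,\ccH^+)>g$ back to $\Gth(H,\ccH^+)>g$ via Lemma~\ref{lem:0217}. Your additional remarks on isolated vertices and on recovering $\gth(H)>g$ are accurate but not needed for the argument.
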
   

\begin{proof}
	Let $\seq{F}=(F, \equiv^F_0, \equiv^F_1)$ be the ordered $f$-partite 
	quasitrain associated to $F$ (as in Example~\ref{exmp:0814}).
	We can view $\seq{F}$ as a train with parameter $(I)$, where $I$ denotes 
	the domain of $f$. Since $\ggth(\seq{F})>(g)$, the principle $\karo_{(g)}$ 
	yields a train system $(\seq{H}, \ccH)$ such that $\ccH\longrightarrow(\seq{F})_r$
	and $\GTH(\seq{H}, \ccH^+)>(g)$. Writing $\seq{H}=(H, \equiv^H_0, \equiv^H_1)$
	it remains to observe that $\GTH(H, \equiv^H_0, \ccH^+)>g$ 
	implies $\Gth(H, \ccH^+)>g$ (see Lemma~\ref{lem:0217}).
\end{proof}

In analogy to Lemma~\ref{lem:n383} we could add that the construction 
$\Omega^{(g)}$ also preserves being $A$-intersecting, but due to a lack 
of known applications we do not state this more carefully here. 
Let us finally recall that every $k$-uniform hypergraph $F$ can be viewed 
as an $f$-partite hypergraph, where $f$ denotes any function from a one-element 
set to $\{k\}$. Therefore, Theorem~\ref{thm:6653} immediately yields a 
Ramsey theorem for arbitrary hypergraphs without short cycles. As the 
conclusion $\Gth(H, \ccH^+)>g$ implies $\gth(H)>g$ (see Lemma~\ref{lem:Gth-gth})
we thereby see that Theorem~\ref{thm:6653} yields Theorem~\ref{thm:grth1}.
Let us collect all these implications into a single statement. 

\begin{summary}\label{sum:1020}
	The conjunction of Proposition~\ref{prop:0142} and Proposition~\ref{prop:0139}
	leads to Corollary~\ref{cor:1412}, Theorem~\ref{thm:6653}, and to the girth 
	Ramsey theorem as formulated in Theorem~\ref{thm:grth1}.
\end{summary}
 
\begin{remark}
	With the help of ordinal numbers our induction scheme can be reformulated 
	in a, perhaps, more transparent way. 
	The map $\phi\colon \gM_{\le}\lra \omega^\omega$ defined by 
		\[
		\phi(g_1, \dots, g_m)=\omega^{g_m-2}+\dots+\omega^{g_1-2}
	\]
		is bijective. Thus Corollary~\ref{cor:1412} asserts 
	that $\karo_{\phi^{-1}(\alpha)}$ holds for every positive 
	ordinal~$\alpha$ beneath~$\omega^\omega$. The proof is by induction 
	on $\alpha$, its base case $\alpha=1$ agrees with Lemma~\ref{lem:0120},
	and Proposition~\ref{prop:0142} takes care of the successor step 
	$\karo_{\phi^{-1}(\alpha)}\Longrightarrow\karo_{\phi^{-1}(\alpha+1)}$.
	Finally, Proposition~\ref{prop:0139} provides the limit step, for in the 
	notation employed there
	the sequence $\langle \phi((g)^m\circ\seq{g})\colon m<\omega\rangle$
	converges to the limit number $\phi((g+1)\circ\seq{g})$.   
\end{remark} \section{Trains in the extension process}
\label{sec:TUD}

In this section we take the first major step towards proving 
Proposition~\ref{prop:0142}. Notice that we are given there 
a construction applicable to certain trains of height $m=|\seq{g}|$,
and we are to exhibit another construction capable of handling certain
trains of height $m+1$. The plan for accomplishing this height increment 
is to use the extension process; the main result of this section describes
how far we can go with this idea (see Lemma~\ref{lem:1904}). The main deficit 
of the construction we obtain is that the copies it produces can still intersect 
in entire $1$-wagons. 
Following the arguments we saw in~\S\ref{subsec:9o} this can be remedied 
by means of the partite construction method, but we defer the details of this 
cleaning step to~\S\ref{subsec:ngr}.

\subsection{Extensions of trains}
\label{sssec:3136}

The extension process discussed in Section~\ref{subsec:EP} generalises
from pretrains to quasitrains of arbitrary height and our next immediate goal 
is to develop an appropriate language for the description of such constructions.    
In other words, we repeat~\S\ref{sssec:1448} in a more intricate setting. 

\begin{dfn}\label{dfn:4127}
	Given a subquasitrain $\seq{F}=(F, \equiv^F_0, \ldots, \equiv^F_m)$ of 
	a quasitrain 
		\[
		\seq{H}=(H, \equiv^H_0, \ldots, \equiv^H_m)
	\]
		we call $\seq{H}$ a {\it $1$-extension} of $\seq{F}$ provided
	that $(H, \equiv^H_1)$ is an extension of $(F, \equiv^F_1)$.
\end{dfn}

Let us recall that the latter notion was introduced in Definition~\ref{dfn:1811}
and in the present situation it means that the following three conditions hold.  
\begin{enumerate}[label=\rmlabel]
	\item\label{it:4127a} The hypergraphs $F$ and $H$ have the same isolated vertices.
	\item\label{it:4127b} Every $1$-wagon $W$ of $\seq{H}$ contracts to a $1$-wagon 
			of $\seq{F}$, i.e., satisfies $E(W)\cap E(F)\ne\varnothing$.
	\item\label{it:4127c} If two distinct $1$-wagons $W_1^\star$ and $W_1^{\star\star}$ 
			of $\seq{H}$ contract to the $1$-wagons $\overline{W}_1^\star$ 
			and $\overline{W}_1^{\star\star}$ of~$\seq{F}$, then 
			$V(W_1^\star)\cap V(W_1^{\star\star})
			=V(\overline{W}_1^\star)\cap V(\overline{W}_1^{\star\star})$.
\end{enumerate} 

The next lemma expresses the intuitively obvious fact that there is a natural 
bijective correspondence between the $1$-extensions 
of a quasitrain $\seq{F}=(F, \equiv^F_0, \ldots, \equiv^F_m)$ and the extensions 
of the pretrain $(F, \equiv^F_1)$.
 
\begin{lemma} \label{lem:1455}
	Let $\seq{F}=(F, \equiv^F_0, \ldots, \equiv^F_m)$ be a quasitrain. 
	If the pretrain~${(H, \equiv^H_1)}$ is an extension of $(F, \equiv^F_1)$, 
	then there exists a unique quasitrain structure   
		\[
		\seq{H}=(H, \equiv^H_0, \ldots, \equiv^H_m)
	\]
		such that $\seq{F}$ is a subquasitrain of $\seq{H}$. 
\end{lemma}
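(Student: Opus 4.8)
The plan is to reconstruct the equivalence relations $\equiv^H_0, \dots, \equiv^H_m$ on $E(H)$ from the data we already have, namely $\equiv^H_1$ together with the quasitrain structure on $\seq{F}$, and then to verify that the resulting tuple is a quasitrain of which $\seq{F}$ is a subquasitrain, and finally that no other choice works. Since every edge of $H$ lies in a unique $1$-wagon of $(H, \equiv^H_1)$ which, by clause~\ref{it:4127b} of the definition of a $1$-extension, contracts to a uniquely determined $1$-wagon of $\seq{F}$, each edge $e \in E(H)$ is assigned a $1$-wagon $\overline{W}(e)$ of $\seq{F}$; pick once and for all an edge $\rho(e) \in E(\overline{W}(e)) \cap E(F)$, which exists by~\ref{it:4127b}. (For $e \in E(F)$ we take $\rho(e) = e$.)

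First I would define, for each $\mu \in [0, m]$, the relation $\equiv^H_\mu$ by declaring $e \equiv^H_\mu e'$ to hold if and only if $\rho(e) \equiv^F_\mu \rho(e')$ \emph{and} $e \equiv^H_1 e'$, except that for $\mu = 0$ we simply put $\equiv^H_0$ equal to the relation whose classes are single edges (as forced by Definition~\ref{dfn:2206}\ref{it:2206a}), and for $\mu = 1$ we keep the given $\equiv^H_1$. One should check this is well defined independently of the choice of $\rho$: if $e \equiv^H_1 e'$ then $\overline W(e) = \overline W(e')$, and any two edges of $E(\overline W(e)) \cap E(F)$ lie in the same $1$-wagon of $\seq F$, hence are $\equiv^F_\mu$-equivalent for all $\mu \ge 1$ by Definition~\ref{dfn:2206}\ref{it:2206b}; so the second condition $e \equiv^H_1 e'$ makes the first one insensitive to the choice of representatives. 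Each $\equiv^H_\mu$ is clearly reflexive and symmetric; transitivity follows from transitivity of $\equiv^F_\mu$ and of $\equiv^H_1$.

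Next I would verify the three quasitrain axioms of Definition~\ref{dfn:2206}. Axiom~\ref{it:2206a} holds by construction of $\equiv^H_0$. For~\ref{it:2206b}, suppose $e \equiv^H_\mu e'$ with $\mu \in [0,m)$; the case $\mu=0$ is immediate, and for $\mu \ge 1$ we have $\rho(e) \equiv^F_\mu \rho(e')$ and $e \equiv^H_1 e'$, whence $\rho(e) \equiv^F_{\mu+1} \rho(e')$ by nestedness of the $\equiv^F_\bullet$, giving $e \equiv^H_{\mu+1} e'$. For~\ref{it:2206c}, any two edges $e, e'$ of $H$ satisfy $\rho(e) \equiv^F_m \rho(e')$ automatically; the remaining issue is to show $e \equiv^H_1 e'$ follows from the hypothesis — but wait, that is not automatic, so in fact $\equiv^H_m$ as defined above need \emph{not} have a single class. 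The correct definition is therefore: for $\mu \in [2,m]$, declare $e \equiv^H_\mu e'$ iff $\rho(e) \equiv^F_\mu \rho(e')$, \emph{without} the extra conjunct; with this, axiom~\ref{it:2206c} holds since $\equiv^F_m$ has a single class, and axiom~\ref{it:2206b} for $\mu = 1$ becomes the statement that $e \equiv^H_1 e'$ implies $\rho(e) \equiv^F_2 \rho(e')$, which is exactly the representative-independence observed above (two edges in a common $1$-wagon of $\seq F$ are $\equiv^F_2$-equivalent). Nestedness between consecutive levels $\ge 2$ is then inherited from that of $\equiv^F_\bullet$.

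Finally I would check that $\seq F$ is a subquasitrain of $\seq H$: condition~\ref{it:0033a} is given, and for~\ref{it:0033b} we need, for $e,e' \in E(F)$ and each $\mu$, that $e \equiv^F_\mu e' \Longleftrightarrow e \equiv^H_\mu e'$; since $\rho|_{E(F)}$ is the identity, for $\mu \ge 2$ this is a tautology, for $\mu = 1$ it is the assumption that $(H,\equiv^H_1)$ \emph{extends} the pretrain $(F, \equiv^F_1)$ (Definition~\ref{dfn:1129}\ref{it:spt2}, built into the extension notion), and for $\mu = 0$ both sides say $e = e'$. Uniqueness is the real content to pin down, and I expect it to be the only place requiring care: given any quasitrain structure $(H, \equiv^H_0, \dots, \equiv^H_m)$ making $\seq F$ a subquasitrain, axiom~\ref{it:2206a} forces $\equiv^H_0$, and the hypothesis fixes $\equiv^H_1$. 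For $\mu \ge 2$ one argues as follows: for arbitrary $e, e' \in E(H)$, since $\overline W(e)$ contracts from the $1$-wagon of $e$ in $(H, \equiv^H_1)$, there are edges $f, f' \in E(F)$ with $f \equiv^H_1 e$ and $f' \equiv^H_1 e'$ (take $f = \rho(e)$, $f' = \rho(e')$); by nestedness $e \equiv^H_\mu f$ and $e' \equiv^H_\mu f'$, so $e \equiv^H_\mu e' \Leftrightarrow f \equiv^H_\mu f' \Leftrightarrow f \equiv^F_\mu f'$ (using the subquasitrain property on the pair $f, f' \in E(F)$) $\Leftrightarrow \rho(e) \equiv^F_\mu \rho(e')$, which is precisely our definition. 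Hence $\equiv^H_\mu$ is determined, completing the proof.
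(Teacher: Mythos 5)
Your proposal is correct and follows essentially the same approach as the paper's proof: the paper characterises $\equiv^H_\mu$ for $\mu\ge 2$ by the existential formula ``$\exists e_\star, e_{\star\star}\in E(F)$ with $e'\equiv^H_1 e_\star\equiv^F_\mu e_{\star\star}\equiv^H_1 e''$,'' which is equivalent to your fixed-representative condition $\rho(e')\equiv^F_\mu\rho(e'')$, and the uniqueness argument (forcing $\equiv^H_0$, fixing $\equiv^H_1$, then reducing to representatives in $E(F)$ via nestedness and the subquasitrain property) is the same. The only blemish is cosmetic: the final write-up should state the corrected definition directly rather than presenting the false start and self-correction inline.
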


Notice that in this situation $\seq{H}$ is a $1$-extension of $\seq{F}$.  

\begin{proof}[Proof of Lemma~\ref{lem:1455}]
	Dealing with the uniqueness first, we consider any such quasitrain
	${\seq{H}=(H, \equiv^H_0, \ldots, \equiv^H_m)}$. 
	Definition~\ref{dfn:2206}\ref{it:2206a} determines the relation 
	$\equiv^H_0$. For every $\mu\in [2, m]$ we contend that 
		\begin{equation}\label{eq:1755}
	\forall e', e''\in E(H)\,\,\,\bigl[e'\equiv^H_\mu e'' \,\,\, \Longleftrightarrow \,\,\,
		\exists e_\star, e_{\star\star} \in E(F)\,\,\,\, e'\equiv^H_1 e_\star\equiv^F_\mu 
		e_{\star\star} \equiv^H_1 e''\bigr]\,.
	\end{equation}
		
	Indeed, let $\mu\in [2, m]$ and $ e', e''\in E(H)$. For the backwards implication 
	we just need to observe that 
	because of Definition~\ref{dfn:2206}\ref{it:2206b}
	and Definition~\ref{dfn:0033}\ref{it:0033b} the formula 
	$e'\equiv^H_1 e_\star\equiv^F_\mu e_{\star\star} \equiv^H_1 e''$
	yields $e'\equiv^H_\mu e_\star\equiv^H_\mu e_{\star\star} \equiv^H_\mu e''$,
	whence $e'\equiv^H_\mu e''$.
	
	For the forwards direction, we suppose $e'\equiv^H_\mu e''$ and take two edges 
	$e_\star, e_{\star\star}\in E(F)$ which are in the same $1$-wagons of $H$ 
	as $e'$, $e''$, respectively. Since $(H, \equiv^H_1)$ is an extension 
	of $(F, \equiv^F_1)$,
	Definition~\ref{dfn:1811}\ref{it:1811b} informs us that such edges do indeed exist. 
	Now $e_\star \equiv^H_1 e' \equiv^H_\mu e'' \equiv^H_1 e_{\star\star}$ 
	implies $e_\star\equiv^H_\mu e_{\star\star}$. Due to the fact that $\seq{F}$ is a 
	subquasitrain of $\seq{H}$ we can conclude $e_\star\equiv^F_\mu e_{\star\star}$, 
	and altogether the edges $e_\star$, $e_{\star\star}$ are as desired. 
	This proves~\eqref{eq:1755} and the uniqueness of $\seq{H}$ follows. 
	
	Addressing the claim on the existence of $\seq{H}$ we define $\equiv^H_0$ 
	as demanded by Definition~\ref{dfn:2206}\ref{it:2206a}   
	and for $\mu\in [2, m]$ we define $\equiv^H_\mu$ by~\eqref{eq:1755}. 
	We omit the easy proof that these relations are indeed equivalence relations
	satisfying the clauses~\ref{it:2206b} and~\ref{it:2206c} of Definition~\ref{dfn:2206}.
\end{proof}

The following special cases are going to be important soon. 

\begin{example}\label{exmp:2339}
	Suppose that $\seq{F}=(F, \equiv^F_0, \ldots, \equiv^F_m)$ is an arbitrary 
	ordered quasitrain of height $m$ and that $(H, \equiv^H_1)$ denotes the 
	ordered pretrain obtained from $(F, \equiv^F_1)$ by wagon assimilation 
	as discussed in Example~\ref{exmp:2345}. 
	\index{wagon assimilation}
	Lemma~\ref{lem:1455} leads us to a (unique) ordered quasitrain 
	$\seq{H}=(H, \equiv^H_0, \ldots, \equiv^H_m)$
	of height $m$ which is a $1$-extension of $\seq{F}$. It will be convenient 
	to say that $\seq{H}$ arises from $\seq{F}$ by {\it assimilation of its $1$-wagons}.  
\end{example}

\begin{example}\label{exmp:2340} 
	Consider an ordered hypergraph pair $(X, W)$ such that neither $X$ nor~$W$ 
	has isolated vertices. Given an ordered quasitrain 
	$\seq{F}=(F, \equiv^F_0, \ldots, \equiv^F_m)$ all of whose $1$-wagons are 
	order-isomorphic to $W$ we can form 
	the ordered pretrain 
		\[
		(G, \equiv^G_1)=(F, \equiv^F_1)\ltimes (X, W)
	\]
		as in 
	Definition~\ref{dfn:1529}. 
	Since $(G, \equiv^G_1)$ is an extension 
	of $(F, \equiv^F_1)$, Lemma~\ref{lem:1455} gives rise to a (unique)
	$1$-extension $\seq{G}=(G, \equiv^G_0, \ldots, \equiv^G_m)$ of $\seq{F}$.
	We shall write $\seq{G}=\seq{F}\ltimes (X, W)$ for this construction (see 
	Figure~\ref{fig:FWX}).  
	\index{$(F, \equiv^F)\ltimes (X, W)$}
\end{example}

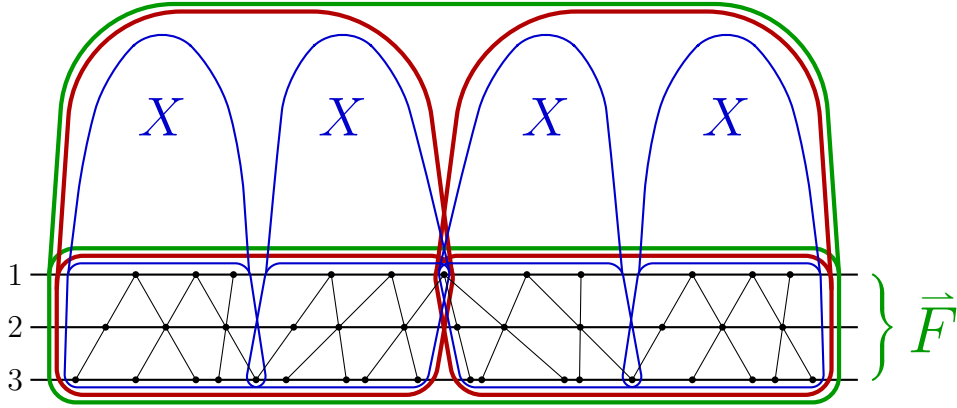
\begin{figure}
\centering
\begin{tikzpicture}[scale=1]
	
	\def\a{1.4}
	\def\b{.7}
	\def\c{0}

\draw [thick] (-5.5,\a) -- (5.5,\a);
\draw [thick] (-5.5,\b)--(5.5,\b);
\draw [thick] (-5.5,\c)--(5.5,\c);

\draw [rounded corners=10, green!60!black,ultra thick] (-5.25, \a+.35)--(-5.25,\c-.3)--(5.25,\c-.3)--(5.25,\a+.35)--cycle;
\draw [rounded corners=40, green!60!black, ultra thick] (-5.25,\a) -- (-5, 5)--(5,5)--(5.25,\a);

\draw [rounded corners=10, red!70!black, ultra thick] (-5.15, \a+.25)--(-5.15,\c-.2)--(-.15,\c-.2)--(.17,\a+.25)--cycle;
\draw [rounded corners=40, red!70!black, ultra thick] (-5.15,\a-.2) -- (-4.9, 4.9)--(-.35,4.9)--(.1,\a+.05);

\draw [rounded corners=10, red!70!black, ultra thick] (5.15, \a+.25)--(5.15,\c-.2)--(.15,\c-.2)--(-.17,\a+.25)--cycle;
\draw [rounded corners=40, red!70!black, ultra thick] (5.15,\a-.2) -- (4.9, 4.9)--(.35,4.9)--(-.1,\a+.05);

\draw [rounded corners=5, blue!80!black, thick] (-5.05,\c-.1)--(-5,\a+.15)--(-2.6,\a+.15)--(-2.35,\c-.1)--cycle;
\draw [rounded corners=15, blue!80!black, thick] (-5, \a) -- (-4.8, 3)-- (-4.5,4.1)--(-3.75,4.8)--(-3,4.1)--(-2.7,3) -- (-2.59, \a);

\draw [rounded corners=5, blue!80!black, thick] (-2.65,\c-.1)--(-2.35,\a+.15)--(.1,\a+.15)--(-.24,\c-.1)--cycle;
\draw [rounded corners=15, blue!80!black, thick] (-2.37, \a) -- (-2.22, 3)-- (-2.1,4.1)--(-1.35,4.8)--(-.6,4.1)--(-.3,3) -- (0.07, \a+.05);

\draw [rounded corners=5, blue!80!black, thick] (2.65,\c-.1)--(2.35,\a+.15)--(-.1,\a+.15)--(.24,\c-.1)--cycle;
\draw [rounded corners=15, blue!80!black, thick] (2.37, \a) -- (2.22, 3)-- (2.1,4.1)--(1.35,4.8)--(.6,4.1)--(.3,3) -- (-0.07, \a+.05);

\draw [rounded corners=5, blue!80!black, thick] (5.05,\c-.1)--(5,\a+.15)--(2.6,\a+.15)--(2.35,\c-.1)--cycle;
\draw [rounded corners=15, blue!80!black, thick] (5, \a) -- (4.8, 3)-- (4.5,4.1)--(3.75,4.8)--(3,4.1)--(2.7,3) -- (2.59, \a);

\draw (-4.9,\c)--(-4.1,\a)--(-3.3,\c);
\draw (-4.1, \c)--(-3.3,\a)--(-2.5,\c)--(-1.5,\a)--(-1.3,\c);
\draw (-3,\c)--(-2.8,\a);
\draw (-2.1,\c)--(-.7,\a)--(-.35,\c);
\draw (-1.05,\c)--(0,\a)--(.35,\c);

\draw (0,\a)--(1.6,\c);
\draw (1.8,\c)--(1.82,\a);

\draw (4.9,\c)--(4.1,\a)--(3.3,\c);
\draw (4.1, \c)--(3.3,\a)--(2.5,\c)--(1.1,\a)--(.5,\c);
\draw (4.4,\c)--(4.6,\a);

\foreach \x in {-4.1, -3.3,-2.8, -1.5,-.7, 0, 1.1, 1.82, 3.3, 4.1, 4.6} 
				\fill (\x,\a) circle (1.3pt);
\foreach \x in {-4.5, -3.7,-2.9, -2,-1.4, -.53, .175, .8, 1.81, 2.9, 3.7, 4.5} 
				\fill (\x,\b) circle (1.3pt);
\foreach \x in {-4.9, -4.1, -3.3,-3, -2.5,-2.1, -1.3,-1.05,-.35, .35,.5,1.6, 1.8, 2.5, 3.3, 4.1,4.4, 4.9} 
				\fill (\x,\c) circle (1.3pt);

\foreach \x in {-3.75, -1.35, 3.75, 1.35}
		\node [blue!80!black] at (\x,3.5) {\LARGE $X$};

\node at (-5.7,\a) {$1$};
\node at (-5.7,\b) {$2$};
\node at (-5.7,\c) {$3$};

\node [green!60!black] at (6.5,\b) {\Huge $\seq{F}$};
\draw [pen colour={green!60!black},decorate, decoration = {calligraphic brace, amplitude = 7pt}, ultra thick] (5.7,\a) --  (5.7,\c);
\end{tikzpicture}

\caption{The train $\harpp{F}\ltimes (X, W)$, where $\harpp{F}$ is $3$-partite and has
the parameter $(\{1, 2\}, \{3\}, \{1\})$. For space reasons, the $3$-partite structure 
of $X$ is not shown in this figure.}
\label{fig:FWX}
\end{figure} 
\begin{lemma}\label{lem:4015}
	If the quasitrain $\seq{H}=(H, \equiv^H_0, \ldots, \equiv^H_m)$ of height $m$
	is a $1$-extension of the quasitrain $\seq{F}=(F, \equiv^F_0, \ldots, \equiv^F_m)$,
	then for every $\mu\in [m]$ the pretrain $(H, \equiv^H_\mu)$ is an extension 
	of $(F, \equiv^F_\mu)$. 
\end{lemma}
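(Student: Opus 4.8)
The plan is to fix an arbitrary $\mu\in[m]$ and to check the three requirements of Definition~\ref{dfn:1811} for the pretrain $(H,\equiv^H_\mu)$ relative to $(F,\equiv^F_\mu)$. First I would observe that $(F,\equiv^F_\mu)$ is indeed a subpretrain of $(H,\equiv^H_\mu)$: the hypergraph $F$ is a subhypergraph of $H$ by Definition~\ref{dfn:0033}\ref{it:0033a}, and specialising Definition~\ref{dfn:0033}\ref{it:0033b} to the single index $\mu$ gives that two edges of $F$ are $\equiv^F_\mu$-equivalent if and only if they are $\equiv^H_\mu$-equivalent, which is precisely clause~\ref{it:spt2} of Definition~\ref{dfn:1129}. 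In particular the vanishing/contraction dichotomy recorded after Definition~\ref{dfn:1129} applies, so each $\mu$-wagon of $\seq{H}$ either meets $E(F)$ trivially or contracts to a unique $\mu$-wagon of $\seq{F}$ with the expected edge set. Condition~\ref{it:1811a} of Definition~\ref{dfn:1811} is immediate, since it refers only to the hypergraphs $F$ and $H$ and is part of the hypothesis that $(H,\equiv^H_1)$ is a $1$-extension of $\seq{F}$ (namely clause~\ref{it:4127a}).

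For condition~\ref{it:1811b} I would use that $\equiv^H_1\subseteq\equiv^H_\mu$ as relations on $E(H)$, which follows by iterating Definition~\ref{dfn:2206}\ref{it:2206b}. Hence the edge set of any $\mu$-wagon $W_\mu$ of $\seq{H}$ is a union of $\equiv^H_1$-classes; picking one such class, the corresponding $1$-wagon $W$ of $\seq{H}$ satisfies $E(W)\subseteq E(W_\mu)$, and $E(W)\cap E(F)\ne\varnothing$ by clause~\ref{it:4127b} of the $1$-extension hypothesis, so that $E(W_\mu)\cap E(F)\ne\varnothing$. Thus every $\mu$-wagon of $\seq{H}$ contracts to a $\mu$-wagon of $\seq{F}$.

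The main point is condition~\ref{it:1811c}, and fortunately it reduces cleanly to the corresponding clause~\ref{it:4127c} at the level of $1$-wagons. Let $W_\mu^\star\ne W_\mu^{\star\star}$ be $\mu$-wagons of $\seq{H}$ contracting to the $\mu$-wagons $\overline{W}_\mu^\star$, $\overline{W}_\mu^{\star\star}$ of $\seq{F}$, so that $E(\overline{W}_\mu^\star)=E(W_\mu^\star)\cap E(F)$ and likewise for $\star\star$. The inclusion $V(\overline{W}_\mu^\star)\cap V(\overline{W}_\mu^{\star\star})\subseteq V(W_\mu^\star)\cap V(W_\mu^{\star\star})$ holds because wagons have no isolated vertices, whence $V(\overline{W}_\mu^\star)=\bigcup_{f\in E(\overline{W}_\mu^\star)}f\subseteq V(W_\mu^\star)$ and similarly on the other side. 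For the reverse inclusion let $x\in V(W_\mu^\star)\cap V(W_\mu^{\star\star})$. Then $x$ lies in an edge of $W_\mu^\star$, hence in some $1$-wagon $X^\star$ of $\seq{H}$ with $E(X^\star)\subseteq E(W_\mu^\star)$, and similarly $x\in V(X^{\star\star})$ for some $1$-wagon $X^{\star\star}$ with $E(X^{\star\star})\subseteq E(W_\mu^{\star\star})$. Since $E(W_\mu^\star)$ and $E(W_\mu^{\star\star})$ are distinct $\equiv^H_\mu$-classes they are disjoint, so $X^\star\ne X^{\star\star}$; by clause~\ref{it:4127b} they contract to $1$-wagons $\overline{X}^\star$, $\overline{X}^{\star\star}$ of $\seq{F}$, which are again distinct. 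Now clause~\ref{it:4127c} gives $V(X^\star)\cap V(X^{\star\star})=V(\overline{X}^\star)\cap V(\overline{X}^{\star\star})$, so $x\in V(\overline{X}^\star)$, i.e.\ $x$ belongs to some $f^\star\in E(\overline{X}^\star)\subseteq E(X^\star)\cap E(F)\subseteq E(W_\mu^\star)\cap E(F)=E(\overline{W}_\mu^\star)$, whence $x\in V(\overline{W}_\mu^\star)$; symmetrically $x\in V(\overline{W}_\mu^{\star\star})$. This establishes~\ref{it:1811c}.

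Having verified all three conditions, $(H,\equiv^H_\mu)$ is by definition an extension of $(F,\equiv^F_\mu)$, and since $\mu\in[m]$ was arbitrary the lemma follows. I do not anticipate any genuine difficulty here; the only thing to be careful about is keeping track of which equivalence relation a given wagon refers to when passing between the $1$-wagon and $\mu$-wagon levels.
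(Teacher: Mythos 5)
Your argument is correct and matches the paper's proof essentially step by step: you verify the three clauses of Definition~\ref{dfn:1811}, reduce clause~\ref{it:1811b} to the corresponding clause at the $1$-wagon level, and for clause~\ref{it:1811c} pass from $\mu$-wagons to the $1$-wagons containing the two chosen edges, apply the $1$-extension hypothesis there, and transfer back via the refinement $\equiv^H_1\subseteq\equiv^H_\mu$. Your explicit check that the two $1$-wagons (and their contractions) are distinct is a detail the paper leaves implicit, but otherwise the arguments coincide.
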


\begin{proof}
	Given $\mu\in [m]$ we check the three conditions in Definition~\ref{dfn:1811}. 
	There is no problem with~\ref{it:1811a}. For~\ref{it:1811b} we consider 
	an arbitrary $\mu$-wagon $W_\mu$ of $\seq{H}$ and take an edge $e\in E(W_\mu)$.
	Since the $1$-wagon of $e$ contracts to $\seq{F}$, there is an edge $e'\in E(F)$
	with $e'\equiv^H_1 e$. In particular, we have $e'\in E(W_\mu)\cap E(F)$, meaning 
	that $W_\mu$ indeed contracts. 
	
	Proceeding with~\ref{it:1811c} we let $\overline{W}'_\mu$, $\overline{W}''_\mu$
	be the contractions of two distinct $\mu$-wagons~$W'_\mu$,~$W''_\mu$ of $\seq{H}$.
	Now 
	$V(\overline{W}'_\mu)\cap V(\overline{W}''_\mu)\subseteq V(W'_\mu)\cap V(W''_\mu)$ 
	is clear and for the reverse inclusion we look at an arbitrary vertex 
	$x\in V(W'_\mu)\cap V(W''_\mu)$. Pick two edges $e'\in E(W'_\mu)$, $e''\in E(W''_\mu)$ 
	with $x\in e'\cap e''$
	and denote the $1$-wagons of $\seq{H}$ these edges belong to by $W'_1$, $W''_1$, 
	respectively. Since $\seq{H}$ is a $1$-extension of $\seq{F}$, the contractions 
	$\overline{W}'_1$, $\overline{W}''_1$ of these wagons satisfy 
		\[
		x\in V(W'_1)\cap V(W''_1)=V(\overline{W}'_1)\cap V(\overline{W}''_1)
	\]
		and, consequently, there exist edges $e_\star\in E(\overline{W}'_1)$,
	$e_{\star\star}\in E(\overline{W}''_1)$ with $x\in e_\star\cap e_{\star\star}$.
	As~$e'$ and~$e_\star$ are in the same $1$-wagon of $\seq{H}$, they are also in the 
	same $\mu$-wagon, for which reason 
	$e_\star\in E(W'_\mu)\cap E(F)=E(\overline{W}'_\mu)$.
	Similarly we have $e_{\star\star}\in E(\overline{W}''_\mu)$
	and altogether $x\in V(\overline{W}'_\mu)\cap V(\overline{W}''_\mu)$
	follows.   
\end{proof}

In general, $1$-extensions of trains are only known to be quasitrains, but they 
may fail to be trains. This is for the reason that two edges of an enlarged $1$-wagon 
might intersect each other in a 'wrong' vertex class. The lemma that follows shows that, 
actually, this is the only obstacle.    

\begin{lemma}\label{lem:4507}
	Suppose that $\seq{F}=(F, \equiv^F_0, \ldots, \equiv^F_m)$ is an $f$-partite 
	train of height $m$ with parameter $\seq{A}=(A_1, \ldots, A_m)$ 
	and that the $f$-partite quasitrain $\seq{H}=(H, \equiv^H_0, \ldots, \equiv^H_m)$
	is a \mbox{$1$-extension} of $\seq{F}$. 
	If every $1$-wagon $X$ of $\seq{H}$ has the property 
	that $e'\cap e''\subseteq V_{A_1}(H)$ holds for any two distinct 
	edges $e', e''\in E(X)$, then $\seq{H}$ is a train with parameter $\seq{A}$.
\end{lemma}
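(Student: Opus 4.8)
The plan is to verify that $\seq{H}$ satisfies clause~\ref{it:3140b} of Definition~\ref{dfn:3140} with respect to the parameter $\seq{A}=(A_1,\dots,A_m)$; since $\seq{H}$ is already an $f$-partite quasitrain of height $m$, this is all that is missing. I would treat the indices $\mu=1$ and $\mu\ge 2$ separately.

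The case $\mu=1$ is essentially the hypothesis: by Definition~\ref{dfn:2206}\ref{it:2206a} every $0$-wagon of $\seq{H}$ is a single edge, so two distinct $0$-wagons $W'$, $W''$ contained in a common $1$-wagon $X$ have the form $W'=(e')^+$ and $W''=(e'')^+$ with distinct $e',e''\in E(X)$, whence $V(W')\cap V(W'')=e'\cap e''\subseteq V_{A_1}(H)$.

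For $\mu\ge 2$ I would first invoke Lemma~\ref{lem:4015}, which tells us that $(H,\equiv^H_\nu)$ is an extension of $(F,\equiv^F_\nu)$ for every $\nu\in[m]$; in particular, by Definition~\ref{dfn:1811}\ref{it:1811b}, every $1$-wagon, $(\mu-1)$-wagon and $\mu$-wagon of $\seq{H}$ contracts to a wagon of $\seq{F}$ (none vanishes). Fix distinct $(\mu-1)$-wagons $W'$, $W''$ of $\seq{H}$ lying inside a common $\mu$-wagon $W_\mu$, and write $\overline{W}'$, $\overline{W}''$, $\overline{W}_\mu$ for their contractions. Since $E(W')$ and $E(W'')$ are disjoint while both $E(\overline{W}')=E(W')\cap E(F)$ and $E(\overline{W}'')=E(W'')\cap E(F)$ are nonempty, $\overline{W}'$ and $\overline{W}''$ are distinct $(\mu-1)$-wagons of $\seq{F}$; moreover $E(\overline{W}')\subseteq E(W_\mu)\cap E(F)=E(\overline{W}_\mu)$, and likewise for $\overline{W}''$, so both are included in $\overline{W}_\mu$. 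Now take an arbitrary $x\in V(W')\cap V(W'')$ and pick edges $e'\in E(W')$, $e''\in E(W'')$ with $x\in e'\cap e''$ (possible since wagons have no isolated vertices); let $X'$, $X''$ be the $1$-wagons of $\seq{H}$ containing $e'$, $e''$. Because $\equiv^H_1$ refines $\equiv^H_{\mu-1}$ and $e'\not\equiv^H_{\mu-1}e''$, we have $X'\ne X''$, so Definition~\ref{dfn:1811}\ref{it:1811c} applied to the extension $(H,\equiv^H_1)$ of $(F,\equiv^F_1)$ gives $x\in V(X')\cap V(X'')=V(\overline{X}')\cap V(\overline{X}'')$, where $\overline{X}'$, $\overline{X}''$ are the contracted $1$-wagons. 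From $E(X')\subseteq E(W')$ we get $E(\overline{X}')\subseteq E(\overline{W}')$ and hence $V(\overline{X}')\subseteq V(\overline{W}')$, and similarly $V(\overline{X}'')\subseteq V(\overline{W}'')$; therefore $x\in V(\overline{W}')\cap V(\overline{W}'')$. Since $\seq{F}$ is a train with parameter $\seq{A}$, Definition~\ref{dfn:3140}\ref{it:3140b} applied to the distinct $(\mu-1)$-wagons $\overline{W}'$, $\overline{W}''$ inside $\overline{W}_\mu$ yields $V(\overline{W}')\cap V(\overline{W}'')\subseteq V_{A_\mu}(F)\subseteq V_{A_\mu}(H)$, so $x\in V_{A_\mu}(H)$. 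This establishes~\ref{it:3140b} for all $\mu$ and finishes the proof.

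The point requiring the most care is that $x$ need not lie in $V(F)$ a priori, because the $1$-wagons of $\seq{H}$ are genuinely larger than those of $\seq{F}$; hence one cannot simply compare $V(W')\cap V(W'')$ with $V(\overline{W}')\cap V(\overline{W}'')$ by restriction. The device that resolves this is to exhibit $x$ as a common vertex of \emph{two distinct} $1$-wagons of $\seq{H}$ and then apply clause~\ref{it:1811c} of the definition of extension at level $1$, which forces any such shared vertex back into $V(F)$ and into the unextended $1$-wagons. It is precisely the fact that $\equiv^H_1$ refines $\equiv^H_{\mu-1}$—so that edges of $W'$ and $W''$ can never share a $1$-wagon—that makes this step legitimate.
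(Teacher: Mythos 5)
Your proof is correct. The case $\mu=1$ is handled in the same way as the paper, and for $\mu\ge 2$ your argument is valid. However, you take a more roundabout route than necessary. The paper simply observes that, by Lemma~\ref{lem:4015}, $(H,\equiv^H_{\mu-1})$ is an extension of $(F,\equiv^F_{\mu-1})$, so that clause~\ref{it:1811c} of Definition~\ref{dfn:1811} applied at level $\mu-1$ \emph{directly} yields $V(W'_{\mu-1})\cap V(W''_{\mu-1})=V(\overline{W}'_{\mu-1})\cap V(\overline{W}''_{\mu-1})$, and then the train property of $\seq{F}$ finishes the job in one line. You instead use Lemma~\ref{lem:4015} only for the contraction statements (clause~\ref{it:1811b}) and then re-derive the vertex identity by descending to the level-$1$ extension and arguing via the refinement $\equiv^H_1\subseteq\equiv^H_{\mu-1}$; this is precisely the calculation that is already carried out inside the proof of Lemma~\ref{lem:4015}\ref{it:1811c}. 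So your argument is sound but essentially inlines a piece of a lemma you have already invoked. The observation you single out at the end — that one must force the shared vertex $x$ into $V(F)$ via distinctness of the two $1$-wagons — is indeed the crux, but it is exactly what Lemma~\ref{lem:4015} has packaged for you, and applying that lemma at level $\mu-1$ avoids having to repeat it.
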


\begin{proof}
	We need to check that Definition~\ref{dfn:3140}\ref{it:3140b} holds for $\seq{H}$. 
	For $\mu=1$ this was stated as a hypothesis, so suppose $\mu\in [2, m]$ from now on. 
	Let $W'_{\mu-1}$ and $W''_{\mu-1}$ be two distinct $(\mu-1)$-wagons of $\seq{H}$
	contained in the same $\mu$-wagon and denote their contractions to $\seq{F}$
	by $\overline{W}'_{\mu-1}$, $\overline{W}''_{\mu-1}$. Lemma~\ref{lem:4015}
	informs us that $(H, \equiv^H_{\mu-1})$ is an extension of $(F, \equiv^F_{\mu-1})$
	and thus we have indeed 
		\[
		V(W'_{\mu-1})\cap V(W''_{\mu-1})
		=
		V(\overline{W}'_{\mu-1})\cap V(\overline{W}''_{\mu-1})
		\subseteq
		V_{A_\mu}(F)
		\subseteq
		V_{A_\mu}(H)\,. \qedhere
	\]
	\end{proof}

Taking a $1$-extension can affect the $\ggth$ of a quasitrain, 
for the new $1$-wagons might contain shorter cycles than the original $1$-wagons. 
The next result shows, that if there are no problems with $1$-wagons, 
then $1$-extensions inherit the $\ggth$ of the original quasitrain.  
 	
\begin{lemma}\label{lem:4508}
	Suppose that $\seq{g}=(g_1, \dots, g_m)\in\gM$ is a sequence of some 
	positive length~$m$, that $\seq{F}=(F, \equiv^F_0, \ldots, \equiv^F_m)$
	is a quasitrain with $\ggth(\seq{F})>\seq{g}$ and that  
	$\seq{H}=(H, \equiv^H_0, \ldots, \equiv^H_m)$ is a $1$-extension of $\seq{F}$.
	If every $1$-wagon~$X$ of $\seq{H}$ satisfies $\gth(X)>g_1$, 
	then $\ggth(\seq{H}) > \seq{g}$. 
\end{lemma}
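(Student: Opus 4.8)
The plan is to prove $\ggth(\seq{H})>\seq{g}$ directly from Definition~\ref{dfn:0027}: I need to show that for every $\mu\in[m]$ and every $\mu$-wagon $Y_\mu$ of $\seq{H}$ one has $\ggth(Y_\mu,\equiv^H_{\mu-1})>g_\mu$. The case $\mu=1$ is exactly the hypothesis: a $1$-wagon $X$ of $\seq{H}$ satisfies $\gth(X)>g_1$, and since the $0$-wagons of a quasitrain are single edges, $\ggth(X,\equiv^H_0)=\gth(X)>g_1$. So the real work is in the range $\mu\in[2,m]$.

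Fix such a $\mu$ and a $\mu$-wagon $Y_\mu$ of $\seq{H}$. By Lemma~\ref{lem:4015} the pretrain $(H,\equiv^H_{\mu-1})$ is an extension of $(F,\equiv^F_{\mu-1})$, and by Lemma~\ref{lem:4015} applied with $\mu$ in place of $\mu-1$ (or directly), $(H,\equiv^H_\mu)$ is an extension of $(F,\equiv^F_\mu)$; hence $Y_\mu$ contracts to a $\mu$-wagon $\overline{Y}_\mu$ of $\seq{F}$, meaning $E(\overline{Y}_\mu)=E(Y_\mu)\cap E(F)\ne\varnothing$ and, by Definition~\ref{dfn:1811}\ref{it:1811c} applied along the contraction, $V(\overline{Y}_\mu)\subseteq V(Y_\mu)$ with the intersection pattern of distinct $(\mu-1)$-subwagons preserved. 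Now $\ggth(\seq{F})>\seq{g}$ gives $\ggth(\overline{Y}_\mu,\equiv^F_{\mu-1})>g_\mu$; that is, the set system of vertex sets of the $(\mu-1)$-wagons of $\seq{F}$ lying inside $\overline{Y}_\mu$ has girth exceeding $g_\mu$. I want to transfer this to the analogous set system for $\seq{H}$ inside $Y_\mu$. The key combinatorial point is that the $(\mu-1)$-wagons of $\seq{H}$ contained in $Y_\mu$ are in bijection (via contraction) with the $(\mu-1)$-wagons of $\seq{F}$ contained in $\overline{Y}_\mu$: each $(\mu-1)$-wagon $W$ of $\seq{H}$ inside $Y_\mu$ contracts to a $(\mu-1)$-wagon $\overline{W}$ of $\seq{F}$ by Lemma~\ref{lem:4015} (extension property \ref{it:1811b}), this assignment is injective because two distinct $(\mu-1)$-wagons of $\seq{H}$ cannot contract to the same one (they are disjoint in edges), and it is surjective because every $(\mu-1)$-wagon of $\seq{F}$ inside $\overline{Y}_\mu$ is the contraction of the $(\mu-1)$-wagon of $\seq{H}$ generated by its edges. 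Crucially, Lemma~\ref{lem:4015} (via Definition~\ref{dfn:1811}\ref{it:1811c} for the extension $(H,\equiv^H_{\mu-1})\supseteq(F,\equiv^F_{\mu-1})$) gives $V(W')\cap V(W'')=V(\overline{W}')\cap V(\overline{W}'')$ for distinct such wagons.

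With this bijection in hand, I apply Fact~\ref{fact:girth-trans} (transitivity of girth) in the following shape. Start from the set system $\mathcal{S}$ whose vertices lie in $V(\overline{Y}_\mu)$ and whose edges are the vertex sets $V(\overline{W})$ of the $(\mu-1)$-wagons $\overline{W}$ of $\seq{F}$ inside $\overline{Y}_\mu$; this system has girth $>g_\mu$. For each such edge $V(\overline{W})$, the corresponding $(\mu-1)$-wagon $W$ of $\seq{H}$ has the same vertex set ($V(W)=V(\overline{W})$, since $V(\overline{W})\subseteq V(W)$ and by \ref{it:1811c} any extra vertex of $W$ would already lie in $V(\overline{W})$ after contraction—this is precisely Lemma~\ref{lem:5210}'s mechanism at level $\mu-1$), but a possibly richer internal structure of $(\mu-2)$-wagons. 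Actually, since I only need to compare the ``$(\mu-1)$-wagon vertex-set'' systems, the cleanest route is: the set system of $V(W)$'s for $\seq{H}$ inside $Y_\mu$ is literally equal, vertex set and edge set, to the set system of $V(\overline{W})$'s for $\seq{F}$ inside $\overline{Y}_\mu$, because of the bijection and the vertex-set equality $V(W)=V(\overline{W})$ and the intersection-pattern equality above. Hence the two systems have the same girth, which exceeds $g_\mu$; by Definition~\ref{dfn:2256} this says $\ggth(Y_\mu,\equiv^H_{\mu-1})>g_\mu$, as required. Combining the cases $\mu=1$ and $\mu\in[2,m]$ yields $\ggth(\seq{H})>\seq{g}$.

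The main obstacle I anticipate is not conceptual but bookkeeping: verifying rigorously that $V(W)=V(\overline{W})$ and that the contraction map on $(\mu-1)$-wagons is a bijection preserving pairwise intersections, purely from Definition~\ref{dfn:1811} applied to the extension $(H,\equiv^H_{\mu-1})\supseteq (F,\equiv^F_{\mu-1})$ furnished by Lemma~\ref{lem:4015}. The vertex-set equality should follow exactly as in the proof of Lemma~\ref{lem:5210} (with $\equiv_{\mu-1}$ in place of $\equiv_1$), using \ref{it:1811a} to rule out isolated vertices and \ref{it:1811c} to locate any vertex of a wagon inside the contracted wagon once it is known to lie in $V(F)$—and membership in $V(F)$ is automatic because the edges of $W$ that survive contraction already span all of $V(W)$... wait, that needs the extension to have the same isolated vertices, which is \ref{it:1811a}, so it is fine. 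I would isolate this as a short preliminary lemma (or just invoke Lemma~\ref{lem:5210} with the relation index shifted) so that the transitivity-of-girth step reads cleanly.
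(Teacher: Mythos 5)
Your plan for $\mu\in[2,m]$ hinges on the claim $V(W)=V(\overline{W})$ for a $(\mu-1)$-wagon $W$ of $\seq{H}$ and its contraction $\overline{W}$, but this is simply false for extensions. Definition~\ref{dfn:1811} is designed precisely to let wagons grow: clause~\ref{it:1811a} fixes only the isolated vertices, and clause~\ref{it:1811c} controls only \emph{intersections} of \emph{distinct} wagons, saying nothing about the extra vertices a single wagon acquires. Concretely, if a $1$-wagon of $\seq{F}$ is the edge $\{a,b\}$ and it is extended in $\seq{H}$ by adding an edge $\{b,d\}$ with a new vertex $d$, then the corresponding $1$-wagon (hence also any $(\mu-1)$-wagon containing it) of $\seq{H}$ has $d$ in its vertex set while its contraction does not. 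Lemma~\ref{lem:5210} does not rescue this: it asserts $e\cap V(W)=e\cap V(F)$ for $e$ an edge of an extended wagon, which says nothing about vertices of $e$ outside $V(F)$. So the two set systems are \emph{not} literally equal, and the ``same girth'' conclusion is unsupported as written.

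The repair is to use only what clause~\ref{it:1811c} actually gives you: the intersections of distinct $(\mu-1)$-wagons of $\seq{H}$ coincide with the intersections of their contractions, and in particular lie in $V(F)$. This is exactly what the paper does: take a putative $n$-cycle $W^1_{\mu-1}q_1\ldots W^n_{\mu-1}q_n$ of $(\mu-1)$-wagons of $\seq{H}$ inside a $\mu$-wagon with $n\in[2,g_\mu]$; by Lemma~\ref{lem:4015} and Definition~\ref{dfn:1811}\ref{it:1811c} the connecting vertices $q_i$ lie in $V(F)$ and the contractions $\overline{W}^i_{\mu-1}$ give an $n$-cycle inside the contracted $\mu$-wagon of $\seq{F}$, contradicting $\ggth(\seq{F})>\seq{g}$. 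No appeal to Fact~\ref{fact:girth-trans} or to the vertex-set equality is needed. Your conclusion that the two set systems have the same girth is in fact true, since intersections determine all cycles, but the justification must go through the cycle pullback rather than the false equality of set systems.
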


\begin{proof}
		Given $\mu\in [m]$ and a $\mu$-wagon $W_\mu$ of $\seq{H}$ we are to prove 
	that $\ggth(W_\mu, \equiv^{W_\mu}_{\mu-1}) >g_\mu$. In the special case $\mu=1$
	this was stated as an assumption, so it remains to consider the case $\mu\in [2, m]$.  
	Assume for the sake of contradiction that
	for some $n\in [2, g_\mu]$ there exists an $n$-cycle of distinct $(\mu-1)$-wagons 
		\[
		\ccC=W^1_{\mu-1}q_1 \ldots W^n_{\mu-1}q_n
	\]
	all of which are contained in $W_\mu$. Owing to Lemma~\ref{lem:4015}  
	these $(\mu-1)$-wagons contract to~$\seq{F}$ and the connecting 
	vertices $q_1, \ldots, q_n$ are in $V(F)$. 
	Therefore, the contraction $\overline{W}_\mu$ of~$W_\mu$ fails to have the 
	property $\ggth(\overline{W}_\mu, \equiv^{\overline{W}_\mu}_{\mu-1})>g_\mu$,
	contrary to $\ggth(\seq{F})>\seq{g}$.
\end{proof}
	
Let us state an immediate consequence of Lemma~\ref{lem:4507} and Lemma~\ref{lem:4508}.
 
\begin{cor}\label{cor:5515}
	If $\seq{g}\in\gM$ is a nonempty sequence, $\seq{F}$ denotes a train of 
	height $|\seq{g}|$ with parameter~$\seq{A}$ and $\ggth(\seq{F})>\seq{g}$,
	and the quasitrain $\seq{H}$ arises from $\seq{F}$ by assimilation of 
	its $1$-wagons, then~$\seq{H}$
	is again a train of height $m$ with parameter $\seq{A}$ 
	and $\ggth(\seq{H})>\seq{g}$. \qed
\end{cor}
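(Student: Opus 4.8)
The plan is to reduce the statement directly to Lemma~\ref{lem:4507} and Lemma~\ref{lem:4508}, both of which concern $1$-extensions of (quasi)trains. By Example~\ref{exmp:2339} the quasitrain $\seq{H}$ obtained from $\seq{F}$ by assimilation of its $1$-wagons is a $1$-extension of $\seq{F}$, and by Lemma~\ref{lem:1455} it has the same height $m=|\seq{g}|$ as $\seq{F}$; moreover, since $F$ is $f$-partite so is $H$ (the $f$-partite analogue of Fact~\ref{exmp:2345}), so $\seq{H}$ is an $f$-partite quasitrain. By construction every $1$-wagon $X$ of $\seq{H}$ is order-isomorphic, via an isomorphism respecting the $f$-partite structure, to the disjoint union $Y$ of all $1$-wagons $W_1$ of $\seq{F}$. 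Writing $\seq{g}=(g_1,\dots,g_m)$, it therefore suffices to check the following two properties of each such $X$: (i) $e'\cap e''\subseteq V_{A_1}(H)$ for any two distinct edges $e',e''\in E(X)$, and (ii) $\gth(X)>g_1$. Given these, Lemma~\ref{lem:4507} yields that $\seq{H}$ is a train with parameter $\seq{A}$, and Lemma~\ref{lem:4508} yields $\ggth(\seq{H})>\seq{g}$.

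For (i), I would transport the question through the $f$-partite isomorphism $X\cong Y$. Two distinct edges of $Y$ either come from distinct $1$-wagons of $\seq{F}$ --- in which case they are disjoint, because $Y$ is a \emph{disjoint} union --- or they lie in a common $1$-wagon $W_1$ of $\seq{F}$. In the latter case, recalling that the $0$-wagons of $\seq{F}$ are single edges (Definition~\ref{dfn:2206}\ref{it:2206a}), clause~\ref{it:3140b} of Definition~\ref{dfn:3140} applied with $\mu=1$ gives $e'\cap e''\subseteq V_{A_1}(F)$. Since the isomorphism maps each vertex class into the corresponding vertex class of $H$, in both cases the intersection of the corresponding edges of $X$ lies in $V_{A_1}(H)$.

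For (ii), I would first note that $\ggth(\seq{F})>\seq{g}$ means, by Definition~\ref{dfn:0027}, that every $1$-wagon $W_1$ of $\seq{F}$ satisfies $\ggth(W_1,\equiv_0^{W_1})>g_1$; since $0$-wagons are single edges, the set system obtained from $(W_1,\equiv_0^{W_1})$ by replacing $0$-wagons by new edges is just $W_1$ itself, so this says simply $\gth(W_1)>g_1$. Now any cycle in a disjoint union of set systems is confined to a single component (consecutive edges of a cycle share a vertex), so $\gth(Y)$ equals the minimum of the girths of the $1$-wagons $W_1$, which exceeds $g_1$; hence $\gth(X)>g_1$. This completes the verification of (i) and (ii), and the two lemmata finish the proof.

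There is essentially no serious obstacle here; the work is purely bookkeeping. The only points that need a little care are (a) ensuring that the isomorphism between a $1$-wagon of $\seq{H}$ and the disjoint union of the $1$-wagons of $\seq{F}$ is compatible with the $f$-partite structure, so that the containment in $V_{A_1}$ transfers, and (b) the elementary observation that girth is the minimum over connected components of a disjoint union. One should also remark at the outset that $\seq{H}$ is an $f$-partite quasitrain --- not yet known to be a train --- since that is a hypothesis of Lemma~\ref{lem:4507}; this is immediate once one knows $H$ is $f$-partite.
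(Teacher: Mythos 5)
Your proof is correct and takes exactly the route the paper intends: the corollary is stated with a \qed precisely because it is meant to fall out of Lemma~\ref{lem:4507} and Lemma~\ref{lem:4508} once their hypotheses are verified, and you verify both (the $A_1$-intersecting condition via Definition~\ref{dfn:3140}\ref{it:3140b} at $\mu=1$, the girth condition via Definition~\ref{dfn:0027} at $\mu=1$ together with the fact that cycles in a disjoint union live in a single component). Your care about the $f$-partite compatibility of the isomorphism and about the $1$-extension status coming from Example~\ref{exmp:2339} is appropriate bookkeeping; the whole thing is what the paper calls "an immediate consequence."
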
 	

We conclude this subsection with a brief discussion of disjoint unions of quasitrains. 
Suppose first that $\{\seq{G}_j\colon j\in J\}$ is a family of mutually 
vertex-disjoint $f$-partite quasitrains of the same height $m\in \NN$, 
say $\seq{G}_j=(G_j, \equiv^j_0, \ldots, \equiv^j_m)$ for every $j\in J$. 
By the {\it union} of this family we mean the $f$-partite quasitrain 
$\seq{G}=(G, \equiv^G_0, \ldots, \equiv^G_m)$ with 
\[
	V(G)=\bigdcup_{j\in J}V(G_j)
	\quad \text{ and } \quad
	E(G)=\bigdcup_{j\in J}E(G_j)
\]
whose equivalence relations are defined as follows. 
\begin{enumerate}
	\item[$\bullet$] If $e', e''\in E(G)$ and $\mu\in [0, m)$, then $e'\equiv^G_\mu e''$ 
				means that there is an index $j\in J$ with $e', e''\in E(G_j)$
				and $e'\equiv^j_\mu e''$. 
	\item[$\bullet$] Moreover, $e'\equiv^G_m e''$ holds for all edges $e', e'' \in E(G)$.
\end{enumerate}
One checks immediately that $\seq{G}$ is indeed a quasitrain of height $m$ and 
that $\seq{G}_j$ is a subquasitrain of $\seq{G}$ for every $j\in J$. If the 
quasitrains~$\seq{G}_j$ are ordered, we order the vertex classes of $\seq{G}$ in such a 
way that $\seq{G}_j$ is an ordered subquasitrain of $\seq{G}$ for every $j\in J$. 

If a family ${\mathfrak G}=\{\seq{G}_j\colon j\in J\}$ of not necessarily 
vertex-disjoint ordered quasitrains of the same height $m$ is given, we can take a
family of mutually vertex-disjoint ordered 
quasitrains $\{\seq{G}^\star_j\colon j\in J\}$ 
such that $\seq{G}^\star_j$ is order-isomorphic to $\seq{G}_j$ for every~$j\in J$,
and then we can form its union $\seq{G}$ as explained above. 
In this situation $\seq{G}$ is called the {\it disjoint union} of the 
family~$\mathfrak G$. 
\index{disjoint union (of quasitrains)}
One readily confirms that if all 
members of~$\mathfrak G$ are trains with the same parameter~$\seq{A}$, then~$\seq{G}$
is a train with parameter~$\seq{A}$ as well. Moreover, if $\seq{g}\in\gM$ is 
a sequence of length $m$ and $\ggth(\seq{G}_j) >\seq{g}$ holds for every~$j\in J$, 
then $\ggth(\seq{G}) >\seq{g}$ follows. 

We conclude this discussion with an easy fact on disjoint unions that will 
help us in Section~\ref{subsec:0136} to analyse the $\GTH$ of train picture zero.  

\begin{fact}\label{f:picnull}
	Suppose that a sequence $\seq{g}\in \gM_\le^\times$
	and a quasitrain $\seq{F}$ of height $|\seq{g}|$ satisfy $\ggth(\seq{F})>\seq{g}$. 
	If~$\ccP$ denotes a set of mutually vertex-disjoint copies of $\seq{F}$ and the 
	quasitrain~$\seq{P}$ is their union, then $\GTH(\seq{P}, \ccP^+)>\seq{g}$.    
\end{fact}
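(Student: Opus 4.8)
The plan is to reduce the claim to the case $\ccP=\{\seq{F}\}$, i.e.\ to the statement that $\GTH(\seq{F}, \{\seq{F}\}^+)>\seq{g}$ for a single quasitrain $\seq{F}$ with $\ggth(\seq{F})>\seq{g}$, and then to deal with disjointness separately. For a single copy $\seq{F}$, unravelling Definition~\ref{dfn:0115} we must show $\GTH(F, \equiv^F_{\mu-1}, \{\seq{F}\}^+)>g_\mu$ for each $\mu\in[m]$ and $\GTH(F, \equiv^F_m, \{\seq{F}\}^+)>1$, where $m=|\seq{g}|$. The last condition holds for the trivial reason that $\seq{F}$ itself is a supreme copy of every big cycle in which it appears; and any acceptable big cycle in $\bigl(F, \equiv^F_{\mu-1}, \{\seq{F}\}^+\bigr)$ either involves the real copy $\seq{F}$—in which case $\seq{F}$ is a supreme copy of it—or consists entirely of edge copies, and then one can invoke the correspondence between ``acceptable big cycle consisting solely of edge copies'' and ``semitidy cycle in the hypergraph'' together with Lemma~\ref{lem:2310}: since $\ggth(\seq{F})>\seq{g}$ forces $\ggth(F, \equiv^F_{\mu-1})>g_\mu$ by Lemma~\ref{lem:1557} (here the nondecreasing hypothesis $\seq{g}\in\gM_\le^\times$ is used), we get $\GTH\bigl(F, \equiv^F_{\mu-1}, E^+(F)\bigr)>g_\mu$, so such an all-edge-copy cycle already has a supreme copy. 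This settles the single-copy case.

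Next I would handle the disjoint union. Write $\ccP=\{\seq{F}_1,\dots,\seq{F}_N\}$ with the $\seq{F}_j$ pairwise vertex-disjoint, and let $\seq{P}=(P,\equiv^P_0,\dots,\equiv^P_m)$ be their union as defined just before Fact~\ref{f:picnull}. The key structural observation is that, by construction of the union, for every $\mu\in[0,m-1]$ every $\mu$-wagon of $\seq{P}$ is entirely contained in a single copy $\seq{F}_j$ (two edges of $P$ can be $\equiv^P_\mu$-equivalent for $\mu<m$ only if they lie in the same $\seq{F}_j$). Consequently, for $\mu\le m-1$, a big cycle $\ccC=G_1q_1\dots G_nq_n$ in $\bigl(P,\equiv^P_{\mu-1},\ccP^+\bigr)$ cannot ``jump'' between distinct copies: each vertex connector $q_i$ lies in $V(F_i)\cap V(F_{i+1})$, each $\mu{-}1$-wagon connector lies inside one copy, and each edge copy lies inside one copy, so by vertex-disjointness all copies $G_1,\dots,G_n$ and all connectors belong to one and the same $\seq{F}_j$. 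Thus $\ccC$ is (after identifying $\seq{P}$-wagon connectors with the corresponding $\seq{F}_j$-wagons, exactly as in the proof of Claim~\ref{clm:1327}) an acceptable big cycle in $\bigl(F_j,\equiv^{F_j}_{\mu-1},\{\seq{F}_j\}^+\bigr)$ of the same order, and the single-copy analysis above produces a supreme copy of $\ccC$. For the condition $\GTH(P,\equiv^P_m,\ccP^+)>1$ at the top level $\mu=m$, where $\equiv^P_m$ identifies all edges of $P$, I would argue directly: any acceptable big cycle of order $1$ is of the form $G_1xG_2W$ with $x$ a vertex and $W$ the unique $m$-wagon $P$; since the copies are disjoint, $x\in V(F_i)\cap V(F_{i+1})$ forces $G_1,G_2$ to lie in the same copy $\seq{F}_j$, and if one of them is real it is a supreme copy (after exhibiting the required edge through $x$ in some wagon of $\seq{F}_j$, which exists because wagons have no isolated vertices and $x$ is non-isolated in $F_j$), while the case of two edge copies is exactly the one ruled out of acceptability by \ref{it:A1}.

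The one point demanding care—and the place where I expect most of the routine-but-nontrivial bookkeeping to sit—is verifying that the ``restriction'' of an acceptable big cycle $\ccC$ in $(\seq{P},\ccP^+)$ to the ambient system $(\seq{F}_j,\{\seq{F}_j\}^+)$ is again acceptable, and conversely that a supreme copy found there genuinely witnesses supremacy back in $(\seq{P},\ccP^+)$. The forward direction (acceptability is inherited) is straightforward: conditions \ref{it:A1}–\ref{it:A3} only reference copies, connectors, vertices and edges that—by the no-jumping observation—all live inside $\seq{F}_j$, and no edge of $P$ outside $\seq{F}_j$ can contain two vertices of $F_j$ by disjointness, so the moreover-part of \ref{it:A2} and condition \ref{it:A3} transfer verbatim. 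The backward direction—that a family of $F_j$-pieces exemplifying supremacy inside $\seq{F}_j$ still does so inside $\seq{P}$—needs only that long pieces of $\seq{F}_j$ remain pieces of $\seq{P}$ (true, since a $1$-wagon of $\seq{F}_j$ sits inside a $1$-wagon of $\seq{P}$ and these coincide for indices $<m$; at $\mu=m$ there are no long pieces to worry about because every acceptable order-$1$ cycle handled there collapses to a short piece) and that clause \ref{it:1751b}\ref{it:beta} is preserved, which again follows from disjointness as above. Finally I would note that since $\seq{P}$ is a disjoint union of linear quasitrains (each $\seq{F}_j$ is linear because $\ggth(\seq{F})>(2)^m$), $\seq{P}$ is itself linear, so the linearity requirement in $\GTH>\seq{g}$ is met, completing the proof of Fact~\ref{f:picnull}.
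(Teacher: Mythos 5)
Your proposal is correct and takes essentially the same approach as the paper's proof: establish the $\GTH$ bounds for a single copy via Lemma~\ref{lem:1557} and Lemma~\ref{lem:2310}, observe that big cycles in the disjoint union cannot jump between copies, and treat the top-level condition $\GTH(P,\equiv^P_m,\ccP^+)>1$ separately. The only stylistic difference is that the paper invokes the variant $\GTH\bigl(F,\equiv^F_{\mu-1}, E^+(F)\cup\{(F,\equiv^F_{\mu-1})\}\bigr)>g_\mu$ provided directly by Lemma~\ref{lem:2310}, which handles the ``cycle involves a real copy'' and ``all edge copies'' cases in one stroke, whereas you split them by hand; and the paper dispatches the $\mu=m$ case by citing Lemma~\ref{lem:GTH1} rather than reproving its content.
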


\begin{proof}
	Writing $\seq{g}=(g_1, \dots, g_m)$	 we shall prove first  
		\begin{equation}\label{eq:n113}
		\GTH(P, \equiv^P_{\mu-1}, \ccP^+)>g_\mu
		\quad \text{ for every $\mu\in [m]$.}
	\end{equation}
		To this end we recall that Lemma~\ref{lem:1557} 
	tells us $\ggth(F, \equiv^F_{\mu-1}) >g_\mu$,
	which due to Lemma~\ref{lem:2310} implies
	$\GTH\bigl(F, \equiv^F_{\mu-1}, E^+(F)\cup\{(F, \equiv^F_{\mu-1})\}\bigr) > g_\mu$.
	Since big cycles of the pretrain system $(P, \equiv^P_{\mu-1}, \ccP^+)$ cannot 
	jump from one copy in $\ccP$ to another copy, this confirms~\eqref{eq:n113}.
	
	Moreover, Lemma~\ref{lem:GTH1} immediately implies $\GTH(P, \equiv^P_m, \ccP^+)>1$.
\end{proof}

\subsection{A generalised extension lemma}
\label{subsec:0135}

The proof of Proposition~\ref{prop:0142} starts with the assumption that for 
some sequence $\seq{g}=(g_1, \dots, g_m)\in\gM_\le^\times$ we have a construction 
$\Psi^{\seq{g}}$ exemplifying $\karo_{\seq{g}}$. The extension process then gives 
rise to a construction $\ups=\Ext(\Omega^{(2)}, \Psi^{\seq{g}})$, which is applicable
to trains of height $m+1$ whose $\ggth$ exceeds $(2)\circ\seq{g}$. 
Roughly speaking, this construction starts by applying $\Omega^{(2)}$ to the 
(assimilated) $1$-wagons and forming a disjoint union over all possible extensions.
\index{$\Omega^{(2)}$} 
We then apply $\Psi^{\seq{g}}$ to an auxiliary train of height~$m$ with ``larger edges'', and insert the earlier $1$-wagons back into the current edges.

We proceed with a full description as to how this construction 
is carried out. 
Suppose that $\seq{F}=(F, \equiv^F_0, \ldots, \equiv^F_{m+1})$ is an 
ordered $f$-partite train of height~$m+1$ with parameter 
$\seq{A}=(A_1, \ldots, A_{m+1})$ 
satisfying $\ggth(\seq{F}) > (2)\circ\seq{g}$, and that $r$ signifies 
a number of colours.  
Now the train system $\ups_r(\seq{F})=(\seq{H}, \ccH)$ is constructed
by means of the following eight steps paralleling the discussion in~\S\ref{sssec:lp}. 

\begin{enumerate}[label=\nlabel]
	\item\label{it:am1} Let $(\wh{F}, \equiv^{\wh{F}}_0, \ldots, \equiv^{\wh{F}}_{m+1})$ 
		be obtained from $\seq{F}$ by assimilating its $1$-wagons.  
		Due to Corollary~\ref{cor:5515} this is a train of height $m+1$ with 
		parameter $\seq{A}$ containing a standard copy of $\seq{F}$ and satisfying 
		$\ggth(\wh{F}, \equiv^{\wh{F}}_0, \ldots, \equiv^{\wh{F}}_{m+1})
		>(2)\circ\seq{g}$. 
		Let $W$ denote an ordered $f$-partite hypergraph all $1$-wagons of this train 
		are isomorphic to. Notice that $W$ is a linear $A_1$-intersecting hypergraph
		without isolated vertices.
	\item\label{it:am2} Construct $\Omega^{(2)}_r(W)=(X, \ccX)$ and assume, without loss 
		of generality, that~$X$ has no isolated vertices. Now $X$ is an ordered $f$-partite 
		hypergraph which is linear and $A_1$-intersecting (by 
		Proposition~\ref{prop:1738}, 
		Corollary~\ref{cor:0059}, and
		Lemma~\ref{lem:n383}).
		Furthermore, Corollary~\ref{cor:2201} entails $\Gth(X, \ccX^+)>2$.  
	\item\label{it:am3} Construct the family 
				\[
				{\mathfrak G}
				=
				\bigl\{
				(\wh{F}, \equiv^{\wh{F}}_0, \ldots, \equiv^{\wh{F}}_{m+1}) 
				\ltimes (X, W_\star)
				\colon 
				W_\star\in\ccX\bigr\}
		\]
				of ordered $f$-partite quasitrains as in Example~\ref{exmp:2340}.  
		By Lemma~\ref{lem:4507} and Lemma~\ref{lem:4508} every 
		$\seq{G}_\star\in{\mathfrak G}$ is a train with parameter $\seq{A}$ 
		that satisfies $\ggth(\seq{G}_\star) >(2)\circ\seq{g}$. 
		
		Let $\seq{G}=(G, \equiv^G_0, \ldots, \equiv^G_{m+1})$ be the disjoint union 
		of the trains in $\mathfrak G$ as defined at the end of \S\ref{sssec:3136}.
		Thus $\seq{G}$ is a train with parameter $\seq{A}$ 
		satisfying $\ggth(\seq{G})>(2)\circ\seq{g}$
		and all $1$-wagons of $\seq{G}$ are order-isomorphic to $X$.
		Moreover, $\seq{G}$ contains $|\ccX|$ standard copies of $(F, \equiv^F)$.		
	\item\label{it:am4} Let $I$ be the index set of the given train $\seq{F}$ 
		and define the function $x\colon I\longrightarrow \NN$ by 
		$x(i)=|V_i(X)|$ for every $i\in I$. Let $M$ be the ordered $x$-partite 
		hypergraph with $V(M)=V(G)$ whose edges correspond to the wagons 
		of $(G, \equiv_1^G)$ (so that $G$ is living in $M$). 
		Moreover, let $\seq{M}=(M, \equiv^M_0, \ldots, \equiv^M_m)$ be the train 
		of height $m$ whose $\mu$-wagons correspond to the $(\mu+1)$-wagons of~$\seq{G}$. 
		In more precise terms, this means that 
		for every $\mu\in [0, m]$ the pretrain $(G, \equiv^G_{\mu+1})$ is derived 
		from $(M, \equiv^M_\mu)$.
		We remark that $\seq{M}$ has the parameter 
		$\seq{A}_\bullet = (A_2, \ldots, A_{m+1})$ 
		and satisfies $\ggth(\seq{M})>\seq{g}$.   
	\item\label{it:am5} Construct the ordered $x$-partite train system
				\[
			\Psi^{\seq{g}}_{r^{e(X)}}(\seq{M})
			=
			(\seq{N}, \ccN)=(N, \equiv^N_0, \ldots, \equiv^N_m, \ccN)
		\]
		of height $m$. Due to $\karo_{\seq{g}}$ the copies of this system are strongly 
		induced and we have $\GTH(\seq{N}, \ccN^+)>\seq{g}$. 
		Moreover, the parameter of $\seq{N}$ is again $\seq{A}_\bullet$. 
	\item\label{it:am6} Let $H$ be the ordered $f$-partite hypergraph obtained from 
		$N$ by inserting ordered copies of $X$ into its edges. We endow $H$ with the 
		following train structure 
				\[
			\seq{H}=(H, \equiv^H_0, \ldots, \equiv^H_{m+1})
		\]
		of height $m+1$. 
		For $\mu\in [m+1]$ we declare two edges $e', e''\in E(H)$ to be in the 
		same $\mu$-wagon of $\seq{H}$ if the edges $f', f''\in E(N)$ 
		with $e'\subseteq f'$ and 
		$e''\subseteq f''$ satisfy $f'\equiv^N_{\mu-1} f''$. In other words, 
		we demand that the pretrain $(H, \equiv^H_\mu)$ be derived 
		from $(N, \equiv^N_{\mu-1})$. Recall that the $0$-wagons 
		of $\seq{H}$ need to be determined according to 
		Definition~\ref{dfn:2206}\ref{it:2206a}. 
	\item\label{it:am7} Every copy $\seq{M}_\star\in \ccN$ gives rise to a derived 
		copy $\seq{G}_\star\in\binom{\seq{H}}{\seq{G}}$ and we write $\ccH_\bullet$
		for the system of all $|\ccN|$ copies arising in this manner. Each member 
		of $\ccH_\bullet$ con\-tains~$|\ccX|$ standard copies of $\seq{F}$. Let 
		$\ccH\subseteq \binom{\seq{H}}{\seq{F}}$ denote the system of all these copies. 
	\item\label{it:am8} Finally, we set $\ups_r(\seq{F})=(\seq{H}, \ccH)$.
\end{enumerate}

The next result summarises all properties of this construction we shall need 
in the sequel. 
 
\begin{lemma}\label{lem:1904}
	Assume $\karo_{\seq{g}}$ for some sequence $\seq{g}\in\gM^\times_\le$ of length $m$.
	There exists a construction~$\ups$ which generates for every 
	ordered $f$-partite train $\seq{F}$ of height~$m+1$ 
	with $\ggth(\seq{F})>(2)\circ\seq{g}$ and every number of colours $r$ 
	a train system $\ups_r(\seq{F})=(\seq{H}, \ccH)$ with the same parameter 
	as~$\seq{F}$ and strongly induced copies such that $\ccH\lra(\seq{F})_r$
	and $\GTH(\seq{H}, \ccH^+)>\seq{g}$. 
\end{lemma}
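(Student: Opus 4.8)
The construction $\ups$ is precisely the eight-step procedure \ref{it:am1}\,--\,\ref{it:am8} described immediately above, and the proof consists in checking that it has the four advertised features. Assume first that $\seq F$ has no isolated vertices (the general case is handled, as in Lemma~\ref{lem:0120}, by stripping the isolated vertices off $\seq F$, running the construction, and giving each copy in $\ccH$ its own isolated vertices back at the end) and in Step~\ref{it:am2} arrange that $X$ has none either. First I would verify that every invocation of an earlier construction in the eight steps is legitimate: the passage to $\wh F$ in Step~\ref{it:am1} is justified by Corollary~\ref{cor:5515}; $\Omega^{(2)}$ in Step~\ref{it:am2} applies to any hypergraph, and by Proposition~\ref{prop:1738}, Corollary~\ref{cor:0059}, Lemma~\ref{lem:n383}, and Corollary~\ref{cor:2201} the result $X$ is linear, $A_1$-intersecting, and has $\Gth(X,\ccX^+)>2$. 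Hence every $1$-wagon of the trains $\wh F\ltimes(X,W_\star)$ in Step~\ref{it:am3} is a copy of $X$, so it has girth larger than $2$ and is $A_1$-intersecting; Lemma~\ref{lem:4507} and Lemma~\ref{lem:4508} then make each $\seq G_\star\in\mathfrak G$, and hence the disjoint union $\seq G$, a train with parameter $\seq A$ satisfying $\ggth(\seq G)>(2)\circ\seq g$. Step~\ref{it:am4} records that $\seq M$ has parameter $\seq A_\bullet=(A_2,\dots,A_{m+1})$ and $\ggth(\seq M)>\seq g$, so $\Psi^{\seq g}$ applies in Step~\ref{it:am5}, and $\karo_{\seq g}$ gives that $(\seq N,\ccN)$ has strongly induced copies, satisfies $\GTH(\seq N,\ccN^+)>\seq g$, and keeps the parameter $\seq A_\bullet$. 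Since $\ggth(\seq N)>\seq g$ (Corollary~\ref{cor:1921}, using $E^+(N)\subseteq\ccN^+$) forces $\gth(N)>2$ (Lemma~\ref{lem:1557}) and $X$ is linear, the hypergraph $H$ of Step~\ref{it:am6} is linear and lives in $N$, with $(H,\equiv^H_\mu)$ derived from $(N,\equiv^N_{\mu-1})$ for every $\mu\in[m+1]$. Because $\equiv^N_0$ has singleton wagons (Definition~\ref{dfn:2206}\ref{it:2206a} for $\seq N$), the $1$-wagons of $\seq H$ are exactly the inserted copies of $X$, hence $A_1$-intersecting; and for $\mu\in[2,m+1]$ the $\mu$- and $(\mu-1)$-wagons of $\seq H$ are derived from those of $\seq N$ with unchanged vertex sets, while $V_{A_\mu}(H)=V_{A_\mu}(N)$, so Definition~\ref{dfn:3140}\ref{it:3140b} for $\seq N$ transfers verbatim and $\seq H$ is a train with parameter $\seq A$.

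The Ramsey property $\ccH\lra(\seq F)_r$ is proved exactly as in Lemma~\ref{lem:0059}. Given a colouring $\gamma\colon E(H)\lra[r]$, each $1$-wagon of $\seq H$ is a copy of $X$ and so receives one of $r^{e(X)}$ colour patterns; this induces a colouring of $E(N)$ with $r^{e(X)}$ colours, and $\ccN\lra(\seq M)_{r^{e(X)}}$ yields a copy $\seq M_\star\in\ccN$ all of whose edges carry one common pattern $\delta\colon E(X)\lra[r]$. By $\ccX\lra(W)_r$ there is a copy $W_\star\in\ccX$ monochromatic under $\delta$, and the copy of $\seq G$ derived from $\seq M_\star$ contains a copy of $\wh F\ltimes(X,W_\star)$ whose standard copy of $\seq F$ has all its edges in $W_\star$; these edges are monochromatic under $\gamma$, and the copy lies in $\ccH$.

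Finally I would establish that the copies in $\ccH$ are strongly induced and that $\GTH(\seq H,\ccH^+)>\seq g$, which is where the preparation of Section~\ref{sec:1912} and \S\ref{sssec:3136} is used. Write $\ccH_\bullet$ and $\ccH$ as in Step~\ref{it:am7}. Each copy of $\seq G$ in $\ccH_\bullet$ is the disjoint union of the copies of $\wh F\ltimes(X,W_\star)$, and since $W_\star\Str X$, Lemma~\ref{lem:5757} makes each such copy a tame extension (at the $\equiv_1$-level) of the corresponding standard copy of $\seq F$; because $\equiv_1$ refines every $\equiv_\mu$ (Definition~\ref{dfn:2206}\ref{it:2206b}), this tameness persists at the $\equiv_\mu$-level for all $\mu\in[1,m+1]$, so for every $\mu\in[1,m]$ (where $(G,\equiv^G_\mu)$ genuinely splits as this disjoint union of pretrains) the system $\ccH$ is scattered in $\ccH_\bullet$. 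Now $\GTH(\seq N,\ccN^+)>\seq g$ unpacks to $\GTH(N,\equiv^N_{\mu-1},\ccN^+)>g_\mu$ for $\mu\in[m]$ and $\GTH(N,\equiv^N_m,\ccN^+)>1$; applying Lemma~\ref{lem:n023} to $H$ living in $N$ with $(H,\equiv^H_\mu)$ derived from $(N,\equiv^N_{\mu-1})$ gives $\GTH(H,\equiv^H_\mu,\ccH_\bullet^+)>g_\mu$ for $\mu\in[m]$ and $\GTH(H,\equiv^H_{m+1},\ccH_\bullet^+)>1$, while Fact~\ref{f:strder} shows the copies in $\ccH_\bullet$ are strongly induced; transitivity of strong inducedness together with the tameness above gives the same for $\ccH$. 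Lemma~\ref{lem:n540}, applied at each level $\mu\in[1,m]$, then carries the bounds $\GTH(H,\equiv^H_\mu,\ccH_\bullet^+)>g_\mu$ down to $\GTH(H,\equiv^H_\mu,\ccH^+)>g_\mu$. At the top level $\mu=m+1$ the relation $\equiv^H_{m+1}$ has a single wagon, scattering does not apply, but since the copies in $\ccH$ have no isolated vertices, Lemma~\ref{lem:GTH1} directly yields $\GTH(H,\equiv^H_{m+1},\ccH^+)>1$. By Definition~\ref{dfn:0115} (read with $\ell=2$, reflecting that nothing is claimed at level $\equiv^H_0$) these facts together say exactly $\GTH(\seq H,\ccH^+)>\seq g$.

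The only real difficulty is the level bookkeeping in the last paragraph: one has to keep the heights of $\seq F$, $\seq G$, $\seq M$, $\seq N$ and $\seq H$ aligned with the re-indexing that turns a length-$m$ sequence $\seq g$ into the entries $(g_2,\dots,g_{m+1})$ appearing in the $\GTH$-condition for the height-$(m+1)$ train $\seq H$, to notice that the top level $\equiv^H_{m+1}$ must be treated separately from the derivation–scattering chain, and to confirm that the hypotheses of Lemmata~\ref{lem:n023}, \ref{lem:n540} and \ref{lem:5757} really hold at each of the $m+1$ equivalence relations rather than just the first. The remaining parts — well-definedness, the train structure, and the Ramsey property — are routine transcriptions of the pretrain arguments in Section~\ref{subsec:EP} and Section~\ref{sec:1912}.
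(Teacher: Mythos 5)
Your proof is correct and follows the same overall architecture as the paper's: well-definedness via Steps~\ref{it:am1}\,--\,\ref{it:am8}, the Ramsey property via the Lemma~\ref{lem:0059} pattern, strong inducedness via Fact~\ref{f:strder} and Lemma~\ref{lem:5757}, and the $\GTH$ bounds for $\mu\in[m]$ via the derivation chain Lemma~\ref{lem:n023} followed by the scattering argument Lemma~\ref{lem:n540}. Your explicit verification that $\seq{A}$ parametrises $\seq{H}$ is a helpful addition, since the paper treats this as implicit in Step~\ref{it:am6}.

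The one place where you genuinely diverge from the paper is the top-level claim $\GTH(H,\equiv^H_{m+1},\ccH^+)>1$. You reduce at the outset to the case where $\seq F$ has no isolated vertices (citing the reduction technique from Lemma~\ref{lem:0120}), arrange that $X$ has none either, and then note that Lemma~\ref{lem:GTH1} is immediate because the unique $(m+1)$-wagon is all of $E(H)$ and no copy in $\ccH$ has an isolated vertex. This works: adding private isolated vertices back at the end cannot create new big cycles (a private isolated vertex lies in only one copy and in no edge, so it can serve neither as a vertex connector nor affect any wagon), preserves the parameter, and preserves strong inducedness. The paper instead attacks the general case directly. It first records two auxiliary facts---that isolated vertices of a standard copy $F_\star$ remain isolated in the ambient $\seq{G}$ (eq.~\eqref{eq:n1056}), and that $\GTH(H,\equiv^H_{m+1},\ccH_\bullet^+)>1$ holds (eq.~\eqref{eq:n1058}, via a further application of Lemma~\ref{lem:n023} at level $m+1$)---and then deduces that two distinct copies of $\ccH^+$ sharing an isolated vertex would force them into the same member of $\ccH_\bullet$, contradicting the disjoint-union construction of $\seq{G}$. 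Your shortcut buys a considerably shorter final paragraph; the paper's direct argument keeps $\ups$ literally equal to the eight-step construction, which is a little cleaner but makes no substantive difference, since all downstream uses of the lemma only invoke the four stated properties of the output.
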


It might be helpful to point out that the conclusion of this lemma implies
\begin{equation*}		\ggth(\seq{H})>(2)\circ\seq{g}\,.
\end{equation*}
This is because $\GTH(\seq{H}, \ccH^+)>\seq{g}$ presupposes that $H$ is linear,
whence all $1$-wagons of~$H$ are linear. Moreover, for $\mu\in [2, m+1]$ the required 
statement on cycles of $(\mu-1)$-wagons within the same $\mu$-wagon follows 
from $\GTH(\seq{H}, E^+(H))>\seq{g}$ via Lemma~\ref{lem:2310}.
	
\begin{proof}[Proof of Lemma~\ref{lem:1904}]
	Given a train $\seq{F}=(F, \equiv^F_0, \ldots, \equiv^F_{m+1})$ of height $m+1$ 
	with parameter~$\seq{A}$ such that $\ggth(\seq{F}) > (2)\circ\seq{g}$ and a 
	number of colours $r$ we follow the above Steps~\ref{it:am1}\,--\,\ref{it:am8} 
	and construct the train system $\ups_r(\seq{F})=(\seq{H}, \ccH)$.  
	
	Lemma~\ref{lem:0059} (applied to $\Omega^{(2)}$ and $\Psi^{\seq{g}}$ here in place 
	of $\Phi$ and $\Psi$ there) yields the partition relation $\ccH\lra(\seq{F})_r$.  
	Thus it remains to prove 
		\begin{enumerate}[label=\alabel]
		\item\label{it:tea} that the copies in $\ccH$ are strongly induced,
		\item\label{it:teb} and $\GTH(\seq{H}, \ccH^+)>\seq{g}$.
	\end{enumerate} 
		
	We begin by taking a closer look at the train $\seq{G}$ constructed 
	in Step~\ref{it:am3}. Let $\ccS_G$ be the system consisting of the $|\ccX|$ 
	standard copies of $\seq{F}$ in $\seq{G}$. Each of them is contained 
	in a unique train of the 
	form $\seq{G}_\star=(\wh{F}, \equiv^{\wh{F}}_0, \ldots, \equiv^{\wh{F}}_{m+1})\ltimes (X, W_\star)$, where $W_\star\in\ccX$.\footnote[1]{For the sake of transparency we are 
	ignoring here the isomorphism implied in the formation of the disjoint union.} 	
	Since the copies in $\ccX$ are strongly induced, Lemma~\ref{lem:5757} 
	informs us that $(G_\star, \equiv^{G_\star}_1)$ is a tame extension 
	of $(F, \equiv^F_1)$. In particular, 
		\begin{equation}\label{eq:n1053}
		\text{ the system $(G, \ccS_G)$ has strongly induced copies.}
	\end{equation}
		
	Moreover, for every $\mu\in[m+1]$ the equivalence 
	relation $\equiv^{G_\star}_1$ refines $\equiv^{G_\star}_\mu$ and thus  
	the pretrain $(G_\star, \equiv^{G_\star}_\mu)$ is a tame extension 
	of $(F, \equiv^F_\mu)$ as well. Consequently, 
		\begin{equation}\label{eq:n1054}
		\text{ for every $\mu\in[m]$ the system $\ccS_G$ is scattered 
			in $(G, \equiv^G_\mu)$.}
	\end{equation}
	   For clarity we point out that we cannot claim this for $\mu=m+1$;
   the difference is that the 
   unique $(m+1)$-wagon of $\seq{G}$ is the union of the $(m+1)$-wagons of the
   various trains~$\seq{G}_\star$, while for $\mu\in [m]$ the $\mu$-wagons of 
   these trains ``remain separate from each other''. 
   
   Later we will need to know that 
   	\begin{equation}\label{eq:n1056}
		\text{ if $F_\star\in \ccS_G$ and $x$ is an isolated vertex of $F_\star$, 
		then $x$ is isolated in $G$.}
	\end{equation}
		Indeed, by the definition of wagon assimilation, $x$ is also isolated 
	in $\widehat{F}$ and thus in every member of $\mathfrak{G}$. 
   
	We proceed by discussing the hypergraphs $H$ and $N$.  
	Clearly, $H$ is living in $N$ (cf.\ Step~\ref{it:am6}) and the 
	system derived from $\ccN$ is $\ccH_\bullet$ (cf.\ Step~\ref{it:am7}).
	By Step~\ref{it:am5} the copies in $\ccN$ are strongly induced and, therefore, 	
	Fact~\ref{f:strder} tells us that the system $(H, \ccH_\bullet)$ has strongly 
	induced copies, too. 
	Together with~\eqref{eq:n1053} and the fact that strong inducedness
	is a transitive relation this proves~\ref{it:tea}.
		
	Let us now write $\seq{g}=(g_1, \dots, g_m)$ and fix some $\mu\in [m]$.
	Due to
		\begin{equation}\label{eq:n1050}
		\GTH(\seq{N}, \ccN^+)>\seq{g}
	\end{equation}
	   we know $\GTH(N, \equiv^N_{\mu-1}, \ccN^+)>g_\mu$.
	As the pretrain $(H, \equiv^H_\mu)$ is derived from $(N, \equiv^N_{\mu-1})$ 
	(cf.\ Step~\ref{it:am6}), Lemma~\ref{lem:n023} translates this to 
	$\GTH(H, \equiv^H_\mu, \ccH^+_\bullet)>g_\mu$, which together 
	with~\eqref{eq:n1054} and Lemma~\ref{lem:n540} reveals
	$\GTH(H, \equiv^H_\mu, \ccH^+)>g_\mu$.
	
	So in order to establish~\ref{it:teb} it only remains to 
	prove $\GTH(H, \equiv^H_{m+1}, \ccH^+)>1$. 
	The related statement 
		\begin{equation}\label{eq:n1058}
		\GTH(H, \equiv^H_{m+1}, \ccH^+_\bullet)>1
	\end{equation}
		can be shown as in the previous paragraph by observing that~\eqref{eq:n1050}
	contains the information $\GTH(N, \equiv^N_m, \ccN^+)>1$ and invoking 
	Lemma~\ref{lem:n023}.
	
	Now let $F_1, F_2\in \ccH^+$ be two distinct copies having a 
	vertex $x$ in common. According to Lemma~\ref{lem:GTH1} and symmetry it 
	suffices to exhibit an edge $e\in E(F_1)$ such that $x\in e$. Assume contrariwise 
	that $x$ is an isolated vertex of $F_1$. 
	
	In particular, $F_1$ is a real copy. Let $G_1\in\ccH_\bullet$ be 
	the copy one of whose standard copies is $\seq{F}_1$. 
	If $F_2$ is a real copy as well 
	we determine $G_2\in\ccH_\bullet$ similarly
	and if $F_2$ is an edge copy we set $G_2=F_2$.
	
	Due to~\eqref{eq:n1056} we know that $x$ is isolated in $G_1$. 
	Moreover, the copies $G_1, G_2\in \ccH^+_\bullet$ have the vertex $x$
	in common. If they were distinct, then~\eqref{eq:n1058} and Lemma~\ref{lem:GTH1}    
	would lead to the contradiction that $x$ is non-isolated in $G_1$.	
	
	So altogether $G_1=G_2\in \ccH_\bullet$ is a real copy and, 
	therefore, $F_1$ and $F_2$ are two distinct standard copies 
	in $G_1$. But now $x\in V(F_1)\cap V(F_2)$ contradicts $F_1\ne F_2$
	because of the construction of~$\seq{G}$.  
\end{proof} \section{Trains in partite constructions}
\label{subsec:0136}

In this section we complete the proof of Proposition~\ref{prop:0142}
and establish Proposition~\ref{prop:0139}. Both tasks are accomplished 
by means of the partite construction method and thus we begin with 
some general remarks on train pictures. 
 
\subsection{Quasitrain constructions}
\label{sssec:4137}

Since it allows us to ignore parameters, it will be easier to study 
quasitrains in partite constructions first. 
Suppose that $\Phi$ denotes a Ramsey construction for hypergraphs and that $\Xi$ 
is a partite lemma applicable to $k$-partite $k$-uniform quasitrains of a 
fixed height $m\in \NN$. As we shall see below, we can then define a 
construction $\PC(\Phi, \Xi)$ applicable to quasitrains of height~$m$. 

Let us first introduce some terminology for such situations.
Suppose that $(G, \ccG)$ is a system of hypergraphs, 
where $\ccG\subseteq \binom{G}{F}$
holds for some hypergraph $F$ endowed with a fixed quasitrain structure 
$\seq{F}=(F, \equiv^F_0, \ldots, \equiv^F_m)$ of height $m$. 
A {\it quasitrain picture over $(G, \ccG)$} is a structure of the form 
$(\seq{\Pi}, \ccP, \psi)=(\Pi, \equiv^\Pi_0, \ldots, \equiv^\Pi_{m}, \ccP, \psi)$,
where
\begin{enumerate}
	\item[$\bullet$] $\seq{\Pi}$ is a quasitrain of height $m$,
	\item[$\bullet$] $(\Pi, \ccP, \psi)$ is a picture over $(G, \ccG)$,
	\item[$\bullet$] and $\ccP\subseteq \binom{\seq{\Pi}}{\seq{F}}$, i.e., every 
		copy $(F_\star, \equiv_0^{F_\star}, \ldots, \equiv_m^{F_\star})\in\ccP$
		is a subquasitrain of $\seq{\Pi}$ isomorphic to $\seq{F}$.
\end{enumerate} 
\index{quasitrain picture}

For instance, the picture zero $(\Pi_0, \ccP_0, \psi_0)$ introduced in~\S\ref{sssec:pict}
expands uniquely to the corresponding {\it quasitrain picture zero}
$(\seq{\Pi}_0, \ccP_0, \psi_0)$ with the property that for $\mu\in[0, m)$
every $\mu$-wagon of $\seq{\Pi}_0$ is contained in exactly one copy from $\ccP_0$. 
Recall that owing to Definition~\ref{dfn:2206}\ref{it:2206c} 
all edges of $\Pi_0$ need to be in the same $m$-wagon 
of $\seq{\Pi}_0$. In other words, the quasitrain $\seq{\Pi}_0$ is constructed 
to be the disjoint union of the quasitrains in $\ccP_0$. 
 
Now suppose that $(\Pi, \equiv^\Pi_0, \ldots, \equiv^\Pi_{m}, \ccP, \psi_\Pi)$ 
is such a quasitrain picture and that $e\in E(G)$.
The constituent $\Pi^e$ induces a subquasitrain $\seq{\Pi}^e$ of $\seq{\Pi}$. 
For reasons that will become apparent later we only define amalgamations over $e$
when 
\begin{equation}\label{eq:n190}
	E(\Pi^e)\ne \vn\,,
\end{equation}
which will never cause problems in practice.   
Given a $k$-partite $k$-uniform quasitrain system 
\[
	(\seq{H}, \ccH)=(H, \equiv^H_0, \ldots, \equiv^H_{m}, \ccH)
\]
(where $k=|e|$) with  
$\ccH\subseteq \binom{\seq{H}}{\seq{\Pi}^e}$ we can construct a structure 
\begin{equation}\label{eq:2213}
	(\Sigma, \equiv^\Sigma_0, \ldots, \equiv^\Sigma_{m}, \ccQ, \psi_\Sigma)
	=
	(\Pi, \equiv^\Pi_0, \ldots, \equiv^\Pi_{m}, \ccP, \psi_\Pi)
	\conc
	(H, \equiv^H_0, \ldots, \equiv^H_{m}, \ccH)
\end{equation}
by forming, in a first step, the ordinary picture 
$(\Sigma, \ccQ, \psi_\Sigma)=(\Pi, \ccP, \psi_\Pi)\conc (H, \ccH)$
and then defining the equivalence relations 
$\equiv^\Sigma_0, \ldots, \equiv^\Sigma_{m}$
on $E(\Sigma)$ in such a way that 
\[
	(\Sigma, \equiv^\Sigma_\mu, \ccQ, \psi_\Sigma)
	=
	(\Pi, \equiv^\Pi_\mu, \ccP, \psi_\Pi)
	\conc 
	(H, \equiv^H_\mu, \ccH)
\]
holds for every $\mu\in[0, m]$ (see~\S\ref{sssec:0021} and Lemma~\ref{lem:1825}).
One verifies easily that $(\seq{\Sigma}, \ccQ, \psi_\Sigma)$ 
is again a quasitrain 
picture over $(G, \ccG)$, 
where $\seq{\Sigma}=(\Sigma, \equiv^\Sigma_0, \ldots, \equiv^\Sigma_{m})$. 
In particular, one has to check here that the construction in~\S\ref{sssec:0021}
causes the equivalence classes of~$\equiv^\Sigma_0$ to consist of single edges.
Moreover, one needs to convince oneself that all edges of $\Sigma$ are in the 
same wagon with respect to $\equiv^\Sigma_m$, which 
requires~\eqref{eq:n190}. 

Let us now return to the discussion of $\PC(\Phi, \Xi)$, where, let us recall, $\Phi$
is a Ramsey construction for hypergraphs and $\Xi$ denotes a partite lemma for quasitrains 
of height~$m$. Given a quasitrain $\seq{F}=(F, \equiv_0^F, \ldots, \equiv^F_m)$ of 
height $m$ and a number of colours $r$
the quasitrain system $\PC(\Phi, \Xi)_r(\seq{F})$ is constructed as follows. Set
$\Phi_r(F)=(G, \ccG)$ and assume, without loss of generality, that every edge of $G$
appears in at least one copy of $\ccG$ (other edges of $G$ are not needed for 
ensuring the partition relation $\ccG\lra (F)_r$). Now picture zero 
$(\seq{\Pi}_0, \ccP_0, \psi_0)$ over $(G, \ccG)$ satisfies $E(\Pi^e_0)\ne\vn$
for every $e\in E(G)$ and thus we never need to worry about~\eqref{eq:n190}
throughout the ensuing partite construction. As usual we 
fix an enumeration $E(G)=\{e(1), \ldots, e(N)\}$ and recursively we construct 
a sequence $(\seq{\Pi}_\alpha, \ccP_\alpha, \psi_\alpha)_{\alpha\le N}$ of 
quasitrain pictures over~$(G, \ccG)$.
Whenever we have just obtained 
$(\seq{\Pi}_{\alpha-1}, \ccP_{\alpha-1}, \psi_{\alpha-1})$
for some $\alpha\in [N]$ we generate the $k$-partite $k$-uniform quasitrain system 
$\Xi_r(\seq{\Pi}_{\alpha-1}^{e(\alpha)})=(\seq{H}_\alpha, \ccH_\alpha)$
and amalgamate 
\[
	 (\seq{\Pi}_\alpha, \ccP_\alpha, \psi_\alpha)
	 =
	 (\seq{\Pi}_{\alpha-1}, \ccP_{\alpha-1}, \psi_{\alpha-1})
	 \conc
	 (\seq{H}_\alpha, \ccH_\alpha)\,.
\]
Finally, when the final quasitrain picture $(\seq{\Pi}_N, \ccP_N, \psi_N)$ has been 
reached, we stipulate 
\[
	\PC(\Phi, \Xi)_r(\seq{F})=(\seq{\Pi}_N, \ccP_N)\,.
\]

\subsection{Train amalgamations}\label{subsec:tipc}
The next problem is whether the partite construction method can handle trains 
instead of quasitrains as well. That is, we shall need to know 
suitable conditions on constructions $\Phi$, $\Xi$, and trains $\seq{F}$ 
guaranteeing that for every number of colours $r$ the quasitrain
$\PC(\Phi, \Xi)_r(\seq{F})$ turns out to be a train. 
As usual, our strategy is to enforce that all quasitrain pictures 
$(\seq{\Pi}, \ccP, \psi)$ generated along the way have the property 
that their underlying quasitrains 
$\seq{\Pi}$ are trains. This is already somewhat problematic for 
picture zero and we resolve to deal with this situation by demanding that~$\Phi$
be a train construction, so that vertically we have a train system $(\seq{G}, \ccG)$
and not just a hypergraph system. We are thus led to the following notion of train 
pictures. 

\begin{dfn}\label{dfn:7058}
	Suppose that $\seq{F}$ is a train 
	and that $(\seq{G}, \ccG)$ is a train system all of whose copies are isomorphic 
	to $\seq{F}$. We say that $(\seq{\Pi}, \ccP, \psi_\Pi)$ is 
	a {\it train picture over $(\seq{G}, \ccG)$} if 
	\begin{enumerate}[label=\rmlabel]
		\item\label{it:tp1} the trains $\seq{F}$, 
			$\seq{G}=(G, \equiv^G_0, \dots, \equiv^G_m)$, 
			and $\seq{\Pi}=(\Pi, \equiv^\Pi_0, \dots, \equiv^\Pi_m)$
			are $f$-partite for the same function $f$, have the same height~$m$, 
			and the same parameter $\seq{A}$;
		\item\label{it:tp2} $(\seq{\Pi}, \ccP, \psi_\Pi)$ is a quasitrain picture 
			over $(G, \ccG)$;
		\item\label{it:tp3} and $\forall \mu\in [0, m]\,\,\forall e, e'\in E(\Pi) \,\,\,
			[e\equiv^\Pi_\mu e' \,\,\, \Longrightarrow \,\,\, \psi_\Pi(e)\equiv^G_\mu \psi_\Pi(e')]$.
	\end{enumerate} 
	In this situation, we call $m$ and $\seq{A}$ the {\it height} 
	and the {\it parameter} of the picture $(\seq{\Pi}, \ccP, \psi_\Pi)$.  
	\index{train picture}
\end{dfn}    

A more intuitive way of thinking about condition~\ref{it:tp3} is that the 
projection $\psi_\Pi$ is required to be a ``train homomorphism'' 
from $\seq{\Pi}$ to $\seq{G}$ (rather than just a mere hypergraph homomorphism).   
Notice that given $\seq{F}$ and $(\seq{G}, \ccG)$ 
with $\ccG\subseteq \binom{\seq{G}}{\seq{F}}$ as in~\ref{it:tp1} we can always 
form the {\it train picture zero} $(\seq{\Pi}_0, \ccP_0, \psi_0)$ in the usual way. 
The next statement takes a brief look at the parameters of constituents of 
train pictures. 

\begin{fact}\label{f:N107}
	Let $(\seq{\Pi}, \ccP, \psi)$ be an $f$-partite 
	train picture over the train system $(\seq{G}, \ccG)$. 
	If $m$ and $\seq{A}=(A_1, \dots, A_m)$
	denote the height and the parameter of this picture, then for every edge $e\in E(G)$
	the constituent $\seq{\Pi}^e$ is a train of height $m$ whose parameter 
	$\seq{D}=(D_1, \dots, D_m)$ is given by $D_\mu=e\cap V_{A_\mu}(G)$ 
	for every $\mu\in [m]$.
\end{fact}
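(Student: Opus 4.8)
The plan is to verify the three defining properties directly: that $\seq{\Pi}^e$ is an $f$-partite quasitrain of height $m$, that its $0$-wagons are single edges, and that the intersection condition \ref{it:3140b} of Definition~\ref{dfn:3140} holds for the sequence $\seq{D}$ with $D_\mu=e\cap V_{A_\mu}(G)$. The first of these is immediate: by Definition~\ref{dfn:7058}\ref{it:tp2} the structure $\seq{\Pi}$ is a quasitrain of height $m$, and restricting the equivalence relations $\equiv^\Pi_0,\dots,\equiv^\Pi_m$ to the edge set $E(\Pi^e)$ of a constituent clearly preserves the nesting \ref{it:2206b} and the fact \ref{it:2206c} that all edges are $\equiv^\Pi_m$-equivalent; the $0$-wagon condition \ref{it:2206a} likewise descends to subquasitrains. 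The constituent $\Pi^e$ is $k$-partite $k$-uniform with index set $e$ (as recalled in~\S\ref{sssec:pict}), which is the $f$-partite structure coming from $f|_e$ being the constant function $1$ on $e$; in the notation of the fact, the relevant function on $e$ is $f\restriction_e$, which is what the statement means by ``train of height $m$'' — here I should double-check whether the intended $f$-partite structure on $\seq{\Pi}^e$ is the one with index set $e$ and the constant function $1$, and state this explicitly.

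The heart of the matter is property~\ref{it:3140b}. I would fix $\mu\in[m]$ and two distinct $(\mu-1)$-wagons $W',W''$ of $\seq{\Pi}^e$ lying in a common $\mu$-wagon, and a vertex $x\in V(W')\cap V(W'')$. The key point is that these are also $(\mu-1)$-wagons of $\seq{\Pi}$ itself (or at least contained in $(\mu-1)$-wagons of $\seq{\Pi}$), sitting inside a common $\mu$-wagon of $\seq{\Pi}$; since $\seq{\Pi}$ is a train with parameter $\seq{A}$, Definition~\ref{dfn:3140}\ref{it:3140b} applied to $\seq{\Pi}$ gives $x\in V_{A_\mu}(\Pi)$. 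On the other hand $x\in V(\Pi^e)=V_e(\Pi)$, so $\psi(x)\in e$; and $x\in V_{A_\mu}(\Pi)$ means $\psi(x)\in A_\mu$ by the way the vertex classes of the constituent are indexed by $\psi$. Hence $\psi(x)\in e\cap A_\mu$, i.e.\ $x\in V_{D_\mu}(\Pi^e)$ with $D_\mu=e\cap A_\mu$. There is a minor bookkeeping subtlety: the statement writes $D_\mu=e\cap V_{A_\mu}(G)$ rather than $e\cap A_\mu$; since $V_{A_\mu}(G)=\bigcup_{i\in A_\mu}V_i(G)$ and the edge $e\in E(G)$ meets $V_i(G)$ in exactly the vertices of $e$ with index $i$, the set $e\cap V_{A_\mu}(G)$ is precisely the set of vertices of $e$ whose index lies in $A_\mu$, which under the identification of the constituent's index set with $e$ is exactly $D_\mu$ — so these two descriptions agree, and I would spell this identification out once.

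The one place requiring genuine care is matching up the various $(\mu-1)$-wagon structures. A $(\mu-1)$-wagon of the constituent $\seq{\Pi}^e$ is a wagon of $(\Pi^e,\equiv^{\Pi^e}_{\mu-1})$, where $\equiv^{\Pi^e}_{\mu-1}$ is the restriction of $\equiv^\Pi_{\mu-1}$; such a wagon need not be an entire $(\mu-1)$-wagon of $\seq{\Pi}$, but it is contained in one, and similarly for the $\mu$-wagons. So I would argue: let $\widetilde W',\widetilde W''$ be the $(\mu-1)$-wagons of $\seq{\Pi}$ containing $W',W''$ respectively. If $\widetilde W'=\widetilde W''$ then all of $W',W''$ lie in a single $(\mu-1)$-wagon of $\seq{\Pi}$, hence in a single $(\mu-1)$-wagon of $\seq{\Pi}^e$, contradicting $W'\neq W''$ (two distinct wagons of a pretrain are edge-disjoint equivalence classes). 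So $\widetilde W'\neq\widetilde W''$, and since $W',W''$ lie in a common $\mu$-wagon of $\seq{\Pi}^e$, hence in a common $\mu$-wagon of $\seq{\Pi}$, the train property of $\seq{\Pi}$ applies. The rest is the index-set chase above. This step is routine once the wagon-containment relationships are set up correctly, and I anticipate it being the only part that needs more than a sentence; the fact itself is really just unwinding definitions.
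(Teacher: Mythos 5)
Your proof is correct and follows essentially the same route as the paper's: both arguments hinge on the identity $V_{D_\mu}(\Pi^e)=\psi^{-1}(e)\cap\psi^{-1}\bigl(V_{A_\mu}(G)\bigr)=V(\Pi^e)\cap V_{A_\mu}(\Pi)$ and then apply the train property of $\seq\Pi$. The only difference is that the paper phrases the condition~\ref{it:3140b} check directly in terms of a pair of edges lying in the same $\mu$-wagon but different $(\mu-1)$-wagons, thereby bypassing the wagon-containment bookkeeping that you spell out (correctly) with $\widetilde W',\widetilde W''$.
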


As usual, the constituent $\seq{\Pi}^e$ is regarded here as a $k$-partite $k$-uniform 
train with index set $e$, where $k=|e|$.

\begin{proof}
	Recall that the $f$-partite structure of $\Pi$ is defined 
	by $V_i(\Pi)=\psi^{-1}\bigl(V_i(G)\bigr)$ for every index~$i$ in the domain of $f$. 
	Thus the sets $D_\mu=e\cap V_{A_\mu}(G)$ satisfy  
		\[
		V_{D_\mu}(\Pi^e)
		=
		\psi^{-1}(D_\mu)
		=
		\psi^{-1}(e)\cap \psi^{-1}\bigl(V_{A_\mu}(G)\bigr)
		=
		V(\Pi^e)\cap V_{A_\mu}(\Pi)
	\]
		for every $\mu\in [m]$. 
	
	Now if two edges $e_\star$, $e_{\star\star}\in E(\Pi^e)$
	belong to the same $\mu$-wagon but not to the same $(\mu-1)$-wagon 
	of $\seq{\Pi}^e$, 
	then $e_\star\cap e_{\star\star}\subseteq V(\Pi^e)\cap V_{A_\mu}(\Pi)
	=V_{D_\mu}(\Pi^e)$. 
\end{proof}

When executing partite constructions with train pictures, we can maintain parameters 
by appealing to the following result. 

\begin{lemma}\label{lem:N128}
	Let~$(\seq{G}, \ccG)$ be a train system of height $m$ with 
	parameter~$\seq{A}=(A_1, \dots, A_m)$. 
	Suppose further that $(\seq{\Pi}, \ccP, \psi_\Pi)$
	is a train picture over~$(\seq{G}, \ccG)$ and that 
		\[
		(\seq{\Sigma}, \ccQ, \psi_\Sigma)
		=
		(\seq{\Pi}, \ccP, \psi_\Pi)
		\conc
		(\seq{H}, \ccH)
	\]
		holds for a train system 
	$(\seq{H}, \ccH)=(H, \equiv^H_0, \ldots, \equiv^H_{m}, \ccH)$ and a quasitrain 
	picture $(\seq{\Sigma}, \ccQ, \psi_\Sigma)$. 
	If the amalgamation occurs over the edge $e\in E(G)$,
		\begin{enumerate}
		\item[$\bullet$] the parameter $\seq{D}=(D_1, \dots, D_m)$ of $\seq{H}$
			is given by $D_\mu=e\cap V_{A_\mu}(G)$ for every $\mu\in [m]$,
		\item[$\bullet$] and $\GTH(H, \equiv_\mu, \ccH^+)>1$ for every $\mu\in [m]$,
	\end{enumerate}
		then $(\seq{\Sigma}, \ccQ, \psi_\Sigma)$ is a train picture over $(\seq{G}, \ccG)$. 
\end{lemma}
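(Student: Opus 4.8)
The requirements for $(\seq{\Sigma}, \ccQ, \psi_\Sigma)$ to be a train picture over $(\seq{G}, \ccG)$ are those listed in Definition~\ref{dfn:7058}. Since $(\seq{\Sigma}, \ccQ, \psi_\Sigma)$ is already assumed to be a quasitrain picture over $(G, \ccG)$ and $\Sigma$ is $f$-partite of height $m$, the actual content of the lemma is to check (i) that the quasitrain $\seq{\Sigma}$ is a train with parameter $\seq{A}$, i.e.\ Definition~\ref{dfn:3140}\ref{it:3140b}, and (ii) that $\psi_\Sigma$ satisfies the ``train homomorphism'' condition~\ref{it:tp3}. Throughout I would use that, by Lemma~\ref{lem:1825}, for every $\mu\in[0,m]$ one has the pretrain amalgamation $(\Sigma, \equiv^\Sigma_\mu, \ccQ, \psi_\Sigma)=(\Pi, \equiv^\Pi_\mu, \ccP, \psi_\Pi)\conc(H, \equiv^H_\mu, \ccH)$; in particular the standard copies $(\Pi_\star, \equiv^{\Pi_\star}_\mu)$ and $(H, \equiv^H_\mu)$ are subpretrains of $(\Sigma, \equiv^\Sigma_\mu)$, and each relation $e_1\equiv^\Sigma_\mu e_2$ is witnessed by one of the five clauses in the proof of Lemma~\ref{lem:1825}, hence by a chain joining $e_1$ to $e_2$ whose links are either of the form $c\equiv^{\Pi_\star}_\mu c'$ inside a single standard copy or $c\equiv^H_\mu c'$ inside $H$. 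I would also record two inclusions used repeatedly: $V_{D_\mu}(H)=\psi_H^{-1}(D_\mu)=\psi_H^{-1}\bigl(e\cap V_{A_\mu}(G)\bigr)\subseteq V_{A_\mu}(\Sigma)$, and $V_{A_\mu}(\Pi_\star)\subseteq V_{A_\mu}(\Sigma)$ for every standard copy $\Pi_\star$ (its $f$-partite structure being inherited from $\Sigma$).

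Condition~\ref{it:tp3} is the easy part. For $\mu=0$ it holds because $e_1\equiv^\Sigma_0 e_2$ forces $e_1=e_2$, and for $\mu=m$ because $\seq{G}$ has a single $m$-wagon. For $1\le\mu<m$ I would take the chain for $e_1\equiv^\Sigma_\mu e_2$ described above: on a link $c\equiv^{\Pi_\star}_\mu c'$ the canonical isomorphism $\Pi_\star\cong\Pi$ together with the fact that $\psi_\Pi$ is already a train homomorphism (since $(\seq{\Pi}, \ccP, \psi_\Pi)$ is a train picture) gives $\psi_\Sigma(c)\equiv^G_\mu\psi_\Sigma(c')$, while on a link $c\equiv^H_\mu c'$ one has $\psi_\Sigma(c)=\psi_\Sigma(c')=e$; transitivity of $\equiv^G_\mu$ then finishes.

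For Definition~\ref{dfn:3140}\ref{it:3140b} I would fix $\mu\in[m]$, distinct $(\mu-1)$-wagons $W'\ne W''$ of $\seq{\Sigma}$ lying in a common $\mu$-wagon, and $x\in V(W')\cap V(W'')$, and choose edges $a\in E(W')$, $b\in E(W'')$ with $x\in a\cap b$, so that $a\not\equiv^\Sigma_{\mu-1}b$ but $a\equiv^\Sigma_\mu b$; the goal is $x\in V_{A_\mu}(\Sigma)$. If $a,b\in E(H)$, then (using that $(H,\equiv^H_{\mu-1})$ and $(H,\equiv^H_\mu)$ are subpretrains of $\seq{\Sigma}$) they lie in distinct $(\mu-1)$-wagons of $\seq{H}$ inside a common $\mu$-wagon, and since $\seq{H}$ is a train with parameter $\seq{D}$ we get $x\in V_{D_\mu}(H)\subseteq V_{A_\mu}(\Sigma)$. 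If instead $a$ and $b$ are both new and belong to the same standard copy $\Pi_\bullet$, the same reasoning inside $\seq{\Pi}_\bullet\cong\seq{\Pi}$ (a train with parameter $\seq{A}$) gives $x\in V_{A_\mu}(\Pi_\bullet)\subseteq V_{A_\mu}(\Sigma)$.

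The remaining case --- and, I expect, the main obstacle --- is when $a$ is new, belonging to the standard copy $\Pi_\star$ that extends $\Pi^e_\star\in\ccH$, while $b$ is either old or new in a different standard copy $\Pi_{\star\star}$; here a $(\mu-1)$- or $\mu$-wagon of $\seq{\Sigma}$ may ``spread'' through $H$ between two standard copies, and this is exactly what the hypothesis $\GTH(H,\equiv^H_\mu,\ccH^+)>1$ is designed to control. Since $x\in a$ and $x$ lies on $H$, one has $x\in V(\Pi^e_\star)$; set $F_2=\Pi^e_{\star\star}$ or $F_2=b^+$ accordingly, so $x\in V(\Pi^e_\star)\cap V(F_2)$. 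Applying Lemma~\ref{lem:1825}\ref{it:1832c} to $a\equiv^\Sigma_\mu b$ yields an edge $a'\in E(\Pi^e_\star)$ with $a\equiv^{\Pi_\star}_\mu a'$ whose $\mu$-wagon $W$ of $\seq{H}$ also meets $F_2$, so that $\ccC=\Pi^e_\star\,x\,F_2\,W$ is a big cycle in $(H,\equiv^H_\mu,\ccH^+)$ (degenerate configurations, such as a single-edge constituent, reduce to cases already treated). By $\GTH(H,\equiv^H_\mu,\ccH^+)>1$ and Lemma~\ref{lem:GTH1} there is an edge $f_1\in E(\Pi^e_\star)\cap E(W)$ with $x\in f_1$. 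Now $f_1$ and the edge of $F_2$ lying in $W$ (namely $b$ in the old case) both contain $x$: if they lie in distinct $(\mu-1)$-wagons of $\seq{H}$, the train structure of $\seq{H}$ gives $x\in V_{D_\mu}(H)\subseteq V_{A_\mu}(\Sigma)$ as before; otherwise, comparing with $W'\ne W''$ forces $a$ and $f_1$ into distinct $(\mu-1)$-wagons of $\seq{\Pi}_\star$, while $a\equiv^{\Pi_\star}_\mu a'\equiv^{\Pi_\star}_\mu f_1$ puts them in a common $\mu$-wagon there, so the train structure of $\seq{\Pi}_\star\cong\seq{\Pi}$ yields $x\in a\cap f_1\subseteq V_{A_\mu}(\Pi_\star)\subseteq V_{A_\mu}(\Sigma)$. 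The delicate points --- recognising the correct big cycle to feed into Lemma~\ref{lem:GTH1}, the $f_1\equiv^H_{\mu-1}b$ versus $f_1\not\equiv^H_{\mu-1}b$ dichotomy, and the degenerate configurations --- are where I would be most careful.
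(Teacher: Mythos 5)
Your plan is essentially the paper's proof: after noting that only Definition~\ref{dfn:3140}\ref{it:3140b} and Definition~\ref{dfn:7058}\ref{it:tp3} need checking, you run a case analysis by the membership of the two edges in $E(H)$, invoke Lemma~\ref{lem:1825}\ref{it:1832c} to pass to intermediate edges of $H$, form a big cycle of order~$1$, and apply $\GTH(H,\equiv^H_\mu,\ccH^+)>1$ via Lemma~\ref{lem:GTH1} — all of which matches the paper. The treatment of condition~\ref{it:tp3} via the chain through $H$ is also the same.

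The one place your sketch is not yet airtight is the ``otherwise'' branch of your third case, when both $a$ and $b$ are new and lie in distinct standard copies $\Pi_\star$, $\Pi_{\star\star}$. You assert that ``comparing with $W'\ne W''$ forces $a$ and $f_1$ into distinct $(\mu-1)$-wagons of $\seq{\Pi}_\star$,'' but this need not hold: letting $f_2\in E(\Pi^e_{\star\star})\cap E(\overline{W}_\mu)$ be the second edge supplied by Lemma~\ref{lem:GTH1}, from $a\not\equiv^\Sigma_{\mu-1}b$ and $f_1\equiv^H_{\mu-1}f_2$ you only get that \emph{at least one} of $a\not\equiv^\Sigma_{\mu-1}f_1$ or $f_2\not\equiv^\Sigma_{\mu-1}b$ holds, and in the latter situation you must appeal to the parameter of $\seq{\Pi}_{\star\star}$ rather than of $\seq{\Pi}_\star$. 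The paper sidesteps this by laying out the four-edge chain $e', e_\star, e_{\star\star}, e''$ inside the $\mu$-wagon and observing that some consecutive pair is $\equiv^\Sigma_{\mu-1}$-inequivalent, each of the three possibilities being covered by the parametrisation of $\Pi_\star$, $H$, or $\Pi_{\star\star}$ respectively. You flag this dichotomy as a point of care, and indeed spelling out the chain this way is exactly what closes the gap.
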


Let us emphasise that our demand on the parameter $\seq{D}$ agrees with 
Fact~\ref{f:N107}. As we saw in the above proof, it leads 
to $V_{D_\mu}(H)=V_{A_\mu}(\Sigma)\cap V(H)$ for every $\mu\in [m]$.

\begin{proof}[Proof of Lemma~\ref{lem:N128}]
	We have to show that $\seq{A}$ is a legitimate parameter 
	for $\seq{\Sigma}$ 	and that clause~\ref{it:tp3} of Definition~\ref{dfn:7058} holds for $\Sigma$
	instead of $\Pi$. The latter condition is clear for $\mu=0$, 
	because $\equiv^\Sigma_0$ is the same as equality. 
	So it remains to consider an index $\mu\in [m]$, a $\mu$-wagon $W_\mu$ 
	of~$\seq{\Sigma}$, and two edges $e', e''\in E(W_\mu)$. We are to prove that
		\begin{enumerate}[label=\alabel]
		\item\label{it:N128-1} if $e'\not\equiv^\Sigma_{\mu-1} e''$ and 
			$x\in e'\cap e''$, then $x\in V_{A_\mu}(\Sigma)$;
		\item\label{it:N128-2} and $\psi_\Sigma(e')\equiv^G_\mu \psi_\Sigma(e'')$.
	\end{enumerate}

	For both edges $e'$ and $e''$ we distinguish the cases whether they belong 
	to $H$ or not. By symmetry there are three possibilities. 
	
	\smallskip
	
	{\it \hskip 2em First Case: Neither $e'$ nor $e''$ is in $E(H)$.}
	
	\smallskip
	
	Let $\Pi_\star$ and $\Pi_{\star\star}$ denote the standard copies 
	of~$\Pi$ with $e'\in E(\Pi_\star)$ and $e''\in E(\Pi_{\star\star})$,
	respectively. In the special case $\Pi_\star=\Pi_{\star\star}$
	both claims follow from $(\seq{\Pi}, \ccP, \psi_\Pi)$ being a train 
	picture and $\seq{\Pi}_\star$ being a subtrain of $\seq{\Sigma}$, 
	so we may assume $\Pi_\star\ne \Pi_{\star\star}$ from now on. 
	
	Starting with~\ref{it:N128-1} we observe that the vertex $x$ needs to belong
	to $V(H)$.  
	Lemma~\ref{lem:1825}\ref{it:1832c}\ref{it:1832c2} tells us that the copies 
	$\Pi^e_\star, \Pi^e_{\star\star}\in \ccH$ extended by the standard 
	copies $\Pi_\star$, $\Pi_{\star\star}$ intersect the wagon $W_\mu$ and, 
	for this reason, $W_\mu$ contracts to a wagon $\overline{W}_\mu$ 
	of $(H, \equiv^H_\mu)$. Clearly $\Pi^e_\star x \Pi^e_{\star\star} \overline{W}_\mu$ 
	is a big cycle of order $1$
	in $(H, \equiv^H_\mu, \ccH^+)$. Due to $\GTH(H, \equiv^H_\mu, \ccH^+) > 1$
	and Lemma~\ref{lem:GTH1} there are two edges 
	$e_\star\in E(\Pi^e_\star)\cap E(\overline{W}_\mu)$ 
	and $e_{\star\star} \in E(\Pi^e_{\star\star})\cap E(\overline{W}_\mu)$ 
	with $x\in e_\star\cap e_{\star\star}$. By now we know four edges containing $x$
	and belonging to $W_\mu$, namely $e'$, $e_\star$, $e_{\star\star}$, and $e''$.
	Owing to $e'\not\equiv^\Sigma_{\mu-1} e''$ at least one of the three cases
		\[
		e'\not\equiv^\Sigma_{\mu-1} e_\star\,, 
		\quad 
		e_\star \not\equiv^\Sigma_{\mu-1} e_{\star\star}\,,
		\quad \text{ or } \quad 
		e_{\star\star}\not\equiv^\Sigma_{\mu-1} e''
	\]
		occurs. As $\seq{A}$ parametrises $\Pi_\star$, the first alternative
	implies indeed $x\in V_{A_\mu}(\Pi_\star)\subseteq V_{A_\mu}(\Sigma)$, and the 
	third case is similar. Moreover, the second 
	case rewrites as $e_\star \not\equiv^H_{\mu-1} e_{\star\star}$
	and $x\in V_{D_\mu}(H)\subseteq V_{A_\mu}(\Sigma)$ follows 
	(see Figure~\ref{fig:gmp}).
	This completes the proof of~\ref{it:N128-1}. 
	
\begin{figure}[h]
\centering
\begin{tikzpicture}[scale=.7]
	
\draw (-6,4)--(-6,-4);

\newcommand{\elip}[7]{\draw [color={#5}, fill={#6}, opacity={#7}] (#1-#3, #2) [out=-90, in= 180] to (#1,#2-#4)  [out=0, in=-90] to (#1+#3,#2) [out=90, in=0] to (#1,#2+#4)  [out=-180, in=90] to (#1-#3,#2);}

\elip{-6}{0}{.3}{.85}{black}{white}{.7};
\elip{-6}{.4}{.27}{.43}{black}{brown!80}{.7};
\elip{-2.5}{0}{1.5}{3.8}{black}{yellow!60}{1};
\elip{2.5}{0}{1.5}{3.8}{black}{cyan!30}{1};

\draw [red,fill=red!20] (-2.3,2.4) rectangle (2.3,-2.4);
\draw [red,fill=red!40] (-2.27,.77) rectangle (2.27,-.77);

\draw [thick] (-1.125,2.41) [out=-80, in=180] to (0,.83) [out=0, in=-100] to (1.125,2.41);
\fill [white](-1.125,2.42) [out=-80, in=180] to (0,.83) [out=0, in=-100] to (1.125,2.42)--cycle;

\draw [thick] (-1.125,-2.41) [out=80, in=180] to (0,-.83) [out=0, in=100] to (1.125,-2.41);
\fill [white](-1.125,-2.42) [out=80, in=180] to (0,-.83) [out=0, in=100] to (1.125,-2.42)--cycle;

\draw [rounded corners=10] (-.2,.84)--(.6,.7)--(.6,-.7)--(-.2,-.84);
\draw [rounded corners=10] (.2,.84)--(-.6,.7)--(-.6,-.7)--(.2,-.84);

\draw [red] (0,.43)--(-1.3,1.22);
\draw (0,.4)--(-1.3,1.2);
\node at (-1.05,1.3) {$e'$};

\draw [red] (0,.43)--(1.3,1.22);
\draw (0,.4)--(1.3,1.2);
\node at (1.05,1.25) {$e''$};

\draw (.4,.79)--(-1.22,-.79);
\node at (-1.3,-.5) {$e_\star$};

\draw (-.4,.79)--(1.22,-.79);
\node at (1.35,-.5) {$e_{\star\star}$};

\fill (0,.4) circle (1.5pt);
\node at (.25,.4) {$x$};
\draw [thick, dashed, -Stealth] (0,.4) -- (-5.7,.4); 

\draw (-5,.8)--(5,.8);
\draw (-5,-.8)--(5,-.8);

\node at (-6.75, .4) {$D_\mu$};
\node at (-6.6,-.4) {$e$};

\node [red!70!black] at (0,1.8) {$W_\mu$};
\node [red!70!black] at (3,0) {$\overline{W}_\mu$};

\node [yellow] at (-3.97,3.52) {\large ${\Pi_\star}$};
\node [yellow] at (-3.95,3.53) {\large ${\Pi_\star}$};
\node  at (-4,3.5) {\large ${\Pi_\star}$};
\node [cyan] at (4.13,3.52) {\large ${\Pi_{\star\star}}$};
\node  at (4.1,3.5) {\large ${\Pi_{\star\star}}$};

\end{tikzpicture}
\caption{Part~\ref{it:N128-1} in the first case.}
\label{fig:gmp}
\end{figure} 	
	Proceeding with~\ref{it:N128-2} we again invoke
	Lemma~\ref{lem:1825}\ref{it:1832c}\ref{it:1832c2}, thus obtaining  
	two edges $e_\star\in E(H)\cap E(\Pi_\star)$ 
	and $e_{\star\star}\in E(H)\cap E(\Pi_{\star\star})$ with 
	$e' \equiv^\Sigma_\mu e_\star \equiv^\Sigma_\mu e_{\star\star}\equiv^\Sigma_\mu e''$.
 	Since the train picture $(\seq{\Pi}, \ccP, \psi_\Pi)$ satisfies 
	Definition~\ref{dfn:7058}\ref{it:tp3}, we have
	$\psi_\Sigma(e')\equiv^G_\mu \psi_\Sigma(e_\star)$
	and $\psi_\Sigma(e'')\equiv^G_\mu \psi_\Sigma(e_{\star\star})$.
	Together with $\psi_\Sigma(e_\star)=e=\psi_\Sigma(e_{\star\star})$ 
	this yields the desired equivalence $\psi_\Sigma(e')\equiv^G_\mu \psi_\Sigma(e'')$.
	
	\smallskip
	
	{\it \hskip 2em Second Case: We have $e'\not\in E(H)$ and $e''\in E(H)$.}
	
	\smallskip
	
	For the proof of~\ref{it:N128-1} we again observe $x\in V(H)$,  
	denote the standard copy of~$\Pi$ containing~$e'$ by~$\Pi_\star$,
	and let $\Pi^e_\star\in \ccH$ be the copy extended by~$\Pi_\star$. Invoking 
	Lemma~\ref{lem:1825}\ref{it:1832c}\ref{it:1832c1} we infer 
	that $\Pi^e_\star x (e'')^+ \overline{W}_\mu$ is a big cycle of order $1$ 
	in $(H, \equiv^H_\mu, \ccH^+)$ and as before we find an 
	edge $e_\star\in E(\Pi^e_\star)\cap E(\overline{W}_\mu)$ 
	with $x\in e_\star$. Since at least one of the statements  
		\[
		e'\not\equiv^\Sigma_{\mu-1} e_\star
		\quad \text{ or } \quad
		e_\star\not\equiv^\Sigma_{\mu-1} e''
	\]
		holds, we can conclude $x\in V_{A_\mu}(\Sigma)$ as in the first case.
		
	For dealing with~\ref{it:N128-2} we appeal to 
	Lemma~\ref{lem:1825}\ref{it:1832c}\ref{it:1832c1} again, 
	this time getting an edge $e_\star\in E(H)\cap E(\Pi_\star)$ 
	with $e' \equiv^\Sigma_\mu e_\star\equiv^\Sigma_\mu e''$.
	Due to $\psi_\Sigma(e')\equiv^G_\mu \psi_\Sigma(e_\star)$
	and $\psi_\Sigma(e_\star)=e=\psi_\Sigma(e'')$ we have indeed
	$\psi_\Sigma(e')\equiv^G_\mu \psi_\Sigma(e'')$.
	
	\smallskip
	
	{\it \hskip 2em Third Case: Both $e'$ and $e''$ are in $E(H)$.}
	
	\smallskip

	Now~\ref{it:N128-1} follows from $x\in V_{D_\mu}(H)\subseteq V_{A_\mu}(\Sigma)$, 
	and $\psi_\Sigma(e')=e=\psi_\Sigma(e'')$ yields~\ref{it:N128-2}. 
\end{proof}

Next we adapt the $\GTH$ preservation lemma from~\S\ref{subsec:GTHpres} to train 
pictures. 

\begin{lemma}\label{lem:n209}
	Let $(\seq{G}, \ccG)$ be a train system of height $m$ with strongly induced 
	copies. Suppose further that 
		\[
		(\seq{\Sigma}, \ccQ, \psi_\Sigma)
		=
		(\seq{\Pi}, \ccP, \psi_\Pi)\conc(\seq{H}, \ccH)
	\]
		holds for two train pictures $(\seq{\Sigma}, \ccQ, \psi_\Sigma)$ 
	and $(\seq{\Pi}, \ccP, \psi_\Pi)$ over $(\seq{G}, \ccG)$, and a
	$k$-partite $k$-uniform train system $(\seq{H}, \ccH)$.
	If for some sequence $\seq{g}=(g_\ell, \dots, g_m)\in\gM$ whose length 
	is at most $m$ we have 
	$\GTH(\seq{\Pi}, \ccP^+)>\seq{g}$ and $\GTH(\seq{H}, \ccH^+)>\seq{g}$,
	then $\GTH(\seq{\Sigma}, \ccQ^+)>\seq{g}$.
\end{lemma}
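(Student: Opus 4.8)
The statement to be proved is Lemma~\ref{lem:n209}: train pictures over a train system with strongly induced copies inherit $\GTH$ bounds (indexed by sequences rather than single integers) through partite amalgamations. The plan is to reduce this to the single-index $\GTH$ preservation result, Lemma~\ref{lem:0052}, applied level by level in the nested family of equivalence relations. Unwinding Definition~\ref{dfn:0115}, the hypothesis $\GTH(\seq{\Pi},\ccP^+)>\seq{g}$ with $\seq{g}=(g_\ell,\dots,g_m)$ means that $\GTH(\Pi,\equiv^\Pi_{\mu-1},\ccP^+)>g_\mu$ for every $\mu\in[\ell,m]$ together with $\GTH(\Pi,\equiv^\Pi_m,\ccP^+)>1$, and similarly for $\seq{H}$; the conclusion is the analogous list of statements for $\seq{\Sigma}$. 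So it suffices to verify each of these finitely many inequalities separately.

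\textbf{First step: the middle-level bounds.} Fix $\mu\in[\ell,m]$. By Definition~\ref{dfn:7058}\ref{it:tp2}, $(\seq{\Pi},\ccP,\psi_\Pi)$ and $(\seq{\Sigma},\ccQ,\psi_\Sigma)$ are quasitrain pictures, so the amalgamation~\eqref{eq:2213} restricts at level $\mu-1$ to an ordinary pretrain-picture amalgamation
\[
	(\Sigma,\equiv^\Sigma_{\mu-1},\ccQ,\psi_\Sigma)
	=
	(\Pi,\equiv^\Pi_{\mu-1},\ccP,\psi_\Pi)\conc(H,\equiv^H_{\mu-1},\ccH)
\]
over the same edge $e\in E(G)$. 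The underlying hypergraph system $(G,\ccG)$ has strongly induced copies (this is part of the hypothesis, and it is exactly the hypothesis needed in Lemma~\ref{lem:0052}). Since $g_\mu\ge 2$ — because $\seq{g}\in\gM$ means all its entries are at least $2$ — and $\GTH(\Pi,\equiv^\Pi_{\mu-1},\ccP^+)>g_\mu$ and $\GTH(H,\equiv^H_{\mu-1},\ccH^+)>g_\mu$, Lemma~\ref{lem:0052} applied with $g=g_\mu$ gives $\GTH(\Sigma,\equiv^\Sigma_{\mu-1},\ccQ^+)>g_\mu$. This handles all $\mu\in[\ell,m]$ at once.

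\textbf{Second step: the top-level bound $\GTH(\Sigma,\equiv^\Sigma_m,\ccQ^+)>1$.} Here Lemma~\ref{lem:0052} does not apply directly, since it is stated for $g\ge 2$; instead I would argue by hand using Lemma~\ref{lem:GTH1}. Let $\ccC=F_1xF_2W$ be a big cycle of order $1$ in $(\Sigma,\equiv^\Sigma_m,\ccQ^+)$, with $x$ a vertex and $W$ a wagon of $(\Sigma,\equiv^\Sigma_m)$; by symmetry it suffices to produce an edge $f_1\in E(F_1)\cap E(W)$ with $x\in f_1$, and likewise an $f_2$ for $F_2$. Since $\equiv^\Sigma_m$ is the coarsest relation — recall Definition~\ref{dfn:2206}\ref{it:2206c}, so there is a single $m$-wagon — the wagon $W$ is all of $(\Sigma,\equiv^\Sigma_m)$ (whose only edge class is $E(\Sigma)$), and the requirement reduces to finding an edge of $F_1$ through $x$, i.e. to showing $x$ is non-isolated in $F_1$. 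If $x$ lies on a music line belonging to a single standard copy $\Pi_\star$ of $\Pi$ that also hosts $F_1$, one invokes $\GTH(\Pi,\equiv^\Pi_m,\ccP^+)>1$; if $x\in V(H)$, one passes to the reduct in $(H,\equiv^H_m,\ccH^+)$ as in Stage~B of the proof of Lemma~\ref{lem:0052} and uses $\GTH(H,\equiv^H_m,\ccH^+)>1$ together with Lemma~\ref{lem:GTH1}; the general case is reduced to these two by the segmentation machinery (segments, leaders, reducts) already developed there. The main obstacle in this step is that one genuinely cannot quote Lemma~\ref{lem:0052} as a black box for the order-$1$ level and must reproduce the relevant fragment of its Stage~A/Stage~B argument — Claims~\ref{clm:1327} and~\ref{clm:GTH1} are the relevant templates — but since the order is only $1$ this is a short special case rather than the full induction of Stage~D. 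Combining the two steps and reassembling via Definition~\ref{dfn:0115} yields $\GTH(\seq{\Sigma},\ccQ^+)>\seq{g}$.
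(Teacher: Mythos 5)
Your decomposition is exactly the paper's: unfold Definition~\ref{dfn:0115} into the level-by-level statements, dispatch each level $\mu\in[\ell,m]$ by a direct application of Lemma~\ref{lem:0052}, and handle the top-level order-$1$ bound separately because Lemma~\ref{lem:0052} assumes $g\ge 2$. Step~1 is identical to the paper's argument.

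Step~2 is the right idea but stays at the gestural level, and the case split you propose (``$x$ on a music line of a single standard copy'' versus ``$x\in V(H)$'') is not quite the one that makes the argument work. The clean way — which is what the paper actually does — is to take the big cycle $\ccC=F_1xF_2W_\Sigma$, assign to each of $F_1,F_2$ its leader $\Pi^e_i\in\ccH^+$ (the copy in $\ccH$ extended by the standard copy hosting $F_i$, or $F_i$ itself if it is an edge copy of $H$), and case-split on whether $\Pi^e_1=\Pi^e_2$. If they coincide, both $F_1,F_2$ sit in one standard copy $(\Pi_1,\ccP^+_1)$ and $\GTH(\Pi,\equiv^\Pi_m,\ccP^+)>1$ applies directly via Lemma~\ref{lem:GTH1}. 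If they differ, the key part you omit is the \emph{two-step} descent-and-pull-back: first form the order-$1$ big cycle $\Pi^e_1\,x\,\Pi^e_2\,W^H$ in $(H,\equiv^H_m,\ccH^+)$, using Lemma~\ref{lem:1825} to check its copies have edges, and invoke $\GTH(H,\equiv^H_m,\ccH^+)>1$ with Lemma~\ref{lem:GTH1} to produce an edge $f\in E(\Pi^e_1)$ through $x$; then you must apply $\GTH(\Pi_1,\equiv^{\Pi_1}_m,\ccP^+_1)>1$ a \emph{second time}, to the cycle $F_1\,x\,f^+\,W^{\Pi_1}$, to descend from an edge of $\Pi^e_1$ to the desired edge $f_1\in E(F_1)$. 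Without that explicit pull-back the argument is incomplete — getting an edge of $H$ through $x$ is not yet getting an edge of $F_1$. Your reference to ``the segmentation machinery'' covers this in spirit, but Stage~B of Lemma~\ref{lem:0052} handles cycles of general order; for order $1$ the whole thing collapses to this single descent-and-pull-back step, which is worth writing out rather than leaving implicit.
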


\begin{proof}
	According to Definition~\ref{dfn:0115} the $\GTH$ assumptions mean 
		\begin{enumerate}[label=\nlabel]
		\item\label{it:125-1} $\GTH(\Pi, \equiv^\Pi_{\mu-1}, \ccP^+)>g_\mu$ for
			every $\mu\in [\ell, m]$;
		\item\label{it:125-2} $\GTH(\Pi, \equiv^\Pi_m, \ccP^+)>1$;
		\item\label{it:125-3} $\GTH(H, \equiv^H_{\mu-1}, \ccH^+)>g_\mu$ for
			every $\mu\in [\ell, m]$;
		\item\label{it:125-4} and $\GTH(H, \equiv^H_m, \ccH^+)>1$.
	\end{enumerate}
		Due to Lemma~\ref{lem:0052} the statements~\ref{it:125-1} and~\ref{it:125-3} 
	entail $\GTH(\Sigma, \equiv^\Sigma_{\mu-1}, \ccQ^+)>g_\mu$ for 
	every $\mu\in [\ell, m]$ and thus it only remains to 
	prove $\GTH(\Sigma, \equiv^\Sigma_m, \ccQ^+)>1$.
	
	To this end we consider a big cycle 
		\[
		\ccC=F_1xF_2W_\Sigma
	\]
		in $(\Sigma, \equiv^\Sigma_m)$, where $F_1, F_2\in\ccQ^+$ are distinct copies,~$x$ 
	is a vertex, and $W_\Sigma$ denotes the unique $m$-wagon of $\seq{\Sigma}$.  
	Due to Lemma~\ref{lem:GTH1} and symmetry it suffices to show that 
	there exists an edge $f_1\in E(F_1)$ passing through $x$. 
	 
	If $F_1\not\in E(H)^+$ we denote the standard copy of $(\Pi, \ccP^+)$
	to which $F_1$ belongs by $(\Pi_1, \ccP_1^+)$ and we let 
	$\Pi_1^e\in\ccH$ be the copy extended by $\Pi_1$. If, on the other hand,
	$F_1\in E(H)^+$, then we set $\Pi^e_1=F_1$. So in both cases we 
	have $\Pi^e_1\in\ccH^+$. Let~$\Pi_2^e$ be defined similarly with respect to~$F_2$. 
	
	If $\Pi^e_1=\Pi^e_2$, then $F_1$, $F_2$ belong to a common standard copy 
	and the existence of $f_1$ follows from~\ref{it:125-2}. 
	So we can henceforth assume $\Pi^e_1\ne \Pi^e_2$. Since $\ccC$ is a big cycle,
	each of the copies $F_1$, $F_2$ has at least one edge, and 
	Lemma~\ref{lem:1825} implies $E(\Pi^e_1), E(\Pi^e_2)\ne\vn$. 
	Using the unique $m$-wagon $W^H$ of $\seq{H}$ we can thus form a big cycle 
	$\Pi^e_1x\Pi^e_2W^H$ in $(H, \equiv^H_m, \ccH^+)$. Owing to~\ref{it:125-4}
	and Lemma~\ref{lem:GTH1} there exists an 
	edge $f\in E(\Pi^e_1)$ passing through $x$. 
	
	In the special case $F_1=f^+$ we can simply take $f_1=f$ and otherwise 
	we apply $\GTH(\Pi_1, \equiv^{\Pi_1}, \ccP_1^+)>1$ to the big 
	cycle $F_1xf^+W^{\Pi_1}$, where $W^{\Pi_1}$ denotes the unique $m$-wagon 
	of $\seq{\Pi}_1$. Due to Lemma~\ref{lem:GTH1} we thus obtain the desired edge~$f_1$.
\end{proof}

\subsection{Amenable partite lemmata}
\label{subsec:napl}

Both Proposition~\ref{prop:0142} and Proposition~\ref{prop:0139}
assert that under certain inductive assumptions some karo principle holds. 
A commonality of their proofs is that they end with similar partite constructions 
that can be executed for roughly the same reasons. The main result of this subsection 
explains how this works. This involves the following concepts. 

\begin{dfn}\label{dfn:n1034}
	Let $\seq{g}\in \gM_\le^\times$ be a nonempty nondecreasing sequence. 
	Put $g=\inf(\seq{g})$, $m=|\seq{g}|$, and let $\seq{g}_\star\in\gM_\le$ 
	be obtained from $\seq{g}$ by removing its initial term $g$, so that 
	$\seq{g}=(g)\circ \seq{g}_\star$. 
	\index{Ramsey construction for $\seq{g}$-trains}
	\index{amenable partite lemma}
		\begin{enumerate}[label=\alabel]
		\item	We say that $\Phi$ is a {\it Ramsey construction 
		for $\seq{g}$-trains} if for every ordered $f$-partite
		train $\seq{F}$ of height $m$ with $\ggth(\seq{F})>\seq{g}$ 
		and every number of colours $r$ the train system 
		$\Phi_r(\seq{F})=(\seq{G}, \ccG)$ is defined,
		$\seq{G}$ is a linear ordered $f$-partite train of height~$m$
		with the same parameter as $\seq{F}$, the copies in $\ccG$ 
		are strongly induced, and $\ccG\lra (\seq{F})_r$.		
		\item A partite lemma $\Xi$ is said to be {\it $\seq{g}$-amenable}
		if for every $k$-partite $k$-uniform train~$\seq{F}$ of height $m$ 
		with $\ggth(\seq{F})>\seq{g}$ and every number of colours $r$ it 
		generates a train system $\Xi_r(\seq{F})=(\seq{H}, \ccH)$ such that 
		$\seq{H}$ has the same parameter as $\seq{F}$,
				\[
			\ccH\lra(\seq{F})_r\,, 
			\quad
			\GTH(\seq{H}, \ccH^+)>\seq{g}_\star\,, 
			\quad \text{ and } \quad 
			\Gth(H, \ccH^+)>(g, g)\,.
		\]
			\end{enumerate}
\end{dfn}

\begin{lemma}\label{lem:n1035}
	Let $\seq{g}\in \gM_\le^\times$ be a nonempty nondecreasing sequence. 
	If $\Phi$ denotes a Ramsey construction for $\seq{g}$-trains
	and the partite lemma $\Xi$ is $\seq{g}$-amenable, then $\PC(\Phi, \Xi)$
	exemplifies~$\karo_{\seq{g}}$.
\end{lemma}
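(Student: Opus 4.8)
The plan is to run a single partite construction using $\Phi$ vertically and the $\seq{g}$-amenable partite lemma $\Xi$ horizontally, and then verify that the resulting train system witnesses $\karo_{\seq{g}}$. Write $m=|\seq{g}|$, $g=\inf(\seq{g})$, and $\seq{g}=(g)\circ\seq{g}_\star$. Given an ordered $f$-partite train $\seq{F}$ of height $m$ with $\ggth(\seq{F})>\seq{g}$ and a number of colours $r$, set $(\seq{G},\ccG)=\Phi_r(\seq{F})$; by the definition of a Ramsey construction for $\seq{g}$-trains, $\seq{G}$ is a linear ordered $f$-partite train of height $m$ with the same parameter $\seq{A}$ as $\seq{F}$, the copies in $\ccG$ are strongly induced, and $\ccG\lra(\seq{F})_r$. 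Form the train picture zero $(\seq{\Pi}_0,\ccP_0,\psi_0)$ over $(\seq{G},\ccG)$, fix an enumeration $E(G)=\{e(1),\dots,e(N)\}$ of the edges (each appearing in some copy of $\ccG$, so that $E(\Pi_0^{e})\ne\varnothing$ persists throughout), and recursively build train pictures $(\seq{\Pi}_\alpha,\ccP_\alpha,\psi_\alpha)_{\alpha\le N}$ by amalgamating $(\seq{\Pi}_\alpha,\ccP_\alpha,\psi_\alpha)=(\seq{\Pi}_{\alpha-1},\ccP_{\alpha-1},\psi_{\alpha-1})\conc\Xi_r(\seq{\Pi}_{\alpha-1}^{e(\alpha)})$. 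We then set $\Psi^{\seq{g}}_r(\seq{F})=(\seq{\Pi}_N,\ccP_N)$.

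The verification has four parts, proved by induction along the partite construction. First, \emph{$\Xi$ applies at each step}: one checks inductively that $\ggth(\seq{\Pi}_{\alpha-1})>\seq{g}$, whence by Fact~\ref{f:N107} the constituent $\seq{\Pi}_{\alpha-1}^{e(\alpha)}$ is a $k$-partite $k$-uniform train of height $m$ whose parameter $\seq{D}$ (given by $D_\mu=e(\alpha)\cap V_{A_\mu}(\Pi_{\alpha-1})$) satisfies $\ggth(\seq{\Pi}_{\alpha-1}^{e(\alpha)})>\seq{g}$, so $\Xi_r(\cdot)$ is defined; here one uses that the constituent of a linear train is linear (parameters control the intersections) together with Corollary~\ref{cor:1921}. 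Second, \emph{the parameter is preserved}: picture zero has parameter $\seq{A}$ since $\seq{\Pi}_0$ is a disjoint union of copies of $\seq{F}$, and each amalgamation step preserves it by Lemma~\ref{lem:N128}---whose two hypotheses hold because the horizontal system $\Xi_r(\seq{\Pi}_{\alpha-1}^{e(\alpha)})=(\seq{H}_\alpha,\ccH_\alpha)$ has exactly the parameter $\seq{D}$ demanded there (by $\seq{g}$-amenability it has the same parameter as $\seq{\Pi}_{\alpha-1}^{e(\alpha)}$, which is $\seq{D}$ by Fact~\ref{f:N107}), and $\GTH(\seq{H}_\alpha,\ccH_\alpha^+)>\seq{g}_\star$ together with $\GTH(H_\alpha,\ccH_\alpha^+)$-at-the-top (i.e. $\GTH(H_\alpha,\equiv^{H_\alpha}_m,\ccH_\alpha^+)>1$, part of $\GTH>\seq{g}_\star$ when $|\seq{g}_\star|\ge1$, or of the $(g,g)$ hypothesis via Lemma~\ref{lem:Gth22} and Lemma~\ref{lem:GTH1} when $\seq{g}_\star=\varnothing$) yields $\GTH(H_\alpha,\equiv^{H_\alpha}_\mu,\ccH_\alpha^+)>1$ for every $\mu\in[m]$. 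Third, \emph{strong inducedness and the Ramsey property}: the copies of $\ccP_0$ are disjoint hence strongly induced, and Lemma~\ref{lem:cleancap} (applied with the hypergraph construction $\Phi$, whose copies are strongly induced, and the partite lemma $\Xi$) shows the copies in $\ccP_N$ are strongly induced; the partition relation $\ccP_N\lra(\seq{F})_r$ follows from the usual induction-along-the-partite-construction argument exactly as in \S\ref{sssec:irt}, using that $\ccH_\alpha\lra(\seq{\Pi}_{\alpha-1}^{e(\alpha)})_r$ at each step and $\ccG\lra(\seq{F})_r$ vertically.

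Fourth, and this is \textbf{the main obstacle}, \emph{the $\GTH$ bound} $\GTH(\seq{\Pi}_N,\ccP_N^+)>\seq{g}$, i.e. $\GTH(\Pi_N,\equiv_{\mu-1},\ccP_N^+)>g_\mu$ for every $\mu\in[m]$ together with $\GTH(\Pi_N,\equiv_m,\ccP_N^+)>1$. The top condition propagates by Lemma~\ref{lem:n209} (the $\GTH$-preservation lemma for train pictures) once we know it for picture zero---where it holds by Fact~\ref{f:picnull}---and for each horizontal system $\Xi_r(\seq{\Pi}_{\alpha-1}^{e(\alpha)})$. But Lemma~\ref{lem:n209} only preserves $\GTH>\seq{g}_\star$ (the ``all but the first coordinate'' part), which is precisely what $\seq{g}$-amenability of $\Xi$ gives us; it does \emph{not} by itself deliver $\GTH(\Pi_N,\equiv_0,\ccP_N^+)>g$ (= $g_1$ = $g$). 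For that first coordinate we instead invoke Proposition~\ref{prop:girthclean}: its hypotheses are met because $\Xi$ delivers linear systems with $\Gth(H,\ccH^+)>(g,g)$ (the third clause of $\seq{g}$-amenability) and applies to $k$-partite $k$-uniform hypergraphs $B$ with $\gth(B)>g$ (which holds for the constituents since $\ggth(\seq{\Pi}_{\alpha-1}^{e(\alpha)})>\seq{g}$ forces $\gth>g$ by Lemma~\ref{lem:1557}, treating the height-$m$ train as a hypergraph equipped with extra structure); Proposition~\ref{prop:girthclean} then yields $\Gth(\Pi_N,\ccP_N^+)>g$, which by Lemma~\ref{lem:0217} is exactly $\GTH(\Pi_N,\equiv_0,\ccP_N^+)>g$. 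Combining the two sources---Lemma~\ref{lem:n209} for coordinates $2,\dots,m$ and the top, and Proposition~\ref{prop:girthclean} for coordinate $1$---gives $\GTH(\seq{\Pi}_N,\ccP_N^+)>\seq{g}$, completing the proof that $\PC(\Phi,\Xi)$ exemplifies $\karo_{\seq{g}}$. The delicate bookkeeping is making sure the $\Gth(\cdot)>(g,g)$ hypothesis of $\Xi$ survives into the constituents at every stage and meshes with the parameter-preservation argument; once that is in place, the rest is a routine simultaneous induction.
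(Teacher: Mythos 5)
Your proof is essentially correct and mirrors the paper's approach: you set up the same partite construction, identify the same decomposition of the $\GTH$ requirement into the $\equiv_0$-coordinate and the remaining coordinates, and use the same battery of lemmata (Fact~\ref{f:N107}, Corollary~\ref{cor:1921}, Lemma~\ref{lem:N128}, Lemma~\ref{lem:n209}, Lemma~\ref{lem:0217}, Lemma~\ref{lem:cleancap}, Fact~\ref{f:picnull}).

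The one slip is citing Proposition~\ref{prop:girthclean} for the $\equiv_0$-coordinate: it cannot be invoked literally, because its hypothesis requires $\Xi$ to be a partite lemma applicable to \emph{all} $k$-partite $k$-uniform hypergraphs $B$ with $\gth(B)>g$, whereas the $\Xi$ here is only defined on $k$-partite $k$-uniform trains of height $m$ with $\ggth>\seq{g}$ and can be applied to the constituents $\seq{\Pi}^{e(\alpha)}_{\alpha-1}$ only because they inherit a train structure from the train picture; a bare hypergraph with $\gth>g$ does not carry such a structure, so the construction $\PC(\Phi,\Xi)$ in the sense of Proposition~\ref{prop:girthclean} is not even defined. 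What is actually needed—and what you gesture at when you call the argument a ``simultaneous induction''—is to iterate the underlying picturesque statement, Lemma~\ref{lem:251}, at each amalgamation step alongside Lemma~\ref{lem:n209}, with two applications of Lemma~\ref{lem:0217} per step to translate back and forth between $\Gth(\Pi_\alpha,\ccP_\alpha^+)>g$ and $\GTH(\Pi_\alpha,\equiv_0,\ccP_\alpha^+)>g$. Once this substitution is made, your argument coincides with the paper's.
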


\begin{proof}
	Consider an arbitrary ordered $f$-partite train $\seq{F}$ 
	with parameter $\seq{A}$ and $\ggth(\seq{F})>\seq{g}$  
	as well as number of colours $r$, and construct the train 
	system $\Phi_r(\seq{F})=(\seq{G}, \ccG)$. 
	Since $\Phi$ is a Ramsey construction for $\seq{g}$-trains,
	we know that $\seq{G}$ is a linear ordered $f$-partite train 
	with parameter $\seq{A}$, that the copies in $\ccG$ are strongly induced, 
	and that $\ccG\lra(\seq{F})_r$. Without loss of generality we can assume 
	that every edge of $G$ belongs to at least one copy in $\ccG$.
	
	As usual we let $\{e(1), \dots, e(N)\}$ enumerate the edges of the 
	underlying hypergraph of~$\seq{G}$. Now we run the partite 
	construction, thereby creating a sequence 
	$(\seq{\Pi}_\alpha, \ccP_\alpha, \psi_\alpha)_{\alpha\le N}$
	of train pictures. Picture zero can clearly be formed and by Fact~\ref{f:picnull}
	it has the property 
	$\GTH(\seq{\Pi}_0, \ccP^+_0)>\seq{g}$. 
	
	Now suppose that for some $\alpha\in [N]$ we have reached 
	the picture $(\seq{\Pi}_{\alpha-1}, \ccP_{\alpha-1}, \psi_{\alpha-1})$
	satisfying 
	$\GTH(\seq{\Pi}_{\alpha-1}, \ccP^+_{\alpha-1})>\seq{g}$.
	Corollary~\ref{cor:1921} entails 
	${\ggth(\seq{\Pi}_{\alpha-1}^{e(\alpha)})>\seq{g}}$ and the 
	parameter $\seq{D}_\alpha$ of $\seq{\Pi}_{\alpha-1}^{e(\alpha)}$
	has been described in Fact~\ref{f:N107}.
	By our assumptions on $\Xi$ there exists a train system 
		\[
		\Xi_r\bigl(\seq{\Pi}_{\alpha-1}^{e(\alpha)}\bigr)
		=
		(\seq{H}_\alpha, \ccH_\alpha)
	\]
		of height $m$ with parameter $\seq{D}_\alpha$ that satisfies 
		\[
		\GTH(\seq{H}_\alpha, \ccH^+_\alpha) > \seq{g}_\star
		\quad \text{ and } \quad 
		\Gth(H_\alpha, \ccH^+_\alpha) > (g, g)\,,
	\]
		where $\seq{g}=(g)\circ \seq{g}_\star$.
	In view of Lemma~\ref{lem:N128} the amalgamation 
		\[
		(\seq{\Pi}_\alpha, \ccP_\alpha, \psi_\alpha)
		=
		(\seq{\Pi}_{\alpha-1}, \ccP_{\alpha-1}, \psi_{\alpha-1})
		\conc
		(\seq{H}_\alpha, \ccH_\alpha)
	\]
		provides the next train picture over $(\seq{G}, \ccG)$ 
	and in order to keep the construction going we need to check 
	$\GTH(\seq{\Pi}_{\alpha}, \ccP_{\alpha}^+)>\seq{g}$. 
	
	To this end we observe that the hypothesis 
	$\GTH(\seq{\Pi}_{\alpha-1}, \ccP^+_{\alpha-1})>\seq{g}$ is equivalent to 
	the conjunction of
		\[
		\GTH(\Pi_{\alpha-1}, \equiv^{\Pi_{\alpha-1}}_0, \ccP^+_{\alpha-1})>g		
		\quad \text{ and } \quad 
		\GTH(\seq{\Pi}_{\alpha-1}, \ccP^+_{\alpha-1})>\seq{g}_\star\,.
	\]
		So Lemma~\ref{lem:n209} immediately yields 
		\begin{equation}\label{eq:n553}
		\GTH(\seq{\Pi}_{\alpha}, \ccP^+_{\alpha})>\seq{g}_\star
	\end{equation}
		and Lemma~\ref{lem:0217} tells us $\Gth(\Pi_{\alpha-1}, \ccP_{\alpha-1}^+)>g$.
	Combined with ${\Gth(H_\alpha, \ccH^+_\alpha) > (g, g)}$
	and Lemma~\ref{lem:251} this leads to $\Gth(\Pi_\alpha, \ccP_\alpha^+)>g$,
	and a further application of Lemma~\ref{lem:0217} translates this back to 
	$\GTH(\seq{\Pi}_{\alpha}, \equiv^{\Pi_\alpha}_0, \ccP_{\alpha}^+)>g$.
	Together with~\eqref{eq:n553} this establishes
	$\GTH(\seq{\Pi}_\alpha, \ccP_\alpha^+)>\seq{g}$ and thus the construction 
	can be continued. 
	
	Eventually, it produces a final picture $(\seq{\Pi}_N, \ccP_N, \psi_N)$
	such that $\GTH(\seq{\Pi}_N, \ccP_N^+)>\seq{g}$. Since $(G, \ccG)$ has 
	strongly induced copies, the same applies to $(\Pi_N, \ccP_N)$ 
	(see Lemma~\ref{lem:cleancap} and its proof). So altogether, when viewed
	as an ordered $f$-partite structure the last picture is as required 
	by~$\karo_{\seq{g}}$.
\end{proof}
		    
\subsection{Girth resurrection}
\label{subsec:ngr}

Now we shall finally complete the proof of Proposition~\ref{prop:0142}.
So we consider an arbitrary nonempty nondecreasing sequence 
$\seq{g}\in\gM^\times_\le$ and assume $\karo_{\seq{g}}$.
Let~$\ups$ be the construction provided by Lemma~\ref{lem:1904}.
Evidently $\ups$ is a Ramsey construction for $((2)\circ\seq{g})$-trains. 
So in view of Lemma~\ref{lem:n1035} it only remains to exhibit an
$((2)\circ\seq{g})$-amenable partite lemma. 

By restricting our attention to unordered $k$-partite $k$-uniform trains
we can regard $\ups$ as a partite lemma for trains and 
in this manner $\Xi=\PC(\ups, \ups)$ becomes a partite lemma as well. 
It will turn out that $\Xi$ has the required amenability property. 

\begin{lemma}\label{lem:2222}
	For every $k$-partite $k$-uniform train $\seq{F}$ of height~$m+1$  
	with $\ggth(\seq{F})>(2)\circ\seq{g}$ and every number of colours $r$ 
	the train system $\Xi_r(\seq{F})=(\seq{H}, \ccH)$ is defined.
	Moreover, the train $\seq{H}$ has the same parameter 
	as $\seq{F}$, the copies in $\ccH$ are strongly induced, their intersections 
	are clean, and $\GTH(\seq{H}, \ccH^+)>\seq{g}$.
\end{lemma}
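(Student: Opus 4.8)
The plan is to run the partite construction $\PC(\ups,\ups)$, maintain a $\GTH$ invariant along it, and then recover strong inducedness, clean intersections and ordinary girth at the very end. First I would put $(\seq{G},\ccG)=\ups_r(\seq{F})$, which is legitimate because $\ggth(\seq{F})>(2)\circ\seq{g}$; by Lemma~\ref{lem:1904} (with $m=|\seq{g}|$) the train $\seq{G}$ is a $k$-partite $k$-uniform train of height $m+1$ with the same parameter $\seq{A}$ as $\seq{F}$, the copies in $\ccG$ are strongly induced, $\ccG\lra(\seq{F})_r$, and $\GTH(\seq{G},\ccG^+)>\seq{g}$; in particular $G$ is linear. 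After discarding the edges of $G$ not covered by any copy in $\ccG$ and enumerating $E(G)=\{e(1),\dots,e(N)\}$, I would form the train picture zero $(\seq{\Pi}_0,\ccP_0,\psi_0)$ over $(\seq{G},\ccG)$; its underlying quasitrain is a disjoint union of copies of $\seq{F}$, so Fact~\ref{f:picnull} gives $\GTH(\seq{\Pi}_0,\ccP_0^+)>(2)\circ\seq{g}$ and hence, a fortiori, the weaker statement $\GTH(\seq{\Pi}_0,\ccP_0^+)>\seq{g}$. This last statement---a ``$\ell=2$'' $\GTH$ bound in the sense of Definition~\ref{dfn:0115}, since $|\seq{g}|=m$ while the height is $m+1$---is the invariant to be propagated.

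For the inductive step, assume a train picture $(\seq{\Pi}_{\alpha-1},\ccP_{\alpha-1},\psi_{\alpha-1})$ over $(\seq{G},\ccG)$ with parameter $\seq{A}$ and $\GTH(\seq{\Pi}_{\alpha-1},\ccP_{\alpha-1}^+)>\seq{g}$. By Fact~\ref{f:N107} the constituent $\seq{\Pi}_{\alpha-1}^{e(\alpha)}$ is a $k$-partite $k$-uniform train of height $m+1$ whose parameter has $\mu$-th term $e(\alpha)\cap V_{A_\mu}(G)$; moreover, restricting the invariant to the constituent, passing to edge copies, and invoking Lemma~\ref{lem:2310} together with Corollary~\ref{cor:1921} exactly as in the proof of Lemma~\ref{lem:n1035} (the lowest-level linearity requirement coming for free from $G$ being linear), one obtains $\ggth(\seq{\Pi}_{\alpha-1}^{e(\alpha)})>(2)\circ\seq{g}$. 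Thus $\ups_r$ applies to this constituent and yields a $k$-partite $k$-uniform train system $(\seq{H}_\alpha,\ccH_\alpha)$ of height $m+1$ with the same parameter, strongly induced copies, $\ccH_\alpha\lra(\seq{\Pi}_{\alpha-1}^{e(\alpha)})_r$, and $\GTH(\seq{H}_\alpha,\ccH_\alpha^+)>\seq{g}$; in particular $\GTH(H_\alpha,\equiv_\mu,\ccH_\alpha^+)>1$ for every $\mu\in[m+1]$. Forming $(\seq{\Pi}_\alpha,\ccP_\alpha,\psi_\alpha)=(\seq{\Pi}_{\alpha-1},\ccP_{\alpha-1},\psi_{\alpha-1})\conc(\seq{H}_\alpha,\ccH_\alpha)$, Lemma~\ref{lem:N128}---whose hypothesis on the parameter of $\seq{H}_\alpha$ is exactly the formula just quoted from Fact~\ref{f:N107}---shows that this is again a train picture over $(\seq{G},\ccG)$ with parameter $\seq{A}$, while Lemma~\ref{lem:n209} shows $\GTH(\seq{\Pi}_\alpha,\ccP_\alpha^+)>\seq{g}$. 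So the invariant persists and the construction proceeds.

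The final picture $(\seq{\Pi}_N,\ccP_N)=\Xi_r(\seq{F})=(\seq{H},\ccH)$ is therefore a well-defined $k$-partite $k$-uniform train system of height $m+1$ with parameter $\seq{A}$, its copies are induced partite copies (they project via $\psi_N$ onto the partite copies in $\ccG$), one has $\ccH\lra(\seq{F})_r$ for the usual reason, and $\GTH(\seq{H},\ccH^+)>\seq{g}$; this last fact presupposes $H$ linear, so $(H,\ccH)$ is a linear system. Since the vertical construction $\ups$ delivers strongly induced copies, running the argument of Lemma~\ref{lem:cleancap} along this partite construction shows that the copies in $\ccH$ are strongly induced and have clean intersections. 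Finally, applying Lemma~\ref{lem:Gth22} to the linear system $(H,\ccH)$ with strongly induced copies and clean intersections gives $\Gth(H,\ccH^+)>(2,2)$. This establishes every clause of the lemma, and since $\inf((2)\circ\seq{g})=2$ and the tail of $(2)\circ\seq{g}$ is $\seq{g}$, the conjunction of $\GTH(\seq{H},\ccH^+)>\seq{g}$ and $\Gth(H,\ccH^+)>(2,2)$ is precisely what Definition~\ref{dfn:n1034} demands of an $((2)\circ\seq{g})$-amenable partite lemma.

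The step I expect to be the main obstacle is the bookkeeping around Definition~\ref{dfn:0115}: the invariant $\GTH(\seq{\Pi}_\alpha,\ccP_\alpha^+)>\seq{g}$ pairs a sequence of length $m$ with a quasitrain of height $m+1$, so it is the ``$\ell=2$'' form and carries no information about the level-$0$ equivalence relation. One must check carefully that this weakened invariant is nonetheless (i) strong enough to supply $\ggth(\seq{\Pi}_{\alpha-1}^{e(\alpha)})>(2)\circ\seq{g}$ at the next step---fine because the only extra ingredient, linearity of the $1$-wagons, follows from linearity of the constituent---and (ii) preserved by Lemma~\ref{lem:n209}, which indeed uses only the ``$\ell=2$'' hypothesis on both factors. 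The companion subtlety, and the reason one works with $\Xi=\PC(\ups,\ups)$ rather than $\ups$ itself, is that $\ups$ gives no $\GTH$ control at level $0$, so the bound $\Gth(H,\ccH^+)>(2,2)$ cannot be read off from the $\GTH$ invariant and must instead be recovered from the clean intersections that the partite construction manufactures.
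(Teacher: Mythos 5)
Your proof is correct and follows essentially the paper's route: set $(\seq{G},\ccG)=\ups_r(\seq{F})$, run the partite construction with $\ups$ horizontally, maintain the $\ell{=}2$ form of the $\GTH$ invariant via Lemmas~\ref{lem:N128} and~\ref{lem:n209}, and derive $\ggth$ of each constituent from the invariant to feed back into $\ups_r$. The small organisational difference is that the paper carries strong inducedness and clean intersections explicitly inside the inductive invariant (re-establishing them each step via the inductive claim inside Lemma~\ref{lem:cleancap}), whereas you keep only the $\GTH$ bound and recover the others once at the end by applying Lemma~\ref{lem:cleancap} as a black box; this works, because the inductive step genuinely consumes only the $\GTH$ part, and is slightly cleaner.

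Two minor corrections. First, the lowest-level girth requirement $\gth(W_1)>2$ for $1$-wagons of the constituent follows from the linearity of $\Pi_{\alpha-1}$, which is already built into the invariant $\GTH(\seq{\Pi}_{\alpha-1},\ccP_{\alpha-1}^+)>\seq{g}$ via Definition~\ref{dfn:1820}; attributing it to the linearity of $G$ is not quite right, since $G$ linear does not by itself make the picture linear. Second, Corollary~\ref{cor:1921} is stated for a quasitrain whose height equals the length of $\seq{g}$, so in your height-$(m{+}1)$ / length-$m$ situation it does not apply verbatim; one should cite Lemma~\ref{lem:2310} level by level, as the paper does.
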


\begin{proof}
	Owing to Lemma~\ref{lem:1904} the $k$-partite $k$-uniform train 
	system $\ups_r(\seq{F})=(\seq{G}, \ccG)$
	satisfies $\ccG\lra (\seq{F})_r$, its copies are strongly induced, and the 
	parameters of $\seq{F}$ and $\seq{G}$ are the same. Without loss of generality 
	we can assume that every edge of $G$ belongs to some copy in $\ccG$.
	
	Let $E(G)=\{e(1), \dots, e(N)\}$ enumerate the edges of $G$. We will 
	create a sequence $(\seq{\Pi}_\alpha, \ccP_\alpha, \psi_\alpha)_{\alpha\le N}$
	of train pictures over $(\seq{G}, \ccG)$ by means of the partite construction 
	method. Clearly, picture zero $(\seq{\Pi}_0, \ccP_0, \psi_0)$ 
	exists and it has strongly induced copies with clean intersections. Moreover,
	Fact~\ref{f:picnull} yields $\GTH(\seq{\Pi}_0, \ccP_0^+)>(2)\circ \seq{g}$, 
	which implies, in particular, $\GTH(\seq{\Pi}_0, \ccP_0^+)>\seq{g}$.
	
	Now suppose inductively that for some positive $\alpha\le N$ we have just constructed 
	a train picture $(\seq{\Pi}_{\alpha-1}, \ccP_{\alpha-1}, \psi_{\alpha-1})$
	such that the copies in $\ccP_{\alpha-1}$ are strongly induced, their intersections 
	are clean, and $\GTH(\seq{\Pi}_{\alpha-1}, \ccP_{\alpha-1}^+)>\seq{g}$.
	   
	Combined with Lemma~\ref{lem:2310} and the linearity of $\Pi_{\alpha-1}$ this 
	shows, in particular, that $\ggth(\seq{\Pi}_{\alpha-1}^{e(\alpha)})>(2)\circ\seq{g}$
	and thus Lemma~\ref{lem:1904} yields a $k$-partite $k$-uniform train system
		\[
		\ups_r(\seq{\Pi}_{\alpha-1}^{e(\alpha)})=(\seq{H}_\alpha, \ccH_\alpha)
	\]
		with the correct parameter satisfying $\GTH(\seq{H}_\alpha, \ccH_\alpha)>\seq{g}$.
	Due to Lemma~\ref{lem:N128} the next train picture 
		\[
		(\seq{\Pi}_\alpha, \ccP_\alpha, \psi_\alpha)
		=
		(\seq{\Pi}_{\alpha-1}, \ccP_{\alpha-1}, \psi_{\alpha-1})
		\conc
		(\seq{H}_\alpha, \ccH_\alpha)
	\]
		exists. By Lemma~\ref{lem:cleancap} the copies in $\ccP_\alpha$ are again 
	strongly induced and their intersections are clean. Moreover, Lemma~\ref{lem:n209}
	tells us $\GTH(\seq{\Pi}_\alpha, \ccP_\alpha^+)>\seq{g}$. 
	Thus the construction goes on and the final train picture we eventually reach has the 
	desired properties. 	
\end{proof}

By Lemma~\ref{lem:Gth22} the systems $(\seq{H}, \ccH)$ 
produced by $\Xi$ satisfy $\Gth(H, \ccH)>(2, 2)$ and, consequently, $\Xi$ 
is indeed $((2)\circ\seq{g})$-amenable. Thus the discussion at the beginning 
of this subsection shows that the construction $\PC(\ups, \Xi)$ exemplifies 
$\karo_{(2)\circ\seq{g}}$. This concludes the proof of Proposition~\ref{prop:0142}.  
For later reference we formulate a straightforward consequence of 
the results we currently have. 

\begin{cor}\label{cor:k2l}
	For every $\seq{g}\in \gM^\times$ there exists a Ramsey construction
	for $\seq{g}$-trains. 
\end{cor}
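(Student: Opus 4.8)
The plan is to deduce this from Corollary~\ref{cor:1412}, which already furnishes $\karo_{\seq{g}'}$ for every nondecreasing $\seq{g}'\in\gM^\times_\le$, by choosing the cheapest possible $\seq{g}'$, namely $(2)^m$ where $m=|\seq{g}|$. The point is that the elements of $\gM$ have all their entries in $\NN_{\ge 2}$, so writing $\seq{g}=(g_1,\dots,g_m)$ we have $g_\mu\ge 2$ for every $\mu$; hence, by Definition~\ref{dfn:0027}, any train $\seq{F}$ with $\ggth(\seq{F})>\seq{g}$ also satisfies $\ggth(\seq{F})>(2)^m$, because each $\mu$-wagon $W_\mu$ already has $\ggth(W_\mu,\equiv^{W_\mu}_{\mu-1})>g_\mu\ge 2$.

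So first I would apply Corollary~\ref{cor:1412} to obtain $\karo_{(2)^m}$, witnessed by a construction $\Psi=\Psi^{(2)^m}$ as in Definition~\ref{dfn:0121}, and then define $\Phi$ by $\Phi_r(\seq{F})=\Psi_r(\seq{F})$ for every ordered $f$-partite train $\seq{F}$ of height $m$ with $\ggth(\seq{F})>\seq{g}$ and every number of colours $r$. The previous paragraph shows $\Psi_r(\seq{F})$ is indeed defined; write $\Psi_r(\seq{F})=(\seq{H},\ccH)$, where $\seq{H}=(H,\equiv^H_0,\dots,\equiv^H_m)$. By $\karo_{(2)^m}$ the train $\seq{H}$ is an ordered $f$-partite train of height $m$ with the same parameter as $\seq{F}$, the copies in $\ccH$ are strongly induced, $\ccH\lra(\seq{F})_r$, and $\GTH(\seq{H},\ccH^+)>(2)^m$. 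All of this matches Definition~\ref{dfn:n1034}(a) except for the requirement that $\seq{H}$ be linear, which I would verify next.

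The only point that needs a word is this linearity. By Definition~\ref{dfn:0115} the inequality $\GTH(\seq{H},\ccH^+)>(2)^m$ entails $\GTH(H,\equiv^H_{\mu-1},\ccH^+)>2$ for every $\mu\in[m]$, and by Definition~\ref{dfn:1820} the latter presupposes that the pretrain $(H,\equiv^H_{\mu-1})$ is linear, equivalently (by the remark following Definition~\ref{dfn:2256}) that $\ggth(H,\equiv^H_{\mu-1})>2$. Restricting this last inequality to any single $\mu$-wagon $W_\mu$ of $\seq{H}$ gives $\ggth(W_\mu,\equiv^{W_\mu}_{\mu-1})>2$ for all $\mu\in[m]$, i.e.\ $\ggth(\seq{H})>(2)^m$, which is exactly the assertion that $\seq{H}$ is a linear quasitrain; alternatively one may invoke Corollary~\ref{cor:1921} after noting that $\GTH$ cannot increase when the system of copies is enlarged, so that $\GTH(\seq{H},E^+(H))>(2)^m$. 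Hence $\Phi$ is a Ramsey construction for $\seq{g}$-trains. No genuine obstacle arises in this argument; the whole content is the observation that weakening the parameter from $\seq{g}$ down to $(2)^m$ costs nothing on the input side while still yielding linearity on the output side.
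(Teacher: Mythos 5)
Your central observation — that a train with $\ggth(\seq{F})>\seq{g}$ automatically has $\ggth(\seq{F})>(2)^m$ (since every entry of $\seq{g}$ is $\ge 2$), so a construction exemplifying $\karo_{(2)^m}$ serves as a Ramsey construction for $\seq{g}$-trains — is precisely the paper's argument, and your verification of linearity via Definition~\ref{dfn:1820} is a nice expansion of what the paper leaves as ``clear''. However, there is a genuine problem with \emph{where} you obtain $\karo_{(2)^m}$: you invoke Corollary~\ref{cor:1412}, and this creates a circular dependence. Corollary~\ref{cor:1412} is derived (in \S\ref{sssec:Karo}) from Proposition~\ref{prop:0142} \emph{and} Proposition~\ref{prop:0139} jointly, and the proof of Proposition~\ref{prop:0139} is only completed in \S\ref{sssec:1746}--\S\ref{sssec:1748}, \emph{after} the present corollary; moreover, that proof explicitly uses Corollary~\ref{cor:k2l} (e.g.\ the proof of Lemma~\ref{lem:Xig} picks ``a Ramsey construction $\Phi$ for $((g+1)\circ\seq{g})$-trains (cf.\ Corollary~\ref{cor:k2l})''). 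So you cannot appeal to Corollary~\ref{cor:1412} here without begging the question.

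The repair is cheap, and is exactly what the paper does: $\karo_{(2)^m}$ for every positive integer $m$ follows from Lemma~\ref{lem:0120} (the base case $\karo_{(2)}$) together with $m-1$ iterated applications of Proposition~\ref{prop:0142} (the step $\karo_{\seq{g}'}\Rightarrow\karo_{(2)\circ\seq{g}'}$), both of which are fully proved at this point in the development and, crucially, make no use of Proposition~\ref{prop:0139}. Replacing the citation of Corollary~\ref{cor:1412} by this direct derivation leaves the rest of your argument intact and removes the circularity.
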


\begin{proof}
	Due to Lemma~\ref{lem:0120} and Proposition~\ref{prop:0142} we have 
	$\karo_{(2)^m}$ for every positive integer~$m$. Clearly, every construction 
	exemplifying $\karo_{(2)^m}$ for $m=|\seq{g}|$ is a Ramsey construction 
	for $\seq{g}$-trains.
\end{proof}

\subsection{Revisability}
\label{sssec:1746}

The remainder of this section is devoted to the proof of Proposition~\ref{prop:0139}. 
To this end we fix a (possibly empty) nondecreasing sequence $\seq{g}$ as well 
as an integer $g$ such that $\inf(\seq{g}) > g \ge 2$. Let $\ell$ be the length 
of $\seq{g}$ and write $\seq{g}=(g_1, \dots, g_\ell)$. We assume that
for every positive integer $m$ there exists a construction $\Psi^m$
exemplifying the principle $\karo_{(g)^m\circ\seq{g}}$. 
Our deduction of $\karo_{(g+1)\circ\seq{g}}$ consists of three steps. 
\begin{enumerate}
	\item[$\bullet$] First, we explain how the constructions $\Psi^m$
		are going to be used. 
	\item[$\bullet$] Second, we perform a ``diagonal'' partite construction 
		in order to derive 
		a $((g+1)\circ\seq{g})$-amenable partite lemma. 
	\item[$\bullet$] Third, we conclude $\karo_{(g+1)\circ\seq{g}}$.
\end{enumerate}	

The given constructions $\Psi^m$ will only be used as partite  lemmata, i.e., 
we shall only apply them to unordered $k$-partite $k$-uniform trains. 
More precisely, each $\Psi^m$ will be reinterpreted as a partite lemma applicable 
to certain trains $\seq{F}$ of height $\ell+1$ that will be called $m$-revisable.
Here is the definition of this concept. 

\begin{dfn}\label{dfn:N206}
	A $k$-partite $k$-uniform train $\seq{F}=(F, \equiv_0, \dots, \equiv_{\ell+1})$ 
	of height $\ell+1$ with index set $I$ and 
	parameter $\seq{A}=(A_1, \dots, A_{\ell+1})$ 
		is said to be {\it $m$-revisable} for 
	a positive integer $m$ if there exist equivalence 
	relations $\sim_1, \dots, \sim_{m-1}$ on $E(F)$ and an 
	$m$-tuple $\seq{B}=(B_1, \dots, B_m)\in \powerset(A_1)^m$
	such that 
		\begin{enumerate}
		\item[$\bullet$] $\seq{F}_\bullet
		=(F, \equiv_0, \sim_1, \dots, \sim_{m-1}, \equiv_1, \ldots, \equiv_{\ell+1})$ 
		is a train of height $\ell+m$ with 
		parameter $\seq{B}\circ(A_2, \dots, A_{\ell+1})$; 
	\item[$\bullet$] $\ggth(\seq{F}_\bullet)>(g)^m\circ \seq{g}$;
		\item[$\bullet$] and $|B_\mu|\le 1$ for every $\mu\in [m]$.
	\end{enumerate}
	In this situation we call the pair $(\seq{F}_\bullet, \seq{B})$ an 
	{\it $m$-revision} of $\seq{F}$.
	\index{revisability}
\end{dfn}

We illustrate this notion with an easy example that will later 
be applied to the constituents of a train picture zero. 

\begin{fact}\label{f:pnc}
	Let $\seq{F}$ be a $k$-partite $k$-uniform train of height $\ell+1$. 
	If its underlying hypergraph is a matching, possibly together with some 
	isolated vertices, then $\seq{F}$ is $1$-revisable. 
\end{fact}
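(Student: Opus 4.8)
The plan is to produce an explicit $1$-revision of $\seq{F}$ as required by Definition~\ref{dfn:N206}. Since here $m=1$, the list of auxiliary equivalence relations $\sim_1,\dots,\sim_{m-1}$ is empty, so the enlarged train $\seq{F}_\bullet$ is forced to equal $\seq{F}$ itself, namely $\seq{F}_\bullet=(F,\equiv_0,\equiv_1,\dots,\equiv_{\ell+1})$; the only choice left open is the one-term sequence $\seq{B}=(B_1)\in\powerset(A_1)^1$, where $\seq{A}=(A_1,\dots,A_{\ell+1})$ is the parameter of $\seq{F}$. I would simply take $B_1=\varnothing$, so that $|B_1|=0\le 1$ holds automatically and the modified parameter becomes $\seq{B}\circ(A_2,\dots,A_{\ell+1})=(\varnothing,A_2,\dots,A_{\ell+1})$. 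Then it remains to verify the two substantive clauses of Definition~\ref{dfn:N206}: that $\seq{F}_\bullet$ is a train of height $\ell+1$ with parameter $(\varnothing,A_2,\dots,A_{\ell+1})$, and that $\ggth(\seq{F}_\bullet)>(g)\circ\seq{g}$.

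The one observation that makes everything go through is that when the underlying hypergraph of $\seq{F}$ is a matching (plus isolated vertices), any two distinct wagons of any of the pretrains $(F,\equiv_\mu)$ are \emph{vertex-disjoint}: a wagon has no isolated vertices, so its vertex set is the union of its edges, and two edges lying in different $\equiv_\mu$-classes are disjoint. Consequently, for the train condition Definition~\ref{dfn:3140}\ref{it:3140b} applied to $\seq{F}_\bullet$ with the new parameter, whenever $\mu\in[\ell+1]$ and $W'_{\mu-1},W''_{\mu-1}$ are distinct $(\mu-1)$-wagons inside a common $\mu$-wagon we simply have $V(W'_{\mu-1})\cap V(W''_{\mu-1})=\varnothing$, which is contained in $V_{B_\mu}(F)$ for \emph{any} choice of $B_\mu$ --- in particular for $B_1=\varnothing$ at level $\mu=1$ and for $B_\mu=A_\mu$ at levels $\mu\ge 2$. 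Since $\seq{F}_\bullet=\seq{F}$ is already an $f$-partite quasitrain of height $\ell+1$, this confirms it is a train with the asserted parameter.

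For the girth bound I would unwind Definition~\ref{dfn:0027}: it suffices to show that for every $\mu\in[\ell+1]$ and every $\mu$-wagon $W_\mu$ of $\seq{F}_\bullet$ the quantity $\ggth(W_\mu,\equiv^{W_\mu}_{\mu-1})$ exceeds the entry of $(g)\circ\seq{g}$ with index $\mu$. But $W_\mu$ is a subhypergraph of a matching, hence itself a matching, hence linear and acyclic; and by the vertex-disjointness observation its $(\mu-1)$-wagons are pairwise vertex-disjoint, so no cyclic sequence $W_1q_1\ldots W_nq_n$ of distinct $(\mu-1)$-wagons of length $n\ge 2$ with $q_i\in V(W_i)\cap V(W_{i+1})$ can exist. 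By the explicit reformulation of Definition~\ref{dfn:2256} via Fact~\ref{fact:231a} this yields $\ggth(W_\mu,\equiv^{W_\mu}_{\mu-1})=\infty$, which is far more than needed. Combined with $|B_1|\le 1$, this shows that $(\seq{F},(\varnothing))$ is a $1$-revision of $\seq{F}$, so $\seq{F}$ is $1$-revisable. I do not expect a genuine obstacle here: the entire content is the vertex-disjointness of distinct wagons in a matching, and everything else is checking definitions.
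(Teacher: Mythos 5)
Your proof is correct and takes essentially the same route as the paper: both hinge on the single observation that in a matching any two distinct wagons (at any level) are vertex-disjoint, so the train condition with the shrunk parameter $(\varnothing,A_2,\dots,A_{\ell+1})$ holds vacuously and $\ggth(\seq{F})>(g)\circ\seq{g}$ is automatic. You simply spell out the definitional bookkeeping that the paper's two-line proof leaves implicit.
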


\begin{proof}
	For every $\lambda\in [\ell+1]$ any two distinct $(\lambda-1)$-wagons 
	of $\seq{F}$ are vertex-disjoint. Thus we 
	have $\ggth(\seq{F})>(g)\circ \seq{g}$ and $(\seq{F}, (\vn))$
	is a $1$-revision of $\seq{F}$. 
\end{proof}

Here is a necessary condition for revisability that could be shown to be quite
far from being sufficient. 

\begin{fact}\label{f:revis}
	If a $k$-partite $k$-uniform train of height $\ell+1$ is $m$-revisable 
	for some positive integer $m$, then $\ggth(\seq{F})>(g+1)\circ \seq{g}$.
\end{fact}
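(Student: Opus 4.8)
The claim is: if a $k$-partite $k$-uniform train $\seq{F}$ of height $\ell+1$ is $m$-revisable for some positive integer $m$, then $\ggth(\seq{F})>(g+1)\circ\seq{g}$. Here $\seq{g}=(g_1,\dots,g_\ell)$ with $\inf(\seq{g})>g\ge 2$, so $(g+1)\circ\seq{g}$ is the sequence $(g+1,g_1,\dots,g_\ell)$ of length $\ell+1$. By Definition~\ref{dfn:0027} I must show that for every $\mu\in[\ell+1]$ and every $\mu$-wagon $W_\mu$ of $\seq{F}$ one has $\ggth(W_\mu,\equiv_{\mu-1}^{W_\mu})>(g+1)$ if $\mu=1$, and $>g_{\mu-1}$ if $\mu\ge 2$.

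**The plan.** Fix an $m$-revision $(\seq{F}_\bullet,\seq{B})$ of $\seq{F}$, where $\seq{F}_\bullet=(F,\equiv_0,\sim_1,\dots,\sim_{m-1},\equiv_1,\dots,\equiv_{\ell+1})$ is a train of height $\ell+m$ with parameter $\seq{B}\circ(A_2,\dots,A_{\ell+1})$, satisfying $\ggth(\seq{F}_\bullet)>(g)^m\circ\seq{g}$ and $|B_\mu|\le1$ for all $\mu\in[m]$. First I would handle the indices $\mu\in[2,\ell+1]$: for such $\mu$ the $\mu$-wagons of $\seq{F}$ coincide with certain wagons of $\seq{F}_\bullet$ (namely the $(\mu+m-1)$-wagons, since the tail $\equiv_1,\dots,\equiv_{\ell+1}$ of $\seq{F}_\bullet$ is exactly $\equiv_1,\dots,\equiv_{\ell+1}$ of $\seq{F}$ and $\equiv_{\mu}^{\seq{F}_\bullet}$-below-$\equiv_{\mu+1}$ cycles are counted with threshold $g_{\mu-1}$), and the relevant girth bound transfers directly from $\ggth(\seq{F}_\bullet)>(g)^m\circ\seq{g}$, whose tail is $\seq{g}$. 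So these $\ell$ bounds are immediate. The only substantial case is $\mu=1$: I must show that the girth of the set system formed by the $0$-wagons of $\seq{F}$ (i.e. the single edges) inside each $1$-wagon exceeds $g+1$; equivalently, that each $1$-wagon $W_1$ of $\seq{F}$, viewed as a hypergraph, satisfies $\gth(W_1)>g+1$.

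**The main obstacle and how to get past it.** The key step is to apply Lemma~\ref{lem:0036} to the subtrain of $\seq{F}_\bullet$ ``living inside'' $W_1$. Concretely, fix a $1$-wagon $W_1$ of $\seq{F}$. Its edge set $E(W_1)$ is a union of $\equiv_0$-classes, and restricting $\equiv_0,\sim_1,\dots,\sim_{m-1},\equiv_1$ to $E(W_1)$ gives a structure $\seq{W}=(W_1,\equiv_0',\sim_1',\dots,\sim_{m-1}',\equiv_1')$ of height $m$. I would check that $\seq{W}$ is a $k$-partite $k$-uniform train with parameter $\seq{B}=(B_1,\dots,B_m)$: property~\ref{it:3140a} (quasitrain of height $m$) holds because $\equiv_1'$ identifies all edges of $W_1$ (it is one $1$-wagon), $\equiv_0'$-wagons are single edges, and the nesting of the $\sim_i'$ is inherited; property~\ref{it:3140b} follows from the corresponding property of $\seq{F}_\bullet$ since its parameter in levels $1,\dots,m$ is exactly $(B_1,\dots,B_m)$. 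Next, $\ggth(\seq{W})>(g)^m$: for each $\mu\in[m]$ a $\mu$-wagon of $\seq{W}$ is a $\mu$-wagon of $\seq{F}_\bullet$ contained in $W_1$, and by hypothesis the $\equiv_{\mu-1}$-below-$\equiv_\mu$ girth of such a wagon exceeds the $\mu$-th entry of $(g)^m\circ\seq{g}$, which is $g$ (here I use $\mu\le m$); one must be slightly careful that the restriction to $W_1$ can only shorten, never lengthen, cycles, so the bound is preserved. Finally, $|B_\mu|\le1$ for all $\mu\in[m]$ is assumed. Thus Lemma~\ref{lem:0036}, applied to the $k$-partite $k$-uniform train $\seq{W}$ of height $m$ with parameter $\seq{B}$ satisfying $|B_\mu|\le1$ and $\ggth(\seq{W})>(g)^m$, yields $\gth(W_1)>g+1$. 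Since $W_1$ was an arbitrary $1$-wagon, this gives the $\mu=1$ clause of $\ggth(\seq{F})>(g+1)\circ\seq{g}$, and combined with the $\mu\ge2$ clauses established above we conclude $\ggth(\seq{F})>(g+1)\circ\seq{g}$.

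**Remarks on pitfalls.** The part I expect to require the most care is the bookkeeping identifying which wagons of $\seq{F}_\bullet$ play the role of the $\mu$-wagons of $\seq{F}$ (an off-by-indexing issue between the two heights $\ell+1$ and $\ell+m$) and confirming that restricting each relevant equivalence relation to a single $1$-wagon genuinely produces a $k$-partite $k$-uniform train to which Lemma~\ref{lem:0036} applies — in particular that the parameter of the restricted train is the truncation $(B_1,\dots,B_m)$ and not something larger. None of this involves computation, only a careful unwinding of Definitions~\ref{dfn:3140},~\ref{dfn:0027}, and~\ref{dfn:N206}; once the indexing is pinned down the two invocations of the hypotheses ($\ggth(\seq{F}_\bullet)>(g)^m\circ\seq{g}$ and $|B_\mu|\le1$) plug directly into Lemma~\ref{lem:0036}.
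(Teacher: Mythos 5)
Your proof is correct and follows essentially the same route as the paper's: you reduce to the $\mu=1$ case, restrict the first $m+1$ equivalence relations of $\seq{F}_\bullet$ to each $1$-wagon to obtain a height-$m$ train with parameter $\seq{B}$ and $\ggth>(g)^m$, and conclude $\gth(W_1)>g+1$ via Lemma~\ref{lem:0036}. The extra indexing bookkeeping you spell out for $\mu\ge 2$ is accurate but is exactly what the paper dispatches with ``follows immediately.''
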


\begin{proof}
	Let $\seq{A}$, $\seq{B}$, and $\seq{F}_\bullet$ be as in Definition~\ref{dfn:N206}.
	We need to prove that for every $\lambda\in [\ell+1]$ and every $\lambda$-wagon $W$
	of $\seq{F}$ the $(\lambda-1)$-wagons in $W$ form a set system whose girth exceeds
	the $\lambda^{\mathrm{th}}$ entry of $(g+1)\circ\seq{g}$. For $\lambda\ne 1$ 
	this follows immediately from $\ggth(\seq{F}_\bullet)>(g)^m\circ \seq{g}$.
 
	Thus it remains to show $\gth(W)>g+1$ for every $1$-wagon $W$ of $\seq{F}$. 
	To this end we notice that by restricting the first $m+1$ equivalence 
	relations of $\seq{F}_\bullet$ to~$E(W)$ 
	we obtain a train 
	$\seq{W}=(W, \equiv_0^W, \sim_1^W, \dots, \sim_{m-1}^W, \equiv^W_1)$
	of height $m$. We know $\ggth(\seq{W})>(g)^m$ and the parameter of $\seq{W}$
	is $\seq{B}=(B_1, \dots, B_m)$.
	As we required $|B_\mu|\le 1$ for every $\mu\in [m]$, all assumptions of 
	Lemma~\ref{lem:0036} are satisfied and we have indeed $\gth(W)>g+1$.
\end{proof}

Given an $m$-revisable $k$-partite $k$-uniform train $\seq{F}$ 
as well as a number of colours~$r$ we can take an arbitrary $m$-revision
$(\seq{F}_\bullet, \seq{B})$ of $\seq{F}$ and construct the train system 
\begin{equation}\label{eq:N206}
	\Psi^m_r(\seq{F}_\bullet)
	=
	(H, \equiv_0^H, \sim_1^H, \dots, \sim_{m-1}^H, 
		\equiv_1^H, \ldots, \equiv_{\ell+1}^H, \ccH_\bullet)\,.
\end{equation}
The outcome does not depend on $\seq{F}$ alone, for there could be many
distinct $m$-revisions of~$\seq{F}$. 
This ambiguity, however, is irrelevant to the main concern of this subsection 
and it will be convenient to write 
\[
	\Psi^m_r(\seq{F})
	=
	(\seq{H}, \ccH)\,,
\]
where $\seq{H}=(H, \equiv_0^H, \ldots, \equiv_{\ell+1}^H)$ and the copies in
$\ccH\subseteq \binom{\seq{H}}{\seq{F}}$ are obtained from the copies 
of~$\seq{F}_\bullet$ in $\ccH_\bullet$ by forgetting the $\sim$-relations. 

\begin{lemma}\label{clm:0158}
	If $\seq{F}$ denotes an $m$-revisable $k$-partite $k$-uniform train of 
	height $\ell+1$ with parameter $\seq{A}$, and $r$ is a number of colours, 
	then the train system 
	$\Psi^m_r(\seq{F})=(\seq{H}, \ccH)$ has again the parameter $\seq{A}$ 
	and satisfies		
	\[
		\ccH\lra (\seq{F})_r\,, \quad
		\ggth(\seq{H}) > (g+1)\circ \seq{g},
		\quad\text{ and }\quad
		\GTH(\seq{H}, \ccH^+)>(g)\circ\seq{g}\,.
	\]
\end{lemma}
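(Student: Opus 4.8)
The strategy is to unravel the definitions and feed everything into $\karo_{(g)^m\circ\seq{g}}$ via a chosen $m$-revision. Fix an $m$-revision $(\seq{F}_\bullet,\seq{B})$ of $\seq{F}$, write $\seq{F}_\bullet=(F,\equiv_0,\sim_1,\dots,\sim_{m-1},\equiv_1,\dots,\equiv_{\ell+1})$, and form the train system in~\eqref{eq:N206} by applying $\Psi^m_r$; call the resulting train of height $\ell+m$ with all its equivalence relations $\seq{H}_\bullet$, and let $\seq{H}=(H,\equiv_0^H,\dots,\equiv_{\ell+1}^H)$ and $\ccH$ be obtained by forgetting the $\sim^H$-relations (so $\ccH$ comes from $\ccH_\bullet$ the same way). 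Since $\ggth(\seq{F}_\bullet)>(g)^m\circ\seq{g}$, the principle $\karo_{(g)^m\circ\seq{g}}$ guarantees that $\seq{H}_\bullet$ has the same parameter $\seq{B}\circ(A_2,\dots,A_{\ell+1})$ as $\seq{F}_\bullet$, that the copies in $\ccH_\bullet$ are strongly induced, that $\ccH_\bullet\lra(\seq{F}_\bullet)_r$, and that $\GTH(\seq{H}_\bullet,\ccH_\bullet^+)>(g)^m\circ\seq{g}$. The first job is to transfer each of these four facts from the "dotted" objects to $\seq{H}$ and $\ccH$.

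The partition relation is immediate: any $r$-colouring of $E(H)$ that makes a copy of $\seq{F}$ monochromatic is, after refining by the $\sim^H$-relations (which does not affect monochromaticity), exactly the situation handled by $\ccH_\bullet\lra(\seq{F}_\bullet)_r$, so $\ccH\lra(\seq{F})_r$. Strong inducedness of the copies in $\ccH$ is the same statement as strong inducedness of the copies in $\ccH_\bullet$, since forgetting equivalence relations does not change the underlying subhypergraphs. For the parameter: the parameter of $\seq{H}$ is read off from which vertex classes distinct $(\lambda-1)$-wagons of $\seq{H}$ can meet in, where $\lambda\in[\ell+1]$; for $\lambda\ge 2$ the $(\lambda-1)$-wagons of $\seq{H}$ coincide with the $(\lambda-1+m-1)$-wagons of $\seq{H}_\bullet$, whose intersection constraint is $V_{A_\lambda}(H)$; and for $\lambda=1$ the $0$-wagons of $\seq{H}$ are single edges, and two distinct edges inside a common $1$-wagon of $\seq{H}$ lie in a common $m$-wagon of $\seq{H}_\bullet$, so by iterated use of the parameter of $\seq{H}_\bullet$ (the sets $B_1,\dots,B_m\subseteq A_1$) their intersection is contained in $\bigcup_\mu V_{B_\mu}(H)\subseteq V_{A_1}(H)$. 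Hence $\seq{H}$ has parameter $\seq{A}$. For $\ggth(\seq{H})>(g+1)\circ\seq{g}$: the $\ggth$-conditions at heights $2,\dots,\ell+1$ follow directly from $\ggth(\seq{H}_\bullet)>(g)^m\circ\seq{g}$ (implied by the $\GTH$ bound via Corollary~\ref{cor:1921}, or one can cite it directly since $\karo$ gives $\GTH(\seq{H}_\bullet,\ccH_\bullet^+)>(g)^m\circ\seq{g}$ and hence $\GTH(\seq{H}_\bullet,E^+(H))>(g)^m\circ\seq{g}$), and the height-$1$ condition $\gth(W)>g+1$ for every $1$-wagon $W$ of $\seq{H}$ is exactly what Lemma~\ref{lem:0036} delivers, applied to the height-$m$ subtrain $\seq{W}$ of $\seq{H}_\bullet$ living on $E(W)$ with parameter $\seq{B}$ satisfying $|B_\mu|\le 1$, using $\ggth(\seq{W})>(g)^m$. (This is the same computation as in Fact~\ref{f:revis}.)

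The last and main point is $\GTH(\seq{H},\ccH^+)>(g)\circ\seq{g}$. By Definition~\ref{dfn:0115} this unpacks into: $\GTH(H,\equiv_{\lambda-1}^H,\ccH^+)>g$ for $\lambda=1$, then $\GTH(H,\equiv_{\lambda-1}^H,\ccH^+)>g_\lambda$ for $\lambda=2,\dots,\ell+1$ with $\seq{g}=(g_1,\dots,g_\ell)$, and finally $\GTH(H,\equiv_{\ell+1}^H,\ccH^+)>1$. Each $\equiv_{\lambda-1}^H$ for $\lambda\ge 2$ is one of the equivalence relations appearing in $\seq{H}_\bullet$, and the corresponding $\GTH$ bound is a component of $\GTH(\seq{H}_\bullet,\ccH_\bullet^+)>(g)^m\circ\seq{g}$ — but one must check that acceptable big cycles of $\ccH^+$ are "the same" as acceptable big cycles of $\ccH_\bullet^+$ at that level, which holds because the wagon structure at level $\lambda-1$ is identical in $\seq{H}$ and $\seq{H}_\bullet$ and because the copies in $\ccH$ and $\ccH_\bullet$ have the same underlying subpretrains. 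The top condition $\GTH(H,\equiv_{\ell+1}^H,\ccH^+)>1$ is likewise inherited, since $\equiv_{\ell+1}^H$ is the top relation of $\seq{H}_\bullet$ as well and $\GTH(\seq{H}_\bullet,\ccH_\bullet^+)$ includes $\GTH(H,\equiv^H_{\ell+m},\ccH_\bullet^+)>1$. The genuinely new case is $\lambda=1$, i.e.\ $\GTH(H,\equiv_0^H,\ccH^+)>g$: here $\equiv_0^H$ is the single-edge relation, which does not appear among the $m$ relations refining $\equiv_1^H$ in $\seq{H}_\bullet$, so this bound is not directly a component of the $\karo$ output. I expect this to be the main obstacle. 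The plan is to derive it from $\GTH(H,\sim_1^H,\ccH^+)>g$ — the finest non-trivial relation of $\seq{H}_\bullet$, for which the $\karo$ output does give $\GTH>g$ since $(g)^m\circ\seq{g}$ has $g$ in its first coordinate — by showing that every acceptable big cycle in $(H,\equiv_0^H,\ccH^+)$ of order $\le g$ "lifts" to an acceptable big cycle in $(H,\sim_1^H,\ccH^+)$ of the same order with the same copies, whose wagon connectors get replaced by the $\sim_1^H$-wagons containing them; this uses that the $\sim_1^H$-wagons restricted to a $1$-wagon of $\seq{H}$ have girth $>g$ (from $\ggth(\seq{W})>(g)^m$, first coordinate $g$), which is precisely why such a replacement keeps the cycle acceptable, and that a supreme copy upstairs collapses back to a supreme copy downstairs by an argument like Lemma~\ref{lem:1448}\ref{it:1448b}. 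Once this lift is in place, the linearity half of each $\GTH$ claim is automatic from $\ggth(\seq{H})>(2)^{\ell+1}$, and the lemma follows.
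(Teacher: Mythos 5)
Most of the proposal tracks the paper's proof closely: the parameter check, the appeal to Lemma~\ref{lem:0036} (equivalently Fact~\ref{f:revis}) for $\ggth(\seq{H})>(g+1)\circ\seq{g}$, and the observation that the partition relation and strong inducedness transfer trivially when the $\sim$-relations are forgotten. The problem is the ``main obstacle'' you identify at the end, which is not an obstacle at all.

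You claim that $\equiv_0^H$ ``does not appear among the $m$ relations refining $\equiv_1^H$ in $\seq{H}_\bullet$'' and that therefore $\GTH(H,\equiv_0^H,\ccH^+)>g$ is not directly part of the output of $\karo_{(g)^m\circ\seq{g}}$. That is a miscount. By Definition~\ref{dfn:2206}\ref{it:2206a} \emph{every} quasitrain has the single-edge relation as its index-$0$ relation, and~\eqref{eq:N206} indeed writes $\seq{H}_\bullet=(H,\equiv_0^H,\sim_1^H,\dots,\sim_{m-1}^H,\equiv_1^H,\dots,\equiv_{\ell+1}^H,\ccH_\bullet)$: the $m$ relations below $\equiv_1^H$ are $\equiv_0^H,\sim_1^H,\dots,\sim_{m-1}^H$, so $\equiv_0^H$ is among them, sitting at index $0$. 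Unpacking $\GTH(\seq{H}_\bullet,\ccH_\bullet^+)>(g)^m\circ\seq{g}$ via Definition~\ref{dfn:0115}, the case $\mu=1$ is precisely $\GTH(H,\equiv_0^H,\ccH_\bullet^+)>g$; and since forgetting the $\sim$-relations does not change the underlying subhypergraphs, $\ccH^+$ and $\ccH_\bullet^+$ give the same set of subpretrains of $(H,\equiv_0^H)$, so this is the same as $\GTH(H,\equiv_0^H,\ccH^+)>g$. Together with the cases $\mu=m+\lambda$ for $\lambda\in[\ell]$ and the top condition $\GTH(H,\equiv_{\ell+1}^H,\ccH^+)>1$, all $\ell+2$ inequalities required for $\GTH(\seq{H},\ccH^+)>(g)\circ\seq{g}$ come straight out of the karo output, and no cycle-lifting argument is needed. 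The paper's proof is exactly this direct unpacking. The lift from $\equiv_0^H$ to $\sim_1^H$ that you sketch would in any case be delicate to make rigorous (edge connectors and $\sim_1^H$-wagon connectors interact differently with the acceptability conditions~\ref{it:A1}--\ref{it:A3}), so it is good that you do not have to pursue it.
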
 

\begin{proof}
	We keep using the notation from Definition~\ref{dfn:N206} and~\eqref{eq:N206}.
	As a first step we shall show that $\seq{A}$ parametrises $\seq{H}$. 
	Due to~$\karo_{(g)^m\circ\seq{g}}$ the train 
		\[
		\seq{H}_\bullet
		=
		(H, \equiv_0^H, \sim_1^H, \dots, \sim_{m-1}^H, 
		\equiv_1^H, \ldots, \equiv_{\ell+1}^H)
	\]
		has the same parameter as $\seq{F}_\bullet$, i.e., 
	$\seq{B}\circ (A_2, \dots, A_{\ell+1})$.
	Setting for simplicity $\sim^H_0=\equiv^H_0$ and $\sim^H_m=\equiv^H_1$ this means
	that for any two edges $e, e'\in E(H)$ the following two statements hold.  
		\begin{enumerate}[label=\nlabel]
		\item\label{it:19-1} If $\mu\in [m]$, $e\sim^H_\mu e'$, 
			and $e\not\sim^H_{\mu-1} e'$, then $e\cap e'\subseteq V_{B_\mu}(H)$.
		\item\label{it:19-2} If $\lambda\in [2, \ell+1]$, $e\equiv^H_\lambda e'$, 
			and $e\not\equiv^H_{\lambda-1} e'$,
			then $e\cap e'\subseteq V_{A_\lambda}(H)$.
	\end{enumerate}
		
	In view of~\ref{it:19-2} it only remains to be shown that all 
	edges $e, e'\in E(H)$ with $e\equiv^H_1 e'$ and $e\not\equiv^H_0 e'$
	satisfy $e\cap e'\subseteq V_{A_1}(H)$. Due to $e\sim^H_m e'$ there 
	exists a smallest integer $\mu\in [0, m]$ such that $e\sim^H_\mu e'$. 
	Since $e\not\sim^H_0 e'$, we have $\mu>0$, and thus the minimality of $\mu$ 
	yields $e\not\sim^H_{\mu-1} e'$. So~\ref{it:19-1} 
	reveals $e\cap e'\subseteq V_{B_\mu}(H)$ and because of $B_\mu\subseteq A_1$
	the desired inclusion $e\cap e'\subseteq V_{A_1}(H)$ follows.
		
 	This concludes our discussion of parameters and we proceed with the three displayed 
	properties of the train system $(\seq{H}, \ccH)$.	
	As the train construction~$\Psi^m$ 
	exemplifies the principle $\karo_{(g)^m\circ\seq{g}}$,
	the partition relation $\ccH\lra (\seq{F})_r$ is clear. 
	
	Moreover, we have 
		\begin{equation}\label{eq:n713}
		\GTH(\seq{H}_\bullet, \ccH_\bullet^+)>(g)^m\circ\seq{g}\,.
	\end{equation}
		According to Definition~\ref{dfn:0115} this implies		\begin{enumerate}		\item[$\bullet$] $\GTH(H, \equiv^H_0, \ccH^+)>g$,
						\item[$\bullet$] $\GTH(H, \equiv^H_\lambda, \ccH^+)>g_\lambda$
			for every $\lambda\in [\ell]$,
		\item[$\bullet$] and $\GTH(H, \equiv^H_{\ell+1}, \ccH^+)>1$,
	\end{enumerate}
		which in turn yields $\GTH(\seq{H}, \ccH^+)>(g)\circ\seq{g}$.
	By Corollary~\ref{cor:1921} and~\eqref{eq:n713} we 
	have $\ggth(\seq{H}_\bullet)>(g)^m\circ\seq{g}$.
	So $(\seq{H}_\bullet, \seq{B})$ is an $m$-revision of $\seq{H}$ 
	and $\ggth(\seq{H})>(g+1)\circ\seq{g}$ follows from Fact~\ref{f:revis}.	 
\end{proof}

This concludes the first step of our proof of Proposition~\ref{prop:0139}, 
i.e., the conversion of the given $\karo$-principles 
into partite lemmata $\Psi^m$ applicable to certain trains $\seq{F}$ of height $\ell+1$.

\subsection{Train constituents}
\label{sssec:1748}

Recall that our initial motivation for introducing trains was the observation 
that partite constructions produce trains; moreover trains seem to offer a 
chance to handle hypergraphs of the next larger girth in the context of an 
argument by induction. In some sense, this 
subsection is the place of the whole article where all these hopes materialise.  

\begin{lemma}\label{lem:Xig}
	There exists a $((g+1)\circ\seq{g})$-amenable partite lemma $\Xi$.
\end{lemma}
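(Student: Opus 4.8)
Recall that for the sequence $\seq{h}=(g+1)\circ\seq{g}$ we have $\inf(\seq{h})=g+1$, $|\seq{h}|=\ell+1$, and that removing the initial term leaves $\seq{g}$; so a $((g+1)\circ\seq{g})$-amenable partite lemma must turn every $k$-partite $k$-uniform train $\seq{F}$ of height $\ell+1$ with $\ggth(\seq{F})>(g+1)\circ\seq{g}$ and every number of colours $r$ into a train system $(\seq{H},\ccH)$ with the parameter of $\seq{F}$, such that $\ccH\lra(\seq{F})_r$, $\GTH(\seq{H},\ccH^+)>\seq{g}$, and $\Gth(H,\ccH^+)>(g+1,g+1)$. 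The plan is to build $\Xi$ by a \emph{diagonal partite construction}: a partite construction over a vertical train system, in which the horizontal partite lemma is allowed to vary from step to step and ranges over the family $\Psi^1,\Psi^2,\dots$ provided by the hypothesis of Proposition~\ref{prop:0139} (reinterpreted, via Lemma~\ref{clm:0158}, as partite lemmata for revisable trains).

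Concretely, first I would fix by Corollary~\ref{cor:k2l} a Ramsey construction $\Phi$ for $((g+1)\circ\seq{g})$-trains; restricted to $k$-partite $k$-uniform inputs it is a partite lemma delivering strongly induced copies. Given $\seq{F}$ and $r$, set $(\seq{G},\ccG)=\Phi_r(\seq{F})$ --- a linear ordered $k$-partite $k$-uniform train of height $\ell+1$ with the parameter of $\seq{F}$, with strongly induced copies, and $\ccG\lra(\seq{F})_r$ --- discard edges of $G$ lying in no copy, and enumerate $E(G)$ in an order that exhausts the $1$-wagons of $\seq{G}$ one at a time. Starting from train picture zero over $(\seq{G},\ccG)$, recursively form train pictures $(\seq{\Pi}_\alpha,\ccP_\alpha,\psi_\alpha)_{\alpha\le N}$ by amalgamating $(\seq{\Pi}_\alpha,\ccP_\alpha,\psi_\alpha)=(\seq{\Pi}_{\alpha-1},\ccP_{\alpha-1},\psi_{\alpha-1})\conc(\seq{H}_\alpha,\ccH_\alpha)$, where $(\seq{H}_\alpha,\ccH_\alpha)=\Psi^{m_\alpha}_r(\seq{\Pi}_{\alpha-1}^{e(\alpha)})$ and $m_\alpha$ is a revision order of the constituent (to be produced below). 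Then $\Xi_r(\seq{F})$ is declared to be the final picture $(\seq{\Pi}_N,\ccP_N)$, viewed as a $k$-partite $k$-uniform structure.

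The construction is kept running by four invariants along the partite construction. \textbf{(1)} Each constituent $\seq{\Pi}_\alpha^{e}$ over a not-yet-processed edge $e$ is $m$-revisable for an explicit $m$, and $\seq{\Pi}_\alpha^{e(\alpha)}=\seq{H}_\alpha$ is $m_\alpha$-revisable: for picture zero this holds with $m=1$ by Fact~\ref{f:pnc}, since the constituents are matchings together with isolated vertices; and because $\seq{G}$ is linear, each amalgamation step glues copies of the old constituent along a single music line, so it raises the revision order of the affected constituents by at most one --- the extra layer being ``which standard copy one lies in'', whose associated parameter is contained in $\{x\}$ for the relevant vertex $x$ (hence has size $\le 1$), and whose required girth --- triviality of short cyclically glued configurations of copies --- follows from $\Gth(H_\beta,\ccH_\beta^+)>(g,g)$ and $\GTH(H_\beta,\equiv_0,\ccH_\beta^+)>g$ at the earlier steps $\beta$ together with the transitivity of girth (Fact~\ref{fact:girth-trans}) and the structure of the amalgamation (Lemma~\ref{lem:1825}). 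In particular $m_\alpha$ stays bounded and $\Psi^{m_\alpha}$ is available. By Lemma~\ref{clm:0158}, $(\seq{H}_\alpha,\ccH_\alpha)$ then has the parameter of $\seq{\Pi}_{\alpha-1}^{e(\alpha)}$ (which by Fact~\ref{f:N107} is $D_\mu=e(\alpha)\cap V_{A_\mu}(G)$), has strongly induced copies, satisfies $\ccH_\alpha\lra(\seq{\Pi}_{\alpha-1}^{e(\alpha)})_r$, has $\ggth(\seq{H}_\alpha)>(g+1)\circ\seq{g}$ (hence $\gth(H_\alpha)>g+1$ by Lemma~\ref{lem:1557}), and $\GTH(\seq{H}_\alpha,\ccH_\alpha^+)>(g)\circ\seq{g}$ (hence $\GTH(H_\alpha,\equiv_\mu,\ccH_\alpha^+)>1$ for all $\mu$ and, by Lemma~\ref{lem:0217}, $\Gth(H_\alpha,\ccH_\alpha^+)>g$). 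With these in hand, the remaining invariants propagate: \textbf{(2)} the parameter of the picture is preserved by Lemma~\ref{lem:N128}; \textbf{(3)} $\GTH(\seq{\Pi}_\alpha,\ccP_\alpha^+)>\seq{g}$ is preserved by Lemma~\ref{lem:n209}, with base case Fact~\ref{f:picnull}; \textbf{(4)} $\Gth(\Pi_\alpha,\ccP_\alpha^+)>(g+1,g+1)$ is preserved by Lemma~\ref{lem:1753} (applicable since $\Phi$ delivers strongly induced copies vertically and $\gth(H_\alpha)>g+1$, $\Gth(H_\alpha,\ccH_\alpha^+)>g$, with base case again Fact~\ref{f:picnull}, which gives $\Gth(\Pi_0,\ccP_0^+)>g+1$). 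Finally, Lemma~\ref{lem:cleancap} shows the copies of the final picture are strongly induced, and the usual Ramsey argument along the partite construction (using $\ccH_\alpha\lra(\seq{\Pi}_{\alpha-1}^{e(\alpha)})_r$ at each step and $\ccG\lra(\seq{F})_r$ vertically) yields $\ccP_N\lra(\seq{F})_r$. Invariants (2)--(4) at $\alpha=N$ are exactly the requirements of Definition~\ref{dfn:n1034}, so $\Xi$ is $((g+1)\circ\seq{g})$-amenable.

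The main obstacle is invariant \textbf{(1)}: rigorously exhibiting the revisions and verifying the condition $\ggth(\cdot)>(g)^{m}\circ\seq{g}$ with singleton parameters for the enlarged constituents. One must show that gluing copies of an $m$-revisable constituent along one music line $V_x$ produces an $(m+1)$-revisable constituent, locate the new equivalence layer within the nested sequence, check that its parameter is contained in $\{x\}\cap V_{A_1}(G)$, and derive the girth of this layer from the $\Gth$- and $\GTH$-data carried by the earlier amalgams $(H_\beta,\ccH_\beta)$ via the amalgamation properties (Lemma~\ref{lem:1825}) and Fact~\ref{fact:girth-trans}; choosing the enumeration of $E(G)$ so that whole $1$-wagons of $\seq{G}$ are processed consecutively keeps this analysis local and the revision orders under control. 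Everything else is a routine induction along the partite construction assembling Lemmata~\ref{lem:N128}, \ref{lem:n209}, \ref{lem:1753}, \ref{lem:cleancap} and Fact~\ref{f:picnull}.
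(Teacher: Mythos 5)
Your proposal follows the same diagonal partite construction as the paper and correctly identifies all four running invariants (revisability of constituents, parameter preservation via Lemma~\ref{lem:N128}, $\GTH$ preservation via Lemma~\ref{lem:n209}, and $\Gth(\Pi_\alpha,\ccP_\alpha^+)>(g+1,g+1)$ via Lemma~\ref{lem:1753}), together with the base-case facts. The overall architecture and most of the book-keeping match the paper's proof.

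However, the key technical step --- showing that each amalgamation raises revisability by one --- is only sketched, and the sketch leans on the wrong tool. This is precisely the content of the paper's Lemma~\ref{lem:3111}, which it proves by explicitly defining the new equivalence layer $\sim^\Sigma_\alpha$ (two edges are $\sim^\Sigma_\alpha$-related iff they lie in the same standard copy and are $\sim^{\Pi_\circ}_\alpha$-related there), computing the new parameter set $B_{\alpha+1}$ as $e\cap e_\star$ when $e\equiv^G_1 e_\star$ and $\vn$ otherwise (and showing $B_{\alpha+1}\subseteq D_1$, $|B_{\alpha+1}|\le 1$ from the linearity of $G$), and then verifying $\ggth(\seq{\Sigma}^{e_\star}_\bullet)>(g)^{\alpha+1}\circ\seq{g}$. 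That last verification is not via Fact~\ref{fact:girth-trans}: rather, from a hypothetical $n$-cycle of $\alpha$-wagons inside an $(\alpha+1)$-wagon of the revised constituent one manufactures a tidy cycle of copies $\Pi^e_1 v_1 \ldots \Pi^e_n v_n$ in $(H,\ccH^+)$ all of whose connectors are distinct vertices on a single music line; such a cycle has no master copy, contradicting $\Gth(H,\ccH^+)>g$ directly. Your alternative suggestion of reordering $E(G)$ so that whole $1$-wagons of $\seq{G}$ are processed consecutively is unnecessary; the paper's invariant (c) simply tracks $(\alpha+1)$-revisability of every remaining constituent at stage $\alpha$, with no constraint on the enumeration. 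Spelling out Lemma~\ref{lem:3111} --- including the construction of $\sim^\Sigma_\alpha$ and $B_{\alpha+1}$ and the cycle-of-copies argument --- is the part your proposal still needs.
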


This partite lemma $\Xi$ will be obtained by a diagonal variation 
of the partite construction method. Vertically we use an arbitrary 
Ramsey construction $\Phi$ for $((g+1)\circ\seq{g})$-trains 
(see Corollary~\ref{cor:k2l}). Horizontally the basic observation is that 
it was never written into stone that one has to employ the very same 
partite lemma in each stage of the iterative procedure. 
Rather, it is reasonable to adjust to the increasing complexity 
of the arising pictures by using more and more sophisticated partite lemmata 
as the construction progresses. 
In fact, we plan to use $\Psi^m$ when we need a partite lemma 
for the $m^{\mathrm{th}}$ time, 
so a suggestive notation for the construction we are about to describe 
could be
$\Xi=\PC\bigl(\Phi, (\Psi^{m})_{m\in \NN}\bigr)$.  
Here is the picturesque statement we shall iterate.

\begin{lemma}\label{lem:3111}
	Let $(\seq{G}, \ccG)$ be a $k$-partite $k$-uniform train system 
	of height $\ell+1$ with parameter~$\seq{A}$ 
	whose underlying hypergraph $G$ is linear. 
	Suppose further that 
	\[
		(\seq{\Sigma}, \ccQ, \psi_\Sigma)
		=
		(\seq{\Pi}, \ccP, \psi_\Pi)
		\conc
		(\seq{H}, \ccH)
	\]
		holds for two train pictures $(\seq{\Pi}, \ccP, \psi_\Pi)$, 
	$(\seq{\Sigma}, \ccQ, \psi_\Sigma)$ of height $\ell+1$ with parameter $\seq{A}$
	and a $k$-partite $k$-uniform train system $(\seq{H}, \ccH)$ of height $\ell+1$
	such that 
		\[
		\GTH(\seq{\Sigma}, \ccQ^+)>\seq{g}
		\quad \text{ and } \quad 
		\Gth(H, \ccH^+)>g \,.
	\]
		If this amalgamation occurs over the edge $e\in E(G)$,
		\begin{enumerate}
		\item[$\bullet$] another edge $e_\star \in E(G)$ is distinct to $e$, 
		\item[$\bullet$] and the constituent $\seq{\Pi}^{e_\star}$ is
			$\alpha$-revisable for some $\alpha\in\NN$, 
	\end{enumerate}
		then $\seq{\Sigma}^{e_\star}$ is $(\alpha+1)$-revisable. 
\end{lemma}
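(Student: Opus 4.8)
The plan is to analyse the constituent $\seq{\Sigma}^{e_\star}$ of the amalgamated picture and to build an $(\alpha+1)$-revision of it out of a given $\alpha$-revision of $\seq{\Pi}^{e_\star}$. First I would recall the shape of $\seq{\Sigma}^{e_\star}$: since $e_\star\ne e$, every edge of $\Sigma^{e_\star}$ avoids $E(H)=E(\Sigma^e)$ and hence lies in a unique standard copy of $\seq{\Pi}$, so $\Sigma^{e_\star}$ is a union of copies $\Pi^{e_\star}_j$ of $\Pi^{e_\star}$, one per standard copy $\Pi_j$. Two such copies with $\Pi_j\ne\Pi_{j'}$ share only vertices in $V(\Pi^e_j)\cap V(\Pi^e_{j'})\subseteq V(H)$, and a shared vertex of $\Sigma^{e_\star}$ must also lie in some $V_z$ with $z\in e_\star$; by the linearity of $G$ we have $|e\cap e_\star|\le 1$, so all identifications take place on the single music line $V_x$, $\{x\}=e\cap e_\star$ (the copies being pairwise disjoint if $e\cap e_\star=\vn$). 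Moreover, each $\seq{\Pi}_j$ being a subquasitrain of $\seq{\Sigma}$, the relations $\equiv^\Sigma_\mu$ restrict on $\Pi_j$ to those of $\seq{\Pi}$, so within one standard copy nothing is glued. By Fact~\ref{f:N107}, $\seq{\Pi}^{e_\star}$ and $\seq{\Sigma}^{e_\star}$ are trains of height $\ell+1$ with the same parameter $\seq{D}=(D_1,\dots,D_{\ell+1})$, $D_\mu=e_\star\cap V_{A_\mu}(G)$.

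Next I would fix an $\alpha$-revision $(\seq{\Pi}^{e_\star}_\bullet,\seq{B})$, say $\seq{\Pi}^{e_\star}_\bullet=(\Pi^{e_\star},\equiv_0,\sim_1,\dots,\sim_{\alpha-1},\equiv_1,\dots,\equiv_{\ell+1})$ with $\ggth(\seq{\Pi}^{e_\star}_\bullet)>(g)^\alpha\circ\seq{g}$, $\seq{B}=(B_1,\dots,B_\alpha)\in\powerset(D_1)^\alpha$, $|B_\mu|\le 1$. I transfer $\sim_1,\dots,\sim_{\alpha-1}$ to $E(\Sigma^{e_\star})$ via ``same standard copy and $\sim_j$-equivalent there'', obtaining $\approx_1,\dots,\approx_{\alpha-1}$, and I add $\approx_\alpha$: edges equivalent iff they lie in a common $1$-wagon of a single copy $\Pi^{e_\star}_j$ — equivalently, in the same standard copy and the same $\equiv^\Sigma_1$-class. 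Then $\approx_\alpha$ refines $\equiv^\Sigma_1$, and the candidate revision is
\[
	\seq{\Sigma}^{e_\star}_\bullet=(\Sigma^{e_\star},\equiv^\Sigma_0,\approx_1,\dots,\approx_{\alpha-1},\approx_\alpha,\equiv^\Sigma_1,\dots,\equiv^\Sigma_{\ell+1})
\]
with tuple $\seq{B}'=(B_1,\dots,B_\alpha,C)$, $C=\{x\}$ if $e\cap e_\star\ne\vn$ and $C=\vn$ otherwise. I would first check that $\seq{\Sigma}^{e_\star}_\bullet$ is a train of height $\ell+\alpha+1$ with parameter $\seq{B}'\circ(D_2,\dots,D_{\ell+1})$: the parameter entries below level $\alpha$ are inherited from $\seq{\Pi}^{e_\star}_\bullet$ (only within-copy intersections occur); the entry at level $\alpha+1$ is $C$, since distinct $\approx_\alpha$-wagons inside one $\equiv^\Sigma_1$-wagon lie in distinct standard copies and hence meet only in $V_x$ (and, if $e\cap e_\star\ne\vn$, the train property of $\seq{G}$ at level $1$, governed by $A_1$, gives $x\in V_{A_1}(G)$, so $C\in\powerset(D_1)$); the entries above level $\alpha+1$ are those of the parameter $\seq{D}$ of $\seq{\Sigma}^{e_\star}$. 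Since $|C|\le 1$, all of $B_1,\dots,B_\alpha,C$ have size $\le 1$, as required.

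The main task is to verify $\ggth(\seq{\Sigma}^{e_\star}_\bullet)>(g)^{\alpha+1}\circ\seq{g}$, split by level $\mu$. For $\mu\le\alpha$ (bound $g$) any cycle of $\approx_{\mu-1}$-wagons inside an $\approx_\mu$-wagon lies inside one standard copy $\Pi_j$, hence is a cycle inside $\seq{\Pi}^{e_\star}_\bullet$, forbidden by $\ggth(\seq{\Pi}^{e_\star}_\bullet)>(g)^\alpha\circ\seq{g}$ (whose $\mu$-th entry is $g$). For $\mu\ge\alpha+2$ (bound the $(\mu-\alpha-1)$-st entry $g_i$ of $\seq{g}$) a short cycle of $\equiv^\Sigma_{i-1}$-wagons inside an $\equiv^\Sigma_i$-wagon of $\seq{\Sigma}^{e_\star}$ yields, after passing to the ambient wagons of $\seq{\Sigma}$, a short cycle of distinct wagons of $\seq{\Sigma}$ within a single wagon, ruled out because $\GTH(\seq{\Sigma},\ccQ^+)>\seq{g}$ gives $\GTH(\Sigma,\equiv^\Sigma_{i-1},E^+(\Sigma))>g_i$ and hence (Lemma~\ref{lem:2310}) $\ggth(\Sigma,\equiv^\Sigma_{i-1})>g_i$. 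The critical level is $\mu=\alpha+1$, which I expect to be the real difficulty. Here I must show that in each $\equiv^\Sigma_1$-wagon $W$ of $\seq{\Sigma}^{e_\star}$ the $\approx_\alpha$-wagons — the $1$-wagons of the various $\Pi^{e_\star}_j$ composing $W$ — form a set system of girth $>g$. Suppose $W^1q_1\dots W^nq_n$ is an $n$-cycle of these, $n\in[2,g]$, with $W^i\subseteq\Pi_{j(i)}$. If $\Pi_{j(i)}=\Pi_{j(i+1)}$ then $\equiv^\Sigma_1$ restricts on that copy to its own $\equiv_1$, forcing $W^i=W^{i+1}$, impossible; so $\Pi_{j(i)}\ne\Pi_{j(i+1)}$ for all $i$, and $q_i\in V(\Pi^e_{j(i)})\cap V(\Pi^e_{j(i+1)})\cap\bigl(\bigcup_{z\in e_\star}V_z\bigr)=V_x$. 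Hence $\Pi^e_{j(1)}q_1\dots\Pi^e_{j(n)}q_n$ is a cycle of copies in $(H,\ccH^+)$ satisfying~\ref{it:L1}\,--\,\ref{it:L3}; it is tidy, since all connectors lie on the single music line $V_x$, so~\ref{it:T1} is vacuous and each $M(f)$ has at most one element; and its order is $n\le g$. By $\Gth(H,\ccH^+)>g$ it has a master copy $\Pi^e_{j(k)}$, so some $\Pi^e_{j(i)}\ne\Pi^e_{j(k)}$ collapses to an edge $f\in E(\Pi^e_{j(k)})$ with $q_{i-1},q_i\in f$; but $q_{i-1}\ne q_i$ both lie in $V_x$, while the edge $f$ of the constituent $\Pi^e_{j(k)}$ meets each music line $V_z$ ($z\in e$, and $x\in e$) exactly once — a contradiction. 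Thus the main obstacle is precisely this reduction of a cycle of sub-wagons of $\seq{\Sigma}^{e_\star}$ to a tidy cycle of copies over $H$, plus the observation that a master copy would confine two distinct connectors to the single music line $V_x$; the rest is bookkeeping transporting the revision of $\seq{\Pi}^{e_\star}$ and the $\GTH$ of $\seq{\Sigma}$ through the levelwise structure.
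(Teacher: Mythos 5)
Your proof follows essentially the same route as the paper: transfer the $\alpha$-revision copy-by-copy, add one more level defined by ``same standard copy'', set the new parameter entry from $e\cap e_\star$, and reduce the critical girth check at level $\alpha+1$ to a tidy cycle of copies in $(H,\ccH^+)$ that cannot have a master copy because all its vertex connectors lie on a single music line. That reduction is exactly the heart of the paper's argument and you have it right.

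There is one technical slip worth fixing. You set $C=\{x\}$ whenever $e\cap e_\star\ne\vn$ and justify $C\subseteq D_1$ by ``the train property of $\seq{G}$ at level $1$.'' That property only applies when $e\equiv^G_1 e_\star$; if $e$ and $e_\star$ meet but first become equivalent only at some level $\lambda>1$, then $x\in V_{A_\lambda}(G)$ and need not lie in $V_{A_1}(G)$, so $C\not\subseteq D_1$ and $\seq{B}'$ fails to be in $\powerset(D_1)^{\alpha+1}$. The correct gate is the one in the paper: set $C=e\cap e_\star$ when $e\equiv^G_1 e_\star$ and $C=\vn$ otherwise. This is harmless for your later argument, since in the case $e\not\equiv^G_1 e_\star$ every $\equiv^\Sigma_1$-wagon of $\Sigma^{e_\star}$ lies in a single standard copy (via Lemma~\ref{lem:1825}\ref{it:1832c} and Definition~\ref{dfn:7058}\ref{it:tp3}), so no two distinct $\approx_\alpha$-wagons share an $(\alpha+1)$-wagon and $C=\vn$ suffices. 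There is also a small index shift in your treatment of levels $\mu\ge\alpha+2$ (it should be $\equiv^\Sigma_i$-wagons inside $\equiv^\Sigma_{i+1}$-wagons with $i=\mu-\alpha-1$), but that is purely notational.
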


\begin{proof}
	Write $\seq{A}=(A_1, \dots, A_{\ell+1})$ and recall that according to 
	Fact~\ref{f:N107} the parameter $\seq{D}=(D_1, \dots, D_{\ell+1})$ 
	of the constituent $\seq{\Pi}^{e_\star}$ is given 
	by $D_\lambda=e_\star\cap V_{A_{\lambda}}(G)$ for every $\lambda\in [\ell+1]$.
	Let $(\seq{\Pi}^{e_\star}_\bullet, \seq{B})$ be an $\alpha$-revision 
	of $\seq{\Pi}^{e_\star}=(\Pi^{e_\star}, \equiv^\Pi_0, \dots, \equiv^\Pi_{\ell+1})$, 
	where 
		\[
		\seq{\Pi}^{e_\star}_\bullet
		=
		(\Pi^{e_\star}, \equiv^\Pi_0, \sim^\Pi_1, \dots, \sim^\Pi_{\alpha-1},
		\equiv^\Pi_1, \dots, \equiv^\Pi_{\ell+1})
		\quad \text{ and } \quad 
		\seq{B}=(B_1, \dots, B_\alpha)\in \powerset(D_1)^\alpha\,.
	\]
		
	Our task is to exhibit an $(\alpha+1)$-revision of the new 
	constituent 
		\[
		\seq{\Sigma}^{e_\star}
		=
		(\Sigma^{e_\star}, \equiv^\Sigma_0, \dots, \equiv^\Sigma_{\ell+1})\,.
	\]
		For every standard copy $\seq{\Pi}_\circ$ 
	of $\seq{\Pi}$ in $\seq{\Sigma}$ we can copy the above revision of 
	$\seq{\Pi}^{e_\star}$ onto the constituent $\seq{\Pi}^{e_\star}_\circ$,
	thus getting a train 
		\begin{equation}\label{eq:N329}
		(\Pi^{e_\star}_\circ, \equiv^{\Pi_\circ}_0, 
		\sim^{\Pi_\circ}_1, \dots, \sim^{\Pi_\circ}_{\alpha-1},
		\equiv^{\Pi_\circ}_1, \dots, \equiv^{\Pi_\circ}_{\ell+1})\,.
	\end{equation}
		It will be convenient to set $\sim^{\Pi_\circ}_0=\equiv^{\Pi_\circ}_0$
	and $\sim^{\Pi_\circ}_\alpha=\equiv^{\Pi_\circ}_1$ for every such standard copy. 
	The edge set of the new constituent~$\Sigma^{e_\star}$ is the disjoint union of 
	the edge sets of all $\Pi_\circ^{e_\star}$ as $\Pi_\circ$ varies over the standard 
	copies of $\seq{\Pi}$ in $\Sigma$. We can thus define a quasitrain 
		\[
		\seq{\Sigma}^{e_\star}_\bullet
		=
		(\Sigma^{e_\star}, \equiv^\Sigma_0, 
		\sim^\Sigma_1, \dots, \sim^\Sigma_\alpha, 
		\equiv^\Sigma_1, \dots, \equiv^\Sigma_{\ell+1})
	\]
		of height $\ell+\alpha+1$ by declaring for every $\mu\in[\alpha]$ 
	and any two edges $e', e''\in E(\Sigma^{e_\star})$ that the statement 
	$e'\sim^\Sigma_\mu e''$ means: there is a common standard copy $\Pi_\circ$
	containing $e'$, $e''$, and $e'\sim_\mu^{\Pi_\circ} e''$ holds.
	Next we define a set $B_{\alpha+1}$ by setting 
		\[
		B_{\alpha+1}
		=
		\begin{cases}
			e\cap e_\star & \text{ if $e\equiv^G_1 e_\star$} \cr
			\vn & \text{ if $e\not\equiv^G_1 e_\star$.}
		\end{cases}
	\]
		
	We shall eventually show 
	that $\bigl(\seq{\Sigma}^{e_\star}_\bullet, \seq{B}\circ (B_{\alpha+1})\bigr)$
	is an $(\alpha+1)$-revision of $\seq{\Sigma}^{e_\star}$.
	
	\begin{clm}
		 We have $B_{\alpha+1}\subseteq D_1$ and $|B_{\alpha+1}|\le 1$.
	\end{clm}
	
	\begin{proof}
		If $B_{\alpha+1}=\vn$ both assertions are clear, so we may assume 
		$B_{\alpha+1}=e\cap e_\star$ and $e\equiv^G_1 e_\star$ from now on. 
		The linearity of $G$ implies $|B_{\alpha+1}|\le 1$. 
		We also know $e\cap e_\star\subseteq V_{A_1}(G)$, because 
		$\seq{G}$ has the parameter $\seq{A}$. Thus we have indeed
				\[
			B_{\alpha+1}
			=
			e\cap e_\star
			\subseteq 
			e_\star\cap V_{A_1}(G)
			=
			D_1\,. \qedhere
		\]
			\end{proof}
	
	\begin{clm}
		The sequence $\seq{B}\circ (B_{\alpha+1}, D_2, \dots, D_{\ell+1})$ 
		parametrises $\seq{\Sigma}^{e_\star}_\bullet$.
	\end{clm}
	  
	\begin{proof}
		For every standard copy $\seq{\Pi}_\circ$ of $\seq{\Pi}$ in $\seq{\Sigma}$
		the train~\eqref{eq:N329} has the same parameter as~$\seq{\Pi}^{e_\star}_\bullet$,
		i.e., $\seq{B}\circ (D_2, \dots, D_{\ell+1})$ 
		and, therefore, the statements involving the sets $B_1, \dots, B_\alpha$ hold.
		Similarly, the claims on $D_2, \dots, D_{\ell+1}$ follow from $\seq{A}$ being 
		the parameter of the entire train $\seq{\Sigma}$ and Fact~\ref{f:N107}.
		
		So it remains to be shown that if two edges $e', e''\in E(\Sigma^{e_\star})$
		satisfy $e'\equiv^\Sigma_1 e''$ but $e'\not\sim_{\alpha}^\Sigma e''$, then 
		$e'\cap e''\in V_{B_{\alpha+1}}(\Sigma^{e_\star})$. Notice that this 
		can only happen if $e'$ and $e''$ are in distinct standard 
		copies of $\seq{\Pi}$. So in view of 
		Lemma~\ref{lem:1825}\ref{it:1832c}\ref{it:1832c2} there needs to exist,
		in particular, an edge $e_0\in E(H)$ satisfying 
		$e'\equiv^\Sigma_1 e_0\equiv^\Sigma_1 e''$. 
		Now Definition~\ref{dfn:7058}\ref{it:tp3} discloses 
		$e=\psi_\Sigma(e_0)\equiv^G_1\psi_\Sigma(e')=e_\star$, which in turn 
		leads to $B_{\alpha+1}=e\cap e_\star$. Since $\psi_\Sigma$ projects 
		all vertices of $e'\cap e''$, if there exist any, into $e\cap e_\star$,
		this proves that $e'\cap e''\in V_{B_{\alpha+1}}(\Sigma^{e_\star})$ 
		is indeed true. 
	\end{proof}

	Now it remains to be shown 
	that $\ggth(\seq{\Sigma}^{e_\star}_\bullet)>(g)^{\alpha+1}\circ\seq{g}$. 
	Most parts of this claim are straightforward consequences 
	of $\ggth(\seq{\Pi}^{e_\star}_\bullet)>(g)^\alpha\circ\seq{g}$ 
	and of $\GTH(\seq{\Sigma}, \ccQ^+)>\seq{g}$. In fact, the only part 
	requiring some attention is that if $W$ denotes an $(\alpha+1)$-wagon 
	of $\seq{\Sigma}^{e_\star}_\bullet$, i.e., a wagon 
	of $(\Sigma^{e_\star}, \equiv^\Sigma_1)$, then $\ggth(W, \sim_\alpha^W)>g$.
 
	Assume contrariwise that for some $n\in [2, g]$ there is an $n$-cycle
		\[
		\ccC = W_1v_1 \ldots W_nv_n\,,
	\]
		where $W_1, \dots, W_n$ denote wagons of $(W, \sim_\alpha^W)$ 
	and $v_1, \dots, v_n$ are vertices. Due to the definition of $\sim^\Sigma_\alpha$
	there exists for every $t\in\ZZ/n\ZZ$ a standard copy $\seq{\Pi}_t$ of $\seq{\Pi}$
	such that the wagon $W_t$ is entirely contained in $\seq{\Pi}^{e_\star}_t$.
	Moreover, $W_t\ne W_{t+1}$ yields $\seq{\Pi}_t\ne \seq{\Pi}_{t+1}$ and 
	due to $v_t\in V(W_t)\cap V(W_{t+1})$ we have $v_t\in V(H)$. We thus arrive 
	at a cycle of copies   
		\[
		\ccD = \Pi^{e}_1v_1 \ldots \Pi^{e}_nv_n
	\]
		in $(H, \ccH^+)$ whose length is~$n$ and all of whose connectors are vertices sitting 
	on the same music line of $\Sigma$ (namely the line projected to the unique vertex 
	in $e\cap e_\star$).   
	In particular, $\ccD$ is tidy and none of its copies is 
	collapsible, meaning that~$\ccD$ has no master copy. 
	But because of $\ord{\ccD}=n\le g$ this contradicts $\Gth(H, \ccH^+)>g$. 
\end{proof}

\begin{proof}[Proof of Lemma~\ref{lem:Xig}]
	Suppose that $\seq{F}$ is a $k$-partite $k$-uniform train of height $\ell+1$
	with 
		\begin{equation}\label{eq:N921}
		\ggth(F) > (g+1)\circ \seq{g}
	\end{equation}
		and parameter $\seq{A}$, and that $r$ is a number of colours. 
	Pick a Ramsey construction $\Phi$ for $((g+1)\circ \seq{g})$-trains 
	(cf.\ Corollary~\ref{cor:k2l}) and set $\Phi_r(\seq{F})=(\seq{G}, \ccG)$.
	Recall that $\seq{G}$ is a linear train of height $\ell+1$ with parameter~$\seq{A}$
	and that the copies in $\ccG$ are strongly induced. Without loss of generality 
	we can assume that all edges of $G$ belong to some copy in $\ccG$. 
	 
	Let $E(G)=\{e(1), \ldots, e(N)\}$ enumerate the edges of~$G$. 
	We intend to construct recursively a sequence of train 
	pictures $(\seq{\Pi}_\alpha, \ccP_\alpha, \psi_\alpha)_{0\le \alpha\le N}$ 
	over $(\seq{G}, \ccG)$ that starts with picture zero. This is to be done 
	in such a way that for every $\alpha\in [0, N]$ the $\alpha^{\mathrm{th}}$ 
	picture has the properties
		\begin{enumerate}[label=\alabel]
		\item\label{it:0811a} $\Gth(\Pi_\alpha, \ccP_\alpha^+) > (g+1, g+1)$;
		\item\label{it:0811b} $\GTH(\seq{\Pi}_\alpha, \ccP_\alpha^+)>\seq{g}$;
		\item\label{it:0811c} and for every $\beta\in (\alpha, N]$ the 
			constituent $\seq{\Pi}_\alpha^{e(\beta)}$
			is $(\alpha+1)$-revisable. 
	\end{enumerate}  

	Before we can move any further we need to check that picture zero has 
	these properties for $\alpha=0$. Fact~\ref{f:picnull} and~\eqref{eq:N921} imply 
	$\GTH(\seq{\Pi}_0, \ccP_0^+)>(g+1)\circ\seq{g}$, which entails~\ref{it:0811b} 
	immediately and in view of Lemma~\ref{lem:0217} clause~\ref{it:0811a} follows 
	as well. Furthermore, Fact~\ref{f:pnc} yields~\ref{it:0811c}.  
	
	Now let any positive $\alpha\le N$ be given for which we have already managed 
	to reach a train picture $(\seq{\Pi}_{\alpha-1}, \ccP_{\alpha-1}, \psi_{\alpha-1})$ 
	satisfying~\ref{it:0811a},~\ref{it:0811b}, and~\ref{it:0811c} for $\alpha-1$ 
	instead of $\alpha$. As a consequence of the last condition we know that its 
	constituent $\Pi_{\alpha-1}^{e(\alpha)}$ is $\alpha$-revisable. 
	So Lemma~\ref{clm:0158} provides a train system 
		\[
		\Psi^\alpha_r\bigl(\seq{\Pi}_{\alpha-1}^{e(\alpha)}\bigr) 		
		=
		(\seq{H}_{\alpha}, \ccH_{\alpha})
	\]
		with the correct parameter satisfying 
		\[
		\ccH_\alpha\lra \bigl(\seq{\Pi}_{\alpha-1}^{e(\alpha)}\bigr)_r\,, \quad
		\ggth(\seq{H}_\alpha) > (g+1)\circ\seq{g},
		\quad\text{ and }\quad
		\GTH(\seq{H}_\alpha, \ccH_\alpha^+)>(g)\circ\seq{g}\,.
	\]
		
	In view of Lemma~\ref{lem:N128} the quasitrain picture 
		\[
		(\seq{\Pi}_{\alpha}, \ccP_{\alpha}, \psi_{\alpha})
		=
		(\seq{\Pi}_{\alpha-1}, \ccP_{\alpha-1}, \psi_{\alpha-1})
		\conc
		(\seq{H}_{\alpha}, \ccH_{\alpha})
	\]
		is actually a train picture over $(\seq{G}, \ccG)$. We need to check that 
	it has the properties~\ref{it:0811a}\,--\,\ref{it:0811c}.
	
	Observe first that $\ggth(\seq{H}_\alpha) > (g+1)\circ\seq{g}$, our assumption 
	$\inf(\seq{g})>g$, and Lemma~\ref{lem:1557} imply 
		\begin{equation}\label{eq:n135}
		\gth(H_\alpha)>g+1\,.
	\end{equation}
		Moreover, $\GTH(\seq{H}_\alpha, \ccH_\alpha^+)>(g)\circ\seq{g}$ contains the 
	information $\GTH(H_\alpha, \equiv^{H_\alpha}_0, \ccH_\alpha^+)>g$, which in view 
	of Lemma~\ref{lem:0217} leads to $\Gth(H_\alpha, \ccH_\alpha^+)>g$. 
	Together with our induction 
	hypothesis $\Gth(\Pi_{\alpha-1}, \ccP_{\alpha-1}^+) > (g+1, g+1)$
	and~\eqref{eq:n135}
	this allows us to apply Lemma~\ref{lem:1753} with $g+1$ here in place of $g$ there.
	This proves~\ref{it:0811a}.
	
	Clause~\ref{it:0811b} follows from Lemma~\ref{lem:n209}.	
	For the verification of~\ref{it:0811c} we consider an arbitrary 
	integer $\beta\in (\alpha, N]$. 
	We already know that $\Pi_{\alpha-1}^{e(\beta)}$
	is $\alpha$-revisable and we want to deduce the   
	$(\alpha+1)$-revisability of $\Pi_{\alpha}^{e(\beta)}$ 
	with the help of Lemma~\ref{lem:3111}. Its assumptions 
	${\GTH(\seq{\Pi}_\alpha, \ccP_\alpha^+)>\seq{g}}$
	and $\Gth(H_\alpha, \ccH_\alpha^+)>g$ have already been 
	established and $e(\beta)\ne e(\alpha)$ is clear. 
	Thereby~\ref{it:0811c} has been confirmed as well and the 
	partite construction continues.  
	
	Eventually we arrive at a final picture 
	$(\seq{\Pi}_N, \ccP_N, \psi_N)$, which due to~\ref{it:0811a}
	and~\ref{it:0811b} satisfies   
		\[
		\Gth(\Pi_N, \ccP_N^+) > (g+1, g+1)
		\quad \text{ as well as } \quad
		\GTH(\seq{\Pi}_N, \ccP_N^+)>\seq{g}\,.
	\]
		Thus the partite lemma $\Xi$ defined 
	by $\Xi_r(\seq{F})=(\seq{\Pi}_N, \ccP_N^+)$
	is $((g+1)\circ\seq{g})$-amenable. 	
\end{proof}
	
It is now immediate from Lemma~\ref{lem:n1035} that the construction 
$\PC(\Phi, \Xi)$, where $\Phi$ again denotes a Ramsey construction for 
$((g+1)\circ\seq{g})$-trains, exemplifies $\karo_{(g+1)\circ\seq{g}}$. 
This concludes the proof of Proposition~\ref{prop:0139}. 
In the light of our earlier work
Corollary~\ref{cor:1412}, Theorem~\ref{thm:6653}, and 
Theorem~\ref{thm:grth1} have thereby been demonstrated as well (see Summary~\ref{sum:1020}). 

\section{Paradise}
\label{sec:paradise}

This section is concerned with the proofs of   
Theorem~\ref{thm:1522} and Theorem~\ref{cor:19}. To this end we need
a connection between $\Gth$ and forests of copies, which is established 
in~\S\ref{subsec:6142}. The main result there (Lemma~\ref{lem:6822})
asserts that large $\Gth$ forces small sets of copies to be forests.
In~\S\ref{subsec:2142} this lemma will allow us to analyse a partite 
construction utilising the construction $\Omega^{(g)}$ provided by Theorem~\ref{thm:6653} 
as a partite lemma. As a result, we shall prove Theorem~\ref{thm:1522} in a very 
strong form (Theorem~\ref{thm:6643}). 
The deduction of Theorem~\ref{cor:19} will then be straightforward.

\subsection{Forests of copies}
\label{subsec:6142}

Let us recall that forests of copies were introduced in Definition~\ref{dfn:1606} 
as being certain systems $\ccN\subseteq \binom{H}{F}$ consisting of mutually 
isomorphic copies. 

Marginally generalising the underlying setup, we shall henceforth say for a linear system 
of hypergraphs $(H, \ccN)$ that~$\ccN$ is a {\it forest of copies}
if one can write $\ccN=\{F_1, \ldots, F_{|\ccN|}\}$ in such a way that 
for every $j\in [2, |\ccN|]$ the set 
$z_j=V(F_j)\cap\bigl(\bigcup_{i<j}V(F_i)\bigr)$ is either an edge 
in $E(F_j)\cap\bigl(\bigcup_{i<j}E(F_i)\bigr)$ or it consists of at most 
one vertex. 
\index{forest of copies}
In this situation, $(F_1, \ldots, F_{|\ccN|})$
is called an {\it admissible enumeration} of $\ccN$. 
\index{admissible enumeration}
It may be helpful to point out that forests of copies are thereby allowed 
to contain both edge copies and real copies. 

Most results in this subsection can be regarded as appropriate adaptations
of well-known facts on ordinary forests---a notion brief\-ly recapitulated immediately 
before Theorem~\ref{thm:7338}. To distinguish such forests from forests of copies we shall
call them {\it edge forests} in the discussion that follows. 
\index{edge forest}
Here is a list of four standard facts on edge forests. 

\begin{enumerate}[label=\rmlabel]
	\item\label{it:6353a} Every subset of an edge forest is again an edge forest.
	\item\label{it:6353b} The result of gluing two edge forests together along a 
		vertex or an edge is again an edge forest. 
	\item\label{it:6353c} If a hypergraph $H=(V, E)$ satisfies $\gth(H) > |E|$, then
		$E$ is an edge forest. 
	\item\label{it:6353d} Every edge forest consisting of at least two edges has 
		at least two leaves (belonging to different edges).
\end{enumerate}

Now we already saw in the introduction that forests of copies are not closed 
under taking subsets, meaning that the na\"{i}ve generalisation of~\ref{it:6353a} fails
(see Figure~\ref{fig:11}).
Nevertheless, we shall show in Lemma~\ref{lem:6140} that deleting edge copies preserves 
being a forest of copies. In Lemma~\ref{lem:7032} and Lemma~\ref{lem:6032} we shall 
then see that~\ref{it:6353b} generalises in the obvious way. 
Statement~\ref{it:6353c} remains valid if we replace lower case $\gth$ by 
capital $\Gth$ (see Lemma~\ref{lem:6822}). Finally, when dealing 
with~\ref{it:6353d} we shall work with the following analogue of edges acting as ``leaves''.
 
\begin{dfn}   
	Given a forest of copies $\ccN$ and a copy $F_\star\in\ccN$ we say that $F_\star$ 
	is {\it terminal in $\ccN$} if there exists an admissible enumeration
	$(F_1, \ldots, F_{|\ccN|})$ of $\ccN$ such that $F_\star=F_{|\ccN|}$.
	\index{terminal copy}
\end{dfn}

For instance, if $\ccN$ denotes a forest consisting of two copies, then both enumerations 
of $\ccN$ are admissible and, consequently, both copies in $\ccN$ are terminal. 
Now the variant of~\ref{it:6353d} we have been alluding to reads as follows.

\begin{lemma}\label{lem:7207}
	Every forest of at least two copies possesses at least two terminal copies. 
\end{lemma}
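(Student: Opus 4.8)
The plan is to mimic the standard proof that an edge forest with at least two edges has at least two leaves. Let $\ccN$ be a forest of copies with $|\ccN|=k\ge 2$, and fix an admissible enumeration $(F_1,\ldots,F_k)$. I would first observe that $F_k$ is terminal by definition, so it suffices to produce a \emph{second} terminal copy, i.e.\ an admissible enumeration ending with some $F_\ell$ with $\ell\ne k$. The natural candidate is to take a copy that is ``attached'' to the rest of $\ccN$ only through a single connector --- a vertex or an edge --- and to promote it to the last position.

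The key steps, in order: (1) Set $R=\{F_1,\ldots,F_{k-1}\}$, which is again a forest of copies by the same enumeration, and consider the connector $z_k=V(F_k)\cap\bigcup_{i<k}V(F_i)$, which by Definition~\ref{dfn:1606} is either a single vertex (or empty) or an edge in $E(F_k)\cap\bigcup_{i<k}E(F_i)$. (2) Within the forest $R$, find a terminal copy $F_\ell$ ($\ell\le k-1$) that is ``separated'' from $z_k$ in the following sense: working backwards through $(F_1,\ldots,F_{k-1})$, at each stage the last copy is attached to the earlier ones by a connector of size $\le1$ or a shared edge; I would argue that one can reorder so that $F_\ell$ is the last copy of $R$ and, crucially, $F_\ell$ either avoids $z_k$ entirely or meets $\bigcup_{i\ne\ell,k}V(F_i)\cup V(F_k)$ only in a connector of the allowed type. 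Concretely, if $z_k$ is a vertex or empty, then any terminal copy of $R$ other than one that is ``pinned'' at $z_k$ will do; if $z_k$ is an edge $e$, then $e$ lies in some $F_j$ with $j<k$, and I would choose a terminal copy of the subforest obtained by a suitable pruning that keeps $e$ attached to the bulk. (3) Verify that reinserting $F_k$ at the appropriate position yields an admissible enumeration ending with $F_\ell$: since $F_k$'s connector to everything before it is exactly $z_k$, and $z_k$ is contained in the copies $F_1,\ldots,F_{k-1}$, inserting $F_k$ right after the first copy among $F_1,\ldots,F_{k-1}$ that contains $z_k$ preserves admissibility of all later steps, because for each subsequent copy $F_i$ the set $V(F_i)\cap(\text{earlier vertices})$ only grows by vertices of $z_k$, and $z_k$ is either a single vertex or an edge already present; one checks the forest condition~\ref{it:1606i}--\ref{it:1606ii} is not disturbed. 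This produces the enumeration exhibiting $F_\ell$ as a second terminal copy.

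The main obstacle I expect is step (2): unlike edge forests, forests of copies are \emph{not} closed under taking subsets (the counterexample of Figure~\ref{fig:11}), so I cannot simply delete $F_k$ and invoke induction to get two terminal copies of $\ccN\setminus\{F_k\}$ --- that set need not even be a forest. The delicate point is therefore to show that some terminal copy of the \emph{original} forest $\ccN$, other than $F_k$, exists, which amounts to showing that the ``attachment tree'' structure induced by a single fixed admissible enumeration has, in an appropriate sense, at least two sink-like positions. I would handle this by a careful induction on $k$: for $k=2$ both copies are terminal; for the inductive step, apply the hypothesis to $R=\{F_1,\ldots,F_{k-1}\}$ to get two terminal copies $F_a,F_b$ of $R$, and argue that at least one of $F_a,F_b$ remains terminal in $\ccN$ once $F_k$ is appended --- the only way a terminal copy $F_a$ of $R$ fails to stay terminal in $\ccN$ is if every admissible enumeration of $R$ ending in $F_a$ forces $F_k$'s connector $z_k$ to ``pass through'' $F_a$ in a way incompatible with putting $F_a$ last, and since $z_k$ has size $\le1$ or is a single edge, this can obstruct at most one of the two candidates $F_a,F_b$. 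Thus at least one survives, giving the required second terminal copy. The bookkeeping here, tracking how $z_k$ interacts with the two terminal copies of $R$, is where the real work lies, but it is elementary once the case distinction ($z_k$ a vertex/empty vs.\ $z_k$ an edge) is set up correctly.
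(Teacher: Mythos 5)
Your proposal follows essentially the same route as the paper's proof: induct on $|\ccN|$, remove a terminal copy $F_\star$ (your $F_k$), observe the remainder is a forest, invoke the inductive hypothesis to get two terminal copies of the remainder, and argue at least one survives when $F_\star$ is reinserted because the attachment set $z_\star$ pins down at most one culprit copy. The only cosmetic difference is the insertion position for $F_\star$ --- you place it just after the first copy that covers $z_k$, whereas the paper places it second-to-last and exploits that the designated cover copy $F_\circ$ then precedes it; both work, and your reasoning that the new ``earlier vertices'' contributed by $F_k$ are all contained in $z_k\subseteq V(F_\circ)$ (hence already counted) is exactly the observation the paper uses in its verification $z^+=z^-$.
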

  
In the argument that follows and at several other occasions occurring in the rest of 
this section it will be convenient to employ the following notation. Given a set of 
hypergraphs $\ccN$, we shall write $V(\ccN)$ for $\bigcup_{F_\star\in\ccN}V(F_\star)$
and, similarly, $E(\ccN)$ will abbreviate $\bigcup_{F_\star\in\ccN}E(F_\star)$.
   
\begin{proof}[Proof of Lemma~\ref{lem:7207}]
	We argue by induction on the size $|\ccN|$ of the forest of copies 
	under consideration. The base case $|\ccN|=2$ has already been studied. 
	For the induction step we suppose that~$\ccN$
	is a forest consisting of $n\ge 3$ copies and that every forest 
	of copies~$\ccN'$ with $|\ccN'|=n-1$ has at least two terminal copies. 
	
	Since $\ccN$ admits an admissible enumeration, it possesses some terminal 
	copy~$F_\star$. Notice that the 
	set 
		\[
		z_\star
		=
		V(F_\star)\cap V(\ccN\sm\{F_\star\})
	\]
		is either an edge in $E(F_\star)\cap E(\ccN\sm\{F_\star\})$
	or it consists of at most one vertex. Therefore there exists a copy 
	$F_\circ\in \ccN\sm\{F_\star\}$ such that 
	\begin{enumerate}		\item[$\bullet$] $z_\star\subseteq V(F_\circ)$
		\item[$\bullet$] and if $z_\star$ is an edge, then $z_\star\in E(F_\circ)$.
	\end{enumerate} 	  
	
	The terminality of $F_\star$ implies that $\ccN\sm\{F_\star\}$ is a forest 
	of copies and the induction hypothesis yields an admissible 
	enumeration $(F_1, \ldots, F_{n-1})$ of $\ccN\sm\{F_\star\}$
	whose terminal copy is distinct from $F_\circ$. We contend that $F_{n-1}$
	is terminal in $\ccN$ as well and that the enumeration 
		\begin{equation}\label{eq:6310}
		(F_1, \ldots, F_{n-2}, F_\star, F_{n-1})
	\end{equation}
		exemplifies this fact. The claims that we need to check concerning the 
	copies $F_2, \ldots, F_{n-2}$ follow immediately 
	from the admissibility of $(F_1, \ldots, F_{n-1})$, and the claim 
	on $F_\star$ is a consequence of $F_\circ\in \{F_1, \ldots, F_{n-2}\}$.
	It remains to verify that the set 
		\[	
		z^+ 
		= 
		V(F_{n-1}) 
		\cap 
		V\bigl(\{F_1, \ldots, F_{n-2}, F_\star\}\bigr)
	\]
		is either an edge 
	in $E(F_{n-1})\cap E\bigl(\{F_1, \ldots, F_{n-2}, F_\star\}\bigr)$ or that it 
	consists of at most one vertex. 
	Due to the admissibility of $(F_1, \ldots, F_{n-1})$ we know that 
	the set 
		\[
		z^- 
		= 
		V(F_{n-1}) 
		\cap 
		V\bigl(\{F_1, \ldots, F_{n-2}\}\bigr)
	\]
		has analogous properties and thus it suffices to prove $z^+=z^-$.
	Because of 
		\[
		V(F_{n-1})\cap V(F_\star)
		\subseteq 
		V(F_{n-1})\cap z_\star 
		\subseteq 
		V(F_{n-1})\cap V(F_\circ)
		\subseteq 
		z^-
	\]
		and 
		\[
		z^+=z^-\cup\bigl(V(F_{n-1})\cap V(F_\star)\bigr)
	\]
		this is indeed the case.  
	Altogether we have thereby proved that the enumeration~\eqref{eq:6310} is 
	admissible. Hence $F_\star$ and $F_{n-1}$ are two terminal copies of $\ccN$, 
	and the induction step is complete.
\end{proof}      

The concept ``dual'' to a terminal copy is that of an {\it initial copy}, 
by which we mean a copy in a forest that is capable of standing in the first position 
of an admissible enumeration.
\index{initial copy}

\begin{lemma}\label{lem:6339}
	In a forest of copies every copy is initial. 
\end{lemma}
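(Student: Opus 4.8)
The plan is to prove the lemma by induction on $|\ccN|$, with the base case $|\ccN|\le 1$ being immediate, since then the one-term enumeration is trivially admissible.

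For the induction step, consider a forest of $n\ge 2$ copies $\ccN$ and an arbitrary copy $F_\star\in\ccN$; I would appeal to Lemma~\ref{lem:7207} to pick a terminal copy $G\in\ccN$ distinct from $F_\star$, which is possible precisely because that lemma guarantees \emph{two} terminal copies. Fixing an admissible enumeration of $\ccN$ that ends in $G$, its initial segment exhibits $\ccN'=\ccN\sm\{G\}$ as a forest of copies, and its last step shows that the set $z=V(G)\cap V(\ccN')$ is either an edge in $E(G)\cap E(\ccN')$ or a set of at most one vertex. The one thing worth isolating here is that $z$ depends only on $\ccN$ and $G$, and not on the particular admissible enumeration witnessing the terminality of $G$.

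Next I would invoke the induction hypothesis for $\ccN'$, which has $n-1$ copies and still contains $F_\star$, to obtain an admissible enumeration $(F_1,\ldots,F_{n-1})$ of $\ccN'$ with $F_1=F_\star$. Then I claim $(F_1,\ldots,F_{n-1},G)$ is an admissible enumeration of $\ccN$: for each $j\in[2,n-1]$ the set $V(F_j)\cap\bigl(\bigcup_{i<j}V(F_i)\bigr)$ has the required shape because $(F_1,\ldots,F_{n-1})$ is admissible, while for the final position the relevant set is exactly $V(G)\cap V(\ccN')=z$, which was already analysed in the previous paragraph. Hence $F_\star$ stands in the first position of an admissible enumeration of $\ccN$, i.e.\ it is initial; since $F_\star$ was arbitrary, the induction closes.

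I do not expect any genuine obstacle beyond this bookkeeping: the argument is a routine ``grafting'' parallel to the proof of Lemma~\ref{lem:7207}, and the enumeration-independence of the attachment set $V(G)\cap V(\ccN\sm\{G\})$ of a terminal copy $G$ is what makes the grafting of $G$ onto the end of the enumeration supplied by the induction hypothesis legitimate.
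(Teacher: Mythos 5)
Your proof is correct and follows essentially the same route as the paper: induction on $|\ccN|$, using Lemma~\ref{lem:7207} to extract a terminal copy $G\ne F_\star$, applying the induction hypothesis to $\ccN\sm\{G\}$, and appending $G$ at the end. The only cosmetic difference is that the paper takes $|\ccN|\le 2$ as the base case (observing any enumeration of two copies is admissible), while you start the induction from $n\ge 2$; your explicit remark that the attachment set $V(G)\cap V(\ccN\sm\{G\})$ is enumeration-independent is exactly the implicit justification the paper relies on for the grafting step.
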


\begin{proof}
	Again we argue by induction on the size of the forest under consideration. 
	In the base case, when the forest consists of at most two copies, every 
	enumeration is admissible and, therefore, every copy is initial.
	
	For the induction step we look at a forest of copies $\ccN$ 
	with $n=|\ccN|\ge 3$ and at an arbitrary copy $F_1\in \ccN$. 
	Lemma~\ref{lem:7207}
	discloses that $\ccN$ has a terminal copy $F_n\ne F_1$. 
	Now $\ccN'=\ccN\sm\{F_n\}$ is again a forest of copies and due to the
	induction hypothesis $F_1$ is an initial copy of $\ccN'$, i.e., there is 
	an admissible enumeration $(F_1, \ldots, F_{n-1})$ of $\ccN'$ that starts 
	with $F_1$. Clearly $(F_1, \ldots, F_n)$ is an admissible enumeration 
	of $\ccN$ and, consequently, $F_1$ is indeed an initial copy of $\ccN$. 
\end{proof}

Next we address ``subforests'' of copies obtained by the removal of edge copies.

\begin{lemma}\label{lem:6140}
	The result of deleting an edge copy from a forest of copies is again 
	a forest of copies. 
\end{lemma}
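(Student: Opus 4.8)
*The result of deleting an edge copy from a forest of copies is again a forest of copies.*

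The plan is to reduce everything to Lemma~\ref{lem:6339}, using the fact that an edge copy $e^+$ has an extremely small vertex set—namely the two-or-more vertices of the single edge $e$—so its removal can only simplify the intersection structure of the remaining copies. Concretely, suppose $\ccN$ is a forest of copies and $e^+\in\ccN$ is an edge copy; I want to show that $\ccN^-=\ccN\sm\{e^+\}$ is a forest of copies. The key observation I would record first is that, for \emph{any} copy $F_\star\in\ccN^-$ and any set $\ccM\subseteq\ccN^-$ of copies, the set $V(F_\star)\cap V(\ccM\cup\{e^+\})$ differs from $V(F_\star)\cap V(\ccM)$ only by possibly adjoining vertices lying in $e$; and if $e\in E(F_\star)$ and $e\in E(\ccM)$ then in fact $e\subseteq V(\ccM)$ already, so no difference arises. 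This is exactly the computation that made the enumeration~\eqref{eq:6310} admissible in the proof of Lemma~\ref{lem:7207}, just run in reverse.

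First I would invoke Lemma~\ref{lem:6339} applied to the forest $\ccN$ to obtain an admissible enumeration of $\ccN$ that \emph{starts with} $e^+$, say $(e^+, F_1,\ldots,F_{n-1})$ where $n=|\ccN|$. I claim $(F_1,\ldots,F_{n-1})$ is an admissible enumeration of $\ccN^-$. Indeed, for each $j\in[2,n-1]$ admissibility of the original enumeration tells us that
\[
	z_j^+ \;=\; V(F_j)\cap\Bigl(V(e^+)\cup\bigcup_{i<j}V(F_i)\Bigr)
\]
is either an edge in $E(F_j)\cap\bigl(E(e^+)\cup\bigcup_{i<j}E(F_i)\bigr)$ or consists of at most one vertex. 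I must check the analogous statement for
\[
	z_j^- \;=\; V(F_j)\cap\bigcup_{i<j}V(F_i),
\]
and since $z_j^-\subseteq z_j^+=z_j^-\cup\bigl(V(F_j)\cap e\bigr)$, the task is to show $z_j^-=z_j^+$, i.e.\ that $V(F_j)\cap e\subseteq\bigcup_{i<j}V(F_i)$. This is where I expect the only real (though minor) obstacle: I need to locate, for each vertex $x\in V(F_j)\cap e$, an earlier copy $F_i$ ($i<j$) containing $x$. Here I use that $e^+$ sits first in the enumeration, so at stage $j$ the vertices of $e$ already ``belong to the past'' of $F_j$ in the sense that $z_j^+\ni x$; if $|z_j^+|\le 1$ then $z_j^+$ is a single vertex and one checks directly (splitting on whether that vertex lies in $e$ or not) that $z_j^-$ is also a single vertex or empty; if $z_j^+$ is an edge $f\in E(F_j)$ with $f\in\bigcup_{i<j}E(F_i)\cup\{e\}$, then either $f\neq e$, in which case $f\in E(F_i)$ for some $i<j$ and hence $z_j^-=f=z_j^+$, or $f=e$, in which case $e\in E(F_j)$ forces $e\in E(F_j)$ and I argue that since $e^+$ contributed the edge $e$ there must be some $F_i$ with $i<j$ also carrying $e$ (otherwise $z_j^+$ would have been $e\cap V(e^+)=e$ with $e\notin E(e^+)$-part only... ) — more cleanly, in the degenerate subcase $f=e\in E(F_j)$ I note that $z_j^+=e$ being an edge of $\bigcup_{i<j}E(F_i)\cup E(e^+)$ and $e\notin E(F_j)$ being impossible, so $e\in\bigcup_{i<j}E(F_i)$, giving $z_j^-\supseteq e$, hence $z_j^-=z_j^+$.

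Finally the case $j=1$ is trivial: $z_1^-=\varnothing$. Collecting these cases shows every $z_j^-$ is either an at-most-one-vertex set or an edge of $F_j$ lying among $\bigcup_{i<j}E(F_i)$, so $(F_1,\ldots,F_{n-1})$ is admissible and $\ccN^-$ is a forest of copies. The main point worth being careful about is the bookkeeping in the subcase where the ``historical intersection'' of $F_j$ is the very edge $e$ underlying the deleted copy; there one must confirm that some genuine earlier copy of $\ccN^-$ already carries $e$, which follows from admissibility of the original enumeration together with the placement of $e^+$ at the front (so that any later reference to $e$ cannot be supplied by $e^+$ alone).
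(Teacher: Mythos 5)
The strategy is the same as the paper's — apply Lemma~\ref{lem:6339} to obtain an admissible enumeration with a specified first element, then delete $e^+$ — but the choice of which copy to place first is different, and your choice does not go through. You place $e^+$ at the front, whereas the paper first picks $F_1\in\ccN'$ with $e\in E(F_1)$ (when $e\in E(\ccN')$) and puts \emph{that} copy first, so that $e^+$ lands somewhere in the middle.

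The gap in your argument is the subcase ``$f=e$''. There you need to show $z_j^-$ has at most one vertex or is an edge in $E(F_j)\cap\bigcup_{i<j}E(F_i)$, and you try to deduce from ``$z_j^+=e$ is an edge of $\bigcup_{i<j}E(F_i)\cup E(e^+)$'' that $e\in\bigcup_{i<j}E(F_i)$. This inference is invalid: since $E(e^+)=\{e\}$, membership of $e$ in the union is automatic and carries no information about the $F_i$. Indeed the conclusion is false in general. Take $k=3$, $e=\{a,b,c\}$, and $\ccN=\{e^+, F_1, F_2, F_3\}$ with $e\in E(F_3)$, $V(F_1)\cap V(F_3)=\{a\}$, $V(F_2)\cap V(F_3)=\{b\}$, and $F_1$, $F_2$, $e^+$ otherwise pairwise disjoint. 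The enumeration $(e^+, F_1, F_2, F_3)$ is admissible ($z_1=\{a\}$, $z_2=\{b\}$, $z_3=e$ with $e\in E(F_3)\cap E(e^+)$), but deleting $e^+$ from the front gives $(F_1,F_2,F_3)$, for which $z_3=V(F_3)\cap\bigl(V(F_1)\cup V(F_2)\bigr)=\{a,b\}$ has two vertices yet is not an edge. So the truncated enumeration fails, even though $\ccN'=\{F_1,F_2,F_3\}$ is still a forest (as $(F_3,F_1,F_2)$ shows). The fix is exactly the paper's initial-element choice: once a copy containing $e$ stands first, all later $z$-sets coincide in the two enumerations, because $e$ is already covered by $V(F_1)$.
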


\begin{proof}
	Let $\ccN$ be a forest of copies containing some edge copy $e^+$.
	We are to prove that $\ccN'=\ccN\sm\{e^+\}$ is a forest of copies as well. 
	To avoid trivialities we may suppose that $\ccN'\ne \vn$. If $e\in E(\ccN')$ 
	we choose a copy $F_1\in \ccN'$ satisfying $e\in E(F_1)$ and otherwise we 
	let $F_1\in \ccN'$ be arbitrary. By Lemma~\ref{lem:6339} there is 
	an admissible enumeration 
		\[
	 (F_1, \ldots, F_i, e^+, F_{i+1}, \ldots, F_{|\ccN'|})
	\]
		of $\ccN$ starting with $F_1$ and one checks immediately that 
	the enumeration $(F_1, \ldots, F_{|\ccN'|})$ of~$\ccN'$ obtained by deleting $e^+$
	is admissible. 
\end{proof}

Roughly speaking, the lemma that follows asserts that if we glue two forests 
of copies $\ccA$ and $\ccB$ together at a vertex, then the result is again 
a forest of copies. 
 
\begin{lemma}\label{lem:7032}
	Suppose that $(H, \ccN)$ is a linear system of hypergraphs admitting a 
	partition $\ccN=\ccA\dcup \ccB$ such that 
	$|V(\ccA)\cap V(\ccB)|\le 1$.
	If both $\ccA$ and $\ccB$ are forests of copies, then so is $\ccN$. 
\end{lemma}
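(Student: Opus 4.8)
The plan is to prove Lemma~\ref{lem:7032} by induction on $|\ccN|$, mimicking the standard proof that gluing two edge forests at a vertex yields an edge forest, but taking care of the peculiarities of forests of copies (notably that the order in the enumeration matters and that $z_j$ may be an edge copy). The base cases $|\ccN|\le 2$ are immediate: if $|\ccN|\le 1$ there is nothing to prove, and if $|\ccN|=2$ then $|V(\ccA)\cap V(\ccB)|\le 1$ already says that the two copies share at most one vertex, so either enumeration of $\ccN$ is admissible.

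For the induction step, suppose $|\ccN|\ge 3$ and that the lemma holds for all smaller systems. By symmetry we may assume $\ccB\ne\vn$; pick a terminal copy $F_\star$ of $\ccB$ if $|\ccB|\ge 2$ (using Lemma~\ref{lem:7207}), or the unique copy of $\ccB$ if $|\ccB|=1$. In either case $\ccB^-=\ccB\sm\{F_\star\}$ is a forest of copies (possibly empty), and moreover
\[
	z_\star = V(F_\star)\cap V\bigl(\ccB^-\bigr)
\]
is either an edge in $E(F_\star)\cap E(\ccB^-)$ or consists of at most one vertex. Now apply the induction hypothesis to $\ccN^-=\ccA\dcup\ccB^-$: its two parts $\ccA$ and $\ccB^-$ are forests of copies and $|V(\ccA)\cap V(\ccB^-)|\le|V(\ccA)\cap V(\ccB)|\le 1$, so $\ccN^-$ is a forest of copies. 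I want an admissible enumeration of $\ccN^-$ that ends at the right place so that appending $F_\star$ stays admissible, and this is where the main work lies. The key point to establish is that
\[
	V(F_\star)\cap V\bigl(\ccN^-\bigr) = z_\star
\]
or differs from $z_\star$ by at most the single vertex of $V(\ccA)\cap V(\ccB)$; in the former case, any admissible enumeration of $\ccN^-$ works (since then $z_\star$ is witnessed inside $\ccB^-$), and in the latter case I must arrange, using Lemma~\ref{lem:6339} (every copy of a forest is initial), that the enumeration of $\ccN^-$ starts with a copy containing that shared vertex, so that the relevant intersection set for $F_\star$ against the already-listed copies is still either an edge of one of them or a single vertex. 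Concretely: if the unique vertex $v$ of $V(\ccA)\cap V(\ccB)$ lies in $V(F_\star)$ but not in $V(\ccB^-)$, choose a copy of $\ccA$ through $v$ to open the enumeration; otherwise $V(F_\star)\cap V(\ccN^-)=V(F_\star)\cap V(\ccB^-)=z_\star$ and any admissible enumeration of $\ccN^-$ does.

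The main obstacle is precisely this bookkeeping about which vertex of $F_\star$ can ``see'' the $\ccA$-side: we have to rule out that $F_\star$ meets both $\ccB^-$ in one vertex or edge and $\ccA$ in a different vertex, which would create an inadmissible $z$ for $F_\star$ of size $\ge 2$ that is not an edge. This cannot happen because any vertex in $V(F_\star)\cap V(\ccA)$ lies in $V(\ccB)\cap V(\ccA)$, hence equals the single common vertex $v$, and if in addition $v\in V(\ccB^-)$ then $v\in z_\star$, so nothing new is contributed; if $v\notin V(\ccB^-)$ then $z_\star$ is disjoint from $\ccA$, and opening with an $\ccA$-copy through $v$ makes $\{v\}\cup z_\star$ split correctly — $\{v\}$ handled by the first copy and $z_\star$ handled by a copy of $\ccB^-$. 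Once the enumeration $(F_1,\ldots,F_{|\ccN^-|},F_\star)$ is seen to be admissible, the induction is complete. (The analogous statement for gluing along an edge is Lemma~\ref{lem:6032}, proved by the same method with ``vertex'' replaced by ``edge'' throughout, so I would handle it immediately afterward in the same spirit.)
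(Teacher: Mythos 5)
Your approach is structurally different from the paper's: you argue by induction, peel off a terminal copy $F_\star$ of $\ccB$ (via Lemma~\ref{lem:7207}), and try to append it last; the paper instead directly concatenates an admissible enumeration of $\ccA$ with one of $\ccB$ that begins with a copy containing the shared vertex, using Lemma~\ref{lem:6339}. Your version has a gap in the case $v\in V(F_\star)$, $v\notin V(\ccB^-)$, $z_\star\ne\vn$. The set that governs admissibility of $F_\star$ as the \emph{last} copy is
\[
	V(F_\star)\cap V(\ccN^-)=\{v\}\cup z_\star\,,
\]
which is a property of the \emph{set} $\ccN^-$ and not of its enumeration; thus ``opening with an $\ccA$-copy through $v$'' has no effect on it, and if $z_\star$ is nonempty and disjoint from $\{v\}$, the displayed set has at least two elements and in general is not an edge in $E(F_\star)\cap E(\ccN^-)$, so appending $F_\star$ last is inadmissible. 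A concrete instance: take three two-edge paths $A\colon v\!-\!x\!-\!y$, $B_1\colon q\!-\!s\!-\!t$, $B_2\colon v\!-\!w\!-\!q$, and put $\ccA=\{A\}$, $\ccB=\{B_1,B_2\}$. Then $B_2$ is terminal in $\ccB$, yet $(A,B_1,B_2)$ is inadmissible because $V(B_2)\cap\bigl(V(A)\cup V(B_1)\bigr)=\{v,q\}$ is neither a single vertex nor an edge of $E(A)\cup E(B_1)$; no re-ordering of $\{A,B_1\}$ changes this.

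Your inductive route is salvageable, but by choosing $F_\star$ carefully rather than by re-ordering $\ccN^-$. If $|\ccB|\ge 2$, Lemma~\ref{lem:7207} supplies two distinct terminal copies of $\ccB$, and at least one of them avoids the bad case: if $F_\star\ne F_{\star\star}$ were both terminal with $v\in V(F_\star)$, $v\notin V(\ccB\sm\{F_\star\})$, and likewise for $F_{\star\star}$, then $F_\star\in\ccB\sm\{F_{\star\star}\}$ would give $v\in V(F_\star)\subseteq V(\ccB\sm\{F_{\star\star}\})$, a contradiction. With such a choice of $F_\star$ one always has $V(F_\star)\cap V(\ccN^-)=z_\star$ and appending it last is fine. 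The paper's direct argument avoids this case analysis entirely: take $F'_1\in\ccB$ with $V(\ccA)\cap V(\ccB)\subseteq V(F'_1)$, an admissible enumeration $(F'_1,\ldots,F'_{|\ccB|})$ of $\ccB$ starting there (Lemma~\ref{lem:6339}), and prepend any admissible enumeration of $\ccA$; since each later $F'_j$ meets $V(\ccA)$ only inside $V(F'_1)$, prepending $\ccA$ leaves the sets $z_j$ for $j\ge 2$ unchanged.
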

 
\begin{proof}
	The case $\ccB=\vn$ being clear we may suppose that $\ccB$ contains at least 
	one copy. Since the set $z=V(\ccA)\cap V(\ccB)$ consists of at most one vertex,
	there exists a copy $F'_1\in \ccB$ such that $z\subseteq V(F'_1)$. Now 
	Lemma~\ref{lem:6339} shows that $\ccB$ has an admissible 
	enumeration $(F'_1, \ldots, F'_{|\ccB|})$ starting with $F'_1$.
	It is not hard to check that if $(F_1, \ldots, F_{|\ccA|})$ is an arbitrary
	admissible enumeration of $\ccA$, then the enumeration 
	$(F_1, \ldots, F_{|\ccA|}, F'_1, \ldots, F'_{|\ccB|})$ of $\ccN$ is admissible 
	as well. 
\end{proof}

There is a similar statement addressing the case that the glueing occurs 
in an entire edge of the underlying hypergraph.  

\begin{lemma}\label{lem:6032}
	Let $(H, \ccN)$ be a linear system of hypergraphs and let 
	$\ccN=\ccA\dcup \ccB$ be a partition such that $V(\ccA)\cap V(\ccB)\subseteq e$ 
	holds for some edge $e$ of $H$.
	If both $\ccA\cup\{e^+\}$ and $\ccB\cup\{e^+\}$ are forests of copies, 
	then so is $\ccN$. 
\end{lemma}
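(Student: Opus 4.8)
The plan is to reduce at once to showing that the augmented system $\ccN\cup\{e^+\}$ is a forest of copies: once this is known, either $e^+\in\ccN$ and the two systems coincide, or $e^+\notin\ccN$ and Lemma~\ref{lem:6140} (deleting an edge copy preserves being a forest of copies) yields that $\ccN=(\ccN\cup\{e^+\})\sm\{e^+\}$ is a forest of copies. This also subsumes the degenerate cases $\ccA=\vn$ or $\ccB=\vn$.

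To exhibit an admissible enumeration of $\ccN\cup\{e^+\}=\ccA\cup\ccB\cup\{e^+\}$, I would begin with admissible enumerations of the two given forests that \emph{both start with the edge copy~$e^+$}; these exist by Lemma~\ref{lem:6339} (every copy of a forest is initial). Writing them as $(e^+, A_1,\ldots,A_a)$ for $\ccA\cup\{e^+\}$ and $(e^+, B_1,\ldots,B_b)$ for $\ccB\cup\{e^+\}$, I would consider the concatenation
\[
	\ccE=(e^+, A_1,\ldots,A_a, B_1,\ldots,B_b)\,.
\]
Since $\ccN=\ccA\dcup\ccB$, the edge copy $e^+$ lies in at most one of $\ccA$, $\ccB$, and a short check of the three possibilities shows that $\ccE$ lists the copies of $\ccN\cup\{e^+\}$ without repetition.

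For the admissibility of $\ccE$ I would split according to the two blocks. At a position held by some $A_i$, the set of its $\ccE$-predecessors is exactly the set of its predecessors in the admissible enumeration $(e^+,A_1,\ldots,A_a)$, so the required condition transfers verbatim. At a position held by some $B_k$, the $\ccE$-predecessors form $(\ccA\cup\{e^+\})\cup\{B_1,\ldots,B_{k-1}\}$, so I need to control
\[
	z=V(B_k)\cap\bigl(V(\ccA)\cup e\cup V(\{B_1,\ldots,B_{k-1}\})\bigr)\,.
\]
This is the one place the hypothesis enters: as $B_k\in\ccB$ we have $V(B_k)\cap V(\ccA)\subseteq V(\ccB)\cap V(\ccA)\subseteq e=V(e^+)$, so the $\ccA$-part of the intersection is absorbed by the $e^+$-part, whence $z$ coincides with $z^-:=V(B_k)\cap V(\{e^+, B_1,\ldots,B_{k-1}\})$, the predecessor-intersection of $B_k$ in the admissible enumeration $(e^+,B_1,\ldots,B_b)$. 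By admissibility of that enumeration, $z^-$ is either a set of at most one vertex or an edge lying in $E(B_k)$ and in the edge set of $\{e^+,B_1,\ldots,B_{k-1}\}$, which is contained in the edge set of the $\ccE$-prefix of $B_k$; this is precisely what admissibility of $\ccE$ demands. Hence $\ccN\cup\{e^+\}$ is a forest of copies and the reduction step finishes the proof. I do not expect a genuine obstacle: the argument runs parallel to Lemma~\ref{lem:7032}, the only extra care being the identity $z=z^-$, which expresses that the edge $e$ already ``hides'' the entire overlap between $\ccA$ and $\ccB$.
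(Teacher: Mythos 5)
Your argument is correct and follows essentially the same route as the paper: concatenate admissible enumerations of $\ccA\cup\{e^+\}$ and $\ccB\cup\{e^+\}$, use the hypothesis $V(\ccA)\cap V(\ccB)\subseteq e$ to show that for each $B_k$ the predecessor-overlap reduces to what is already controlled by the $\ccB\cup\{e^+\}$-enumeration, and finish with Lemma~\ref{lem:6140}. The only cosmetic difference is that the paper only requires the $\ccB\cup\{e^+\}$-enumeration to start with $e^+$ (using Lemma~\ref{lem:6339} once and taking an arbitrary admissible enumeration for $\ccA\cup\{e^+\}$), whereas you insist that both start with $e^+$; this is harmless but slightly redundant.
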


\begin{proof}
	Take an arbitrary admissible enumeration $(F_1, \ldots, F_a)$ of $\ccA\cup\{e^+\}$,
	as well as an admissible enumeration $(e^+, F'_2, \ldots, F'_b)$ of $\ccB\cup\{e^+\}$, 
	where $a=|\ccA\cup\{e^+\}|$ and $b=|\ccB\cup\{e^+\}|$, respectively. 
	Now $(F_1, \ldots, F_a, F'_2, \ldots, F'_b)$ is an admissible enumeration
	of $\ccN\cup\{e^+\}$. If $e^+\in \ccN$ this proves that $\ccN$ is indeed 
	a forest of copies and in case $e^+\not\in \ccN$ Lemma~\ref{lem:6140} leads us to the 
	same conclusion.  
\end{proof}

An iterative application of the next result will allow us to relate
forests of copies to $\Gth$. 

\begin{lemma}\label{lem:6349}
	Given a linear system of hypergraphs $(H, \ccN)$ 
	with ${\Gth(H, \ccN^+)>|\ccN|\ge 2}$ there exists a copy $F_\star\in\ccN$
	such that the set 
		\begin{equation}\label{eq:6622}
		z_\star=V(F_\star)\cap V(\ccN\sm\{F_\star\})
	\end{equation}
		is either an edge in $E(F_\star)\cap E(\ccN\sm\{F_\star\})$  
	or it consists of at most one vertex.
\end{lemma}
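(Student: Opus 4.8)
The plan is to argue by contradiction, so suppose that every copy $F\in\ccN$ has $|z_F|\ge 2$ and $z_F$ is not an edge in $E(F)\cap E(\ccN\sm\{F\})$, where $z_F=V(F)\cap V(\ccN\sm\{F\})$. First I would extract two consequences of $\Gth(H,\ccN^+)>|\ccN|\ge 2$. Since $E^+(H)\subseteq\ccN^+$, every tidy cycle in $E^+(H)$ is one in $\ccN^+$, so $\Gth(H,E^+(H))\ge\Gth(H,\ccN^+)>|\ccN|$ and Lemma~\ref{lem:Gth-gth} gives $\gth(H)>|\ccN|$; in particular $H$ is linear, and triangle-free once $|\ccN|\ge 3$ (the case $|\ccN|=2$ I would settle separately along the same lines). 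Applying the hypothesis to the tidy length-$2$ cycles $F\,x\,F'\,y$ and $F\,e\,F'\,e'$ (with $F\ne F'$ in $\ccN$, $x\ne y$ shared vertices, $e\ne e'$ shared edges) and invoking Fact~\ref{rem:5027} shows that distinct copies of $\ccN$ share at most one edge, and that whenever two distinct copies share two vertices they share a common edge through those vertices. As such an edge is already contained in both copies, these facts combine with triangle-freeness to give: if distinct $F,F'\in\ccN$ share at least two vertices, then $V(F)\cap V(F')$ equals that common edge. Consequently no copy $F$ can meet at most one other copy, for then $z_F=V(F)\cap V(F')$ would be the shared edge and $F$ would be an admissible $F_\star$. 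Hence under our assumption every copy shares a vertex with at least two others.

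Next I pass to the graph $\Gamma$ on $\ccN$ joining two copies whenever they share a vertex. By the previous step $\Gamma$ has minimum degree $\ge 2$, hence contains a cycle; I take a shortest one, $F_1F_2\ldots F_n$ with $n\ge 3$, which is chordless in $\Gamma$ — non-consecutive copies of it share no vertex. Picking $v_i\in V(F_i)\cap V(F_{i+1})$ for every $i\in\ZZ/n\ZZ$, chordlessness forces the $v_i$ to be distinct once $n\ge 4$, and it forces condition~\ref{it:T2}: an edge of $H$ carrying two non-consecutive connectors would link two non-consecutive copies, a chord. Condition~\ref{it:T1} is automatic as all connectors are vertices. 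Thus $\ccC=F_1v_1F_2v_2\ldots F_nv_n$ is a tidy cycle of copies in $(H,\ccN^+)$ whose copies are distinct and real.

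To close the argument, note that $\ord{\ccC}\le\len{\ccC}=n\le|\ccN|<\Gth(H,\ccN^+)$, so by Definition~\ref{dfn:Girth} the cycle $\ccC$ has a master copy $F_\star=F_j$. Since $\Gth(H,\ccN^+)>(2,2)$, Fact~\ref{rem:5027} shows that in the collapsed cycle each $F_i\ne F_\star$ becomes an edge copy $f_i^+$ with $f_i\in E(F_i)\cap E(F_\star)$, and examining which connectors flank $F_\star$ and the various $f_i^+$ shows that every $v_i$ lies on some $f_i\subseteq V(F_\star)$; hence all connectors of $\ccC$ belong to $V(F_\star)$. Then $V(F_{j+2})\cap V(F_\star)$ contains the distinct vertices $v_{j+1},v_{j+2}$, so $F_{j+2}$ and $F_\star=F_j$ are joined in $\Gamma$; being at cyclic distance two along the shortest cycle, this is a chord — a contradiction, which proves the lemma.

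The hard part will be the bookkeeping suppressed in the middle two paragraphs: verifying tidiness cleanly and, above all, handling the degenerate configurations. When $n=3$ every pair of indices in $\ZZ/n\ZZ$ counts as consecutive, so the chord produced in the final paragraph joins adjacent copies and yields nothing; here one must instead detour through a copy outside the triangle or promote a shared edge to an edge connector (watching condition~\ref{it:T1}). One also has to rule out that the chosen connectors coincide or that some edge of $H$ meets three of them, and dispose of the base case $|\ccN|=2$. I expect all of this to go through via a secondary minimality argument — over shortest $\Gamma$-cycles together with the choice of connectors — without new ideas.
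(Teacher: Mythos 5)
Your overall strategy---assume no admissible $F_\star$, and derive a contradiction by manufacturing a bad cycle of copies of order at most $|\ccN|$---agrees with the paper, but your implementation via a shortest cycle in the auxiliary graph $\Gamma$ on $\ccN$ is a different route, and it has a genuine gap at the point where you claim tidiness.

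You assert that chordlessness of the shortest $\Gamma$-cycle ``forces condition~\ref{it:T2}: an edge of $H$ carrying two non-consecutive connectors would link two non-consecutive copies, a chord.'' This is not true. An edge $f\in E(H)$ with $v_i, v_j\in f$ for non-consecutive $i,j$ need not be an edge of any copy in $\ccN$; it is merely an ambient edge of $H$. Such an $f$ does not make $F_i$ and $F_j$ (or $F_{i+1}$ and $F_{j+1}$, etc.) share a vertex, so it produces no chord in $\Gamma$, yet it destroys~\ref{it:T2} for your cycle $\ccC$, and then Definition~\ref{dfn:Girth} gives you no master copy and your contradiction never materialises. To rule this out you would have to insert $f^+$ as a new copy, split $\ccC$ into shorter cycles in $(H,\ccN^+)$, apply the $\Gth$ hypothesis to those, use Fact~\ref{rem:5027} to conclude $f$ lies in a real copy (which then does give a chord), and recurse---but this is a substantive additional argument that has to be set up carefully (the shorter cycles may themselves fail~\ref{it:T2}, and may contain edge copies, so the $\Gamma$-cycle framework no longer applies). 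You flag ``an edge of $H$ meets three of them'' and coinciding connectors as concerns, but the $|M(f)|=2$, non-consecutive case is where your claimed implication is actually false, and you treat it as already handled.

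This is precisely the difficulty the paper's proof is built to avoid. It introduces \emph{useful} vertices and edges, the notion of a copy being \emph{secure} between two connectors, and \emph{special} cycles (with at most one insecure copy). Claim~\ref{clm:1455} (for every useful vertex there are two incident copies each with at most one useful edge through it) is the key $\Gth$-based preprocessing step that has no counterpart in your outline; the existence of a special cycle (Claim~\ref{clm:6650}) replaces your $\Gamma$-cycle; and the case split (a)/(b) in the paper's final argument is exactly the dichotomy ``$\ccC$ untidy'' versus ``$\ccC$ has a master copy''---case (a) being the $\ref{it:T2}$-failure you gloss over, handled there by splitting $\ccC$ with $f^+$ into shorter special cycles $\ccD_1,\dots,\ccD_m$ and a counting argument. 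Your $n=3$ worry and the worry about coinciding connectors are real but secondary; the $\ref{it:T2}$ verification is the load-bearing error, and ``a secondary minimality argument over shortest $\Gamma$-cycles'' will not close it without, in effect, reinventing the paper's secure/special machinery.
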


\begin{proof}
	Assume for the sake of contradiction that no such copy $F_\star$ exists.
	We aim at building a tidy cycle of copies in $(H, \ccN^+)$ consisting 
	of at most $|\ccN|$ distinct copies, which does not possess a master copy. 
	Looking at the 
	potential connectors of such a cycle, we call a vertex $x\in V(H)$
	{\it useful} if there exist distinct copies $F', F''\in \ccN$ such that 
	$x\in V(F')\cap V(F'')$. 
	Similarly, an edge $e\in E(H)$ is said to be 
	{\it useful} if there are distinct copies $F', F''\in \ccN$ satisfying
	$e\in E(F')\cap E(F'')$. 
	For instance, for every copy $F_\star\in\ccN$ all vertices belonging to the 
	corresponding set $z_\star$ defined in~\eqref{eq:6622} are useful. 
	
	\begin{clm}\label{clm:1455}
		For every useful vertex $x$ there are two distinct copies $F'_x, F''_x\in\ccN$
		such that	
		\begin{enumerate}[label=\rmlabel]
			\item\label{it:1455i} $x\in V(F'_x)\cap V(F''_x)$;
			\item\label{it:1455ii} and each of $F'_x$, $F''_x$ has at most one 
				useful edge containing $x$.
		\end{enumerate}  
	\end{clm}
	
	\begin{proof}
		Given $x$ we consider an auxiliary set system $S_x$ with vertex set 
				\[
			V(S_x) = \bigl\{F_\star\in \ccN\colon x\in V(F_\star)\bigr\}\,.
		\]
				For every useful edge $e$ containing $x$ the set
		$\phi_e =\bigl\{F_\star\in V(S_x)\colon e\in E(F_\star)\bigr\}$
		has at least the size $2$. Thus we can define the edge set of our set system $S_x$
		by 
				\[
			E(S_x) = \bigl\{\phi_e\colon \text{$e$ is useful and $x\in e$}\bigr\}\,.
		\]
				
		We are to prove that $S_x$ has at least two vertices whose degree
		is at most one. Due to $v(S_x)\ge 2$ the failure of this statement would imply 
		that $S_x$ contains some cycle $\phi_{e(1)}F_1\ldots \phi_{e(n)}F_n$ (in the 
		sense of Definition~\ref{dfn:girth}). But then $F_1e(2)\ldots F_ne(1)$
		is a tidy cycle of copies in $(H, \ccN)$ all of whose connectors are edges,
		which in view of 
				\[
			\Gth(H, \ccN^+) > |\ccN| \ge v(S_x)\ge n
		\]
				contradicts Lemma~\ref{lem:1447}. 
	\end{proof}
	
	Next we shall construct a cycle of copies in $(H, \ccN)$ which has only vertex
	connectors and some further special property. 
	We commence by examining desirable subsequences 
	of the form $q'F_\star q''$ of the envisaged cycle. 
	Given a copy $F_\star\in\ccN$ and distinct useful vertices 
	$q', q''\in V(F_\star)$ 
	we say that {\it $F_\star$ is secure between $q'$ and $q''$} provided 
	the following statement holds: If there is an edge $f$ 
	satisfying $q', q''\in f\in E(F_\star)$, then this edge is not useful. 
	The intuition here is that it is impossible to collapse copies
	that occur securely between their neighbouring connectors.  
	
	\begin{clm}\label{clm:6640}
		If $F_\star\in \ccN$ and $q'\in V(F_\star)$ is useful,
		then there exists 	a copy $F_{\star\star}\in\ccN\sm\{F_\star\}$ satisfying 
		$q'\in V(F_{\star\star})$ together with another useful vertex 
		$q''\in V(F_{\star\star})$ such that 
		$F_{\star\star}$ is secure between $q'$ and $q''$.
	\end{clm}
	
	\begin{proof}
		Let $F'_{q'}, F''_{q'}\in \ccN$ be the copies Claim~\ref{clm:1455} provides 
		for $x=q'$, choose $F_{\star\star}\in \{F'_{q'}, F''_{q'}\}$ such that 
		$F_{\star\star}\ne F_\star$, and observe that Claim~\ref{clm:1455}\ref{it:1455i}
		ensures $q'\in V(F_{\star\star})$.  
		Let the set $z^{\star\star}$ be defined 
		with respect to $F_{\star\star}$ as in~\eqref{eq:6622}. The failure 
		of our lemma entails $|z^{\star\star}|\ge 2$ and thus there exists 
		a vertex $q^\circ\in z^{\star\star}$ distinct from~$q'$. 
		If $F_{\star\star}$ is secure between $q'$ and $q^\circ$, we can just 
		take $q''=q^\circ$, so suppose from now on that this is not the case. 
		This means that there exists a useful edge $f$ satisfying  
		$q', q^\circ\in f\in E(F_{\star\star})$. 
		
		\begin{figure}[ht]
	\centering	
			\begin{tikzpicture}[scale=.8]
	
	\def\w{2.5};
	\def\h{1.5};
	\def\x{2}
	
	\fill[blue!15!white, opacity = .5] (-\x,0) ellipse (\w cm and \h cm);
	\draw[blue!75!black,thick] (-\x,0) ellipse (\w cm and \h cm);
	
	\fill[blue!15!white, opacity = .5] (\x,0) ellipse (\w cm and \h cm);
	\draw[blue!75!black,thick] (\x,0) ellipse (\w cm and \h cm);
	
	\coordinate (a) at (2,1);
	\coordinate (b) at (0,-.5);
	\coordinate (c) at (4,-.5);
	
	\draw [green!70!black, thick] (a) -- (b) -- (c);
		
		\foreach \i in {a,b,c}{
			\fill (\i) circle (2pt);}
						
	\node at (1,.6) { $f$};
	\node at (2,-.9) {$f'$};
	\node at (-4.5,1) {$F_{\star}$};
	\node at (4.7,1) {$F_{\star\star}$};
	\node at (2.4,1.1) {$q^\circ$};
			\node at (-.15,-.1) {$q'$};
	\node at (3.9,-.1) {$q''$};
			
			\end{tikzpicture}
			
				\caption{Security of $F_{\star\star}$}
				\label{fig:91}

	\end{figure} 		
		Clearly $f\subseteq z^{\star\star}$ and by appealing to the failure of 
		our lemma again we learn $z^{\star\star}\ne f$. Hence 
		there exists a vertex $q''\in z^{\star\star}\sm f$ and it suffices 
		to prove that $F_{\star\star}$ is secure between~$q'$ and~$q''$ (see Figure~\ref{fig:91}). 
		If this were not the case, there had to exist a useful edge $f'$
		such that $q', q''\in f'\in E(F_{\star\star})$. Now $q''\in f'\sm f$
		yields $f\ne f'$ and, therefore, $F_{\star\star}$ violates clause~\ref{it:1455ii}
		of Claim~\ref{clm:1455}. This contradiction concludes the proof of Claim~\ref{clm:6640}. 
	\end{proof}
	
	Let us call a cycle of copies $F_1q_1\ldots F_nq_n$ in 
	$(H, \ccN)$ or $(H, \ccN^+)$ {\it special} if 
	\begin{enumerate}
		\item[$\bullet$] the copies $F_1, \ldots, F_n$ are distinct,
		\item[$\bullet$] the connectors $q_1, \ldots, q_n$ are vertices,
		\item[$\bullet$] and with at most one exception every copy $F_i$ 
			is secure between $q_{i-1}$ and $q_i$.
	\end{enumerate}
	
	\begin{clm}\label{clm:6650}
		There exist a special cycle of copies in $(H, \ccN)$.	
	\end{clm}
	
	\begin{proof}
		An iterative application of Claim~\ref{clm:6640} allows us to 
		construct an infinite sequence
				\[
			F_1q_1F_2q_2\ldots
		\]
				consisting of copies $F_1, F_2, \ldots \in\ccN$ and useful 
		vertices $q_1, q_2, \ldots,$ such that for every $i\in \NN$ 
		\begin{enumerate}[label=\nlabel]
			\item\label{it:6651} the vertex $q_i$ belongs to $V(F_i)\cap V(F_{i+1})$, 
			\item\label{it:6652} the copies $F_i$, $F_{i+1}$ are distinct,
			\item\label{it:6653} and $F_{i+1}$ is secure between $q_i$ and $q_{i+1}$. 
		\end{enumerate}
		
		Indeed, let $F_1\in \ccN$ be arbitrary and let $q_1\in V(F_1)$ be a
		useful vertex. If for some natural number $m$ we have already constructed 
		the initial segment $F_1q_1\ldots F_mq_m$ of our infinite sequence, 
		we apply Claim~\ref{clm:6640} to $(F_m, q_m)$ here in place of $(F_\star, q')$ there,
		thus obtaining a copy $F_{m+1}$ and a useful vertex $q_{m+1}$ that allow us to continue. 
		
		Since $\ccN$ is finite, there exists some $n\in\NN$ such that 
		$F_{n+1}\in \{F_1, \ldots, F_n\}$. If $n$ denotes the least such natural
		number, then $F_1, \ldots, F_n$ are distinct and, hence, there 
		is a unique index $i\in [n]$ such that $F_i=F_{n+1}$. For notational 
		simplicity we may suppose that $i=1$. Since~\ref{it:6652} implies $n\ge 2$,
		the cyclic sequence 
				\[
			\ccC=F_1q_1\ldots F_nq_n
		\]
				has all properties of a cycle of copies except that we do not 
		know whether its connectors are distinct. Moreover,~\ref{it:6653} 
		tells us that $F_1$ is the only copy in $\ccC$ that might be insecure.
		
		Thus if the connectors of $\ccC$ happen to be distinct, then $\ccC$ is 
		the desired special cycle of copies. If $q_1, \ldots, q_n$ fail to be distinct,
		we take a pair $(r, s)$ of indices with $1\le r<s\le n$,~$q_r=q_s$, 
		and, subject to this, such that $s-r$ is minimal. 
		Now one checks easily that $F_{r+1}q_{r+1}\ldots F_sq_s$ is a special cycle 
		of copies.
	\end{proof}
	
	Throughout the rest of the proof we consider a special cycle of copies 
	$\ccC=F_1q_1\ldots F_nq_n$ in the extended system $(H, \ccN^+)$ whose 
	length~$n$ is minimal. Claim~\ref{clm:6650} discloses $n\le |\ccN|$ and, 
	consequently, we have $\ord{\ccC} < \Gth(H, \ccN^+)$. In other words,
	\begin{enumerate}[label=\alabel]
		\item\label{it:6730a} either $\ccC$ fails to be tidy
		\item\label{it:6730b} or $\ccC$ has a master copy. 
	\end{enumerate}
	
	We shall show that both alternatives lead to a contradiction. 
	Let us deal with case~\ref{it:6730a} first. Since $\ccC$ has no edge 
	connectors, this is only possible if~\ref{it:T2} fails for some 
	edge $f\in E(H)$. By symmetry we can suppose that $1\in M(f)$. 
	Write $M(f)=\{i(1), \ldots, i(m)\}$ with $1=i(1)< i(2) < \ldots i(m)\le n$ 
	and $m\ge 2$. We observe that the cyclic sequences 
		\begin{align*}
		\ccD_\mu &=
			f^+q_{i(\mu)}F_{i(\mu)+1}q_{i(\mu)+1}\ldots F_{i(\mu+1)}q_{i(\mu+1)}
			\quad \text{ for } \mu\in[m-1] \\
		\text{ and } \quad 
			\ccD_m &=
			f^+q_{i(m)}F_{i(m)+1}q_{i(m)+1}\ldots F_{1}q_{1}
	\end{align*}
		are shorter than $\ccC$. Indeed, this is clear for $m\ge 3$ and in case $m=2$ 
	it follows from the fact that the two members of $M(f)$ cannot be consecutive 
	in $\ZZ/n\ZZ$.
	 
	We aim at showing that at least one of $\ccD_1, \ldots, \ccD_m$
	contradicts the minimality of $n$. Notice that the newly inserted 
	edge copy $f^+$ can cause trouble in two different ways. 
	First, it might be insecure in all of $\ccD_1, \ldots, \ccD_m$ and, 
	second, it might happen that one of $\ccD_1, \ldots, \ccD_m$ contains $f^+$
	twice. However, since $\ccC$ itself is special, at most one 
	of $\ccD_1, \ldots, \ccD_m$ contains an insecure appearance 
	of one of $F_1, \ldots, F_n$. Thus at most two among $\ccD_1, \ldots, \ccD_m$
	can fail to contradict the minimality of $n$, one due to containing
	two insecure copies and the (potential) other one due to 
	containing $f^+$ twice. 
	 
	In other words, the only case where we are not done yet occurs if $m=|M(f)|=2$ 
	and $f^+$ is among $F_1, \ldots, F_n$. As $f^+=F_i$ 
	implies $i-1, i\in M(f)$, this case requires $M(f)=\{i-1, i\}$,
	which contradicts the choice of $f$. Altogether, we have thereby shown 
	that~\ref{it:6730a} is indeed impossible, i.e., that our 
	minimal special cycle of copies $\ccC$ in $(H, \ccN^+)$ is tidy.
	
	Therefore~\ref{it:6730b} holds, i.e., $\ccC$ has a master copy. 
	By symmetry we may suppose that $F_1$ is a master copy of $\ccC$
	and that the family of edges $\{f_i\in E(F_1)\colon i\in [2, n]\}$
	exemplifies this fact. For every $i\in [2, n]$ Fact~\ref{rem:5027}
	tells us $f_i\in E(F_i)\cap E(F_1)$, wherefore 
	the edge $f_i$ is useful. Combined with $q_{i-1}, q_i\in f_i$ this 
	implies that $F_i$ fails to be secure between $q_{i-1}$ and~$q_i$.
	Thus despite being special $\ccC$ contains at least $n-1$ insecure copies, 
	which is only possible if $n=2$. But now the useful edge $f_2$ 
	exemplifies that $F_1$ is insecure as well and we have obtained 
	the final contradiction that rules out option~\ref{it:6730b} and 
	thereby concludes the proof of Lemma~\ref{lem:6349}. 
\end{proof}

\begin{lemma}\label{lem:6822}
	Let $(H, \ccN)$ be a linear system of hypergraphs. 
	If $\Gth(H, \ccN^+)>|\ccN|$, then $\ccN$ is a forest of copies.  
\end{lemma}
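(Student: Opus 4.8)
The plan is to argue by induction on $|\ccN|$. If $|\ccN|\le 1$, then $\ccN$ is vacuously a forest of copies, so I may assume $n=|\ccN|\ge 2$ and that the lemma already holds for all linear systems with fewer copies.

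First I would invoke Lemma~\ref{lem:6349}: since $\Gth(H, \ccN^+)>|\ccN|\ge 2$, there is a copy $F_\star\in\ccN$ for which the set $z_\star=V(F_\star)\cap V(\ccN\sm\{F_\star\})$ is either an edge in $E(F_\star)\cap E(\ccN\sm\{F_\star\})$ or consists of at most one vertex. Set $\ccN'=\ccN\sm\{F_\star\}$, so that $|\ccN'|=n-1$.

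The next step is to feed $(H, \ccN')$ into the induction hypothesis, which requires $\Gth(H, (\ccN')^+)>|\ccN'|$. This is a monotonicity observation: $(\ccN')^+=\ccN'\cup E^+(H)\subseteq \ccN\cup E^+(H)=\ccN^+$, so every tidy cycle of copies $\ccC$ in $(\ccN')^+$ is also a tidy cycle of copies in $\ccN^+$; if moreover $\ord{\ccC}\le |\ccN'|<|\ccN|$, then the hypothesis $\Gth(H, \ccN^+)>|\ccN|$ supplies a master copy of $\ccC$ (which, being a real copy by Fact~\ref{rem:1200} and occurring on $\ccC$, automatically lies in $\ccN'$). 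Hence the induction hypothesis applies and yields an admissible enumeration $(F_1, \ldots, F_{n-1})$ of $\ccN'$.

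Finally, I would verify that $(F_1, \ldots, F_{n-1}, F_\star)$ is an admissible enumeration of $\ccN$. For $j\in [2, n-1]$ the condition on $z_j=V(F_j)\cap\bigl(\bigcup_{i<j}V(F_i)\bigr)$ holds because $(F_1, \ldots, F_{n-1})$ is admissible, and for $j=n$ we have $\bigcup_{i<n}V(F_i)=V(\ccN')$ and $\bigcup_{i<n}E(F_i)=E(\ccN')$, so the requisite property of $z_n=V(F_\star)\cap V(\ccN')=z_\star$ is precisely what Lemma~\ref{lem:6349} delivered. Thus $\ccN$ is a forest of copies, completing the induction. There is no real obstacle here once Lemma~\ref{lem:6349} is available; the only points demanding a little care are the monotonicity of $\Gth$ under deletion of real copies and the bookkeeping that identifies the set $z_\star$ produced by Lemma~\ref{lem:6349} with the condition at the last position of the admissible enumeration.
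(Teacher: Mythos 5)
Your proof is correct and follows exactly the same strategy as the paper's: induction on $|\ccN|$, invoking Lemma~\ref{lem:6349} to find the copy $F_\star$ to place last, and applying the induction hypothesis to $\ccN\sm\{F_\star\}$. The only difference is cosmetic — you spell out the monotonicity of $\Gth$ under deletion of a real copy, which the paper states without elaboration.
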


\begin{proof}
	We argue by induction on $n=|\ccN|$, the base case $n=1$ being clear. 
	In the induction step we have to deal with a linear system $(H, \ccN)$	
	consisting of $n\ge 2$ copies that satisfies $\Gth(H, \ccN^+)>|\ccN|$.
	Let $F_\star\in \ccN$ be a copy obtained by applying Lemma~\ref{lem:6349}
	to~$\ccN$ and set $\ccN_\star=\ccN\sm\{F_\star\}$. 
	Since $\Gth(H, \ccN_\star^+)>|\ccN|>|\ccN_\star|$, the induction hypothesis 
	shows that $\ccN_\star$ is a forest of copies. If $(F_1, \ldots, F_{n-1})$
	is an admissible enumeration of $\ccN_\star$, 
	then $(F_1, \ldots, F_{n-1}, F_\star)$ is the 
	desired admissible enumeration of $\ccN$.
\end{proof}
  
\subsection{The final partite construction}
\label{subsec:2142}

We shall prove the following strong form of 
Theorem~\ref{thm:1522} alluded to in the introduction.

\begin{thm}\label{thm:6643}
	Let a hypergraph $F$ and a natural number $g\ge 2$ satisfy 
	$\gth(F) > g$. If~$r, n\ge 2$ are two further natural numbers, then there 
	exists a linear system of hypergraphs $(H, \ccH)$ such that
	\begin{enumerate}
		\item[$\bullet$] $\ccH\lra(F)_r$
		\item[$\bullet$] and for every $\ccN\subseteq \ccH^+$ with $|\ccN|\in [2, n]$
			there exists some $\ccX\subseteq \ccH$ 
			for which $\ccN\cup \ccX$ is a forest of copies
			and $|\ccX|\le \frac{|\ccN|-2}{g-1}$. 
	\end{enumerate}
\end{thm}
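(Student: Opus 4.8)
The plan is to obtain $(H,\ccH)$ from a partite construction that uses the construction $\Omega^{(g)}$ of Theorem~\ref{thm:6653} as its partite lemma; concretely one may take $(H,\ccH)=\PC\bigl(\Omega^{(2)},\Omega^{(g)}\bigr)_r(F)$, where $r$ is chosen large (how large being dictated by the counting below). Since $\gth(F)>g$, the partite lemma $\Omega^{(g)}$ applies to all constituents that arise, and the results of \S\ref{subsec:PC}, \S\ref{subsec:PCAG} and \S\ref{sec:1912} show that the resulting linear system satisfies $\ccH\lra(F)_r$, has strongly induced copies with clean intersections, and — because $\Omega^{(g)}$ delivers systems with $\Gth>g$, hence with $\Gth>(g,g)$ — that Proposition~\ref{prop:girthclean} gives
\[
	\Gth(H,\ccH^+)>g\,.
\]
For $r$ large I would also record the standard \emph{richness} fact that $\ccH$ contains, in any prescribed position, a copy of $F$ completing a given partial $F$-forest of bounded size that is already present in $H$; this is the usual unavoidability-of-forests argument (forests of copies are inserted into the edges of a hypergraph of large chromatic number), and the partite shape of the construction is what lets one perform these insertions compatibly with the structure already built.

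\textbf{Reduction.} It then suffices to prove the following purely combinatorial statement: if $(H,\ccH)$ is a linear system with $\Gth(H,\ccH^+)>g$ enjoying the richness property, then every $\ccN\subseteq\ccH^+$ with $2\le|\ccN|\le n$ admits $\ccX\subseteq\ccH$ with $|\ccX|\le\frac{|\ccN|-2}{g-1}$ such that $\ccN\cup\ccX$ is a forest of copies. First I would note that restricting the system of copies cannot decrease $\Gth$: a master copy of a tidy cycle $\ccC$ is one of the copies occurring in $\ccC$, so a tidy cycle witnessing $\Gth(H,\ccN^+)\le h$ also witnesses $\Gth(H,\ccH^+)\le h$; thus $\Gth(H,\ccN^+)\ge\Gth(H,\ccH^+)>g$. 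Consequently, when $|\ccN|\le g$ we get $\Gth(H,\ccN^+)>g\ge|\ccN|$, so $\ccN$ is already a forest of copies by Lemma~\ref{lem:6822} and we take $\ccX=\vn$ (note $0\le\frac{|\ccN|-2}{g-1}$ since $|\ccN|\ge2$).

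\textbf{The inductive step.} For $|\ccN|>g$ I would induct on a ``cyclomatic defect'' of $\ccN$ (roughly $|\ccN|$ minus the length of the longest admissible partial enumeration it admits). If $\ccN$ is a forest we are done. Otherwise, running the argument in the proof of Lemma~\ref{lem:6349} produces either a copy $F_\star\in\ccN$ that can be ``peeled'' — i.e.\ $V(F_\star)\cap V(\ccN\sm\{F_\star\})$ is an edge common to $F_\star$ and the rest, or consists of at most one vertex — or a tidy \emph{special} cycle of copies $\ccC=F_1q_1\dots F_mq_m$ in $(H,\ccN^+)$ all of whose connectors are vertices, with $m=\ord{\ccC}\le|\ccN|\le n$ and at most one insecure copy. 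In the peeling case I strengthen the induction hypothesis so that $\ccX$ may be taken internal to $V(\ccN\sm\{F_\star\})$; then applying it to $\ccN\sm\{F_\star\}$ and re-appending $F_\star$ gives the claim with the better bound $\frac{|\ccN|-3}{g-1}$. In the cycle case, if $m\le g$ then $\Gth(H,\ccH^+)>g\ge m$ forces $\ccC$ to have a master copy, which is impossible for a special cycle (as in Lemma~\ref{lem:6349}), so $\ccN$ is in fact a forest, contradiction; hence $m\ge g+1$. Now I \emph{triangulate} $\ccC$: viewing $q_1,\dots,q_m$ as the vertices of a convex $m$-gon, I partition it into cells each bounded by at most $g+1$ of the $q_i$, realising every $(g+1)$-cell (by richness) via a copy of $F$ from $\ccH$ carrying a $(g+1)$-cycle of $F$ along those connectors, while cells bounded by at most $g$ of the $q_i$ require no new copy, since the copies bounding such a cell number fewer than $\Gth(H,\ccH^+)$ and hence form a forest by Lemma~\ref{lem:6822}. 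A fan-type partition uses $\lceil(m-2)/(g-1)\rceil$ cells, and gluing the admissible enumerations of the cells to that of $\ccN$ along the cut vertices and edges — using Lemma~\ref{lem:6140}, Lemma~\ref{lem:7032} and Lemma~\ref{lem:6032}, together with Lemma~\ref{lem:7207} and Lemma~\ref{lem:6339} to control terminal and initial copies — shows that adding these copies to $\ccN$ strictly decreases the cyclomatic defect, so the induction closes.

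\textbf{Main obstacle and conclusion.} The delicate point is the accounting: one must choose the successive special cycles to be ``internally disjoint'' so that their orders $m_i$ satisfy $\sum_i(m_i-2)\le|\ccN|-2$ (an ear-decomposition bound on the defect), and one must check that on a cell bounded by fewer than $g+1$ connectors the unused budget exactly absorbs the ceiling, so that a fan on an $m$-gon costs at most $(m-2)/(g-1)$ copies overall; once this is in place the realisability of the triangulating copies inside $\ccH$ is free from the richness of the construction. Finally, Theorem~\ref{cor:19} follows routinely: apply Theorem~\ref{thm:6643} with $g=2$ (permissible since $F$ is linear) and with $n$ replaced by a suitable polynomial in the given value, assuming without loss of generality that every vertex and edge of $H$ lies in a copy from $\ccH$; given $X\subseteq V(H)$ with $|X|\le n$, letting $\ccN$ collect one copy through each vertex of $X$ together with every copy containing an edge of $H[X]$, the theorem yields $\ccX\subseteq\ccH$ with $\ccN\cup\ccX$ a forest of copies, and then $H[X]$ is an induced subhypergraph of $\bigcup(\ccN\cup\ccX)$, i.e.\ $X$ induces a partial $F$-forest in $H$, which also recovers Theorem~\ref{thm:1522}.
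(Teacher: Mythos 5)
Your proposal diverges from the paper's actual argument in a way that introduces a genuine gap, and I do not think the route you outline closes.

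First, the choice of partite lemma is wrong. You take $\PC\bigl(\Omega^{(2)},\Omega^{(g)}\bigr)_r(F)$, so the horizontal lemma only guarantees $\Gth>g$. But the inductive step that actually makes the final picture ``$(g,n)$-certified'' requires a partite lemma delivering systems $(H,\ccH)$ with $\Gth(H,\ccH^+)>n$ (and applicable to constituents of girth $>n$); this is what the paper's Lemma~\ref{lem:6229} needs, both to invoke Lemma~\ref{lem:6822} on the projected family $\phi[\ccN]$ of size up to $n$, and to keep $\gth(\Pi^e)>n$ for all constituents. With $\Gth>g$ only, your base case stops working the moment $|\ccN|>g$, and the picturesque induction cannot be carried.

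Second, and more seriously, your ``triangulation'' step rests on an unestablished \emph{richness} property: you need to realise each $(g{+}1)$-cell of a triangulation as an actual copy in $\ccH$ passing through prescribed connectors. Nothing in the partite construction (and certainly not ``taking $r$ large'', which only enlarges the vertical hypergraph and number of colours) gives you the freedom to find members of $\ccH$ in arbitrary positions; the system $\ccH$ is the specific family that the amalgamation produces, not a generic dense object. The paper avoids this entirely: the additional copies in $\ccX$ come from the partite structure itself, produced by a divide-and-conquer along the standard copies of the previous picture (the ``good pair'' / resolution argument of Lemma~\ref{lem:6229}), and the exact bound $|\ccX|\le\frac{|\ccN|-2}{g-1}$ falls out of the arithmetic in that recursion (one new copy per glued wagon, with the convexity $\frac{a-1}{g-1}+\frac{b-1}{g-1}=\frac{(a+b)-2}{g-1}$ doing the accounting), rather than from any explicit geometric triangulation. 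Your own ``main obstacle'' paragraph flags the ear-decomposition bookkeeping as delicate and leaves it uncarried; in fact the whole step falls away if one takes $\Xi$ with $\Gth>n$ as the paper does.

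Concretely, to repair your proof you would need to (i) replace $\Omega^{(g)}$ by a partite lemma (obtained from $\Omega^{(n)}$ via Theorem~\ref{thm:6653}) delivering $\Gth>n$, and (ii) replace the richness-based triangulation by an induction \emph{along the partite construction} that manufactures $\ccX$ from the copies of the previous picture and the horizontal system; at that point you have reconstructed Lemmas~\ref{lem:6156} and~\ref{lem:6229}. Your base case $|\ccN|\le g$ (via Lemma~\ref{lem:6822}) and the deduction of Theorem~\ref{cor:19} from Theorem~\ref{thm:6643} are essentially the same as in the paper and are fine.
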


Before we embark on the proof of this result we would like to point out
that for~$n\le g$ it follows quickly from a pair of statements that have 
been obtained earlier. Indeed, the construction $\Omega^{(g)}$ (see 
Theorem~\ref{thm:6653}) delivers a linear system of hypergraphs 
$(H, \ccH)$ such that $\ccH\lra(F)_r$ and $\Gth(H, \ccH^+)>g$.
Now for every $\ccN\subseteq \ccH^+$ with $|\ccN|\in [2, g]$ we have 
$\Gth(H, \ccN^+) >g\ge |\ccN|$ and owing to Lemma~\ref{lem:6822} 
this implies that $\ccN$ is a forest of copies. In other words, the second
bullet holds for $\ccX=\vn$, as desired. 

The general case of Theorem~\ref{thm:6643} will be proved by means of a 
further application of the partite construction method. This will involve 
pictures of the following kind. 

\begin{dfn}
	Given integers $g, n\ge 2$ a picture $(\Pi, \ccP, \psi)$ is said to be 
	{\it $(g, n)$-certified} if each of its constituents $\Pi^e$ satisfies 
	$\gth(\Pi^e)>n$ and for every $\ccN\subseteq \ccP^+$ with $|\ccN|\in [2, n]$
	there exists some $\ccX\subseteq \ccP$ such that $\ccN\cup\ccX$ is a forest of 
	copies and $|\ccX|\le \frac{|\ccN|-2}{g-1}$. \index{$(g, n)$-certified}
\end{dfn}

Let us show first that being certified is a property we can expect any reasonable 
picture zero to have.

\begin{lemma}\label{lem:6156}
	Suppose that a hypergraph $F$ and an integer $g\ge 2$ satisfy $\gth(F)>g$.
	If~$(G, \ccG)$ denotes any linear system of hypergraphs 
	with $\ccG\subseteq\binom GF$, then the picture zero over this system 
	is $(g, n)$-certified for every integer $n\ge 2$. 
\end{lemma}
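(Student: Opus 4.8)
The argument rests entirely on the explicit description of picture zero $(\Pi_0,\ccP_0,\psi_0)$ from \S\ref{sssec:pict}: the copies in $\ccP_0$ are pairwise vertex-disjoint, each is isomorphic to $F$, and every vertex and every edge of $\Pi_0$ lies in exactly one of them. First I would dispose of the girth condition on constituents. Fix $e\in E(G)$. Every edge of the constituent $\Pi_0^e$ lies in a unique copy $\widehat F_\star\in\ccP_0$, and since $\psi_0$ restricted to $\widehat F_\star$ is an isomorphism onto $F_\star$, the copy $\widehat F_\star$ contains at most one edge projecting to $e$. As distinct copies of $\ccP_0$ are vertex-disjoint, $\Pi_0^e$ is a matching together with some isolated vertices; in particular it has no cycles, so $\gth(\Pi_0^e)=\infty>n$.

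\textbf{The forest condition.} Given $\ccN\subseteq\ccP_0^+$ with $|\ccN|\in[2,n]$, write $\ccN=\ccN_r\dcup\ccN_e$, where $\ccN_r=\ccN\cap\ccP_0$ are the real copies and $\ccN_e$ the edge copies in $\ccN$. Each edge copy $e^+\in\ccN_e$ lies in a unique parent copy $p(e^+)\in\ccP_0$; for $C\in\ccP_0$ put $\ccE_C=\{e^+\in\ccN_e\colon p(e^+)=C\}$ and set
\[
	\ccX=\{C\in\ccP_0\sm\ccN\colon |\ccE_C|\ge g+1\}\,.
\]
I would then check that $\ccN\cup\ccX$ is a forest of copies by producing an admissible enumeration: first list all real copies of $\ccN\cup\ccX$ in any order (being elements of $\ccP_0$ they are pairwise vertex-disjoint, so each gives an empty $z_j$), then, grouped by parent, list the edge copies of $\ccN_e$. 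If the parent $C$ of a group $\ccE_C$ lies in $\ccN\cup\ccX$, then $C$ has already appeared, and each $e^+\in\ccE_C$ has $z_j=e\in E(C)$, an edge, so case~\ref{it:1606ii} applies. If $C\notin\ccN\cup\ccX$, then $|\ccE_C|\le g$ by the definition of $\ccX$; since $\gth(C)=\gth(F)>g\ge|\ccE_C|$, the edges $\{e\colon e^+\in\ccE_C\}$ form an edge forest (this is \ref{it:6353c}), and listing the copies in $\ccE_C$ along an admissible enumeration of that edge forest makes each $z_j$ empty or a single vertex (case~\ref{it:1606i})---here one uses that edge copies of distinct parents, and real copies other than $C$, are pairwise vertex-disjoint, so the only earlier vertices a member of $\ccE_C$ can meet lie in $C$ itself or in its predecessors within $\ccE_C$.

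\textbf{The bound and the main difficulty.} If $|\ccN|\le g$, then no $\ccE_C$ has $g+1$ members, so $\ccX=\vn$ and $|\ccX|=0\le\frac{|\ccN|-2}{g-1}$ since $|\ccN|\ge 2$. If $|\ccN|\ge g+1$, then each $C\in\ccX$ contributes at least $g+1$ edge copies to $\ccN_e$, whence $(g+1)|\ccX|\le|\ccN_e|\le|\ccN|$ and therefore $|\ccX|\le\frac{|\ccN|}{g+1}\le\frac{|\ccN|-2}{g-1}$, the last inequality being equivalent to $|\ccN|\ge g+1$. I expect the only real work to be the bookkeeping in the enumeration---checking that across parent groups no unwanted intersections occur and that no copy is listed twice---together with pinning down that the crude choice of $\ccX$ still meets the sharp bound $\frac{|\ccN|-2}{g-1}$; the case $|\ccN|=g+1$ is where that inequality is tight.
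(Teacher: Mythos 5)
Your proposal is correct and follows essentially the same strategy as the paper: observe that picture zero's constituents are matchings, partition $\ccN$ by parent copy in $\ccP_0$, define $\ccX$ as the set of parents hosting at least $g+1$ members, check the forest property group-by-group, and derive $|\ccX|\le\frac{|\ccN|-2}{g-1}$ from $(g+1)|\ccX|\le|\ccN|$ together with $|\ccN|\ge 2$. The only cosmetic differences are that the paper counts the parent copy itself in $\ccN(F_\star)$ (so it needn't exclude parents already in $\ccN$ from $\ccX$) and presents the final inequality via a single algebraic rewrite rather than a case split on $|\ccN|$ versus $g+1$.
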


\begin{proof}
	Let $(\Pi, \ccP, \psi)$ denote the picture zero under discussion. Since all of
	its constituents are matchings, their girth is greater than $n$ for every $n\ge 2$.
	Proceeding with the second property, we suppose that 
	any $\ccN\subseteq \ccP^+$ with $|\ccN|\ge 2$ is given. 
	For every copy $F_\star\in\ccP$ we set 
	$\ccN(F_\star)=\ccN \cap \bigl(E^+(F_\star)\cup\{F_\star\}\bigr)$. Since $\Pi$ 
	is the disjoint union of the copies in $\ccP$, this stipulation yields 
	a partition $\ccN=\bigdcup_{F_\star\in \ccP}\ccN(F_\star)$.
	
	There are two sufficient conditions ensuring that a partition class 
	$\ccN(F_\star)$ is a forest of copies. First, if $F_\star\in N(F_\star)$,
	then every enumeration of $\ccN(F_\star)$ starting with $F_\star$ itself 
	is admissible. Second, if $F_\star\not\in \ccN(F_\star)$ and $|\ccN(F_\star)|\le g$,
	then $\ccN(F_\star)$ is a forest of edge copies due to $\gth(F_\star)>g$.
	For these reasons, the set 
	\[
		\ccX=\bigl\{F_\star\in \ccP\colon |\ccN(F_\star)|\ge g+1\bigr\}
	\]
	has the property that $\ccN\cup\ccX$ is a forest of copies. 
	
	So it remains to prove $|\ccX|\le \frac{|\ccN|-2}{g-1}$. 
	The obvious bound $|\ccX|\le |\ccN|/(g+1)$ rewrites as
	$(g-1)|\ccX|+2(|\ccX|-1)\le |\ccN|-2$
	and provided that $\ccX$ is nonempty the desired upper bound on $|\ccX|$
	follows. Moreover, if $\ccX$ is empty, then we just need to appeal 
	to $|\ccN|\ge 2$. 
\end{proof}

The picturesque lemma appropriate for the present context reads as follows. 

\begin{lemma} \label{lem:6229}
	Suppose that $g, n\ge 2$ are integers and that 
		\begin{equation}\label{eq:6726}
		(\Sigma, \ccQ, \psi_\Sigma)
		=
		(\Pi, \ccP, \psi_\Pi)
		\conc
		(H, \ccH)
	\end{equation}
		holds for two pictures $(\Pi, \ccP, \psi_\Pi)$ and $(\Sigma, \ccQ, \psi_\Sigma)$
	over a linear system of hypergraphs~$(G, \ccG)$ and a linear $k$-partite $k$-uniform
	system of hypergraphs $(H, \ccH)$. If 
	\begin{enumerate}
		\item[$\bullet$] $(\Pi, \ccP, \psi_\Pi)$ is $(g, n)$-certified
		\item[$\bullet$] and $\Gth(H, \ccH^+)>n$, 
	\end{enumerate}
	then $(\Sigma, \ccQ, \psi_\Sigma)$ is $(g, n)$-certified as well. 
\end{lemma}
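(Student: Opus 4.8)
The plan is to argue by induction along the partite construction, exactly as in the proofs of Lemma~\ref{lem:cleancap}, Lemma~\ref{lem:clean-preserve}, and Lemma~\ref{lem:251}. The base case (picture zero) is handled by Lemma~\ref{lem:6156}, so the whole argument reduces to the picturesque statement of Lemma~\ref{lem:6229}. Throughout I would use the standard language of the partite construction method: standard copies $\Pi_\star$ of $\Pi$ in $\Sigma$, the copies $\Pi^e_\star\in\ccH$ they extend, the projection $\psi_\Sigma$, and the fact that distinct standard copies intersect only inside $V(H)$. The two properties to verify for $(\Sigma, \ccQ, \psi_\Sigma)$ are that every constituent $\Sigma^f$ has $\gth(\Sigma^f)>n$ and that every $\ccN\subseteq\ccQ^+$ with $|\ccN|\in[2,n]$ can be completed by some $\ccX\subseteq\ccQ$ with $|\ccX|\le(|\ccN|-2)/(g-1)$ to a forest of copies.

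For the girth of the constituents: if $f=e$, then $\Sigma^e=H$ and the hypothesis $\Gth(H,\ccH^+)>n$ gives $\gth(H)>n$ via Lemma~\ref{lem:Gth-gth}. If $f\ne e$, then (by the linearity of $G$ and the usual disjointness of standard copies) a cycle of length $\le n$ in $\Sigma^f$ would have to live inside a single standard copy $\Pi_\star$, contradicting $\gth(\Pi^f)>n$; so I would spell out that short cycles cannot jump between standard copies, because consecutive edges of such a cycle sharing a vertex outside $V(H)$ must lie in a common $\Pi_\star$. The main work is the second property. Given $\ccN\subseteq\ccQ^+$ with $2\le|\ccN|\le n$, group its members by the standard copy they belong to (edge copies in $E(H)$ being treated separately, each as a singleton ``segment'' in the same style as Lemma~\ref{lem:251}), obtaining a partition $\ccN=\ccN_1\dcup\cdots\dcup\ccN_t$ where all copies in $\ccN_\tau$ lie in one standard copy $\Pi_\tau$. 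Applying the hypothesis that $(\Pi,\ccP,\psi_\Pi)$ is $(g,n)$-certified to each $\ccN_\tau$ (pulled back to $\Pi$ via the canonical isomorphism $\Pi_\tau\cong\Pi$) yields, for each $\tau$, a set $\ccX_\tau\subseteq\ccP_\tau$ with $|\ccX_\tau|\le(|\ccN_\tau|-2)/(g-1)$ such that $\ccN_\tau\cup\ccX_\tau$ is a forest of copies inside $\Pi_\tau$ --- here one must be careful about the degenerate cases $|\ccN_\tau|\le 1$, where $\ccX_\tau=\vn$ works trivially. The remaining task is to glue these forests together across distinct standard copies. Since $V(\Pi_\tau)\cap V(\Pi_{\tau'})\subseteq V(H)$ and, by $\Gth(H,\ccH^+)>n\ge t$ together with Lemma~\ref{lem:6822}, the ``reduct'' system $\{\Pi^e_\tau\colon \tau\in[t]\}$ of leaders in $(H,\ccH^+)$ is a forest of copies, I would build an admissible enumeration of the whole thing by processing the leaders in forest order: when a new leader $\Pi^e_\tau$ attaches to the already-built part either in $\le 1$ vertex or in an edge $f\in E(H)$, I splice in (after possibly adding $f^+$ once as a shared edge copy) the forest $\ccN_\tau\cup\ccX_\tau$, invoking Lemma~\ref{lem:7032} for the vertex case and Lemma~\ref{lem:6032} for the edge case, plus Lemma~\ref{lem:6339} to reorder each $\ccN_\tau\cup\ccX_\tau$ so that the attaching copy comes first and Lemma~\ref{lem:6140} to drop auxiliary edge copies at the end.

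The final set is then $\ccX=\bigcup_\tau\ccX_\tau$ (plus, if one chooses to insert them as real copies rather than edge copies, at most one extra copy per ``edge connector'' between leaders, which I would instead absorb by using edge copies $f^+\in E^+(H)$ and then deleting them via Lemma~\ref{lem:6140}, keeping $\ccX\subseteq\ccQ$). The size bound is the one arithmetic point to be careful with: writing $n_\tau=|\ccN_\tau|$ and using $\sum_\tau n_\tau=|\ccN|$, one has $\sum_{\tau:\,n_\tau\ge2}(n_\tau-2)\le |\ccN|-2t_0$ where $t_0=|\{\tau\colon n_\tau\ge2\}|$, and combining with the trivial bound $|\ccX_\tau|\le(n_\tau-2)/(g-1)$ for those $\tau$ (and $|\ccX_\tau|=0$ otherwise) gives $|\ccX|\le(|\ccN|-2t_0)/(g-1)\le(|\ccN|-2)/(g-1)$ as soon as $t_0\ge1$; the case $t_0=0$, i.e. every $\ccN_\tau$ a singleton, needs separate treatment, and here one uses that the leaders already form a forest of $\le n$ copies in $(H,\ccH^+)$ (by $\Gth(H,\ccH^+)>n$ and Lemma~\ref{lem:6822}) so $\ccX=\vn$ suffices. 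The main obstacle I anticipate is precisely this bookkeeping at the seams: ensuring that when two standard-copy forests are glued along an edge of $H$, that edge (viewed as an edge copy) occurs consistently and is not double-counted, and that the minimality/forest structure of the leader cycle in $(H,\ccH^+)$ is genuinely available --- this is where $\Gth(H,\ccH^+)>n$ is used, via Lemma~\ref{lem:6822}, and where I would need to state carefully that a subset of $\ccH^+$ of size $\le n$ inherits $\Gth>n$ and hence is a forest. Once the gluing lemmas from \S\ref{subsec:6142} are in hand, each individual step is routine, but assembling them in the right order, and isolating the degenerate cases $|\ccN_\tau|\in\{0,1\}$ and $t_0=0$, is the part that needs genuine care.
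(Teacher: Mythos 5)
Your high-level plan --- group $\ccN$ by the standard copy its members live in, note that the induced set of leaders in $\ccH^+$ is a forest (via $\Gth(H,\ccH^+)>n$ and Lemma~\ref{lem:6822}), apply the $(g,n)$-certification of $\Pi$ to each piece, and splice the resulting forests together along the leader forest using Lemma~\ref{lem:7032} and Lemma~\ref{lem:6032} --- is a genuine alternative to the paper's proof, but there is a gap in the arithmetic that you yourself flag as ``the main obstacle'' without resolving it. When the new piece attaches to the already-built part along an edge $f\in E(H)$, Lemma~\ref{lem:6032} requires that $\ccN_\tau\cup\ccX_\tau\cup\{f^+\}$ be a forest; but $f$ need not be an edge of any copy in $\ccN_\tau\cup\ccX_\tau$, so adding $f^+$ at the end can break the admissible-enumeration condition ($f$ may meet two copies of $\ccN_\tau\cup\ccX_\tau$ in one vertex each). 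You therefore have to apply the certification to $\ccN_\tau$ \emph{augmented by its attaching edge copies}, i.e.\ to a set of size $n_\tau+d_\tau$ where $d_\tau$ is the number of edge-type neighbours of the leader $\Pi^e_\tau$ in the leader forest. This replaces your bound $|\ccX_\tau|\le(n_\tau-2)/(g-1)$ by $(n_\tau+d_\tau-2)/(g-1)$, and your $(|\ccN|-2t_0)/(g-1)$ calculation no longer applies. The corrected total does still come out to $(|\ccN|-2)/(g-1)$, but only after a separate counting argument: with $s$ edge-type connections one has $\sum_\tau d_\tau=2s\le 2(t-1)$ and, for the pieces with $n_\tau+d_\tau<2$ (forced to have $n_\tau=1$, $d_\tau=0$), all their connections are vertex-type, whence there are at most $2(t-1-s)$ of them. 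None of this is in your proposal.

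The paper avoids this bookkeeping entirely by recursing on the number of distinct leaders $|\phi[\ccN]|$ rather than grouping all pieces at once. It introduces \emph{good pairs} $(\ccN,\phi)$ (where $\phi$ records the leader of each copy) and the notion of one good pair \emph{resolving} another, with the crucial clause $\xi[\ccX]\subseteq\phi[\ccN]$. At each step it peels off a single terminal copy $\Pi^e_\star$ of the leader forest, splits $\ccN$ into $\ccA=\phi^{-1}(\Pi^e_\star)$ and $\ccB=\ccN\setminus\ccA$, augments both by the shared edge copy $x^+$ when the connection is an edge, resolves each side recursively, and glues via Lemma~\ref{lem:6032}. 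The binary split with $x^+$ added to \emph{both} sides makes the arithmetic trivial: $(|\ccA'|-2)+(|\ccB'|-2)=|\ccN|-2$. So the paper's route buys you a clean induction with no degenerate-case accounting, while your route (once the arithmetic is repaired) buys you a single-pass construction but at the cost of the $\sum_\tau d_\tau$ counting and the separate handling of singleton pieces.
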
    

\begin{proof}
	The demand on the girth of the constituents of $(\Sigma, \ccQ, \psi_\Sigma)$ 
	follows easily from the corresponding property of the constituents
	of $(\Pi, \ccP, \psi_\Pi)$ combined with $\Gth(H, \ccH^+)>n$ and the linearity
	of the vertical hypergraph $G$. So it remains to deal with the forest extension 
	property. 
	
	We shall say that $(\ccN, \phi)$ is a {\it good pair} if $\ccN\subseteq \ccQ^+$
	and $\phi\colon \ccN\lra\ccH^+$ is a map such that for every copy $F_\star\in \ccN$
	one of the following two cases occurs:
	\begin{enumerate}
		\item[$\bullet$] Either $F_\star\in E^+(H)$ is an edge copy and
			$\phi(F_\star)=F_\star$  
		\item[$\bullet$] or $\phi(F_\star)=\Pi_\star^e\in \ccH$ 
			and the standard copy $(\Pi_\star, \ccP_\star)$ extending $\Pi^e_\star$
			satisfies $F_\star\in \ccP_\star^+$. 
	\end{enumerate}
	Notice that for every $\ccN\subseteq \ccQ^+$ there is a map $\phi\colon \ccN\lra\ccH^+$
	such that $(\ccN, \phi)$ is a good pair. But for clarity we would like to remark 
	that $\phi$ does not need to be uniquely determined by~$\ccN$, since for 
	$f^+\in \ccN\cap E^+(H)$ there may be several legitimate choices for~$\phi(f^+)$. 
	
	A good pair $(\ccX, \xi)$ is said to {\it resolve} another good pair $(\ccN, \phi)$
	if 
	\begin{enumerate}[label=\rmlabel]
		\item\label{it:6149a} $\ccX\subseteq\ccQ$, 
		\item\label{it:6149b} $\xi[\ccX]\subseteq \phi[\ccN]$,
		\item\label{it:6149c} and $\ccN\cup\ccX$ is a forest of copies.
	\end{enumerate}
	
	We shall prove the following strengthening of our claim: 
	Every good pair $(\ccN, \phi)$ such that $|\ccN| \in [2, n]$ is resolved by another 
	good pair $(\ccX, \xi)$ satisfying $|\ccX|\le \frac{|\ccN|-2}{g-1}$.  
	
	Arguing indirectly we fix a good pair $(\ccN, \phi)$ with $|\ccN|\in [2, n]$
	\begin{enumerate}[label=\nlabel]
		\item\label{it:6311a} that is not resolved by any good pair $(\ccX, \xi)$ 
			with $|\ccX|\le \frac{|\ccN|-2}{g-1}$ 		
		\item\label{it:6311b} and subject to this in such a way 
			that $\big|\phi[\ccN]\big|$ minimal. 
	\end{enumerate}
	
	Assume first that $\big|\phi[\ccN]\big|\le 1$. Because of $|\ccN|\ge 2$ this is only 
	possible if there exists some standard copy $(\Pi_\star, \ccP_\star)$ such that 
	$\ccN\subseteq \ccP_\star^+$ and $\phi[\ccN]=\{\Pi^e_\star\}$, where $\Pi_\star$
	extends~$\Pi^e_\star$. 
	As the picture $(\Pi, \ccP, \psi_\Pi)$ is $(g, n)$-certified, there exists  
	a set $\ccX\subseteq \ccP_\star\subseteq \ccQ$ such that $\ccN\cup\ccX$
	is a forest of copies and $|\ccX|\le \frac{|\ccN|-2}{g-1}$. 
	Let $\xi$ be the unique map from $\ccX$ to $\{\Pi^e_\star\}$. 
	Now the good pair $(\ccX, \xi)$ resolves $(\ccN, \phi)$
	and thus it contradicts~\ref{it:6311a}.   
	This argument proves
		\begin{equation}\label{eq:6034}
		\big|\phi[\ccN]\big|\ge 2\,.
	\end{equation}
		
	Since $|\phi[\ccN]|\le |\ccN|\le n<\Gth(H, \ccH^+)$, Lemma~\ref{lem:6822}
	tells us that $\phi[\ccN]$ is a forest of copies. We choose a terminal 
	copy $\Pi^e_\star$ in this forest, partition $\ccN$ into the sets 
		\[
		\ccA=\phi^{-1}(\Pi^e_\star)
		\quad\text{ and } \quad
		\ccB=\ccN\sm\ccA\,,
	\]
		and remark that~\eqref{eq:6034} implies
		\begin{equation}\label{eq:6035}
		1\le \big|\phi[\ccA]\big|, \big|\phi[\ccB]\big| < \big|\phi[\ccN]\big|\,.
	\end{equation}
		
	Due to $\phi[\ccA]=\{\Pi^e_\star\}$ 
	and $\phi[\ccB]=\phi[\ccN]\sm \{\Pi^e_\star\}$ the terminal choice of~$\Pi^e_\star$
	guarantees that the set $x= V(\phi[A])\cap V(\phi[\ccB])$ is either an edge 
	in $E(\phi[A])\cap E(\phi[\ccB])$ or it consists of at most one vertex. 
	We begin with the former case, as it illustrates better how the upper 
	bound $\frac{|\ccN|-2}{g-1}$ arises in the proof. 
	
	\smallskip
		
	{\it \hskip2em First Case. $x\in E(\phi[A])\cap E(\phi[\ccB])$.}
		
	\smallskip
	
	Let us consider the sets $\ccA'=\ccA\cup\{x^+\}$ and $\ccB'=\ccB\cup\{x^+\}$.
	We claim that there are maps $\alpha\colon \ccA'\lra \ccH^+$ and  
	$\beta\colon \ccB'\lra \ccH^+$ such that 
		\begin{equation}\label{eq:6111}
		\text{$(\ccA', \alpha)$ and $(\ccB', \beta)$ are good pairs, 
		$\alpha[\ccA']=\phi[\ccA]$, and $\beta[\ccB']=\phi[\ccB]$}\,.
	\end{equation}
		Concerning the existence of $\alpha$, we observe that in case $x^+\in\ccA$ 
	we may just take the restriction of $\phi$ to $\ccA$. If $x^+\not\in\ccA$ we 
	additionally need to specify an appropriate value of~$\alpha(x^+)$ 
	in $\phi[\ccA]$, which is possible because of $x\in E(\phi[\ccA])$. Thus the
	desired map $\alpha$ does indeed exist and we may argue similarly with respect 
	to $\beta$. Thereby~\eqref{eq:6111} is proved. 
	
	Next we seek to establish
		\begin{equation}\label{eq:6031}
		|\ccA'|, |\ccB'| \in [2, n]\,. 
	\end{equation}
		Since~\eqref{eq:6035} implies $\ccA, \ccB\ne\ccN$, the upper bounds follow
	from $|\ccN|\le n$. Assume towards a contradiction that $|\ccA'|\le 1$. 
	By $\ccA\ne\vn$ this is only possible if $\ccA=\{x^+\}$,
	whence~$\ccB'=\ccN$. Now $(\ccN, \beta)$ is a good 
	pair, which by~\eqref{eq:6035} and~\eqref{eq:6111} satisfies 
	$\big|\beta[\ccN]\big| < \big|\phi[\ccN]\big|$. So by the minimality 
	demand~\ref{it:6311b} there is a good pair $(\ccX, \xi)$
	resolving $(\ccN, \beta)$ and satisfying $|\ccX|\le \frac{|\ccN|-2}{g-1}$.
	In particular, $(\ccX, \xi)$ resolves $(\ccN, \phi)$, contrary
	to~\ref{it:6311a}. This whole argument shows $|\ccA'|\ge 2$ and in a similar 
	fashion one confirms $|\ccB'|\ge 2$ as well. Thereby~\eqref{eq:6031} is proved. 
	
	For all these reasons~\ref{it:6311b} tells us that the good pairs $(\ccA', \alpha)$ 
	and $(\ccB', \beta)$ are resolved by certain good pairs $(\ccY, \upsilon)$ 
	and $(\ccZ,\zeta)$ that satisfy
		\begin{equation}\label{eq:6141}
		|\ccY|\le \frac{|\ccA'|-2}{g-1}
		\quad \text{ and } \quad
		|\ccZ|\le \frac{|\ccB'|-2}{g-1}\,,
	\end{equation}
		respectively. 
	Set $\ccX=\ccY\cup\ccZ$ and let $\xi\subseteq\upsilon\cup\zeta$
	be a map from $\ccX$ to $\ccH^+$. We contend that 
		\begin{equation}\label{eq:6142}
		\text{the good pair $(\ccX, \xi)$ resolves $(\ccN, \phi)$}\,.
	\end{equation}
		
	The demand~\ref{it:6149a} is clear and~\ref{it:6149b} follows from 
		\[
		\xi[\ccX]
		\subseteq
		\upsilon[\ccY]\cup \zeta[\ccZ]
		\overset{\text{\ref{it:6149b}}}{\subseteq}
		\alpha[\ccA']\cup \beta[\ccB']
		\overset{\eqref{eq:6111}}{=}
		\phi[\ccA]\cup \phi[\ccB]
		=
		\phi[\ccN]\,.
	\]
		For the verification of~\ref{it:6149c} we want to apply Lemma~\ref{lem:6032}
	to $\ccA\cup\ccY$, $\ccB\cup\ccZ$, and $x$ here in place of $\ccA$, $\ccB$, 
	and $e$ there. As the assumption that $(\ccA\cup\ccY)\cup\{x^+\}=\ccA'\cup\ccY$
	and $(\ccB\cup\ccZ)\cup\{x^+\}=\ccB'\cup\ccZ$ need to be forests of copies 
	are satisfied by our choice of the good pairs $(\ccY, \upsilon)$ and $(\ccZ, \zeta)$, 
	it remains to check that 
		\begin{equation}\label{eq:6614}
		V(\ccA\cup\ccY)\cap V(\ccB\cup\ccZ) \subseteq x\,.
	\end{equation}
	
	Towards the proof of this inclusion we observe that 
	$V(F_\star)\cap V(H)\subseteq V\bigl(\phi(F_\star)\bigr)$ holds for 
	every~$F_\star\in\ccA$, whence $V(\ccA)\cap V(H)\subseteq V(\phi[\ccA])$.
	Similarly one has 
		\[
		V(\ccY)\cap V(H)
		\subseteq 
		V(\upsilon[\ccY])
		\overset{\text{\ref{it:6149b}}}{\subseteq} 
		V(\alpha[\ccA'])
		\overset{\eqref{eq:6111}}{=}
		V(\phi[\ccA])
	\]
		and both statements together yield $V(\ccA\cup\ccY)\cap V(H)\subseteq V(\phi[\ccA])$. 
	Proceeding similarly with $\ccB\cup\ccZ$ and combining the results 
	we infer 
		\begin{equation}\label{eq:6615}
		\bigl(V(\ccA\cup\ccY)\cap V(\ccB\cup\ccZ)\bigr)\cap V(H) 
		\subseteq 
		V(\phi[\ccA])\cap V(\phi[\ccB])
		=
		x\,.
	\end{equation}
		Moreover, the definition of the partite amalgamation~\eqref{eq:6726} entails	
	$V(F_\star)\cap V(F_{\star\star})\subseteq V(H)$ for all $F_\star\in\ccA\cup\ccY$ 
	and $F_{\star\star}\in\ccB\cup\ccZ$ and for this reason
	we have 
		\[
		V(\ccA\cup\ccY)\cap V(\ccB\cup\ccZ)
		\subseteq 
		V(H)\,.
	\]
		Together with~\eqref{eq:6615} this establishes~\eqref{eq:6614} and thus 
	concludes the proof of~\eqref{eq:6142}.
	
	However, in the light of
		\[
		|\ccX|
		\le 
		|\ccY|+|\ccZ|
		\overset{\eqref{eq:6141}}{\le}
		\frac{|\ccA|-1}{g-1}+\frac{|\ccB|-1}{g-1}
		=
		\frac{|\ccN|-2}{g-1}
	\]
		this contradicts~\ref{it:6311a}. In other words, the case that $x$ is an edge 
	is impossible. 
	
	\smallskip
		
	{\it \hskip2em Second Case. $|x|\le 1$.}
		
	\smallskip
	
	We argue similar as in the first case. The pairs $(\ccA, \phi\upharpoonright \ccA)$ 
	and $(\ccB, \phi\upharpoonright \ccB)$ are good and the main point is that they 
	are resolved by certain good pairs $(\ccY, \upsilon)$ and $(\ccZ, \zeta)$
	such that 
		\begin{equation}\label{eq:6843}
		|\ccY|\le \frac{|\ccA|-1}{g-1}
		\quad \text{ and } \quad
		|\ccZ|\le \frac{|\ccB|-1}{g-1}\,,
	\end{equation}
	respectively. In fact, if $|\ccA|\ge 2$ we can argue exactly as in the first
	case in order to obtain such a good pair $(\ccY, \upsilon)$ satisfying the 
	stronger estimate $|\ccY|\le \frac{|\ccA|-2}{g-1}$. Moreover, in case
	$|\ccA|=1$ we can just take $\ccY=\upsilon=\vn$.
	The claim on $(\ccZ, \zeta)$ is proved in the same way. 
	
	Again we set $\ccX=\ccY\cup\ccZ$ and take a map $\xi\subseteq\upsilon\cup\zeta$
	from $\ccX$ to $\ccH^+$.
	We remark that~\eqref{eq:6614} is still valid in the present case and the only 
	change we need to make when concluding that $\ccN\cup\ccX$ is a forest of copies 
	is that this time we need to appeal to Lemma~\ref{lem:7032}. Apart from these
	small modifications, the proof that the good pair $(\ccX, \xi)$ 
	resolves $(\ccN, \phi)$ is still the same. Finally,~\eqref{eq:6843} implies
	$|\ccX|\le \frac{|\ccN|-2}{g-1}$, meaning that again we reach a contradiction
	to the choice of $(\ccN, \phi)$ in~\ref{it:6311a}. 
\end{proof}

After these preparations we can quickly establish the main result of this subsection. 

\begin{proof}[Proof of Theorem~\ref{thm:6643}]
	Let $\Phi$ be any linear Ramsey construction and let $\Xi$ be a partite 
	lemma applicable to $k$-partite $k$-uniform hypergraphs $F$ with $\gth(F)>n$
	that delivers linear systems of hypergraphs $(H, \ccH)$ 
	satisfying $\Gth(H, \ccH^+)>n$. We recall that the existence of such a partite 
	lemma is an immediate consequence of Theorem~\ref{thm:6653}.
	
	Let us attempt to perform the partite construction 
	$\PC(\Phi, \Xi)_r(F)	=(H, \ccH)$. By Lemma~\ref{lem:6156} picture zero is 
	$(g, n)$-certified and an iterative application of Lemma~\ref{lem:6229} 
	discloses that all further pictures are well-defined and, likewise, 
	$(g, n)$-certified. In particular, the final picture is 
	well-defined and the fact that it is $(g, n)$-certified implies that 
	it has the desired property.   
\end{proof}

Finally, we deduce the last statement announced in the introduction.   

\begin{proof}[Proof of Theorem~\ref{cor:19}]
	Given a linear $k$-uniform hypergraph $F$ and $r, n\in\NN$ we apply 
	Theorem~\ref{thm:1522} with $n'=\binom nk$ instead of $n$, thus obtaining
	some linear system $(H, \ccH)$. Without loss of generality we can assume 
	that $E(H)=E(\ccH)$, for deleting  
	edges from~$H$ that belong to none of the copies in $\ccH$ cannot influence
	whether the system~$(H, \ccH)$ satisfies the conclusion of Theorem~\ref{thm:1522}.
 
	In order to show that $H$ has the desired property we consider any set 
	$X\subseteq E(H)$ whose size it at most $n$. For every edge $e$ of $H$ 
	contained in $X$ we fix some copy $F_e\in\ccH$ such that $e\in E(F_e)$. 
	Since there are at most $\binom{|X|}{k}$ such edges, the set 
		\[
		\ccN^{-}=\bigl\{F_e\in \ccH\colon e\in E(H) \text{ and } e\subseteq X\bigr\} 
	\]
		has at most the size $n'$. In the special case that $|\ccN^{-}|\le 1$ 
	we can set $\ccN=\ccN^-$ and are done. Otherwise the conclusion of Theorem~\ref{thm:1522}
	yields a set $\ccX\subseteq\ccH$ such that $\ccN=\ccN^-\cup\ccX$ is a forest
	of copies with the desired property.  
\end{proof}

\subsection*{Acknowledgement} 
We would like to thank {\sc Joanna Polcyn} for drawing the beautiful pictures 
and {\sc Mathias Schacht} for devoting a great amount of his time 
to reading and studying this article with us while we have been writing it.

Moreover, we thank {\sc Mathias Schacht} for writing an initial draft of Section~\ref{sec:overview}. The current version owes a lot to his efforts. 
\printindex 

\begin{bibdiv}
\begin{biblist}
\bib{AH78}{article}{
   author={Abramson, Fred G.},
   author={Harrington, Leo A.},
   title={Models without indiscernibles},
   journal={J. Symbolic Logic},
   volume={43},
   date={1978},
   number={3},
   pages={572--600},
   issn={0022-4812},
   review={\MR{503795 (80a:03045)}},
   doi={10.2307/2273534},
}

\bib{AKRWZ16}{article}{
   author={Alon, Noga},
   author={Kostochka, Alexandr},
   author={Reiniger, Benjamin},
   author={West, Douglas B.},
   author={Zhu, Xuding},
   title={Coloring, sparseness and girth},
   journal={Israel J. Math.},
   volume={214},
   date={2016},
   number={1},
   pages={315--331},
   issn={0021-2172},
   review={\MR{3540616}},
   doi={10.1007/s11856-016-1361-2},
}

\bib{BNRR}{article}{
   author={Bhat, Vindya},
   author={Ne\v set\v ril, Jaroslav},
   author={Reiher, Chr.},
   author={R\"odl, Vojt\v ech},
   title={A Ramsey class for Steiner systems},
   journal={J. Combin. Theory Ser. A},
   volume={154},
   date={2018},
   pages={323--349},
   issn={0097-3165},
   review={\MR{3718069}},
}

\bib{CG16}{article}{
   author={Conlon, D.},
   author={Gowers, W. T.},
   title={Combinatorial theorems in sparse random sets},
   journal={Ann. of Math. (2)},
   volume={184},
   date={2016},
   number={2},
   pages={367--454},
   issn={0003-486X},
   review={\MR{3548529}},
   doi={10.4007/annals.2016.184.2.2},
}

\bib{Deuber75}{article}{
   author={Deuber, W.},
   title={Generalizations of Ramsey's theorem},
   conference={
      title={Infinite and finite sets (Colloq., Keszthely, 1973; dedicated
      to P. Erd\H os on his 60th birthday), Vol. I},
   },
   book={
      publisher={North-Holland, Amsterdam},
   },
   date={1975},
   pages={323--332. Colloq. Math. Soc. J\'anos Bolyai, Vol. 10},
   review={\MR{0369127 (51 \#5363)}},
}

\bib{DHKZ}{article}{
	author={Diskin, Sahar},
	author={Hoshen, Ilay},
	author={Krivelevich, Michael},
	author={Zhukovskii, Maksim},
	title={On vertex Ramsey graphs with forbidden subgraphs},
	eprint={2211.13966},
}

\bib{Erd59}{article}{
   author={Erd\H{o}s, P.},
   title={Graph theory and probability},
   journal={Canadian J. Math.},
   volume={11},
   date={1959},
   pages={34--38},
   issn={0008-414X},
   review={\MR{102081}},
   doi={10.4153/CJM-1959-003-9},
}

\bib{Erd75}{article}{
   author={Erd\H{o}s, Paul},
   title={Problems and results on finite and infinite graphs},
   conference={
      title={Recent advances in graph theory},
      address={Proc. Second Czechoslovak Sympos., Prague},
      date={1974},
   },
   book={
      publisher={Academia, Prague},
   },
   date={1975},
   pages={183--192. (loose errata)},
   review={\MR{0389669}},
}

\bib{EH66}{article}{
   author={Erd\H{o}s, P.},
   author={Hajnal, A.},
   title={On chromatic number of graphs and set-systems},
   journal={Acta Math. Acad. Sci. Hungar.},
   volume={17},
   date={1966},
   pages={61--99},
   issn={0001-5954},
   review={\MR{193025}},
   doi={10.1007/BF02020444},
}

\bib{EHP75}{article}{
   author={Erd{\H{o}}s, P.},
   author={Hajnal, A.},
   author={P{\'o}sa, L.},
   title={Strong embeddings of graphs into colored graphs},
   conference={
      title={Infinite and finite sets (Colloq., Keszthely, 1973; dedicated
      to P. Erd\H os on his 60th birthday), Vol. I},
   },
   book={
      publisher={North-Holland, Amsterdam},
   },
   date={1975},
   pages={585--595. Colloq. Math. Soc. J\'anos Bolyai, Vol. 10},
   review={\MR{0382049 (52 \#2937)}},
}

\bib{FRS10}{article}{
   author={Friedgut, Ehud},
   author={R\"{o}dl, Vojt\v{e}ch},
   author={Schacht, Mathias},
   title={Ramsey properties of random discrete structures},
   journal={Random Structures Algorithms},
   volume={37},
   date={2010},
   number={4},
   pages={407--436},
   issn={1042-9832},
   review={\MR{2760356}},
   doi={10.1002/rsa.20352},
}

\bib{Promel}{book}{
   author={Graham, Ronald L.},
   author={Rothschild, Bruce L.},
   author={Spencer, Joel H.},
   title={Ramsey theory},
   series={Wiley-Interscience Series in Discrete Mathematics and
   Optimization},
   edition={2},
   note={A Wiley-Interscience Publication},
   publisher={John Wiley \& Sons, Inc., New York},
   date={1990},
   pages={xii+196},
   isbn={0-471-50046-1},
   review={\MR{1044995}},
}

\bib{HJ63}{article}{
   author={Hales, A. W.},
   author={Jewett, R. I.},
   title={Regularity and positional games},
   journal={Trans. Amer. Math. Soc.},
   volume={106},
   date={1963},
   pages={222--229},
   issn={0002-9947},
   review={\MR{0143712 (26 \#1265)}},
}

\bib{HKL}{article}{
   author={Haxell, P. E.},
   author={Kohayakawa, Y.},
   author={\L uczak, T.},
   title={The induced size-Ramsey number of cycles},
   journal={Combin. Probab. Comput.},
   volume={4},
   date={1995},
   number={3},
   pages={217--239},
   issn={0963-5483},
   review={\MR{1356576}},
   doi={10.1017/S0963548300001619},
}

\bib{HN19}{article}{
   author={Hubi\v{c}ka, Jan},
   author={Ne\v{s}et\v{r}il, Jaroslav},
   title={All those Ramsey classes (Ramsey classes with closures and
   forbidden homomorphisms)},
   journal={Adv. Math.},
   volume={356},
   date={2019},
   pages={106791, 89},
   issn={0001-8708},
   review={\MR{4001036}},
   doi={10.1016/j.aim.2019.106791},
}
		
\bib{LR06}{article}{
   author={Leader, Imre},
   author={Russell, Paul A.},
   title={Sparse partition regularity},
   journal={Proc. London Math. Soc. (3)},
   volume={93},
   date={2006},
   number={3},
   pages={545--569},
   issn={0024-6115},
   review={\MR{2266959}},
   doi={10.1017/S0024611506015887},
}

\bib{Lov68}{article}{
   author={Lov\'{a}sz, L.},
   title={On chromatic number of finite set-systems},
   journal={Acta Math. Acad. Sci. Hungar.},
   volume={19},
   date={1968},
   pages={59--67},
   issn={0001-5954},
   review={\MR{220621}},
   doi={10.1007/BF01894680},
}

\bib{NeRo76}{article}{
   author={Ne\v{s}et\v{r}il, J.},
   author={R\"{o}dl, V.},
   title={Partitions of vertices},
   journal={Comment. Math. Univ. Carolinae},
   volume={17},
   date={1976},
   number={1},
   pages={85--95},
   issn={0010-2628},
   review={\MR{412044}},
}

\bib{NeRo1}{article}{
   author={Ne\v{s}et\v{r}il, J.},
   author={R\"{o}dl, V.},
   title={Partitions of finite relational and set systems},
   journal={J. Combinatorial Theory Ser. A},
   volume={22},
   date={1977},
   number={3},
   pages={289--312},
   review={\MR{0437351 (55 \#10283)}},
}

\bib{Ecken}{article}{
   author={Ne\v{s}et\v{r}il, J.},
   author={R\"{o}dl, V.},
   title={A short proof of the existence of highly chromatic hypergraphs
   without short cycles},
   journal={J. Combin. Theory Ser. B},
   volume={27},
   date={1979},
   number={2},
   pages={225--227},
   issn={0095-8956},
   review={\MR{546865}},
   doi={10.1016/0095-8956(79)90084-4},
}

\bib{NR79a}{article}{
   author={Ne\v{s}et\v{r}il, J.},
   author={R\"{o}dl, V.},
   title={On Ramsey graphs without cycles of short odd lengths},
   journal={Comment. Math. Univ. Carolin.},
   volume={20},
   date={1979},
   number={3},
   pages={565--582},
   issn={0010-2628},
   review={\MR{550457}},
}
		
\bib{NeRo3a}{article}{
   author={Ne\v{s}et\v{r}il, J.},
   author={R\"{o}dl, V.},
   title={Simple proof of the existence of restricted Ramsey graphs by means
   of a partite construction},
   journal={Combinatorica},
   volume={1},
   date={1981},
   number={2},
   pages={199--202},
   issn={0209-9683},
   review={\MR{625551 (83a:05101)}},
   doi={10.1007/BF02579274},
}

\bib{NeRo5}{article}{
   author={Ne\v{s}et\v{r}il, J.},
   author={R\"{o}dl, V.},
   title={Two proofs of the Ramsey property of the class of finite
   hypergraphs},
   journal={European J. Combin.},
   volume={3},
   date={1982},
   number={4},
   pages={347--352},
   issn={0195-6698},
   review={\MR{687733 (85b:05134)}},
   doi={10.1016/S0195-6698(82)80019-X},
}

\bib{NeRo4}{article}{
   author={Ne\v{s}et\v{r}il, J.},
   author={R\"{o}dl, V.},
   title={Strong Ramsey theorems for Steiner systems},
   journal={Trans. Amer. Math. Soc.},
   volume={303},
   date={1987},
   number={1},
   pages={183--192},
   issn={0002-9947},
   review={\MR{896015 (89b:05127)}},
   doi={10.2307/2000786},
}
	
\bib{Ramsey30}{article}{
   author={Ramsey, Frank Plumpton},
   title={On a problem of formal logic},
   journal={Proceedings London Mathematical Society},
   volume={30},
   date={1930},
   number={1},
   pages={264--286},
   issn={0024-6115},
   review={\MR{1576401}},
   doi={10.1112/plms/s2-30.1.264},
}

\bib{Rodl73}{unpublished}{
	author={{R\"odl}, Vojt\v{e}ch}, 
	title={The dimension of a graph and generalized Ramsey numbers}, 
	note={Master's Thesis, Charles University, Praha, Czechoslovakia},
	date={1973},
}

\bib{Rodl76}{article}{
    author = {{R\"odl}, Vojt\v{e}ch},
    title = {A generalization of the Ramsey theorem},
    conference={
    		title={Graphs, Hypergraphs, Block Syst.},
			address={Proc. Symp. comb. Anal., Zielona Gora},
			date={1976},
	},
    date={1976},
    pages={211--219},
    review={Zbl. 0337.05133},
}

\bib{JAMS}{article}{
   author={R\"{o}dl, Vojt\v{e}ch},
   author={Ruci\'{n}ski, Andrzej},
   title={Threshold functions for Ramsey properties},
   journal={J. Amer. Math. Soc.},
   volume={8},
   date={1995},
   number={4},
   pages={917--942},
   issn={0894-0347},
   review={\MR{1276825}},
   doi={10.2307/2152833},
}

\bib{Sh329}{article}{
   author={Shelah, Saharon},
   title={Primitive recursive bounds for van der Waerden numbers},
   journal={J. Amer. Math. Soc.},
   volume={1},
   date={1988},
   number={3},
   pages={683--697},
   issn={0894-0347},
   review={\MR{929498}},
   doi={10.2307/1990952},
}
	
\end{biblist}
\end{bibdiv}
\end{document}